\newtheorem{theorem}{Theorem}[section]
\newtheorem{lemma}[theorem]{Lemma}
\newtheorem{proposition}[theorem]{Proposition}
\newtheorem{corollary}[theorem]{Corollary}
\theoremstyle{definition}
\newtheorem{definition}[theorem]{Definition}
\newtheorem{example}[theorem]{Example}
\theoremstyle{remark}
\newtheorem{remark}[theorem]{Remark}
\numberwithin{equation}{section}
\numberwithin{equation}{section}
\newcommand{\be}{\begin{equation}}
\newcommand{\ee}{\end{equation}}
\newcommand{\N}{{\mathbb N}}
\newcommand{\Z}{{\mathbb Z}}
\newcommand{\R}{{\mathbb R}}
\def\va{\varphi}
\def\la{\lambda}
\def\csi1{\circ\sigma^{-1}}
\def\cs{\circ\sigma}
\def\lacr{\lambda\circ R}
\def\ol{\overline}
\def\wt{\widetilde}
\def\wh{\widehat}
\def\mc{\mathcal}
\newcommand{\B}{{\mathcal B}}
\newcommand{\sms}{(X, \mathcal B, \mu)}
\newcommand{\FXB}{{\mathcal F(X, \B)}}
\newcommand{\sB}{{\sigma^{-1}(\B)}}
\begin{document}

\title[Infinite-dimensional transfer operators]{Infinite-dimensional
 transfer operators, endomorphisms, and measurable partitions}

\author{Sergey Bezuglyi}
%    Address of record for the research reported here
\address{Department of Mathematics, University of Iowa, Iowa City,
52242 IA, USA}
%    Current address
%\curraddr{}
\email{sergii-bezuglyi@uiowa.edu}

\author{Palle E.T. Jorgensen}
\address{Department of Mathematics, University of Iowa, Iowa City,
52242 IA, USA}
\email{palle-jorgensen@uiowa.edu}

\subjclass[2010]{37B10, 37L30, 47L50, 60J45}

%\dedicatory{To the memory of Ola Bratteli}

\keywords{Transfer operator, positive operator, endomorphism, measure, 
 nonsingular transformation, measurable partition, iterated function system}

\maketitle

\newpage

\tableofcontents
  
\newpage

\vskip2cm

\section*{Foreword and Acknowledgments}

While the mathematical structures of positive operators, endomorphisms, 
transfer operators, measurable partitions, and Markov processes arise in a 
host of settings, both pure and applied, we propose here a unified study. 
This is the general setting of dynamics in Borel measure spaces. Hence the 
corresponding linear structures are infinite-dimensional. Nonetheless, we 
prove a number of analogues of the more familiar finite-dimensional settings, 
for example, the Perron-Frobenius theorem for positive matrices, and the 
corresponding Markov chains.

\vskip2cm

The first named author is thankful to Professors Jane Hawkins, Olena Karpel, 
Konstantin Medynets, and Cesar Silva for useful discussions on properties of
endomorphisms. 
The second named author gratefully acknowledge discussions, on the 
subject of this book, with his colleagues, especially helpful insight from 
Professors Daniel Alpay, Dorin Dutkay, Judy Packer, Erin Pearse, 
Myung-Sin Song, and Feng Tian.

\newpage

\section{Introduction and examples}\label{sect Introduction}

We develop a new duality between endomorphisms  
$\sigma$ of measure spaces $(X, \B)$, on the one hand, and a
 certain family of positive operators  $R$ acting in spaces of 
 measurable functions on $(X, \B)$, on the other. A framework of 
 standard Borel spaces $(X, \B)$ is adopted; and this generality is wide
  enough to cover a host of applications.

    In detail, from a given pair $(R, \sigma)$ on $(X, \B)$, a positive 
operator $R$, and an endomorphism $\sigma$, we define the notion of transfer 
operator.  At the outset, measures on $(X, \B)$ are not specified, but 
they will be introduced, and adapted to the questions at hand; in fact, 
a number of convex sets of measures on $(X, \B)$ will be analyzed in 
order for us to make precise the desired duality correspondences 
between the two parts, operator and endomorphism, in a fixed transfer 
operator pair $(R, \sigma)$. The theorems we obtain in this setting are 
motivated in part by recent papers dealing with stochastic processes 
(especially in joint work between D. Alpay et al and the second named 
author), applications to 
physics, to path-space analysis, to ergodic theory, and to dynamical 
systems and fractals. A source of inspiration is a desire to find an 
infinite-dimensional setting for the classical Perron-Frobenius theorem 
for positive matrices, and for the corresponding infinite Markov chains. 
Indeed, recent applications dictate a need for such infinite-dimensional 
extensions.

   Tools from the theory of operators in Hilbert space of special 
significance to us will be the use of a certain universal Hilbert space, as 
well as classes of operators in it, directly related to the central theme 
of duality for transfer operators. From ergodic theory, we address such 
questions as measurable cross sections, partitions, and Rohlin analysis 
of endomorphisms of measure spaces. While there are classical 
theorems dealing with analogous questions for \textit{automorphisms} of 
measure spaces, a systematic study of \textit{endomorphisms} is of more 
recent vintage;--  in its infancy. In order to make the exposition 
accessible to students and to researchers in neighboring areas, we 
have included a  number of explicit examples and applications.

The notion of transfer operators includes settings from statistical 
mechanics where they are often referred to as \textit{Ruelle 
operators} (and 
we shall use the notation $R$ for transfer operator for that reason), 
from harmonic analysis, including spectral analysis of wavelets, from 
ergodic theory of endomorphisms in measure spaces, Markov random 
walk models, transition processes in general; and more. 
The terminology ``transfer operator'' is from statistical mechanics; 
used for example in consideration of the action of a dynamical system 
on mass densities. The idea is that for chaotic systems, it is not 
possible to predict individual ``atoms'', or molecules,  only the density 
of large collections of initial conditions \cite{Ruelle1978}. Or in mathematical language, 
``transfer operator'' refers to the transformation of individual 
probability distributions for systems of random variables.
 There are further a number of parallels between our present 
infinite-dimensional theory and the classical Perron-Frobenius theorem 
for the special case of finite positive matrices.

To make the latter parallel especially striking, it is helpful to restrict 
the comparison to the case of the Perron-Frobenius for finite matrices 
in the special case when the spectral radius is 1 (see e.g., 
\cite{Baladi2000, BaillifBaladi2005, BaladiJiangLanford1996, 
Keane1972, MayerUrbanski2015, NicoloRadin1982, Parry1969, 
Radin1999}).

As we hint at in the title to our book, in our infinite-dimensional 
version of Perron-Frobenius  transfer operators, we include theorems 
which may be viewed as analogues of many points from the classical 
finite-dimensional Perron-Frobenius  case, for example, the classical 
respective left and right Perron-Frobenius eigenvectors, now take the 
form in infinite-dimensions of a 
\textit{positive $R$-invariant measure} (left) 
and the infinite-dimensional right Perron-Frobenius  vector becomes a 
\textit{positive harmonic function}.

Of course in infinite-dimensions, we have more non-uniqueness than is 
implied by the classical matrix version, but we also have many parallels. 
We even prove infinite-dimensional versions of the Perron-Frobenius 
limit theorem from the classical matrix case.

 In recent research (detailed citations below) in infinite-dimensional 
 analysis, a number of frameworks have emerged that involve positive 
 operators, but nonetheless, a \textit{unified} infinite-dimensional setting is 
 only slowly taking shape. While these settings and applications involve 
 researchers from diverse areas, and may on the surface appear quite 
different, they, in one way or the other, all involve generalizations of 
the classical Perron-Frobenius theory which in turn has already found 
many applications in ergodic theory, in the study of Markov chains, and 
more generally in infinite-dimensional dynamics. 

Motivated by recent research, it is our aim here to address and unify 
these infinite-dimensional settings. Our work in turn is also motivated 
by many instances of the use of classes of positive operators which by 
now go under the name “transfer operators,” or Ruelle operators, (see 
below for precise definitions). The latter name is after David Ruelle who 
first used such a class of these operators in the study of phase 
transition questions in statistical mechanics. Subsequent research on 
such questions as symbolic dynamics, spectral theory, endomorphisms 
in measure spaces, and diffusion processes, further suggest the need 
for a unifying infinite-dimensional approach.
    In fact the list of applications is longer than what we already hinted 
at, and it includes recent joint research involving the second named 
author with Daniel Alpay, and collaborators; details and citations are 
included below (see e.g., \cite{AlpayJorgensenLewkowicz2013, 
AlpayJorgensenLewkowicz2016, AlpayLewkowicz2013}). This collaborative research also makes use of positive 
operators and transfer operators in several infinite-dimensional 
settings, specifically in the study of such stochastic processes as 
infinite-dimensional Markov transition systems, analysis of Gaussian 
processes, and in the realization of wavelet multiresolution 
constructions for a host of probability spaces, and their associated 
$L^2$  
Hilbert spaces, all of which go beyond the more traditional setting of 
$L^2(\mathbb R^d)$ from wavelet theory. Indeed the last mentioned 
multi-scale wavelet constructions are applicable to a general framework 
of self-similarity from geometric measure theory (see e.g., 
\cite{Keane1972, KutzFuBrunton2016, 
Hutchinson1981, HutchinsonRueschendorf1998}).

  Important points in our present consideration of transfer operators 
  are as follows: We formulate a general framework, a list of precise 
  axioms, which includes a diverse host of cases of relevance to 
  applications. In this, we separate consideration of the transfer 
  operators as they act on \textit{functions on Borel spaces}
   $(X, \B)$ on the   one hand,  and their \textit{Hilbert space 
   properties }on the other. When a 
  transfer operator is given, there is a variety of measures compatible 
  with it, and we will discuss both the individual cases, as well as the 
  way a given transfer operator is acting on a certain universal Hilbert 
  space. The latter encompasses all possible probability measures on 
  the given Borel space $(X, \B)$. This approach is novel, and it helps 
  us organize our discussion of a host of ergodic theoretic properties 
  relevant to the theory of transfer operators.
  
  The sections in the book are organized as follows: The early sections are 
  in the most general setting, and the framework is restricted in the later 
  more specialized sections. Each specialization in turn is motivated by 
  applications. To make the book accessible to a wider readership, including 
  non-specialist, at the end of these sections we have cited some papers/
  books which may help by discussing foundations, applications, and 
  motivation.
 
A detailed summary of our main theorems is given in Subsection 
\ref{subsect Main theorems}  below.

\subsection{Motivation} This work is devoted to the study of transfer 
operators, see Definition \ref{def transfer operator from Intro}. This
 kind of operators, acting in a functional space, has been 
studied in numerous research papers and books. They are 
also known  by the name of \textit{Ruelle operators} or \textit{Perron-
Frobenius operators} that are used synonymously.  One of the first 
instances of 
the use of transfer operators in the sense we address here was papers 
by Ruelle in the 1970ties (see, e.g., \cite{Ruelle1978}) dealing with
 phase transition in statistical mechanics.
Since then the subject has branched off in a variety of new 
directions, and new applications. Our present  aim is to give a 
systematic and general setting for the study of transfer operators, and 
to offer some key results that apply to this general setting. 
Nonetheless, by now, the literature dealing with transfer operators and 
their diverse applications is large. For readers interested in the many 
settings in dynamical systems where some version, or the other, of a 
transfer operator arises, we have cited the papers below 
\cite{Keane1972, Ruelle1978, Ruelle1989,BaillifBaladi2005, 
BaladiEckmanRuelle1989, 
BaladiJiangLanford1996, Baladi2000, Ruelle1992, Ruelle2002, 
Dutkay2002, DutkayRoysland2007, Jorgensen2001, Kato2007, 
MayerUrbanski2010, MayerUrbanski2015, Radin1999,  
Stoyanov2012, Stoyanov2013}. Non-singular transformations of 
measure spaces are of a special interest. We refer to the following
 papers in this connection \cite{BezuglyiGolodets1991, BruinHawkins2009, 
 DajaniHawkins1994, EigenSilva1989, HawkinsSilva1991, Silva1988}. 
 Invariant measures on  Cantor sets are studied, in particular, in the following
 papers \cite{BezuglyiKwiatkowskiMedynetsSolomyak2010,
 BezuglyiKwiatkowskiMedynetsSolomyak2013, BezuglyiHandelman2014,
 BezuglyiKarpel2016}.

Our present results are motivated in part by applications. These 
applications include  Markov random walk models, problems 
from statistical mechanics, and from dynamics. While our setting here,  
dealing with transfer operators and endomorphisms in general measure 
spaces, is of independent interest, there are also a number of more 
recent applications of this setting to problems dealing with generalized 
multi-resolution analysis, relevant to the study of wavelet filters 
which require the use of solenoid analysis for their realization. In fact,  
the following is only a sample of  research papers devoted to 
these problems \cite{BaggettFurstMerrillPacker2009, 
BaggettMerrillPackerRamsay2012, FarsiGillaspyKangPacker2016, 
LatremoliereFredericPacker2013}.

Since our work touches rather different areas of Analysis, we give
here a list of principal references  in the corresponding fields.
While there is a rich literature on \textit{endomorphisms} of 
non-Abelian 
algebras of operators, both C*-algebras, and von Neumann algebras, 
the nature of endomorphisms of Abelian measure spaces presents 
intriguing new questions which are quite different from those studied 
so far in the corresponding non-Abelian situations. Our present analysis  
deals with endomorphisms of Abelian measure spaces. 
(The interplay between the Abelian vs the non-Abelian case is at the
 heart of the Kadison-Singer problem/now theorem, see 
 \cite{CasazzaTremain2016, MarkusSpielmanSrivastava2015}, but this 
 direction will not be addressed here.) For readers 
interested in the non-Abelian cases, we offer the following references 
\cite{Longo1989, BratteliElliottKishimoto1993, 
BratteliJorgensenPrice1996, BratteliJorgensen1997, 
BratteliKishimoto2000, PowersPrice1993, Powers1999, Jones1994, 
BischoffKawahigashiLongo2015}.

The study of \textit{transfer operators}, and more generally 
\textit{positivity-preserving operators},  are both of independent 
interest in their 
own right. This is in addition to its use in numerous applications; both 
within mathematics, and in neighboring areas; for example in physics, in 
signal analysis, in probability, and in the study of stochastic processes. 
While we shall cite these applications inside the book, we already now 
call attention to the following recent papers 
\cite{AlpayJorgensenLewkowicz2013, AlpayJorgensenLewkowicz2013,
AlpayKipnis2013, AlpayLewkowicz2013, AlpayJorgensenSalomon2014,
AlpayJorgensenVolok2014, AlpayJorgensenKimsey2015, 
AlpayKipnis2015, AlpayJorgensen2015, 
AlpayJorgensenLewkowiczMartziano2015,  
AlpayJorgensenLewkowiczVolok2016, 
AlpayColomboKimseySabadini2016}.
 
We cite some papers on the \textit{multiresolutions} that are 
related  to our work
\cite{BratteliJorgensen2002, BaggettJorgensenMerrillPacker2005, 
KutzFuBrunton2016,
BandaraRubergCirak2016, SuarezGhosal2016,
AlpayJorgensenLewkowiczVolok2016}.

\textit{Iterated function systems (IFS) }are used to describe the 
properties of 
fractal sets, and have close relations to transfer operators. Here we 
cite  papers on IFS and their connections to various aspects 
of transfer operators: \cite{BarnsleyHutchinsonStenflo2008,
 BarnsleyHutchinsonStenflo2012, Beardon1991,
Hutchinson1981,  Hutchinson1995, 
HutchinsonRueschendorf1998, Ruelle1978, 
Ruelle1989, Ruelle1992, Ruelle2002, YanLiuZhu1999}.

\subsection{Examples of transfer operators}
Our goal is to study \emph{transfer operators} in the framework of 
various functional spaces. To be more specific, we briefly mention 
several typical examples of
transfer operators. They will illustrate our results proved below. The 
rigorous definitions of used  notions are given in the next section, see
also Definitions \ref{def transfer operator from Intro} and \ref{def transfer 
operator}.

Our approach to the theory of transfer operators can be briefly 
described as follows. We first define and study these operators 
in the most abstract setting, aiming 
to find out what general properties they have. By \textit{abstract setting}, 
we mean the space of Borel real-valued functions $\mc F(X, 
\B)$ over a \textit{standard Borel space}
 \index{standard Borel space} $(X,\ B)$. 
Such spaces being 
endowed with a topology, or a Borel measure, are used in  most 
interesting classes of transfer operators such as Frobenius-Perron 
operators, or operators corresponding to iterated function systems, 
or operators acting in a Hilbert space, etc. 

Let  $\sigma$ be a fixed \textit{surjective Borel  endomorphism} 
\index{endomorphism ! Borel} of $(X, \B)$,  
 and let $M(X)$ be the set of all Borel (finite or sigma-finite) measures 
 on $(X, \B)$. In general, $\sigma^{-1}(\B): = \{\sigma^{-1}(A) : A 
 \in \B\}$ is a proper nontrivial 
 subalgebra of $\B$ where $\sigma^{-1}(A) = \{x \in X : \sigma(x) 
 \in A\}$. In fact, an endomorphism $\sigma$ defines a 
 sequence of filtered subalgebras $\B \supset \sigma^{-1}(\B)
 \supset \cdots \sigma^{-n}(\B) \supset \cdots$. An important 
 property of $\sigma$, called \textit{exactness}, is characterized by 
 the triviality of the subalgebra $\B_{\infty}= \bigcap_{n \in \N}
 \sigma^{-n}(\B)$, see Definition \ref{def ergodic and exact}.  
 We note that a Borel function  $f$ on $(X, \B)$ is
  $\sigma^{-1}(\B)$-measurable if and only if there exists a Borel 
  function $g$ such that $f =  g\cs$. 
 
When a measure $\lambda \in M(X)$ is fixed, then we get into the 
framework of a \textit{standard measure space} $(X, \B, \lambda)$ 
(see, e.g., \cite{CornfeldFominSinai1982}), 
and, in this situation, we  use measurable sets from the complete 
sigma-algebra\footnote{We reserve the symbol $\sigma$ for an 
endomorphism of a standard Borel space $(X, \B)$, so that to avoid 
any confusion we write sigma-algebra and sigma-finite measure instead 
of such more common terms $\sigma$-algebra and $\sigma$-finite} $
\B(\la)$ and functions measurable with respect to 
$\B(\la)$  instead of Borel ones. With some abuse of notation, 
we will also use the same symbol $\B$ for the sigma-algebra of 
measurable sets.

Having these data defined, we now give the following main  definition.

\begin{definition}\label{def transfer operator from Intro}
Let  $\sigma : X \to X$ be a surjective endomorphism of a standard 
Borel space $(X, \B)$. We say that $R$ is a \emph{transfer 
operator} \index{transfer operator} 
if $R : \mathcal F(X) \to \mathcal F(X)$ is a linear operator satisfying 
the properties:

(i) $R$ is a positive operator \index{positive operator}, 
that is $f \geq 0 \ \Longrightarrow \ Rf \geq 0$;

(ii)  for any Borel functions $f, g \in \mathcal F(X)$,
\be\label{eq char property of r via sigma}
R((f\circ \sigma) g) = f R(g).
\ee
If $R(\mathbf 1)(x) > 0$ for all $x\in X$, then we say that $R$ is a
 \textit{strict} transfer operator \index{transfer operator ! strict} 
 (here and below 
$\mathbf 1$ means the constant function that takes value 
1). If $R( \mathbf 1) = \mathbf 1$, then 
$R$ is called a \textit{normalized} transfer operator.  
\index{transfer operator ! normalized}
\end{definition}

Relation (\ref{eq char property of r via sigma}) is called the 
\textit{\index{pull-out property}}.

In what follows we describe several classes of transfer operators and 
then give a universal approach to these classes based on the notion of 
a \textit{measurable partition}, see Subsection \ref{subsect standard 
spaces}. More examples of transfer operators 
will be also given in  subsequent  sections.

\begin{example}[Transfer operators defined by finite-to-one 
endomorphisms]\label{ex R forIFS}
Let $X = [0,1]$ be the unit interval with Lebesgue measure $dx$. 
Take the endomorphism $\sigma$ of $X$ into itself defined by
$$
\sigma(x) = 2x \ \mathrm{mod}\; 1.
$$
Then $\sigma$ is onto, and $|\{\sigma^{-1}(x) : x \in X\}| = 2$. 
Consider a functional space $\mc F$ of real-valued functions over $X$. 
We do not need to specify this space here. For instance, it can be
 either $L^p(X, dx)$, or the space of all Borel functions, or the space 
 of continuous functions, etc. Set
\be\label{eq example R_sigma IFS}
R_\sigma (f)(x) := \frac{1}{2}\left(f(\frac{x}{2}) + f(\frac{x+1}{2})
\right), \ \ \ f \in \mc F.
\ee
Relation (\ref{eq example R_sigma IFS}) gives an example of a transfer 
operator that is well studied in the theory of iterated function systems 
(IFS). 

Based on this elementary example, we can use  a more general 
approach to the definition of $R_\sigma$. Suppose that $\sigma$ is 
an $n$-to-one endomorphism of a measurable space $(X, \B)$, and $
\mc F(X)$ is an appropriate functional space of real-valued functions. 
Let $W$ be a nonnegative function  on $X$ (it is called a weight 
function). We define
a transfer operator  $\mc F(X)$ by the formula
\be\label{eq def Ruelle operator classic}
R_\sigma (f)(x) = \sum_{y\in \sigma^{-1}(x)}W(y) f(y).
\ee
Clearly, $R_\sigma f \geq 0$ whenever $f \geq 0$, i.e., $R_\sigma$ is 
a positive operator.  Moreover, if $\mathbf 1$ denotes the constant 
function that takes the value 1, then the condition $R_\sigma
(\mathbf 1) =\mathbf 1$ holds if and only if $\sum_{y\in \sigma^{-1}
(x)}W(y) =1$ for all $x$.
The most important fact about $R_\sigma$ is that if $R_\sigma$ 
satisfies the 
pull-out property: for any functions $f$ and $g$ from $\FXB$,
\be\label{eq charact property of R_sigma}
R_\sigma((f\circ \sigma)g)(x) = f(x) (R_\sigma g)(x).
\ee

In case of the transfer operator given in (\ref{eq example R_sigma 
IFS}), it can be modified by considering a nontrivial weight function $W
$. Illustrating our further results, we will deal with $R_\sigma$ defined 
by (\ref{eq example R_sigma IFS}), or more generally by
\be\label{eq IFS with tri weight}
R'_\sigma (f)(x) := \cos^2(\frac{\pi x}{2})f(\frac{x}{2}) + \sin^2 
(\frac{\pi x}{2}) f(\frac{x+1}{2}), \ \ \ f \in \mc F,
\ee
as well. 

As we will see below, any normalized 
transfer operator defines an action on the 
set of probability measures. It is interesting to note that $R_\sigma$
 and $R'_\sigma$ have different properties relating to the 
 corresponding  
 invariant  measures. We  present them in the following  table. More
 detailed exposition of these results is given in Section 
 \ref{sect examples}.

\begin{table}[h!]
  \centering
  \caption{Invariant measures for $R_\sigma$ and $R'_\sigma$}
  \label{tab:table1}
  {\renewcommand{\arraystretch}{1.2}
  \begin{tabular}{c  c  c}
    \toprule
   Transfer operator & Lebesgue measure $\mu$  & Dirac measure $
   \delta_0$ \\
    \midrule
    \hline
   $R_\sigma$ (see (\ref{eq example R_sigma IFS}))
     & $\mu R_\sigma = \mu$  & $\delta_0 R_\sigma = 1/2(\delta_0 
     + \delta_{1/2}) \centernot\ll \delta_0$\\
    $R'_\sigma$ (see (\ref{eq IFS with tri weight})) & $\mu R'_\sigma \ll \mu$ and $d(\mu R'_\sigma) =
    2\cos^2(\pi x)dx$ & $\delta_0 R'_\sigma =\delta_0$\\
    \bottomrule
  \end{tabular}}
\end{table}
 
\end{example}

The following class of transfer operators is a continuous analogue of 
the operators defined by (\ref{eq def Ruelle operator classic}). 

\begin{example}[Frobenius-Perron operators]\label{ex F-P operator}
We follow here \cite{AbramovichAliprantis2001, LasotaMackey1994, 
DingZhou2009}. Suppose we have 
a standard measure space $\sms$ and a surjective 
non-singular endomorphism $\sigma$  acting on the space $\sms$. 
Let $P$ be a positive 
operator on $L^1\sms = L^1(\mu)$. It is said that $P$ is a
 \textit{Frobenius-Perron operator} 
\index{Frobenius-Perron operator} if for any 
$f \in L^1(\mu)$, and any set $A \in \B$,
\be\label{eq F-P operator}
\int_A P(f)\; d\mu = \int_{\sigma^{-1}(A)} f \; d\mu.
\ee
It can be easily checked that this Frobenius-Perron operator satisfies
the pull-out property (\ref{eq char property of r via sigma}).
 Furthermore, it follows from (\ref{eq F-P 
operator}) that $P$ preserves the measure $\mu$, i.e., $\mu P = 
\mu$ where $\mu P$ is defined by the formula:
$$
(\mu P)(A) = \int_X P(\chi_A)\; d\mu.
$$

More generally, we can define a ``non-singular'' Frobenius-Perron 
operator, meaning that $\mu P \ll \mu$:
\be\label{eq F-P operator with W}
\int_A P(f)\; d\mu = \int_{\sigma^{-1}(A)} Wf \; d\mu.
\ee
Then $W$ is the \textit{Radon-Nikodym derivative} 
\index{Radon-Nikodym derivative}
 of $\mu P$ with respect to $\mu$.
\end{example}

\begin{example}[Transfer operators on densities]\label{ex example 2} 
Let $\sigma$ be an onto endomorphism of a standard Borel space $(X, 
\B)$. Fix a Borel measure $\lambda$ on $(X, \B)$ such that $\la\circ 
\sigma^{-1} \ll \la$. Define a linear operator $ R = R_\la$ acting on 
non-negative functions $f$ from $L^1(\la)$ by the formula
\be\label{eq TO on densities}
R_\la( f)(x) = \frac{(fd\la)\circ \sigma^{-1}}{d\la},
\ee
where the right-hand side is the Radon-Nikodym derivative of the 
measure 
$(fd\la) \circ \sigma^{-1}$ with respect to $\la$. Then $R_\la$ is 
called a Ruelle transfer operator. It can be easily checked that $R_\la$
 satisfies the conditions of Definition \ref{def transfer operator from 
 Intro}:  (i) $R_\la$ is positive, (ii) $R_\la((f\circ \sigma)g) = fR_\la (g)$
  for any $f, g \in L^1(\la)$. We note that this operator $R_\la$ 
simultaneously acts on the set of Borel measures $M(X)$. 
The pull-out property of $R_\la$ (\ref{eq char property of r via 
sigma})  can be written in the equivalent form
$$
\int_X g (Rf) \; d\la = \int_X (g\circ\sigma) f \; d\la.
$$
Then one sees that $\la R_\la = \la$.
\end{example}

It turns out that the transfer operators defined in Examples \ref{ex F-P 
operator} and \ref{ex example 2} are related in a simple way.

\begin{lemma}
Let $P$ be  a Frobenius-Perron operator on $L^1\sms$ given by 
(\ref{eq F-P operator}). Then $P(f) = R_\mu(f)$ for any $f \in 
L^1(\mu)$.  If $P$ is defined by (\ref{eq F-P operator with W}), then
$P(f) = R_\mu(Wf)$.
\end{lemma}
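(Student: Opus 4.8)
The plan is simply to unwind both definitions and observe that they produce the same set function on $\mathcal B$. First I would recall that, by the formula (\ref{eq TO on densities}) in Example \ref{ex example 2}, $R_\mu(f)$ is by construction the Radon--Nikodym derivative $d\big((f\,d\mu)\circ\sigma^{-1}\big)/d\mu$; equivalently, $R_\mu(f)$ is characterized up to $\mu$-null sets by
\be
\int_A R_\mu(f)\,d\mu \;=\; \big((f\,d\mu)\circ\sigma^{-1}\big)(A) \;=\; \int_{\sigma^{-1}(A)} f\,d\mu
\ee
for every $A\in\mathcal B$. Since both $P$ and $R_\mu$ are linear, it suffices to treat $f\ge 0$ in $L^1(\mu)$ and then extend by writing $f=f^+-f^-$.

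For such $f$, the defining relation (\ref{eq F-P operator}) says precisely that $\int_A P(f)\,d\mu=\int_{\sigma^{-1}(A)}f\,d\mu$ for all $A\in\mathcal B$. Comparing with the displayed identity above gives $\int_A P(f)\,d\mu=\int_A R_\mu(f)\,d\mu$ for all $A\in\mathcal B$, hence $P(f)=R_\mu(f)$ $\mu$-a.e. As a byproduct this also legitimizes the construction of $R_\mu$ in this case: taking $f\ge 0$ we see that $\mu(A)=0$ forces $\int_{\sigma^{-1}(A)} f\,d\mu=\int_A P(f)\,d\mu=0$, so the measure $(f\,d\mu)\circ\sigma^{-1}$ is indeed absolutely continuous with respect to $\mu$ (in particular $\mu\circ\sigma^{-1}\ll\mu$).

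For the second assertion the only change is that the right-hand side of (\ref{eq F-P operator with W}) is $\int_{\sigma^{-1}(A)} Wf\,d\mu=\big((Wf)\,d\mu\big)(\sigma^{-1}(A))=\big((Wf\,d\mu)\circ\sigma^{-1}\big)(A)$. Applying the characterization of $R_\mu$ above with $Wf$ in place of $f$ yields $\int_A P(f)\,d\mu=\int_A R_\mu(Wf)\,d\mu$ for all $A\in\mathcal B$, whence $P(f)=R_\mu(Wf)$.

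I do not expect a substantial obstacle: the statement is essentially a translation between the integrated form of the transfer identity and the pointwise Radon--Nikodym form. The only point deserving a word of care is the well-definedness of $R_\mu$ — that is, the absolute continuity of $(f\,d\mu)\circ\sigma^{-1}$, resp.\ $((Wf)\,d\mu)\circ\sigma^{-1}$, with respect to $\mu$ — but, as observed above, this is automatic from the mere existence of a positive operator $P$ satisfying (\ref{eq F-P operator}), resp.\ (\ref{eq F-P operator with W}).
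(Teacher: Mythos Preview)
Your proof is correct and follows essentially the same approach as the paper: both arguments identify $P(f)$ with $R_\mu(f)$ (resp.\ $R_\mu(Wf)$) via the uniqueness of the Radon--Nikodym derivative. The only cosmetic difference is that the paper passes from characteristic functions to general test functions $g$ and invokes the equivalent form $\int_X P(f)\,g\,d\mu=\int_X (g\circ\sigma)\,fW\,d\mu$, whereas you work directly with sets $A\in\mathcal B$; your added remark on why $(f\,d\mu)\circ\sigma^{-1}\ll\mu$ follows automatically from the existence of $P$ is a nice clarification that the paper omits.
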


\begin{proof} Indeed, relation (\ref{eq F-P operator with W}) can be written
in an equivalent form as
$$
\int_X P(f) g\; d\mu = \int_X (g\cs)fW\; d\mu.
$$
Then the lemma follows. 
\end{proof}

The next example is important and will be used later, see Sections 
\ref{sect integrable operators} and \ref{sect examples}. 

\begin{example}[Transfer operators via systems of conditional 
measures]\label{ex TO by cond meas}
 This example of  a transfer operator is of different nature and is based 
 on the notion of a \textit{system of conditional measures}. 
 \index{system of conditional measures}  The definitions of
  used terms can be found in Section \ref{section Basics}.

Let $\sms$ be a standard measure space with finite measure, and let 
$\sigma$ be an endomorphism onto $X$. Consider  the 
\textit{measurable partition} \index{measurable
 partition} $\xi$ of $X$ into preimages of $\sigma$, $\xi = 
 \{\sigma^{-1}(x) : x \in X\}$. Take the system of conditional 
 measures $\{\mu_C\}_{C \in \xi}$ corresponding to the partition 
 $\xi$ (see Definition \ref{def system cond measures}).

We define a  transfer operator $R$ on the standard 
probability measure space  $(X, \B, \mu)$ by setting 
\be\label{eq TO via cond syst meas Intro}
R(f)(x) := \int_{C_x} f(y)\; d\mu_{C_x}(y)
\ee
where $C_x$ is the element of $\xi$ containing $x$, i.e., $C_x = 
\sigma^{-1}(x)$. The domain of $R$ is $L^1(\mu)$ in this example.

\begin{lemma}\label{lem R via cond syst meas}
The operator $R : L^1(\mu) \to L^1(\mu)$ defined by (\ref{eq TO via 
cond syst meas Intro}) is a transfer operator. 
\end{lemma}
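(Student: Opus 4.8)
The plan is to verify directly that the operator $R$ defined by the conditional-measure formula $R(f)(x) = \int_{C_x} f(y)\, d\mu_{C_x}(y)$ satisfies the three requirements of Definition~\ref{def transfer operator from Intro}: well-definedness as a map $L^1(\mu) \to L^1(\mu)$, positivity, and the pull-out property. The one genuinely nontrivial point is well-definedness, so I would treat that first, relying on the Rohlin disintegration theory referenced in Section~\ref{section Basics}.

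First, I would recall that since $\xi = \{\sigma^{-1}(x) : x \in X\}$ is a measurable partition of the standard probability space $(X,\B,\mu)$, the Rohlin disintegration theorem gives a system of conditional measures $\{\mu_C\}_{C\in\xi}$ together with the quotient measure $\ol\mu$ on the factor space $X/\xi$ (which here is naturally identified with $X$ via $\sigma$), so that for every $f\in L^1(\mu)$ one has $f\in L^1(\mu_C)$ for $\ol\mu$-a.e.\ $C$, the map $C \mapsto \int_C f\, d\mu_C$ is measurable on the factor space, and the Fubini-type identity
\be\label{eq disintegration identity}
\int_X f\, d\mu = \int_{X/\xi} \Bigl(\int_C f\, d\mu_C\Bigr)\, d\ol\mu(C)
\ee
holds. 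Identifying $C_x = \sigma^{-1}(x)$ with the point $x$, this says exactly that $R(f)$ is a well-defined $\sB$-measurable (equivalently, of the form $g\circ\sigma$) function and that $\int_X R(f)\, d\mu = \int_X f\, d\mu$; applying this to $|f|$ in place of $f$ shows $\|R(f)\|_1 \le \|f\|_1$, so $R$ indeed maps $L^1(\mu)$ into itself and is bounded. Linearity is immediate from linearity of the integral against each $\mu_C$.

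Second, positivity is trivial: if $f\ge 0$ $\mu$-a.e., then $f\ge 0$ $\mu_{C}$-a.e.\ for $\ol\mu$-a.e.\ $C$, and hence $R(f)(x) = \int_{C_x} f\, d\mu_{C_x} \ge 0$ for $\mu$-a.e.\ $x$. Third, for the pull-out property $R((f\circ\sigma)g) = f\, R(g)$: fix $x$ and restrict to $C_x = \sigma^{-1}(x)$. For every $y\in C_x$ we have $\sigma(y) = x$, so $(f\circ\sigma)(y) = f(x)$ is constant on $C_x$; therefore
\be\label{eq pullout verification}
R((f\circ\sigma)g)(x) = \int_{C_x} f(\sigma(y))\, g(y)\, d\mu_{C_x}(y) = f(x)\int_{C_x} g(y)\, d\mu_{C_x}(y) = f(x)\, R(g)(x),
\ee
which is the desired identity (valid $\mu$-a.e., for $f,g$ such that both sides lie in $L^1$, e.g.\ $f\circ\sigma$ bounded or $g\in L^\infty$).

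The main obstacle is purely a matter of bookkeeping with the disintegration: making sure that the conditional measures $\mu_{C_x}$ are probability measures supported on $C_x = \sigma^{-1}(x)$ (so that constants pull out cleanly and $R(\mathbf 1) = \mathbf 1$, giving in fact a normalized transfer operator), and that all the a.e.\ statements are with respect to the correct ($\ol\mu$- versus $\mu$-) null sets under the identification of the factor space with $X$. None of this requires new ideas beyond Rohlin's theorem as recalled in Section~\ref{section Basics}, so once \eqref{eq disintegration identity} is in hand the proof is short.
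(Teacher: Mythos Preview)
Your proposal is correct and takes essentially the same approach as the paper: both verify positivity and then check the pull-out property via the one-line computation using that $f\circ\sigma$ is constant on $C_x = \sigma^{-1}(x)$. You add a careful treatment of well-definedness of $R$ as a bounded operator on $L^1(\mu)$ using the disintegration identity, which the paper's proof simply omits; otherwise the arguments are identical.
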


\begin{proof}
 Clearly, $R$ is a positive operator. To see that 
 (\ref{eq char property of r via sigma}) holds, we simply calculate
\begin{eqnarray*}
  R((f\circ\sigma) g)(x) & = & \int_{C_x} f\circ\sigma(y)g(y)\; d
  \mu_{C_x}\\
  \\
   &=& f(x) \int_{C_x} g(y)\; d\mu_{C_x}(y)\\
   \\
  &=& f(x) (Rg)(x).
\end{eqnarray*}
Here we used the fact that $f(\sigma(y)) = f(x)$ for $y \in C_x = 
\sigma^{-1}(x)$.
\end{proof}
\end{example}

More results about this type of transfer operators are discussed in 
Section \ref{sect examples}.

\subsection{Directions and motivational questions}

In this subsection, we formulate, in a rather loose manner, a few 
problems that could be considered as directions of further 
work in this area. 

If a transfer operator $R$ is defined by an endomorphism $\sigma$, 
then, as we will see below,  it is convenient to view at $R$  as a pair 
$(R, \sigma)$. This notation makes sense because the set of such 
pairs forms a semigroup, and moreover it  emphasizes that these two
 objects are closely  related to each other, according to 
the ``pull-out property'' given in (\ref{def transfer operator from Intro}) and
 (\ref{eq char property of r via 
sigma 1}). Next, this point of view is useful for the problem of 
classification of transfer operators. Clearly, the set $\mc R_\sigma$ 
of transfer operators defined by the same endomorphism $\sigma$
 can be vast, as we have seen in  the examples given in this  section.

To understand better the research directions of our approach,  we
 mention here a few questions which are not rigorously formulated 
but nevertheless  serve as motivational questions. 
Obviously, the study of possible relations between 
 \textit{Borel dynamical systems }$(X, \B, \sigma)$, or  
 \textit{measurable 
dynamical systems} $(X, \B, \mu, \sigma)$, and \textit{transfer 
operators} $R$, is a big multifacet problem, and we do not try to 
discuss all aspects of it here. 

In detail: 
(A) Suppose an endomorphism $\sigma$ is given in a standard Borel 
space $(X, \B)$. Denote by $\mc R_\sigma$ the set of all transfer 
operators $(R, \sigma)$ on $\mc F(X)$. What can be said about the 
properties of the set $\mc R_\sigma$? Clearly,  $\mc R_\sigma$  is a 
convex set. How can one find its extreme points? This 
 question becomes clearer when a measure $\mu$ is fixed on $(X, \B)
 $ and the operators $(R, \sigma)$ are considered in 
 $L^p(\mu)$-spaces. 

(B) The interaction between dynamical properties of endomorphisms 
and transfer operators, such as ergodicity, mixing, etc.,  has been
 discussed in many papers, see e.g., 
 \cite{LasotaMackey1994, DingZhou2009}. Our main interest is  the 
 study of the set of  measures which are quasi-invariant for both 
 transformations,  $\sigma$  and $R$. This approach  has been 
  productive for the Frobenius-Perron operators defined in Section
  \ref{sect Introduction}.  

(C) In treating positive, and transfer operators, $R$ as 
infinite-dimensional 
analogues of positive matrices, it is natural to raise the questions 
about spectral properties of such operators. If $h$ is a harmonic 
function for  $R$ and a measure $\mu$ is invariant (or ``quasi-
invariant'')  with  respect to $R$, then the relations $Rh = h$
and $\mu R = \mu$  ($\mu R \ll \mu$, respectively) are 
infinite dimensional analogues of eigenvectors in the matrix case 
for $R$. It would be 
interesting to find out how far the analogue with positive matrices 
can be extended to transfer operators.

(D) In the definition of a transfer operator, it is required that $R$ is an 
operator defined on the set of functions. In some cases, this action 
generates a ``dual'' action of $R$ on the set of all Borel measures 
$M(X)$. For instance, this is true for transfer operators defined on 
continuous functions over a compact Hausdorff space. How can one 
find, say, measures invariant with respect to $R$? Is there an 
interaction between actions of $R$ on functions and on measures?
In particular, we can define an equivalence relation on the set of all
transfer operators. Given $(R, \sigma)$,  let $\mc I(R,\sigma)$ be the 
set of all probability measures which are invariant with respect to $R$
and $\sigma$. It is said that $(R_1, \sigma_1)$ and $(R_2, 
\sigma_2)$ are \textit{measure equivalent} if $\mc I(R_1,\sigma_1)
= \mc I(R_2,\sigma_2)$. How can transfer operators be classified 
with respect to the measure equivalence relation?

(E) We will study transfer operators $R$ acting in various functional 
spaces. The same transfer operator $R$ and endomorphism $\sigma$ 
can be considered in 
different  frameworks depending on the choice of  its domain.  For 
instance, if $X$ is a  compact Hausdorff space and $\sigma$ is 
a continuous map on $X$, then it is natural 
to consider a transfer operator $(R, \sigma)$ as acting on 
continuous functions 
$C(X)$. At the same time, $(R, \sigma)$ can be viewed as a transfer
operator on the space of Borel functions $\FXB$, or on the
space $L^p(X, \B, \la)$. It would be interesting to understand how
 properties of $R$ depend on the choice of an underlying space. 

\subsection{Main results}\label{subsect Main theorems}
    A common theme is as follows: Given a transfer operator $(R, 
\sigma)$, what are the properties and the interplay between the 
following dual actions, action of $R$ on functions vs its action on 
measures? What is the interplay between the action of $R$ and that of 
an associated endomorphism $\sigma$? What are the important 
classes of quasi-invariant measures? These questions are answered in 
Sections 4 - 6, see especially Theorems 
\ref{thm  lambda R abs cntn wrt lambda}, \ref{thm erg decomp for TO},
\ref{thm measures vs operators}, \ref{thm from measure equivalence}, 
\ref{thm adjoint of R},  
\ref{thm L(R) is Q plus (sigma)}, and \ref{thm t_R is 1-1 and more}.
We also mention our main Theorems 
\ref{thm Wold}, \ref{thm S isometry}, \ref{thm K_1(R) tfae}, \ref{thm 
existence of p_k},
\ref{thm p_k in terms alpha}, \ref{thm measure mu_pi}, \ref{thm csm}, 
\ref{prop harmonic f-ns} from other sections (more important results are 
obtained in Corollaries \ref{cor support of mu_x}, and \ref{cor delta}). 

The necessary preparation and preliminary results are in Sections 2 - 3.
   
     For each of the classes of quasi-invariant measures, when do we 
have existence? This is the Perron-Frobenius setting, and now made 
precise in the general infinite-dimensional setting, and involving 
harmonic functions and measurable partitions. Our answers here are in 
Theorems \ref{thm mu R-invariance},  \ref{lem H(la) is S-inv}, 
\ref{thm R is adjoint of S}, and in 
Proposition \ref{prop harmonic}.

When does a given transfer operator $(R, \sigma)$ induce a 
multiresolution, i.e., a filtered system of subspaces, or of measures? 
And under what conditions does exactness hold? (See Theorem 
\ref{thm t_R is 1-1 and more}).
    
  In Theorems 4.13 and 10.6, we establish explicit measurable 
  partitions, co-boundary analysis, ergodic properties, and ergodic       
  decompositions.  In Theorem 8.10, we show that there is  a  
  \textit{universal Hilbert space} which realizes every transfer operator 
  $(R, \sigma)$.
  
  \newpage

\section{Endomorphisms and measurable partitions}
\label{section Basics}

In this section, we collect definitions and some basic facts about
 the underlying spaces, endomorphisms, measurable partitions, etc., 
 which are  used throughout the book.  Though these notions are 
 known in ergodic theory, we discuss them for the reader's
  convenience. The main references are the original works by Rohlin 
  \cite{Rohlin1949}, \cite{Rohlin1961}.  We refer also to
   \cite{Bogachev2007}, \cite{CornfeldFominSinai1982}
   \cite{Kechris1995}, 
    \cite{Renault1987}, \cite{Vershik2001}.
 \\

\subsection{Standard Borel and standard measure spaces} 
\label{subsect standard spaces}
Let $X$ be a separable completely metrizable topological space (a 
\textit{Polish space}, \index{Polish space} for short), and let $\mathcal B$ 
be  the \textit{sigma-algebra of Borel subsets} \index{sigma-algebra of Borel subsets}
 of $X$. Then  $(X, \mathcal B)$ is
called a \emph{standard Borel space}. \index{standard Borel space}
 If $\mu$ is a continuous (i.e., 
 non-atomic) Borel  measure \index{measure} 
 \index{measure ! sigma-finite} \index{measure ! Borel}  on 
 $(X, \mathcal B)$, then 
 $(X, \mathcal B, \mu)$ is called a \emph{standard measure space}.
  \index{standard measure space} 
In this book, we will use the same notation, $\B$, for Borel sets, and
 measurable  sets, of a standard measure space. It will be clear from 
 the context in what settings we are. Dealing with the sigma-algebra
  of measurable sets, we will assume that  $\B$ is  \textit{complete}
   with   respect to the measure $\mu$.  We will consider the set  
   $M(X)$ of all sigma-finite complete Borel measures on $(X, \B)$. Let
 $M_1(X) \subset M(X)$ denote the subset of probability measures.
For short,  an element of $M(X)$ will be called a measure.
If $\mu, 
\nu$ are two measures from $M(X)$, then $\mu$ is absolutely
 continuous with respect to $\nu$, $\mu \ll \nu$, if $\nu(A) = 0$
 implies $\mu(A) =0$.  
  Two measures  $\mu$ and $\nu$ on $(X, \B)$ are called
   \textit{equivalent} \index{measure ! equivalent},  $\mu \sim \nu$, 
   if they share the same sets  of measure zero, i.e., 
   $\mu\ll \nu$ and $\nu \ll \mu$. 

We denote by $\mathcal F(X)$ (or by $\mathcal F(X, \B)$)  the
 vector space of Borel functions. If a Borel measure $\mu$ is defined 
 on $(X, \B)$, we will work with $\mu$-measurable functions.

All objects considered in the context of measure spaces (such as sets,
 partitions, functions, transformations, etc) are considered by modulo
  sets of zero measure (they are also called null sets).  In most cases,
 we will implicitly use this $\mod\ 0$ convention.

 It is a well known fact that all uncountable standard Borel spaces
are Borel isomorphic,  and that all standard measure spaces 
are measure isomorphic. This means that  results do not depend 
on a specific realization of an underlying space. 
We will discuss this issue
in the context of \textit{isomorphic transfer operators} in Section \ref{sect TO
on measurable spaces}. 
\\

\subsection{Endomorphisms of measurable spaces}\label{subsect
Endomorphisms}

The notion of an endomorphism is a central concept of ergodic theory
and endomorphisms are studied extensively in many books and 
research papers. We mention only a few of them to present a wide
spectrum of research directions:  \cite{Rohlin1961}, 
\cite{Hawkins1994}, \cite{CornfeldFominSinai1982}, 
\cite{Beneteau1996},  \cite{BruinHawkins2009},  
\cite{PrzytyckiUrbanski2010}.

Let $\sigma$ be a Borel map of $(X, \B)$ onto itself. Such a map 
$\sigma$ is called an onto \textit{endomorphism} of $(X, \B)$. 
 \index{endomorphism}
In particular, $\sigma$ may be injective; in this case, we have a Borel  
\textit{automorphism} \index{automorphism} of $(X, \B)$. 
Since the  cardinality of  the 
set $\sigma^{-1}(x)$ is a Borel function on $X$, we can
  independently   consider the following classes:  $\sigma$ is either a
   finite-to-one or  countable-to-one map, or  $\sigma^{-1}(x)$ is an
   uncountable  Borel subset for any $x\in X$. 
 In general, we do not require that the  set $\sigma(A)$ is Borel but if 
  $\sigma$ is at most countable-to-one, then this property holds
automatically. 

We denote by $End(X, \B)$ the semigroup (with respect to the 
composition) of all surjective endomorphisms of the standard Borel 
space $(X, \B)$.

Given an endomorphism $\sigma$ of $(X, \B)$,  we denote by 
$\sigma^{-1}(\B)$ the proper subalgebra of $\B$ consisting of 
 sets $\sigma^{-1}(A)$ where $A$ is any set from $\B$.

We will use endomorphisms mostly in the context of  standard 
measure spaces $\sms$ with a finite (or sigma-finite) 
measure $\mu$.  Any endomorphism $\sigma$ of $(X, \B, \mu)$ 
defines an action on the set of measures $M(X)$ by 
$$ 
\mu \mapsto \mu\circ\sigma^{-1} : M(X) \to M(X),
$$
where $(\mu\circ\sigma^{-1})(A) := \mu(\sigma^{-1}(A))$. For a 
fixed measure $\mu$, it is said that $\sigma$ is a 
\textit{non-singular endomorphism} \index{endomorphism ! non-singular} 
(or equivalently that $\mu \in M(X)$ is 
a  (backward)  \emph{quasi-invariant measure} \index{measure ! 
quasi-invariant} 
 with respect to $\sigma$) if   $\mu\csi1$ is equivalent to $\mu$, i.e., 
  $$
\mu(A) = 0 \  \Longleftrightarrow \ \mu(\sigma^{-1}(A)) = 0,\ \ \
 \forall A\in \B.
$$
 Let $End\sms$ denote the set of all non-singular endomorphisms of 
 $\sms$.
 
\textit{ In this book, we consider only non-singular endomorphisms of 
 standard measure spaces.} We will also assume that $(X, \sB, 
 \mu_\sigma)$
 is again a standard measure space where  $\mu_\sigma$ is  the 
 restriction of $\mu$ to  $\sigma^{-1}(\B)$.

If $\mu(\sigma^{-1}(A))   = \mu(A)$ for any measurable set $A$, 
then $\sigma$ is called a  \textit{measure 
preserving endomorphism} 
\index{endomorphism ! measure preserving}, and $\mu$ is 
called  a \textit{$\sigma$-invariant measure}. \index{measure ! invariant} 

In some cases, we will also need the notion of a 
\textit{forward  quasi-invariant measure} \index{measure ! forward  
quasi-invariant} $\mu$. 
This means that, for every $\mu$-measurable  set 
$A$, the set  $\sigma(A)$ is measurable and $\mu(A) = 0 \ 
  \Longleftrightarrow  \ \mu(\sigma(A)) = 0$. For an at most   
  countable-to-one non-singular  endomorphism $\sigma$, this property 
  is automatically true. On the other hand, it it is not hard to construct 
an endomorphism  $\sigma$ of a  measure space $\sms$ such that $
  \sigma$ is not  forward quasi-invariant   with respect to $\mu$.

It is worth noting that, for standard
 measure spaces $\sms$ and non-singular $\sigma$, $\sigma(A)$ 
 is measurable when $\sigma$ satisfies the condition: $\mu(B) =0 
 \Longrightarrow \mu(\sigma(B)) = 0$ for any Borel set $B$.

\begin{lemma}\label{lem existence of q-inv measure for  sigma}
Let $\sigma$ be a surjective endomorphism of a standard Borel 
space $(X, \B)$. Then $M(X)$ always contains a 
$\sigma$-quasi-invariant measure $\mu$.
\end{lemma}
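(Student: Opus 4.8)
The plan is to build a $\sigma$-quasi-invariant probability measure by the standard averaging trick over the forward orbit of a reference measure, exploiting the fact that $\sigma$ maps $M(X)$ into itself via push-forward. First I would fix any probability measure $\nu \in M_1(X)$ on the standard Borel space $(X,\B)$; such a measure exists since $(X,\B)$ is standard. Because $\sigma$ is a surjective Borel endomorphism, the push-forward $\nu\circ\sigma^{-1}$ is again a Borel probability measure, and by induction $\nu\circ\sigma^{-n}$ is well-defined for every $n\ge 0$ (here I use that countable-to-one images are Borel, or more generally that for standard Borel spaces one can still make sense of the relevant constructions; if $\sigma$ is not countable-to-one one works with the push-forward, which is always well-defined). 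Then I would set
\[
\mu \;=\; \sum_{n=0}^{\infty} 2^{-(n+1)}\, \nu\circ\sigma^{-n}.
\]
This is a convex combination of probability measures, hence $\mu \in M_1(X) \subset M(X)$.

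The key step is to verify quasi-invariance, i.e. that $\mu(A)=0 \iff \mu(\sigma^{-1}(A))=0$ for all $A\in\B$. For the forward implication, observe that $\mu\circ\sigma^{-1} = \sum_{n\ge 0} 2^{-(n+1)}\,\nu\circ\sigma^{-(n+1)} = 2\big(\mu - \tfrac12\nu\big) = 2\mu - \nu$. Hence $\mu\circ\sigma^{-1} \le 2\mu$, so $\mu(A)=0$ forces $\mu(\sigma^{-1}(A)) = (\mu\circ\sigma^{-1})(A) = 0$. Wait — I should be careful about which direction "quasi-invariant" requires; re-examining the definition in the excerpt, $\mu$ is quasi-invariant iff $\mu\circ\sigma^{-1}\sim\mu$. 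The inequality $\mu\circ\sigma^{-1}\le 2\mu$ gives $\mu\circ\sigma^{-1}\ll\mu$ immediately. For the reverse absolute continuity $\mu\ll\mu\circ\sigma^{-1}$, I use the identity $\mu\circ\sigma^{-1} = 2\mu-\nu$ rearranged as $2\mu = \mu\circ\sigma^{-1} + \nu$; this shows $\mu \ll \mu\circ\sigma^{-1} + \nu$, which is not quite what I want. Instead I would note $\nu \ll \mu$ trivially (since $\nu = 2\mu - \mu\circ\sigma^{-1}\le 2\mu$, or simply because $\nu$ is the $n=0$ term with positive weight), so it suffices to show $\mu\ll\mu\circ\sigma^{-1}$ directly: if $(\mu\circ\sigma^{-1})(A)=0$ then $\sum_{n\ge 0}2^{-(n+1)}(\nu\circ\sigma^{-(n+1)})(A)=0$, so $(\nu\circ\sigma^{-(n+1)})(A)=0$ for all $n\ge 0$, i.e. $(\nu\circ\sigma^{-m})(A)=0$ for all $m\ge 1$. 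This leaves only the $m=0$ term $\nu(A)$ possibly nonzero, so I do not yet get $\mu(A)=0$.

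To close this gap I would instead start the sum from a "two-sided" construction or, more simply, replace $\nu$ by its $\sigma$-saturation. The cleanest fix: since the forward implication $\mu\circ\sigma^{-1}\ll\mu$ already holds for the geometric average above, it is enough to separately ensure $\mu\ll\mu\circ\sigma^{-1}$, and for this I would symmetrize by also noting that surjectivity of $\sigma$ lets me push measures "forward" in the sense that for every $A$ with $\nu(A)>0$ there is positive $\nu\circ\sigma^{-1}$-mass nearby; rather than belabor this, the standard move is to observe that the set $\{A : \mu(A)=0\}$ is automatically $\sigma$-invariant for the averaged measure once one checks that $\mu\circ\sigma^{-1}$ and $\mu$ have the same null sets by a direct argument, which one obtains by instead defining $\mu = \sum_{n\ge 0} c_n\,\nu\circ\sigma^{-n}$ and then passing to the measure $\mu' = \tfrac12\mu + \tfrac12\mu\circ\sigma^{-1}$; iterating or taking a further geometric average stabilizes the null-set class. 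The main obstacle, and the point requiring genuine care, is exactly this reverse absolute continuity $\mu \ll \mu\circ\sigma^{-1}$ — the forward direction is free from the geometric weights, but controlling null sets under pull-back requires either surjectivity used in an essential way or a symmetrization of the averaging, and I would present whichever of these the authors find cleanest, most likely the observation that $\mu\circ\sigma^{-1} = 2\mu - \nu$ combined with $\nu\ll\mu$ and a short argument that the $\mu$-null ideal is $\sigma^{-1}$-stable. Everything else — measurability of push-forwards on standard Borel spaces, convergence of the series, that $\mu$ is a finite (indeed probability) measure — is routine.
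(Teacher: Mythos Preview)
Your averaging construction is the right general idea and is exactly what lies behind the Dougherty--Jackson--Kechris reference, but the gap you flag in the reverse direction $\mu \ll \mu\circ\sigma^{-1}$ is genuine, and none of the fixes you sketch actually closes it. To see that the naive forward average can fail, take $\sigma(x)=2x \bmod 1$ on $[0,1]$ and $\nu=\delta_{x_0}$ for a point $x_0$ that is not $\sigma$-periodic. Then $\nu\circ\sigma^{-n}=\delta_{\sigma^n(x_0)}$, so your $\mu=\sum_{n\ge 0}2^{-(n+1)}\delta_{\sigma^n(x_0)}$ is supported on the forward orbit. For $A=\{x_0\}$ we have $\mu(A)=\tfrac12>0$, while $\sigma^{-1}(A)$ is disjoint from the forward orbit (non-periodicity), so $\mu(\sigma^{-1}(A))=0$. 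Thus $\mu\not\ll\mu\circ\sigma^{-1}$. Your proposed remedies do not help here: passing to $\mu'=\tfrac12\mu+\tfrac12\mu\circ\sigma^{-1}$ leaves the null ideal unchanged since $\mu\circ\sigma^{-1}\ll\mu$ already; and the claim that ``the $\mu$-null ideal is $\sigma^{-1}$-stable'' is precisely the statement to be proved, so invoking it is circular.

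What is missing is a mechanism for pushing mass \emph{backward} along $\sigma$, which is where surjectivity must be used in an essential way. In the DJK argument one averages over a countable \emph{group} of Borel automorphisms, so both directions come for free. The adaptation to an endomorphism proceeds by first producing inverse branches: in the countable-to-one case, Luzin--Novikov gives a Borel partition $X=\bigsqcup_i A_i$ with $\sigma|_{A_i}$ injective, and one averages the initial $\nu$ over the countable family of partial Borel isomorphisms generated by the maps $\sigma|_{A_i}$ and their inverses (this is the orbit equivalence relation, which is countable Borel, and DJK applies directly). In the general case one uses a universally measurable section $\tau$ with $\sigma\circ\tau=\mathrm{id}_X$ (Jankov--von Neumann) and averages over powers of both $\sigma$ and $\tau$. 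Either way, the crucial step is to include backward iterates in the average; simply re-averaging forward push-forwards, however cleverly weighted, cannot manufacture the missing absolute continuity.
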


\begin{proof}
A proof is factually contained in  
\cite[Proposition 3.1]{DoughertyJacksonKechris1994} that can be
 easily adapted to the case of an endomorphism.
\end{proof}

We will keep the following notation for a surjective endomorphism 
$\sigma$ of $\FXB$:
$$
\mc Q_{-} = \{\mu \in M(X) : \mu \csi1 \sim \mu\}, \ \ 
\mc Q_{+} = \{\mu \in M(X) : \mu \cs \sim \mu\}. 
$$  

It is known that there are Borel endomorphisms $\sigma$ of $(X, \B)$
for which there exists no \textit{finite} $\sigma$-invariant measure,
see e.g. \cite{DoughertyJacksonKechris1994}.

\begin{remark} \label{rem RN derivatives for sigma}
Quasi-invariance of $\mu$ with respect to an endomorphism
 $\sigma$ of $\sms$ (backward and forward) allows us to
 define the notion of Radon-Nikodym derivatives 
 \index{Radon-Nikodym derivative} of measures $\lambda
 \csi1$ and $\lambda \cs $ with respect to $\lambda$:
$$
\theta_\lambda(x) = \frac{d\lambda\csi1}{d\lambda}(x) \ \ \
 \mbox{and} \ \ \ \omega_\lambda(x) = 
 \frac{d\lambda\circ\sigma}{d\lambda}(x).
$$
In other words, for any function $f\in L^1(\lambda)$, one has
$$
\int_X f\circ\sigma \; d\lambda = \int_X f \theta_\lambda\; d\lambda
$$
 and
$$
 \int_X (f\circ\sigma) \; \omega_\lambda\; d\lambda = 
 \int_X f \; d\lambda.
$$
To justify these relations, we observe that  $\lambda$ and 
$\lambda\circ \sigma$ are well defined measures when they are
 considered on the subalgebra  $\sigma^{-1}(\B)$. When $\sigma$ 
 is forward  quasi-invariant with respect to $\lambda$, we can 
 uniquely  define   the $\sigma^{-1}(\B)$-measurable function 
 $\omega_\lambda(x)$.  Since $\theta_\lambda\circ\sigma$ is also 
   $\sigma^{-1}(\B)$-measurable, then, by uniqueness of the 
   Radon-Nikodym derivative, we obtain that
$$
\omega_\lambda(x) = \frac{1}{\theta_\lambda}(\sigma x).
$$
\end{remark}

The following fact is obvious. 

\begin{lemma}
 Suppose that $\sigma \in End\sms$ and $\nu$ is a measure
  equivalent to $\mu$, i.e., there exists a measurable function $\xi$
  such that 
$d\nu(x) = \xi(x) d\mu(x)$. Then $\sigma$ is also non-singular with
 respect to $\nu$, and  $\theta_\nu$ is cohomologous to 
 $\theta_\mu$,  i.e., $\theta_\nu(x) = \xi (\sigma x) 
 \theta_\mu(x) \xi(x)^{-1}$.
\end{lemma}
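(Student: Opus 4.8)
The plan is to verify the two assertions directly from the definitions, using only the defining property of Radon--Nikodym derivatives and the hypothesis $d\nu = \xi\, d\mu$ with $\xi > 0$ a.e.\ (which is forced by $\nu \sim \mu$).

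First I would check non-singularity of $\sigma$ with respect to $\nu$. Since $\xi$ is the Radon--Nikodym derivative of $\nu$ with respect to $\mu$ and $\nu \sim \mu$, the function $\xi$ is strictly positive $\mu$-a.e.\ (equivalently $\nu$-a.e.), so $\mu$ and $\nu$ have the same null sets. Hence for $A \in \B$ we have $\nu(A) = 0 \iff \mu(A) = 0 \iff \mu(\sigma^{-1}(A)) = 0 \iff \nu(\sigma^{-1}(A)) = 0$, the middle equivalence being non-singularity of $\sigma$ with respect to $\mu$. Thus $\sigma \in End(X,\B,\nu)$ and $\theta_\nu = d(\nu\circ\sigma^{-1})/d\nu$ is well defined.

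Next I would compute $\theta_\nu$ by testing against functions $f \in L^1(\nu)$, using the characterization from Remark \ref{rem RN derivatives for sigma}, namely $\int_X (f\circ\sigma)\, d\nu = \int_X f\, \theta_\nu\, d\nu$. The idea is to unwind the left-hand side: writing $d\nu = \xi\, d\mu$,
\begin{eqnarray*}
\int_X (f\circ\sigma)\, d\nu &=& \int_X (f\circ\sigma)\, \xi\, d\mu \\
&=& \int_X (f\circ\sigma)\, \big((\xi\,\xi^{-1})\circ\sigma\big)^{-1}\cdots
\end{eqnarray*}
more cleanly: set $g = f\xi^{-1}\cdot(\xi\circ\sigma)$ so that $(f\circ\sigma)\,\xi = \big((g\xi)\circ\sigma\big)/(\xi\circ\sigma) \cdot \xi$; the cleaner route is to write $(f\circ\sigma)\,\xi = (h\circ\sigma)\,\xi$ where we want to produce a $\theta_\mu$ via the $\mu$-relation $\int (h\circ\sigma)\,d\mu = \int h\,\theta_\mu\,d\mu$. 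Concretely: $\int_X (f\circ\sigma)\,\xi\, d\mu$; multiply and divide by $\xi\circ\sigma$ inside the $\sigma$-composition to get $\int_X \big((f\xi)\circ\sigma\big)\,\dfrac{\xi}{\xi\circ\sigma}\, d\mu$, which still is not in the required form. The correct bookkeeping is: apply the $\mu$-relation to the function $f\xi$, giving $\int_X \big((f\xi)\circ\sigma\big)\, d\mu = \int_X f\xi\,\theta_\mu\, d\mu = \int_X f\,\theta_\mu\, d\nu$; then compare with $\int_X (f\circ\sigma)\,d\nu = \int_X (f\circ\sigma)\,\xi\,d\mu = \int_X \big((f\xi)\circ\sigma\big)\,\dfrac{\xi}{\xi\circ\sigma}\, d\mu$. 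Hence $\int_X f\,\theta_\nu\,\xi\, d\mu = \int_X \big((f\xi)\circ\sigma\big)\,\dfrac{\xi}{\xi\circ\sigma}\, d\mu$; substituting $f \mapsto f/\xi$ (legitimate since $\xi > 0$) and using the identity just derived for $f$ replaced by $f/\xi$ yields $\int_X f\,\theta_\nu\, d\mu = \int_X f\,\theta_\mu\,(\xi\circ\sigma)\,\xi^{-1}\, d\mu$ for all suitable $f$, whence $\theta_\nu = (\xi\circ\sigma)\,\theta_\mu\,\xi^{-1}$ $\mu$-a.e.

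The main obstacle, though it is really just bookkeeping rather than a genuine difficulty, is keeping the change-of-variable substitutions straight: one must pass between the $\mu$-relation and the $\nu$-relation by inserting and removing factors of $\xi$ and $\xi\circ\sigma$ in the right places, and justify that $\xi > 0$ a.e.\ so that division by $\xi$ and by $\xi\circ\sigma$ is legitimate (the latter using non-singularity so that $\xi\circ\sigma > 0$ $\mu$-a.e.). Once the identity $\theta_\nu(x) = \xi(\sigma x)\,\theta_\mu(x)\,\xi(x)^{-1}$ is established, this is by definition the statement that $\theta_\nu$ and $\theta_\mu$ are cohomologous (differ by a coboundary $\xi\circ\sigma / \xi$), which completes the proof.
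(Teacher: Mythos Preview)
Your non-singularity argument is fine, but the computation of $\theta_\nu$ breaks down at the last step. After substituting $f\mapsto f/\xi$ you correctly arrive at
\[
\int_X f\,\theta_\nu\, d\mu \;=\; \int_X (f\circ\sigma)\,\frac{\xi}{\xi\circ\sigma}\, d\mu,
\]
and you then claim the right-hand side equals $\int_X f\,\theta_\mu\,(\xi\circ\sigma)\,\xi^{-1}\, d\mu$ ``using the identity just derived.'' But the only identity available is $\int (g\circ\sigma)\,d\mu=\int g\,\theta_\mu\,d\mu$, which applies only when the \emph{entire} integrand is of the form $g\circ\sigma$. The factor $\xi/(\xi\circ\sigma)$ is not $\sigma^{-1}(\B)$-measurable in general, so $(f\circ\sigma)\cdot\xi/(\xi\circ\sigma)$ cannot be rewritten as some $g\circ\sigma$, and the step is unjustified. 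This is a genuine gap, not just messy bookkeeping.

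The gap is in fact not repairable, because the stated formula $\theta_\nu=(\xi\circ\sigma)\,\theta_\mu\,\xi^{-1}$ is false for non-invertible $\sigma$. Take $X=[0,1)$, $\sigma(x)=2x\bmod 1$, $\mu$ Lebesgue (so $\theta_\mu\equiv 1$), and $\xi(x)=2x$. The formula would give $\theta_\nu\equiv 2$ on $[0,\tfrac12)$, hence $\int_0^{1/2}\theta_\nu\,d\nu=\int_0^{1/2}2\cdot 2x\,dx=\tfrac12$; but directly $(\nu\circ\sigma^{-1})([0,\tfrac12))=\nu\big([0,\tfrac14)\cup[\tfrac12,\tfrac34)\big)=\tfrac{3}{8}$. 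The point is that $d(\nu\circ\sigma^{-1})/d(\mu\circ\sigma^{-1})$ is a conditional average of $\xi$ over the fibre $\sigma^{-1}(x)$, not $\xi(\sigma x)$. The paper states the lemma without proof; note that the parallel cocycle identity for $W=\dfrac{d(\lambda R)}{d\lambda}$ in Lemma~\ref{lem lambda_1 equiv to lambda} \emph{is} correct, precisely because the pull-out property of $R$ supplies the step that is missing here.
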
 

Here we define the most important dynamical properties of 
endomorphisms. 

\begin{definition}\label{def ergodic and exact}
Let If $\sigma \in End\sms$. 

(i) The endomorphism $\sigma$ is called
 \textit{conservative} \index{endomorphism ! conservative} 
 if for any set $A$ of positive measure there 
 exists $n >0$ such that $\mu(\sigma^n(A) \cap A) > 0$.

(ii) The endomorphism $\sigma$  is called 
\textit{ergodic} \index{endomorphism ! ergodic}
 if whenever $A$ is $\sigma$-invariant, i.e. $
\sigma^{-1}(A) = A$,  then either $A$ or $X \setminus A$ is of 
measure zero.  

(iii) For $\sigma \in End\sms$, one associates the sequence of 
subalgebras generated by $\sigma$:
$$
\B \supset \sigma^{-1} (\B) \ \cdots \ \supset \sigma^{-i} (\B)
\supset \ \cdots
$$
Then $\sigma \in End\sms$ is called \textit{exact} \index{endomorphism ! 
 exact} if 
 $$
 \B_\infty := \bigcap_{k \in \N} \sigma^{-k}(\B) = \{\emptyset, X\} \ 
 \ \mod 0.
 $$
 Clearly, every exact endomorphism is ergodic. 
\end{definition}

The nested (filtered) family of sigma-algebras from Definition \ref{def 
ergodic and exact} is a 
recurrent theme in symbolic dynamics, and in ergodic theory;-- for 
details, see, for instance, \cite{Kakutani1948}, \cite{Rohlin1961}, 
\cite{Ruelle1989},  \cite{CornfeldFominSinai1982}, 
\cite{Jorgensen2001}, \cite{Jorgensen2004}, \cite{Hawkins1994}.
 A main theme in our work is to point out that this basic filtered 
 system has three incarnations in our 
analysis, each important in a systematic study of transfer operators.

In more detail: The starting point for our study of infinite-dimensional 
analysis of transfer operators is a fixed system $(X, \B, R, \sigma)$ as 
specified above, i.e., a fixed transfer operator $R$, subject
to the pull-out property for $\sigma$, as in Definition \ref{def transfer 
operator from Intro}. The three 
incarnations we have in mind of the scale of sigma-algebras from 
Definition  \ref{def 
ergodic and exact} are: (i) measure-theoretic (Sections \ref{sect TO
 on measurable spaces} and \ref{sect integrable operators}), (ii) 
geometric/symbolic (Sections \ref{sect TO
 on measurable spaces}, \ref{sect TO on densities},  
 and \ref{sect Example IFS}), and (iii) operator 
theoretic (Sections  \ref{sect L1 and L2}, \ref{sect Wold}, and
  \ref{sect Universal HS}). In each of these settings, we show 
that when $(X, \B, R, \sigma)$ is given, then the system from 
Definition \ref{def 
ergodic and exact} induces corresponding scales of measures, of 
certain closed 
subspaces in a suitable universal Hilbert space, and in geometric 
systems of self-similar scales; referring to (i)-(iii), respectively. The 
details and the applications of these three correspondences will be 
presented systematically in in the respective sections (below), inside 
the body of the book.

\subsection{Measurable partition and subalgebras}\label{subsect meas 
partitions} 
We give here a short overview of the theory of measurable partitions,
developed earlier in a series of papers by V.A. Rohlin 
(see his pioneering 
article  \cite{Rohlin1949} and the book \cite{CornfeldFominSinai1982} 
for further references). Later on, the ideas and methods of this 
theory were used in many papers. We refer to the works 
  \cite{Vershik_Fedorov1985,  Vershik1994, Vershik2001} where the 
  orbit theory of dynamical  systems was studied in the framework of 
  sequences of measurable   partitions. 

 Let $\xi = \{C_{\alpha} : \alpha \in I\}$ be a \textit{partition} 
 \index{partition}  of a standard
probability  measure space $(X, \mathcal B, \mu)$ into measurable 
sets.  We will focus on  the most interesting case when all sets $C_
\alpha$ and the  index set $I$ are uncountable (though some 
endomorphisms, arising  in the  examples considered below, have finite 
preimages). 

One says that a set $A = \bigcup_{\alpha \in I'} C_\alpha$ is a
 \emph{$\xi$-set} where $I'$ is any subset of $I$. Let $\mathcal
  B(\xi)$ be the sigma-algebra of $\xi$-sets. Clearly,  $\mathcal B(\xi)
   \subset \mathcal B$. 

By definition, a partition $\xi$ is called \emph{measurable} \index{partition 
! measurable} if  
$\mathcal B(\xi)$  contains a countable subset $(D_i)$ of $\xi$-sets
such that it separates any two elements $C, C'$ of $\xi$:   there
exists $i \geq 1$ such that either $C \subset D_i$ and $C' \subset 
X \setminus D_i$ or $C' \subset D_i$ and $C \subset 
X \setminus D_i$. 
   
Any partition $\xi$ defines the \textit{quotient space}
 $X/\xi$ whose points
are elements of $\xi$. Let $\pi$ be the natural projection from $X$ 
  to $X/\xi$. For $\mu \in M_1(X)$, we define the probability measure
   $\mu_\xi$ on   $\mathcal B(\xi)$ by setting $\mu_\xi
= \mu \circ \pi^{-1}$.

It can be proved that \textit{$\xi$ is measurable if and only
if $(X/\xi, \mathcal B(\xi), \mu_\xi)$ is a standard measure space.}
More generally, suppose $(X, \mathcal B, \mu)$ and $(Y, \mathcal C,
 \nu)$ are two standard measure spaces. Let $\varphi : X \to Y$ be a
 measurable map. Then the partition $\zeta := \{\varphi^{-1}(y) : y
 \in Y\}$ is obviously measurable.  In particular,  $\varphi$ can be
a surjective non-singular endomorphism of $(X, \B, \mu)$. In this case,
we see that the partition $\zeta(\varphi) := \{\varphi^{-1}(x) : x \in X
\}$ has the following  properties 
\be\label{eq quotient for endo}
X/\zeta(\varphi) = X, \ \  \B(\varphi) = \varphi^{-1}(\B), \ \ 
\mu_{\varphi} = \mu|_{\varphi^{-1}(\B)}
\ee
Hence, the partition  $\zeta(\varphi)$ is indexed by 
points of the space $X$, that is the quotient space 
$X/\zeta$ is identified with $X$. 

Let $Orb_\varphi(x) := \{y \in X : \varphi^m(y) = \varphi^n(x) \ 
\mbox{for\ some\ } m, n \in \N_0\}$ be the orbit of $\va$  through
$x\in X$. Then, in contrast to the above 
partition $\zeta$, the partition of $X$ into orbits of $\varphi$ is not 
measurable, in general.

 We recall here a few facts and definitions about measurable partitions
  that will be used below. It is said that a partition $\zeta$ 
  \emph{refines} $\xi$
 (in symbols, $\xi\prec \zeta $) if every element $C$ of $\xi$ is a 
 $\zeta$-set. If $\xi_\alpha$ is a family of measurable partitions, then
  their product $\bigvee_\alpha \xi_\alpha$ is a measurable partition 
  $\xi$ which is uniquely determined by the conditions: (i) $\xi_\alpha
  \prec \xi$ for all $\alpha$, and (ii) if $\eta$ is a measurable partition
   such that $\xi_\alpha \prec \eta$, then $\xi \prec \eta$. Similarly,
    one defines the intersection $\bigwedge_\alpha\xi_\alpha$ of
     measurable partitions.

It turns out that every partition $\zeta$ has a \emph{measurable hull},
 that is a measurable partition $\xi$ such that $\xi \prec \zeta$ and 
 $\xi$ is a maximal measurable partition with this property. In order to
 illustrate this fact, we consider a measurable automorphism  $T$ of a
  measure space  $(X, \mathcal B, \mu)$ and define the partition 
  $\zeta(T)$ of $X$ into orbits of $T$, $\zeta(T)(x) = \{T^i x : i \in
   \Z\}$. In general, $\zeta(T)$ is not measurable. There exists a 
  measurable partition $\xi$, the measurable hull of $\zeta(T)$, which
  is known as the partition of $X$ into \textit{ergodic components} of 
  $T$. If $T$ is ergodic, then $\xi$ is the trivial partition.

\begin{lemma} 
There is a one-to-one correspondence between the set of measurable
 partitions of a standard measure space $(X, \mathcal B, \mu)$ and
  the set of complete sigma-subalgebras $\mathcal A$ of $\mathcal 
  B$. This correspondence is defined by assigning to each partition 
  $\xi$ the sigma-algebra $\B(\xi)$ of $\xi$-sets. Moreover, 
  $$
  \mathcal A(\bigwedge_\alpha\xi_\alpha) = 
  \bigcap_\alpha\mathcal A(\xi_\alpha),  \qquad 
\mathcal A(\bigvee_\alpha \xi_\alpha) = \bigvee_\alpha \mathcal
 A( \xi_\alpha)
 $$ 
 where the latter is the minimal sigma-subalgebra that contains all 
 $\mathcal A(\xi_\alpha)$.
 \end{lemma}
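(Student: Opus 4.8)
The plan is to establish the claimed bijection by producing the inverse map and checking that the two compositions are the identity, and then to verify the two displayed algebraic identities. First I would define, for each complete sigma-subalgebra $\mathcal A \subset \B$, the partition $\xi(\mathcal A)$ whose elements are the atoms of $\mathcal A$; concretely, since $(X,\B,\mu)$ is a standard measure space, $\mathcal A$ is countably generated mod $0$, say by sets $(A_i)_{i\geq 1}$, and one declares $x \sim_{\mathcal A} y$ iff $x$ and $y$ lie in the same member of the algebra generated by the $A_i$ for every $i$ (equivalently, $\chi_{A_i}(x)=\chi_{A_i}(y)$ for all $i$). The resulting partition is measurable by construction — the separating countable family $(D_i)$ is exactly the chosen generators — and one checks that $\B(\xi(\mathcal A)) = \mathcal A$ mod $0$: the inclusion $\supset$ is immediate since each $A_i$ is a $\xi(\mathcal A)$-set, and $\subset$ follows because any $\xi(\mathcal A)$-set is, mod $0$, a countable Boolean combination / measurable limit of the $A_i$, using standardness to pass from the countably generated algebra to its completion. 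Conversely, starting from a measurable partition $\xi$, the atoms of the sigma-algebra $\B(\xi)$ coincide with the elements of $\xi$ mod $0$: they are separated by the countable family $(D_i)$ from the definition of measurability, and standardness of $(X/\xi,\B(\xi),\mu_\xi)$ guarantees there are no further identifications. Hence $\xi \mapsto \B(\xi)$ and $\mathcal A \mapsto \xi(\mathcal A)$ are mutually inverse.

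Next I would record that this correspondence is order-reversing in the expected sense: $\xi \prec \zeta$ iff $\B(\xi) \subset \B(\zeta)$, which is immediate from the definitions (every element of $\xi$ is a $\zeta$-set exactly when every $\xi$-set is a $\zeta$-set). From this, the two identities for $\bigwedge$ and $\bigvee$ follow by a universal-property argument. For the meet: $\bigwedge_\alpha \xi_\alpha$ is by definition the largest measurable partition refined by every $\xi_\alpha$, so $\B(\bigwedge_\alpha \xi_\alpha)$ is the largest complete sigma-subalgebra contained in every $\B(\xi_\alpha)$, which is precisely $\bigcap_\alpha \B(\xi_\alpha)$ (this intersection is automatically a complete sigma-subalgebra, and it is measurable-partition-realizable by the bijection just established). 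For the join: $\bigvee_\alpha \xi_\alpha$ is the smallest measurable partition refining all $\xi_\alpha$, hence $\B(\bigvee_\alpha \xi_\alpha)$ is the smallest complete sigma-subalgebra containing all $\B(\xi_\alpha)$, i.e. the completion of the sigma-algebra generated by $\bigcup_\alpha \B(\xi_\alpha)$ — exactly the $\bigvee_\alpha \B(\xi_\alpha)$ of the statement. One subtlety: arbitrary (not just countable) families are allowed, but since everything lives in a standard space each such supremum/infimum is mod $0$ realized already by a countable subfamily, so no set-theoretic pathology arises.

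The main obstacle is the mod-$0$ bookkeeping in the claim $\B(\xi(\mathcal A)) = \mathcal A$ and its dual: one must be careful that ``atoms of $\mathcal A$'' is only well behaved because $(X,\B,\mu)$ is standard (in a non-standard setting the atoms can fail to generate $\mathcal A$, or the partition into atoms can fail to be measurable). The clean way around this is to invoke Rohlin's theory directly: the correspondence and the fact that $\xi$ is measurable iff $(X/\xi,\B(\xi),\mu_\xi)$ is standard are exactly Rohlin's results cited above (\cite{Rohlin1949}, \cite{CornfeldFominSinai1982}), so the proof can be phrased as: define $\xi(\mathcal A)$ as the partition into atoms, cite Rohlin for measurability and for $\B(\xi(\mathcal A)) = \mathcal A$, then derive the lattice identities formally from order-reversal as above. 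I would keep the write-up short, leaning on the cited literature for the bijection itself and spending the few lines of real argument on the $\bigwedge$/$\bigvee$ formulas, since those are the only genuinely new content of the lemma in this context.
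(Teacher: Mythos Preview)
The paper does not prove this lemma; it is stated as a background fact from Rohlin's theory of measurable partitions, with references to \cite{Rohlin1949} and \cite{CornfeldFominSinai1982} given at the start of the section. Your sketch is a reasonable outline of the standard argument and is consistent with how the result is established in those references, so there is nothing to compare against. One small slip: you call the correspondence ``order-reversing,'' but the relation you actually write down, $\xi \prec \zeta \iff \B(\xi) \subset \B(\zeta)$, is order-\emph{preserving} (finer partitions correspond to larger sigma-subalgebras); your derivation of the $\bigwedge$/$\bigvee$ identities is unaffected since you use the correct inclusion throughout.
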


We need the following classical result due to Rokhlin 
\cite{Rohlin1949} about the disintegration of probability measures.

\begin{definition}\label{def system cond measures}
For a  standard probability  measure space  $(X, \mathcal B, \mu)$
 and a measurable partition $\xi$ of $X$, we say that a  collection of
  measures $(\mu_C)_{C \in X/\xi}$ is a \emph{system of conditional
   measures}  \index{system of conditional measures} with respect to 
$((X, \mathcal B, \mu), \xi)$ if

(i) for each $C \in X/\xi$,  $\mu_C$ is a measure on the
sigma-algebra $\mathcal B_C := \mathcal B \cap C$ such that $(C,
 \mathcal B_C, \mu_C)$ is a standard probability measure space;

(ii) for any $B \in \mathcal B$, the function $C \mapsto \mu_C(B
 \cap C)$ is $\mu_\xi$-measurable;

(iii)  for any $B \in \mathcal B$,
\be\label{eq mu(B) integral}
\mu(B) = \int_{X/\xi} \mu_C(B \cap C)\; d\mu_\xi(C).
\ee
\end{definition}

\begin{theorem}[\cite{Rohlin1949}] \label{thm Rokhlin disintegration}
For any measurable partition $\xi$ of a standard probability measure
 space $(X, \mathcal B, \mu)$, there exists a unique system of
  conditional measures \index{system of conditional measures} 
  $(\mu_C)$. Conversely, if  $(\mu_C)_{C \in 
  X/\xi}$ is a system of conditional measures with respect to $((X,
   \mathcal B, \mu), \xi)$, then $\xi$ is a measurable partition.
\end{theorem}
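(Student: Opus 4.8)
The plan is to prove the existence and uniqueness of the system of conditional measures via the Radon-Nikodym theorem applied to conditional expectations, and to prove the converse by reconstructing a countable separating family from the measurability of the disintegration. For the forward direction, first I would reduce to the canonical model: since $(X,\mathcal B,\mu)$ is standard and $\xi$ is measurable, the quotient $(X/\xi,\mathcal B(\xi),\mu_\xi)$ is itself a standard measure space, and by the isomorphism theorem for standard measure spaces we may assume $X/\xi$ is a subset of $[0,1]$ (or all of $[0,1]$) and that $\mathcal B(\xi)\subset\mathcal B$ is the sub-sigma-algebra generated by the projection $\pi$. Then for each fixed $B\in\mathcal B$, the map $A'\mapsto\mu(B\cap\pi^{-1}(A'))$ for $A'\in\mathcal B(\xi)$ defines a finite measure on $X/\xi$ absolutely continuous with respect to $\mu_\xi$; its Radon-Nikodym derivative is, up to a $\mu_\xi$-null set, the conditional expectation $E(\chi_B\mid\mathcal B(\xi))$, which I will denote $q_B(C)$ as a function of $C\in X/\xi$. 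By construction $\int_{X/\xi} q_B(C)\,d\mu_\xi(C)=\mu(B)$, giving (iii) once we know $q_B(C)=\mu_C(B\cap C)$ for a genuine measure $\mu_C$; property (ii) is then automatic because $q_B$ is $\mu_\xi$-measurable by definition.

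The technical heart is to upgrade the family of functions $\{q_B\}_{B\in\mathcal B}$ — each defined only modulo $\mu_\xi$-null sets, and only finitely additive "in $B$" on the nose — into an honest assignment $C\mapsto\mu_C$ of Borel probability measures on the fibers. Here I would fix a countable algebra $\mathcal B_0$ generating $\mathcal B$ (available since $X$ is standard), choose representatives $q_B$ for $B\in\mathcal B_0$, and discard a single $\mu_\xi$-null set $N\subset X/\xi$ outside of which: $q_X\equiv 1$, $q_\emptyset\equiv 0$, $q$ is finitely additive on $\mathcal B_0$, and $q$ is continuous along the countably many sequences witnessing countable additivity (using $\sigma$-additivity of $\mu$ on $X$ together with monotone convergence through the countably many relevant instances). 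For $C\notin N$ this yields a premeasure on $\mathcal B_0$ extending by Carathéodory to a Borel measure $\mu_C$ on $C$; for the exceptional $C\in N$ define $\mu_C$ arbitrarily (say a point mass). One then checks $\mu_C(B\cap C)=q_B(C)$ extends from $\mathcal B_0$ to all $B\in\mathcal B$ by a monotone-class argument, that $\mu_C$ is concentrated on $C$ (because $q_{\pi^{-1}(A')}(C)=\chi_{A'}(C)$ for $A'\in\mathcal B(\xi)$), and that each $(C,\mathcal B_C,\mu_C)$ is standard since $C$ is a Borel subset of a standard space. Uniqueness follows because any other system $(\mu_C')$ must satisfy $\int_{X/\xi}\mu_C'(B\cap C)\,d\mu_\xi=\mu(B)=\int_{X/\xi}\mu_C(B\cap C)\,d\mu_\xi$ for every $B\in\mathcal B$, and the same identity restricted to $B\cap\pi^{-1}(A')$ for $A'\in\mathcal B(\xi)$ forces $\mu_C'(B\cap C)=\mu_C(B\cap C)$ for $\mu_\xi$-a.e.\ $C$; running this over the countable generating algebra $\mathcal B_0$ pins down $\mu_C'=\mu_C$ for a.e.\ $C$.

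For the converse, suppose $(\mu_C)_{C\in X/\xi}$ satisfies (i)--(iii); I must produce a countable family of $\xi$-sets separating the elements of $\xi$. Fix a countable algebra $\mathcal B_0$ generating $\mathcal B$ and separating points of $X$. For $B\in\mathcal B_0$ and a rational $r\in(0,1)$, consider $D_{B,r}:=\pi^{-1}\big(\{C: \mu_C(B\cap C)>r\}\big)$; by (ii) the inner set is $\mu_\xi$-measurable, so $D_{B,r}$ is (mod $0$) a $\xi$-set. I claim the countable family $\{D_{B,r}\}$ separates $\mu_\xi$-almost every pair $C\neq C'$: if $\mu_C\neq\mu_{C'}$ as measures on the respective fibers then, because distinct Borel measures on a standard space already differ on some set in the countable generating algebra, there is $B\in\mathcal B_0$ with $\mu_C(B\cap C)\neq\mu_{C'}(B\cap C')$, whence some rational $r$ lies strictly between and separates them via $D_{B,r}$. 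The only remaining point is that distinct fibers genuinely carry distinct conditional measures $\mod 0$ — which holds because $\mu_C$ is supported on $C$, so fibers with $C\cap C'=\emptyset$ have mutually singular $\mu_C,\mu_{C'}$ and in particular differ on $\mathcal B_0$. Hence $\mathcal B(\xi)$ contains a countable separating subfamily $\mod 0$, i.e.\ $\xi$ is measurable. The main obstacle throughout is the one flagged in the second paragraph: the measure-theoretic bookkeeping needed to glue the almost-everywhere-defined, merely finitely-additive data $\{q_B\}$ into a pointwise-defined countably-additive fiber system — everything else is a monotone-class or standard-Borel-isomorphism formality.
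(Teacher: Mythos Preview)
The paper does not give its own proof of this theorem: it is stated as a classical result with the citation \cite{Rohlin1949} and used as background, so there is no in-paper argument to compare against. Your sketch is the standard route to Rokhlin's disintegration theorem---conditional expectations via Radon--Nikodym, regularization over a countable generating algebra, Carath\'eodory extension on each fiber, and a monotone-class argument for uniqueness---and it is essentially correct as an outline.

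One point worth tightening in the converse direction: the hypothesis that $(\mu_C)$ is a system of conditional measures already presupposes (via condition (ii) of Definition~\ref{def system cond measures}) that the quotient carries a measure $\mu_\xi$ with respect to which the maps $C\mapsto\mu_C(B\cap C)$ are measurable. You should make explicit what structure on $X/\xi$ you are assuming at the outset so that ``$\mu_\xi$-measurable'' makes sense before you have established that $\xi$ is measurable in Rokhlin's sense; otherwise the argument reads as slightly circular. In Rokhlin's original treatment this is handled by working with the sigma-algebra $\mathcal B(\xi)$ of $\xi$-saturated sets from the start and showing it is countably generated mod~$0$. Your separating-family construction via the sets $D_{B,r}$ does exactly this once the setup is clarified.
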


We notice that relation (\ref{eq mu(B) integral}) can be written
 as follows:
for any $f \in L^1(X, \B, \mu)$,
\be\label{eq function integration csm}
\int_X f(x)\; d\mu(x) = \int_{X/\xi} \left( \int_C f_C(y)\; 
d\mu_C(y)\right)\, d\mu_\xi(C)
\ee
where $f_C = f|_C$.

We can apply this theorem to the case of a surjective endomorphism $
\varphi \in End(X, \B, \mu)$ 
as described above. Let $\zeta(\varphi)$ be the 
measurable partition of $\sms$  into preimages $\varphi^{-1}(x)$
of points $x \in X$ (see (\ref{eq quotient for endo}). 
Let $(\mu_C)$ be the system of conditional 
measures defined by $\zeta(\varphi)$. Then relation (\ref{eq function
 integration csm}) has the form 
\be\label{eq function disintegration for endo}
\int_X f(x)\; d\mu(x) = \int_X \left( \int_C f_C(y)\; 
d\mu_C(y)\right)\, d\mu_{\varphi}(C)
\ee

In most important cases, the disintegration of a measure is applied to
 probability (finite) measures. The case of an infinite sigma-finite
  measure was considered by several authors. We refer here to 
   \cite{Simmons2012}. The result is formulated in a slightly more 
  general terms, in comparison with probability measures.

Let $\sms$ and $(Y, \mathcal A, \nu)$ be standard measure spaces
 with sigma-finite measures, and suppose that $\pi : X \to Y$ is a
  measurable map.  By definition, a \emph{system of conditional
 measures}  is a collection of measures $(\nu_y)_{y \in Y}$ such that

i) $\nu_y$ is a measure on the standard measure space $(\pi^{-1}(y), 
\B \cap \pi^{-1}(y))$, $y \in Y$;

ii) for every $B \in \B$,
$$
\mu(B) = \int_Y \nu_y(B) \; d\nu(y).
$$

\begin{theorem}[\cite{Simmons2012}]\label{thm simmons}
Let $\sms$ and $(Y, \mathcal A, \nu)$ be as above. Suppose that 
$\widehat \mu= \mu\circ \pi^{-1} \ll \nu$. Then there exists a unique     
 system of conditional measures $(\nu_y)_{y \in Y}$ for $\mu$. For 
 $\nu$-a.e., $\nu_y$ is a sigma-finite measure.
\end{theorem}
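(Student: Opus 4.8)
The plan is to deduce the sigma-finite disintegration from the Rokhlin disintegration theorem for probability measures, Theorem~\ref{thm Rokhlin disintegration}, by first normalizing $\mu$ and then un-normalizing the conditional probability measures it produces; the normalization does double duty, since it also forces the relevant pushforward on $Y$ to be a \emph{finite} measure, hence to possess an $\nu$-a.e.\ finite density against $\nu$. Assume $\mu\neq 0$ (the zero case being trivial), and, using sigma-finiteness of $\mu$, choose a measurable $f:X\to(0,\infty)$ with $\int_X f\,d\mu=1$; set $dP:=f\,d\mu$, so $P$ is a probability measure on $(X,\B)$ with $P\sim\mu$ and $d\mu=f^{-1}\,dP$. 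The partition $\xi_\pi:=\{\pi^{-1}(y):y\in Y\}$ is measurable (it is the partition into fibers of the measurable map $\pi$ between standard spaces), so Theorem~\ref{thm Rokhlin disintegration} applied to $(X,\B,P)$ and $\xi_\pi$ produces conditional \emph{probability} measures $(P_C)_{C\in X/\xi_\pi}$ with $P(B)=\int_{X/\xi_\pi}P_C(B\cap C)\,dP_{\xi_\pi}(C)$.

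I would then transport everything to $Y$. Since $X$ and $Y$ are standard, $\pi$ descends to an injective measurable $\bar\pi:X/\xi_\pi\to Y$, which is a Borel isomorphism onto a Borel set $Y_0\subseteq Y$ carrying the finite measure $\widehat P:=P\circ\pi^{-1}$. Putting $P_y:=P_{\bar\pi^{-1}(y)}$ for $y\in Y_0$ (a probability measure on $\pi^{-1}(y)$) and $P_y:=0$ otherwise, Rokhlin's identity becomes $P(B)=\int_Y P_y(B)\,d\widehat P(y)$, and by approximation with simple functions and monotone convergence $\int_X g\,dP=\int_Y\bigl(\int_{\pi^{-1}(y)}g\,dP_y\bigr)\,d\widehat P(y)$ for every measurable $g\geq 0$, the inner integral being a $[0,\infty]$-valued measurable function of $y$ by the measurability clause of Theorem~\ref{thm Rokhlin disintegration}. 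Because $P\ll\mu$ we get $\widehat P\ll\widehat\mu\ll\nu$, and since $\widehat P$ is finite its density $\rho:=d\widehat P/d\nu$ is $\nu$-integrable, hence finite $\nu$-a.e. Now set, for $y\in Y$ and $B\in\B$,
\[
\nu_y(B):=\rho(y)\int_{B\cap\pi^{-1}(y)}f^{-1}\,dP_y,
\]
a measure on $(\pi^{-1}(y),\,\B\cap\pi^{-1}(y))$ with $y\mapsto\nu_y(B)$ measurable; applying the last display to $g=\mathbf 1_B f^{-1}$ and using $d\mu=f^{-1}dP$, $d\widehat P=\rho\,d\nu$,
\[
\mu(B)=\int_B f^{-1}\,dP=\int_Y\Bigl(\int_{B\cap\pi^{-1}(y)}f^{-1}\,dP_y\Bigr)\rho(y)\,d\nu(y)=\int_Y\nu_y(B)\,d\nu(y),
\]
so $(\nu_y)_{y\in Y}$ is a system of conditional measures for $\mu$.

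For sigma-finiteness, choose $B_n\uparrow X$ with $\mu(B_n)<\infty$; from $\int_Y\nu_y(B_n)\,d\nu(y)=\mu(B_n)<\infty$ we get $\nu_y(B_n)<\infty$ for $\nu$-a.e.\ $y$, and intersecting over the countably many $n$ shows that for $\nu$-a.e.\ $y$ the sets $B_n\cap\pi^{-1}(y)$ exhaust the fiber with finite $\nu_y$-mass, i.e.\ $\nu_y$ is sigma-finite. For uniqueness, if $(\nu_y')$ is another such system then for $B\in\B$ with $\mu(B)<\infty$ and $A\in\mathcal A$ the identity $\nu_y'(B\cap\pi^{-1}(A))=\mathbf 1_A(y)\,\nu_y'(B)$ gives $\int_A\nu_y'(B)\,d\nu=\mu(B\cap\pi^{-1}(A))=\int_A\nu_y(B)\,d\nu$, so $\nu_y(B)=\nu_y'(B)$ for $\nu$-a.e.\ $y$; fixing a countable algebra generating $\B$ modulo $\mu$-null sets and containing the $B_n$, one concludes $\nu_y=\nu_y'$ for $\nu$-a.e.\ $y$ by the uniqueness theorem for sigma-finite measures agreeing on a generating algebra.

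The difficulty lies not in any single estimate but in the bookkeeping of the $\mod 0$ identifications: realizing the Rokhlin quotient $X/\xi_\pi$ as a Borel subset of $Y$ and transporting $P_{\xi_\pi}$ to $\widehat P$; promoting Rokhlin's disintegration identity from $L^1(P)$ to arbitrary nonnegative integrands; and keeping in mind that $\nu$ may be far larger than $\widehat\mu$, so that $\nu_y\equiv 0$ on a set of positive $\nu$-measure while being a genuinely infinite (though sigma-finite) measure on most fibers. The normalization step is precisely what legitimizes the density $\rho$; working directly with $d\widehat\mu/d\nu$ would fail exactly when $\widehat\mu$ is not sigma-finite.
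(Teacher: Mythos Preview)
The paper does not supply a proof of this statement: it is quoted from \cite{Simmons2012} as a known extension of Rokhlin's disintegration theorem to the sigma-finite setting, so there is no in-paper argument to compare against. Your argument is the natural one and is essentially correct: pass to an equivalent probability measure $P=f\,d\mu$, invoke Theorem~\ref{thm Rokhlin disintegration} to get fiberwise probabilities $P_y$, push the quotient measure forward to $Y$ and absorb the change of reference measure $\widehat P\to\nu$ via the density $\rho=d\widehat P/d\nu$, and finally undo the normalization by integrating $f^{-1}$ against $P_y$. The sigma-finiteness and uniqueness arguments are standard and correctly set up.

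One technical point deserves a word of caution. Your claim that the induced map $\bar\pi:X/\xi_\pi\to Y$ is a Borel isomorphism onto a \emph{Borel} subset $Y_0$ is stronger than what Rokhlin's theorem gives you directly: the quotient $(X/\xi_\pi,\B(\xi_\pi),P_{\xi_\pi})$ is guaranteed to be a standard \emph{measure} space, and the identification with a subset of $Y$ carrying $\widehat P$ is only $\mathrm{mod}\,0$. This does not damage the argument (everything downstream is an a.e.\ statement anyway, and you can simply declare $P_y=0$ off a suitable conull set), but the sentence as written overstates the Borel regularity. Your closing paragraph already flags this bookkeeping issue, so you are aware of it; just soften the ``Borel isomorphism onto a Borel set'' claim to a $\mathrm{mod}\,0$ identification and the proof stands.
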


The structure of countable-to-one endomorphisms is described in 
the following result.

\begin{theorem}[\cite{Rohlin1949}]\label{thm Rohlin partition}  For a 
countable-to-one endomorphism $\sigma$ of $\sms$, there exists a 
partition $\zeta = (A_1, A_2, ... )$ of $X$ into at most countably 
many elements such that

(i) $\mu(A_i) > 0$ for all $i$;

(ii) $\sigma_i := \sigma|_{A_i}$ is one-to-one and $A_i$ is of maximal 
measure in $X \setminus \bigcup_{j <i}A_j$ with this property. In 
particular, $\sigma_1$ is one-to-one and onto $X$.
\end{theorem}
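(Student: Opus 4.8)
The plan is to build $\zeta$ by disintegrating $\mu$ along the fibres of $\sigma$ and reading off the atoms of the conditional measures in decreasing order of mass; the maximality clause in (ii) then drops out of the disintegration formula. Concretely, I would first apply Rokhlin's disintegration theorem (Theorem~\ref{thm Rokhlin disintegration}) to the measurable partition $\zeta(\sigma)=\{\sigma^{-1}(x):x\in X\}$, identifying $X/\zeta(\sigma)$ with $X$ as in (\ref{eq quotient for endo}). This yields a system of conditional measures $(\mu_x)_{x\in X}$, with $\mu_x$ a probability measure on $\sigma^{-1}(x)$ and
\[
\mu(B)=\int_X \mu_x\big(B\cap\sigma^{-1}(x)\big)\,d\bar\mu(x),\qquad \bar\mu:=\mu\circ\sigma^{-1},
\]
for every $B\in\B$. (When $\mu$ is only sigma-finite I would instead use Theorem~\ref{thm simmons} with $\nu:=\bar\mu$ on the quotient copy of $X$, so that $\widehat\mu=\bar\mu\ll\nu$ holds trivially; I will describe the finite case, the sigma-finite one being parallel.) Since $\sigma$ is countable-to-one, each fibre $\sigma^{-1}(x)$ is countable, so each $\mu_x$ is purely atomic; write $p_x(y):=\mu_x(\{y\})$, so that $\sum_{y}p_x(y)=1$ over the atoms of $\mu_x$, and note that a summable family of positive numbers attains its maximum.

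Next I would enumerate, for each $x$, the atoms of $\mu_x$ as $y_1(x),y_2(x),\dots$ in non-increasing order $p_x(y_1(x))\ge p_x(y_2(x))\ge\cdots$, breaking ties by a fixed Borel linear order on $X$ (which exists since $(X,\B)$ is standard Borel), and set
\[
A_i:=\{\,y_i(x):x\in X,\ \mu_x\text{ has at least }i\text{ atoms}\,\}.
\]
By construction $A_i$ meets each fibre in at most one point, hence $\sigma_i:=\sigma|_{A_i}$ is injective, and (being the injective Borel image of a Borel set) $\sigma(A_i)$ is Borel. Since for every $x$ the points $y_i(x)$ exhaust the atoms of $\mu_x$, which carry all of its mass, $\mu\big(X\setminus\bigsqcup_i A_i\big)=\int_X \mu_x(\{\text{non-atoms}\})\,d\bar\mu=0$, so $(A_i)$ is a partition of $X$ mod $0$. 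Moreover $\mu(A_i)=\int_X p_x(y_i(x))\,d\bar\mu(x)$ is non-increasing in $i$ and $\sum_i\mu(A_i)=\mu(X)$; discarding the null sets and relabelling therefore leaves an at most countable family with $\mu(A_i)>0$, which is (i).

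For the maximality clause, let $A\subseteq X\setminus\bigcup_{j<i}A_j$ be Borel with $\sigma|_A$ injective. Then $A\cap\sigma^{-1}(x)$ is at most one point, and if it is a point $y$, then $y\notin\{y_1(x),\dots,y_{i-1}(x)\}$, so $\mu_x(A\cap\sigma^{-1}(x))=p_x(y)\le p_x(y_i(x))$ (with the convention $p_x(y_i(x)):=0$ when $\mu_x$ has fewer than $i$ atoms); integrating against $\bar\mu$ gives $\mu(A)\le\mu(A_i)$, which is exactly (ii). Finally, for $i=1$ each $\mu_x$ is a probability measure, hence has at least one atom, so $y_1(x)$ is defined for every $x$ and $\sigma(y_1(x))=x$; thus $\sigma(A_1)=X$, and together with injectivity $\sigma_1$ is a bijection of $A_1$ onto $X$. (In the sigma-finite case one shows instead that $\{x:\mu_x\ne 0\}$ is $\bar\mu$-conull and invokes non-singularity of $\sigma$, i.e. $\mu(\sigma^{-1}(N))=0\Leftrightarrow\mu(N)=0$, to conclude $\sigma(A_1)=X$ mod $0$.)

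The step I expect to be the main obstacle is the measurability in the construction of $(A_i)$: one must verify that $x\mapsto y_i(x)$ is a Borel map, equivalently that $\{(x,y):y=y_i(x)\}$ is a Borel graph. This reduces to the joint measurability of $(x,y)\mapsto\mu_x(\{y\})$ on the Borel set $\{(x,y):\sigma(y)=x\}$, together with the Lusin--Novikov selection theorem for Borel sets with countable sections; this descriptive-set-theoretic input is the substantive point (it is implicit in Rohlin's original treatment \cite{Rohlin1949}), whereas everything else is a routine computation with the disintegration formula above.
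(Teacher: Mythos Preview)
The paper does not supply a proof of this theorem; it is stated with attribution to \cite{Rohlin1949} and used as a background structural result, so there is nothing in the paper to compare your argument against.

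That said, your reconstruction via disintegration is sound and is in the spirit of Rohlin's original approach. Ordering the atoms of the fibre measures $\mu_x$ by decreasing mass is exactly what makes the maximality clause in (ii) fall out of the disintegration formula, and the partition property and the surjectivity of $\sigma_1$ follow as you describe. You have also correctly isolated the only genuinely nontrivial step: the Borel measurability of $x\mapsto y_i(x)$. This does reduce, as you say, to the joint measurability of $(x,y)\mapsto \mu_x(\{y\})$ on $\{(x,y):\sigma(y)=x\}$ together with Lusin--Novikov selection for Borel sets with countable vertical sections; once one has Borel selectors $f_n$ covering the fibres, $x\mapsto \mu_x(\{f_n(x)\})$ is measurable by the defining property of the disintegration, and the rest is bookkeeping. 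One small correction: even in the finite-measure case the disintegration only guarantees that $\mu_x$ is a probability measure for $\bar\mu$-almost every $x$, so the conclusion $\sigma(A_1)=X$ should be read $\bmod\ 0$ there as well, consistently with the paper's standing $\bmod\ 0$ convention.
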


Clearly, the Rohlin partition  $\zeta$ is finite if $\sigma$ is bounded-
to-one. Let $\tau_i$ be a one-to-one Borel map with domain 
$\sigma_i(A_i)$ such that $\sigma\circ \tau_i = \mbox{id}$. Then 
the collection of maps $\tau_i$'s represents the inverse branches of
$\sigma$. They are used in explicit constructions of positive operators
related to iterated function systems. This type of endomorphisms 
arises also as shifts on stationary Bratteli diagrams. 

\subsection{Solenoids and applications} To finish this section we recall the 
construction of natural extension of endomorphisms (or a solenoid in
 other terms).

Let $\sigma$ be an endomorphism of a standard Borel space 
$(X, \mathcal B)$. We associate to $( (X, \mathcal B), \sigma)$ a 
\emph{solenoid } \index{solenoid} $Sol_\sigma(X)$ as follows. By definition,
$$
Sol_\sigma (X) :=\{ y = (x_i) \in \prod_{i=0}^\infty X  : \sigma(x_{i
+1}) = x_i,\ i \in \N_0\}.
$$
Since $Sol_\sigma (X)$ is a Borel subset of $\prod_{i=0}^\infty (X, 
\mathcal B) $, any solenoid is a standard Borel space in its turn. If $X$ 
is a compact space,
then $Sol_\sigma(X)$ is also a  compact subset. Furthermore, $Sol_
\sigma(X)$ is an invariant subset of $ \prod_{i=0}^\infty X $ with 
respect to the shift $\sigma_0 (x_i) = (\sigma x_i)$. We use the 
notation $\pi_i, i \in \N_0,$ for the projection from $ Sol_\sigma(X)$ 
onto $X$, $\pi_i((x_i)) = x_i$.

\begin{lemma}%\label{lem measurable partition of solenoid}
Let $\lambda$ be a Borel measure on $(X, \mathcal B)$. In the above 
notation, the partition of $Sol_\sigma(X)$ into the fibers 
$\{\pi_0^{-1}(x) : x \in X\}$ is measurable.
\end{lemma}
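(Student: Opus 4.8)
The plan is to recognize this as an instance of the general fact recalled in Subsection~\ref{subsect meas partitions}: the fibre partition $\{\varphi^{-1}(y):y\in Y\}$ of a measurable map $\varphi$ between standard spaces is measurable. Here the relevant map is the coordinate projection $\pi_0:Sol_\sigma(X)\to X$, which is Borel (indeed continuous when $X$ is compact); it is onto, since $\sigma$ is surjective, so that given $x_0\in X$ one produces $(x_i)\in Sol_\sigma(X)$ with $\pi_0((x_i))=x_0$ by successively choosing $x_{i+1}\in\sigma^{-1}(x_i)$. Thus the partition $\xi:=\{\pi_0^{-1}(x):x\in X\}$ is exactly the preimage partition of $\pi_0$, and its quotient space is identified with $X$.

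To make the argument self-contained I would exhibit the required countable separating family directly. Since $(X,\B)$ is a standard Borel space, there is a countable family $(B_i)_{i\geq 1}$ of Borel subsets of $X$ separating points of $X$. Set $D_i:=\pi_0^{-1}(B_i)\subset Sol_\sigma(X)$. Each $D_i$ is Borel because $\pi_0$ is Borel, and each $D_i$ is a $\xi$-set because $\pi_0^{-1}(B_i)=\bigcup_{x\in B_i}\pi_0^{-1}(x)$ is a union of elements of $\xi$; hence $D_i\in\B(\xi)$, the sigma-algebra of $\xi$-sets of $Sol_\sigma(X)$. Finally, if $C=\pi_0^{-1}(x)$ and $C'=\pi_0^{-1}(x')$ are distinct elements of $\xi$, then $x\neq x'$, so there is an index $i$ with, say, $x\in B_i$ and $x'\notin B_i$; consequently $C\subset D_i$ and $C'\subset Sol_\sigma(X)\setminus D_i$. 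So $(D_i)$ separates the elements of $\xi$, and by the definition of a measurable partition $\xi$ is measurable.

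Two remarks settle the role of the fixed measure $\lambda$. The definition of measurability is applied inside $Sol_\sigma(X)$, which is a standard Borel space as a Borel subset of $\prod_{i=0}^{\infty}X$ (noted above), so $(D_i)$ is a genuine countable separating family of Borel $\xi$-sets; once any Borel measure on $Sol_\sigma(X)$ (for instance one whose $\pi_0$-pushforward is comparable with $\lambda$) is fixed, the very same family witnesses measurability mod $0$. I expect there to be essentially no obstacle here: the only points that need to be in place are the surjectivity of $\pi_0$, so that $Sol_\sigma(X)/\xi$ is all of $X$, and the standard Borel structure on the solenoid, both already established in the preceding discussion; everything else is the routine verification above.
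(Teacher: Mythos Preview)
Your proof is correct. In fact, the paper states this lemma without proof; it relies implicitly on the general remark in Subsection~\ref{subsect meas partitions} that for any measurable map $\varphi:(X,\B,\mu)\to(Y,\mathcal C,\nu)$ between standard measure spaces the partition $\{\varphi^{-1}(y):y\in Y\}$ is ``obviously measurable.'' Your argument simply unpacks that remark for $\varphi=\pi_0$, exhibiting the countable separating family $D_i=\pi_0^{-1}(B_i)$ explicitly, and so is precisely the intended justification.
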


Starting with a transfer operator system $(X, B, \sigma, R)$, there is a 
general procedure for extending to an invertible dynamical system, now 
realized on an associated solenoid; see the outline here in Lemma 2.9. 
As documented in the literature (see, for example, 
\cite{BratteliJorgensen1997, BratteliJorgensen2002, 
BaggettJorgensenMerrillPacker2005, BaggettMerrillPackerRamsay2012,
 DutkayJorgensen2006, DutkayRoysland2007, Dutkay2002, 
 FarsiGillaspyKangPacker2016, Jorgensen2001, Jorgensen2004, 
 JorgensenTian2015}), 
there are many applications of this construction: (i) the given 
endomorphism $\sigma$ lifts in a canonical fashion to an 
automorphism on the solenoid; (ii) under suitable assumption, the 
given transfer operator system $(X, B, \sigma, R)$ then admits a 
realization by unitary operators, again realized on suitable $L^2$ 
spaces 
and realized on the solenoid; and (iii) the construction in (ii) includes 
families of generalized wavelets. These wavelet families in turn include 
as special cases more traditional multi-resolution wavelet constructions 
considered earlier in the standard Hilbert space $L^2(R^d)$. Under 
suitable restrictions, in fact, $L^2(R^d)$ embeds naturally in an $L^2$ 
space on the solenoid. We shall refer to the cited literature for details 
regarding (i)-(iii), but see also \cite{AlpayJorgensenLewkowicz2016}.

For the solenoid $Sol_\sigma(X)$, we define a Borel map  $\wt\sigma
$ of the solenoid by setting
\be\label{eq automorphism wt sigma}
\wt\sigma(x_0, x_1, x_2, ... ) = (\sigma(x_0), x_0, x_1,...)
\ee

\begin{lemma}
The transformation $\wt\sigma: Sol_\sigma(X) \to Sol_\sigma(X)$ 
is a one-to-one and onto map, i.e. $\wt\sigma$ is a Borel 
automorphism of the solenoid. 
\end{lemma}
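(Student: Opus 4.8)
The plan is to verify directly, by coordinate bookkeeping, that $\wt\sigma$ is a well-defined bijection of $Sol_\sigma(X)$ with Borel inverse. First I would confirm that $\wt\sigma$ actually maps the solenoid into itself: if $y=(x_i)\in Sol_\sigma(X)$ and $z=\wt\sigma(y)=(\sigma(x_0),x_0,x_1,\dots)$, write $z=(z_i)$ with $z_0=\sigma(x_0)$ and $z_{i+1}=x_i$ for $i\in\N_0$. Then $\sigma(z_1)=\sigma(x_0)=z_0$, and for $i\geq 1$ we have $\sigma(z_{i+1})=\sigma(x_i)=x_{i-1}=z_i$ by the defining relation for $y$; hence $z\in Sol_\sigma(X)$.

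For injectivity, suppose $\wt\sigma(x_i)=\wt\sigma(x_i')$. Comparing the coordinates in positions $1,2,3,\dots$ gives $x_0=x_0'$, $x_1=x_1'$, and so on, so the two sequences coincide. For surjectivity I would exhibit a preimage explicitly: given $w=(y_0,y_1,y_2,\dots)\in Sol_\sigma(X)$, put $x_i:=y_{i+1}$ for $i\in\N_0$. Then $(x_i)\in Sol_\sigma(X)$ because $\sigma(x_{i+1})=\sigma(y_{i+2})=y_{i+1}=x_i$, and $\wt\sigma(x_0,x_1,\dots)=(\sigma(y_1),y_1,y_2,\dots)=(y_0,y_1,y_2,\dots)=w$ since $\sigma(y_1)=y_0$. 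Thus $\wt\sigma$ is a bijection, and its inverse is the one-sided left shift $(y_0,y_1,y_2,\dots)\mapsto(y_1,y_2,y_3,\dots)$ restricted to the solenoid.

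Finally, to promote this bijection to a Borel automorphism I would use that a map into the product $\prod_{i\ge 0}(X,\B)$ is Borel if and only if each of its coordinate functions is Borel: for $\wt\sigma$ the $0$-th coordinate is $\sigma\circ\pi_0$ (Borel, as a composition of the Borel map $\sigma$ with the projection $\pi_0$) and the $(i+1)$-st coordinate is $\pi_i$, which is Borel; the identical check applies to the inverse left shift. Hence both $\wt\sigma$ and $\wt\sigma^{-1}$ are Borel, so $\wt\sigma$ is a Borel automorphism of $Sol_\sigma(X)$. Alternatively, once $\wt\sigma$ is known to be a Borel bijection of the standard Borel space $Sol_\sigma(X)$, the Lusin--Souslin theorem already forces $\wt\sigma^{-1}$ to be Borel. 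There is no genuine obstacle here; the only points requiring care are keeping the index shifts straight in the surjectivity step and recording the standard-Borel fact needed for the measurability of the inverse.
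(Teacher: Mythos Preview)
Your proof is correct and follows essentially the same approach as the paper: both identify the one-sided left shift $(y_0,y_1,y_2,\dots)\mapsto(y_1,y_2,\dots)$ as the inverse of $\wt\sigma$ and verify the two compositions are the identity on $Sol_\sigma(X)$. Your version is in fact more thorough---you explicitly check that $\wt\sigma$ lands in the solenoid and address the Borel measurability of $\wt\sigma$ and its inverse, points the paper leaves implicit.
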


\begin{proof}
To see this, we set
$$
\wt\sigma^{-1}(x_0, x_1, x_2, ...  ) = ( x_1, x_2, ...  ).
$$
Then, the relation $\wt\sigma^{-1}\wt\sigma = \mbox{id}$ is 
obvious. On the other hand, for any $(x_0, x_1, x_2, ...  ) \in Sol_
\sigma(X)$, we have
$$
\wt\sigma\wt\sigma^{-1} (x_0, x_1, x_2, ...  ) =  \wt\sigma( x_1, 
x_2, ...  ) =
\wt\sigma( y_0, y_1, ...  )
$$
$$
= (\sigma( y_0), y_0, y_1, ...  ) = (\sigma( x_1), x_1, x_2, ...  ) = 
(x_0, x_1, x_2, ...  )
$$
where $y_i = x_{i+1}, i \geq 0.$
\end{proof}

We will use this construction below. 

\begin{remark} It is worth noting that, based on 
the definition of a transfer operator built by a system of conditional 
measures, see Example \ref{ex TO by cond meas}, we can immediately 
extend the 
main results of \cite[Theorems 3.1, 3.4]{DutkayJorgensen2007} to
the case of an arbitrary surjective endomorphism $\sigma$.
\end{remark}

\newpage

%Any fiber in the solenoid is an orbit of the skew product over 
%$\sigma$

\section{Positive, and transfer, operators on measurable spaces:
 general properties} \label{sect TO on measurable spaces}

The notions of positive operators and transfer operators are central 
objects in this book. We will discuss various properties of these 
operators and their specific  realization in the subsequent sections.
 Here we first focus on the most general properties  and basic 
 definitions related to these operators. We also refer to 
 \cite{Karlin1959} as one of the pioneering papers on positive 
 operators. 
 
While the setting for a study of transfer operators, and more general 
positive linear operators, is that of a set $Y$, and a fixed 
sigma-algebra $\mc A$, in order to get explicit characterizations, it is 
useful to 
restrict attention to \textit{standard Borel spaces}; so the case when 
$(Y, \mc A)$ is now a pair $(X,\B)$ given to be isomorphic to some 
separable complete metric space (a Polish space) with associated Borel 
sigma-algebra $\B$ ; or  $(X,\B)$ is isomorphic to some uncountable 
Borel subset of some separable complete metric space with the 
induced Borel sigma-algebra. Generally we allow for the possibility that 
$X$ is non-compact.

  By a transfer operator in $(X, \B)$ we mean a pair $(R, \sigma)$ 
satisfying the conditions in Definition \ref{def transfer operator} (i), 
and (\ref{eq char property of r via sigma 1}). The starting point in the 
present section is a fixed pair $(R, \sigma)$ on $(X, \B)$, defining a 
transfer operator; and we begin with a systematic study of various 
sets of measures on $(X, \B)$ which allow us to derive spectral 
theoretic information for the transfer operator $(R, \sigma)$ under 
consideration. For this purpose, we also make precise a notion of 
\textit{isomorphisms} of pairs of transfer operators $(R, \sigma)$; see 
Definition \ref{def iso for TO}. Our study of \textit{measure classes }
associated to a fixed $(R, \sigma)$ will be undertaken in the two 
sections to follow.
 
\subsection{Transfer operators on Borel functions}\label{sect Basics
 on PO and TO}
We will consider positive and transfer operators acting in some 
natural spaces  $\mathcal F$ of real-valued functions. Examples of
 such spaces are: $\FXB$, $L^p\sms\ (1\leq p \leq \infty)$, $C(X)$ 
 (if  $X$  is considered as a compact Hausdorff space), etc. 
 In all these spaces, the generating cone $\mathcal F_+$ of  
 non-negative  functions is  obviously defined.
  Hence, we can define a  \textit{positive  operator} $P$ 
  \index{positive operator}  as a  linear     operator that preserves 
 the cone of non-negative   functions: $f \in \mathcal F_+ \ 
 \Longrightarrow \ P(f) \in    \mathcal{F}_+$. If a Borel measure 
 $\mu$ is given on $(X, \B)$, then we can consider non-negative
 elements of the space $L^p\sms$ and define a positive operator $P$
 on $L^p(\mu)$ similarly. 
 
 Let $(X,\B)$ be a standard Borel space,  and let $\sigma$ be
a surjective Borel  endomorphism of $(X, \B)$. A  function $f$ is
called $\sigma^{-1}(\B)$-measurable if $f \in \mathcal 
F(X, \sigma^{-1}(\B))$.  For any function $f \in \mathcal F(X)$, 
the function $f\circ\sigma$ is
 constant on every element of the partition $\xi =\{\sigma^{-1}(x) : x 
 \in X\}$, and therefore $f\circ\sigma$ is measurable with respect to 
 $\sigma^{-1}(\B)$. 
Thus, it can be easily seen that a Borel function $g$ is
$\sigma^{-1}(\B)$-measurable if and only if there exists a 
Borel function $G$ such that $g = G\cs$. In this settings, the operator
$U : \FXB \to \mathcal F(X, \sigma^{-1}(\B)): f \mapsto 
f\cs$ is positive and called the \textit{composition operator}. 
\index{composition operator} In the
 framework of ergodic theory this operator $U$ being considered 
 on  the spaces $L^1(\mu)$ or $L^2(\mu)$  is  known by the
  name of \textit{Koopman operator.}, see e.g., \cite{Ruelle1978}. 

The set of positive operators contains an important class of operators
called \textit{transfer operators}. We find it useful to expand the
 definition of a transfer operator from Section \ref{sect Introduction},
 giving more details now.

\begin{definition}\label{def transfer operator}
(1) Let  $\sigma : X \to X$ be a surjective endomorphism of a
 standard Borel space $(X, \B)$. We say that $R$ is a \emph{transfer 
 operator} \index{transfer operator} if $R : \mathcal F(X) \to 
 \mathcal F(X)$ is a linear operator  satisfying the properties:

(i) $f \geq 0 \ \Longrightarrow \ R(f) \geq 0$ (i.e., $R$ is positive);

(ii)  for any Borel functions  $f, g \in \mathcal F(X)$,
\be\label{eq char property of r via sigma 1}
R((f\circ \sigma) g) = f R(g).
\ee

(2) For a non-singular endomorphism $\sigma$ on $\sms$, we define 
similarly a transfer operator acting in the space $L^p\sms, 1 \leq p
\leq \infty$.
 
(3) If $R(\mathbf 1)(x) > 0$ for all $x\in X$, then we say that $R$ is 
a \emph{strict} \index{transfer operator ! strict} transfer operator (here and below the expression 
$R(\mathbf 1)$ means the image of the constant function that takes 
value 1 under the action of $R$).

(4) If $R(\mathbf 1) = \mathbf 1$, then the transfer operator $R$ is 
called \textit{normalized}. \index{transfer operator ! normalized} 

(5) If $h$ is a non-negative function such that $Rh = h$, then $h$
is   called a\textit{ harmonic function.} \index{harmonic function}

\end{definition}

We use also the notation $(R, \sigma)$ for a transfer operator $R$ to 
emphasize that these two objects are closely related according to the
 ``pull-out property'' given in  (\ref{eq char property of r via sigma 
 1}). Moreover, this point of view is useful for the problem of
  classification of transfer operators (see the corresponding definitions 
  below in this section). It is worth
remarking that the set $\mathcal R(\sigma)$ of transfer operators $R
$ defined by the same endomorphism $\sigma$ can be vast.

\begin{remark}
(1) Since we work with standard Borel and measure spaces, the 
transfer 
operators do not depend on underlying space, in general. This means 
that  if $(X, \B)$ and $(Y, \mathcal A)$ are standard Borel spaces and
$\psi: (X, \B) \to (Y, \mathcal A)$ is a Borel map implementing the 
Borel isomorphism of these spaces, then, for every transfer operator 
$(R, \sigma)$ acting in $\FXB$, there exists an isomorphic transfer
operator $(R', \sigma')$ acting on the space $\mathcal F(Y, \mathcal 
A)$. We discuss the notion of isomorphism of transfer operators 
below in this section. 

(2)  When we discuss properties of a transfer operator $R$, we will 
mostly  work with non-negative Borel (or measurable) functions. The 
point is that if a transfer operator $R$ is defined on the cone of 
positive functions $\mathcal F(X)_+$, then $R$ is naturally extended 
to $\mathcal F(X)$ by linearity. The same approach is used in all 
statements related to integration with respect to a measure $\la$.
\end{remark}

The point of view on transfer operators as pairs $(R, \sigma)$ allows 
us to introduce a semigroup structure on such pairs. 

Let $\sigma \in End(X, \B)$, and let 
$$
\mathcal 
R(\sigma) := \{(R, \sigma) : R \ \mathrm{is\ a\ transfer\ operator\ 
w.r.t.}\ \sigma \}. 
$$
Denote 
$$
\mc R(X, \B) := \bigcup_{\sigma \in End(X, \B)} \mc R(\sigma).
$$ 

\begin{lemma}\label{lem semi-group}
(1) The set $\mc R(X, \B)$ is a semigroup with identity with respect 
to the product 
$$
(R_1R_2, \sigma_1\sigma_2) =  (R_1, \sigma_1)(R_2, \sigma_2).
$$
(Here the notation $R_1R_2$ and $\sigma_1\sigma_2$ means
the composition of mappings.)

(2) The set $\mc R(\sigma)$ is a vector space for each fixed $\sigma
\in End(X, \B)$. 
\end{lemma}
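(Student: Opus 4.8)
The plan is to read off both statements directly from Definition~\ref{def transfer operator}, the only substantive point being that the composition of two transfer operators is again a transfer operator, with the pull-out property (\ref{eq char property of r via sigma 1}) now taken with respect to the composed endomorphism; everything else amounts to inheriting associativity from composition of maps and noting that the defining identity is linear in the operator.

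For part (1), I would first check that the product is well defined, i.e.\ that $(R_1R_2,\sigma_1\sigma_2)\in\mc R(X,\B)$ whenever $(R_1,\sigma_1),(R_2,\sigma_2)\in\mc R(X,\B)$. Here $\sigma_1\sigma_2\in End(X,\B)$ because $End(X,\B)$ is closed under composition, $R_1R_2$ is linear, and $R_1R_2$ is positive since a composition of positive operators is positive ($f\ge 0\Rightarrow R_2f\ge 0\Rightarrow R_1R_2f\ge 0$). For the pull-out property I would use $f\circ(\sigma_1\sigma_2)=(f\circ\sigma_1)\circ\sigma_2$ and compute
\begin{align*}
R_1R_2\big((f\circ(\sigma_1\sigma_2))\,g\big) &= R_1\Big(R_2\big(((f\circ\sigma_1)\circ\sigma_2)\,g\big)\Big)\\
&= R_1\big((f\circ\sigma_1)\,R_2 g\big) = f\,(R_1R_2)g,
\end{align*}
where the second equality applies (\ref{eq char property of r via sigma 1}) for $(R_2,\sigma_2)$ with the Borel function $f\circ\sigma_1$ in the role of $f$, and the third applies it for $(R_1,\sigma_1)$ with $R_2 g$ in the role of $g$. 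Associativity of the product then reduces to associativity of composition, both of operators and of maps, and the identity element is $(I,\mathrm{id}_X)$, where $I$ is the identity operator and $\mathrm{id}_X$ the (bijective, hence surjective) identity endomorphism: indeed $I((f\circ\mathrm{id}_X)g)=fg=f\,Ig$, so $(I,\mathrm{id}_X)\in\mc R(\mathrm{id}_X)$, and $(I,\mathrm{id}_X)(R,\sigma)=(R,\sigma)=(R,\sigma)(I,\mathrm{id}_X)$.

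For part (2), the key observation is that both conditions cutting out $\mc R(\sigma)$ behave linearly: if $R_1,R_2\in\mc R(\sigma)$, then $R_1+R_2$ and $cR_1$ (for $c\ge 0$) are positive, and $(R_1+R_2)((f\circ\sigma)g)=fR_1g+fR_2g=f(R_1+R_2)g$, with the analogous identity for $cR_1$; the zero operator lies in $\mc R(\sigma)$ trivially. Thus $\mc R(\sigma)$ is a convex cone, and the asserted vector-space structure is the one it carries inside the genuine real vector space of all linear operators on $\mc F(X)$ satisfying the pull-out identity for $\sigma$ (with the positivity requirement dropped); the remaining vector-space axioms are then inherited from that ambient space of operators.

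I do not expect a real obstacle here. The only points needing care are bookkeeping the order of composition in $\sigma_1\sigma_2$ when verifying the pull-out property in (1), so that the two applications of (\ref{eq char property of r via sigma 1}) are made with the correct functions, and, in (2), the observation that ``vector space'' should be read as the linear span of the cone of positive transfer operators, the pull-out identity---unlike positivity---being preserved under arbitrary real linear combinations.
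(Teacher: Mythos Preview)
Your argument is correct and follows essentially the same approach as the paper's proof: both verify the pull-out property for $(R_1R_2,\sigma_1\sigma_2)$ by writing $f\circ(\sigma_1\sigma_2)=(f\circ\sigma_1)\circ\sigma_2$ and applying (\ref{eq char property of r via sigma 1}) twice, and both check that the pull-out identity is preserved under linear combinations. You are in fact more thorough than the paper in two respects: you spell out the identity element and associativity explicitly, and you correctly flag that positivity forces $\mc R(\sigma)$ to be only a convex cone, so that the ``vector space'' in (2) must be read as the real span---a point the paper's proof passes over in silence when it takes arbitrary $a,b\in\mathbb R$.
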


\begin{proof} 
Let   $(R_1, \sigma_1)$ and $(R_2, \sigma_2)$ be two transfer 
operators, where $R_i : \mathcal F(X, \B) \to \mathcal F(X, \B)$ and
 $\sigma_i$ is an onto endomorphism of $(X, \B)$, $i =1,2$.
 We need to check that $(R_1R_2, \sigma_1\sigma_2) $ is a well 
 defined transfer operator in $\FXB$. Since the range of any  transfer
  operator $(R, \sigma)$ is $\FXB$ (see Lemma \ref{lem TO is onto 
1-1}), the composition $R_1R_2$  is defined. It remains to check that
$(R_1R_2, \sigma_1\sigma_2) $ satisfies Definition \ref{def transfer 
operator}. The positivity is obvious and 
\begin{eqnarray*}
  R_1 R_2[f(\sigma_1\sigma_2(x)) g(x)] &=&
   R_1[f(\sigma_1(x))R_2(g(x))]\\
   &=& f(x) R_1R_2( g)(x)
\end{eqnarray*}

The second claim is clear because, for $a, b \in \mathbb R$,
$$
(a R_1 + b R_2)[(f\circ \sigma) g] = a fR_1(g) + b fR_2(g) =
 f(aR_1+ bR_2)(g).
$$
\end{proof}

The dynamical properties of  endomorphisms $\sigma$ such as 
ergodicity, mixing, etc can be described in terms of transfer operators,
see \cite{LasotaMackey1994, DingZhou2009}. We mention here 
several simple observation to motivate our future study.

\begin{remark}\label{rem links R and sigma}
(1) Suppose that $(R, \sigma)$ is a transfer operator acting on the
 space $(X, \B, \mu)$. If $\sigma$ is not an ergodic endomorphism of 
 $(X, \B, \mu)$, then for any $\sigma$-invariant set $A$ of positive 
 measure ($\sigma^{-1}(A) = A\ \mod 0$), we  can define the
  restriction of $R$ on $(A, \B|_A)$. For this, we set
$$
R_A (f) = R(\chi_A f), \qquad f \in \mc F(X, \B).
$$
\begin{lemma}\label{lem invariant set}
The operator $R_A : \mc F(A, \B|_A) \to \mc F(A, \B|_A)$ 
is a transfer operators corresponding to $\sigma_A = \sigma
 : A \to A$. 
\end{lemma}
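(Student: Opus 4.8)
The plan is to reduce the whole statement to the single identity $\chi_A\circ\sigma=\chi_A$, which is exactly the hypothesis $\sigma^{-1}(A)=A$ written at the level of indicator functions, since $\chi_{\sigma^{-1}(A)}=\chi_A\circ\sigma$ (and, as everything is taken $\mathrm{mod}\ 0$, the equality $\sigma^{-1}(A)=A\ (\mathrm{mod}\ 0)$ gives $\chi_A\circ\sigma=\chi_A$ $\mu$-a.e., which is all we ever use). I would first dispatch the routine preliminaries: $(A,\B|_A,\mu|_A)$ is again a standard measure space (a Borel subset of a standard space carrying the restricted measure); $\sigma(A)=\sigma(\sigma^{-1}(A))=A$ by surjectivity of $\sigma$; and $\sigma_A:=\sigma|_A$ is a surjective non-singular endomorphism of $(A,\B|_A,\mu|_A)$, because for Borel $B\subseteq A$ one has $\sigma_A^{-1}(B)=\sigma^{-1}(B)\subseteq\sigma^{-1}(A)=A$ and $\mu(B)=0\iff\mu(\sigma^{-1}(B))=0$. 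Throughout I identify $\mathcal F(A,\B|_A)$ with the subspace of $\mathcal F(X,\B)$ of functions vanishing off $A$, extending a function on $A$ by zero and writing $\widetilde f$ for that extension.

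Next I would show that $R_A$ actually takes values in $\mathcal F(A,\B|_A)$. For every $g\in\mathcal F(X,\B)$,
\[
R_A(g)=R(\chi_A g)=R\big((\chi_A\circ\sigma)\,g\big)=\chi_A\,R(g),
\]
by the pull-out property \eqref{eq char property of r via sigma 1} and $\chi_A\circ\sigma=\chi_A$; hence $R_A(g)$ vanishes off $A$ and, being the restriction of a $\B$-measurable function, is $\B|_A$-measurable. Positivity of $R_A$ is immediate: if $f\ge 0$ on $A$ then $\chi_A\widetilde f\ge 0$ on $X$, so $R_A(f)=R(\chi_A\widetilde f)\ge 0$.

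The remaining, and main, step is the pull-out property for $(R_A,\sigma_A)$. Fix $f,g\in\mathcal F(A,\B|_A)$ and regard $(f\circ\sigma_A)\,g$ as an element of $\mathcal F(A,\B|_A)$, extended by zero. I would check that its zero-extension to $X$ equals $(\widetilde f\circ\sigma)\,\widetilde g$: off $A$ the latter vanishes because $\widetilde g$ does, while for $x\in A$ we have $\sigma(x)\in A$ and $\widetilde f(\sigma(x))\widetilde g(x)=f(\sigma_A(x))g(x)$. Consequently, using the pull-out property of $R$ once more,
\[
R_A\big((f\circ\sigma_A)\,g\big)=R\big((\widetilde f\circ\sigma)\,\widetilde g\big)=\widetilde f\,R(\widetilde g).
\]
On the other hand $R_A(g)=R(\chi_A\widetilde g)=R(\widetilde g)$ since $\chi_A\widetilde g=\widetilde g$, and $R(\widetilde g)$ vanishes off $A$ by the previous step, so the zero-extension of $f\,R_A(g)$ is precisely $\widetilde f\,R(\widetilde g)$. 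Comparing, $R_A((f\circ\sigma_A)g)=f\,R_A(g)$ on $A$, which is condition (ii) of Definition \ref{def transfer operator} for the pair $(R_A,\sigma_A)$; together with positivity this finishes the proof.

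I do not expect any serious obstacle here; the only thing that requires care is the $\mathrm{mod}\ 0$ bookkeeping in the identity $\chi_A\circ\sigma=\chi_A$ and in the identification of $\sigma(A)$ with $A$, so I would state the a.e.\ version of that identity at the very beginning and then carry out the short computations above without further comment.
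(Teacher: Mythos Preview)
Your proof is correct and follows essentially the same route as the paper: both hinge on the single identity $\chi_A\circ\sigma=\chi_A$ coming from $\sigma^{-1}(A)=A$, then push it through the pull-out property of $R$. The paper's version is a compact chain $R_A((f\circ\sigma)g)=R(\chi_A(f\circ\sigma)g)=R((\chi_A)^2(f\circ\sigma)g)=R((\chi_A\circ\sigma)(f\circ\sigma)\chi_A g)=\chi_A f\,R(\chi_A g)=\chi_A f\,R_A(g)$, using $\chi_A^2=\chi_A$ to split the indicator and then pull out $(\chi_A f)\circ\sigma$; your argument unpacks the same computation with explicit zero-extensions and adds the preliminary checks (well-definedness of $\sigma_A$, range of $R_A$, the $\mathrm{mod}\ 0$ caveats) that the paper leaves implicit.
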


\begin{proof} We need to check that $(R_A, \sigma_A)$ satisfies the
 Definition \ref{def transfer operator}: 
\begin{equation*}
\setlength{\jot}{10pt}
\begin{aligned}
R_A((f\circ \sigma) g) &= R(\chi_A(f\circ \sigma) g)\\
& = R((\chi_A)^2(f\circ \sigma) g)\\
&= R((\chi_A\circ\sigma)(f\circ \sigma) \chi_A g)\\
& = \chi_A f R(\chi_A g)\\
& =  \chi_A f R_A( g).
\end{aligned}
\end{equation*}
We used here the relation $\chi_{A} = \chi_{\sigma^{-1}(A)} =
\chi_{A}\cs$.
\end{proof}

(2) Suppose that $\sigma$ is periodic on $(X, \B)$ of period $p$, i.e., 
$\sigma^p(x) = x$ for all $x$. If $R$ is a transfer operator 
from $\mc R(\sigma)$, then $R$ is also periodic. Indeed, $R^p$ is 
a transfer operator corresponding $\sigma^p$. Hence, for any 
functions $f, g \in \FXB$, it satisfies the relation $R^p(f) g =
f R^p(g)$ which means that $R^p$ is the identity operator. 
\end{remark}

The statements, proved in Remark \ref{rem links R and sigma}, mean 
that the classes of all ergodic endomorphisms of a measure space, and
the aperiodic endomorphisms for Borel spaces, play the central role.  
Thus, we can avoid some trivialities by considering only ergodic and/or 
aperiodic  endomorphisms $\sigma$. 
\\

\subsection{Classification} The problem of classification of transfer operators has many aspects
and depends on the choice of equivalence relations on the set of all 
transfer operators. We consider only the definition of isomorphic 
transfer operators $(R, \sigma)$. 
For motivation, we begin with the following example.

\begin{example}
Suppose that $\sigma$ and $\sigma'$ are two surjective 
endomorphisms of $(X, \B)$ such that $\sigma' \tau(x)  =\tau\sigma 
(x)$ for some one-to-one Borel map $\tau$ and all $x \in X$. We 
define the operator $S := S_\tau$ acting on the set of Borel
functions $f \in \mathcal F(X, \B)$ by the formula
$$
(S f)(x):= f(\tau x).
$$

\begin{lemma} Let $(R, \sigma)$ be a transfer operator in $\FXB$. 
Then  $R' = (S^{-1}RS$ is  transfer operator corresponding to the 
endomorphism $\sigma'$.
\end{lemma}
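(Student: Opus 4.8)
The plan is to check the two defining properties of a transfer operator from Definition~\ref{def transfer operator} for the pair $(R',\sigma')$, after first recording that $\sigma'$ is itself a surjective Borel endomorphism of $(X,\B)$. Here $\tau$ is taken to be a Borel automorphism of $(X,\B)$, so that $S=S_\tau$ is invertible with $(S^{-1}f)(x)=f(\tau^{-1}x)$; then the intertwining relation $\sigma'\tau=\tau\sigma$ yields $\sigma'=\tau\sigma\tau^{-1}$, which is a composition of surjective Borel maps and hence a surjective Borel endomorphism. Thus it makes sense to ask whether $R'=S^{-1}RS$ belongs to $\mc R(\sigma')$.

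Positivity of $R'$ is immediate: $S$ and $S^{-1}$ are composition operators, hence positive, and $R$ is positive by hypothesis, so $R'=S^{-1}RS$ sends $\FXB_+$ into itself.

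For the pull-out property the two facts I would isolate first are that $S$ (and likewise $S^{-1}$) is an algebra homomorphism of $\FXB$, i.e. $S(hk)=(Sh)(Sk)$, and that $S$ conjugates $\sigma$ into $\sigma'$ under composition: for $f\in\FXB$,
\[
S(f\circ\sigma')=f\circ\sigma'\circ\tau=f\circ\tau\circ\sigma=(Sf)\circ\sigma ,
\]
where the middle equality is exactly $\sigma'\tau=\tau\sigma$. Combining these with the pull-out property (\ref{eq char property of r via sigma 1}) for $(R,\sigma)$, I would compute, for arbitrary $f,g\in\FXB$,
\[
R'\big((f\circ\sigma')\,g\big)=S^{-1}R\big(S(f\circ\sigma')\cdot Sg\big)=S^{-1}R\big(((Sf)\circ\sigma)\,(Sg)\big)=S^{-1}\big((Sf)\cdot R(Sg)\big)=f\cdot S^{-1}R(Sg)=f\,R'(g),
\]
using the homomorphism property of $S^{-1}$ in the last-but-one step. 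This is precisely (\ref{eq char property of r via sigma 1}) for $(R',\sigma')$, so $R'$ is a transfer operator with respect to $\sigma'$.

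There is no genuinely hard step here; the computation is a short diagram chase. The only point worth a word of care is the standing assumption that $\tau$ be \emph{bijective} rather than merely injective, so that $S^{-1}$ is defined on all of $\FXB$ and the conjugation $\sigma'=\tau\sigma\tau^{-1}$ is literally valid. Under the injective-but-not-surjective reading one would instead restrict attention to $\tau(X)$ (or pass to the measurable-space formulation of Definition~\ref{def transfer operator}), but in the setting of the example $\tau$ is a Borel isomorphism and the displayed identities hold verbatim.
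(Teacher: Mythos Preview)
Your proof is correct and follows essentially the same route as the paper's argument: both verify positivity trivially and then derive the pull-out property for $(R',\sigma')$ by applying $S$, using $\sigma'\tau=\tau\sigma$ to rewrite $S(f\circ\sigma')=(Sf)\circ\sigma$, invoking the pull-out property of $(R,\sigma)$, and then applying $S^{-1}$. Your version is slightly more explicit in isolating the algebra-homomorphism property of $S$ and in noting that $\tau$ must be bijective for $S^{-1}$ and $\sigma'=\tau\sigma\tau^{-1}$ to make sense, but the substance is identical.
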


This observation follows from the facts that $R'$ is positive and $(R', 
\sigma')$ satisfies the relation:
\begin{equation*}
\setlength{\jot}{10pt}
\begin{aligned}
S^{-1}RS[f(\sigma' x) g(x)] &=  S^{-1}R[f(\sigma' \tau x) 
g(\tau x)] \\
   &= S^{-1}R[f(\tau (\sigma x)) g(\tau x)]  \\
   &= S^{-1}[f\circ \tau(x) (Rg)(\tau x)] \\
   &=  f(x) (S^{-1}RSg)(x).
\end{aligned}
\end{equation*}
\end{example}

The next definition is a generalization of the above example. 

\begin{definition}\label{def iso for TO} Let $\sigma_i $ be an onto 
endomorphism of a standard Borel space $(X_i, \B_i), i =1,2$. Suppose 
that $(R_1, \sigma_1)$ and $(R_2, \sigma_2)$ are transfer operators 
acting on Borel functions defined on $(X_1, \B_1)$ and $(X_2, \B_2)
$, respectively. We say that $(R_1, \sigma_1)$ and $(R_2, \sigma_2)
$ are \emph{isomorphic} \index{transfer operator ! isomorphic} if there exists a Borel isomorphism  $T : 
(X_1, \B_1) \to (X_2, \B_2)$  such that
$$
T\sigma_1 = \sigma_2 T \quad \mbox{and} \quad T_* R_2 = R_1 
T_*,
$$
where $T_*$ is the induced map  $\mathcal F(X_2) \to \mathcal 
F(X_1)$:
$$
(T_* f)(x_1) = f(Tx_1), \ \ \forall f\in \mathcal F(X_2), \ \ x_1 \in 
X_1.
$$
\end{definition}

In order to justify this definition, we need to show that $T R_2 T^{-1}$ 
is a transfer operator corresponding to $\sigma_1$. It is obvious that  
$T R_2 T^{-1}$ is a positive operator. To verify the pull-out property  
(\ref{eq char property of r via sigma}), we calculate, for $g, h \in 
\mathcal F(X_1)$,
\begin{equation*}
\setlength{\jot}{10pt}
\begin{aligned}
 T R_2 T^{-1}[(g\circ\sigma_1) h]  &= T R_2 [g(\sigma_1T^{-1}x) 
 h(T^{-1}x)]\\
 &= T_* R_2 [(g\circ T_*^{-1}(\sigma_2 x)) (h \circ T^{-1})(x)]\\
   &=  T_* [(g\circ T^{-1}(x)) R_2(h \circ T^{-1})(x)]\\
   & =  g(x) (T_* R_2 T_*^{-1}h)(x)).\
\end{aligned}
\end{equation*}

If $T : X_1 \to X_2$ is a not invertible Borel map, then this definition
 gives the notion of a \emph{factor} map between two transfer
  operators.

\begin{example}\label{ex isomorphic R} In this example, we illustrate 
the definition of the isomorphism for the transfer operators defined by 
the formula
 \be\label{eq example of TO}
(R_if)(x) := \sum_{\sigma_i y = x} q_i(y)f(y), i =1,2.
\ee
Here $\sigma_i$ is a finite-to-one onto endomorphism of $X_i$. 
\textit{Under 
what conditions on $q_1, q_2$ are the transfer operators $(R_1, 
\sigma_1)$ and $(R_2, \sigma_2)$ isomorphic?} Let $T : X_1 \to 
X_2$ be as in Definition \ref{def iso for TO}.  If one rewrites the 
relation $(T_*R_2f)(x) = (R_1T_*f)(x)$ with $f\in \mathcal F(X_2)$, 
then it transforms to the identity
$$
\sum_{y : \sigma_2 y = Tx} q_2(y) f(y) = \sum_{z : \sigma_1 z = x} 
q_1(z) f(Tz)
$$
which holds for any $x \in X_1$ and any Borel function $f\in \mathcal 
F(X_2)$.

\begin{lemma} Let $(R_1, \sigma_1)$ and $(R_2, \sigma_2)$ be 
defined by (\ref{eq example of TO}). If they are isomorphic via a  
transformation $T$, then
\be\label{eq iso condition for q_1 q_2}
\sum_{a \in C_x} q_2(a) = \sum_{a \in C_x} (T_*^{-1}q_1)(a)
\ee
where $C_x = \{a \in X_2 : T^{-1}\sigma_2 a = x\}$.
\end{lemma}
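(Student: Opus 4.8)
The plan is to start from the isomorphism identity
$$
\sum_{y : \sigma_2 y = Tx} q_2(y) f(y) = \sum_{z : \sigma_1 z = x} q_1(z) f(Tz),
$$
which, as noted just before the statement, holds for every $x \in X_1$ and every Borel function $f \in \mathcal F(X_2)$. The key observation is that on the left-hand side the sum ranges over $y \in \sigma_2^{-1}(Tx)$, while on the right it ranges over $z \in \sigma_1^{-1}(x)$; pushing the right-hand sum through $T$ via the substitution $a = Tz$ (legitimate since $T$ is a Borel isomorphism) rewrites it as a sum over $a$ with $T^{-1} a \in \sigma_1^{-1}(x)$, i.e. over $a \in C_x = \{a \in X_2 : T^{-1}\sigma_2 a = x\}$, using $\sigma_1 T^{-1} = T^{-1}\sigma_2$. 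Under this substitution $q_1(z) = q_1(T^{-1}a) = (T_*^{-1} q_1)(a)$ and $f(Tz) = f(a)$.

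First I would verify that the two index sets coincide: $y \in \sigma_2^{-1}(Tx)$ means $\sigma_2 y = Tx$, equivalently $T^{-1}\sigma_2 y = x$, which is exactly the defining condition for $y \in C_x$. So both sides of the identity are sums over the \emph{same} finite set $C_x$ (finiteness comes from $\sigma_2$ being finite-to-one), and the identity becomes
$$
\sum_{a \in C_x} q_2(a) f(a) = \sum_{a \in C_x} (T_*^{-1} q_1)(a) f(a)
$$
for every Borel $f$. Then I would specialize $f$: for a fixed $x$, apply this with $f = \chi_{\{a_0\}}$ for each $a_0 \in C_x$ to conclude $q_2(a_0) = (T_*^{-1}q_1)(a_0)$ pointwise on $C_x$ — which is even stronger than the claimed equality of sums. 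Summing over $a \in C_x$ then gives (\ref{eq iso condition for q_1 q_2}) immediately.

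The main subtlety — really the only place requiring care — is the measurability/well-definedness of the substitution and of the sums: one must note that because $\sigma_i$ is finite-to-one the set $\sigma_i^{-1}(\cdot)$ is Borel and the inverse branches are Borel, so $T_*^{-1}q_1$ is a genuine Borel function on $X_2$ and the characteristic functions $\chi_{\{a_0\}}$ of single points are Borel. One should also confirm that $T\sigma_1 = \sigma_2 T$ yields $\sigma_1 T^{-1} = T^{-1}\sigma_2$ (apply $T^{-1}$ on both sides), which is what makes the index-set bookkeeping work. I expect no genuine obstacle here: once the index sets are identified and $f$ is chosen as a point mass, the conclusion is forced, and in fact the argument delivers the pointwise identity $q_2 = T_*^{-1}q_1$ on each fiber, of which (\ref{eq iso condition for q_1 q_2}) is a trivial consequence.
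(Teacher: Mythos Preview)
Your proposal is correct and follows essentially the same approach as the paper: both arguments specialize the isomorphism identity to point-mass test functions $f=\delta_{a_0}$ (equivalently $\chi_{\{a_0\}}$), using $T\sigma_1=\sigma_2T$ to identify the index sets. Your write-up is in fact cleaner, and you correctly note that the argument actually yields the stronger pointwise identity $q_2(a)=(T_*^{-1}q_1)(a)$ on each fiber $C_x$, of which the stated equality of sums is an immediate consequence.
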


\begin{proof} Take $f = \delta_a$ where $a$ is a point from $X_2$. 
Then
$$
(T_*R_2\delta_a)(x)  = \sum_{a : \sigma_2 a = Tx} q_2(a)
$$
and
$$
(R_1T_*\delta_a)(x) = \sum_{a : \sigma_1 T^{-1}a = x} q(T^{-1}a).
$$
We notice that $T^{-1}\sigma_2 a = \sigma_1T^{-1} a$, therefore 
the relation $(T_*R_2f)(x) = (R_1T_*f)(x)$ implies (\ref{eq iso 
condition for q_1 q_2}).
 \end{proof}
\end{example}

\subsection{Kernel and range of transfer operators}
In the next statements we discuss the structural properties of transfer
operators. 

\begin{lemma}\label{lem TO is onto 1-1}
Let $\sigma$ be an onto endomorphism of a standard Borel space
$X, \B)$. Suppose $R : \FXB \to \FXB$ is a strict transfer operator
with respect to $\sigma$. If $R|_\sigma$  is the restriction  of $R$
 onto $\mc F(X, \sigma^{-1}(\B))$, then  
$$
R|_\sigma : \mc F(X, \sigma^{-1}(\B)) \to \FXB
$$ 
 is a one-to-one and onto map. 
\end{lemma}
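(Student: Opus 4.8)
The plan is to exploit the pull-out property together with strictness. First I would recall that a Borel function $g$ is $\sigma^{-1}(\B)$-measurable precisely when $g = G\circ\sigma$ for some Borel $G$ on $X$, and that $R(\mathbf 1) > 0$ everywhere by strictness. The key computation is the pull-out identity applied with $g = \mathbf 1$: for any Borel $F$ on $X$,
\[
R\bigl((F\circ\sigma)\,\mathbf 1\bigr) = F\cdot R(\mathbf 1).
\]
So on the subspace $\mc F(X,\sigma^{-1}(\B))$, writing a typical element as $F\circ\sigma$, we get $R(F\circ\sigma) = F\cdot R(\mathbf 1)$. This single formula essentially gives both injectivity and surjectivity once we divide by $R(\mathbf 1)$.

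For injectivity of $R|_\sigma$: suppose $h \in \mc F(X,\sigma^{-1}(\B))$ with $R(h) = 0$. Write $h = F\circ\sigma$. Then $R(h) = F\cdot R(\mathbf 1) = 0$, and since $R(\mathbf 1)(x) > 0$ for every $x$, we conclude $F \equiv 0$, hence $h \equiv 0$. (One should note that $F$ is determined up to the usual conventions, and that $F\circ\sigma = 0$ forces $F = 0$ because $\sigma$ is surjective.) For surjectivity: given an arbitrary $\varphi \in \FXB$, set $F := \varphi / R(\mathbf 1)$, which is a well-defined Borel function by strictness, and put $h := F\circ\sigma \in \mc F(X,\sigma^{-1}(\B))$. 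Then $R(h) = F\cdot R(\mathbf 1) = \varphi$. Thus $R|_\sigma$ maps onto $\FXB$.

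The only point requiring a little care — and the one I expect to be the main obstacle — is justifying that $F = \varphi/R(\mathbf 1)$ really is a \emph{Borel} function and that division is legitimate: this is where strictness ($R(\mathbf 1) > 0$ \emph{everywhere}, not just a.e.) is essential, since otherwise $\varphi/R(\mathbf 1)$ need not be defined on all of $X$. One should also be slightly careful about whether $\varphi$ is required to be non-negative: by the convention stated in the paper (Remark following Definition \ref{def transfer operator}, part (2)), $R$ is extended to all of $\FXB$ by linearity, so it suffices to treat non-negative $\varphi$ and then extend, or simply observe that $\varphi/R(\mathbf 1)$ is Borel regardless of sign. With these observations in place the proof is essentially a two-line argument built on the displayed identity $R(F\circ\sigma) = F\cdot R(\mathbf 1)$.
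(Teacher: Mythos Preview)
Your proof is correct and follows essentially the same approach as the paper: both arguments rest on the single identity $R(F\circ\sigma) = F\cdot R(\mathbf 1)$, deducing surjectivity by dividing by the strictly positive $R(\mathbf 1)$ and injectivity from the fact that $F\cdot R(\mathbf 1) = G\cdot R(\mathbf 1)$ forces $F=G$. The only cosmetic difference is that the paper phrases injectivity by starting from distinct $f,g$ rather than via the kernel, but this is the same argument.
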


\begin{proof}
Since, for any function $f \in \FXB$, 
\be\label{eq simplest pull out prop}
R(f\cs) = f R(\mathbf 1)
\ee
and $R(\mathbf 1) >0$, we see that $R|_{\sigma}$ is onto. 

Suppose $f, g \in \FXB$ are two distinct Borel functions and set
$A = \{x \in X : f(x) \neq g(x)\}$. Then, for $x \in \sigma^{-1}(A)$, 
we have $(f\cs)(x) \neq (g\cs) (x)$. It follows that
$$
R(f\cs) = f R(\mathbf 1) \neq g R(\mathbf 1) = R(g\cs)
$$
and the proof is complete. 
\end{proof}

Denote by $\mathcal S(X)$ the set of real-valued simple functions on 
$(X, \B)$:
$$
\mathcal S(X) := \{s : X \to \R : s(x) = \sum_{i \in I} c_i\chi_{E_i}, \ |I|
< \infty \}
$$
where $\{E_i : i \in I\}$ is any finite partition of $X$ into Borel 
subsets.

\begin{lemma}\label{lem TO preserves simple fncts}
Suppose $\sigma $ is an onto endomorphism of a standard Borel 
space. Let $R : \FXB \to \FXB$ be a normalized transfer operator. 
Then $R$ sends the set of $\sB$-measurable simple functions onto 
the set of simple functions in $\FXB$ (by Lemma \ref{lem TO is onto 
1-1} this map is one-to-one and onto).
\end{lemma}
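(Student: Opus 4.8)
The plan is to track what a normalized transfer operator $R$ does to a characteristic function and then extend by linearity. First I would observe that for any Borel set $E\in\B$, the function $\chi_E\cs$ equals $\chi_{\sigma^{-1}(E)}$, so the $\sB$-measurable characteristic functions are exactly the functions of the form $\chi_E\cs$ with $E\in\B$. Applying the pull-out property with $g=\mathbf 1$ and using $R(\mathbf 1)=\mathbf 1$, I get $R(\chi_E\cs)=R((\chi_E\cs)\mathbf 1)=\chi_E R(\mathbf 1)=\chi_E$. Thus $R$ sends the $\sB$-measurable indicator $\chi_{\sigma^{-1}(E)}$ to the indicator $\chi_E$, and in particular the value is again an indicator function, hence simple.

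Next I would handle a general $\sB$-measurable simple function. Such an $s$ can be written as $s=\sum_{i\in I}c_i\chi_{F_i}$ where $\{F_i:i\in I\}$ is a finite partition of $X$ into $\sB$-sets; since each $F_i\in\sB$, we have $F_i=\sigma^{-1}(E_i)$ for some $E_i\in\B$, and the $E_i$ may be taken to form a finite Borel partition of $X$ because $\sigma$ is onto (so $\sigma^{-1}$ preserves disjointness and unions, and $\bigcup_i\sigma^{-1}(E_i)=X$ forces $\bigcup_i E_i=X$ after discarding overlaps). Equivalently $s=\bigl(\sum_i c_i\chi_{E_i}\bigr)\cs=g\cs$ for the simple Borel function $g=\sum_i c_i\chi_{E_i}$. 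By linearity and the computation above, $R(s)=R(g\cs)=\sum_i c_i R(\chi_{E_i}\cs)=\sum_i c_i\chi_{E_i}=g$, which is a simple function in $\FXB$. So $R$ maps $\sB$-measurable simple functions into $\mathcal S(X)$.

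Finally I would check surjectivity onto $\mathcal S(X)$: given any simple $g=\sum_i c_i\chi_{E_i}\in\FXB$ with $\{E_i\}$ a finite Borel partition, the function $g\cs=\sum_i c_i\chi_{\sigma^{-1}(E_i)}$ is a $\sB$-measurable simple function, and the previous paragraph shows $R(g\cs)=g$. Hence every element of $\mathcal S(X)$ is the image of a $\sB$-measurable simple function. Injectivity then comes for free from Lemma \ref{lem TO is onto 1-1}, since a normalized transfer operator is strict ($R(\mathbf 1)=\mathbf 1>0$), so $R|_\sigma$ is injective on all of $\mc F(X,\sB)$ and in particular on the simple functions there.

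I do not expect a genuine obstacle here; the only point requiring a little care is the bookkeeping that a $\sB$-measurable simple function really is of the form $g\cs$ with $g$ simple and Borel — i.e. that one can refine the $\sB$-partition underlying $s$ to one of the form $\{\sigma^{-1}(E_i)\}$ with $\{E_i\}$ a Borel partition — which uses only that $f$ is $\sB$-measurable iff $f=G\cs$ for some Borel $G$ (stated in the excerpt) together with surjectivity of $\sigma$.
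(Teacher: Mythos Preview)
Your proof is correct and follows essentially the same approach as the paper: the key computation is $R(\chi_{\sigma^{-1}(A)})=R(\chi_A\cs)=\chi_A R(\mathbf 1)=\chi_A$, after which one extends by linearity. You have simply written out in more detail the surjectivity and the bookkeeping that a $\sB$-simple function has the form $g\cs$ with $g$ simple, whereas the paper leaves these as implicit consequences of the displayed identity and linearity.
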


\begin{proof}
The result follows from the following observation: for any set $A \in 
\B$,
$$
R(\chi_{\sigma^{-1}(A)}) = R(\chi_A\cs) = \chi_A R(\mathbf 1) =
\chi_A.
$$
Hence, this relation is  extended to simple functions by linearity. 
\end{proof}

Based on the proved results, one can ask whether relation 
(\ref{eq simplest pull out prop}) determines the pull-out property. The 
affirmative answer is contained in the following lemma.

\begin{lemma}
If $(X, \B)$ and $\sigma$ are as above, then 
$$
R(f\cs) = f R(\mathbf 1) \ \Longleftrightarrow \ R((f\cs) g) =
fR(g), \ \ \ \forall g \in \mc F(X, \sB).
$$
\end{lemma}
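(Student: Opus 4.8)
The plan is to handle the two implications separately; the reverse one is immediate by specialization, and the forward one rests on the factorization of $\sigma^{-1}(\B)$-measurable functions through $\sigma$ recalled earlier in this section.

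For the implication ($\Leftarrow$): if $R((f\cs) g) = fR(g)$ holds for every $g \in \mc F(X,\sB)$, then it holds in particular for $g = \mathbf 1$, which is trivially $\sigma^{-1}(\B)$-measurable, and this yields $R(f\cs) = fR(\mathbf 1)$.

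For the implication ($\Rightarrow$): assume $R(f\cs) = fR(\mathbf 1)$ for all Borel $f$, and fix $g \in \mc F(X,\sB)$. By the standard characterization (a function is $\sigma^{-1}(\B)$-measurable iff it is of the form $G\cs$ for a Borel $G$), write $g = G\cs$. Since composition with $\sigma$ is multiplicative, $(f\cs) g = (f\cs)(G\cs) = (fG)\cs$. Applying the hypothesis once with $fG$ in place of $f$ gives $R((f\cs) g) = R((fG)\cs) = (fG)\,R(\mathbf 1)$. Regrouping as $f\,\big(G\,R(\mathbf 1)\big)$ and applying the hypothesis a second time, now with $G$ in place of $f$, gives $G\,R(\mathbf 1) = R(G\cs) = R(g)$. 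Hence $R((f\cs) g) = fR(g)$, as required.

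The only point that needs care is the legitimacy of the factorization $g = G\cs$, i.e. the fact that $\sigma^{-1}(\B)$-measurability is equivalent to factoring through $\sigma$; this was noted in Section~\ref{sect Introduction} and again in the paragraph preceding Definition~\ref{def transfer operator}. Beyond that, the argument is purely algebraic: it uses the hypothesized identity twice and the multiplicativity of $f \mapsto f\cs$, and in fact requires neither positivity nor linearity of $R$. So there is essentially no obstacle here; the substance of the lemma is simply packaging this factorization.
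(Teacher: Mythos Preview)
Your proof is correct and follows essentially the same route as the paper: both treat $(\Leftarrow)$ by specializing $g=\mathbf 1$ and handle $(\Rightarrow)$ by writing $g=G\cs$, computing $R((f\cs)g)=R((fG)\cs)=(fG)R(\mathbf 1)=f\,R(G\cs)=fR(g)$ via two applications of the hypothesis.
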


\begin{proof}
We need to show only that $(\Longrightarrow)$ holds. Indeed, if 
$g \in \mc F(X, \sB$, then there exists $G \in \FXB$ such that $g 
= G\cs$. Then 
$$
R((f\cs) g) = R((fG)\cs) = fG R(\mathbf 1) = f R(G\cs) = f R(g).
$$
\end{proof}

Consider the kernel \index{transfer operator ! kernel} of $R$,
$$
Ker(R) := \{ f \in \FXB : R(f) = 0\}.
$$
It is clear that the pull-out property implies that
$$
f \in Ker(R) \ \Longrightarrow \ f(g\cs) \in Ker(R), \ \forall g \in 
 \mc F(X, \sB).
$$

The above relation shows that  $Ker(R)$ can be viewed as an
$\mc F(X, \sB)$-module. 

\begin{theorem}\label{thm f = f0 + ol f}
 Let $(R, \sigma)$ be a normalized transfer
 operator on $\FXB$ where $\sigma$ is an onto endomorphism.
   For any  Borel function $f \in \FXB$,  there exist
uniquely determined functions $f_0\in Ker(R)$ and $\ol f \in 
\mc F(X, \sB)$ such that 
\be\label{eq f = f_0 + ol f}
f = f_0 + \ol f.
\ee
\end{theorem}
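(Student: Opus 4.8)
The plan is to produce the candidate decomposition explicitly and then argue uniqueness separately. Given $f \in \FXB$, the natural guess for $\ol f$ is the ``conditional expectation'' of $f$ relative to the subalgebra $\sB$, realized through the transfer operator: since $R$ is normalized, Lemma \ref{lem TO is onto 1-1} tells us $R|_\sigma : \mc F(X, \sB) \to \FXB$ is a bijection, and Lemma \ref{lem TO preserves simple fncts} says it carries $\sB$-measurable simple functions onto simple functions. So I would set $G := (R|_\sigma)^{-1}(R f) \in \FXB$ and define $\ol f := G \cs \in \mc F(X, \sB)$; then $R(\ol f) = R(G\cs) = G\, R(\mathbf 1) = G = Rf$ by the pull-out property and normalization. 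Consequently $f_0 := f - \ol f$ satisfies $R(f_0) = Rf - R(\ol f) = 0$, i.e. $f_0 \in Ker(R)$, and $f = f_0 + \ol f$ is the desired decomposition.

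For uniqueness, suppose $f = f_0 + \ol f = f_0' + \ol f'$ with $f_0, f_0' \in Ker(R)$ and $\ol f, \ol f' \in \mc F(X, \sB)$. Then $\ol f - \ol f' = f_0' - f_0 \in Ker(R)$, so it suffices to show that the only $\sB$-measurable function in $Ker(R)$ is $0$. Write $\ol f - \ol f' = h \cs$ for some $h \in \FXB$ (using the characterization of $\sB$-measurable functions). Then $0 = R(h\cs) = h\, R(\mathbf 1) = h$, hence $\ol f - \ol f' = 0$, and then $f_0 = f_0'$ as well. This gives uniqueness of both components.

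The only genuine subtlety I anticipate is a measurability/well-definedness bookkeeping point rather than a conceptual obstacle: one must be sure that $(R|_\sigma)^{-1}$ applied to the Borel function $Rf$ really yields a \emph{Borel} function $G$ on $(X, \B)$ (so that $\ol f = G\cs$ is genuinely $\sB$-measurable and $f_0 = f - \ol f$ is genuinely Borel). This follows because $R|_\sigma$ is a bijection of $\mc F(X, \sB)$ onto $\FXB$ whose inverse is induced by the bijection $g \mapsto g\cs$ between $\FXB$ and $\mc F(X, \sB)$ together with the relation $R(g\cs) = g\,R(\mathbf 1) = g$; so $(R|_\sigma)^{-1}$ is literally $G \mapsto G\cs$ where $G = Rf$, which keeps everything inside the Borel category. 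With that identification in hand, the decomposition is simply
\be
f = \bigl(f - (Rf)\cs\bigr) + (Rf)\cs,
\ee
and the two bracketed terms are exactly $f_0$ and $\ol f$. I would present it in this closed form and then verify the two defining properties and uniqueness as above.
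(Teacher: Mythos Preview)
Your proposal is correct and is essentially the same as the paper's proof: the paper also sets $\ol f = R(f)\cs$ directly, checks $R(\ol f)=R(f)$ using $R(\mathbf 1)=\mathbf 1$, and proves uniqueness from injectivity of $R$ on $\mc F(X,\sB)$ (your argument $0=R(h\cs)=h$ is exactly this injectivity). Your detour through $(R|_\sigma)^{-1}$ is unnecessary once you observe, as you do at the end, that the closed form is simply $\ol f=(Rf)\cs$.
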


\begin{proof}
We first show that, for any Borel function $f \notin \mc F(X,\sB)$, 
there exists a function $\ol f \in \mc F(X, \sB)$ such that 
$R(f) = R(\ol f)$. (If $f \in \mc F(X, \sB)$, the we take $\ol f = f$.)
Indeed, take $R(f)$ and set $\ol f= R(f) \cs$. Since $R(\mathbf 1) =  
\mathbf 1$, the function  $\ol f$ has the desired properties. 
Set $f_0 = f - \ol f$. Then $f_0 \in Ker(R)$ and (\ref{eq 
f = f_0 + ol f}) is proved. 

It remains to show that this representation is unique. If $f = f = f_0 + 
\ol f = g_0 +\ol g$ where $f_0, g_0 \in Ker(R)$, then $R(\ol g - 
\ol f) =0$. Since $R$ is one-to-one on $\mc F(X, \sB)$, we obtain
that $\ol f = \ol g$ and hence $f_0 = g_0$.
\end{proof}

\begin{corollary}\label{cor properties of E}
(1) For a normalized transfer operator $(R, \sigma)$ as above, let 
\be\label{eq E for R}
E : \FXB \to \mc F(X, \sB) : f \mapsto R(f) \cs. 
\ee
Then the operator $E$ has the following properties: $E$ is positive, 
$E(\FXB) = \mc F(X, \sB)$, 
$E^2 = E$, $E|_{\mc F(X, \sB)} = id$,  $E\circ R = R$, and 
$R\circ E = R$.

(2)  For $Ker(R^n)$ and $\mc F(X, \sigma^{-n}(\B))$ and any $f \in 
\FXB$ there exists a decomposition $f = f_0^{(n)} + \ol f^{(n)}$
 which is similar to (\ref{eq f = f_0 + ol f}). 
\end{corollary}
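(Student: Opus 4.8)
The plan is to read off everything in part~(1) from two properties of a normalized transfer operator --- the normalization identity $R(\mathbf 1)=\mathbf 1$ and the pull-out property (\ref{eq char property of r via sigma 1}) --- and to obtain part~(2) by applying Theorem~\ref{thm f = f0 + ol f} verbatim to the iterated pair $(R^n,\sigma^n)$.

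For part~(1): positivity of $E$ is immediate, since $E = U\circ R$ with $U\colon f\mapsto f\cs$ the (positive) composition operator and $R$ positive by hypothesis. The inclusion $E(\FXB)\subseteq\mc F(X,\sB)$ holds because any function of the form $h\cs$ is $\sigma^{-1}(\B)$-measurable, as recorded in Subsection~\ref{sect Basics on PO and TO}. To get $E|_{\mc F(X,\sB)}=\mathrm{id}$, I would write a $\sB$-measurable $g$ as $g = G\cs$ for some $G\in\FXB$; then $E(g) = R(G\cs)\cs = (G\,R(\mathbf 1))\cs = G\cs = g$, using the pull-out property with constant second argument followed by normalization. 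This simultaneously yields $\mc F(X,\sB)\subseteq E(\FXB)$, hence $E(\FXB)=\mc F(X,\sB)$, and it gives $E^2=E$ because $E(f)$ already lies in the subspace on which $E$ acts as the identity. The composition identities are then single substitutions of $E(f)=R(f)\cs$ into the pull-out relation together with $R(\mathbf 1)=\mathbf 1$: $R\circ E(f) = R(R(f)\cs) = R(f)\,R(\mathbf 1) = R(f)$, and $E\circ R$ is handled the same way, keeping in mind that $R$ is a bijection from $\mc F(X,\sB)$ onto $\FXB$ by Lemma~\ref{lem TO is onto 1-1}.

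For part~(2): the point is that $(R^n,\sigma^n)$ is itself a normalized transfer operator. By Lemma~\ref{lem semi-group} the pair $(R,\sigma)^n = (R^n,\sigma^n)$ belongs to $\mc R(X,\B)$, so $R^n$ is a transfer operator relative to the surjective endomorphism $\sigma^n$; and $R^n(\mathbf 1)=\mathbf 1$ follows by induction from $R(\mathbf 1)=\mathbf 1$. Since $(\sigma^n)^{-1}(\B)=\sigma^{-n}(\B)$, Theorem~\ref{thm f = f0 + ol f} applied to $(R^n,\sigma^n)$ produces, for each Borel $f$, a uniquely determined $f_0^{(n)}\in Ker(R^n)$ and $\ol f^{(n)}\in\mc F(X,\sigma^{-n}(\B))$ with $f = f_0^{(n)} + \ol f^{(n)}$, which is exactly the stated analogue of (\ref{eq f = f_0 + ol f}).

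I do not expect a serious obstacle: part~(1) is a sequence of one-line verifications and part~(2) is essentially a citation of the preceding theorem applied to a power of $(R,\sigma)$. The one step that warrants attention is the pair of identities relating $E$ and $R$, where one must use that $E$ takes values in $\mc F(X,\sB)$ and invoke Lemma~\ref{lem TO is onto 1-1} rather than manipulating $R$ on all of $\FXB$; and in part~(2) one should record explicitly that the scale $(\sigma^n)^{-1}(\B)$ coincides with $\sigma^{-n}(\B)$, so that Theorem~\ref{thm f = f0 + ol f} is being invoked with the correct target space.
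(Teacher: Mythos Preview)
Your proposal is correct and matches the paper's approach: the paper treats part~(1) as a string of routine verifications (explicitly checking only $E^2=E$ via $E(E(f)) = R[R(f)\cs]\cs = R(f)\cs$), and for part~(2) says the argument is ``analogous to that in Theorem~\ref{thm f = f0 + ol f}'', which is exactly your move of applying that theorem to the normalized transfer operator $(R^n,\sigma^n)$. Your write-up simply supplies more detail than the paper does; the only place to be careful is your appeal to Lemma~\ref{lem TO is onto 1-1} for the identity $E\circ R = R$, which is the one relation neither you nor the paper actually spells out.
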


\begin{proof} Most of the properties formulated in (1) are 
obvious; we check only  that $E$ is an
 idempotent:
$$
E(E(f)) = E (R(f) \cs) = R[R(f)\cs]\cs = R(f)\cs = E(f).
$$
The other relation easily follow from the definition. 

For (2), we notice that 
$$
Ker(R) \subset Ker(R^2) \subset \ \cdots \ \subset Ker(R^n) \subset 
\ \cdots 
$$
and 
$$
\FXB \supset \mc F(X, \sigma^{-1}(\B)) \ \cdots \ \supset 
\mc F(X, \sigma^{-n}(\B)) \supset \ \cdots
$$
The proof of the existence of decomposition in (2) is analogous 
to that in Theorem \ref{thm f = f0 + ol f}. 

\end{proof}

For an onto endomorphism $\sigma $ of the space $(X, \B)$, we set 
\be\label{eq U_sigma defin in Borel}
U_\sigma : \FXB \to \mc F(X, \sB) : f(x) \mapsto f(\sigma(x)).
\ee

\begin{corollary}\label{cor U_sigma isometry in Borel}
For $U_\sigma$ defined in (\ref{eq U_sigma defin in Borel}), we have
$$
(RU_\sigma)(f) = f, \ \ \ \ \ (U_\sigma R)(f) = E(f). 
$$ 
If $R$ is not normalized, then the operator $RU_\sigma$ is the 
multiplication operator:
$$
(RU_\sigma)(f) =  R(\mathbf 1)f, \ \ \ f \in \FXB.
$$
The restriction of $R$ to $\mc F(X, \sB)$ is a multiplication operator 
itself.   
\end{corollary}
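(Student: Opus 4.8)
The plan is to read off all four assertions directly from the pull-out property (ii) of Definition \ref{def transfer operator}, together with the special case (\ref{eq simplest pull out prop}) and the definitions of $U_\sigma$ and $E$; no new machinery is required, so the real content is merely organizing the identifications correctly.

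First I would treat $RU_\sigma$. For $f\in\FXB$ one has $U_\sigma f=f\cs$, hence applying the pull-out property with $g=\mathbf 1$,
\[
(RU_\sigma)(f)=R\big((f\cs)\,\mathbf 1\big)=f\,R(\mathbf 1),
\]
which is precisely (\ref{eq simplest pull out prop}). In the normalized case $R(\mathbf 1)=\mathbf 1$, so this collapses to $(RU_\sigma)(f)=f$; in general it says $RU_\sigma$ is the multiplication operator by the function $R(\mathbf 1)$. This settles the first and third assertions simultaneously.

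Next I would handle $U_\sigma R$. By the definition (\ref{eq U_sigma defin in Borel}) of $U_\sigma$ one has, for every $f$, that $(U_\sigma R)(f)=(Rf)\cs$, and the right-hand side is exactly $E(f)$ by the definition (\ref{eq E for R}). Hence $(U_\sigma R)(f)=E(f)$, the second assertion. Here I only use the defining formula for $E$ from the normalized setting; none of the further properties of $E$ listed in Corollary \ref{cor properties of E} are needed for this equality.

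Finally, for the last claim I would observe that $U_\sigma$ is a bijection of $\FXB$ onto $\mc F(X,\sB)$: it is surjective by the characterization of $\sB$-measurable functions recalled in the excerpt (every $g\in\mc F(X,\sB)$ has the form $g=G\cs$ for some $G\in\FXB$), and it is injective because $\sigma$ is onto. Thus any $g\in\mc F(X,\sB)$ equals $U_\sigma G$ for a unique $G\in\FXB$, and by the first step $R(g)=R(U_\sigma G)=R(\mathbf 1)\,G$. Under the identification of $\mc F(X,\sB)$ with $\FXB$ furnished by $U_\sigma$, this exhibits $R|_{\mc F(X,\sB)}$ as the multiplication operator by $R(\mathbf 1)$ (equivalently, $RU_\sigma$ is that multiplication operator, which is the reformulation already obtained). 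I do not expect a genuine obstacle anywhere; the only point requiring a little care is phrasing the last assertion precisely, i.e.\ making the identification $U_\sigma\colon\FXB\to\mc F(X,\sB)$ explicit so that "restriction of $R$" literally becomes multiplication by $R(\mathbf 1)$.
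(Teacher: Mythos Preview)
Your proof is correct and follows exactly the direct verification from the pull-out property and the definitions of $U_\sigma$ and $E$ that the paper has in mind; the paper itself simply declares that ``these formulas are obvious'' and gives no further argument. If anything, you are more careful than the source on the last assertion, since you make explicit the identification $U_\sigma\colon \FXB\to \mc F(X,\sB)$ needed to read $R|_{\mc F(X,\sB)}$ as multiplication by $R(\mathbf 1)$.
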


These formulas are obvious. We will use them in the framework 
of of our study of transfer operators in the Hilbert space $L^2\sms$ 
where $\mu$ is $\sigma$-invariant.
Then $U_\sigma$ would be an isometry, and $R$ could be treated 
as a co-isometry for $U_\sigma$. Thus, Corollary \ref{cor U_sigma 
isometry in Borel} presents a Borel analogue of the \textit{dual pair} 
$U_\sigma, R$. This observation is the basis for the further study of
the isometry $U_\sigma$. In particular, in Section \ref{sect Wold}, we 
discuss the Wold 
decomposition  generated by the sequence
of subalgebras $\{\sigma^{-n}(\B) : n \in N_0\}$. 
\\

We will also see later that the operator $E$ becomes the conditional 
expectation when $R$ is considered in the context of 
$L^p(\mu)$-spaces.

\subsection{Multiplicative properties of transfer operators}
\label{subsect multiplicative TO}

It turns out that any transfer operator possesses some multiplicative 
properties when it is restricted to an appropriate subset of Borel 
functions.

We begin with the following statement proved in 
\cite{ChoiEffros1977, BratteliJorgensen2002} (we formulate only
a part of the statement here because we need only the fact that $A$
is abelian).

\begin{lemma}\label{lem new product}
Let $A$ be an abelian $C^*$-algebra with unit 1, and let $E : A \to A$
be a linear map with the properties: (i) $E$ is positive, (ii) $E(1) =1$,
(iii) $E^2 = E$. Then the map 
$$
(a, b) \mapsto a \times b := E(ab)
$$  
is an associate product on the linear space $E(A)$. Moreover, for 
all $a \in E(A)$ and $b \in A$, 
\be\label{eq E(ab) = E(aE(b))}
E(ab) = E(aE(b)).
\ee
\end{lemma}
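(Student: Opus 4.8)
The plan is to derive everything from one structural fact: that the positive unital idempotent $E$ is an $E(A)$-bimodule map (a ``conditional expectation''), which in turn rests on a Schwarz inequality. First I would record the preliminaries. Since $A$ is abelian, Stinespring's theorem makes the positive map $E$ automatically completely positive, hence $2$-positive; in particular it obeys the Kadison--Schwarz inequality, and for any $z_1,z_2\in A$ the $2\times 2$ matrix $\big(E(z_i^*z_j)\big)_{i,j}-\big(E(z_i)^*E(z_j)\big)_{i,j}$ is positive in $M_2(A)$. Positivity of $E$ also makes it $*$-preserving, so $\mathcal E:=E(A)$ is a self-adjoint subspace containing $1$, and $E^2=E$ gives the convenient description $\mathcal E=\{x\in A: E(x)=x\}$. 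Note that $a\times b=E(ab)$ automatically lies in $\mathcal E$, so $\times$ is a well-defined binary operation on $\mathcal E$.

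The key step is the module identity (\ref{eq E(ab) = E(aE(b))}): for $a\in\mathcal E$ and $b\in A$, $E(ab)=E(aE(b))$. Setting $c:=b-E(b)$ we have $E(c)=E(b)-E^2(b)=0$, so it is enough to prove $E(ac)=0$. Applying the $2$-positive Schwarz inequality to the pair $z_1=c$, $z_2=a^*$, and using $E(c)=0$ and $E(a^*)=a^*$, gives
\[
\begin{pmatrix} E(c^*c) & E(ac)^*\\ E(ac) & E(aa^*)-aa^*\end{pmatrix}\ \geq\ 0 .
\]
Now I would apply the positive map $E\otimes\mathrm{id}_{M_2}$ to this matrix. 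By $E^2=E$ the $(2,2)$ entry becomes $E(aa^*)-E(aa^*)=0$ while the $(2,1)$ entry is unchanged, so $\begin{pmatrix} E(c^*c) & E(ac)^*\\ E(ac) & 0\end{pmatrix}\geq 0$. A positive $2\times 2$ operator matrix with a vanishing diagonal entry has vanishing off-diagonal entries, whence $E(ac)=0$. This proves (\ref{eq E(ab) = E(aE(b))}).

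Associativity is then a short calculation that exploits commutativity of $A$ together with (\ref{eq E(ab) = E(aE(b))}) used twice. For $a,b,c\in\mathcal E$, $(a\times b)\times c=E\big(E(ab)\,c\big)=E\big(c\,E(ab)\big)=E(c\,ab)=E(abc)$, where the third equality is (\ref{eq E(ab) = E(aE(b))}) with $c\in\mathcal E$ and $ab\in A$; symmetrically $a\times(b\times c)=E\big(a\,E(bc)\big)=E(a\,bc)=E(abc)$. Hence $(a\times b)\times c=a\times(b\times c)$, and (commutativity of $\times$ being obvious) $\times$ is an associative product on $E(A)$.

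The one place that requires an idea is the module identity; the rest is bookkeeping. The trick that makes it work is to package the Schwarz inequality as a positive $2\times 2$ matrix whose lower-right corner is precisely the ``multiplicative defect'' $E(aa^*)-aa^*$, and then apply the amplified map $E\otimes\mathrm{id}_{M_2}$ so that idempotency annihilates that corner, forcing the off-diagonal entry to vanish; without this maneuver one is left with an inequality that does not obviously yield $E(ac)=0$. An alternative, closer in spirit to the rest of the paper, is to represent $A\cong C(Y)$ with $Y$ compact, write $E(f)(y)=\int f\,d\mu_y$ for probability measures $\mu_y$ satisfying the invariance $\int E(g)\,d\mu_y=\int g\,d\mu_y$, and run the same estimate at the level of the $\mu_y$'s; but the operator-matrix route is the cleaner one to write up.
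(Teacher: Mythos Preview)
Your proof is correct. The paper itself does not prove this lemma; it merely states the result and cites \cite{ChoiEffros1977, BratteliJorgensen2002} for the proof, restricting the formulation to the abelian case needed later. What you have written is essentially the Choi--Effros argument specialized to commutative $A$: complete positivity comes for free, the $2$-positive Schwarz inequality yields the positive $2\times 2$ matrix with multiplicative defect in the corner, and re-applying the idempotent $E$ entrywise kills that corner and forces the off-diagonal to vanish, giving the module identity; associativity then follows by two uses of that identity together with commutativity of $A$. One cosmetic point: the entrywise amplification you call $E\otimes\mathrm{id}_{M_2}$ would more standardly be written $\mathrm{id}_{M_2}\otimes E$ (it is positive precisely because $E$ is $2$-positive), but your intent and computation are clear. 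Since the paper offers no proof of its own, your argument is in fact more than the paper provides, and it is the natural one to supply here.
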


We can apply Lemma \ref{lem new product} for the operator $E$  
defined in (\ref{eq E for R}). It follows from  Corollary \ref{cor 
properties of E} that this operator $E$ satisfies the conditions of the
 lemma. 

\begin{theorem}\label{thm multiplicative props of R}
Let $R$ be a normalized transfer operator in $\FXB$. 
The positive  operator  $E(f) = R(f) \cs :  \FXB \to \mc F(X, \sB)$
 satisfies the conditions of Lemma  \ref{lem new product}. For the 
 product $f \times g := E(fg)$, the operator $R$ has the properties:
 \be\label{eq  R(f times g) = R(f) R(g)}
 R(f \times g) = R(f) R(g), \ \ f \in \mc F(X, \sB), g \in \FXB,
 \ee
 $$
 R(fg) = R(f) R(g), \ \ f, g \in \mc F(X, \sB).
 $$
\end{theorem}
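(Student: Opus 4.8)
The plan is to verify the two displayed multiplicative identities by combining the idempotent-product structure of Lemma~\ref{lem new product} with the pull-out property~(\ref{eq char property of r via sigma 1}) and the relations $E\circ R = R$, $R\circ E = R$ from Corollary~\ref{cor properties of E}. First I would record that $E$ satisfies the hypotheses of Lemma~\ref{lem new product}: by Corollary~\ref{cor properties of E}(1), $E$ is positive, $E(\mathbf 1) = R(\mathbf 1)\cs = \mathbf 1\cs = \mathbf 1$ (using that $R$ is normalized), and $E^2 = E$. Since $\FXB$ is an abelian algebra with unit under pointwise multiplication, the lemma applies: the bilinear map $f\times g := E(fg)$ is an associative product on $E(\FXB) = \mc F(X, \sB)$, and the identity $E(fg) = E(fE(g))$ holds for $f \in \mc F(X, \sB)$ and $g \in \FXB$.

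Next I would prove~(\ref{eq R(f times g) = R(f) R(g)}). Take $f \in \mc F(X, \sB)$ and $g \in \FXB$. Because $f$ is $\sB$-measurable, there is $F \in \FXB$ with $f = F\cs$ (Lemma~\ref{lem TO is onto 1-1} region, or the remark preceding it). Then, using $R\circ E = R$ and the pull-out property twice,
\begin{align*}
R(f \times g) &= R(E(fg)) = R(fg) = R((F\cs) g) = F R(g).
\end{align*}
On the other hand, again by the pull-out property $R(f) = R((F\cs)\mathbf 1) = F R(\mathbf 1) = F$, so $R(f) R(g) = F R(g)$, and the two expressions agree. This gives the first identity.

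For the second identity, restrict further to $f, g \in \mc F(X, \sB)$. Then $g \in \FXB$ in particular, so the already-established~(\ref{eq R(f times g) = R(f) R(g)}) gives $R(E(fg)) = R(f) R(g)$; but since $f, g$ are both $\sB$-measurable, so is their pointwise product $fg$, hence $E(fg) = E|_{\mc F(X, \sB)}(fg) = fg$ by Corollary~\ref{cor properties of E}(1). Therefore $R(fg) = R(E(fg)) = R(f) R(g)$, as claimed. The only point requiring care — and the step I would watch most closely — is keeping track of which variable is assumed $\sB$-measurable in each invocation of the pull-out property, since~(\ref{eq char property of r via sigma 1}) is genuinely asymmetric in its two arguments; once the asymmetry is respected, the argument is a short chain of substitutions with no analytic difficulty.
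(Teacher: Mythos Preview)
Your proof is correct. The overall strategy matches the paper's, but your route to the first identity is slightly more economical: the paper first invokes the identity $E(ab)=E(aE(b))$ from Lemma~\ref{lem new product} to compute $f\times g = E(fE(g)) = [R(f)R(g)]\cs$ explicitly and only then applies $R$, whereas you bypass any computation of $f\times g$ by using $R\circ E = R$ from Corollary~\ref{cor properties of E} to get $R(f\times g)=R(fg)$ immediately, and then a single application of the pull-out property finishes the job. Your approach thus never actually needs the nontrivial identity~(\ref{eq E(ab) = E(aE(b))}) from Lemma~\ref{lem new product}; Corollary~\ref{cor properties of E} alone suffices. The treatment of the second identity is essentially identical in both arguments.
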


\begin{proof} 
We observe first that $E(f) = R(f) \cs, f \in \FXB,$ is a positive
 normalized  idempotent map onto $\mc \mc F(X, \sB)$. Thus we can 
 use the conclusion of Lemma \ref{lem new product}. 
It follows from (\ref{eq E(ab) = E(aE(b))}) that, for $f \in 
\mc F(X, \sB)$ and $g \in \FXB$,
$$
f \times g = E(f E(g)) = R(f (R(g)\cs)\cs = [R(f)R(g)]\cs
$$
Applying $R$ we obtain
$$
R(f \times g) = R(f) R(g).
$$
Here the relation $R(\mathbf 1) =\mathbf 1$ has been repeatedly 
used. 

Since $\mc F(X, \sB) = \{ f\cs : f \in \FXB\}$, we see that
\begin{equation*}
\setlength{\jot}{10pt}
\begin{aligned}
(f\cs) \times (g\cs) &=  E((f\cs)(g\cs))\\
& =  R[(f\cs)(g\cs)] \cs \\
&= (fg)\cs = (f\cs) (g\cs). 
\end{aligned}
\end{equation*}
Hence, $R(fg) = R(f) R(g)$ holds on the space of Borel functions
measurable with respect to $\sB$. 
\end{proof}

\begin{remark}
(1) We emphasize that, though the product $f \times g$ is defined for
functions from $\FXB$, the multiplicative property of $R$ is 
only true when at least one function belongs to $\mc F(X, \sB)$.

(2) To illustrate Theorem \ref{thm multiplicative props of R}, we show
that (\ref{eq  R(f times g) = R(f) R(g)}) holds for the transfer 
operator $R(\sigma)$ (see (\ref{eq example R_sigma IFS})) under the
condition $\sum_{y : \sigma(y) = x} W(y) =1$ for all $x$.  
Then
\begin{equation*}
\setlength{\jot}{10pt}
\begin{aligned}
R_\sigma((f\cs) g) & =  \sum_{y : \sigma(y) = x} W(y) f(\sigma(y)) 
g(y)\\
& =  f(x) \sum_{y : \sigma(y) = x} W(y) g(y)\\
& =  \sum_{y : \sigma(y) = x} W(y) f(\sigma(y))
\sum_{y : \sigma(y) = x} W(y) g(y)\\
& =  R_\sigma (f \cs) R_\sigma(g).
\end{aligned}
\end{equation*}
\end{remark}

\subsection{Harmonic functions and coboundaries for 
transfer operators}\label{subsect harmonic and coboundaries}

Harmonic functions will be discussed in the book repeatedly. We
mention first  a couple of simple facts about characteristic functions.

In what follows, we will work with transfer operators. In this case, 
the study of harmonic functions is more interesting. 
We begin with a simple result about $\sigma$-invariant functions.

\begin{lemma} (1) Let $\sigma$ be a surjective  endomorphism of a
 standard Borel space $(X, \B)$. Suppose  $s = 
 \sum_{i} c_i\chi_{A_i}$ is a non-negative simple function such that 
 $\sigma^{-1}(A_i) = A_i$ for all $i$. Then $s$ is a harmonic function 
 for any normalized  transfer  operator 
 $(R, \sigma)  \in \mc R(\sigma)$.

(2) Let $G = \{g \in \mc F(X) : g\cs = g\}$ be the set of 
$\sigma$-invariant functions. Then every non-negative function $g \in
 G$ is harmonic with respect to any normalized transfer operator $(R,
  \sigma)$.  
\end{lemma}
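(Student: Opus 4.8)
The plan is to observe that part (1) is a special case of part (2), and then to dispatch part (2) directly from the pull-out property together with the normalization $R(\mathbf 1) = \mathbf 1$.

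First I would record the elementary identity $\chi_A \circ \sigma = \chi_{\sigma^{-1}(A)}$, valid for every Borel set $A$. When $A$ satisfies $\sigma^{-1}(A) = A$ this reads $\chi_A \circ \sigma = \chi_A$, so by linearity the simple function $s = \sum_i c_i \chi_{A_i}$ appearing in (1) is $\sigma$-invariant: $s \circ \sigma = s$. Since $s$ is assumed non-negative, it lies in the cone of non-negative elements of $G$, and therefore (1) follows at once from (2). One could equally prove (1) directly, without passing through (2), by the computation
\[
R(s) = \sum_i c_i R(\chi_{A_i}\circ\sigma) = \sum_i c_i \chi_{A_i} R(\mathbf 1) = \sum_i c_i \chi_{A_i} = s,
\]
which is the same argument specialized to simple functions.

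For (2), let $g \in G$ with $g \geq 0$. I would apply the pull-out property (\ref{eq char property of r via sigma 1}) with the first function taken to be $g$ and the second taken to be the constant function $\mathbf 1$:
\[
R\big((g\circ\sigma)\,\mathbf 1\big) = g\, R(\mathbf 1).
\]
By $\sigma$-invariance $(g\circ\sigma)\mathbf 1 = g$, so the left-hand side equals $R(g)$; by normalization $R(\mathbf 1) = \mathbf 1$, the right-hand side equals $g$. Hence $R(g) = g$, and since $g \geq 0$ this says precisely that $g$ is a harmonic function for $(R,\sigma)$ in the sense of Definition \ref{def transfer operator}(5).

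There is no genuine obstacle here; the only points requiring (minimal) care are getting the characteristic-function identity right and invoking the pull-out property in the correct order, with the $\sigma$-composed factor in the first slot. The normalization hypothesis is used in an essential way: for a merely strict transfer operator one would only obtain $R(g) = g\,R(\mathbf 1)$, so $g$ would be harmonic only on the set where $R(\mathbf 1) = \mathbf 1$.
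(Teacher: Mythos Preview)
Your proof is correct and follows essentially the same approach as the paper: both rely on the identity $\chi_A\circ\sigma = \chi_{\sigma^{-1}(A)}$ together with the pull-out property and normalization $R(\mathbf 1)=\mathbf 1$. The only cosmetic difference is that the paper proves (1) directly and remarks that (2) is similar, whereas you observe that (1) is a special case of (2) and prove (2) first.
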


\begin{proof} (1) We notice that if $\sigma^{-1}(A) = A$, then $
\chi_A(x) = \chi_{\sigma^{-1}(A)}(x) = \chi_A(\sigma x)$. Therefore,
$$
R(s) = R(\sum_{i} c_i\chi_{A_i}) = R(\sum_{i} c_i\chi_{A_i}\cs) = s.
$$

(2) The statement can be proved similarly to (1). 
\end{proof}

We have already noticed how important is the property $R(\mathbf 1)
= \mathbf 1$ in the study of  transfer operators.  
In the next lemma, we give simple conditions under which a transfer 
operator can be normalized. 

\begin{lemma}\label{lem reduce to R1=1} (1) If $(R, \sigma)$ is a
 strict transfer operator acting in the space $\FXB$, then $R_1 =
  (R\mathbf 1)^{-1}R$ is a normalized transfer operator.

(2) Let $(R, \sigma)$ be a transfer operator, and let $k$ be a
 non-negative Borel function. Then the  operator
\be\label{eq def of R_k}
R_k(f)(x) = \begin{cases}
                   \dfrac{R(fk)}{k}(x), & \mbox{if}\ \  x \in \{k \neq 0\} \\
                             0,  &  \mbox{if} \ \ x \in \{ k =0 \}
\end{cases}
\ee
is a well defined transfer operator. Moreover, $R_k$ is a normalized 
transfer operator if  and only if $k$ is a harmonic function for $R$. 
\end{lemma}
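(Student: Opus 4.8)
The plan is to verify the two claims by direct computation, using only the positivity and the pull-out property \eqref{eq char property of r via sigma 1}. For part (1), I first note that $R\mathbf 1 > 0$ everywhere by the hypothesis that $(R,\sigma)$ is strict, so $(R\mathbf 1)^{-1}$ is a well-defined nonnegative Borel function and $R_1 := (R\mathbf 1)^{-1}R$ makes sense as an operator on $\FXB$. Positivity of $R_1$ is immediate from positivity of $R$ together with $(R\mathbf 1)^{-1} > 0$. For the pull-out property, I compute for $f,g \in \FXB$:
\begin{equation*}
\setlength{\jot}{10pt}
\begin{aligned}
R_1((f\circ\sigma)g) &= \frac{1}{R\mathbf 1}\, R((f\circ\sigma)g)\\
&= \frac{1}{R\mathbf 1}\, f\, R(g)\\
&= f\, R_1(g),
\end{aligned}
\end{equation*}
where the middle step is \eqref{eq char property of r via sigma 1}. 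Finally $R_1(\mathbf 1) = (R\mathbf 1)^{-1} R(\mathbf 1) = \mathbf 1$, so $R_1$ is normalized.

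For part (2), the operator $R_k$ is defined on $\{k\neq 0\}$ by $R_k(f) = R(fk)/k$ and extended by $0$ on $\{k=0\}$; I should first check this is genuinely well-defined, i.e. that $R(fk)$ vanishes on $\{k=0\}$ when it needs to for consistency — but actually the definition is by cases and imposes nothing there, so well-definedness is automatic once I observe $\{k\neq 0\}$ and $\{k=0\}$ are Borel. Positivity of $R_k$ follows since $f\geq 0$ gives $fk\geq 0$, hence $R(fk)\geq 0$, hence $R(fk)/k \geq 0$ on $\{k\neq 0\}$. The main point is the pull-out property. On $\{k\neq 0\}$ I compute
\begin{equation*}
\setlength{\jot}{10pt}
\begin{aligned}
R_k((f\circ\sigma)g)(x) &= \frac{R((f\circ\sigma)gk)(x)}{k(x)}\\
&= \frac{f(x)\, R(gk)(x)}{k(x)}\\
&= f(x)\, R_k(g)(x),
\end{aligned}
\end{equation*}
using \eqref{eq char property of r via sigma 1} applied to the pair $(f, gk)$. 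On $\{k=0\}$ both sides are $0$ by definition, so the identity holds on all of $X$. Thus $R_k$ is a transfer operator for $\sigma$.

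For the normalization criterion: $R_k(\mathbf 1)(x) = R(k)(x)/k(x)$ on $\{k\neq 0\}$ and $0$ on $\{k=0\}$. If $k$ is harmonic, i.e. $Rk = k$, then on $\{k\neq 0\}$ this is $k(x)/k(x)=1$; on $\{k=0\}$ the value is $0$, not $1$, so strictly speaking one wants $R_k(\mathbf 1) = \mathbf 1$ to be read modulo the set $\{k = 0\}$ — this is the one subtlety I would flag. Conversely, if $R_k(\mathbf 1) = \mathbf 1$ on $\{k \neq 0\}$, then $R(k)(x) = k(x)$ there, and on $\{k=0\}$ we have $R(k)\geq 0 = k$ trivially, with equality forced if one additionally knows $R$ does not create mass on the null set of $k$; in the intended reading (mod $0$, or under the convention that $k>0$) the equivalence is clean. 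I expect the only real obstacle is bookkeeping on $\{k=0\}$: stating precisely in what sense "harmonic $\iff$ normalized" holds, which the surrounding text's "$\mod 0$" conventions resolve. I would therefore phrase the conclusion as: $R_k$ is normalized on $\{k\neq 0\}$, and exactly normalized precisely when $k$ is harmonic for $R$ (under the standing mod-$0$ convention, or assuming $k > 0$).
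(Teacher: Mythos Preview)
Your proof is correct and follows essentially the same approach as the paper: direct verification of positivity and the pull-out property, followed by the observation that $R_k(\mathbf 1)=R(k)/k$. The paper's proof is simply more terse, calling part (1) ``trivial'' and writing the pull-out computation for $R_k$ in a single displayed line; your careful bookkeeping on $\{k=0\}$ is deferred in the paper to a remark immediately following the lemma, where the Schwarz inequality is used to show $k(x)=0 \Rightarrow R(fk)(x)=0$ when $k$ is harmonic.
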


\begin{proof} The first statement is trivial. In order to prove the
 second one, we check that $(R_k, \sigma)$ satisfies the definition of
  a  transfer operator. Indeed, the positivity of $R_k$ is clear, and the 
 pull-out property follows from the relation
$$
R_k((f\circ\sigma) g) = \frac{R((f\circ\sigma) g k)}{k} = f\frac{R(g 
k)}{k} = fR_k(g).
$$
To finish the proof,  we observe that the property $R_k( \mathbf 1) = 
\mathbf 1$ holds if and only if $k$ is a harmonic function for $R$. 
\end{proof}

\begin{remark} (1) We note that the operator $R_k$ can be 
considered as an abstract \textit{Doob transform}. \index{Doob transform}
We refer, for  instance,  to
the papers   \cite{AlbeverioUgolini2015, AliliGraczykZak2015}  
 for more details.

(2) It is useful to justify the correctness of the definition 
of $R_k$ in (\ref{eq def of R_k}). For this, we notice that in case when
 $h$ is a harmonic function for $R$, then
$$
h(x) = 0 \ \Longrightarrow  \ \ R(fh)(x) =0.
$$
Indeed, since $R$ is positive, the Schwarz inequality 
\index{Schwarz inequality} shows that
$$
|R(fh)| \leq \sqrt{R(f^2)} \sqrt{R(h^2)} \leq \sqrt{R(f^2)} R(h) = 
\sqrt{R(f^2)} h,
$$
and the result follows.

In fact, one can prove even a stronger result. Write
$fh = f\sqrt{h}\sqrt{h}$, and apply the Scwarz inequality
$$
|R(fh)| \leq R(f^2h)^{1/2} R(h)^{1/2} \leq R(f^2h)^{1/2} h^{1/2} .
$$
Repeating this inequality $k$ times, we obtain
$$
|R(fh)| \leq R(f^{2^k}h)^{2^{-k}} h^{2^{-1}+ \cdots + 2^{-k}}.
$$
\end{remark}

\begin{remark}
 Suppose $(R_1, \sigma_1)$ and $(R_2, \sigma_2)$ are 
isomorphic transfer operators. Let $T: X_1\to X_2$ be a one-to-one 
Borel map that implements the isomorphism. This means, in particular, 
that $T_*R_2 = R_1T_*$ where $T_*$ is the induced map from $
\mathcal F(X_2)$ to $\mathcal F(X_1)$. Then $T_*$ realizes a one-
to-one correspondence between the sets of harmonic functions
$H(R_2)$ and $H(R_1)$. Indeed, let 
$h \in H(R_2)$ be a $R_2$-harmonic function. Then $T_* h = T_*R_2 
h = R_1T_* h$.
\end{remark}

If $Rh =h$ for a transfer operator $R$, then this fact  can be 
treated  as the existence of an 
eigenfunction $h$ corresponding to the eigenvalue 1. Similarly, we can 
represent $H(R)$ as the eigenspace corresponding to the eigenvalue 1. 

The following observation is based on Corollary \ref{cor U_sigma 
isometry in Borel}.

\begin{lemma} Let $(R, \sigma)$ be a transfer operator acting in
$\FXB$. Then  $\mc F(X, \sB)$ is the eigenspace corresponding to the 
eigenvalue $R(\mathbf 1)$, that is $\mc F(X, \sB) = \{h \in \mathcal 
F(X) : Rh = R(\mathbf 1) h\}$.
\end{lemma}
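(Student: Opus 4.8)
This is the ``dual pair'' statement of Corollary \ref{cor U_sigma isometry in Borel} recast as a spectral assertion, so the plan is to prove the two inclusions
$$
\mc F(X, \sB) \subseteq \{h\in\FXB : Rh = R(\mathbf 1)h\} \subseteq \mc F(X,\sB),
$$
using only the pull-out property (\ref{eq char property of r via sigma 1}) and the structural facts about $R|_{\mc F(X,\sB)}$ already obtained (Lemma \ref{lem TO is onto 1-1}, Corollary \ref{cor U_sigma isometry in Borel}).

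For ``$\subseteq$'' I would invoke the description of $\sB$-measurable functions recalled in Section \ref{section Basics}: $h\in\mc F(X,\sB)$ iff $h = g\cs$ for some $g\in\FXB$. Then the instance $R(f\cs) = fR(\mathbf 1)$ of the pull-out property (equation (\ref{eq simplest pull out prop})) gives $R(h) = g\,R(\mathbf 1)$; reading $h = U_\sigma g$ via the linear bijection $U_\sigma$ of (\ref{eq U_sigma defin in Borel}) (injective because $\sigma$ is onto), this is precisely the relation $Rh = R(\mathbf 1)h$ on $\mc F(X,\sB)$, i.e. $R$ acts there as the multiplication operator $M_{R(\mathbf 1)}$. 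For ``$\supseteq$'' I would start from $Rh = R(\mathbf 1)h$, note that $R(\mathbf 1)h = R(h)$ lies in the range of $R|_{\mc F(X,\sB)}$, and use Lemma \ref{lem TO is onto 1-1} (that $R$ is one-to-one from $\mc F(X,\sB)$ onto $\FXB$) to identify $h$ with a unique element of $\mc F(X,\sB)$; in the strict case one can alternatively use the splitting $\FXB = Ker(R)\oplus\mc F(X,\sB)$ from Theorem \ref{thm f = f0 + ol f} and Corollary \ref{cor properties of E}, write $h = h_0 + \bar h$, apply $R$, and cancel $R(\mathbf 1)$ (legitimate on $\{R(\mathbf 1)>0\}$) to force $h_0 = 0$.

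The main obstacle, I expect, is the reverse inclusion together with the care needed in the identification $h\leftrightarrow g$ with $h = g\cs$: the eigenvalue equation $Rh = R(\mathbf 1)h$ couples $h$ with its ``base'' $g$, so one must be explicit about which function is being constrained and work consistently inside the $U_\sigma$-picture; and when $R$ is not strict the cancellation $R(\mathbf 1)\psi = 0 \Rightarrow \psi = 0$ breaks down on $\{R(\mathbf 1) = 0\}$, so the argument has to be localized there and matched with the behaviour of $R$ on that set.
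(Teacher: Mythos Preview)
Your argument for the inclusion $\mc F(X,\sB)\subseteq\{h:Rh=R(\mathbf 1)h\}$ breaks down precisely at the step where you write ``this is precisely the relation $Rh=R(\mathbf 1)h$''. What the pull-out property (\ref{eq simplest pull out prop}) actually gives, for $h=g\cs\in\mc F(X,\sB)$, is
\[
R(h)=R(g\cs)=g\,R(\mathbf 1),
\]
and this is \emph{not} $R(\mathbf 1)\,h=R(\mathbf 1)\,(g\cs)$ unless $g=g\cs$. You flag exactly this danger in your last paragraph (``one must be explicit about which function is being constrained''), but then the slippage from $g$ to $h$ happens anyway. A concrete check: take $\sigma(x)=2x\bmod 1$ on $[0,1]$ and $R(f)(x)=\tfrac12\bigl(f(x/2)+f((x+1)/2)\bigr)$, so $R(\mathbf 1)=\mathbf 1$; for $g(x)=x$ and $h=g\cs$ one gets $R(h)(x)=x$ while $R(\mathbf 1)h(x)=2x\bmod 1$.

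The paper does not actually prove this lemma; it only says the observation ``is based on'' Corollary \ref{cor U_sigma isometry in Borel}. What that corollary really establishes is that $R\circ U_\sigma=M_{R(\mathbf 1)}$ as a map $\FXB\to\FXB$, i.e.\ $R$ restricted to $\mc F(X,\sB)$ is a multiplication operator \emph{after} the identification $U_\sigma$. Read literally as the set equality $\mc F(X,\sB)=\{h\in\FXB:Rh=R(\mathbf 1)h\}$, the statement fails in the example above (where the right-hand side is the set of $R$-harmonic functions). So either the lemma is intended in the ``up to the identification $U_\sigma$'' sense, or the equality should be $\FXB=\{g:R(g\cs)=R(\mathbf 1)g\}$; in neither reading does your decomposition-and-cancellation strategy for ``$\supseteq$'' apply as written.
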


We say that two transfer operators $R_1$ and $R_2$ from
$\mc R(\sigma)$ are \textit{equivalent} 
\index{transfer operator ! equivalent}
($R_1 \sim R_2$ in symbols) 
if there exists a non-negative $k \in \FXB$ such that 
\be\label{eq equivalence of TO}
kR_2(f) = R_1(kf), \ \ \ \forall f \in \FXB. 
\ee
Let $\Gamma$  denote the multiplicative group $\FXB_+^0$
 of strictly positive
Borel functions. For the sake of simplicity, we will assume that the 
function $k$ from (\ref{eq equivalence of TO}) is taken from 
$\Gamma$ though this restriction can be, in general, omitted.

\begin{proposition}
(1) $R_1 \sim R_2$ is an equivalence relation on the set $\mc 
R(\sigma)$ of transfer operators.

(2) $(R_k)_g = (R_g)_k = R_{kg}$ where $k, g \in \FXB_+ $.

(3) The map $r(k, R) \mapsto R_k$ defines an action of  $\Gamma = 
\FXB_+^0$ on the set  $\mc R(\sigma)$. The equivalence relation
 $\sim$ coincides with the partition into orbits of the action $r$. 

(4) The action of $\Gamma$ on $\mc R(\sigma)$ is not free.  If
 $\Gamma_0 = \Gamma \cap \{g \in  \mc F(X, \B) : g\cs = g\}$,
  then $\Gamma_0$ belongs to the stabilizer of every $R \in 
  \mc R(\sigma)$. 
 
\end{proposition}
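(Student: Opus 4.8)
The plan is to prove all four assertions by directly unwinding the definition of the relation $\sim$ in (\ref{eq equivalence of TO}) and the definition of the operator $R_k$ in (\ref{eq def of R_k}), using throughout that the elements of $\Gamma = \FXB_+^0$ are strictly positive, so that pointwise division by them causes no trouble. For (1): reflexivity follows by taking $k = \mathbf 1$. For symmetry, suppose $kR_2(f) = R_1(kf)$ for all $f$; replacing $f$ by $f/k$ (legitimate since $k \in \Gamma$) gives $R_1(f) = kR_2(f/k)$, i.e. $k^{-1}R_1(f) = R_2(k^{-1}f)$ with $k^{-1} \in \Gamma$, so $R_2 \sim R_1$. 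For transitivity, from $k_1 R_2(f) = R_1(k_1 f)$ and $k_2 R_3(f) = R_2(k_2 f)$ we substitute the second into the first (applied to $k_2 f$) to obtain $R_1(k_1 k_2 f) = k_1 R_2(k_2 f) = k_1 k_2 R_3(f)$, and $k_1 k_2 \in \Gamma$.

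For (2), I would simply compute: for $f \in \FXB$,
$$
(R_k)_g(f) = \frac{R_k(fg)}{g} = \frac{1}{g}\cdot\frac{R(fgk)}{k} = \frac{R(fgk)}{kg} = R_{kg}(f),
$$
on $\{kg \neq 0\}$, and both sides are $0$ off that set; the symmetric computation gives $(R_g)_k(f) = R_{kg}(f)$ as well. The only point requiring a word is the bookkeeping of the zero sets $\{k=0\}$ and $\{g=0\}$ in the piecewise definition (\ref{eq def of R_k}), which is routine and is vacuous for $k,g \in \Gamma$. For (3): by Lemma \ref{lem reduce to R1=1}(2) each $R_k$ lies in $\mc R(\sigma)$, so $r$ maps into $\mc R(\sigma)$; the action axioms are $R_{\mathbf 1} = R$ (immediate from (\ref{eq def of R_k})) and $(R_g)_k = R_{kg}$, which is exactly part (2), so $r$ is a $\Gamma$-action. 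Finally, $R_2$ lies in the $r$-orbit of $R_1$ iff $R_2 = (R_1)_k$ for some $k \in \Gamma$, and $R_2(f) = R_1(kf)/k$ is equivalent to $kR_2(f) = R_1(kf)$, i.e. to $R_1 \sim R_2$; hence the orbit partition and $\sim$ coincide.

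For (4), let $g \in \Gamma_0$, so $g$ is strictly positive and $g\cs = g$. Since $g = g\cs$, the pull-out property (\ref{eq char property of r via sigma 1}) applied with the factor-through-$\sigma$ function equal to $g$ itself yields, for every $R \in \mc R(\sigma)$ and every $h \in \FXB$,
$$
R(gh) = R((g\cs)h) = g\,R(h),
$$
whence $R_g(h) = R(gh)/g = R(h)$, i.e. $R_g = R$. Thus $\Gamma_0$ is contained in the stabilizer of each $R \in \mc R(\sigma)$; since the strictly positive constant functions $c\mathbf 1$, $c > 0$, all lie in $\Gamma_0$ and form a subgroup isomorphic to $(\R_{>0},\cdot)$ which is not reduced to $\{\mathbf 1\}$, the action $r$ is not free. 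I do not anticipate a genuine obstacle here; the only subtle points are the zero-set analysis in (2) (avoidable by sticking to $\Gamma$) and making sure in (4) that $g = g\cs$ is literally in the hypothesis slot of the pull-out identity rather than merely $\sB$-measurable.
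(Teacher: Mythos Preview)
Your proposal is correct and follows essentially the same direct-computation approach as the paper: unwinding the definitions of $\sim$ and $R_k$, checking symmetry via the substitution $f \mapsto f/k$, verifying the action axiom via part (2), and using the pull-out property with $g = g\cs$ for (4). Your deduction of non-freeness from the fact that the positive constants lie in $\Gamma_0$ is in fact cleaner than the paper's own argument, which appeals to the identity operator $I$ as a common fixed point---a step that is only available when $I \in \mc R(\sigma)$, i.e., when $\sigma$ itself is the identity.
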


\begin{proof}
Statement (1) is verified directly. If $R_1(f) = \dfrac{R_2(kf)}{k}$, 
then, denoting $g = kf$, we get 
$$
R_2(g) = \frac{R_1(gk^{-1})}{k^{-1}};
$$
this proves that $\sim$ is symmetric. Clearly, if $R_1 \sim R_2$ and
$R_2 \sim R_3$, then $R_1 \sim R_3$.

(2) follows from (1).

To see that (3) holds, we use (2) and compute
$$
r(k_1, r(k_2, R)) = r(k_1, R_{k_2}) =  (R_{k_2})_{k_1} = 
R_{k_1k_2}.
$$
Hence, for a fixed $R \in \mc R(\sigma)$, the set $\{R_k : k \in 
\FXB_+^0\}$ is the orbit of the action of $\Gamma = 
\FXB_+^0$ on the set $\mc R(\sigma)$.

(4) The action of $\Gamma$ is not free: $k(I) = I, \forall k \in 
\Gamma$ where $I(f) = f$ (the identity map). Moreover, if 
$k \in \Gamma_0$, then   $k\cs = k$ and we have
$$
(kR)(f) = R_{k}(f) = \frac{R((k\cs) f}{k} = R(f)
$$
where $R\in \mc R(\sigma)$ is a fixed transfer operator and $f$ is any
Borel function.

\end{proof}

\begin{lemma}\label{lem Harmonic for R_k}
For $\sigma\in End(X, \B)$, let $R$ be  a transfer operator from 
$\mc R(\sigma0$. If $k \in \Gamma$, then the map $h \mapsto
kh$ sends the set $H(R_k)$ onto $H(R)$.  
\end{lemma}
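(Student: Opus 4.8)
The plan is to show that $h \mapsto kh$ maps $H(R_k)$ into $H(R)$, that it maps onto, and that (since $k \in \Gamma$ is strictly positive, hence invertible) it has the obvious inverse $g \mapsto k^{-1}g$, giving the bijection. The key computational input is the defining formula $R_k(f) = R(fk)/k$ from Lemma~\ref{lem reduce to R1=1}(2), together with the fact that $k$ being strictly positive means division by $k$ is harmless and every nonnegative function can be written as $k^{-1}g$ for a nonnegative $g$.

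First I would take $h \in H(R_k)$, so $h \geq 0$ and $R_k h = h$. Unwinding the definition of $R_k$, this says $R(hk)/k = h$, i.e. $R(kh) = kh$; since $kh \geq 0$, this means exactly $kh \in H(R)$. So the map $h \mapsto kh$ sends $H(R_k)$ into $H(R)$. Next, for surjectivity, I would take $g \in H(R)$, so $g \geq 0$ and $Rg = g$, and set $h := k^{-1}g$, which is nonnegative since $k \in \Gamma$. Then $R_k(h) = R(hk)/k = R(g)/k = g/k = h$, so $h \in H(R_k)$ and $kh = g$, exhibiting $g$ as an image. This already proves the claim as stated (``sends $H(R_k)$ onto $H(R)$''); the two displayed computations together also show the map is a bijection with inverse $g \mapsto k^{-1}g$.

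There is really no serious obstacle here: the statement is essentially a restatement of the definition of $R_k$, and the only point requiring any care is that $k$ is strictly positive (so that $k^{-1}$ is a legitimate Borel function, division is unambiguous, and the case distinction $\{k \neq 0\}$ versus $\{k = 0\}$ in~\eqref{eq def of R_k} never arises). I would state this explicitly at the outset so the two one-line verifications above go through cleanly. If one wanted to allow $k$ merely nonnegative rather than in $\Gamma$, one would have to argue that $R(fk) = 0$ on $\{k = 0\}$ and keep track of the support of $k$; but since the hypothesis is $k \in \Gamma$, this complication does not occur.
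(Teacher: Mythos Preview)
Your proof is correct and takes essentially the same approach as the paper: both rest on the single computation $R_k(h) = R(kh)/k = h \iff R(kh) = kh$, which is exactly what the paper displays. Your version is more explicit about separating the ``into'' and ``onto'' directions and about why $k \in \Gamma$ makes division unproblematic, but the underlying argument is identical.
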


\begin{proof} We need to show that $h \in H(R_k)$ if and only if 
$hk \in H(R)$. This result follows from the relation:
$$
R_k(h) = \frac{R(kh)}{k} = h  \ \Longleftrightarrow \  R(hk) = hk.
$$

\end{proof}

\begin{definition}\label{def R and sigma coboundaries}
Let $(R, \sigma)$ be a transfer operator. It is said that a function
$f \in \FXB$ is a \textit{$\sigma$-coboundary} if there exists some 
$g \in \FXB$ such that $(g\cs) f = g$. 

We say that $f \in \FXB$ is an \textit{$R$-coboundary} \index{coboundary}
if there exists $k \in \FXB$ such that $kf = R(k)$. The set of all 
 $R$-coboundaries is denoted by $Cb(R)$.
\end{definition}

From Definition \ref{def R and sigma coboundaries} we can 
deduce the following result.

\begin{proposition} 
Let $R$ be a normalized operator from $\mc R(\sigma)$. 

(1)  If $f$ is  a $\sigma$-coboundary, then $R(f)$ is an 
$R$-coboundary.

(2) 
$$
Cb(R) = Cb(R_k), \ \ \ \ \ \forall k \in \Gamma.
$$
\end{proposition}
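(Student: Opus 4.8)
The plan is to reduce both assertions directly to the pull-out property (\ref{eq char property of r via sigma 1}) and to the explicit form of the Doob-type transform $R_k$ from Lemma \ref{lem reduce to R1=1}(2); no deeper machinery is needed.

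For part (1), suppose $f$ is a $\sigma$-coboundary, so by Definition \ref{def R and sigma coboundaries} there is $g \in \FXB$ with $(g\cs)f = g$. I would simply apply $R$ to both sides of this identity. Reading the pull-out property with $g$ in the role of the ``outside'' function, one has $R((g\cs)f) = g\,R(f)$, so the relation $(g\cs)f = g$ becomes $g\,R(f) = R(g)$. By the definition of an $R$-coboundary this is exactly the statement that $R(f) \in Cb(R)$, with the function $g$ itself serving as the required witness $k$. Note that this step uses only the pull-out property and not the normalization $R(\mathbf 1) = \mathbf 1$.

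For part (2), recall that when $k \in \Gamma$ the function $k$ is strictly positive, so $R_k(f) = R(fk)/k$ with no exceptional set. I would establish $Cb(R) = Cb(R_k)$ by exhibiting a correspondence between witnesses. If $f \in Cb(R)$, pick $m \in \FXB$ with $mf = R(m)$ and set $n := m/k$, which is again a Borel function because $k$ is strictly positive; then $R_k(n) = R(nk)/k = R(m)/k = (mf)/k = nf$, so $f \in Cb(R_k)$. Conversely, if $f \in Cb(R_k)$, pick $n \in \FXB$ with $nf = R_k(n) = R(nk)/k$ and set $m := nk$; then $mf = nkf = R(nk) = R(m)$, so $f \in Cb(R)$. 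This yields the claimed equality.

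There is no genuine obstacle here: once the pull-out property and the formula $R_k(f) = R(fk)/k$ are in hand, both parts are immediate. The only point requiring a word of care is that the passage from the witness $m$ to $n = m/k$ (and back) stays inside $\FXB$, which is precisely where the hypothesis $k \in \Gamma$ — strict positivity — is used; on the zero set of a merely non-negative $k$ the transform $R_k$ degenerates and this substitution would fail.
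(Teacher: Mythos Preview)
Your proof is correct and follows essentially the same approach as the paper: for (1), apply $R$ to the coboundary identity and use the pull-out property; for (2), pass between witnesses via $n = m/k$ (the paper writes $h = gk^{-1}$). Your observation that normalization is not actually used in (1) is accurate and worth noting.
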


\begin{proof}
For (1), we simply apply $R$ to the equality $(g\cs) f = g$ and obtain
the result. The converse is not true, in general.

Let $f$ be an $R$-coboundary, i.e., there exists some $g \in \FXB$
such that $gf = R(g)$. Fix $k \in \Gamma.$. We claim that $g \in
 Cb(R_k)$. Indeed, we need to show that there exists $h$ such that 
 $hf =  R_k(h)$ or, equivalently,
 $$
 hf = \frac{R(hk)}{k}.
 $$
 The latter means that $h$ must satisfy the equality
 $$
 hkf = R(hk).
 $$
 If we take $h= gk^{-1}$, then we get $gf = R(g)$ which is true. 
 Hence, $Cb(R) \subset  Cb(R_k)$. The converse inclusion is proved 
 similarly. 
\end{proof}

Let $f$ be a Borel function on $(X, \B)$, and let $\sigma : X \to X$ be 
an onto endomorphism of $(X, \B)$. Define the \textit{cocycle} 
\index{cocycle} generated by $(f, \sigma)$:
$$
\alpha_f(x, \sigma^k) := f(\sigma^{k-1}(x))f(\sigma^{k-2}(x)) \ 
\cdots \ f(x),\ \ \ k \in \N.
$$

We observe that the following fact holds.

\begin{lemma}\label{lem cocycles for harmonic}
 Suppose that $(R, \sigma)$ is a transfer operator acting in $\FXB$,
and let $h$ be an $R$-harmonic function. Then
$$
R(\alpha_h(x, \sigma^k)) = (h(x))^k, \ \ k \in \N.
$$
\end{lemma}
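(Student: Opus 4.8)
The plan is to prove the identity by induction on $k$, using the pull-out property together with the defining relation $Rh = h$ and the fact that $\alpha_h(x,\sigma^k)$ decomposes naturally into a factor depending on $x$ times a factor pulled back through $\sigma$. First I would record the base case $k=1$: here $\alpha_h(x,\sigma) = h(x)$, and one must interpret the statement correctly. The right-hand side is $(h(x))^1 = h(x)$, while the left-hand side is $R(\alpha_h(\cdot,\sigma))(x) = R(h)(x) = h(x)$ since $h$ is harmonic; so the base case is immediate.

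For the inductive step, the key observation is the cocycle factorization
\[
\alpha_h(x,\sigma^{k+1}) = f(\sigma^{k}(x))\,f(\sigma^{k-1}(x))\cdots f(x) = \alpha_h(\sigma(x),\sigma^{k})\cdot h(x),
\]
where I write $f=h$ for brevity; that is, $\alpha_h(x,\sigma^{k+1}) = \big(\alpha_h(\cdot,\sigma^k)\circ\sigma\big)(x)\cdot h(x)$. Then I would apply $R$ and use the pull-out property \eqref{eq char property of r via sigma 1} in the form $R((g\circ\sigma)\,h) = g\cdot R(h)$, with $g = \alpha_h(\cdot,\sigma^k)$. This gives
\[
R(\alpha_h(\cdot,\sigma^{k+1}))(x) = R\big((\alpha_h(\cdot,\sigma^k)\circ\sigma)\cdot h\big)(x) = \alpha_h(x,\sigma^k)\cdot R(h)(x) = \alpha_h(x,\sigma^k)\cdot h(x).
\]
Now I would take $R$ of both sides once more — or rather, observe I have reduced to evaluating $\alpha_h(x,\sigma^k)\cdot h(x)$, which is not yet the claimed $(h(x))^{k+1}$; so the induction has to be set up to carry $R(\alpha_h(\cdot,\sigma^k)) = h^k$ as the hypothesis. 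The cleanest route: apply the pull-out identity after peeling off the \emph{innermost} factor $h(x)$ rather than the outermost one, so that the remaining product is genuinely a composition with $\sigma$. Concretely, write $\alpha_h(x,\sigma^{k}) = \big(\alpha_h(\cdot,\sigma^{k-1})\circ\sigma\big)(x)\cdot h(x)$ is wrong in index bookkeeping; the correct factorization is $\alpha_h(x,\sigma^k) = h(\sigma^{k-1}x)\cdots h(x)$ and $h(\sigma^{j}x) = h(\sigma^{j-1}(\sigma x))$, so $\alpha_h(x,\sigma^k) = \alpha_h(\sigma x,\sigma^{k-1})\cdot h(x)$. Applying $R$ and the inductive hypothesis $R(\alpha_h(\cdot,\sigma^{k-1})) = h^{k-1}$ together with pull-out yields $R(\alpha_h(\cdot,\sigma^k))(x) = h(x)^{k-1}\cdot h(x) = h(x)^k$, completing the induction.

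I do not anticipate a genuine obstacle here; the only thing requiring care is the index bookkeeping in the cocycle — making sure that the factor left over after pulling out the degree-$k-1$ piece through $\sigma$ is exactly $h(x)$ and not $h(\sigma^{k-1}x)$, and that one is inducting from $k-1$ to $k$ with the pull-out applied to $g\circ\sigma$ where $g = \alpha_h(\cdot,\sigma^{k-1})$. A secondary small point is the $k=1$ normalization: $\alpha_h(x,\sigma^1)$ should be read as $h(x)$ (the empty-suffix convention $\alpha_h(x,\sigma^0)=\mathbf 1$ makes the recursion $\alpha_h(x,\sigma^{k}) = \alpha_h(\sigma x,\sigma^{k-1})h(x)$ work down to $k=1$), after which everything follows formally from positivity being irrelevant and only the linear pull-out relation and harmonicity $Rh=h$ being used.
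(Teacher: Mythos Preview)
Your inductive step has a genuine gap. From the factorization $\alpha_h(x,\sigma^k) = (\alpha_h(\cdot,\sigma^{k-1})\circ\sigma)(x)\cdot h(x)$ and the pull-out property $R((g\circ\sigma)\,f) = g\cdot R(f)$ with $g = \alpha_h(\cdot,\sigma^{k-1})$ and $f=h$, what you actually obtain is
\[
R(\alpha_h(\cdot,\sigma^k))(x) \;=\; \alpha_h(x,\sigma^{k-1})\cdot R(h)(x) \;=\; \alpha_h(x,\sigma^{k-1})\cdot h(x),
\]
and the first factor here is the cocycle $\alpha_h(x,\sigma^{k-1})$ \emph{itself}, not $R$ applied to it. Your inductive hypothesis $R(\alpha_h(\cdot,\sigma^{k-1})) = h^{k-1}$ therefore cannot be invoked at this point. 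You in fact noticed exactly this obstruction when you first attempted the step from $k$ to $k+1$ (``which is not yet the claimed $(h(x))^{k+1}$''), but then made the same slip after re-indexing to go from $k-1$ to $k$: the pull-out puts $R$ on $h$, not on the cocycle factor.

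Concretely, the argument does give the right answer for $k=1$ and $k=2$ --- the latter is precisely the computation $R((h\circ\sigma)h) = h\,R(h) = h^2$ that the paper displays --- but for $k=3$ one gets $R(\alpha_h(\cdot,\sigma^3))(x) = \alpha_h(x,\sigma^2)\cdot h(x) = h(\sigma x)\,h(x)^2$, which equals $h(x)^3$ only when $h\circ\sigma = h$. The paper's own proof is just as brief (it does the single step and writes ``Then the result follows by induction''), so you are following its approach faithfully; the gap you have is the same one the paper glosses over.
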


\begin{proof} We calculate
$$
R(\alpha_h(x, \sigma)) = R((h\circ \sigma) h)(x) = h(x)R(h)(x) = 
h^2(x).
$$
Then the result follows by induction.
\end{proof}

\newpage

%Two $\sigma$-cocycles $\alpha_f$ and $\alpha_g$ are called 
%\textit{cohomologous} if there exists a Borel function $\varphi$
% such that
%$$
%\varphi(\sigma x) f(x) = g(x)\varphi(x), \ \ \mbox{for \ all}\ x\in X.
%$$
%It follows that $\varphi(\sigma x) \alpha_f(x, \sigma^{k}) = 
%\alpha_g(x, \sigma^{k})\varphi(x)$ for all $k\in \N$.

\section{Transfer operators on measure spaces}\label{sect integrable 
operators}

 Our starting point is a fixed pair $(R, \sigma)$ on $(X, \B)$ making up 
 a transfer operator. In the next two sections we turn to a systematic 
 study of specific and important sets of measures on $(X, \B)$ and 
 actions of $(R, \sigma)$ on these sets of measures. These classes of 
 measures in turn lead to a \textit{structure theory} for our 
 given transfer 
 operator $(R, \sigma)$. Our corresponding structure results are 
 Theorems \ref{thm  lambda R abs cntn wrt lambda},  \ref{thm formula 
 for R in L2}, \ref{thm adjoint of R}, \ref{thm from measure 
 equivalence}, and \ref{thm mu R-invariance}.
 
\subsection{Transfer operators and measures}\label{subsect action 
on measures}
In general,  positive and transfer operators on the space $\FXB$ or 
$L^p\sms$ are not continuous. 
But one can use the notion of \textit{order convergence} to define 
the order
continuity of positive operators. Under this assumption we can define
an action of a positive operator on the set $M(X)$ of Borel measures. 
Order continuity of  a positive operator is commonly 
used, in particular,  for positive operators one Banach lattices. The 
literature devoted to this subject is very extensive; we 
refer to \cite{AbramovichAliprantis2001} for details and further 
references. 
 
In this section, we work with \textit{positive} and \textit{transfer}
 operators acting 
on the space of measurable (or integrable) functions on a standard
measure space $\sms$ where $\mu$ is a continuous Borel measure. 
We use the same notation and definitions 
as in case of Borel functions keeping in mind the $\mod\, 0$ 
convention.  By $P$ we denote a  positive operator 
acting on an $L^p(\mu)$-space, $1\leq p \leq \infty$. If 
$\sigma \in End\sms$ is a measurable surjective  endomorphism,  then 
we define a transfer operator $R = (R, \sigma)$ on 
$L^p(\mu)$ as in Definition \ref{def transfer operator}. 
We recall that in this case $\sigma$ is assumed to be
 a non-singular onto endomorphism.

The next definition is formulated in a setting which is suitable for
our purposes. We use the language of Banach lattices keeping in 
mind the functional spaces $\FXB$ and $L^p(\mu)$. 

\begin{definition}
Let $\mathcal E$ be  a Banach lattice and $P$ a positive operator
on $\mathcal E$. A sequence $(f_n)$ in  $\mathcal E$ is \textit{order 
convergent} \index{order ! convergent} to $g \in  \mathcal E$, written $f_n \stackrel{o }
\longrightarrow g$, if  there exists a 
sequence $(h_n)$ in $\mathcal E_+$ such that $h_n \downarrow 0$ 
and $| f_n - g| \leq h_n$ for all $n \geq N$, $N \geq 1$. 

It is said that a positive operator $P$ acting on $\mathcal E$ 
is \textit{order continuous} \index{order ! continuous} if for any 
sequence $(f_n)$ of Borel 
functions, the relation  $f_n \stackrel{o}\longrightarrow 0$ implies
$P(f_n) \stackrel{o}\longrightarrow 0$. 
\end{definition}

\begin{definition}\label{def action of P on mu}
Let  $\mu\in M(X)$ be a Borel measure on $(X, \B)$, and let $P$ 
be a positive  order continuous operator on $\FXB$. 
Define the \textit{action }of $P$ on the set $M(X)$ as follows: 
for every fixed $\mu \in M(X)$, we set 
\be\label{eq action of P on M(X)}
(\mu P)(f) := \int_X P(f) \; d\mu, 
\ee
where $f$ is a measurable  function.
\end{definition}

Applying (\ref{eq action of P on M(X)}) to characteristic functions $
\chi_A$ ($A\in \B$), we have the formula 
\be\label{eq mu P measure of A}
(\mu P)(A) = \int_X P(\chi_A)\; d\mu.
\ee

\begin{lemma}\label{lem prop of mu P} Suppose $P$ is a positive
order continuous operator on $\FXB$. Then

(1)  $\mu P$ is a sigma-finite Borel measure on $(X, \B)$ such that
$$
(\mu P)(X) < \infty \ \Longleftrightarrow \ P(\mathbf 1) \in 
L^1(\mu).
$$

(2) If $X$ is a Polish spaces and $P(C(X)) \subset C(X)$, then
the action $ \mu \mapsto \mu P$ is continuous with respect to 
the weak* topology on $M_1(X)$. 

(3) The set $M_1 P := \{\mu R : \mu \in M_1(X)\}$ is a closed 
subset om $M_1(X)$ in the weak* topology.
\end{lemma}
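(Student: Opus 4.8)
The plan is to treat the three assertions in order, since each builds on the previous one. For part (1), the first task is to check that $\mu P$, as defined by $(\mu P)(A) = \int_X P(\chi_A)\,d\mu$, is genuinely a measure. Finite additivity is immediate from linearity of $P$ and of the integral. Countable additivity is exactly where order continuity of $P$ is needed: if $A = \bigsqcup_n A_n$, then the tail sets $B_N := \bigsqcup_{n > N} A_n$ decrease to $\emptyset$, so $\chi_{B_N} \downarrow 0$ in order, hence $P(\chi_{B_N}) \stackrel{o}{\longrightarrow} 0$, which forces $\int_X P(\chi_{B_N})\,d\mu \to 0$ (dominating by the decreasing sequence $h_N$ from the definition of order convergence and invoking monotone/dominated convergence against $\mu$). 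This gives $\sigma$-additivity. Then $\mu P$ is $\sigma$-finite because $P(\mathbf 1)$ is a nonnegative measurable function, so $X = \bigcup_k \{P(\mathbf 1) \le k\}$ up to $\mu$-null sets if $P(\mathbf 1) < \infty$ a.e.; more directly, since $\mu$ itself is $\sigma$-finite and $X = \bigcup_k \{P(\mathbf 1)\le k\}$, we get $(\mu P)(A \cap \{P(\mathbf 1)\le k\}) \le k\,\mu(A)$ after observing $P(\chi_A) \le P(\mathbf 1)$ by positivity. The equivalence $(\mu P)(X) < \infty \iff P(\mathbf 1) \in L^1(\mu)$ is then just the identity $(\mu P)(X) = \int_X P(\chi_X)\,d\mu = \int_X P(\mathbf 1)\,d\mu$.

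For part (2), I would argue that if $P$ maps $C(X)$ into $C(X)$, then for $f \in C(X)$ the map $\mu \mapsto (\mu P)(f) = \int_X P(f)\,d\mu$ is a composition of the continuous linear functional $\nu \mapsto \int_X P(f)\,d\nu$ on $M_1(X)$ with the identity, and since $P(f) \in C_b(X)$ (bounded because $f$ is bounded and $P$ is positive, so $|P(f)| \le \|f\|_\infty P(\mathbf 1)$ — here one should assume $P(\mathbf 1)$ bounded, or restrict to the normalized case where $P(\mathbf 1)=\mathbf 1$, as is implicit in the weak* setting on $M_1(X)$), testing against $P(f)$ is weak*-continuous. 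Hence $\mu_n \to \mu$ weak* implies $(\mu_n P)(f) \to (\mu P)(f)$ for every $f \in C(X)$, which is precisely weak* convergence $\mu_n P \to \mu P$.

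For part (3), the goal is to show $M_1 P = \{\mu P : \mu \in M_1(X)\}$ is weak*-closed in $M_1(X)$. Since $X$ is Polish, $M_1(X)$ with the weak* topology is itself metrizable (and, if $X$ is compact, compact), so it suffices to show sequential closedness: given $\mu_n P \to \nu$ weak* with $\nu \in M_1(X)$, produce $\mu \in M_1(X)$ with $\mu P = \nu$. The natural route is to use compactness of $M_1(X)$ (in the compact-$X$ case, or tightness arguments otherwise) to extract a subsequence $\mu_{n_k} \to \mu$ weak* for some $\mu \in M_1(X)$, then apply part (2) to conclude $\mu_{n_k} P \to \mu P$, and by uniqueness of weak* limits $\mu P = \nu \in M_1 P$. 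The main obstacle here — and the step I expect to require the most care — is the non-compact case: when $X$ is merely Polish, $M_1(X)$ need not be weak*-compact, so one cannot freely extract convergent subsequences; one must either impose tightness hypotheses on the relevant family $\{\mu_n\}$, or exploit additional structure of $P$ (for instance that $P^*$ maps tight families to tight families, which would follow from Feller-type continuity), to recover a limit point $\mu$. I would flag this and either restrict to compact $X$ or add the tightness hypothesis explicitly, since without it the closedness claim can genuinely fail.
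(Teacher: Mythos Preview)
Your approach matches the paper's on all three parts. For (1) and (2) the paper argues exactly as you do, only more tersely: it invokes monotone convergence together with order continuity of $P$ for countable additivity, the inequality $P(\chi_A)\le P(\mathbf 1)$ for the finiteness equivalence, and for (2) writes the single line $(\mu_n P)(f)=\int_X P(f)\,d\mu_n\to\int_X P(f)\,d\mu=(\mu P)(f)$.

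For (3) the paper's entire proof is the sentence ``It follows from (2).'' Your subsequence-extraction argument via compactness of $M_1(X)$ is therefore \emph{more} than what the paper provides, and your caution about the non-compact Polish case is well placed: continuity of $\mu\mapsto\mu P$ alone does not force the image to be closed, so one genuinely needs compactness of $M_1(X)$ (e.g.\ $X$ compact) or a tightness/Feller-type hypothesis to complete the argument. You have correctly identified a step the paper leaves implicit.
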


\begin{proof} (1) The assertion
 that $\mu P$ is a measure follows directly 
from  Definition \ref{def action of P on mu}. One can apply monotone
 convergence theorem, and the order continuity of $P$, to show that 
 $ \mu P$ is countably additive.

Since $P(\chi_A) \leq  P(\mathbf 1)$, we see that finiteness of 
measure  $\mu P$ is equivalent to the property $P(\mathbf 1)\in 
L^1(\mu)$.

(2) The assumption that $X$ is a Polish space is not restrictive 
because any standard Borel space is Borel isomorphic to a Polish space. 
Thus, the set of probability measures $M_1(X)$ can be endowed with 
the weak* topology. Let a sequence $(\mu_n)$ converge to a 
measure $\mu$. Then, for $f \in C(X)$, 
$$
(\mu_n P)(f) = \int_X P(f) \; d\mu_n \ \rightarrow \ \int_X P(f) \; d
\mu = (\mu P)(f)
$$
as $n \to \infty$.

(3) It follows from (2). 
\end{proof}

\textit{Assumption.} In the sequel, we assume (sometimes implicitly)
that the considered transfer operators $R$ are order continuous. This
means that we always have the well defined action of $R$ on the set 
of measures $R : \mu \mapsto \mu R$. We denote  $M_1(X) R = 
K_1$. This set of measures will play an important role in the next 
sections. 

In the next remark, we consider a particular case when an action
of positive operators can be defined on the set of measures without
additional assumptions. 

\begin{remark} \label{rem X lc Hausdorff}
Suppose $X$ is a locally compact finite Hausdorff space and $P$ is 
a positive 
operator in the space $\FXB$. Then, for every $x \in X$, the operator
 $P$ defines a positive linear functional on $C_c(X)$ by the formula 
 $f \mapsto P(f)(x)$. By Riesz' theorem, there exists a positive Borel 
 measure $\mu^x$ such that
$$
P(f)(x) = \int_X f(\cdot)\; d\mu^x(\cdot).
$$
Then the function $F : x \mapsto \mu^x(A)$ is measurable on 
$X$ for every $A \in \B$  because $F(x) = P(\chi_A)(x)$. Hence, for
every positive operator, we can associate a measurable family of
 measures $(\mu^x)$ defined on $(X, \B)$.  

Now, given a measure $\nu \in M(X)$,  define $\nu P$ as a function 
on Borel sets:
\be\label{eq nu P for lc hausdorff}
(\nu P)(A) := \int_X P(\chi_A)(x) \; d\nu(x).
\ee
It follows from the definition of $(\mu^x)$ that
$$
(\nu P)(A) = \int_X \left( \int_X \chi_A(\cdot)\; d\mu^x(\cdot)
\right) \; d\nu(x) = \int_X \mu^x(A)\; d\nu(x).
$$
It is obvious that $\nu P$ is a well defined complete Borel measure on
$(X, \B)$. 
\end{remark}

The concept, we considered in Remark \ref{rem X lc Hausdorff}, is 
known in  probability theory by the name of a \textit{random 
measure}. \index{measure ! random}  
More formally, let  $(X, \B, \mu)$ be a measure space. 
Then a random measure $\Phi$ defined with respect to this space is a 
function  $x \mapsto \nu_x : X \to M(Y)$ such that $\nu_x(A)$
is  $\B$-measurable for every $A \in \mc A$, see, e.g.,  
\cite{GalicerSagliettiSchmerkinYavicoli2016, Sureson2016}.

\begin{definition}\label{def R is p-integrable}
Let $P$ be a positive operator acting in $\FXB$. Given a measure $
\lambda\in M(X)$, we say that $P$ is \emph{$p$-integrable} 
\index{positive operator ! integrable} with 
respect to $\lambda$ if $P(\mathbf 1) \in L^p(\lambda)$. We use the 
terms ``1-integrable'' and ``integrable'' as synonyms.
\index{positive operator ! integrable}

For a fixed positive operator $P$, we denote
$$
I_p(P) := \{\lambda \in M(X) : \int_X P(\mathbf 1)^p \; d\lambda < 
\infty\}.
$$
\end{definition}

It is obvious that if $\lambda_1, \la_2 \in I_p(R)$, then $c_1\la_1+ 
c_2\la_2 \in I_p(R)$.

In the next statement, we prove that the set $\mathcal S(X)$ belongs 
to the domain of any $p$-integrable transfer operator $R$.

\begin{lemma}\label{lem simple f-n p-integrable}
Let $P$ be a positive operator on $\FXB$. Suppose $\la$ is a 
$p$-integrable measure. Then $P(s(x)) \in L^p(\lambda)$ for any
 simple function $s \in \mathcal S$ where $1\leq p < \infty$.
\end{lemma}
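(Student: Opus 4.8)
The plan is to reduce the claim to a single estimate: for any simple function $s = \sum_{i\in I} c_i \chi_{E_i}$ with $\{E_i\}$ a finite Borel partition of $X$, we want $P(s) \in L^p(\lambda)$. First I would observe that, by linearity of $P$ and the triangle inequality in $L^p(\lambda)$, it suffices to treat a single characteristic function: if we show $P(\chi_E) \in L^p(\lambda)$ for every $E \in \B$, then $\|P(s)\|_p \le \sum_{i\in I} |c_i|\, \|P(\chi_{E_i})\|_p < \infty$ since the index set $I$ is finite. So the whole statement collapses to showing $P(\chi_E) \in L^p(\lambda)$.

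Next I would use positivity of $P$ together with the pointwise bound $0 \le \chi_E \le \mathbf 1$. Since $P$ is a positive operator, $\mathbf 1 - \chi_E \ge 0$ forces $P(\mathbf 1) - P(\chi_E) = P(\mathbf 1 - \chi_E) \ge 0$, and likewise $P(\chi_E) \ge 0$. Hence
\be\label{eq pf simple bound}
0 \le P(\chi_E)(x) \le P(\mathbf 1)(x) \qquad \text{for all } x \in X.
\ee
Raising \eqref{eq pf simple bound} to the $p$-th power (legitimate since both sides are nonnegative) and integrating against $\lambda$ gives
$$
\int_X P(\chi_E)^p \, d\lambda \le \int_X P(\mathbf 1)^p \, d\lambda < \infty,
$$
where finiteness of the right-hand side is exactly the hypothesis that $\lambda$ is $p$-integrable, i.e. $\lambda \in I_p(P)$ (equivalently $P(\mathbf 1) \in L^p(\lambda)$ by Definition \ref{def R is p-integrable}). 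Thus $P(\chi_E) \in L^p(\lambda)$, and combining with the first paragraph, $P(s) \in L^p(\lambda)$ for every $s \in \mathcal S$.

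I do not expect a serious obstacle here; the statement is essentially a domination argument. The only point requiring a word of care is that $P$ is a priori only defined on (and valued in) Borel functions, so one should note that $P(s)$ is a genuine Borel function on $X$ and that the inequality in \eqref{eq pf simple bound} holds everywhere (not merely $\lambda$-a.e.), which is immediate from the positivity axiom $f \ge 0 \Rightarrow P(f) \ge 0$ applied pointwise. Also, the argument uses nothing about the pull-out property, so it applies to any positive operator, not just transfer operators — consistent with the way the lemma is stated for a positive operator $P$. If one wanted, the same reasoning extends verbatim to any bounded Borel function in place of $s$, dominating by $\|f\|_\infty \, P(\mathbf 1)$.
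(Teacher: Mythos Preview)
Your proof is correct and follows essentially the same approach as the paper: both use positivity to obtain $0 \le P(\chi_E) \le P(\mathbf 1)$, deduce $P(\chi_E) \in L^p(\lambda)$ from the hypothesis $P(\mathbf 1) \in L^p(\lambda)$, and then extend to arbitrary simple functions by linearity. Your write-up is more detailed than the paper's terse version, but the underlying argument is identical.
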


\begin{proof}
We  notice that, because $P$ is positive ($f \leq g$ implies $P(f) \leq 
P(g)$), then  the fact that $P(\mathbf 1)$ is in $L^p(\lambda)$ 
implies that $P(\chi_A)$ is in $L^p(\lambda)$. Then the statement  
follows from linearity of $R$, and we can conclude that
$$
P( \mathcal S(X) ) \subset L^p(\lambda) \ \Longleftrightarrow \ 
P(\mathbf 1) \in L^p(\la). 
$$
\end{proof}

Let $f \in \FXB$, then by $\mathrm{supp}(f)$ we denote the Borel 
set  $\{x \in X: f(x) \neq 0\}$.

\begin{lemma}\label{lem R(s) is integrable}
Let $(X, \B), R, \sigma$  be as in Definition \ref{def transfer operator}.

(1) For any Borel set $A$, 
$$
\mathrm{supp}( R(\chi_A)(x)) \subset \sigma(A),
$$ 
that is  $R(\chi_A)(x) = 0$ if $x \notin \sigma(A)$.

(2) If $R$ is a strict transfer operator in $\FXB$, then, for any Borel 
set $A$,
$$
\mathrm{supp}( R(\chi_A)(x)) = \sigma(A),
$$
$$
\la(\sigma(A) \setminus \mathrm{supp}( R(\chi_A)) = 0.
$$

(3) If $\la$  quasi-invariant with respect to $\sigma$,  
$\lambda \in \mathcal{Q_-}(\sigma)$, then $\la(A) > 0 $ implies
that $\la(\sigma(A)) > 0$. Then statements (1) and (2) hold for
$\la$-a.e. $x \in X$.
\end{lemma}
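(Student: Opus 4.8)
The plan is to prove the three statements in order, since (2) refines (1) and (3) transports everything to the almost-everywhere setting. For part (1), the key identity is the pull-out property applied to the characteristic function $\chi_A$. Observe that $\chi_A = \chi_A\cdot\chi_A$, but more usefully I would write $\chi_{\sigma^{-1}(\sigma(A))}\cdot\chi_A = \chi_A$, since $A\subseteq \sigma^{-1}(\sigma(A))$. Because $\chi_{\sigma^{-1}(\sigma(A))} = \chi_{\sigma(A)}\circ\sigma$, the pull-out property (\ref{eq char property of r via sigma 1}) gives
\begin{equation*}
R(\chi_A) = R\big((\chi_{\sigma(A)}\circ\sigma)\,\chi_A\big) = \chi_{\sigma(A)}\,R(\chi_A),
\end{equation*}
which immediately shows $R(\chi_A)(x) = 0$ for $x\notin\sigma(A)$, i.e. $\mathrm{supp}(R(\chi_A))\subseteq\sigma(A)$. (If $\sigma$ is not countable-to-one one should note $\sigma(A)$ need not be Borel; but the inclusion is the content here, and one works with a Borel hull of $\sigma(A)$ or simply reads the statement as: $R(\chi_A)$ vanishes off $\sigma(A)$.)

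For part (2), assume $R$ is strict, so $R(\mathbf 1)(x) > 0$ for every $x$. I would show the reverse inclusion $\sigma(A)\subseteq\mathrm{supp}(R(\chi_A))$. Take $x\in\sigma(A)$, so $x = \sigma(y)$ for some $y\in A$. The natural route is to pick the element $C_x = \sigma^{-1}(x)$ of the partition $\xi$; since $y\in C_x\cap A$, the set $C_x\cap A$ is nonempty, and one wants to argue that $R(\chi_A)(x)$ sees this. In the general abstract setting the cleanest argument uses Lemma \ref{lem TO is onto 1-1} or rather the relation $R(\chi_B\circ\sigma) = \chi_B R(\mathbf 1)$: decompose $\chi_A$ and use positivity of $R$ together with the fact that $\chi_A$ cannot be dominated by any function of the form $(1-\chi_{\{x\}}\circ\sigma)g$ that would force $R(\chi_A)(x)=0$, precisely because $A$ meets the fiber over $x$. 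The equality $\la(\sigma(A)\setminus\mathrm{supp}(R(\chi_A)))=0$ then follows because the two sets agree pointwise (up to the measurability subtlety of $\sigma(A)$, which is handled for non-singular $\sigma$ by the remark preceding Lemma \ref{lem existence of q-inv measure for sigma}).

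For part (3), suppose $\la\in\mathcal Q_-(\sigma)$, i.e. $\la\circ\sigma^{-1}\sim\la$. If $\la(A)>0$ then $\la(\sigma^{-1}(\sigma(A)))\geq\la(A)>0$, hence by backward quasi-invariance $\la(\sigma(A))>0$ (here one uses $\la(\sigma^{-1}(B))=0\Leftrightarrow\la(B)=0$ with $B=\sigma(A)$, together with $A\subseteq\sigma^{-1}(\sigma(A))$). Then statements (1) and (2) hold $\la$-a.e.: the pointwise identities proved above become identities modulo $\la$-null sets, which is all one needs once the underlying sets $\sigma(A)$ are measurable, which they are under the non-singularity hypothesis. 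The main obstacle I anticipate is not any deep computation but the bookkeeping around whether $\sigma(A)$ is a Borel/measurable set when $\sigma$ is uncountable-to-one; I would dispose of this at the outset by invoking the standing assumptions on non-singular $\sigma$ (so that $\sigma$ maps null sets to null sets and images of measurable sets are measurable mod $0$), after which parts (1)--(3) are each a two-line application of the pull-out property, positivity, and strictness.
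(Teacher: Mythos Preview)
Your arguments for (1) and (3) are correct and coincide with the paper's: for (1) the identity $\chi_A=(\chi_{\sigma(A)}\circ\sigma)\,\chi_A$ plus pull-out gives $R(\chi_A)=\chi_{\sigma(A)}R(\chi_A)$, and for (3) the inclusion $A\subset\sigma^{-1}(\sigma(A))$ together with backward quasi-invariance is all that is used.

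The gap is in part (2). Your fiber heuristic (``$C_x\cap A\neq\emptyset$, so $R(\chi_A)(x)$ must see this'') is borrowed from the concrete conditional-measure model of Example~\ref{ex TO by cond meas}, but it is not an argument for an abstract transfer operator: nothing in Definition~\ref{def transfer operator} says $R$ integrates over the fiber, and the sentence about $\chi_A$ not being dominated by a function of the form $(1-\chi_{\{x\}}\circ\sigma)g$ is too vague to constitute a proof. The paper's route is different and purely algebraic: write $\mathbf 1=\chi_A+\chi_{X\setminus A}$ and apply $R$, so that strictness gives
\[
R(\chi_A)(x)+R(\chi_{X\setminus A})(x)=R(\mathbf 1)(x)>0\quad\text{for all }x.
\]
Now invoke part (1) for the \emph{complement}: $R(\chi_{X\setminus A})$ is supported in $\sigma(X\setminus A)$, hence wherever $R(\chi_{X\setminus A})$ vanishes one is forced to have $R(\chi_A)>0$. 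This complement-decomposition trick is the missing idea in your sketch; once you have it, part (2) is a two-line consequence of (1) and strictness, with no need to reason about individual fibers.
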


\begin{proof}
(1) We use the pull-out property (\ref{eq char property of r via sigma 
1}) to prove (1). Observe that the relation  $A \subset 
\sigma^{-1}(\sigma (A))$ holds for any endomorphism $\sigma$ and
 any measurable set $A$. Then
$$
\chi_A(x) = \chi_{\sigma^{-1}(\sigma (A))}(x)\chi_A(x) = \chi_{\sigma (A)}(\sigma(x)) \chi_A(x).
$$
Hence,
$$
R(\chi_A(x)) = R(\chi_{\sigma (A)}(\sigma(x)) \chi_A(x)) = \chi_{\sigma (A)}(x) R(\chi_A(x))
$$
and the result follows from
$$
R(\chi_A)(x) (1 - \chi_{\sigma (A)}(x)) = 0.
$$

(2) By assumption, we have that $R(\mathbf 1)(x) > 0$ for any $x
\in X$. Since $\mathbf 1(x) = \chi_A (x) + \chi_{X \setminus A}(x)$,
 we obtain that $R(\chi_A) (x) + R(\chi_{A^c})(x) > 0$ for all $x \in 
 X$. It follows from (1) that 
$$
\mathrm{supp}( R(\chi_A)(x)) \cap \mathrm{supp} (R(\chi_{X 
\setminus A})(x)) \subset \sigma(A) \cap \sigma(X\setminus A)
 = \emptyset.
$$
Since $R(\mathbf 1)(x) > 0$, we see that (2) holds. 
 
(3) It follows from the relation $A \subset \sigma^{-1}(\sigma(A))$ 
that  $\la(A) > 0 \ \Longrightarrow \ \la(\sigma(A)) >0$. 
Hence statement (3) is deduced from above. 
\end{proof}

\begin{remark}
Suppose $R$ is a transfer operators acting in $L^p(\la), 1\leq p < 
\infty$.For  $\lambda \in M(X)$, we set
$$
\mathcal S^p(\lambda) = \mathcal S(X) \cap L^p(\lambda).
$$
Then $\mathcal S^p(\lambda)$ is dense in $L^p(\lambda)$ with
 respect to the norm. 
\end{remark}

\begin{corollary}\label{cor R as an operator in Lp}
Suppose $R$ is $p$-integrable transfer operator defined on the 
space $\FXB$, $1 \leq p < \infty$. 
Then $R$ generates a transfer operator in $L^p(\lambda)$ with 
dense domain containing the set $\mathcal 
S^p(\lambda)$:
$$
R(\mathcal S^p(\lambda)) \subset L^p(\lambda).
$$
In general, $R$ is an unbounded linear operator in $L^p(\lambda)$.
\end{corollary}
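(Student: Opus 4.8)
The plan is to read the statement off from Lemma~\ref{lem simple f-n p-integrable} together with the density assertion in the Remark just preceding the Corollary, after a short check that the transfer-operator axioms survive the restriction to an $L^p$-space. First I would unwind the hypothesis: by Definition~\ref{def R is p-integrable}, $p$-integrability of $R$ with respect to $\la$ means precisely $R(\mathbf 1)\in L^p(\la)$. Positivity of $R$ on $\FXB$ then forces $0\le R(\chi_A)\le R(\mathbf 1)$ for every $A\in\B$, so $R(\chi_A)\in L^p(\la)$; summing over a finite Borel partition, $R(s)\in L^p(\la)$ for every $s\in\mc S(X)$. This is exactly Lemma~\ref{lem simple f-n p-integrable} applied with $P=R$, and since $\mc S^p(\la)=\mc S(X)\cap L^p(\la)\subset\mc S(X)$ it gives the asserted inclusion $R(\mc S^p(\la))\subset L^p(\la)$.

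Next I would take the domain of the induced operator in $L^p(\la)$ to be a linear manifold containing $\mc S^p(\la)$; by the preceding Remark this set is norm-dense in $L^p(\la)$, so the induced operator is densely defined. To see that it satisfies Definition~\ref{def transfer operator}(2), positivity carries over from that of $R$ on Borel functions. For the pull-out identity, if $f,g\in\mc S^p(\la)$ then $f\cs$ is again a bounded Borel simple function (since $\sigma$ is Borel), hence $(f\cs)g\in\mc S^p(\la)$ also lies in the domain, and $R((f\cs)g)=f\,R(g)$ holds $\la$-almost everywhere by~(\ref{eq char property of r via sigma 1}); both sides are genuine elements of $L^p(\la)$, the right one being a product of a bounded function with an $L^p$-function. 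Thus $R$ restricts to a transfer operator $(R,\sigma)$ on $L^p(\la)$ with dense domain containing $\mc S^p(\la)$.

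For the last assertion I would exhibit an example where no bound $\|Rs\|_p\le C\|s\|_p$ can hold on $\mc S^p(\la)$. A convenient choice is the Ruelle operator $R_\sigma f(x)=\sum_{\sigma y=x}W(y)f(y)$ attached to the doubling endomorphism $\sigma(x)=2x\bmod 1$ of $X=[0,1]$, with $\la=dx$ and unbounded weight $W(y)=y^{-\alpha}$ for a fixed $0<\alpha<1/p$. One checks that $R_\sigma(\mathbf 1)$ behaves like $x^{-\alpha}$ near the origin and hence lies in $L^p(dx)$, so $\la$ is $p$-integrable; yet, testing against the normalized indicator functions $f_n$ of the intervals $[0,1/n]$, one gets $\|f_n\|_p=1$ while $\|R_\sigma f_n\|_p$ grows like $n^{\alpha}$ and therefore tends to infinity. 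So $R_\sigma$ is unbounded on $L^p(dx)$. Further examples of this phenomenon, together with sufficient conditions for boundedness, are collected in Section~\ref{sect examples}.

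The only step that is not pure bookkeeping is the passage from Borel functions to $\la$-equivalence classes: absent forward quasi-invariance of $\la$, the Borel operator $R$ need not descend to a well-defined map on all of $L^p(\la)$. The honest reading is therefore that $R$, viewed as an operator on Borel functions, restricts to the linear space $\mc S^p(\la)$ of $p$-integrable Borel simple functions with values in $L^p(\la)$; since $\mc S^p(\la)$ is norm-dense this \emph{generates} a densely defined operator on $L^p(\la)$, which the example above shows to be unbounded in general. This is precisely why the statement is phrased in these terms rather than as a bounded operator on all of $L^p(\la)$.
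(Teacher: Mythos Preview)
Your argument is correct and matches the paper's intended approach: the corollary is stated without proof in the paper, being an immediate consequence of Lemma~\ref{lem simple f-n p-integrable} together with the density Remark just before it, exactly as you unwind it. You go further than the paper by supplying a concrete example for the unboundedness clause (the weighted doubling map with $W(y)=y^{-\alpha}$), which the paper leaves as a bare assertion; your example is sound, and the computation showing $\|R_\sigma f_n\|_p\asymp n^{\alpha}\to\infty$ checks out. The final paragraph on the Borel-versus-equivalence-class subtlety is a fair observation that the paper does not make explicit either.
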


The property of $1$-integrability for a transfer operator $R$ allows 
one to define a  map  $\lambda \mapsto \lambda R$ from the set
$I_1(R) =  I(R)$ to the set of \emph{finite} measures on $(X, \B)$.

\begin{proposition}\label{prop measure lambda R}
Let $(R, \sigma)$ be a transfer operator on $(X, \B, \lambda)$ such 
that $\la$ is backward and forward quasi-invariant with respect to $
\sigma$, i.e., $\la \in \mc Q_- \cap \mc Q_+$. Then, if $\lambda \in 
I(R)$,  the relation
\be\label{eq measure lambda circ R as integral}
(\lambda R)( f) := \int_X R(f)(x)\; d\lambda(x)
\ee
defines a finite Borel measure $\lambda R$ on $(X,\B)$.
\end{proposition}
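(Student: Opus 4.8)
The plan is to reduce the statement to Lemma~\ref{lem prop of mu P}(1), applied to the positive operator $P = R$, once one checks that the formula (\ref{eq measure lambda circ R as integral}) makes sense on characteristic functions. Recall that, by the standing assumption of Section~\ref{sect integrable operators}, every transfer operator in play is order continuous, so $R$ is a positive order continuous operator on $\FXB$; this is the one structural input, and everything else is bookkeeping.

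First I would verify that $R(\chi_A) \in L^1(\lambda)$ for every $A \in \B$. Since $0 \le \chi_A \le \mathbf 1$ and $R$ is positive, $0 \le R(\chi_A) \le R(\mathbf 1)$; and $\lambda \in I(R) = I_1(R)$ means exactly $R(\mathbf 1) \in L^1(\lambda)$ by Definition~\ref{def R is p-integrable}, so $R(\chi_A) \in L^1(\lambda)$, and more generally $R(\mathcal S(X)) \subset L^1(\lambda)$ by Lemma~\ref{lem simple f-n p-integrable} together with linearity of $R$. Thus $(\lambda R)(A) := \int_X R(\chi_A)\, d\lambda$ is a finite nonnegative number for each $A$, with $(\lambda R)(\emptyset) = 0$; finite additivity is immediate from linearity of $R$ and of the integral, and $(\lambda R)(X) = \int_X R(\mathbf 1)\, d\lambda < \infty$, which will yield finiteness of the resulting measure.

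The substantive point is countable additivity, and here I would argue exactly as in Lemma~\ref{lem prop of mu P}(1). Given a Borel partition $A = \bigsqcup_{n \ge 1} A_n$, set $s_N = \sum_{n=1}^N \chi_{A_n}$. Then $\chi_A - s_N = \sum_{n > N}\chi_{A_n} \downarrow 0$ in $\FXB$, so $s_N \stackrel{o}{\longrightarrow} \chi_A$; order continuity of $R$ forces $R(\chi_A) - R(s_N) = R(\chi_A - s_N) \stackrel{o}{\longrightarrow} 0$, whence $R(s_N) \uparrow R(\chi_A)$ $\lambda$-a.e.\ with $R(s_N) \le R(\mathbf 1) \in L^1(\lambda)$. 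The monotone convergence theorem then gives
\[
(\lambda R)(A) = \int_X R(\chi_A)\, d\lambda = \lim_{N\to\infty}\int_X R(s_N)\, d\lambda = \lim_{N\to\infty}\sum_{n=1}^N (\lambda R)(A_n) = \sum_{n=1}^\infty (\lambda R)(A_n),
\]
using linearity of $R$ to split $R(s_N)$. Hence $\lambda R$ is a finite, countably additive set function on $\B$; since a finite Borel measure is sigma-finite, after completion $\lambda R$ lies in $M(X)$, proving the claim.

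I do not anticipate a genuine obstacle: the only nontrivial ingredient, the order-continuity argument for $\sigma$-additivity, is already isolated in Lemma~\ref{lem prop of mu P}. I would note that the quasi-invariance hypotheses $\lambda \in \mc Q_- \cap \mc Q_+$ are not actually used in establishing that $\lambda R$ is a finite measure; they belong to the standing framework of this section and are needed elsewhere — e.g.\ via Lemma~\ref{lem R(s) is integrable}(3), to guarantee that $\lambda(A) > 0 \Rightarrow \lambda(\sigma(A)) > 0$ and that the support description $\mathrm{supp}(R(\chi_A)) \subset \sigma(A)$ holds $\lambda$-a.e. — so I would carry them along but remark explicitly that they play no role in the countable additivity or finiteness of $\lambda R$.
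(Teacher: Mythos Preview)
Your proof is correct but takes a genuinely different route from the paper's. You lean on the standing order-continuity assumption and essentially reprove Lemma~\ref{lem prop of mu P}(1) in detail: positivity gives $0\le R(\chi_A)\le R(\mathbf 1)\in L^1(\lambda)$, order continuity plus monotone convergence gives countable additivity, and the quasi-invariance hypotheses are, as you rightly observe, never used. The paper instead argues sigma-additivity through the transfer-operator structure itself: for a decreasing sequence $A_i\downarrow\emptyset$ it invokes Lemma~\ref{lem R(s) is integrable} to localize $R(\chi_{A_i})$ to $\sigma(A_i)$, writes $(\lambda R)(A_i)=\int_{\sigma(A_i)}R(\chi_{A_i})\,d\lambda$, and then uses the \emph{forward} quasi-invariance $\lambda\in\mc Q_+$ to force $\lambda(\sigma(A_i))\to 0$, hence $(\lambda R)(A_i)\to 0$. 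So the paper's argument does not appeal to order continuity at this point and genuinely consumes the hypothesis $\lambda\in\mc Q_+$, which explains why it is present in the statement; your route shows that once the blanket order-continuity assumption is granted, the quasi-invariance is redundant for the mere fact that $\lambda R$ is a finite Borel measure. Each approach buys something: yours is cleaner and isolates exactly what is needed, while the paper's ties the construction to the pull-out property and the support description that will be reused immediately afterwards in Theorem~\ref{thm  lambda R abs cntn wrt lambda}.
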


\begin{proof} By the premise of the proposition, 
we have $R(\mathbf 1) 
\in L^1(\lambda)$. To justify relation (\ref{eq measure lambda circ R 
as integral}), we use the standard approach via approximation by 
simple functions. 

Given a $\lambda$-measurable non-negative function $f$, take a 
sequence  $(s_n)$ of simple Borel functions such that $s_n \leq s_{n
+1}$ and
$f (x)= \lim_n s_n(x)$ for $\lambda$-a.e. $x\in X$.
Then we can define
\be\label{eq Rf via simple f-ns}
\int_X R(f)(x) \; d\lambda(x) := \lim_{n\to\infty} \int_X R(s_n)(x) \; 
d\lambda(x), \ \ \ f \in \FXB.
\ee
The limit in (\ref{eq Rf via simple f-ns}) exists (it may be infinite) 
because the sequence $(R(s_n))$ is increasing and consists of 
integrable functions  by Lemma \ref{lem R(s) is integrable}. In fact, we 
can see that the definition in (\ref{eq Rf via simple f-ns}) does not 
depend on the choice of a sequence $(s_n)$ since
$$
\lim_{n\to\infty} \int_X R(s_n)(x) \; d\lambda(x) = \sup\left( \int_X 
R(s)(x) \; d\lambda(x) \ : \ s(x) \leq f(x) , \forall x \in X\right).
$$

We need to show relation (\ref{eq measure lambda circ R as integral}) 
defines a Borel measure on $(X,\B)$.  Set, for any $A\in \B$, 
$$
(\lambda R)(A) := \lambda(R(\chi_A)).
$$ 
 To see that $\lambda R$ is sigma-additive, it suffices to prove that if 
 $A_i \supset A_{i+1}$ and $\bigcap_i A_i =\emptyset$, then
\be\label{eq lambda RA_i tends to 0}
\lim_{i\to \infty}(\lambda R)(A_i) = 0.
\ee
It follows from Lemma \ref{lem R(s) is integrable} that $(\lambda  
R)(A_i) \leq \lambda(\sigma (A_i))$. Since $\sigma$ is forward quasi-
invariant, we see that $\lambda (\sigma A_i) \to 0$ as $i \to \infty$. 
Hence, relation  (\ref{eq lambda RA_i tends to 0}) holds, and $\lambda
 R$ is a sigma-additive measure.

Moreover, we see from the equality
$$
(\lambda R)(X) = \int_X R(\mathbf 1)\; d\lambda,
$$
that $\lambda R$ is a finite measure.
\end{proof}

In general, the measure $\lambda R$ is not absolutely continuous with 
respect to $\lambda$, see e.g., Example \ref{ex R forIFS} and Table 
1. One of our aims is to find conditions under 
which $\lambda R \ll \lambda$. We recall the following example 
(more details are in \cite{DutkayJorgensen2006}).

\begin{example}[Case of wavelets \cite{DutkayJorgensen2006},  
see also Example \ref{ex R forIFS}] \label{ex Riesz measure}
Let $\mathbb T = \{z \in \mathbb C : |z| =1\}$ be the unit circle, and 
let $\sigma_2 (z) = z^2$, $\sigma_3 (z) = z^3$ be two surjective 
endomorphisms of $\mathbb T$. Suppose the transfer operator $R_i$ 
on $C(\mathbb T)$ is defined as follows:
$$
R_i(f)(z) = \sum_{w: \sigma_i (w) = z}|m(w)|^2f(w),\ \ \ i =2,3,
$$
where $m(w) = \dfrac{1 + w^2}{\sqrt 2}$. It was proved in 
\cite{DutkayJorgensen2006} that:

(a) the measure $\delta_1$ (the Dirac point mass measure at $z =1$) 
is $R_i$-invariant, $\delta_1 R_i =\delta_1$;

(b) for the transfer operator $(R_3, \sigma_3)$, the Riesz measure
$$
d\nu(t) = \lim_{n\to \infty} \frac{1}{2\pi} \prod_{k=1}^n (1 + \cos 
(2\cdot 3^kt))
$$
is singular with respect to the Lebesgue measure, and satisfies the relation
$\nu R_3 = \nu$. 
\end{example}

\begin{remark}\label{rem justification R acts on measures}
We are interesting in the following problem:
Given a transfer operator $(R, \sigma)$ acting in $\FXB$, find a 
Borel measure $\la$ such that $\la R \ll \la$. We showed in 
Examples \ref{ex R forIFS}, and \ref{ex Riesz measure} that a
transfer operator may satisfy, or not satisfy, this condition of 
absolute
continuity. In order to formulate the problem correctly, an action
of $R$ on measures must be well defined. We know that this is always 
the case when $R$ is \textit{order continuous} (see Lemma \ref{lem 
prop of mu P}), or when $X$ is a locally compact Hausdorff space (see 
Remark \ref{rem X lc Hausdorff}). On the other hand, if we restrict our 
choice of measures to the subsets of $\sigma$-quasi-invariant
measures and  integrable functions  $R(\mathbf 1)$, we still have
a vast set of measures which includes interesting applications. 
We note that if $R$ is a normalized transfer operator, then the latter
condition automatically holds for all finite measures.  

Based on these observation, we will assume that, for a transfer 
operator $R$,  the map $ \la \mapsto \la R$ is defined on $M(X)$ 
(or on a subset of $M(X)$ in case of need).
\end{remark}

\begin{definition}\label{def R acts on measures}  For a fixed 
order-continuous transfer operator $(R, \sigma)$, we define the set 
\be\label{eq lambda R abs cntn wrt lambda}
\mc L(R) := \{ \la \in M(X) : \la R \ll \la\}.
\ee
In case when $R$ is integrable, we use the same notation 
$\mc L(R)$ for the set  of Borel measures $\lambda$ such that 
$\la R \ll \la$, $R(\mathbf 1) \in L^1(\la)$, and $\la$ is 
quasi-invariant with respect to $\sigma$. 
\end{definition}

We are interested in the following questions.
\medskip

\noindent
\emph{\textbf{Question}:} \emph{ (1) Under what conditions on $(R, 
\sigma)$ is the set $\mathcal L(R) $ non-empty?}

\emph{(2) What properties does the set $\mathcal L(R)$ have? In 
particular, can we iterate the map $\lambda \mapsto \lambda R$ 
infinitely many times?}
\medskip

We first give a partial answer to Question (1) in the following theorem.

\begin{theorem}\label{thm  lambda R abs cntn wrt lambda}
(1) Let  $(R, \sigma)$ be  a transfer operator defined on a standard
 measure space $(X, \B, \lambda)$ such that $R(\mathbf 1) \in 
 L^1(\la)$. Then
$$
\mathcal L(R) \supset  I(R) \cap \mc Q_+.
$$
In other words, if  $\sigma$ is backward and forward quasi-invariant
 with respect to $\lambda$, and $\la \in I(R)$, then $\lambda R \ll 
 \la$.

(2) If, additionally to the conditions in (1), we  assume that $(R, 
\sigma)$ is a strict transfer operator, then $\la  R $ is equivalent to 
$\la$ and 
$$
\{\la : \la R \sim \la\} =  I(R) \cap \mathcal{Q_-} \cap \mc Q_+.
$$

\end{theorem}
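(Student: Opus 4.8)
The plan is to combine the support estimates for $R(\chi_A)$ from Lemma~\ref{lem R(s) is integrable} with the quasi-invariance hypotheses on $\la$, after noting (via the standing order-continuity assumption together with Lemma~\ref{lem prop of mu P}, or via Proposition~\ref{prop measure lambda R}) that $\la R$ is a well-defined Borel measure, finite precisely when $R(\mathbf 1)\in L^1(\la)$. Throughout, the point is to keep careful track of which direction of quasi-invariance is invoked where.

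For part (1), let $\la\in I(R)\cap\mc Q_+$, so $\la R$ is a finite Borel measure. Fix $A\in\B$ with $\la(A)=0$. Forward quasi-invariance ($\la\in\mc Q_+$) makes $\sigma(A)$ $\la$-measurable and forces $\la(\sigma(A))=0$. By Lemma~\ref{lem R(s) is integrable}(1) the function $R(\chi_A)$ vanishes off $\sigma(A)$, while positivity and linearity of $R$ give $0\le R(\chi_A)\le R(\mathbf 1)$; hence
$$
(\la R)(A)=\int_{\sigma(A)}R(\chi_A)\,d\la\le\int_{\sigma(A)}R(\mathbf 1)\,d\la=0,
$$
the last equality holding because $R(\mathbf 1)\in L^1(\la)$ is integrated over a $\la$-null set. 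Thus $\la R\ll\la$, i.e.\ $I(R)\cap\mc Q_+\subset\mc L(R)$.

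For part (2), assume in addition that $R$ is strict. By part (1), every $\la\in I(R)\cap\mc Q_-\cap\mc Q_+$ already satisfies $\la R\ll\la$, so it remains to prove $\la\ll\la R$. Suppose $(\la R)(A)=\int_X R(\chi_A)\,d\la=0$; since $0\le R(\chi_A)\le R(\mathbf 1)\in L^1(\la)$, this forces $R(\chi_A)=0$ for $\la$-a.e.\ $x$. Because $R$ is strict and $\la\in\mc Q_-$, Lemma~\ref{lem R(s) is integrable}(2)--(3) identifies $\{x: R(\chi_A)(x)>0\}$ with $\sigma(A)$ modulo a $\la$-null set, so $\la(\sigma(A))=0$. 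Since $A\subset\sigma^{-1}(\sigma(A))$ and backward quasi-invariance gives $\la(\sigma^{-1}(\sigma(A)))=0$, we conclude $\la(A)=0$. Hence $\la\ll\la R$, and together with part (1) this yields $\la R\sim\la$, which is the inclusion $\supset$ in the asserted equality. The reverse inclusion $\subset$ is built into the setting: $\la R\sim\la$ is only meaningful when $\la R$ is a well-defined finite measure, which by Proposition~\ref{prop measure lambda R} and Definition~\ref{def R acts on measures} presupposes $\la\in I(R)\cap\mc Q_-\cap\mc Q_+$.

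The only genuinely delicate point is in part (2): the \emph{strict} hypothesis is exactly what upgrades the one-sided containment $\mathrm{supp}(R(\chi_A))\subset\sigma(A)$ of Lemma~\ref{lem R(s) is integrable}(1) to the $\la$-a.e.\ support equality in parts (2)--(3), and one must not interchange the two quasi-invariance conditions — forward quasi-invariance pushes $\la$-null sets forward through $\sigma$ in part (1), backward quasi-invariance pulls them back through $\sigma^{-1}$ in part (2). Beyond this, the argument is routine bookkeeping resting entirely on results already established.
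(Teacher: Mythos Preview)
Your proof is correct and follows essentially the same route as the paper's: both parts rest on the support identity $(\la R)(A)=\int_{\sigma(A)}R(\chi_A)\,d\la$ coming from Lemma~\ref{lem R(s) is integrable}, with forward quasi-invariance driving part~(1) and the strict hypothesis together with backward quasi-invariance driving part~(2). The only cosmetic difference is that you argue part~(2) by contrapositive (showing $(\la R)(A)=0\Rightarrow\la(A)=0$), whereas the paper argues directly ($\la(A)>0\Rightarrow(\la R)(A)>0$); the logic and the invocation of Lemma~\ref{lem R(s) is integrable}(2)--(3) are identical.
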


\begin{proof}
(1) Let $A$ be a Borel subset of $X$. By Lemma \ref{lem R(s) is 
integrable}, the function $R(\chi_A)$ is non-negative and integrable 
with respect to $\lambda$. As was shown in the proof of Lemma
 \ref{lem R(s) is integrable}, the relation
$$
\chi_A(x) =  \chi_{\sigma (A)}(\sigma(x)) \chi_A(x)
$$
holds. Based on this fact and the pull-out property for $R$, 
we  calculate
\begin{eqnarray}\label{eq formula for lambda R}
\nonumber
  (\lambda R)(A)  &=& \int_X R(\chi_A)\; d\lambda(x) \\
  \nonumber
  \\
\nonumber 
&=& \int_X R[\chi_{\sigma (A)}(\sigma(x)) \chi_A(x)] \; d
\lambda(x) \\
\\
 \nonumber &=& \int_X \chi_{\sigma (A)}(x) R(\chi_A)(x) \; d
 \lambda(x) \\
 \nonumber
 \\
  \nonumber &=& \int_{\sigma(A)} R(\chi_A)(x)\; d\lambda(x).
\end{eqnarray}
Now if $\lambda(A) =0$, then, by the assumption, 
$\lambda(\sigma(A)) =0$. Therefore $\lambda R(A) = 0$, and 
this proves that $\la R\ll \la$.

(2) Having statement (1) proved, we need to show that $\lambda (A)
 >0$ implies that $(\lambda R)(A) >0$. As $A \subset \sigma^{-1}
 (\sigma (A))$, and $\sigma$ is backward non-singular, we obtain that 
  $\lambda (A) > 0 \ \Longrightarrow \ \lambda (\sigma(A)) >0$. In 
  this case, we see from (\ref{eq formula for lambda R}) that
$$
\int_{\sigma(A)} R(\chi_A)(x)\; d\lambda(x) > 0
$$
because of Lemma \ref{lem R(s) is integrable}. Thus, it follows from 
 (\ref{eq formula for lambda R}) that $(\lambda R)(A) >0$.
\end{proof}

\begin{remark}  Note that the condition $\lambda\circ R \ll \la$
 is based on the 
 forward quasi-invariance of $\sigma$. On the other hand, If we 
 additionally assume that $R$ is a strict transfer operator, then 
our proof of  $\la \ll \lambda\circ R$ is based on the backward 
quasi-invariance of $\sigma$. In fact, the condition that $R$ is a strict 
 transfer operator can be slightly weakened as shown in the next
  statement.
\end{remark}  
  
\begin{corollary}\label{cor on equivalence of measures}
Let $(R, \sigma)$ be as in Theorem \ref{thm  lambda R abs cntn wrt 
lambda} and $\lambda \in I(R)$. Suppose that for every measurable 
set $A$ with $\lambda(A) > 0$, the function $R(\chi_A)$ is nonzero 
as a function in $L^1(\lambda)$. Then the measures $\lambda$ and $
\lambda R$ are equivalent.
\end{corollary}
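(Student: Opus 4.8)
The plan is to split the asserted equivalence $\lambda\sim\lambda R$ into its two absolute-continuity halves and handle each separately, reusing the computation already done in the proof of Theorem \ref{thm  lambda R abs cntn wrt lambda}. The direction $\lambda R\ll\lambda$ requires no new work: the hypotheses in force (namely $\lambda\in I(R)$ and $\sigma$ backward and forward quasi-invariant with respect to $\lambda$, i.e. $\lambda\in\mc Q_-\cap\mc Q_+$) are exactly those of Theorem \ref{thm  lambda R abs cntn wrt lambda}(1), which gives $\lambda\in\mc L(R)$, that is, $\lambda R\ll\lambda$.

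For the reverse inclusion $\lambda\ll\lambda R$ I would argue directly that $\lambda(A)>0$ forces $(\lambda R)(A)>0$. By definition $(\lambda R)(A)=\lambda(R(\chi_A))=\int_X R(\chi_A)\,d\lambda$; since $R$ is a positive operator, $R(\chi_A)$ is a nonnegative element of $L^1(\lambda)$ (it is dominated by $R(\mathbf 1)\in L^1(\lambda)$ because $\lambda\in I(R)$), so $(\lambda R)(A)=\|R(\chi_A)\|_{L^1(\lambda)}$. The standing hypothesis of the corollary says precisely that $R(\chi_A)$ is nonzero in $L^1(\lambda)$ whenever $\lambda(A)>0$, and a nonnegative $L^1$-function that is nonzero has strictly positive integral; hence $(\lambda R)(A)>0$. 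Combining the two halves yields $\lambda\sim\lambda R$.

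I do not expect a genuine obstacle here: the corollary is essentially the remark that the condition ``$R(\chi_A)\neq 0$ in $L^1(\lambda)$ for every $A$ with $\lambda(A)>0$'' is exactly the minimal replacement for the strictness of $R$ used in Theorem \ref{thm  lambda R abs cntn wrt lambda}(2). In that theorem, strictness together with backward quasi-invariance was invoked, via Lemma \ref{lem R(s) is integrable}(2), to place the support of $R(\chi_A)$ on the positive-measure set $\sigma(A)$; the present hypothesis simply postulates the consequence $\|R(\chi_A)\|_{L^1(\lambda)}>0$ outright. If one prefers, the same computation can be routed through formula (\ref{eq formula for lambda R}), writing $(\lambda R)(A)=\int_{\sigma(A)}R(\chi_A)\,d\lambda$, but since $\mathrm{supp}(R(\chi_A))\subset\sigma(A)$ by Lemma \ref{lem R(s) is integrable}(1) this equals $\int_X R(\chi_A)\,d\lambda$, so the shorter route above is cleaner.
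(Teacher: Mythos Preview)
Your proof is correct and follows essentially the same approach as the paper: both directions of the equivalence are handled separately, with $\lambda R\ll\lambda$ coming from Theorem~\ref{thm  lambda R abs cntn wrt lambda}(1), and $\lambda\ll\lambda R$ obtained by showing $\lambda(A)>0\Rightarrow(\lambda R)(A)>0$ via the hypothesis that $R(\chi_A)\neq 0$ in $L^1(\lambda)$. The paper routes the second half through formula~(\ref{eq formula for lambda R}), invoking quasi-invariance to get $\lambda(\sigma(A))>0$ before concluding $\int_{\sigma(A)}R(\chi_A)\,d\lambda>0$; as you correctly observe in your final paragraph, this detour is unnecessary since $\mathrm{supp}(R(\chi_A))\subset\sigma(A)$ makes the two integrals coincide, so your shorter route via $\int_X R(\chi_A)\,d\lambda=\|R(\chi_A)\|_{L^1(\lambda)}>0$ is indeed cleaner.
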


\begin{proof} It suffices to show that $(\lambda R)(A) = 0$ implies
 that $\lambda(A) =0$ because the converse result was proved in 
 Theorem \ref{thm  lambda R abs cntn wrt lambda}. We use (\ref{eq 
 formula for lambda R}) and the fact that the support of the function 
 $R(\chi_A)$ belongs to $\sigma(A)$.
Hence, if $\lambda(A) > 0$, then, by quasi-invariance of $\sigma$, we 
conclude that $\lambda (\sigma(A)) > 0$. Therefore, by the 
assumption,
$$
\int_{\sigma(A)} R(\chi_A)\; d\lambda > 0.
$$
and finally we obtain that $(\lambda R) (A) >0$.
\end{proof}

The following result states that the measures $\lambda R$ and 
$\lambda$ are  equivalent when they are restricted on the 
sigma-subalgebra  $\sigma^{-1}(\B)$ of $\B$. This results agrees with the
fact proved in the setting of Borel dynamics. Namely, we showed in 
Lemma \ref{lem TO is onto 1-1} that
 $R$ is a one-to-one map from $\mc F(X, \sB)$ onto $\FXB$. 

\begin{proposition}\label{prop lambda R equiv to lambda}
Suppose that $(R, \sigma) $ is a transfer operator on a standard Borel 
space $(X, \B)$, and let $\la \in I(R)$. If  $\sigma$ is a non-singular 
endomorphism on $(X, \B)$ with respect to $\la$, then
$$
\lambda R |_{\sigma^{-1}(\B)} \sim \lambda |_{\sigma^{-1}(\B)}.
$$
\end{proposition}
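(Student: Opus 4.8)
The plan is to reduce the asserted equivalence of the two restricted measures to a statement about where $R(\mathbf 1)$ vanishes, via the pull-out property. Since $\sigma^{-1}(\B)=\{\sigma^{-1}(B):B\in\B\}$, two measures on the subalgebra $\sigma^{-1}(\B)$ are equivalent exactly when they have the same null sets among the sets $\sigma^{-1}(B)$, $B\in\B$; so it suffices to prove that for every $B\in\B$,
\[
(\lambda R)(\sigma^{-1}(B))=0 \iff \lambda(\sigma^{-1}(B))=0 .
\]
The starting computation is this: because $\chi_{\sigma^{-1}(B)}=\chi_B\circ\sigma$, the pull-out property (\ref{eq char property of r via sigma 1}) applied with $f=\chi_B$ and $g=\mathbf 1$ gives $R(\chi_{\sigma^{-1}(B)})=R((\chi_B\circ\sigma)\,\mathbf 1)=\chi_B\,R(\mathbf 1)$. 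As $\lambda\in I(R)$, the function $R(\mathbf 1)$ lies in $L^1(\lambda)$, so integrating,
\[
(\lambda R)(\sigma^{-1}(B))=\int_X R(\chi_{\sigma^{-1}(B)})\,d\lambda=\int_B R(\mathbf 1)\,d\lambda .
\]

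With this formula in hand both directions are short. For $\lambda R|_{\sigma^{-1}(\B)}\ll\lambda|_{\sigma^{-1}(\B)}$: if $\lambda(\sigma^{-1}(B))=0$ then backward non-singularity of $\sigma$ (that is, $\lambda\circ\sigma^{-1}\sim\lambda$) gives $\lambda(B)=0$, whence $\int_B R(\mathbf 1)\,d\lambda=0$. For the reverse: if $\int_B R(\mathbf 1)\,d\lambda=0$ then, since $R(\mathbf 1)\ge 0$, we get $R(\mathbf 1)=0$ $\lambda$-a.e.\ on $B$; using that $R$ is strict (so $R(\mathbf 1)>0$ everywhere) forces $\lambda(B)=0$, and non-singularity again gives $\lambda(\sigma^{-1}(B))=0$. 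One can avoid strictness and instead invoke the nonvanishing hypothesis of Corollary~\ref{cor on equivalence of measures}: applied to $A=\sigma^{-1}(B)$ with $\lambda(A)>0$, it says $R(\chi_{\sigma^{-1}(B)})=\chi_B R(\mathbf 1)$ is nonzero in $L^1(\lambda)$, i.e.\ $(\lambda R)(\sigma^{-1}(B))>0$.

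The heart of the matter — and the only step that is not a formality — is the reverse inclusion: the pull-out identity alone yields $\lambda R|_{\sigma^{-1}(\B)}\ll\lambda|_{\sigma^{-1}(\B)}$ for every transfer operator with non-singular $\sigma$, but to obtain full equivalence one must know that $R(\mathbf 1)$ is not $\lambda$-a.e.\ zero on a set of positive $\lambda$-measure, which is precisely where strictness (or the weaker nonvanishing condition) enters. A minor point worth a line is well-definedness of the displayed formula: surjectivity of $\sigma$ shows $B$ is determined by $\sigma^{-1}(B)$, and in any case the identity $R(\chi_{\sigma^{-1}(B)})=\chi_B R(\mathbf 1)$ makes the right-hand side independent of the chosen representative.
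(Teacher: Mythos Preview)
Your approach is essentially identical to the paper's: both compute $(\lambda R)(\sigma^{-1}(A))=\int_A R(\mathbf 1)\,d\lambda$ via the pull-out identity $R(\chi_{\sigma^{-1}(A)})=\chi_A R(\mathbf 1)$, and both use non-singularity of $\sigma$ to pass between $\lambda(A)=0$ and $\lambda(\sigma^{-1}(A))=0$. You are in fact more careful than the paper on one point: the paper simply writes ``Since $R(\mathbf 1)\in L^1(\lambda)$, we conclude that $(\lambda R)(B)=0$ if and only if $\lambda(A)=0$,'' whereas you correctly observe that the reverse implication genuinely requires $R(\mathbf 1)>0$ $\lambda$-a.e.\ (strictness, or the weaker nonvanishing hypothesis of Corollary~\ref{cor on equivalence of measures}), an assumption the statement of the proposition does not explicitly include.
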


\begin{proof}
For any set $A \in \B$, set $B = \sigma^{-1}(A)$. By non-singularity 
of $\sigma$, we have  
$$
(\lambda(B) = 0 )\ \Longleftrightarrow\ (\lambda(A) =0).
$$ 
On the other hand, we can apply the same method as in Theorem 
\ref{thm  lambda R abs cntn wrt lambda} and obtain that
\begin{eqnarray*}
% \nonumber to remov numbering (before each equation)
  (\lambda R)(B) &=& \int_X R(\chi_B)(x)\; d\lambda(x)  \\
  \\
   &=& \int_X R(\chi_{\sigma^{-1}(A)})(x) \; d\lambda(x) \\
   \\
   &=& \int_X R((\chi_A \circ\sigma) (x))\; d\lambda(x)  \\
   \\
   &=& \int_X \chi_A(x) R(\mathbf 1)(x)\; d\lambda(x) \\
   \\
   &=& \int_A R(\mathbf 1)(x)\; d\lambda(x).
\end{eqnarray*}
Since $R(\mathbf 1)\in L^1(X, \B, \lambda)$, we conclude that 
$(\lambda R) (B) =0$ if and only if $\lambda(A) =0$ if and only if $
\lambda(B) =0$.
\end{proof}

\subsection{Ergodic decomposition of transfer operators}
Fix a Borel measure $\lambda\in M(X)$ and consider the dynamical 
system $(X, \B, \la, \sigma)$. Suppose $(R, \sigma)$ is a transfer
operator in $\FXB$. It follows from Lemma \ref{lem invariant 
set} that if $A$ is a $\sigma$-invariant set then the restriction of $R$
to the space of Borel functions on $A$ gives an \textit{induced transfer 
operator} $R_A$. It is well known that if $\sigma$ is non-ergodic, then 
one can use the standard procedure of ergodic decomposition for 
$\sigma$.   We show here that, in this case, any transfer operator $R$ 
related to $\sigma$ also admits a kind of  ergodic decomposition. 
\index{ergodic decomposition} This means that we give a justification of the 
relation $R = \int_{X/\xi} R_C \; d\mu_\xi(C)$ which is intuitively clear. 

Let $(X, \B, \lambda, \sigma)$ be a non-singular non-ergodic 
dynamical system. Consider the partition $\zeta$ of $X$ into orbits of 
$\sigma$. We recall that, by definition, $x$ and $y$ are in the same 
\textit{orbit} of 
$\sigma$ if there are positive integers $n, m$ such $\sigma^n(x) = 
\sigma^m(x)$. Let $\xi$ be the measurable hull of $\zeta$, i.e., $\xi
$ is a maximal measurable partition with property $\xi \prec \zeta$. 
We recall that $\xi$ is trivial when $\sigma$ is ergodic.
Denote by $(X/\xi, \B/\xi, \la_\xi)$ the quotient measure space. By 
Theorem \ref{thm Rokhlin disintegration}, there exists a unique system 
of conditional measures $(\lambda_C)_{C \in X/\xi}$ such that
$$
\la(A)  = \int_{X/\xi} \la_C (B) \; d\la_\xi(C),
$$
and, for any measurable function $f$ on $X$,
\be\label{eq integration in system of c m}
\int_X f \; d\la = \int_{X/\xi} \left(\int_{C}\chi_{C} f\; d\la_C\right) 
\; d\la_\xi(C).
\ee
Here $C$ is an arbitrary element of the partition $\xi$ and can be 
considered as a point in $X/\xi$. We refer to Subsection \ref{subsect 
meas partitions} for more information about conditional measures.

Let $(R, \sigma)$ be  a transfer operator in $\FXB$ and $\la \in I(R)$ 
be a fixed measure. Suppose that $\la \in \mc L(R)$. Then there
exists an integrable measurable function $W$ such that
\be\label{eq W as R-N derivative}
\int_X R(f) \; d\la = \int_X fW \; d\la, 
\ee
i.e., $W (x) = \dfrac{d(\la R)}{d\la}(x)$ is the \textit{Radon-Nikodym 
derivative.}

\begin{theorem}\label{thm erg decomp for TO}
Let $(X, \B, \la, \sigma)$ and $\xi$ be as above and $\la(X) =1$. 
Suppose 
$(R,\sigma)$ is a transfer operator defined on the space $L^1(\la)$ 
such that $\la R \ll \la$. Let $(\la_C)$ be the system of conditional 
measures defined by the measurable partition $\xi$ on the measure 
space $(X, \B, \la)$. Then there exists a measurable field of 
transfer operators $(R_C, \sigma_C)$ such that
$\la R_C \ll \lambda_C$,  and
$$
W_C := \frac{d(\la_C R_C)}{d\la_C} = W \chi_C,
$$
where $W d\la = d(\la R)$.
\end{theorem}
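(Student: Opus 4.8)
The plan is to build $R_C$ by restriction and then pin down its Radon--Nikodym cocycle fiberwise, using the disintegration formula \eqref{eq integration in system of c m} together with the pull-out property.

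\emph{Construction of the field.} Since $\xi$ is the measurable hull of the partition $\zeta$ of $X$ into $\sigma$-orbits, every element $C\in\xi$ is a union of orbits, so we may fix a realization of $\xi$ in which $\sigma^{-1}(C)=C$ for each $C$; then $\chi_C\circ\sigma=\chi_C$, the map $\sigma_C:=\sigma|_C$ is a surjective endomorphism of $C$, and it is $\lambda_C$-non-singular for $\lambda_\xi$-a.e.\ $C$ (this follows from non-singularity of $\sigma$ by disintegrating the equivalence $\la\sim\la\circ\sigma^{-1}$). For $g\in L^1(\lambda_C)$ set $R_C(g):=R(\chi_C g_0)|_C$, where $g_0$ is the extension of $g$ by $0$; since $\chi_C=\chi_C\circ\sigma$, the computation in Lemma \ref{lem invariant set} applies verbatim and shows $(R_C,\sigma_C)$ is a transfer operator on $(C,\B_C,\lambda_C)$, and moreover $R_C(h|_C)=\chi_C\,R(h)|_C=R(h)|_C$ for every global $h\in\FXB$ (apply pull-out to $\chi_C h=(\chi_C\circ\sigma)h$). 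Measurability of the field $C\mapsto R_C$ reduces to joint measurability of $(C,y)\mapsto R_C(h|_C)(y)=\mathbf 1[y\in C]\,R(h)(y)$, which holds because $\xi$ is a measurable partition. Finally, assuming (as is implicit, via order continuity and Lemma \ref{lem prop of mu P}) that $R(\mathbf 1)\in L^1(\la)$, disintegrating $R(\mathbf 1)$ by \eqref{eq integration in system of c m} gives $\int_C R(\mathbf 1)|_C\,d\lambda_C<\infty$ for a.e.\ $C$, so $\lambda_C R_C$ is a well-defined finite measure on $(C,\B_C)$ for a.e.\ $C$.

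\emph{Fiberwise Radon--Nikodym identity.} Write $W:=d(\la R)/d\la$, so $\int_X R(f)\,d\la=\int_X fW\,d\la$. Fix a $\xi$-set $D$ and a set $A\in\B$. Using $R_C(\chi_A|_C)=R(\chi_A)|_C$ and \eqref{eq integration in system of c m},
\[
\int_D(\lambda_C R_C)(A\cap C)\,d\lambda_\xi(C)=\int_D\Big(\int_C R(\chi_A)|_C\,d\lambda_C\Big)\,d\lambda_\xi(C)=\int_X\chi_D\,R(\chi_A)\,d\la .
\]
Since $D$ is $\sigma$-invariant mod $0$, $\chi_D\circ\sigma=\chi_D$ in $L^1(\la)$, so by pull-out $\chi_D R(\chi_A)=R(\chi_{A\cap D})$ and the last integral equals $(\la R)(A\cap D)=\int_{A\cap D}W\,d\la$. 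Disintegrating that as well,
\[
\int_D(\lambda_C R_C)(A\cap C)\,d\lambda_\xi(C)=\int_D\Big(\int_{A\cap C}W\,d\lambda_C\Big)\,d\lambda_\xi(C)
\]
for every $\xi$-set $D$ and every $A\in\B$.

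\emph{Exhaustion and conclusion.} Choose a countable algebra $\mathcal A_0$ generating $\B$ and a countable family of $\xi$-sets generating $\B(\xi)$. For each $A\in\mathcal A_0$ the $\lambda_\xi$-integrable functions $C\mapsto(\lambda_C R_C)(A\cap C)$ and $C\mapsto\int_{A\cap C}W\,d\lambda_C$ have equal integrals over every generator of $\B(\xi)$, hence agree $\lambda_\xi$-a.e.; intersecting the countably many conull sets, for a.e.\ $C$ we get $(\lambda_C R_C)(A\cap C)=\int_{A\cap C}W\,d\lambda_C$ for all $A\in\mathcal A_0$ at once. Since $\mathcal A_0$ restricts to a generating algebra of $\B_C$ and both $\lambda_C R_C$ and $(W\chi_C)\lambda_C$ are finite, a monotone-class argument extends the identity to all $A\in\B$. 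Thus $\lambda_C R_C=(W\chi_C)\lambda_C$ for a.e.\ $C$, giving $\lambda_C R_C\ll\lambda_C$ and $W_C=d(\lambda_C R_C)/d\lambda_C=W\chi_C$, as asserted.

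The main obstacle is the passage from the single global identity $\int_X R(f)\,d\la=\int_X fW\,d\la$ to the family of fiberwise identities. It rests on the observation that $R$ commutes with multiplication by indicators of $\xi$-sets (a consequence of pull-out and the $\sigma$-invariance of such sets): this is exactly what collapses the $C$-dependent quantity $R_C(\chi_A|_C)$ to the restriction of the fixed function $R(\chi_A)$, so that \eqref{eq integration in system of c m} can be applied. The remaining care is in choosing the realization of the measurable hull $\xi$ so that $\sigma^{-1}(C)=C$ for individual elements, and in checking that $\sigma_C$ inherits non-singularity for $\lambda_\xi$-a.e.\ $C$.
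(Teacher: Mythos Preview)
Your proof is correct and follows essentially the same route as the paper: restrict $R$ to the $\sigma$-invariant elements of $\xi$ via Lemma~\ref{lem invariant set}, use the pull-out property together with $\chi_C\circ\sigma=\chi_C$ to identify $\int_C R_C(f|_C)\,d\lambda_C$ with a piece of $\int_X fW\,d\lambda$, and compare the two disintegrations. Your version is in fact more careful than the paper's own argument---you supply the countable-generator/monotone-class step needed to pass from equality of $\lambda_\xi$-integrals to the fiberwise identity for a.e.\ $C$, whereas the paper simply invokes ``uniqueness of the system of conditional measures'' after having already written $W_C$ as if $\lambda_C R_C\ll\lambda_C$ were known.
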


\begin{proof}
Let $(C, \B\cap C, \la_C)$ be the standard measure space obtained 
by restriction of Borel sets to $C \in X/\xi$. Then $C$ is 
$\sigma$-invariant, 
 and, by Lemma  \ref{lem invariant set},  we can define a transfer 
 operator $(R_C, \sigma)$ by the formula  $R_C(f) = R(f)|_C$.  It 
 follows from (\ref{eq integration in system of c m}) that $R_C$ is a 
 transfer operator in the space $L^1(\la_C)$. Then we  can compute
 \begin{eqnarray*}
 \int_X R(f) \; d\la & = & \int_X fW \; d\la \\
 \\
& = & \int_{X/\xi}  \left(\int_{C} fW\chi_C\; d\la_C\right) \; 
d\la_\xi\
 \end{eqnarray*}
On the other hand, we have
\begin{eqnarray*}
\int_X R(f) \; d\la & =& \int_{X/\xi} \left(\int_{C} R(f) \chi_C\; d
\la_C\right) \; d\la_\xi\\
\\
& = & \int_{X/\xi} \left(\int_{C} R[f (\chi_C\circ \sigma)]\; d\la_C
\right) \; d\la_\xi\\
\\
&= &\int_{X/\xi} \left(\int_{C} f \chi_C\; d(\la_C R_C)\right) \; 
d\la_\xi\\
\\
& = & \int_{X/\xi} \left(\int_{C} f W_C\; d\la_C\right) \; d\la_\xi.
\end{eqnarray*}

By uniqueness of the system of conditional measures, we have the 
result. 
\end{proof}

\subsection{Positive operators and polymorphisms}\label{subsect 
pairs of measures}

In this subsection, we discuss the following\textit{ problem}.
 Let $(X, \B)$ be  a standard Borel space, and let $\mu_1, \mu_2$ 
 be two probability measures on $(X,\B)$. 
 How can we characterize positive  operators $P$ such that 
 $\mu_1 P = \mu_2$? What can be said about transfer operators
 $(R, \sigma)$ satisfying the condition $\mu_1 R= \mu_2$ where
 $\sigma$ is an onto endomorphism of $(X, \B)$? 
 
 We denote by $\mc P$ the set of positive linear operators acting
 in the space of Borel functions $\FXB$.  We assume implicitly that 
 operators from $\mc P$ are order continuous so that the action 
 $\mu \mapsto \mu P$ is defined on $M(X)$.
 
For fixed measures $\mu_1$ and  $\mu_2$ on $(X, \B)$, we denote
   $$
 \mc P(\mu_1, \mu_2) := \{ P\in \mc P :  \mu_1 P = \mu_2,\ 
 P(\mathbf 1) = \mathbf 1\}.
 $$
Similarly, if $\mc R(\sigma)$ is the set of transfer operators 
corresponding to an endomorphism $\sigma$, then we denote
$$
 \mc R(\mu_1, \mu_2) := \{ R \in \mc R(\sigma) :  \mu_1 R
  = \mu_2,\  R(\mathbf 1) = \mathbf 1 \}.
$$ 

Given a standard Borel space $(X, \B)$, consider the product space
$(Y, \mc A) = (X \times X, \B \times \B)$, and let $\pi_1$ and $
\pi_2$ be the projections, $\pi_i(x_1, x_2) = x_i, i=1,2$. For
 convenience of notation, we will also write $Y = X_1 \times X_2$ 
 where $X_1 = X = X_2$. It will be clear from our next discussions that 
 all results remain true in the case when we have two distinct spaces 
 $(X_1, \B_1, \mu_1)$ and $(X_2, \B_2, \mu_2)$.
 
 Suppose that $\nu$ is a Borel probability measure on $X \times X$. Then 
 $\nu $ defines the marginal measures 
 \index{measure ! marginal} $\mu_1$ and $\mu_2$ on $X_1$
  and $X_2$, respectively:
$$
\mu_i(A)  = \nu(\pi_i^{-1}(A)), \ \ A\in \B.
$$
Denote by
$$
\frak M(\mu_1, \mu_2) :=  \{\nu \in M_1(Y) : \nu\circ \pi_1^{-1} = 
\mu_1,\ \nu\circ \pi_2^{-1} = \mu_2\}.
$$ 
We remark that if two measures $\mu_1, \mu_2$ are given on 
$X$, then the product measure $\nu =\mu_1 \times \mu_2$ gives an
 example of a measure from  $ \frak M(\mu_1, \mu_2)$.

\begin{lemma}\label{lem nu by nu_x}
Suppose $\nu$ is a probability measure on $Y = X_1 \times X_2$ from
the set $\frak M(\mu_1, \mu_2)$. Then $\nu$ is uniquely determined
 by the system of conditional measures $(\nu_x : x \in X_1)$ generated
 by the measurable partition $\xi_1 : = \{\pi_1^{-1}(x) : x \in X_1\} $,
 $$
 \nu = \int_{X_1} \nu_x \; d\mu_1
 $$
\end{lemma}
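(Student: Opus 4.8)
The plan is to obtain the lemma as an immediate application of Rokhlin's disintegration theorem (Theorem \ref{thm Rokhlin disintegration}) to the measurable partition $\xi_1$ of the product space $Y = X_1 \times X_2$.

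First I would check that $\xi_1 = \{\pi_1^{-1}(x) : x \in X_1\}$ is a measurable partition of the standard probability space $(Y, \B \times \B, \nu)$. This follows from the discussion in Subsection \ref{subsect meas partitions}: since $\pi_1 : (Y, \B \times \B) \to (X_1, \B)$ is a measurable map between standard measure spaces, the partition of $Y$ into the fibers of $\pi_1$ is measurable, the quotient space $Y/\xi_1$ is identified with $X_1$, and the quotient sigma-algebra $\B(\xi_1)$ is $\pi_1^{-1}(\B)$. The key bookkeeping point is the identification of the quotient measure: by definition $\nu_{\xi_1} = \nu \circ \pi_1^{-1}$, and this equals $\mu_1$ precisely because $\nu \in \frak M(\mu_1, \mu_2)$, i.e. $\mu_1$ is the first marginal of $\nu$.

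Next I would invoke Theorem \ref{thm Rokhlin disintegration}: there is a unique system of conditional measures $(\nu_x)_{x \in X_1}$ with respect to $((Y, \B \times \B, \nu), \xi_1)$. Each $\nu_x$ is a probability measure on the fiber $\pi_1^{-1}(x) = \{x\} \times X_2$, which I identify in the obvious way with a probability measure on $X_2$; the map $x \mapsto \nu_x(B)$ is $\mu_1$-measurable for every $B$; and by property (iii) of Definition \ref{def system cond measures}, for every $B \in \B \times \B$,
$$
\nu(B) = \int_{X_1} \nu_x\bigl(B \cap \pi_1^{-1}(x)\bigr)\, d\mu_1(x),
$$
which is exactly the asserted identity $\nu = \int_{X_1} \nu_x \, d\mu_1$. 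Uniqueness of $\nu$ given the data $(\nu_x)$ and $\mu_1$ is immediate from this representation, while uniqueness of the family $(\nu_x)$ itself (modulo $\mu_1$-null sets) is the uniqueness clause of Rokhlin's theorem.

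There is no real obstacle here; the only points needing care are the two identifications just mentioned — recognizing $\nu_{\xi_1} = \mu_1$, which is where the hypothesis $\nu \in \frak M(\mu_1, \mu_2)$ is used, and the harmless identification of a measure on $\{x\} \times X_2$ with a measure on $X_2$. I would also note in passing that the symmetric disintegration along $\xi_2 = \{\pi_2^{-1}(y) : y \in X_2\}$ recovers $\nu$ from a family $(\nu^y)_{y \in X_2}$ over $\mu_2$; this complementary description will be convenient in the sequel when relating such $\nu$ to positive operators $P$ with $\mu_1 P = \mu_2$.
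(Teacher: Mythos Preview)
Your proposal is correct and follows essentially the same approach as the paper: the paper simply states that the result ``follows immediately from the uniqueness of the system of conditional measures (see Subsection \ref{subsect meas partitions})'', i.e., from Rokhlin's disintegration theorem, and interprets it as saying that if $\nu,\nu' \in \frak M(\mu_1,\mu_2)$ have $\nu_x = \nu'_x$ for $\mu_1$-a.e.\ $x$, then $\nu = \nu'$. Your version is a more detailed unpacking of exactly this argument, including the identification $\nu_{\xi_1} = \mu_1$ that the paper leaves implicit.
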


This results follows immediately from the uniqueness of the system of
conditional measures (see Subsection \ref{subsect meas 
partitions}). It can be interpreted as follows: if $\nu$ and $\nu'$
are two measures from $\frak M(\mu_1, \mu_2) $, and $\nu_x = \nu'_x$ 
for $\mu_1$-a.e. $x\in X_1$, then $\nu = \nu'$. 

For every measure $\nu \in \frak M(\mu_1, \mu_2)$, we define a positive
operator  $P_\nu : L^1(\mu_2) \to L^1(\mu_1)$ by setting
\be\label{eq pos operator P_nu}
P_\nu(f)(x) = \mathbb E_\nu (f\circ \pi_2  \ | \ \pi_1^{-1}(x)).
\ee
Equivalently, formula (\ref{eq pos operator P_nu}) can be written as follows
\be\label{eq pos op P_nu in integrals}
P_\nu(f)(x) = \int_{X_2} (f\circ \pi_2)\; d\nu_{x},\ \  x\in X_1,
\ee
where $\nu_x$ is the system of conditional measures defined in Lemma 
\ref{lem nu by nu_x}.

We observe that if $\nu = \mu_1 \times \mu_2$, then $P_\nu$  is 
a rank 1 operator such that
$$
P_\nu (f) = \int_{X_2} (f\circ\pi_2) \; d\mu_2,
$$
so that $P_\nu (f)(x), x \in X_1,$ is a constant function.

For the measure space $(Y, \mc A, \nu) = (X_1 \times X_2, \B \times \B,
\nu)$, the projections $\pi_1$ and $\pi_2$ define the \textit{isometries} 
$V_1$ and $V_2$, respectively, where 
\be\label{eq V_1}
V_1(f) = f \times 1 : L^2(X_1, \mu_1) \to L^2(Y, \nu),
\ee
\be\label{eq V_2}
V_2(f) = 1 \times f : L^2(X_2, \mu_2) \to L^2(Y, \nu).
\ee
With some abuse of notation, we denote by $f_1 \times f_2$ the function
 $f(x_1, x_2) = f_1(x_1)f_2(x_2)$. Equivalently, $(f \times 1)(x_1, x_2) = 
 f\circ\pi_1(x_1, x_2)$, and $(1 \times f)(x_1, x_2) = 
 f\circ\pi_2(x_1, x_2)$.

\begin{lemma}\label{lem P_nu as V_1V_2}
The operator $P_\nu$, considered as an operator acting from
 $L^2(\mu_2)$ into $L^2(\mu_1)$, satisfies the relation
 $$
 P_\nu = V_1^* V_2.
 $$ 
\end{lemma}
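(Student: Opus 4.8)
**Proof plan for Lemma (P_ν = V_1^* V_2).**

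The plan is to verify the identity $P_\nu = V_1^* V_2$ by testing it against arbitrary elements, i.e.\ by showing that $\langle P_\nu f, g\rangle_{L^2(\mu_1)} = \langle V_2 f, V_1 g\rangle_{L^2(\nu)}$ for all $f \in L^2(\mu_2)$ and $g \in L^2(\mu_1)$; since $V_1$ is an isometry with adjoint $V_1^*$, the right-hand side equals $\langle V_1^* V_2 f, g\rangle_{L^2(\mu_1)}$, and the equality of the two sides for all $g$ forces $P_\nu f = V_1^* V_2 f$. First I would unwind the right-hand side: $V_2 f = 1 \times f = f \circ \pi_2$ and $V_1 g = g \times 1 = g \circ \pi_1$, so that
$$
\langle V_2 f, V_1 g\rangle_{L^2(\nu)} = \int_Y (f\circ\pi_2)(x_1,x_2)\,\overline{(g\circ\pi_1)(x_1,x_2)}\; d\nu(x_1,x_2).
$$

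Next I would disintegrate $\nu$ over the partition $\xi_1 = \{\pi_1^{-1}(x) : x\in X_1\}$ using Lemma \ref{lem nu by nu_x}, which gives $\nu = \int_{X_1}\nu_x\, d\mu_1$ with $\nu\circ\pi_1^{-1} = \mu_1$. Since $g\circ\pi_1$ is constant equal to $g(x)$ on the fiber $\pi_1^{-1}(x)$, the fiber integral factors:
$$
\int_Y (f\circ\pi_2)\,\overline{g\circ\pi_1}\; d\nu = \int_{X_1}\overline{g(x)}\left(\int_{X_2}(f\circ\pi_2)\; d\nu_x\right) d\mu_1(x).
$$
By the defining formula \eqref{eq pos op P_nu in integrals}, the inner integral is exactly $P_\nu(f)(x)$, so the whole expression equals $\int_{X_1} P_\nu(f)(x)\,\overline{g(x)}\; d\mu_1(x) = \langle P_\nu f, g\rangle_{L^2(\mu_1)}$, which is what we wanted.

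The routine bookkeeping is checking that all the integrals make sense: $V_1, V_2$ are genuine isometries into $L^2(\nu)$ because $\mu_1 = \nu\circ\pi_1^{-1}$ and $\mu_2 = \nu\circ\pi_2^{-1}$ are the marginals, so $\|f\circ\pi_2\|_{L^2(\nu)} = \|f\|_{L^2(\mu_2)}$ and likewise for $V_1$; and $P_\nu$ maps $L^2(\mu_2)$ into $L^2(\mu_1)$ because conditional expectation is a contraction in $L^2$, so the pairing on the left is well-defined. The only genuinely delicate point is the interchange of integration with the disintegration and the measurability of $x\mapsto \int_{X_2}(f\circ\pi_2)\,d\nu_x$; this is handled by Rokhlin's disintegration theorem (Theorem \ref{thm Rokhlin disintegration}, property (ii)) together with a standard approximation of $f$ by simple functions and the monotone convergence theorem, and is where I would spend the small amount of care the argument requires. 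Once one recognizes that $g\circ\pi_1$ is fiberwise constant — the single observation that makes the fiber integral split off the factor $\overline{g(x)}$ — the rest is immediate, so I do not expect any real obstacle here.
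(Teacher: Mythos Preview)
Your proof is correct and is essentially the same argument as the paper's: both rest on disintegrating $\nu$ over the fibers $\pi_1^{-1}(x)$ and using that $g\circ\pi_1$ is fiberwise constant. The only organizational difference is that the paper first isolates the explicit formula $V_1^*(F)(x_1)=\int_{X_2}F(x_1,x_2)\,d\nu_{x_1}$ for general $F\in L^2(\nu)$ and then applies it to $F=V_2 f=f\circ\pi_2$, whereas you carry out the same disintegration directly inside the pairing $\langle V_2 f, V_1 g\rangle_{L^2(\nu)}$; the computations are line-for-line the same.
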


\begin{proof} We begin with finding the explicit formula for the adjoint
operator $V_1^* : L^2(\nu) \to L^2(\mu_1)$. For any functions $f \in 
L^2(\nu)$ and $g \in L^2(\mu_1)$, we have 
\begin{eqnarray*}
\langle f, V_1(g)\rangle_{L^2(\nu)} & = &\int_{X_1 \times X_2}
f(x_1, x_2) (g\circ \pi_1)(x_1, x_2)\; d\nu(x_1, x_2)\\
\\
&=& \int_{X_1} (g\circ \pi_1)(x_1, x_2)\left( \int_{X_2} 
f(x_1, x_2) \; d\nu_{x_1} \right) \; d\mu(x_1)\\
\\
& =& \langle V_1^*(f), g\rangle_{L^2(\mu_1)},
\end{eqnarray*}
where 
\be\label{eq adjoint of V_1}
V_1^*(f)(x_1) = \int_{X_2} f(x_1, x_2) \; d\nu_{x_1}.
\ee

The remaining part of the proof follows now from (\ref{eq adjoint of V_1}):
$$
V_1^* V_2 (f) = V_1^*(f\circ\pi_2) = \int_{X_2} 
(f\circ\pi_2)(x_1, x_2) \; d\nu_{x_1}(x_2) = P_\nu(f),
$$
and we are done.
\end{proof}

Our main result of this  subsection, Theorem 
\ref{thm measures vs operators},  contains several statements that clarify 
the relationship between the set of measures $\frak M(\mu_1, \mu_2)$
and the set of positive operators $P \in \mc P(\mu_1, \mu_2)$. We use 
here the notation introduced above. We also consider positive operators 
acting in the corresponding $L^2$-spaces.

\begin{theorem} \label{thm measures vs operators}
(1) Let $\nu \in \frak M(\mu_1, \mu_2)$. Then 
formula (\ref{eq pos operator P_nu}) defines an affine map 
$$
\Psi(\nu) = P_\nu : \frak M(\mu_1, \mu_2) \to \mc P(\mu_1, \mu_2).
$$

(2) Let $P \in \mc P(\mu_1, \mu_2)$ be a positive operator acting in
$\FXB$. Define a measure $\nu_P$ on $(X\times X, \B\times \B)$ 
by setting
\be\label{eq def of nu_P}
\nu_P(f_1 \times f_2) := \int_{X} f_1 P(f_2)\; d\mu_1.
\ee
Then, $\Phi(P) = \nu_P$ defines  an affine  map  
$$
\Phi : \mc P(\mu_1, \mu_2) \to \frak M(\mu_1, \mu_2).
$$

(3) The maps $\Psi : \nu \mapsto P_\nu $ and $\Phi : P  \mapsto 
\nu_P$ are affine  bijections between 
the sets  $\frak M(\mu_1, \mu_2)$ and $\mc P(\mu_1, \mu_2)$ such that $\Psi\circ \Phi (P) = P$, and $\Phi\circ\Psi (\nu) = \nu$.

\end{theorem}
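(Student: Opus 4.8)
The plan is to verify the three claims in order, with part~(3) reduced to two direct computations once parts~(1) and~(2) are established; I expect the only genuinely technical point to be the well-definedness of $\nu_P$ as a countably additive measure in part~(2).

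\emph{Part (1).} Fix $\nu\in\frak M(\mu_1,\mu_2)$ with system of conditional measures $(\nu_x)_{x\in X_1}$ relative to $\xi_1=\{\pi_1^{-1}(x):x\in X_1\}$, as in Lemma~\ref{lem nu by nu_x}; by Theorem~\ref{thm Rokhlin disintegration} each $\nu_x$ is a probability measure on $\pi_1^{-1}(x)$. Using the form (\ref{eq pos op P_nu in integrals}) of $P_\nu$, positivity is immediate, $P_\nu(\mathbf 1)=\mathbf 1$ follows from $\nu_x(X_2)=1$, and order continuity follows from dominated convergence inside the integral. To get $\mu_1 P_\nu=\mu_2$ I would apply $P_\nu$ to $\chi_A$ and integrate against $\mu_1$: the disintegration identity (\ref{eq mu(B) integral}) turns $\int_{X_1}P_\nu(\chi_A)\,d\mu_1$ into $\int_Y (\chi_A\circ\pi_2)\,d\nu=\nu(\pi_2^{-1}(A))=\mu_2(A)$, so $P_\nu\in\mc P(\mu_1,\mu_2)$. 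Affineness comes from uniqueness in Theorem~\ref{thm Rokhlin disintegration}: $t\nu+(1-t)\nu'$ again lies in $\frak M(\mu_1,\mu_2)$, hence has first marginal $\mu_1$, and its conditional measures are $t\nu_x+(1-t)\nu'_x$, whence $P_{t\nu+(1-t)\nu'}=tP_\nu+(1-t)P_{\nu'}$.

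\emph{Part (2).} The crux is to manufacture from $P$ an honest Borel probability measure on $X\times X$. Here I would invoke the random-measure construction of Remark~\ref{rem X lc Hausdorff}: since $P\in\mc P(\mu_1,\mu_2)$ is order continuous with $P(\mathbf 1)=\mathbf 1$, for each $x$ the set function $A\mapsto\mu^x(A):=P(\chi_A)(x)$ is a probability measure on $(X,\B)$, countable additivity being exactly order continuity applied to $\chi_{A_n}\downarrow 0$, and $x\mapsto\mu^x(A)=P(\chi_A)(x)$ is Borel. Thus $(\mu^x)$ is a Markov kernel, and $\nu_P(E):=\int_{X_1}\mu^x(E_x)\,d\mu_1(x)$, with $E_x$ the $x$-section of $E\in\B\times\B$, is a Borel probability measure on $X\times X$ whose value on product functions is $\int_{X_1}f_1(x)\big(\int_{X_2}f_2\,d\mu^x\big)\,d\mu_1(x)=\int_X f_1\,P(f_2)\,d\mu_1$ (approximating $f_2$ by simple functions and using order continuity), so it agrees with (\ref{eq def of nu_P}). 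Its first marginal is $\int_X \mu^x(X)\,d\mu_1=\mu_1$ and its second marginal is $\int_X P(\chi_B)\,d\mu_1=(\mu_1 P)(B)=\mu_2(B)$, so $\nu_P\in\frak M(\mu_1,\mu_2)$. Affineness of $\Phi$ is immediate from linearity of $P\mapsto\int f_1 P(f_2)\,d\mu_1$, since by Lemma~\ref{lem nu by nu_x} a measure on $X\times X$ is determined by its values on product functions.

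\emph{Part (3).} It remains to check $\Psi$ and $\Phi$ are mutually inverse. Starting from $P\in\mc P(\mu_1,\mu_2)$, the construction above writes $\nu_P=\int_{X_1}(\delta_x\times\mu^x)\,d\mu_1(x)$, so by uniqueness its conditional measures along $\xi_1$ are $(\nu_P)_x=\delta_x\times\mu^x$; hence $P_{\nu_P}(f)(x)=\int_{X_2}f\,d\mu^x=P(f)(x)$, i.e.\ $\Psi\circ\Phi=\mathrm{id}$. Starting from $\nu\in\frak M(\mu_1,\mu_2)$, one computes $\nu_{P_\nu}(f_1\times f_2)=\int_{X_1}f_1(x)\big(\int_{X_2}f_2\,d\nu_x\big)\,d\mu_1(x)=\int_{X_1}\big(\int_{\pi_1^{-1}(x)}(f_1\circ\pi_1)(f_2\circ\pi_2)\,d\nu_x\big)\,d\mu_1(x)=\int_Y (f_1\circ\pi_1)(f_2\circ\pi_2)\,d\nu$, using that $f_1\circ\pi_1$ is constant on $\pi_1^{-1}(x)$ and the integration formula (\ref{eq function integration csm}); since product functions determine the measure, $\nu_{P_\nu}=\nu$, i.e.\ $\Phi\circ\Psi=\mathrm{id}$. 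Together with the affineness proved in parts (1) and (2), this shows $\Psi$ and $\Phi$ are affine bijections inverse to one another, completing the proof.
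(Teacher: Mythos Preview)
Your proposal is correct and follows the same overall architecture as the paper's proof: verify $P_\nu\in\mc P(\mu_1,\mu_2)$ via disintegration, check the marginals of $\nu_P$, and then compute the two compositions on product functions. The one substantive difference is in part~(2): the paper simply \emph{defines} $\nu_P$ by its values on products $f_1\times f_2$ and proceeds to check the marginals, never pausing to explain why this prescription extends to a countably additive Borel measure on $X\times X$; you fill this gap by building the Markov kernel $x\mapsto\mu^x$, with countable additivity coming from order continuity of $P$, and then integrating against $\mu_1$. That is more rigorous than what the paper writes down. A minor presentational point: Remark~\ref{rem X lc Hausdorff} as stated uses the Riesz representation on $C_c(X)$ for locally compact Hausdorff $X$, so your citation is for the \emph{idea} rather than the literal statement; your actual justification via order continuity is self-contained and does not need that hypothesis, so you might phrase it that way. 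Also, the paper inserts a separate injectivity argument for $\Phi$ inside part~(2); you correctly omit this, since it is subsumed by the two-sided inverse computation in part~(3).
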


\begin{proof} (1) We first  check  that the operator $P_\nu$ belongs
to the set $\mc P(\mu_1, \mu_2)$.  Since $\nu$ is a probability measure,
 $P_\nu$ is a normalized positive operator, we obtain
\begin{eqnarray*}
\int_{X_1} P_\nu(f)(x_1) \; d\mu_1(x_1) &=&  \int_{X_1} 
\left( \int_{X_2}
(f\circ \pi_2) (x_1, x_2)\; d\nu_{x_1}(x_2) \right)\; d\mu_1(x_1)  \\
\\
& = &\int_{X_1 \times X_2} (f\circ \pi_2) (x_1, x_2)\; d\nu(x_1, x_2)\\
\\
 & =&  \int_{X_2} \left(\int_{X_1} (f\circ \pi_2) (x_1, x_2)\; 
 d\nu_{x_2}(x_1)\right) \; d\mu_2(x_2)\\
 \\
 & =& \int_{X_2} f(x_2)\; d\mu_2(x_2).
\end{eqnarray*}
We used here the Fubini theorem, and  two properties: (i) the measure 
$\nu_x$ is probability for $\mu_1$-a.e
$x \in X_1$,  and (ii) $X_1 = X_2 = X$. Thus, we conclude that $\mu_1 
P_\nu = \mu_2$. 

Moreover, as we will see from (2), a positive normalized operator $P$ in 
$\FXB$ belongs to $\mc P(\mu_1, \mu_2)$ if and only if there exists a
 measure $\nu \in 
\frak M(\mu_1, \mu_2)$ such that $P = P_\nu$ where $P_\nu$ is defined
by (\ref{eq pos operator P_nu}). For this, we 

The fact that $\Psi(\alpha \nu_1 + (1-\alpha) \nu_2) = \alpha \Psi(\nu_1)
+ (1-\alpha)\Psi(\nu_2), \alpha \in (0,1),$ is obvious.

(2) We show that $\nu_P \in \frak M(\mu_1, \mu_2)$. Apply the definition 
of $\nu_P$ to characteristic functions $\chi_A$ and $\chi_B$ where 
$A\subset X_1$ and $B \subset X_2$:
$$
\nu_P(\chi_{A \times B}) = \int_X \chi_A P(\chi_B)\; d\mu_1. 
$$ 
Then we see that  $\nu\circ \pi_1^{-1} (A) = \mu_1$, and $\nu\circ 
\pi_2^{-1} (B) = \mu_1 P$. Because $P \in \mc P(\mu_1, \mu_2)$ we 
have $\mu_1 P = \mu_2$. Hence,  $\mu_1$ and $\mu_2$ are the
marginal measures for $\nu_P$. 

Next, we show that $\Phi$ is a one-to-one map. Suppose there are
positive operators $P, Q \in \mc P(\mu_1, \mu_2)$ such that 
$\nu_P = \nu_Q$. Then, for any functions $f_1$ and $f_2$, we have 
$$
\int_{X} f_1 P(f_2)\; d\mu_1 = \int_{X} f_1 Q(f_2)\; d\mu_1.
$$
It follows, by standard arguments, that $P = Q$. 

(3) It remains to check that $\Psi$ and $\Phi$ are inverses of each other. It 
can be done by direct computations:
\begin{eqnarray*}
(\Phi \circ \Psi (\nu))(f_1 \times f_2) & =& \int_{X_1} f_1P_\nu(f_2)
\; d\mu_1\\
\\
& =& \int_{X_1} f_1(x_1)\left( \int_{X_2} (f_2\circ \pi_2)(x_1, x_2)
\; d\nu_{x_1}(x_2)\right) d\mu_1(x_1)\\
\\
& =& \int_{X_1}  \int_{X_2} f_1(x_1) f_2( x_2)\; d\nu(x_1, x_2)\\\
\\
& =& \nu(f_1\times f_2).
\end{eqnarray*}

Similarly, we can show that $\Psi \circ \Phi (P) = P$ for any $P \in 
\mc P(\mu_1, \mu_2)$. This is equivalent to the equality $P_{\nu_P} 
= P$ where $\nu_P$ is defined by (\ref{eq  def of nu_P}). The latter 
relation can be proved by using the definitions of $\Phi$ and $\Psi$. 
\end{proof}

\begin{remark} Since the maps $\Phi$ and $\Psi$ are affine, we obtain from
Theorem \ref{thm measures vs operators} that 
they establish one-to-one correspondence between extreme points of the
sets $\frak M(\mu_1, \mu_2)$ and $\mc P(\mu_1, \mu_2)$. We observe 
that the measure $\mu_1\times \mu_2$ is an extreme point in  $\frak 
M(\mu_1, \mu_2)$ as well as the corresponding rank one operator  
$P_{\mu_1 \times \mu_2}$ is an extreme point in $\mc P(\mu_1, 
\mu_2)$.
\end{remark}

\begin{corollary}
Suppose the $L^2$-space of $(X_1 \times X_2, \B\times \B, \nu)$, 
$\nu \in \frak M(\mu_1, \mu_2)$, is 
isometrically embedded into $L^2(\Omega, \rho)$ where $(\Omega, \rho)$
is a standard measure space, $U : L^2(X_1 \times X_2, \nu) \to
 L^2(\Omega, \rho)$. Let $V_i : L^2(X_i, \mu_i) \to L^2(X_1 \times 
 X_2, \nu), i =1,2,$ be the isometries defined by (\ref{eq V_1}) and
 (\ref{eq V_2}). Set $\wt V_i = UV_i$. Then the positive operator 
 $\wt P$ defined by $\wt V_i$ as in Lemma \ref{lem P_nu as V_1V_2} 
 coincides with $P_\nu$.
\end{corollary}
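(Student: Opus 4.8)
The plan is to reduce everything to the single identity $U^{*}U = \mathrm{id}$ on $L^{2}(X_{1}\times X_{2},\nu)$, which is exactly the statement that $U$ is an isometric embedding. First I would record that $\wt V_{i} = UV_{i}$ is again an isometry for $i=1,2$, being a composition of the isometry $V_{i}$ (from \eqref{eq V_1}, \eqref{eq V_2}) with the isometry $U$; hence the recipe of Lemma \ref{lem P_nu as V_1V_2} applies verbatim to the pair $(\wt V_{1},\wt V_{2})$ and produces a well-defined positive operator $\wt P := \wt V_{1}^{*}\wt V_{2} : L^{2}(\mu_{2}) \to L^{2}(\mu_{1})$.

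Next I would carry out the short computation with adjoints. Since $\wt V_{1} = UV_{1}$, we have $\wt V_{1}^{*} = V_{1}^{*}U^{*}$, so that
$$
\wt P \;=\; \wt V_{1}^{*}\wt V_{2} \;=\; V_{1}^{*}U^{*}UV_{2} \;=\; V_{1}^{*}(U^{*}U)V_{2}.
$$
Because $U$ is an isometry, $U^{*}U$ is the identity operator on $L^{2}(X_{1}\times X_{2},\nu)$, and therefore $\wt P = V_{1}^{*}V_{2}$. By Lemma \ref{lem P_nu as V_1V_2}, $V_{1}^{*}V_{2} = P_{\nu}$, which is the desired conclusion. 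Positivity of $\wt P$ is then automatic since $\wt P = P_{\nu}$, and $P_{\nu}$ is the positive operator attached to $\nu$ in \eqref{eq pos operator P_nu}.

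There is essentially no serious obstacle here; the only point that needs a little care is the bookkeeping of which spaces the various operators and their adjoints act on — in particular that $U^{*}U=\mathrm{id}$ (isometry), not $UU^{*}=\mathrm{id}$ (which would be false unless $U$ is onto). One should also be mildly careful that the operators $V_i$, $U$ are defined between the correct $L^2$-spaces so that the composition $V_1^{*}U^{*}UV_2$ is literally the same map as the one Lemma \ref{lem P_nu as V_1V_2} computes; but once the domains and codomains are matched up, the argument is the one-line calculation above. Optionally, I would remark that the statement shows the operator $P_{\nu}$ does not depend on the chosen isometric realization of $L^{2}(X_{1}\times X_{2},\nu)$ inside a larger $L^{2}(\Omega,\rho)$, which is the conceptual content of the corollary.
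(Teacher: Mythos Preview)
Your proof is correct and follows exactly the same approach as the paper: both reduce to the one-line computation $\wt P = \wt V_1^{*}\wt V_2 = V_1^{*}U^{*}UV_2 = V_1^{*}V_2 = P_\nu$, using that $U^{*}U = \mathrm{id}$ since $U$ is an isometry. Your additional remarks about domains, positivity, and the distinction between $U^{*}U$ and $UU^{*}$ are correct and add useful clarity but are not strictly needed.
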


The proof follows immediately from the relation 
$$
\wt P = \wt V_1^* \wt V_2 = V_1^*U^*UV_2 = P_\nu.
$$
\medskip

Suppose now an onto endomorphism  $\sigma$ is defined on a standard 
Borel space $(X, \B)$. Let $\mu $ be a probability measure on $(X, \B)$.
We know that the partition $\xi = \{\sigma^{-1}(x) : x\in X\}$ of $X$ is 
measurable, hence there exists a system of conditional measures
$(\mu_{C_x})$ defined by $\xi$, where $C_x$ is the element of $\xi$
that contains $x$, see Subsection \ref{subsect meas partitions}. In Example
\ref{ex TO by cond meas}, we used measures $(\mu_{C_x})$ to define a
transfer operator 
\be\label{eq R from cond meas}
R(f)(x) = \int_{C_x}f(y)\; d\mu_{C_x}(y).
\ee

We consider here another class of measures on the product space 
$(X \times X, \B \times \B)$ associated to $(X, \B, \mu, \sigma)$. 
For $\mu$, $\sigma$, and $(\mu_{C_x})$ s above,
take the partition $\xi_1$ of $X \times X$ into the fibers 
$\{\pi^{-1}_1(x) : x\in X\}$ and assign the measure $\mu_{C_x}$ to the
set $\{x\} \times \pi^{-1}_1(x)$ endowed the induced Borel structure. 
We see that, in fact, the measure $\mu_{C_x}$ is supported by the set 
$\{x\} \times C_x$. 

Let now $\nu$ be the measure on $X \times X, \B \times \B)$ such that, 
for a Borel function $f(x_1, x_2)$,
\be\label{eq measure nu from cond measures}
\nu(f)  = \int_{X_1} \left(\int_{\pi^{-1}_1(x_1)} f(x_1, x_2)\; 
d\mu_{C_{x_1}}(x_2)\right) \; d\mu(x_1).
\ee

Using the partition $\xi_1$, we can also disintegrate $\nu$ over $X_1$
 and get the family of conditional measures $\nu_x, x \in X_1$. 
 By definition of $\nu$, we have $\nu_x = \mu_{C_x}$.

\begin{lemma} Let $R$ and $\nu$ be defined by (\ref{eq R from cond 
meas}) and  (\ref{eq measure nu from cond measures}), respectively. 
Then $R = R_\nu$ where $R_\nu$ is  defined in 
(\ref{eq pos operator P_nu})
\end{lemma}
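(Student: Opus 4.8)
The plan is to verify the identity by simply unwinding both operators through the disintegration of $\nu$ over the first coordinate; there is essentially no hard step. First I would recall that, by the definition of $R_\nu$ in (\ref{eq pos operator P_nu}) and its integral reformulation (\ref{eq pos op P_nu in integrals}),
\[
R_\nu(f)(x) \;=\; \int_{X_2} (f\circ\pi_2)(x,x_2)\; d\nu_x(x_2),
\]
where $(\nu_x)_{x\in X_1}$ is the system of conditional measures of $\nu$ relative to the partition $\xi_1=\{\pi_1^{-1}(x):x\in X_1\}$, as in Lemma \ref{lem nu by nu_x}. Next I would observe, exactly as in the paragraph preceding the statement, that the defining formula (\ref{eq measure nu from cond measures}) for $\nu$ \emph{is} a disintegration of $\nu$ along $\xi_1$ over the base measure $\mu=\nu\circ\pi_1^{-1}$; hence, by the uniqueness clause of Theorem \ref{thm Rokhlin disintegration}, the conditional measures of $\nu$ are $\nu_x=\mu_{C_x}$, each a probability measure carried by the fibre $C_x=\sigma^{-1}(x)$.

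Substituting this identification then finishes the argument at once: since $\nu_x=\mu_{C_x}$ is concentrated on $\{x\}\times C_x$ and $(f\circ\pi_2)(x,x_2)=f(x_2)$, one obtains
\[
R_\nu(f)(x)\;=\;\int_{C_x} f(x_2)\; d\mu_{C_x}(x_2)\;=\;R(f)(x),
\]
the last equality being precisely the definition (\ref{eq R from cond meas}) of $R$ from Example \ref{ex TO by cond meas}.

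Before invoking the formula for $R_\nu$ I would record two small bookkeeping points. First, that $\nu$ really is a probability measure lying in $\frak M(\mu,\mu R)$, so that $R_\nu$ is the operator associated with $\nu$ by Theorem \ref{thm measures vs operators}: measurability of $x\mapsto\nu_x(B)$ is inherited from condition (ii) of Definition \ref{def system cond measures} for $(\mu_C)$; the first marginal of $\nu$ equals $\mu$ because $\mu_{C_x}(C_x)=1$; and the second marginal is $B\mapsto\int_X R(\chi_B)\,d\mu=(\mu R)(B)$. Second, that the asserted equality of the two operators is the natural one at the level of their common action — on, say, bounded Borel functions — since a priori the domain of $R$ is $L^1(\mu)$ while that of $R_\nu$ is $L^1(\mu R)$; extending from bounded functions by the usual monotone limiting argument then yields the full statement. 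I do not expect any genuine obstacle here: the entire content is the identification $\nu_x=\mu_{C_x}$ together with the fact that $\mu_{C_x}$ lives on $C_x$, after which the claim is forced by comparing the two integral expressions.
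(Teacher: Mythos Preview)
Your proposal is correct and follows essentially the same route as the paper: both identify the fibre measures $\nu_x$ of $\nu$ with the conditional measures $\mu_{C_x}$ (the paper states this just before the lemma; you justify it via uniqueness of disintegration), and then substitute into the integral formula for $R_\nu$ to recover $R(f)(x)$. Your additional bookkeeping on marginals and domains is more than the paper provides but not needed for the lemma itself; the marginal computation appears instead in the proposition that immediately follows.
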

\begin{proof}
We have from the definition of $\nu$ 
\begin{eqnarray*}
R_\nu(f)(x)  &=& \mathbb E_\nu(f(\pi_2(x, y))\; | \; 
\pi_1^{-1}(x)) \\
\\
&=& \int_{\pi_1^{-1}(x)} f(y) \; d\nu_x(y)\\
\\
&=& \int_{C_x} f(y) \; d\mu_x(y)\\
\\
& =& R(f)(x), \ \ \ \qquad (x, y) \in X\times X.
\end{eqnarray*}
\end{proof}

This means, in particular,  that the transfer operator $R_\nu$ possesses 
the pull-out property. 

\begin{proposition}
Let the measure $\nu$ on $X \times X$  be defined by 
(\ref{eq measure nu from cond measures}). Then  the marginal measures 
for $\nu$ are $\mu_1 = \nu\circ \pi_1^{-1}  = \mu$ and $\mu_2 = 
\nu\circ \pi_2^{-1}$
 where 
$$
\mu_2(B) = \int_X \mu_{C_x}(B) \; d\mu(x), \ \ B\in \B.
$$

Moreover, $\mu_1 R_\nu = \mu_2$ and $R_\nu \in \mc 
R(\mu_1, \mu_2)$.
\end{proposition}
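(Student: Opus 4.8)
The plan is to verify the four assertions by testing the measure $\nu$ against product functions of the form $f_1\circ\pi_1$ and $f_2\circ\pi_2$ in the defining formula (\ref{eq measure nu from cond measures}), exploiting throughout that each conditional measure $\mu_{C_x}$ is a probability measure supported on the fiber $\{x\}\times C_x$.

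First I would compute the marginal $\mu_1 = \nu\circ\pi_1^{-1}$. Taking $f = \chi_A\circ\pi_1$ in (\ref{eq measure nu from cond measures}), the inner integral $\int_{\pi_1^{-1}(x_1)}\chi_A(x_1)\, d\mu_{C_{x_1}}(x_2)$ equals $\chi_A(x_1)\,\mu_{C_{x_1}}(\{x_1\}\times C_{x_1}) = \chi_A(x_1)$, since $\mu_{C_{x_1}}$ is a probability measure; integrating against $\mu$ gives $\nu(\pi_1^{-1}(A)) = \mu(A)$, so $\mu_1 = \mu$. For the second marginal I would take $f = \chi_B\circ\pi_2$; the inner integral becomes $\int_{C_{x_1}}\chi_B(y)\, d\mu_{C_{x_1}}(y) = \mu_{C_{x_1}}(B\cap C_{x_1})$, and integrating against $\mu$ yields $\nu(\pi_2^{-1}(B)) = \int_X \mu_{C_x}(B\cap C_x)\, d\mu(x)$, which is precisely the claimed formula for $\mu_2$ (under the convention that $\mu_{C_x}(B)$ abbreviates $\mu_{C_x}(B\cap C_x)$). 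This establishes $\mu_2 = \nu\circ\pi_2^{-1}$ together with its explicit form.

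Next, to show $\mu_1 R_\nu = \mu_2$, I would invoke Definition \ref{def action of P on mu}: $(\mu_1 R_\nu)(B) = \int_X R_\nu(\chi_B)\, d\mu_1$. By the preceding lemma, $R_\nu(f)(x) = R(f)(x) = \int_{C_x} f(y)\, d\mu_{C_x}(y)$, so $R_\nu(\chi_B)(x) = \mu_{C_x}(B\cap C_x)$, and integrating this against $\mu = \mu_1$ reproduces exactly the formula just obtained for $\mu_2(B)$. Hence $\mu_1 R_\nu = \mu_2$.

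Finally, for $R_\nu\in\mc R(\mu_1,\mu_2)$, I would recall that $R_\nu = R$ is a transfer operator for $\sigma$ by Lemma \ref{lem R via cond syst meas}, that $R_\nu(\mathbf 1)(x) = \int_{C_x}\mathbf 1\, d\mu_{C_x} = \mu_{C_x}(C_x) = 1$ so that $R_\nu$ is normalized, and that $\mu_1 R_\nu = \mu_2$ was just verified; these are exactly the defining conditions for membership in $\mc R(\mu_1,\mu_2)$. The only point requiring care throughout is the bookkeeping between the product fiber $\pi_1^{-1}(x_1) = \{x_1\}\times X$ and the support $\{x_1\}\times C_{x_1}$ of $\mu_{C_{x_1}}$; once that identification is kept straight, every step is a routine application of the disintegration formula and Fubini's theorem, so I do not anticipate a genuine obstacle.
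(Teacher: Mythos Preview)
Your proof is correct and follows essentially the same approach as the paper: both compute the marginals by plugging indicator functions into the disintegration formula (\ref{eq measure nu from cond measures}), and both identify $\mu_1 R_\nu = \mu_2$ via the preceding lemma $R_\nu = R$ together with the formula $R(f)(x) = \mu_{C_x}(f)$. Your write-up is in fact more explicit than the paper's on the $\mu_1$ computation and on verifying the normalization and membership conditions for $\mc R(\mu_1,\mu_2)$, but there is no substantive difference in method.
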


\begin{proof}
The fact that the marginal measure $\mu_1$ coincides with $\mu$ is 
obvious. To find $\mu_2(B) = \nu(X_1 \times B)$, we take
\begin{eqnarray*}
\mu_2(B) & = & \int_{X} \left(\int_{\pi_1^{-1}(x)} \chi_B(y) \; 
d\mu_{C_x}(y)\right) \; d\mu(x)\\
\\
& =& \int_X \mu_{C_x}(B) \; d\mu(x).
\end{eqnarray*}

The second statement is a reformulation of the first result. Indeed,  if we 
use (\ref{eq R from cond meas}) and the relation $R_\nu = R$, then we
can conclude that 
\begin{eqnarray*}
\mu_2(f) & = & \int_{X_1} \mu_{C_x}(f) \; d\mu(x)\\
\\
& =& \int_{X_1} R(f) \; d\mu_1(x).
\end{eqnarray*}
This completes the proof.
\end{proof}

We give one more example of a measure $\nu $ on the product space
$(X \times X, \B\times B)$. Let $\mu$ be  a measure on $(X, \B)$, and 
let $\sigma$ be an onto  endomorphism of $X$. Define the probability
 measure $\nu = \nu(\sigma)$ on $ \B \times \B$ as follows:
\be\label{eq nu via sigma}
\nu(\sigma)(A \times B) := \mu( A \cap \sigma^{-1}(B)), \ \ \ A, B \in \B.
\ee

\begin{lemma} In the above notation, the following properties hold:

(1) For $\nu = \nu(\sigma)$,
$$
\mu_1 = \nu \circ \pi_1^{-1} = \mu, \ \ \ \mu_2= 
\nu \circ \pi_2^{-1} = \mu\csi1.
$$

(2) The composition operator $S_\sigma : f \mapsto f\cs $ belongs
to $\mc P(\mu_1, \mu_2)$ where $\mu_2 = \mu_1 S_{\sigma}, 
\mu_1 = \mu$, i.e., \index{composition operator}
$$
\mu S_\sigma   = \mu\csi1.
$$ 

(3) Let $P_{\nu(\sigma)}$ be the positive operator defined by 
the measure $\nu(\sigma)$ according to (\ref{eq pos operator P_nu}). 
Then $P_{\nu(\sigma)} = S_\sigma$. Equivalently, the operator 
$S_\sigma$ is the only solution to the equation
\be\label{eq S is unique}
\nu(\sigma) (f_1 \times f_2) = \int_{X_1} f_1 S(f_2)\; d\mu_1.
\ee

\end{lemma}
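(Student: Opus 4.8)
The plan is to handle the three items in order, each reducing to a short computation from the definitions; the only point with any content is the explicit disintegration of $\nu(\sigma)$. For Part (1) I would evaluate the defining relation (\ref{eq nu via sigma}) on rectangles $A \times X$ and $X \times B$: setting $B = X$ gives $\mu_1(A) = \nu(\sigma)(A\times X) = \mu(A \cap \sigma^{-1}(X)) = \mu(A)$, and setting $A = X$ gives $\mu_2(B) = \nu(\sigma)(X\times B) = \mu(\sigma^{-1}(B)) = (\mu\csi1)(B)$. Thus $\mu_1 = \mu$, $\mu_2 = \mu\csi1$, and in particular $\nu(\sigma) \in \frak M(\mu_1,\mu_2)$, a fact I will reuse in Part (3).

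For Part (2), the operator $S_\sigma$ is evidently positive and $S_\sigma(\mathbf 1) = \mathbf 1$, so only $\mu S_\sigma = \mu\csi1$ needs checking: applying the definition of the action of a positive operator on measures to a characteristic function, $(\mu S_\sigma)(A) = \int_X (\chi_A\cs)\, d\mu = \int_X \chi_{\sigma^{-1}(A)}\, d\mu = \mu(\sigma^{-1}(A))$, which is $\mu\csi1(A)$. Hence $S_\sigma \in \mc P(\mu_1,\mu_2)$. For Part (3), the main step is to identify the system of conditional measures $(\nu_x)_{x\in X_1}$ of $\nu(\sigma)$ along the measurable partition $\{\pi_1^{-1}(x) : x \in X_1\}$, which exists and is unique by Theorem \ref{thm Rokhlin disintegration}. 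I claim $\nu_x = \delta_{\sigma(x)}$ (a measure on $X_2 = X$): the candidate family $x \mapsto \delta_{\sigma(x)}$ satisfies $\int_A \delta_{\sigma(x)}(B)\, d\mu(x) = \int_A \chi_{\sigma^{-1}(B)}(x)\, d\mu(x) = \mu(A \cap \sigma^{-1}(B)) = \nu(\sigma)(A\times B)$ for all $A, B \in \B$, so by uniqueness of the disintegration it is the required system.

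Plugging $\nu_x = \delta_{\sigma(x)}$ into formula (\ref{eq pos operator P_nu}) gives $P_{\nu(\sigma)}(f)(x) = \int_{X_2}(f\circ\pi_2)\, d\nu_x = f(\sigma(x)) = (S_\sigma f)(x)$, so $P_{\nu(\sigma)} = S_\sigma$. For the uniqueness assertion, observe that equation (\ref{eq S is unique}) says exactly $\nu_S = \nu(\sigma)$ in the notation of Theorem \ref{thm measures vs operators}(2), i.e. $\Phi(S) = \nu(\sigma)$; since $\Phi : \mc P(\mu_1,\mu_2) \to \frak M(\mu_1,\mu_2)$ is a bijection by Theorem \ref{thm measures vs operators}(3) and $\nu(\sigma)$ lies in its codomain by Part (1), there is a unique such $S$, which by the computation just made equals $S_\sigma$. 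I do not expect a genuine obstacle: the whole argument is bookkeeping with definitions already in place. The one spot to be careful about is invoking uniqueness — both the uniqueness of the Rohlin disintegration (legitimate because $\pi_1$ is measurable, so $\{\pi_1^{-1}(x)\}$ is a measurable partition) and the bijectivity of $\Phi$ in Theorem \ref{thm measures vs operators}, which requires that $\nu(\sigma)$ really does belong to $\frak M(\mu_1,\mu_2)$ — hence the explicit verification of that in Part (1).
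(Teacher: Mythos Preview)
Your proposal is correct and follows essentially the same approach as the paper: parts (1) and (2) are handled by direct evaluation on characteristic functions (the paper simply calls these ``obvious'' and ``straightforward''), and part (3) rests on Theorem \ref{thm measures vs operators} together with the fact, established in (1), that $\nu(\sigma) \in \frak M(\mu, \mu\csi1)$. The one place you go beyond the paper is in explicitly identifying the conditional measures $\nu_x = \delta_{\sigma(x)}$ to compute $P_{\nu(\sigma)}$ directly; the paper instead leaves this implicit and relies solely on the bijectivity of $\Phi$ and $\Psi$, but your extra step is a clean and legitimate way to make the identity $P_{\nu(\sigma)} = S_\sigma$ concrete.
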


\begin{proof}
The first two assertions of this lemma are rather obvious: (1) follows
 immediately from the definition of $\nu(\sigma)$, and (2) is verified 
  straightforward. 

To see that (3) holds, we note that, by (1), $\nu(\sigma) \in \frak 
M(\mu, \mu\csi1)$, and therefore, we can use Theorem \ref{thm measures 
vs operators}.  Since the maps $\Phi$ and $\Psi$ are one-to-one, we 
conclude that there exists only one operator satisfying 
(\ref{eq S is unique}).

\end{proof}

\

\begin{remark} (1) There is a clear connection between positive 
operators from  the set $\mc 
P(\mu_1, \mu_2)$ and  the notion of \textit{polymorphisms} which 
was introduced and studied in a series of papers by A. Vershik, see e.g.,
\cite{Vershik2000, Vershik2005}. 

By definition, a polymorphism \index{polymorphism}
 $\Pi$ of the standard measure space
$\sms$ to itself is a diagram consisting of an ordered triple of standard
measure spaces:
$$
(X, \B, \mu_1) \stackrel {\pi_1} \longleftarrow \ (X \times X, \B\times \B, \nu)
\stackrel {\pi_2} \longrightarrow \ (X,\B, \mu_2),
$$
where $\pi_1$ and $\pi_2$ are the projections to the first and second 
component of the product space $(X \times X, \B\times \B, \nu)$ such that
$\nu\circ\pi_i^{-1} = \mu_i$. 

This definition can be naturally extended to the case of two different
measure spaces $(X_i, \B_i, \mu_i), i =1,2$. Then we have  a polymorphism
defined between these measure spaces. 

(2) Our approach to the study of measures on product spaces is similar 
to the  study of \textit{joinings} \index{joining} in ergodic theory. 
We recall the definition
of this notion given for single transformations. Suppose  that two
dynamical systems, $(X, \B, \mu, \sigma)$ and $(Y, \mc A, \nu, \tau)$, 
are given.  Then a joining  is a measure $\la$ on $(X \times Y, \B \times 
\mc A)$ such that (i) $\la$  is invariant with respect to $\sigma \times 
\tau$, and (ii) the projections of $\la$ onto the $X$ and $Y$
 coordinates are $\mu$  and $\nu$, respectively. The theory of joinings
 is well developed in ergodic theory and topological dynamics
  and contains many impressive  results. We refer to \cite{Glasner2003, 
  Rue2006, Rudolph1990}  where the   reader can  find further references. 
\end{remark}

We finish this section by formulating a result that was proved in 
\cite{AlpayJorgensenLewkowicz2016}. 

Suppose a positive operator $R$, acting on measurable function over 
$\sms$, has the properties
\be\label{eq Rh =h muR =mu} 
R h = h, \ \ \ \ \ \mu R = \mu,
\ee
where $h$ is a harmonic function for $P$ and $\mu$ is a probability 
$R$-invariant measure.

\begin{theorem}\label{thm from AJL16}
Let $R$ be a positive operator satisfying (\ref{eq Rh =h muR =mu}). 
Suppose
$$
(\Omega, \B_\infty) = \prod_0^\infty (X, \B) 
$$
is the infinite product space. Then, on $(\Omega, \B_\infty)$, there exists
a unique probability measure $\mathbb P$, defined on cylinder functions
$f_0 \times f_1 \times \cdots \times f_n$ ($n \in \N, f \in \FXB$), as 
follows
$$
\int_\Omega f_0 \times f_1 \times \cdots \times f_n \; d\mathbb P =
\int_X f_0 P(f_1 P(\ \cdots \ P(f_{n-1}P(f_n h))\  \cdots \ ))\; d\mu.
$$

If $\{ \pi_i \  |\ i = 0,1,... \}$ denotes the coordinate random functions, then 
the following Markov property holds 
$$
\mathbb E_{\mathbb P}(f\circ \pi_{i+1} \ |\ \pi_i^{-1}(x)) =
R(f)(x)
$$
for all $i$, $x \in X$, and $f \in \FXB$. 

\end{theorem}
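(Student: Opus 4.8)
The plan is to produce $\mathbb{P}$ as the law of a Markov chain built from $R$ by a Doob-type renormalization, and then to recognize the displayed cylinder formula by a telescoping cancellation. Write $R$ for the operator called $P$ in the statement. I would work under the assumption that $R$ is order continuous (needed already for $\mu R$ to make sense; cf.\ Lemma~\ref{lem prop of mu P}), so that, exactly as in the proof of that lemma, for each $x\in X$ the set function $\mu^{x}(A):=R(\chi_{A})(x)$ is a Borel measure on $(X,\B)$ and $R(g)(x)=\int_{X}g\,d\mu^{x}$ for every nonnegative measurable $g$; note also that $\mu R=\mu$ says precisely $\int_{X}R(g)\,d\mu=\int_{X}g\,d\mu$ for $g\ge 0$. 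I also normalize $h$ so $\int_{X}h\,d\mu=1$, which is forced if $\mathbb{P}$ is to be a probability measure (put all $f_{i}=\mathbf 1$ in the formula and use $Rh=h$).

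\textbf{Step 1 (finite-dimensional distributions and consistency).} For $n\ge 0$ and bounded measurable $f_{0},\dots,f_{n}$ I set $\mathbb{P}_{n}(f_{0}\times\cdots\times f_{n}):=\int_{X}f_{0}\,R\big(f_{1}R(\cdots R(f_{n}h)\cdots)\big)\,d\mu$, equivalently $\mathbb{P}_{n}(dx_{0}\cdots dx_{n})=h(x_{n})\,\mu(dx_{0})\,\mu^{x_{0}}(dx_{1})\cdots\mu^{x_{n-1}}(dx_{n})$. First I would check each $\mathbb{P}_{n}$ is a genuine probability measure on $\B^{\otimes(n+1)}$: positivity from positivity of $R$ and $h\ge 0$; countable additivity from the monotone-convergence/order-continuity argument of Lemma~\ref{lem prop of mu P}(1), applied inductively to the iterated kernel integral; and total mass $1$ because integrating out $x_{n},\dots,x_{0}$ successively, using $Rh=h$ at the first $n$ steps (or $\int R(g)\,d\mu=\int g\,d\mu$ once), reduces it to $\int_{X}h\,d\mu=1$. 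Then I would verify the projective relation $p_{*}\mathbb{P}_{n+1}=\mathbb{P}_{n}$, where $p$ is the coordinate projection forgetting the last entry: testing against $f_{0}\times\cdots\times f_{n}\times\mathbf 1$ collapses the innermost block $R(\mathbf 1\cdot h)=R(h)=h$ and returns the defining expression for $\mathbb{P}_{n}$. This is the one place where the harmonic identity $Rh=h$ is indispensable; moreover $\mu R=\mu$ makes every coordinate marginal of $\mathbb{P}_{n}$ equal $h\,d\mu$, so the resulting $\mathbb{P}$ will be shift-stationary.

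\textbf{Step 2 (existence and uniqueness).} Extending $(\mathbb{P}_{n})$ to all finite index subsets of $\N_{0}$ by marginalization, the initial-segment consistency above upgrades to a consistent projective family over the standard Borel spaces $(X,\B)$, and the Kolmogorov extension theorem yields a unique Borel probability measure $\mathbb{P}$ on $(\Omega,\B_{\infty})=\prod_{0}^{\infty}(X,\B)$ with these marginals; uniqueness on all of $\B_{\infty}$ holds because the cylinder sets form a generating $\pi$-system. More directly, I would invoke the Ionescu--Tulcea theorem: the Doob-type kernel $N(x,dy):=h(x)^{-1}h(y)\,\mu^{x}(dy)$ — a probability kernel wherever $h(x)>0$, since $\int_{X}N(x,dy)=h(x)^{-1}R(h)(x)=1$, and the kernel of the normalized operator $R_{h}$ of Lemma~\ref{lem reduce to R1=1} — together with the initial law $h\,d\mu$ defines the law $\mathbb{P}$ of the corresponding time-homogeneous Markov chain on $\Omega$. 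Expanding the product $\prod_{k}N(x_{k-1},dx_{k})$, each factor $h(x_{k})$ telescopes against the $h(x_{k})^{-1}$ of the next kernel, leaving exactly $\int_{X}f_{0}R(f_{1}R(\cdots R(f_{n}h)\cdots))\,d\mu$, so this $\mathbb{P}$ has the required cylinder integrals.

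\textbf{Step 3 (Markov property) and the main difficulty.} Finally I would read off $\mathbb{E}_{\mathbb{P}}(f\circ\pi_{i+1}\mid\pi_{i}^{-1}(x))$ by disintegrating $\mathbb{P}_{i+1}$ over its first $i+1$ coordinates: using $\mu R=\mu$ to reduce the joint law of $(\pi_{i},\pi_{i+1})$ to $h(x_{i+1})\,\mu(dx_{i})\,\mu^{x_{i}}(dx_{i+1})$ and dividing by the marginal $h\,d\mu$ of $\pi_{i}$ exhibits the conditional law of $\pi_{i+1}$ given $\pi_{i}=x$ as the kernel $N(x,\cdot)$ of Step 2; integrating $f$ against it, and keeping the $h$-factors straight, yields the transition operator recorded in the statement. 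I expect the genuine technical work to lie in Step 1 — showing that the iterated kernel integral is countably additive on the full product $\sigma$-algebra, not merely finitely additive on rectangles — which needs order continuity of $R$ and an inductive use of the monotone-convergence/Carath\'eodory machinery in the spirit of Lemma~\ref{lem prop of mu P}; the $h$-bookkeeping in Steps 2 and 3 is the other point to handle with care. Once both are in place, Kolmogorov (or Ionescu--Tulcea) together with the telescoping identity closes the argument.
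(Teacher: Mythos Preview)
The paper does not actually prove this theorem: it is stated at the end of Subsection~\ref{subsect pairs of measures} with the preface ``a result that was proved in \cite{AlpayJorgensenLewkowicz2016}'' and no argument is given, so there is no in-paper proof to compare against.

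Your approach is the standard and correct one for this kind of statement: build the finite-dimensional distributions from the iterated kernel, use $Rh=h$ for Kolmogorov consistency (your Step~1 computation that $R(\mathbf 1\cdot h)=h$ collapses the last coordinate is exactly the point), and invoke Kolmogorov or Ionescu--Tulcea on the standard Borel space $(X,\B)$ for existence and uniqueness. The Doob $h$-transform reformulation via $N(x,dy)=h(x)^{-1}h(y)\,\mu^{x}(dy)$ and the telescoping of the $h$-factors is the clean way to see what is going on; this is precisely the operator $R_{h}$ of Lemma~\ref{lem reduce to R1=1}.

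One point to flag: your Step~3 computation gives the conditional law of $\pi_{i+1}$ given $\pi_i=x$ as $N(x,\cdot)$, so the conditional expectation of $f\circ\pi_{i+1}$ is $\int f\,dN(x,\cdot)=h(x)^{-1}R(fh)(x)=R_{h}(f)(x)$, not $R(f)(x)$. Unless $h=\mathbf 1$, these differ. The theorem as stated in the paper (and presumably in \cite{AlpayJorgensenLewkowicz2016}) writes $R(f)(x)$; you should check the original reference to see whether the intended transition operator is $R$ or $R_{h}$, or whether there is an implicit normalization $h=\mathbf 1$ in force. Your instinct that ``the $h$-bookkeeping in Steps~2 and~3 is the other point to handle with care'' is exactly right, and this is where it bites.
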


 Readers coming from other but related areas, may find the following
  papers/books useful for background \cite{DutkayJorgensen2015, 
  Fedotov2013, ZhangJorgensen2015, JorgensenSong2015, 
 LatremoliereFredericPacker2016, Maier2013,  Reveles2016}.
 
 \newpage

\section{ Transfer operators on $L^1$ and $L^2$}\label{sect L1 and 
L2}

Given a transfer operator $(R, \sigma)$,  it is of interest to find the 
measures $\mu$ such that both   $R$  and $\sigma$ induce operators 
in the corresponding $L^p$ spaces, i.e., in $L^p(X, \B, \mu)$. We turn 
to this below, but our main concern are the cases $p =1, p=2$, and $p 
= \infty$. When $R$  is realized as an operator in $L^2(X, \B, \mu)$, 
for a suitable choice of $\mu$,  then it is natural to ask for the adjoint 
operator $R^*$ where ``adjoint'' is defined with respect to the 
$L^2(\mu)$-inner product. 

We turn to this question in Subsections 
\ref{subsect Adjoint operator in L2} and \ref{subsect relations 
between R and sigma} below. Our operator theoretic results for these 
$L^2$-spaces will be used in  Section \ref{sect Universal HS} below 
where we introduce a certain 
 \textit{universal Hilbert space} $\mc H(X)$, or rather $\mc H(X, \B)$. 
  Indeed, when a transfer operator $(R, \sigma)$ is given, we show 
   that there is then a naturally induced \textit{isometry} in the
    universal Hilbert 
  space $\mc H(X)$, which we show offers a number of applications 
 and results which may be considered to be an infinite-dimensional 
 Perron-Frobenius theory. Our study of transfer operators in 
 $L^2$-spaces is motivated by \cite{AlpayJorgensenLewkowicz2016,
 Jorgensen2001}.

\subsection{Properties of transfer operators acting on $L^1$ and 
$L^2$} In this section, we will keep the following settings. Let $(X, \B,
 \la)$ be a standard measure space, and let $\lambda\in M(X)$ be 
  a Borel measure on $\B$.  Suppose that $\sigma$ is a  non-singular 
  surjective  endomorphism on $(X, \B, \la)$.  We will consider transfer
   operators $(R, \sigma)$ 
defined on the space of Borel functions $\FXB$.  It will be assumed
that the function $R(\mathbf 1)$ is either in $L^1(\lambda)$, or in 
$L^2(\lambda)$, depending on the context. 

\begin{lemma} Let $(X, \B)$ be  a standard Borel space, and $\sigma
\in End(X, \B)$. Set $S(f) = f\cs , f \in \mc F(X, \B)$. For a measure 
$\mu$  on $(X, \B)$, let the measure $\mu S$ be defined by 
\be\label{eq S acts on measures}
\int_X f\; d(\mu S) = \int_X S(f)\; d\mu = \int_X f\cs\; d\mu.
\ee
Then $\mu S = \mu\csi1$.
\end{lemma}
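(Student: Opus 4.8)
The plan is to verify the claimed identity $\mu S = \mu \csi1$ by checking that the two measures agree on all Borel sets, which by the standard extension/monotone-class argument reduces to checking agreement when integrated against indicator functions $\chi_A$, $A \in \B$. The defining relation (\ref{eq S acts on measures}) says that $\mu S$ is the unique measure with $\int_X f \, d(\mu S) = \int_X (f\cs)\, d\mu$ for all suitable $f$, so it suffices to compute the right-hand side for $f = \chi_A$ and recognize the answer as $(\mu\csi1)(A)$.

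First I would take an arbitrary $A \in \B$ and substitute $f = \chi_A$ into the definition of $\mu S$. The key observation is the elementary pointwise identity $\chi_A \cs = \chi_A \circ \sigma = \chi_{\sigma^{-1}(A)}$, which holds because $x \in \sigma^{-1}(A)$ if and only if $\sigma(x) \in A$ if and only if $\chi_A(\sigma(x)) = 1$. Therefore
\be
(\mu S)(A) = \int_X \chi_A \; d(\mu S) = \int_X \chi_A \cs \; d\mu = \int_X \chi_{\sigma^{-1}(A)}\; d\mu = \mu(\sigma^{-1}(A)) = (\mu\csi1)(A),
\ee
where the last equality is simply the definition $(\mu\csi1)(A) := \mu(\sigma^{-1}(A))$ recalled in Subsection \ref{subsect Endomorphisms}. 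Since $A \in \B$ was arbitrary and both $\mu S$ and $\mu\csi1$ are measures on $(X,\B)$, this proves $\mu S = \mu\csi1$.

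To be fully careful, I would note that the relation (\ref{eq S acts on measures}) defining $\mu S$ should first be verified to make sense: $S$ is the composition (Koopman) operator, which is positive and maps $\mc F(X,\B)$ into $\mc F(X, \sB) \subset \mc F(X, \B)$, so the integrals are well defined for nonnegative measurable $f$ (allowing the value $+\infty$), and $\mu S$ is genuinely a measure by the monotone convergence theorem applied to the right-hand side. There is essentially no obstacle here; the only thing to be slightly attentive to is the direction of the composition and the corresponding identity $\chi_A \cs = \chi_{\sigma^{-1}(A)}$ rather than $\chi_{\sigma(A)}$, but this is exactly the content noted repeatedly in the preceding sections (e.g.\ in the proof of Lemma \ref{lem invariant set}, where $\chi_A = \chi_A\cs$ for $\sigma$-invariant $A$). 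Hence the lemma follows immediately.
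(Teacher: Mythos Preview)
Your proof is correct and follows essentially the same approach as the paper: both substitute $f = \chi_A$ into the defining relation, use the identity $\chi_A\cs = \chi_{\sigma^{-1}(A)}$, and conclude that $(\mu S)(A) = (\mu\csi1)(A)$ for all $A \in \B$. Your version is simply a bit more verbose, adding remarks on well-definedness that the paper omits.
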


\begin{proof} It follows from (\ref{eq S acts on measures}) that, for any
$B \in \B$ and the characteristic function $\chi_B$, we have
$$
(\mu S)(B) = \int_X \chi_B \cs \; d\mu = \int_X \chi_{\sigma^{-1}(B)}
 \; d\mu = (\mu\csi1)(B),
$$
and the result follows.
\end{proof}

\begin{lemma}\label{lem Borel TO induces L-p TO}
Let $(R,\sigma)$ be a transfer operator in $\FXB$. Let $\la$ be a 
Borel measure on $(X, \B)$. Then $(R, \sigma)$ induces a transfer 
operator in the space $L^p(\la)$ if and only if $\la R \ll \la$ and 
$\la\cs$.
\end{lemma}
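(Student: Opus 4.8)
The statement should be read as: the Borel transfer operator $(R,\sigma)$ descends to a well-defined transfer operator on the $\la$-equivalence classes $L^p(\la)$ if and only if $\la R\ll\la$ and $\la\circ\sigma^{-1}\ll\la$. The plan is to isolate the two places where a $\la$-null set could be ``expanded'': by the action of $R$, and by the substitution $f\mapsto f\circ\sigma$ that appears on the left of the pull-out identity. The first is governed exactly by $\la R\ll\la$, the second exactly by $\la\circ\sigma^{-1}\ll\la$; once both hold, the algebraic identities of Definition \ref{def transfer operator} pass from the Borel level to the $\la$-a.e.\ level by inspection.

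For the \emph{only if} direction, suppose $(R,\sigma)$ induces a transfer operator on $L^p(\la)$. Then $R$ sends $\la$-classes to $\la$-classes, so for any $A\in\B$ with $\la(A)=0$ we have $R(\chi_A)=0$ $\la$-a.e.; integrating, and recalling that the action is the sigma-finite measure $(\la R)(A)=\int_X R(\chi_A)\,d\la$ (Lemma \ref{lem prop of mu P}), we get $(\la R)(A)=0$, i.e.\ $\la R\ll\la$. Moreover the composition operator $S_\sigma:f\mapsto f\circ\sigma$ is part of the structure $(R,\sigma)$ and must itself be well defined on $\la$-classes for the left-hand side of the pull-out property to make sense, so $\chi_A=0$ in $L^p(\la)$ forces $\chi_{\sigma^{-1}(A)}=\chi_A\circ\sigma=0$ $\la$-a.e., i.e.\ $\la(\sigma^{-1}(A))=(\la\circ\sigma^{-1})(A)=0$, giving $\la\circ\sigma^{-1}\ll\la$.

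For the \emph{if} direction, assume $\la R\ll\la$ and $\la\circ\sigma^{-1}\ll\la$. First, $R$ kills $\la$-null support: if $\la(A)=0$ then $R(\chi_A)\ge 0$ and $\int_X R(\chi_A)\,d\la=(\la R)(A)=0$, so $R(\chi_A)=0$ $\la$-a.e.; by linearity the same holds for every simple function supported on a $\la$-null set, and, using positivity of $R$ together with the standing order-continuity hypothesis of Section \ref{sect integrable operators} (truncate $f$ and pass to the increasing limit), for every Borel $f$ with $f=0$ $\la$-a.e. Hence $R$ descends to a well-defined positive linear operator on $L^p(\la)$ with dense domain containing $\mathcal S^p(\la)$ (cf.\ Corollary \ref{cor R as an operator in Lp}). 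Second, $S_\sigma$ descends: if $\la(\{f\neq 0\})=0$ then $\{f\circ\sigma\neq 0\}=\sigma^{-1}(\{f\neq 0\})$ has $\la$-measure $(\la\circ\sigma^{-1})(\{f\neq 0\})=0$. With both $R$ and $f\mapsto f\circ\sigma$ well defined modulo $\la$-null sets, the relations $f\ge 0\Rightarrow R(f)\ge 0$ and $R((f\circ\sigma)g)=fR(g)$, valid pointwise for Borel representatives, hold $\la$-a.e.; this is exactly Definition \ref{def transfer operator} in $L^p(\la)$.

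The main obstacle is the one genuinely analytic point inside the \emph{if} direction: upgrading ``$R$ annihilates simple functions supported on $\la$-null sets'' to ``$R$ annihilates all a.e.-zero functions'', which for unbounded $f$ is not purely algebraic since $R$ on $\FXB$ is in general unbounded and not norm-continuous. This is where order continuity of $R$ is used (equivalently, one restricts attention to $\mathcal S^p(\la)$ and its monotone closure as the natural domain). All the remaining steps are bookkeeping: checking that positivity and the pull-out relation survive passage to $\la$-equivalence classes once the two null-set conditions are in force.
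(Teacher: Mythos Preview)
Your proof is correct and follows the same natural approach that the paper implicitly has in mind; note, however, that the paper does not actually give a proof of this lemma---it simply states ``This observation is obvious'' immediately after the lemma and moves on. Your write-up supplies precisely the details the authors omit, including the one genuinely nontrivial point (extending ``$R$ kills $\la$-null simple functions'' to general a.e.-zero functions via order continuity), and your interpretation of the truncated hypothesis ``$\la\cs$'' as $\la\circ\sigma^{-1}\ll\la$ matches the surrounding context.
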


This observation is obvious and explains why we will work
with the $\sigma$-quasi-invariant measures $\la$ which belong to 
the set $\mc L(R)$.

Assume that $\lambda\in \mathcal L(R)$ denote the Radon-Nikodym
 derivative of $\la R$ with respect to $\la$ by
$$
W_\lambda(x) = W(x) := \frac{d(\lambda \circ R)}{d\lambda}(x).
$$
Since $R$ is integrable, we have $W_\lambda \in L^1(\lambda)$, and 
the following useful equality holds (see (\ref{eq W as R-N derivative}):
$$
\int_X R(\mathbf 1)\; d\la = \int_X W \; d\la, \ \ \lambda\in \mathcal 
L(R).
$$

\begin{lemma}\label{lem Ri as derivative}
In the above notation, the function $R(\mathbf 1)$ is represented as
 follows:
\be\label{eq R1 R-N derivative}
R(\mathbf 1)(x) = \frac{(Wd\lambda)\circ\sigma^{-1}}{d\lambda}(x) = 
\frac{d(\lambda R)\circ\sigma^{-1}}{d\lambda}(x)
\ee
 where  $\lambda$ is any measure from $\mathcal L(R)$.

If $R$ is a normalized transfer operator, $R(\mathbf 1) = \mathbf 1$,
then 
$$
(\la R) \csi1 = \la, \quad \ \forall \la \in \mc L(R).
$$

Moreover, a transfer operator $R$ is integrable with respect to $
\lambda$ if and only if $(\lambda R)(X) < \infty$.
\end{lemma}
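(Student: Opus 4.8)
The plan is to reduce all three assertions of the lemma to a single elementary identity, which is essentially the computation already carried out in the proof of Proposition \ref{prop lambda R equiv to lambda}. First I would establish that, for every Borel set $A \in \B$,
\be\label{eq key identity for R1}
(\la R)(\sigma^{-1}(A)) = \int_A R(\mathbf 1)\; d\la .
\ee
The point is that $\chi_{\sigma^{-1}(A)} = \chi_A\cs$, so the pull-out property (\ref{eq char property of r via sigma 1}) with $g = \mathbf 1$ gives $R(\chi_{\sigma^{-1}(A)}) = R((\chi_A\cs)\mathbf 1) = \chi_A R(\mathbf 1)$; integrating this against $\la$ and using the defining relation $(\la R)(B) = \int_X R(\chi_B)\, d\la$ yields (\ref{eq key identity for R1}).

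Next I would read off the representation of $R(\mathbf 1)$ directly from (\ref{eq key identity for R1}). Rewritten as $((\la R)\csi1)(A) = \int_A R(\mathbf 1)\, d\la$ for all $A \in \B$, this identity says precisely that $(\la R)\csi1 \ll \la$ with $d((\la R)\csi1)/d\la = R(\mathbf 1)$; the hypothesis $\la \in \mc L(R)$, which in the integrable setting includes $R(\mathbf 1) \in L^1(\la)$, makes this a genuine Radon-Nikodym statement and gives the second equality in (\ref{eq R1 R-N derivative}). The first equality there is then purely a matter of notation, since $W$ is defined by $W\, d\la = d(\la R)$, whence the measures $(W\, d\la)\csi1$ and $(\la R)\csi1$ coincide and share the density $R(\mathbf 1)$ with respect to $\la$.

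For the normalized case I would substitute $R(\mathbf 1) = \mathbf 1$ into (\ref{eq key identity for R1}) to obtain $((\la R)\csi1)(A) = \la(A)$ for every $A$, that is, $(\la R)\csi1 = \la$ for all $\la \in \mc L(R)$. For the final assertion I would take $A = X$ in the definition of $\la R$, giving $(\la R)(X) = \int_X R(\chi_X)\, d\la = \int_X R(\mathbf 1)\, d\la$, and then invoke Definition \ref{def R is p-integrable}: by definition $R$ is integrable with respect to $\la$ exactly when $R(\mathbf 1) \in L^1(\la)$, which holds if and only if $(\la R)(X) < \infty$.

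I do not anticipate a genuine obstacle here; the only delicate points are bookkeeping ones — checking that $\chi_{\sigma^{-1}(A)} = \chi_A\cs$ so that the pull-out property applies, and making sure the Radon-Nikodym derivative is legitimate, which is automatic once (\ref{eq key identity for R1}) is in hand together with $R(\mathbf 1) \in L^1(\la)$. The mildest subtlety is keeping track of whether the measures involved are finite or merely sigma-finite, but membership $\la \in \mc L(R)$ already forces $\la R$ to be a finite measure, so this causes no trouble.
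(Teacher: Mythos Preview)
Your proof is correct and follows essentially the same route as the paper: both arguments apply the pull-out property to $f\circ\sigma$ (you take $f = \chi_A$, the paper takes a general test function) and integrate against $\lambda$ to obtain the identity $(\la R)\csi1 = R(\mathbf 1)\, d\la$, from which all three assertions are read off. The only cosmetic difference is that the paper routes the computation through the density $W$ via $\int_X (f\circ\sigma)W\, d\la = \int_X f R(\mathbf 1)\, d\la$, whereas you work directly with the measure $\la R$ on sets $\sigma^{-1}(A)$.
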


\begin{proof}
By the definition of the Radon-Nikodym derivative $W$,  we have the 
relation
\be\label{eq R-N der W via integral}
\int_X R(f) \; d\lambda = \int_X fW\; d\lambda
\ee
which holds for any measurable function $f$.  In particular,
$f$ can be any simple function.
Substitute  $f\circ\sigma$ instead of $f$ in (\ref{eq R-N der W via 
integral}). Then
\begin{equation*}
\int_X (f\circ\sigma)W\; d\lambda =  \int_X R(f\circ\sigma) \; d
\lambda  = \int_X f R(\mathbf 1) \; d\lambda.
\end{equation*}
Since the last relation holds for every $f$, we have the equality
of measures
$$
(Wd\lambda)\circ \sigma^{-1} (x)= R(\mathbf 1)d\lambda(x),
$$
that proves the first statement.

The other two  assertions follow immediately from (\ref{eq R1 R-N
 derivative}).
\end{proof}

In the next remark we collect several direct consequences of Lemma 
\ref{lem Ri as derivative}. Though these results can be easily proved,
they contain some important properties of transfer operators that  are 
used below.  

\begin{remark}\label{rem properties 4} (1) Equality (\ref{eq R1 R-N 
derivative}) might be confusing because  the left hand side of the 
relation
$$
R(\mathbf 1)(x) = 
\frac{d(\lambda R)\circ\sigma^{-1}}{d\lambda}(x), \ 
\lambda\in \mathcal L(R),
$$
does not contain the measure $\lambda$. But we should remember
 that, in the setting introduced above,  $R(\mathbf 1)(x)$ is
  considered as a  function in $L^1(\lambda)$, so that $\lambda$ is 
  involved implicitly.  
 If we denote by $\theta_\lambda$ the Radon-Nikodym derivative for 
 a non-singular endomorphism $\sigma$,
$$
\theta_\lambda(x) = \frac{d\lambda\circ\sigma^{-1}}{d\lambda}(x),
$$
then the function $R(\mathbf 1)$ can be written as follows
\begin{eqnarray*}
R(\mathbf 1)(x) &= & \frac{d(\lambda R)\circ\sigma^{-1}}{d\lambda 
R}(x) \frac{d(\lambda R)}{d\lambda}(x) \\
\\
&=& \theta_{\lambda R}(x) W(x).
\end{eqnarray*}

(2) Let $\sigma$ be a non-singular endomorphism of $(X, \B, \la)$. 
It follows from Lemma \ref{lem Ri as derivative} that a transfer 
operator $(R, \sigma)$ on $L^1(\lambda)$ is strict, i.e., $(R1)(x) > 
0$ $\la$-a.e., if and only if $W(x) > 0$ $\la$-a.e., and $\lambda\circ 
\sigma^{-1} \sim \lambda$.  
 Moreover, it is seen from (\ref{eq R1 R-N derivative}) that we can 
 prove the following result. 

  \begin{lemma}\label{lem equivalence strict TO}
Let $\sigma$ be a non-singular endomorphism of $(X, \B, \la)$. Then
the following properties are equivalent:

i) $R$ is strict, i.e. $R(\mathbf 1)(x) > 0$ for $\lambda$-a.e. $x$;

ii) $W(x) > 0$ for $\lambda$-a.e. $x$;

iii) $\lambda  R \sim \lambda$;

iv) $\theta_\lambda (x) = R(W^{-1})$.
  \end{lemma}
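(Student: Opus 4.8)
The plan is to fix $W=W_\lambda=d(\lambda R)/d\lambda\in L^1(\lambda)$ and deduce everything from two ingredients already available. The first is the measure identity
$$R(f)\,d\lambda \;=\; \bigl((fW)\,d\lambda\bigr)\circ\sigma^{-1},\qquad f\in\mc F(X)_+,$$
which follows by applying the pull-out property in the form $R((\chi_A\circ\sigma)f)=\chi_A R(f)$ and then the defining relation $\int_X R(\psi)\,d\lambda=\int_X\psi W\,d\lambda$ of $W$: for every $A\in\B$,
$$\int_A R(f)\,d\lambda=\int_X R((\chi_A\circ\sigma)f)\,d\lambda=\int_{\sigma^{-1}(A)}fW\,d\lambda .$$
Taking $f=\mathbf 1$ recovers Lemma \ref{lem Ri as derivative}. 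The second ingredient is the pointwise factorization $R(\mathbf 1)=\theta_{\lambda R}\,W$ recorded in Remark \ref{rem properties 4}.

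I would prove $\mathrm{(ii)}\Leftrightarrow\mathrm{(iii)}$ first, as it is essentially a tautology: since $\lambda\in\mc L(R)$ we have $\lambda R\ll\lambda$ with density $W\ge 0$, so $\lambda R\sim\lambda$ holds iff also $\lambda\ll\lambda R$, iff the density $W$ has no zeros of positive measure, i.e. iff (ii). Next, $\mathrm{(ii)}\Leftrightarrow\mathrm{(iv)}$: assuming (ii), substitute $f=W^{-1}$ into the measure identity above; then $fW=\mathbf 1$ $\lambda$-a.e., so $R(W^{-1})\,d\lambda=\lambda\circ\sigma^{-1}=\theta_\lambda\,d\lambda$, that is $R(W^{-1})=\theta_\lambda$. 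Conversely, interpreting $W^{-1}$ as $1/W$ on $\{W>0\}$ and $0$ on $\{W=0\}$, the identity gives $\theta_\lambda\,d\lambda=R(W^{-1})\,d\lambda=\bigl(\chi_{\{W>0\}}\,d\lambda\bigr)\circ\sigma^{-1}$; subtracting this from $\theta_\lambda\,d\lambda=\bigl(\mathbf 1\cdot d\lambda\bigr)\circ\sigma^{-1}$ and evaluating at $A=X$ (where $\sigma^{-1}(X)=X$) forces $\lambda(\{W=0\})=0$, i.e. (ii).

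It remains to link (i) with (ii). For $\mathrm{(ii)}\Rightarrow\mathrm{(i)}$: if $W>0$ $\lambda$-a.e. then $W\,d\lambda\sim\lambda$, hence $(W\,d\lambda)\circ\sigma^{-1}\sim\lambda\circ\sigma^{-1}\sim\lambda$, the last equivalence being non-singularity of $\sigma$; by Lemma \ref{lem Ri as derivative} the left-hand measure equals $R(\mathbf 1)\,d\lambda$, so $R(\mathbf 1)>0$ $\lambda$-a.e. The converse $\mathrm{(i)}\Rightarrow\mathrm{(ii)}$ is the step I expect to be the main obstacle: the \emph{integrated} identity $R(\mathbf 1)\,d\lambda=(W\,d\lambda)\circ\sigma^{-1}$ by itself does not push positivity of $R(\mathbf 1)$ back to positivity of $W$, since $\sigma^{-1}(\B)$ need not separate the zero set of $W$ from its complement. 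This is precisely where I would use the \emph{pointwise} factorization $R(\mathbf 1)=\theta_{\lambda R}W$: on $\{W=0\}$ (where $\theta_{\lambda R}$ is finite $\lambda$-a.e.) the product vanishes, so $\{W=0\}\subset\{R(\mathbf 1)=0\}$ modulo a $\lambda$-null set, and (i) then forces $\lambda(\{W=0\})=0$, which is (ii). With (i)$\Leftrightarrow$(ii), (ii)$\Leftrightarrow$(iii) and (ii)$\Leftrightarrow$(iv) in hand, the four conditions are equivalent.
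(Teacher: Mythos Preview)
Your arguments for (ii)$\Leftrightarrow$(iii), (ii)$\Leftrightarrow$(iv), and (ii)$\Rightarrow$(i) are correct. The paper itself only treats (iv): it writes $\int f\theta_\lambda\,d\lambda=\int(f\circ\sigma)\,d\lambda=\int W^{-1}W(f\circ\sigma)\,d\lambda=\int R\bigl(W^{-1}(f\circ\sigma)\bigr)\,d\lambda=\int f\,R(W^{-1})\,d\lambda$ and leaves the remaining equivalences to the reader. Your derivation of (ii)$\Rightarrow$(iv) via the measure identity $R(f)\,d\lambda=((fW)\,d\lambda)\circ\sigma^{-1}$ is the same computation in integrated form, so where the paper actually argues, you agree.

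The step (i)$\Rightarrow$(ii), however, has a genuine gap. The factorization $R(\mathbf 1)=\theta_{\lambda R}\,W$ from Remark~\ref{rem properties 4} requires $(\lambda R)\circ\sigma^{-1}\ll\lambda R$ for $\theta_{\lambda R}=d((\lambda R)\circ\sigma^{-1})/d(\lambda R)$ to make sense; that derivative is only defined $\lambda R$-a.e., i.e.\ on $\{W>0\}$. On $\{W=0\}$ there is nothing forcing $\theta_{\lambda R}$ to be finite (or even meaningful), so the claim ``on $\{W=0\}$ the product vanishes'' presupposes exactly what you want to prove. In fact the implication fails as stated: take $X=[0,1]$, $\sigma(x)=2x\bmod 1$, $\lambda$ Lebesgue, and $R(f)(x)=\chi_{(1/2,1]}(x)\,f(x/2)+\chi_{[0,1/2]}(x)\,f((x+1)/2)$. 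One checks that $R$ satisfies the pull-out property and $R(\mathbf 1)\equiv 1$, so (i) holds; yet a direct computation gives $W=2\chi_{(1/4,3/4]}$, so $W=0$ on a set of measure $1/2$ and (ii) fails. Thus Remark~\ref{rem properties 4}(1) and the lemma are themselves imprecise on this point; what survives unconditionally is (ii)$\Leftrightarrow$(iii)$\Leftrightarrow$(iv) together with the one-sided implication (ii)$\Rightarrow$(i).
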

  
\begin{proof} We prove iv) only and leave other assertions to the 
reader. For this, we check
\begin{eqnarray*}
\int_X f \theta_\la \; d\la  &=& \int_X (f\cs) \; d\la \\
\\
& = & \int_X \frac{1}{W} W (f\cs) \; d\la \\
\\
& =& \int_X R\left(\frac{1}{W}(f\cs) \right) \; d\la\\
\\
&=& \int_X  R\left(\frac{1}{W} \right) f \; d\la.
\end{eqnarray*}
Since $f$ is any function, we have the result.
\end{proof}  
  
(3) If $R(\mathbf 1) = \mathbf 1$, then we obtain from the 
statements  of Lemma \ref{lem equivalence strict TO} that
$$
\theta_{\la R} = \frac{1}{W}, \qquad \ \ \ R(\theta_{\la R}) 
= \theta_\la.
$$

Indeed, to see these, we find
\be\label{eq theta la R}
\theta_{\la R} = \frac{d(\la R) \csi1}{d(\la R)} =
\frac{d(\la R) \csi1}{d\la} \frac{d\la}{ d(\la R)} = R(\mathbf 1)
\frac{1}{W}.
\ee

(4) We notice that if $(R, \sigma)$ is a strict transfer operator acting 
on the space of measurable functions over $(X, \B, \lambda)$ with 
$\lambda \in \mathcal L(R)$,  then $\sigma$ is non-singular with 
respect to $\la R$. This fact follows from  (ref{eq theta la R}).

In other words, one has the properties
$$
(\la R)\circ \sigma^{-1} \ll \la R \ll \la.
$$

 (5) Another corollary of relation (\ref{eq R1 R-N derivative}) is 
 formulated as follows: for any two measures $\lambda, \lambda' \in 
 \mathcal L(R)$, we have
$$
\frac{W_\lambda}{W_{\lambda'}} = \frac{\theta_{\lambda' R}}
{\theta_{\lambda R}}.
$$
\end{remark}

In ergodic theory, it is extremely important to understand how 
 properties of a transformation depend on a measure.
More precisely, suppose a transformation $T$ acts on a measure space 
$\sms$. What can be said about dynamical  properties of $T$
if $\mu$ is replaced by an equivalent measure $\nu$? 
We discuss here this question in the context of transfer operators.

\begin{lemma}\label{lem lambda_1 equiv to lambda} Let 
$(R, \sigma)$ be a transfer operator and $\lambda \in \mathcal L(R)$. 
Suppose that a Borel measure $\lambda_1$ is equivalent to $\lambda
$, that is there exists a positive measurable function $\varphi(x) $ 
such that $d\lambda_1(x) = \varphi(x)d\lambda(x)$. Then $
\lambda_1\in \mathcal L(R)$, and the Radon-Nikodym derivative 
$W_1 = \dfrac{d\la_1 R}{d\la_1}$ is $\sigma$-cohomologous 
to $W$: \index{Radon-Nikodym derivative ! cohomologous}
$$
W_1(x) = \va (\sigma (x)) W(x) \va(x)^{-1}.
$$
\end{lemma}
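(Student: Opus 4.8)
The plan is to compute the measure $\lambda_1 R$ directly and read off its Radon--Nikodym derivative with respect to $\lambda_1$. The two ingredients are the characterization of $W=W_\lambda$ by the identity $\int_X R(f)\,d\lambda=\int_X fW\,d\lambda$, valid for every nonnegative measurable $f$ (relation (\ref{eq R-N der W via integral})), and the pull-out property (\ref{eq char property of r via sigma 1}) written as $R\big((\varphi\circ\sigma)\,f\big)=\varphi\,R(f)$, which is exactly what lets the density $\varphi=d\lambda_1/d\lambda$ be moved across $R$ at the cost of precomposing it with $\sigma$.

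First I would fix a nonnegative measurable function $f$ and use $d\lambda_1=\varphi\,d\lambda$ together with the pull-out property:
\begin{equation*}
(\lambda_1 R)(f)=\int_X R(f)\,d\lambda_1=\int_X \varphi\,R(f)\,d\lambda=\int_X R\big((\varphi\circ\sigma)\,f\big)\,d\lambda.
\end{equation*}
Applying the defining identity for $W$ to the nonnegative function $(\varphi\circ\sigma)f$, and then rewriting the result against $\lambda_1$ (this is where $\varphi>0$ is used), one gets
\begin{equation*}
(\lambda_1 R)(f)=\int_X (\varphi\circ\sigma)\,f\,W\,d\lambda=\int_X \frac{(\varphi\circ\sigma)\,W}{\varphi}\,f\,d\lambda_1.
\end{equation*}
Since $f\geq 0$ is arbitrary, this simultaneously shows $\lambda_1 R\ll\lambda_1$, hence $\lambda_1\in\mathcal L(R)$, and identifies the density:
\begin{equation*}
W_1=\frac{d(\lambda_1 R)}{d\lambda_1}=\frac{(\varphi\circ\sigma)\,W}{\varphi},\qquad\text{i.e.}\qquad W_1(x)=\varphi(\sigma(x))\,W(x)\,\varphi(x)^{-1},
\end{equation*}
which is the asserted $\sigma$-cohomology relation. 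All the $\lambda$-a.e.\ statements transfer automatically to $\lambda_1$ since the two measures are equivalent.

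Finally, in the $L^p$ reading of $\mathcal L(R)$ that also records non-singularity of $\sigma$ and integrability of $R(\mathbf 1)$, I would add two short remarks: $\sigma$ stays non-singular with respect to $\lambda_1$ because $\lambda_1\sim\lambda$ (by the earlier lemma, non-singularity passes to equivalent measures), and taking $f=\mathbf 1$ in the computation above gives $(\lambda_1 R)(X)=\int_X \varphi\,R(\mathbf 1)\,d\lambda$, so that $R(\mathbf 1)\in L^1(\lambda_1)$ is equivalent to $\lambda_1 R$ being a finite measure (automatic, e.g., when $\varphi$ is bounded). I do not expect a genuine obstacle here: the argument is essentially the two-line computation above, and the only place requiring care is bookkeeping — keeping straight which base measure each density is taken with respect to, and applying the pull-out identity in the correct direction.
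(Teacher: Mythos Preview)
Your proof is correct and follows essentially the same route as the paper: both compute $\int_X R(f)\,d\lambda_1$ by passing to $\lambda$, applying the pull-out property to absorb $\varphi$ as $\varphi\circ\sigma$, invoking the defining relation for $W$, and then converting back to $\lambda_1$ via $d\lambda=\varphi^{-1}\,d\lambda_1$. The only cosmetic difference is that the paper routes the middle step through $d(\lambda R)$ explicitly, whereas you quote the integral identity for $W$ directly; the content is identical.
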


\begin{proof} The proof is based on the direct calculation, the 
definition of the Radon-Nikodym derivative for $R$, and 
the pull-out property of $R$. We note that because $\la \sim \la_1$,
then $\va$ is positive a.e.  Let $f$ be 
any measurable function, then we compute
\begin{eqnarray*}
 \int_X R(f)\; d\la_1 &=& \int_X R(f)\va \; d\la \\
 \\
   &=&  \int_X R((\va\circ \sigma) f)\; d\la \\
   \\
   &=& \int_X (\va\circ \sigma) f\; d(\la R) \\
   \\
   &=&  \int_X (\va\circ \sigma) f W\; d\la \\
   \\
   &=& \int_X f (\va\circ \sigma)  W\va^{-1}\; d\la_1
  \end{eqnarray*}
Thus, we proved that $(\la_1 R)(f) = (\va\circ \sigma)  W\va^{-1}
\lambda_1(f)$. Hence,
$$
\dfrac{d\la_1 R}{d\la_1} = (\va\circ \sigma)  W\va^{-1}.
$$
\end{proof}

\begin{remark}
We observe that one can directly check the validity of the 
equality for the measure $\la_1$
$$
R\mathbf 1 = \frac{[ (\va\circ \sigma)  W\va^{-1}d\lambda_1]\circ 
\sigma^{-1}}{ d\lambda_1}.
$$
This confirms the conclusion  of Lemma \ref{lem Ri as derivative}.
\end{remark}

\begin{corollary}\label{cor coboundary implies inv meas}
Let $(R, \sigma)$ be a transfer operator such that $R\mathbf 1 \in 
L^1(\lambda)$ for a Borel measure $\la$. Suppose that  
 $\lambda  \in \mathcal L(R)$. The Radon-Nikodym derivative
 $W = \dfrac{d\la R}{d\la}$ is a coboundary with respect to $\sigma$ 
 if and only if there exists a measure $\la_1$ such that  $\la_1
  \sim \la$    and  $\la_1 R = \la_1$.
\end{corollary}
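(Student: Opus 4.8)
The plan is to derive both implications from Lemma~\ref{lem lambda_1 equiv to lambda}, which records how the Radon--Nikodym derivative $W$ transforms under passage to an equivalent measure, combined with the definition of a $\sigma$-coboundary (Definition~\ref{def R and sigma coboundaries}). Recall that for $d\lambda_1 = \varphi\, d\lambda$ with $\varphi$ strictly positive, Lemma~\ref{lem lambda_1 equiv to lambda} gives $\lambda_1 \in \mathcal L(R)$ and $W_1(x) := \frac{d(\lambda_1 R)}{d\lambda_1}(x) = \varphi(\sigma(x)) W(x) \varphi(x)^{-1}$; and $\lambda_1 R = \lambda_1$ is equivalent to $W_1 = \mathbf 1$, i.e. to $(\varphi\circ\sigma)\,W = \varphi$. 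This last identity is, by Definition~\ref{def R and sigma coboundaries}, exactly the assertion that $W$ is a $\sigma$-coboundary with witness $g = \varphi$. So the corollary is essentially a transcription of Lemma~\ref{lem lambda_1 equiv to lambda} into coboundary language.

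First I would treat $(\Leftarrow)$: given $\lambda_1 \sim \lambda$ with $\lambda_1 R = \lambda_1$, write $d\lambda_1 = \varphi\,d\lambda$ where $\varphi > 0$ $\lambda$-a.e.\ (strict positivity being forced by $\lambda \ll \lambda_1$). Lemma~\ref{lem lambda_1 equiv to lambda} yields $W_1 = (\varphi\circ\sigma)\,W\,\varphi^{-1}$, while $\lambda_1 R = \lambda_1$ gives $W_1 = \mathbf 1$ $\lambda$-a.e.; hence $(\varphi\circ\sigma)\,W = \varphi$, so $W$ is a $\sigma$-coboundary (with witness $\varphi \in \Gamma$). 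For $(\Rightarrow)$, assume $W$ is a $\sigma$-coboundary realized by a strictly positive $g \in \Gamma = \FXB_+^0$, so $(g\circ\sigma)\,W = g$. Put $\lambda_1 \in M(X)$, $d\lambda_1 := g\,d\lambda$; then $\lambda_1 \sim \lambda$, Lemma~\ref{lem lambda_1 equiv to lambda} gives $\lambda_1 \in \mathcal L(R)$ and $W_1 = (g\circ\sigma)\,W\,g^{-1} = \mathbf 1$, i.e.\ $\lambda_1 R = \lambda_1$. One may, if desired, cross-check consistency with Lemma~\ref{lem Ri as derivative} by verifying directly that $R\mathbf 1 = \big[(g\circ\sigma)\,W\,g^{-1}\,d\lambda_1\big]\circ\sigma^{-1}\big/d\lambda_1$, exactly as in the remark following that lemma.

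The one point demanding care --- and the main (mild) obstacle --- is positivity of the cocycle witness in the forward direction. Definition~\ref{def R and sigma coboundaries} a priori permits the function $g$ solving $(g\circ\sigma)\,f = g$ to vanish on a set of positive $\lambda$-measure, and such a $g$ would not define a measure equivalent to $\lambda$. As with the equivalence relation $R_1 \sim R_2$ treated via $k \in \Gamma$ after~(\ref{eq equivalence of TO}), the intended reading is that ``$W$ is a $\sigma$-coboundary'' means $W = g/(g\circ\sigma)$ for some $g \in \Gamma$; under that reading the argument above is complete. (If one insists on allowing nonnegative witnesses, one must additionally argue that on $\{g = 0\}$ the relation propagates so that this set is, up to $\lambda$-null sets, $\sigma^{-1}$-invariant and carries no mass under $\lambda R$, and then work separately on its complement --- but this refinement is not needed for the statement as used below.)
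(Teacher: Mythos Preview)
Your proof is correct and follows essentially the same route as the paper's: both rely on the cocycle transformation formula $W_1 = (\varphi\circ\sigma)\,W\,\varphi^{-1}$ for $d\lambda_1 = \varphi\,d\lambda$, and then read off that $W_1 = \mathbf 1$ is equivalent to the $\sigma$-coboundary identity $(\varphi\circ\sigma)\,W = \varphi$. The only difference is presentational --- you invoke Lemma~\ref{lem lambda_1 equiv to lambda} directly, whereas the paper redoes the underlying integral computation inline; your explicit remark on the positivity of the witness (taking $g \in \Gamma$) is a point the paper leaves implicit.
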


\begin{proof}
Suppose that $W$ is a coboundary, that is there exists a measurable 
function $q(x)$ such that
$W = (q\circ \sigma) q^{-1}$. Take a new measure $\la_1$ defined 
by $d\la_1 = q d\la$. Then $\la_1$ is equivalent to $\la$ and, for 
any integrable function $f $, we compute
 \begin{eqnarray*}
 \la_1 (Rf)  &=&  \int_X f \; d(\la_1R) \\
 \\
    &=&  \int_X R(f) q\; d\la\\
    \\
    &=&   \int_X R(f (q\cs))\; d\la\\
    \\
     &=&   \int_X f (q\cs))\; d(\la R)\\
     \\
      &=&   \int_X f (q\cs)) (q \cs)^{-1}q \; d\la  \qquad 
      \mbox{(because\ $W = (q\circ \sigma) q^{-1}$)}\\
      \\
       &=&   \int_X f  \; d\la_1 \\
       \\
       &=&  \la_1(f).
 \end{eqnarray*}
Hence $\la_1$ is $R$-invariant.

Conversely, suppose that a measure $\la_1 $ is $R$-invariant and $d
\la_1 = \varphi d \la$. By invariance with respect to $R$, we have
\begin{eqnarray*}
\int_X f \; d\la_1 &=& \int_X R(f) \; d\la_1 \\
\\
 &=& \int_X R(f) \va \; d\la \\
 \\
   &=& \int_X R(f(\varphi\cs))\; d\la  \\
   \\
   &=&   \int_X f(\varphi\cs)W\; d\la  \\
   \\
   &=& \int_X f(\varphi\cs)W \varphi^{-1}\; d\la_1
\end{eqnarray*}
Since $f$ is any function, we conclude that $W$ is a 
$\sigma$-coboundary:
$$
W = (\varphi\cs)^{-1} \varphi.
$$
\end{proof}

\begin{theorem}\label{thm from measure equivalence} Let $(R, 
\sigma)$ be a transfer operator acting in the space of Borel functions 
over $(X,\B)$.

(1) Suppose that $R$ is such that the action $\la \mapsto \la R$ is 
well defined on the set of all measures $M(X, \B)$. Then the partition 
of  $M(X, \B)$  into subsets $[\lambda] := 
\{\lambda' \in M(X, \B)  : \lambda' \sim \lambda\}$, consisting of 
equivalent measures, is invariant with respect to the action of $R$. 
Thus, the transfer operator $R$ sends equivalent
measures to equivalent ones. 
More generally, if $\lambda_1 \ll \la$, then $\la_1 R \ll \la R$
and
$$
\frac{d(\la_1 R)}{d(\la R)} = \va \cs
$$
where $d\la_1 = \varphi d\la$. 

(2) If $\la_1, \la_2 \in \mathcal L(R)$, then $\lambda_1 + \lambda_2 
\in \mathcal L(R)$. Moreover,
$$
W := \frac{d(\lambda_1 + \lambda_2)  R}{d(\lambda_1 + 
\lambda_2)} = W_1\frac{d\la_1}{d(\lambda_1 + \lambda_2)} + W_2 
\frac{d\la_2}{d(\lambda_1 + \lambda_2)}
$$
where $W_i$ is the Radon-Nikodym derivative of $R_i$  defined in 
(\ref{eq R-N der W via integral}), $ i =1,2$.
\end{theorem}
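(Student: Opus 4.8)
The plan is to reduce both parts to two standard ingredients: the pull-out property of $R$ in its integrated form, and the transitivity (chain rule) of Radon--Nikodym derivatives.

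For part (1), I would first prove the more precise claim, since the invariance of equivalence classes follows from it. Suppose $\lambda_1 \ll \lambda$ and write $d\lambda_1 = \varphi\, d\lambda$ with $\varphi \in \FXB$, $\varphi \geq 0$. For a nonnegative Borel function $f$ compute
\[
(\lambda_1 R)(f) = \int_X R(f)\, d\lambda_1 = \int_X R(f)\,\varphi\, d\lambda = \int_X R\big((\varphi\circ\sigma)f\big)\, d\lambda = \int_X (\varphi\circ\sigma)\, f\, d(\lambda R),
\]
where the third equality uses the pull-out property (\ref{eq char property of r via sigma 1}) and the last uses the definition of the action $\lambda \mapsto \lambda R$. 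Specializing to $f = \chi_A$ gives $(\lambda_1 R)(A) = \int_A (\varphi\circ\sigma)\, d(\lambda R)$, i.e. $\lambda_1 R \ll \lambda R$ with $d(\lambda_1 R)/d(\lambda R) = \varphi\circ\sigma$. Applying this twice, with $\lambda_1 \sim \lambda$ meaning $\lambda_1 \ll \lambda$ and $\lambda \ll \lambda_1$, yields $\lambda_1 R \sim \lambda R$, so the partition of $M(X,\B)$ into $\sim$-classes is $R$-invariant. The one point demanding care is that $\varphi$ may be unbounded, so the identity $R((\varphi\circ\sigma)f) = \varphi R(f)$ must be obtained by monotone approximation of $\varphi$ by simple functions together with the order-continuity of $R$ — exactly the extension machinery already used in Proposition~\ref{prop measure lambda R} and Lemma~\ref{lem simple f-n p-integrable}.

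For part (2), set $\mu = \lambda_1 + \lambda_2$. Linearity of $f \mapsto \int_X R(f)\, d\mu$ gives $\mu R = \lambda_1 R + \lambda_2 R$ as measures. Since $\lambda_i R \ll \lambda_i \ll \mu$ for $i = 1,2$, we get $\mu R \ll \mu$, hence $\mu \in \mathcal L(R)$; in the integrable version of $\mathcal L(R)$ one checks in addition that $\sigma$-quasi-invariance and $R(\mathbf 1) \in L^1$ are inherited by $\mu$ from $\lambda_1,\lambda_2$, which is immediate. For the density formula, write $W = d(\mu R)/d\mu = d(\lambda_1 R)/d\mu + d(\lambda_2 R)/d\mu$, and apply the chain rule $d(\lambda_i R)/d\mu = \big(d(\lambda_i R)/d\lambda_i\big)\big(d\lambda_i/d\mu\big) = W_i\, d\lambda_i/d\mu$, which is legitimate because $\lambda_i R \ll \lambda_i \ll \mu$.

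Neither part is genuinely hard; the single step that deserves attention is the one flagged above — giving meaning to the pull-out identity for unbounded Radon--Nikodym densities — and it is dispatched by the approximation argument already established earlier. I would present part (1) in full and part (2) as a short Radon--Nikodym computation.
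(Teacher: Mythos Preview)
Your proof is correct and follows essentially the same approach as the paper: for (1) you compute $\int R(f)\,d\lambda_1 = \int R(f)\varphi\,d\lambda = \int R((\varphi\circ\sigma)f)\,d\lambda = \int (\varphi\circ\sigma)f\,d(\lambda R)$ exactly as the paper does, and for (2) both you and the paper derive $W\,d(\lambda_1+\lambda_2) = W_1\,d\lambda_1 + W_2\,d\lambda_2$ via linearity and the chain rule for Radon--Nikodym derivatives. Your explicit remark about justifying the pull-out identity for unbounded $\varphi$ by monotone approximation is a welcome bit of care that the paper leaves implicit.
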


\begin{proof}
(1) The first part of the statement is obvious, see Lemma 
\ref{lem lambda_1 equiv to lambda} . To prove the other statements 
in (1), it suffices 
to check the fact that  $\la_1 \ll \la$ implies $\la_1  R \ll \la  R$. 
Since $d\la_1 = \varphi d\la$,  we have
$$
\int_X f \; d(\la_1 R) = \int_X R(f) \; d\la_1 = \int_X R(f) \varphi
\; d\la
$$
$$
= \int_X R[(\varphi\circ \sigma) f] \; d\la = \int_X (\varphi\circ 
\sigma) f\; d(\la R).
$$
Hence, we get
$$
\varphi\circ \sigma = \frac{d(\la_1 R)}{d(\la R)}.
$$

(2) We compute the Radon-Nikodym derivative of $(\la_1 + \la_2)R$ 
with respect to $(\la_1 + \la_2)$ as follows:
\begin{eqnarray*}
  \int_X fW\; d(\lambda_1 + \la_2) &=& \int_X R(f)\; d(\lambda_1 + 
  \la_2)  \\
  \\
   &=& \int_X f\; d(\lambda_1 R) + \int_X f\; d(\lambda_2 R) \\
   \\
   &=& \int_X fW_1\; d\lambda_1 + \int_X fW_2\; d\lambda_2 \\
   \\
   &=& \int_X f\; [W_1 d\lambda_1 + W_2d\la_2].
\end{eqnarray*}
Thus, $Wd(\lambda_1 + \la_2) = W_1 d\lambda_1 + W_2d\la_2$, 
and we are done.

\end{proof}

\begin{remark}\label{rem choice of equivalen measures}
Suppose that $(X, \B, \mu, T)$ is a  measurable dynamical system 
where $T$ is a measurable transformation of $X$. How do properties 
of $T$ depend on measure? Can $\mu$ be replaced by an equivalent 
measure? These questions are well known in ergodic theory, and 
many dynamical properties of $T$ do not depend on a choice of
 a measure in the class $[\mu]$. In particular, this happens in the 
 orbit equivalence theory. The importance of Lemma \ref{lem 
 lambda_1 equiv to lambda} and  Theorem \ref{thm from measure 
 equivalence} consists of explicit formulas relating Radon-Nikodym 
 derivatives of $R$  to equivalent measures.
\end{remark}

\begin{remark}
(1) It follows from Theorem \ref{thm from measure equivalence} any  
transfer operator acts not only on individual measures from 
$\mc L(R)$ but also it acts on the set of classes of  equivalent
 measures: $R[\la] = [\la R]$.
 
(2) We point out several formulas that relate the Radon-Nikodym 
derivatives for $R$ and $\sigma$. They are based on Lemma 
\ref{lem Ri as derivative} and Remark \ref{rem properties 4}. It is 
 assumed that a transfer operator $(R, \sigma)$ is defined on $(X,\B, 
 \la) $ where $\lambda$ is a measure from $\mathcal L(R)$.

(a) If $\lambda  R = \lambda$ (that is $W =1$ a.e.), then
\be\label{eq R1 is theta_la}
R(\mathbf 1)(x) = \dfrac{d\lambda\circ \sigma^{-1}}{d\lambda}(x) =
\theta_\lambda(x).
\ee

Let $Fix(R) := \{\la \in \mc L(R) : \la R = \la\}$ be the set 
of $R$-invariant measures. The above formula means that   the 
function  $(x, \lambda) \mapsto \theta_\lambda(x)$ does not depend 
on $\lambda \in Fix (R)$. As a confirmation of this observation, one  
can show directly that for $\lambda_1, \lambda_2 \in Fix(R)$ the 
condition $\theta_{\lambda_1 + \lambda_2}(x) = R(\mathbf 1)(x)$  
holds.

(b) If $\la R = \la$, then relation (\ref{eq R1 is theta_la}) implies that
$$
R(\mathbf 1) = \mathbf 1 \ \ \Longleftrightarrow \ \ \theta_\la = 1.
$$
Therefore, if $\la \in Fix(R)$, then $R$ is normalized if and only if
$\la$ is $\sigma$-invariant. 

(c) Let $\la, \la_1$ be two equivalent measures from $\mc L(R)$
where $(R, \sigma)$ is a transfer operator. Let the  function $\xi(x) >
 0$, be defined by the relation  $d\lambda_1(x) = \xi(x)
  d\lambda(x)$.  Then we can show that
$$
\frac{d(\lambda_1 R)\sigma^{-1}}{d\lambda_1}(x) = R(\xi\circ
\sigma)(x) \frac{d\lambda}{d\lambda_1}(x)
$$
\end{remark}

\subsection{The adjoint operator for a transfer operator}
\label{subsect
Adjoint operator in L2}

We recall briefly the notion of a symmetric pair of linear operators in a 
Hilbert space.

Suppose that $\mc H_1$ and $\mc H_2$ are Hilbert spaces and $A$ 
and $B$ are operators with dense domains $Dom(A) \subset \mc 
H_1$ and $ Dom (B) \subset \mc H_2$ such that $A : Dom(A)  \to 
\mc H_2$ and $B : Dom(B) \to  \mc H_1$. It is said that $(A; B)$ is a
\emph{ symmetric pair} if
$$
\langle Ax, \ y\rangle_{\mc H_2} = \langle x, \ By\rangle_{\mc H_1},
$$
where $x \in Dom (A), y \in Dom (B)$.
In other words, $(A; B)$ is a symmetric pair if and only if $A \subset  
B^*$ and $B \subset  A^*$.

If $(A; B)$ is a symmetric pair, then the operators $A$ and $B$ are  
closable. Moreover, one can prove that \index{symmetric pair}

(i) $A^*\ol A$ is densely defined and self-adjoint with $Dom (A^*\ol 
A) \subset Dom(\ol A) \subset \mc  H_1$,

(ii) $B^*\ol B$ is densely defined and self-adjoint with $Dom (B^*\ol 
B) \subset Dom(\ol B) \subset  \mc H_2$.

Therefore, without loss of generality, we can assume that $A$ and $B$ 
are closed operators, and we can work with self-adjoint operators 
$A^*A$ and $B^*B$.
\medskip

We will discuss below transfer operators $(R, \sigma)$ defined on 
$L^2(\lambda)$ where $\lambda$ is a $\sigma$-quasi-invariant 
measure. It turns out that one can explicitly describe various properties 
of $R$ and the adjoint operator $R^*$.

\begin{theorem}\label{thm adjoint of R}
Let $(R, \sigma)$ be a transfer operator considered on the space $(X, 
\B, \lambda)$. Suppose that $R(\mathbf 1) \in L^1(\lambda) \cap 
L^2(\lambda)$ and  $\lambda\circ R \ll \lambda$. Then $R$ is a 
densely defined linear operator in the Hilbert space $\mathcal H =  
L^2(\lambda)$ whose adjoint operator $R^*$ is determined by the 
formula \index{transfer operator ! adjoint}
$$
R^* f = W (f\circ \sigma), \ \ \ f \in \mathrm{Dom}(R^*).
$$
In particular, $W = R^*(\mathbf 1)$, and $W \in L^2(\lambda)$.
\end{theorem}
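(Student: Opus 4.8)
The plan is to obtain the dense-domain claim essentially for free, then pin down $R^*$ by one application of the pull-out property, so that the whole statement is reduced to the change-of-variables identity $R(\mathbf 1)\, d\la=(W\, d\la)\csi1$ of Lemma~\ref{lem Ri as derivative}. Since $R(\mathbf 1)\in L^1(\la)\cap L^2(\la)$, the operator $R$ is $2$-integrable in the sense of Definition~\ref{def R is p-integrable}, so by Lemma~\ref{lem simple f-n p-integrable} and Corollary~\ref{cor R as an operator in Lp} it carries the dense subspace $\mathcal S^2(\la)=\mathcal S(X)\cap L^2(\la)$ into $L^2(\la)$. Hence $R$ is a densely defined (generally unbounded) operator in $\mc H=L^2(\la)$, and its adjoint $R^*$ is well defined.

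The key step is the identity
\[
\langle Rf, g\rangle_{L^2(\la)}=\langle f, W(g\cs)\rangle_{L^2(\la)},
\]
to be proved for $f$ a square-integrable simple function and $g$ bounded. Indeed, by the pull-out property~(\ref{eq char property of r via sigma 1}) in the form $R((g\cs)f)=g\,R(f)$, and by the defining relation $\int_X R(\phi)\, d\la=\int_X \phi W\, d\la$ of $W=d(\la R)/d\la$ (equation~(\ref{eq R-N der W via integral})),
\[
\int_X g\,R(f)\, d\la=\int_X R\big((g\cs)f\big)\, d\la=\int_X (g\cs)\,f\,W\, d\la .
\]
I would first verify this for $f,g$ simple and then let $f$ range over $\mathcal S^2(\la)$ by monotone convergence. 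The identity shows that whenever $W(g\cs)\in L^2(\la)$ one has $g\in\mathrm{Dom}(R^*)$ with $R^*g=W(g\cs)$; in the language of Subsection~\ref{subsect Adjoint operator in L2}, $R^*$ extends the operator $g\mapsto W(g\cs)$.

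For the converse inclusion --- needed so that the formula really \emph{determines} $R^*$ --- I would first check that $W(g\cs)\in L^1(\la)$ for \emph{every} $g\in L^2(\la)$: by Lemma~\ref{lem Ri as derivative}, $R(\mathbf 1)\, d\la=(W\, d\la)\csi1$, whence
\[
\int_X W\,\abs{g\cs}\, d\la=\int_X \abs{g}\,R(\mathbf 1)\, d\la\le \|g\|_{L^2(\la)}\,\|R(\mathbf 1)\|_{L^2(\la)}<\infty .
\]
Then for $g\in\mathrm{Dom}(R^*)$ the identity above gives $\langle f, R^*g-W(g\cs)\rangle=0$ for all simple $f$; since simple functions are dense, $R^*g\in L^2(\la)$, and $W(g\cs)\in L^1(\la)$, this forces $R^*g=W(g\cs)$ $\la$-a.e. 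Thus $\mathrm{Dom}(R^*)=\{g\in L^2(\la):W(g\cs)\in L^2(\la)\}$ and $R^*g=W(g\cs)$ there.

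Finally, taking $f=\mathbf 1$ gives $R^*(\mathbf 1)=W(\mathbf 1\cs)=W$, \emph{provided} $\mathbf 1\in\mathrm{Dom}(R^*)$, i.e. provided $W\in L^2(\la)$. This is the step I expect to be the main obstacle: $W\in L^2(\la)$ does not follow from $W\in L^1(\la)$ alone, so it must be squeezed out of the hypothesis $R(\mathbf 1)\in L^2(\la)$ by exploiting the precise link between $W$ and $R(\mathbf 1)$. The approach I would try is to combine the operator Cauchy--Schwarz inequality for the positive operator $R$ (namely $R(f)^2\le R(f^2)\,R(\mathbf 1)$, used repeatedly in the paper) with the identity $R(\mathbf 1)\, d\la=(W\, d\la)\csi1$ and the quasi-invariance of $\sigma$ for $\la$, so as to transfer square-integrability from $R(\mathbf 1)$ back to $W$; failing that, one can work directly from the factorization $R(\mathbf 1)=\theta_{\la R}\,W$ of Remark~\ref{rem properties 4}. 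The remaining items --- unboundedness of $R$ in $L^2(\la)$ and its closability --- are routine given Corollary~\ref{cor R as an operator in Lp} and the generalities on symmetric pairs recalled in Subsection~\ref{subsect Adjoint operator in L2}.
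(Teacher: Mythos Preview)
Your approach coincides with the paper's: the core is exactly the identity
\[
\int_X g\,R(f)\,d\la \;=\; \int_X R\big((g\cs)f\big)\,d\la \;=\; \int_X (g\cs)\,f\,W\,d\la
\]
obtained from the pull-out property and the definition of $W$, applied with $f,g\in\mathcal S^2(\la)$. The paper stops there, reading off $R^*f=W(f\cs)$ for simple $f$ directly from this relation; your additional work on the converse inclusion (showing $W(g\cs)\in L^1(\la)$ for all $g\in L^2(\la)$ and hence characterizing $\mathrm{Dom}(R^*)$ exactly as $\{g:W(g\cs)\in L^2(\la)\}$) is not in the paper's proof but is a correct and natural completion of the argument.

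On the obstacle you flag --- whether $W\in L^2(\la)$ genuinely follows from $R(\mathbf 1)\in L^2(\la)$ --- your caution is well placed: the paper's proof does not address this point either. After establishing the inner-product identity for simple $f,g$, it simply asserts that ``the adjoint operator $R^*$ is defined for every $f\in\mathcal S^2(\lambda)$'' and then records $W=R^*(\mathbf 1)$ and $W\in L^2(\la)$ as consequences, without a separate verification that $W(f\cs)\in L^2(\la)$ for simple $f$. So on this particular issue your sketch is already as complete as the paper's own argument; the approaches you suggest (the operator Cauchy--Schwarz inequality, or the factorization $R(\mathbf 1)=\theta_{\la R}W$ from Remark~\ref{rem properties 4}) are reasonable, but the paper does not pursue any of them.
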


\begin{proof} We take simple functions $f$ and $g$ such that $f, g 
\in \mathcal S^2(\lambda)$. Then $(f\circ \sigma)g$ is also a simple 
function. Since $R(\mathbf 1) \in L^1(\lambda)$, we conclude that 
$R((f\circ \sigma)g ) \in L^1(\lambda)$ according to Lemma \ref{lem 
R(s) is integrable}.

It follows from (\ref{eq R-N der W via integral}) and (\ref{eq char 
property of r via sigma 1}) that the following equalities hold:
$$
\int_X (f\circ\sigma)gW\; d\lambda = \int_X R((f\circ\sigma)g)\; d
\lambda = \int_X f R(g)\; d\lambda.
$$
All integrals in these formulas are well defined. Indeed, we use that $f, 
g \in S^2(\lambda)$ and $R(\mathbf 1) \in L^2(\lambda)$ to 
conclude that $f, R(g)$ are in $L^2(\lambda)$. Hence, we have
\be\label{eq L2 inner product}
\int_X f R(g)\; d\lambda  = \langle  R(g), f \rangle_{L^2(\lambda)}.
\ee
We recall that $\lambda R$ is a finite measure, and the functions $f
\circ\sigma$, and $g$ are simple. Therefore, the integrals
$$
\int_X (f\circ\sigma)gW\; d\lambda = \int_X (f\circ\sigma)g\; 
d(\lambda R)
$$
are finite. Moreover,
\be\label{eq inner product S5}
\int_X (f\circ\sigma)gW\; d\lambda = \langle g, W(f\circ \sigma)
\rangle_{L^2(\lambda)}.
\ee

Thus, we have proved that, for any function $g \in \mathcal 
S^2(\lambda)$,
$$
 \langle  R(g), f\rangle_{L^2(\lambda)} = \langle g, W(f\circ \sigma)
 \rangle_{L^2(\lambda)}.
$$
This relation means that the adjoint operator $R^*$ is defined for 
every $f\in \mathcal S^2(\lambda)$ and
\be\label{eq adjoint of R}
R^*(f) = W (f\circ \sigma).
\ee
\end{proof}

It turns out that when a transfer operator $(R, \sigma)$ is considered 
as an operator in  the space $L^2(\la)$ with $\la \in \mc L(R)$, then  
this operator can be realized  explicitly, see Example 
\ref{ex example 2}.

\begin{theorem}\label{thm formula for R in L2} Suppose $R$ is a 
transfer operator acting in $L^2(\la)$ such that $d(\la R) = W d\la$. 
Then, for any $f \in L^2(\la)$,
\be\label{eq R in L^2}
R(f)(x) = \frac{(fWd\la)\circ \sigma^{-1}}{d\la}(x), \qquad \ \ \la
\mbox{-a.e.}.
\ee
\end{theorem}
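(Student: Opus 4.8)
The plan is to identify both sides of \eqref{eq R in L^2} as elements of $L^1(\la)$ and to check that they have the same integral over every Borel set, so that equality $\la$-a.e.\ follows from uniqueness of the Radon--Nikodym derivative. It is enough to treat $f\ge 0$: the general case then follows by writing $f=f^{+}-f^{-}$ and using linearity of $R$ and of the assignment $g\mapsto (gW\,d\la)\circ\sigma^{-1}$.

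First I would make sure the right-hand side of \eqref{eq R in L^2} is meaningful. By Theorem \ref{thm adjoint of R} (applicable since $R(\mathbf 1)\in L^1(\la)\cap L^2(\la)$ in the standing setting of this subsection) one has $W=R^*(\mathbf 1)\in L^2(\la)$; hence for $f\in L^2(\la)$ the Cauchy--Schwarz inequality gives $fW\in L^1(\la)$, so $fW\,d\la$ is a finite measure and so is its push-forward $(fW\,d\la)\circ\sigma^{-1}$, defined by $\big((fW\,d\la)\circ\sigma^{-1}\big)(A)=\int_{\sigma^{-1}(A)}fW\,d\la$ for $A\in\B$. Since $\la$ is non-singular (indeed quasi-invariant) with respect to $\sigma$, $\la(A)=0$ forces $\la(\sigma^{-1}(A))=0$ and hence $\big((fW\,d\la)\circ\sigma^{-1}\big)(A)=0$; thus this push-forward is absolutely continuous with respect to $\la$, and $\frac{(fW\,d\la)\circ\sigma^{-1}}{d\la}$ is a well-defined element of $L^1(\la)$.

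The computational heart of the argument is the identity $\chi_{\sigma^{-1}(A)}=\chi_A\circ\sigma$ combined with the pull-out property (\ref{eq char property of r via sigma 1}) and the defining relation (\ref{eq R-N der W via integral}) of $W$. For $A\in\B$ I would write
\[
\int_A R(f)\,d\la=\int_X \chi_A\,R(f)\,d\la=\int_X R\big((\chi_A\circ\sigma)\,f\big)\,d\la=\int_X (\chi_A\circ\sigma)\,fW\,d\la=\int_{\sigma^{-1}(A)}fW\,d\la,
\]
where the second equality uses $\chi_A R(f)=R((\chi_A\circ\sigma)f)$ (the pull-out property with $g=\chi_A$) and the third uses $\int_X R(h)\,d\la=\int_X hW\,d\la$ with $h=(\chi_A\circ\sigma)f$. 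Taking $A=X$ shows $\int_X R(f)\,d\la=\int_X fW\,d\la<\infty$, so $R(f)$ is a non-negative function in $L^1(\la)$; and the displayed identity says precisely that the measure $R(f)\,d\la$ coincides with $(fW\,d\la)\circ\sigma^{-1}$. By uniqueness of the Radon--Nikodym derivative, $R(f)=\dfrac{(fW\,d\la)\circ\sigma^{-1}}{d\la}$ $\la$-a.e., which is (\ref{eq R in L^2}); the extension to arbitrary $f\in L^2(\la)$ is then immediate by linearity. (Equivalently one can reorganize the same computation through the adjoint formula $R^*g=W(g\circ\sigma)$ of Theorem \ref{thm adjoint of R} together with the change-of-variables formula $\int_X (g\circ\sigma)h\,d\la=\int_X g\,d\big((h\,d\la)\circ\sigma^{-1}\big)$, tested against $g=\chi_A$.)

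I expect no deep obstacle here: the only point requiring care is the measure-theoretic bookkeeping in the second paragraph --- namely that $(fW\,d\la)\circ\sigma^{-1}$ is a genuine finite measure absolutely continuous with respect to $\la$ (so that its Radon--Nikodym derivative exists) and that $R(f)\in L^1(\la)$ (so that equality of integrals over all Borel sets legitimately upgrades to equality $\la$-a.e.). Both rest on $W\in L^2(\la)$, furnished by Theorem \ref{thm adjoint of R}, and on the $\sigma$-quasi-invariance of $\la$, which is part of the hypothesis $\la\in\mc L(R)$.
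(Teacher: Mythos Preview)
Your proof is correct and follows essentially the same approach as the paper: both arguments identify $R(f)$ via the duality relation $\int_X g\,R(f)\,d\la=\int_X (g\circ\sigma)\,fW\,d\la$ (the paper tests against all $g\in L^2(\la)$, you against $g=\chi_A$), then rewrite the right-hand side as $\int_X g\,\frac{(fW\,d\la)\circ\sigma^{-1}}{d\la}\,d\la$ and conclude by uniqueness. Your version is a bit more careful about the measure-theoretic bookkeeping (well-definedness and integrability of the right-hand side), which the paper leaves implicit.
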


\begin{proof} We first  note that formula (\ref{eq R in L^2}) defines a 
transfer operator that can be checked directly. Next, as it follows from 
(\ref{thm adjoint of R}), a function $R(f) \in L^2$ which satisfies the 
equation
$$
\int_X g R(f)\; d\la = \int_X (g\circ\sigma)f W\; d\la, \qquad \forall g 
\in L^2(\la), $$
is uniquely determined by this relation. Then, the right hand side is 
represented as
$$
\int_X (g\circ\sigma)f W\; d\la = \int_X g \; \frac{(fWd\la)\circ
\sigma^{-1}}{d\la}\; d\la,
$$
and this equality proves (\ref{eq R in L^2}).

\end{proof}

If $(R,\sigma)$ is a transfer operator acting in $\FXB$ and a measures 
$\la \in M(X)$ is such that $\la R \ll \la$, then the realization of $R$
in $L^(\la)$, given in (\ref{eq R in L^2}), is denoted by $R_\la$.

\begin{proposition}\label{prop Ri =1 for all measures}
Let a transfer operator $( R_\la, \sigma)$ be defined in $L^2(\la)$ 
and suppose that $R_\la (\mathbf 1) =\mathbf 1$. Then, for any
 measure $\la' \sim \la$, 
the operator $R_{\la'} \in L^2(\la)$ has the property $R_{\la'} 
\mathbf 1 = \mathbf 1$.
\end{proposition}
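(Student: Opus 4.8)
The plan is to reduce the assertion to the explicit description of $R_\la$ from Theorem~\ref{thm formula for R in L2} together with the cohomology formula of Lemma~\ref{lem lambda_1 equiv to lambda}. Write $d\la'(x) = \va(x)\, d\la(x)$ with $\va > 0$ holding $\la$-a.e. (so that, since $\sigma$ is non-singular with respect to $\la$, it is also non-singular with respect to $\la'$, and $\la' \in \mc L(R)$). Put $W := d(\la R)/d\la$ and $W' := d(\la' R)/d\la'$; by Lemma~\ref{lem lambda_1 equiv to lambda} one has $W'(x) = \va(\sigma(x))\, W(x)\, \va(x)^{-1}$, whence the identity of measures $W'\, d\la' = (\va\cs)\, W\, d\la$.

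First I would unwind the hypothesis $R_\la(\mathbf 1) = \mathbf 1$. By Lemma~\ref{lem Ri as derivative}, $R(\mathbf 1) = \dfrac{d(\la R)\csi1}{d\la}$, so $R_\la(\mathbf 1) = \mathbf 1$ is equivalent to the identity of measures $(\la R)\csi1 = \la$, i.e.
\[
\int_X (f\cs)\, W\, d\la = \int_X f\, d\la \qquad \text{for every } f \in \FXB .
\]
Next I would compute, for an arbitrary Borel set $A$, the value of $(\la' R)\csi1$ on $A$:
\[
(\la' R)\csi1(A) = \int_X (\chi_A\cs)\, W'\, d\la' = \int_X (\chi_A\cs)(\va\cs)\, W\, d\la = \int_X \big((\chi_A\va)\cs\big)\, W\, d\la .
\]
Applying the displayed identity with $f = \chi_A\va$ gives $(\la' R)\csi1(A) = \int_X \chi_A\va\, d\la = \la'(A)$. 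Hence $(\la' R)\csi1 = \la'$, and a second use of Lemma~\ref{lem Ri as derivative} (now with base measure $\la'$) yields $R_{\la'}(\mathbf 1) = \dfrac{d(\la' R)\csi1}{d\la'} = \mathbf 1$, as claimed.

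There is essentially no obstacle here beyond bookkeeping: one must keep careful track of which measure each Radon--Nikodym derivative and each ``a.e.'' refers to, and must apply the integral form of $R_\la(\mathbf 1)=\mathbf 1$ to the correct test function $\chi_A\va$ rather than to $\chi_A$. One could also argue more briefly that $R$ is a single transfer operator on $\FXB$, so $R(\mathbf 1)$ is one fixed Borel function, and since $\la \sim \la'$ share the same null sets, $R(\mathbf 1)=\mathbf 1$ $\la$-a.e. forces $R(\mathbf 1)=\mathbf 1$ $\la'$-a.e.; the computation above is preferred because it re-derives the normalization through the cocycle relation for $W$, in the spirit of the preceding results.
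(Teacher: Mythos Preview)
Your proof is correct and follows essentially the same route as the paper's. Both arguments write $d\la' = \va\, d\la$, invoke the cocycle relation $W' = (\va\cs)W\va^{-1}$ from Lemma~\ref{lem lambda_1 equiv to lambda}, and reduce the $\la'$-normalization identity $\int (g\cs)W'\,d\la' = \int g\,d\la'$ to the $\la$-normalization identity applied to the test function $g\va$; the only cosmetic difference is that the paper works with a general $g$ while you work with $g=\chi_A$ and package the computation via the measure identity $(\la' R)\csi1 = \la'$ from Lemma~\ref{lem Ri as derivative}.
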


\begin{proof} Let $d\la' = \varphi d \la$. Then, as shown in Lemma 
\ref{lem lambda_1 equiv to lambda}, the corresponding Radon-
Nikodym derivatives $W_\la$ and $W_{\la'}$ are related by the 
formula
$$
W_{\la'} = (\varphi\circ\sigma) W_\la \varphi^{-1}.
$$
Based on the proof of Theorem \ref{thm formula for R in L2}, it 
suffices to show that,  for any $g \in L^2(\la')$, 
$$
\int_X g \; d\la' = \int_X (g\circ\sigma)  W_{\la'}\; d\la'.
$$
We  compute
\begin{eqnarray*}
  \int_X (g\circ\sigma)  W_{\la'}\; d\la' &=& \int_X   (g\circ\sigma) 
  (\varphi\circ\sigma) W_\la \varphi^{-1} \varphi \; d\la\\
  \\
   &=& \int_X [(g\varphi)\circ \sigma] W_\la\; d\la  \\
   \\
   & = &  \int_X R[(g\varphi)\circ \sigma]  \; d\la\\
   \\
   &=&  \int_X g\varphi\; d\la\\
   \\
   & = &\int_X g \; d\la'.
\end{eqnarray*}
This means that $R_{\la'}( \mathbf 1) = \mathbf 1$.
\end{proof}

The following corollary is basically deduced from Theorem \ref{thm 
adjoint of R}. 

\begin{corollary}\label{cor formulas on R and adjoint of R}
Let $R$ be a transfer operator in $L^2(\la)$ such that $d(\la R)=
Wd \la$.

(1) The domain of $R^*$ contains the dense set $\mathcal 
S^2(\lambda)$.
 The transfer operator  $R$ is a closable in $L^2(\lambda)$.

(2) The following formulas hold:
$$
R^*(f) = (f\circ\sigma) R^*(\mathbf 1),
$$
$$
 (RR^*)(f) = f(RR^*)(1)   = R(W)f.
$$
This means that $RR^*$ is a multiplication operator.

(3) $R^*(f) \in L^1(\lambda)$ for any simple function $f$.

(4) $R^*$ is an isometry if and only if $R(W) =1$.

(5) For every $n\in \N$, the operator $(R^*)^n$ is defined on 
$L^2(\lambda)$ by the formula
$$
(R^*)^n f = (f\circ \sigma^n) W(W\circ \sigma)\cdots (W\circ 
\sigma^{n-1}).
$$
\end{corollary}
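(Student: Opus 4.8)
The plan is to establish the five claims of Corollary \ref{cor formulas on R and adjoint of R} by reducing everything to the adjoint formula $R^*f = W(f\circ\sigma)$ from Theorem \ref{thm adjoint of R} and the pull-out property. First I would record that Theorem \ref{thm adjoint of R} already produces, for every $f \in \mathcal S^2(\lambda)$, the identity $\langle R(g), f\rangle = \langle g, W(f\circ\sigma)\rangle$ for all $g$ in the dense domain; since $\mathcal S^2(\lambda)$ is dense in $L^2(\lambda)$ and $R^*$ is always closed, this gives (1): $\mathcal S^2(\lambda) \subset \mathrm{Dom}(R^*)$, so $R^*$ is densely defined, hence $R = R^{**}$ is closable. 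For (3), note $R^*(f) = W(f\circ\sigma)$ and $f\circ\sigma$ is a simple function when $f$ is; since $\lambda R = Wd\lambda$ is a finite measure (Proposition \ref{prop measure lambda R}) one gets $\int_X W(f\circ\sigma)\,d\lambda = \int_X (f\circ\sigma)\,d(\lambda R) < \infty$, so $R^*(f)\in L^1(\lambda)$.

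Next I would handle (2). The first formula $R^*(f) = (f\circ\sigma)R^*(\mathbf 1)$ is immediate from $R^*(f) = W(f\circ\sigma)$ together with $R^*(\mathbf 1) = W$ (the ``in particular'' clause of Theorem \ref{thm adjoint of R}). For the second, compute $(RR^*)(f) = R(W(f\circ\sigma)) = R((f\circ\sigma)W) = f\,R(W)$ by the pull-out property \eqref{eq char property of r via sigma 1}; in particular $(RR^*)(\mathbf 1) = R(W)$, so $(RR^*)(f) = f\cdot R(W)\cdot\mathbf 1 = R(W)f$, which exhibits $RR^*$ as the multiplication operator by $R(W)$. Claim (4) then follows: $R^*$ is an isometry iff $RR^* = I$ on its domain iff the multiplication operator by $R(W)$ is the identity iff $R(W) = \mathbf 1$ $\lambda$-a.e.

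For (5) I would argue by induction on $n$. The case $n=1$ is the definition of $R^*$. Assuming $(R^*)^n f = (f\circ\sigma^n)\,W\,(W\circ\sigma)\cdots(W\circ\sigma^{n-1})$, apply $R^*$ once more: writing $P_n := W(W\circ\sigma)\cdots(W\circ\sigma^{n-1})$ and using $R^*(hg) = W\cdot((hg)\circ\sigma) = W\,(h\circ\sigma)\,(g\circ\sigma)$, one gets
\begin{align*}
(R^*)^{n+1}f &= R^*\big((f\circ\sigma^n)P_n\big)\\
&= W\cdot\big((f\circ\sigma^n)\circ\sigma\big)\cdot(P_n\circ\sigma)\\
&= (f\circ\sigma^{n+1})\,W\,(W\circ\sigma)\cdots(W\circ\sigma^{n}),
\end{align*}
which is exactly the asserted formula with $n$ replaced by $n+1$. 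The only genuine subtlety is bookkeeping about domains: at each stage one should check that the relevant simple functions stay in $\mathrm{Dom}((R^*)^n)$, which follows because $R^*$ maps $\mathcal S^2(\lambda)$ into functions of the same multiplicative shape and the finiteness of the iterated push-forward measures $(\lambda R)\circ\sigma^{-k}$ (guaranteed by Remark \ref{rem properties 4}(4), $(\lambda R)\circ\sigma^{-1}\ll\lambda R\ll\lambda$) keeps all integrals finite. The main obstacle, such as it is, is thus not the algebra but being careful that each composition $(R^*)^n$ is genuinely defined on a dense set rather than just formally manipulated; everything else is a direct consequence of Theorem \ref{thm adjoint of R} and the pull-out property.
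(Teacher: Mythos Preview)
Your proposal is correct and essentially matches the paper's intent: the paper states that ``the proofs of most statements are rather obvious so that they can be left for the reader'' and only writes out (4), so your detailed derivations of (1), (2), (3), (5) from Theorem \ref{thm adjoint of R} and the pull-out property are exactly the kind of filling-in that was expected. The one small difference is in (4): the paper proves it by the direct inner-product computation
\[
\langle R^*f, R^*g\rangle = \int_X W(f\circ\sigma)W(g\circ\sigma)\,d\lambda = \int_X R\big[W\,((fg)\circ\sigma)\big]\,d\lambda = \int_X fg\,R(W)\,d\lambda,
\]
whereas you deduce it from the already-established identity $RR^*(f)=R(W)f$ in (2) and the characterization of an isometry via $(R^*)^*R^*=I$. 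Both routes are equivalent; yours is slightly more streamlined since it reuses (2), while the paper's direct computation avoids any question about whether $R^*f$ lies in the domain of $R$ (the point you correctly flag as the only genuine subtlety).
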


\begin{proof}
The proofs of most statements are rather obvious so 
that they can be left for the reader. We show here the proof of (4)
only. Let $f, g \in L^2(\la)$, then we have
\begin{eqnarray*}
\langle R^* f, R^* g\rangle &=& \int_X W(f\cs) W(g\cs)\; d\la\\
\\
&=& \int_X W(fg)\cs) \; d(\la R)\\
\\
&=& \int_X R[W(fg)\cs)] \; d\la \\
\\
&=& \int_X (fg) R(W) \; d\la R\\
\end{eqnarray*}
Hence, $\langle R^* f, R^* g\rangle = \langle f, g\rangle$ if and
only if $R(W) =1$. 
\end{proof}

\begin{proposition}\label{prop R^n f in L2}
Let the conditions of Theorem \ref{thm adjoint of R} hold. Then the 
domain of $R^n$, considered as an operator in $L^2(\lambda)$,  
contains  functions  $f \in \mathcal S^2(\lambda)$ for any $n \geq 
2$.
\end{proposition}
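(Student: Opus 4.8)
The plan is to induct on $n$, reducing the claim to the assertion that $R^{k}(\chi_{A})\in L^{2}(\lambda)$ for every $k\le n$ and every $A$ with $\lambda(A)<\infty$ (in particular, when $\lambda$ is finite, to $R^{k}(\mathbf 1)\in L^{2}(\lambda)$). First I would note that a function in $\mathcal S^{2}(\lambda)$ is a finite linear combination of such indicators, that each $R^{k}$ is linear and positive, and that pointwise $0\le R^{k}(\chi_{A})\le R^{k}(\mathbf 1)$, and more generally $|R^{k}f|\le\|f\|_{\infty}\,R^{k}(\mathbf 1)$ for simple $f$; so $f\in\mathrm{Dom}(R^{n})$ as soon as the dominating iterates $R^{k}(\mathbf 1)$ (or $R^{k}(\chi_A)$) lie in $L^{2}(\lambda)$ for all $k\le n$. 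The base case $n=1$ is exactly the standing hypothesis $R(\mathbf 1)\in L^{1}(\lambda)\cap L^{2}(\lambda)$ together with Corollary~\ref{cor R as an operator in Lp}.

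For the inductive step, granted $R^{k}(\mathbf 1)\in L^{2}(\lambda)$ for $k\le n$, one wants $R^{n+1}(\mathbf 1)\in L^{2}(\lambda)$. The natural device is the adjoint/cocycle formula from the preceding material: writing $W=d(\lambda R)/d\lambda$ and iterating the pull-out property together with the identity $\int_{X}R(\psi)\,d\lambda=\int_{X}\psi W\,d\lambda$ (valid since $\lambda R=W\lambda$), one obtains, for simple $f,g$,
$$\langle R^{n+1}f,\ g\rangle_{L^{2}(\lambda)}=\int_{X} f\,(g\circ\sigma^{n+1})\,W\,(W\circ\sigma)\cdots(W\circ\sigma^{n})\,d\lambda,$$
which is Corollary~\ref{cor formulas on R and adjoint of R}(5) in the form $(R^{*})^{n+1}g=(g\circ\sigma^{n+1})\prod_{j=0}^{n}(W\circ\sigma^{j})$. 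To conclude $R^{n+1}(\mathbf 1)\in L^{2}(\lambda)$ one would take $f=\mathbf 1$, estimate the right-hand side by Cauchy--Schwarz, and push the resulting integrals back through $R$ using $\int R(\cdot)\,d\lambda=\int(\cdot)W\,d\lambda$ and the $\sigma$-nonsingularity of $\lambda$ to control the composition factors $W\circ\sigma^{j}$; the finiteness of the measures $\lambda R,\dots,\lambda R^{n+1}$ (Proposition~\ref{prop measure lambda R}) enters at this point. An alternative is to run the same step with the Schwarz inequality $(R\varphi)^{2}\le R(\mathbf 1)\,R(\varphi^{2})$ for the positive operator $R$ applied to $\varphi=R^{n}(\mathbf 1)$, which again throws the work onto estimating $R$ of a product of known iterates.

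The main obstacle is precisely this last estimate. From $R^{n}(\mathbf 1)\in L^{2}(\lambda)$ one only gets $(R^{n}\mathbf 1)^{2}\in L^{1}(\lambda)$, and a transfer operator is not in general bounded from $L^{1}(\lambda)$ into $L^{2}(\lambda)$, so the crude induction does not close by itself; one needs an extra input to bootstrap the $L^{2}$-control one further step. I would try to extract such control from the explicit cocycle $W\,(W\circ\sigma)\cdots(W\circ\sigma^{n})=(R^{*})^{n+1}(\mathbf 1)$, using $\lambda R\ll\lambda$ applied $n+1$ times together with the finiteness of $\lambda R^{\,n+1}$ to bound it in a usable way; failing that, the cleanest sufficient hypotheses are $R(\mathbf 1)\in L^{\infty}(\lambda)$ or the normalized case $R(\mathbf 1)=\mathbf 1$, in which the statement is immediate since then $R^{n}(\mathbf 1)=\mathbf 1$ and $|R^{n}f|\le\|f\|_{\infty}\mathbf 1\in L^{2}(\lambda)$. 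So I expect the argument to proceed along the induction above, with the crucial point being to carry the $L^{2}$-estimates from step $n$ to step $n+1$ via the $(R^{*})^{n+1}$ formula and the integrability of the iterated Radon--Nikodym cocycle.
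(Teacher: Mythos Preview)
Your reduction via $|R^{k}f|\le\|f\|_{\infty}\,R^{k}(\mathbf 1)$ and induction on $k$ is the natural setup, and you have correctly isolated the real difficulty. The paper's own argument is considerably shorter than what you outline: for $n=2$ it just writes
\[
\int_{X}R^{2}(f)\,d\lambda=\int_{X}R(f)\,W\,d\lambda=\langle R(f),\,W\rangle_{L^{2}(\lambda)}<\infty,
\]
using $R(f)\in L^{2}(\lambda)$ for simple $f$ and $W=R^{*}(\mathbf 1)\in L^{2}(\lambda)$ from Theorem~\ref{thm adjoint of R}, and then appeals to induction for $n>2$.

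Two points are worth noting. First, this computation yields only $R^{2}(f)\in L^{1}(\lambda)$, not $R^{2}(f)\in L^{2}(\lambda)$; so the paper is apparently reading ``domain of $R^{n}$'' more loosely than the strict operator-theoretic meaning you are (reasonably) working with. Second, even for that weaker $L^{1}$ conclusion, the inductive step $\int R^{n+1}(f)\,d\lambda=\int R^{n}(f)\,W\,d\lambda$ needs $R^{n}(f)\in L^{2}(\lambda)$ to pair against $W\in L^{2}(\lambda)$ --- exactly the bootstrap you flagged as problematic. So the obstacle you identified is genuine and is not resolved in the paper's sketch either; your proposed cures ($R(\mathbf 1)\in L^{\infty}(\lambda)$, or the normalized case $R(\mathbf 1)=\mathbf 1$) are precisely the clean hypotheses under which both your argument and the paper's close without difficulty.
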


\begin{proof}
Suppose $n=2$. Then, for $f \in \mathcal S^2(\lambda)$, we see that
$$
\int_X R^2(f)\; d\lambda = \int_X R(R(f))\; d\lambda = \int_X R(f) W
\; d\lambda.
$$
Since $R(f)$ and $W$ are in $L^2(\lambda)$ (see Theorem \ref{thm 
adjoint of R}), we conclude that
$$
\int_X R^2(f)\; d\lambda  = \langle R(f), W\rangle_{L^2(\lambda)} < 
\infty.
$$
To prove the statement for any natural $n > 2$, we use induction.
\end{proof}

Proposition \ref{prop R^n f in L2} is important for consideration 
powers of $R$ because it states that $R^n$ has a dense domain in 
$L^2(\la)$ for every $n$. 

\begin{proposition}\label{prop harmonic}
Let $(R, \sigma)$ be a transfer operator acting in the 
$L^2(\la)$-space of measurable functions over a measure space
$(X, \B, \la)$. Suppose that $\la R \ll \la$ and let $Wd\la = d(\la R)$. 
If $h$ is  a  harmonic function $h$ for $R$, then we have
$$
\|R(W)\|_{L^\infty(\la)} \leq 1 \ \ \Longrightarrow \ \ R(W) = 1 \ a.e.
$$
The converse is obviously true.
\end{proposition}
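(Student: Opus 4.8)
The plan is to pass to the adjoint operator $R^*$ on the Hilbert space $L^2(\la)$ and exploit the two structural facts already established: by Theorem \ref{thm adjoint of R} one has $R^*f = W(f\cs)$ with $W = R^*(\mathbf 1) \in L^2(\la)$, and by Corollary \ref{cor formulas on R and adjoint of R}(2) the composition $RR^*$ is the multiplication operator by $R(W)$, i.e. $RR^*f = R(W)f$. The first step is to observe that the hypothesis $\|R(W)\|_{L^\infty(\la)} \le 1$ makes $R^*$ an $L^2(\la)$-contraction on its (dense) domain: for $f$ in $\mc S^2(\la)$ one computes, using the adjoint relation $\langle R^*f, u\rangle = \langle f, Ru\rangle$ with $u = R^*f$,
$$
\|R^*f\|_2^2 = \langle R^*f, R^*f\rangle = \langle f, RR^*f\rangle = \int_X R(W)\,|f|^2\, d\la \ \le\ \int_X |f|^2\, d\la = \|f\|_2^2 .
$$

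Next I would feed in the harmonic function. Assuming $h \in L^2(\la)$ sits in the relevant domains, so that $Rh = h$ and the pairing $\langle Rh, h\rangle = \langle h, R^*h\rangle$ is legitimate, we get $\|h\|_2^2 = \langle h, h\rangle = \langle Rh, h\rangle = \langle h, R^*h\rangle$. Cauchy--Schwarz then gives $\|h\|_2^2 \le \|h\|_2\,\|R^*h\|_2$, hence $\|h\|_2 \le \|R^*h\|_2$, while the contraction estimate of the first step gives the reverse inequality; therefore $\|R^*h\|_2 = \|h\|_2$. Combining this with $\|R^*h\|_2^2 = \int_X R(W)\,h^2\, d\la$ yields $\int_X \bigl(1 - R(W)\bigr)h^2\, d\la = 0$. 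Since $1 - R(W) \ge 0$ $\la$-a.e. by hypothesis and $h^2 \ge 0$, the integrand vanishes a.e., so $R(W) = 1$ $\la$-a.e. on $\{h > 0\}$; taking the harmonic function strictly positive a.e. gives the asserted $R(W) = 1$ a.e. The converse is trivial, since $R(W) = 1$ forces $\|R(W)\|_{L^\infty(\la)} = 1 \le 1$.

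The hard part will not be the idea but the bookkeeping of domains: one must verify $h \in \mathrm{Dom}(R) \cap \mathrm{Dom}(R^*)$ and $R^*h \in \mathrm{Dom}(R)$ so that the chain $\langle R^*h, R^*h\rangle = \langle h, RR^*h\rangle = \int_X R(W)h^2\, d\la$ is valid; this is where one uses $R(\mathbf 1) \in L^1(\la)\cap L^2(\la)$, $W = R^*(\mathbf 1) \in L^2(\la)$, density of $\mc S^2(\la)$, and Corollary \ref{cor formulas on R and adjoint of R}. A secondary subtlety — which is precisely why a harmonic function is invoked in the statement — is the positivity of $h$: without $h > 0$ a.e. the argument only delivers $R(W) = 1$ on $\mathrm{supp}(h)$, so one should record that the relevant harmonic function is taken strictly positive (or phrase the conclusion relative to $\mathrm{supp}(h)$).
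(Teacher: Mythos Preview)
Your argument is correct and follows the same overall strategy as the paper: both use that $RR^*$ is multiplication by $R(W)$ (Corollary \ref{cor formulas on R and adjoint of R}(2)), deduce from $\|R(W)\|_{L^\infty}\le 1$ that $R^*$ (and hence $R$) is an $L^2(\la)$-contraction, and then feed in the harmonic function $h$ to force $R(W)=1$.

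The endgame differs slightly. The paper invokes the standard contraction lemma $Rh=h \Longleftrightarrow R^*h=h$, reads off the explicit identity $h = W(h\circ\sigma)$ from the formula $R^*f = W(f\circ\sigma)$, and then applies $R$ to obtain $h = hR(W)$ via the pull-out property. You instead stay at the level of norms: from $\langle Rh,h\rangle = \langle h,R^*h\rangle$ and Cauchy--Schwarz you get $\|R^*h\|_2 = \|h\|_2$, and combine this with $\|R^*h\|_2^2 = \int R(W)h^2\,d\la$ to conclude $\int (1-R(W))h^2\,d\la = 0$. Your route is marginally more direct and avoids the auxiliary fixed-point equation; the paper's route has the bonus of exhibiting $W$ as a $\sigma$-coboundary (equation \eqref{eq W coboundary}), which is of independent interest. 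Both arguments share the same tacit assumption you correctly flag: the conclusion $R(W)=1$ a.e.\ really only follows on $\{h>0\}$, so strict positivity of $h$ is being used.
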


\begin{proof} We use Corollary \ref{cor formulas on R and adjoint of R}
to prove the following relation
$$
\|RR^*\|^2_{L^2(\la)} =\|R(W)\|^2_{L^\infty(\la)}
$$
where $||T||_{L^2(\la)}$ denotes the operator norm of $T$ when 
$T : L^2(\la) \to L^2(\la)$ is a bounded operator. 
It follows then that  $\|R\|_{L^2(\la)} = \|R^*\|_{L^2(\la)} \leq 1$,
 i.e. $R$ and $R^*$ are contractions in $L^2(\la)$. We notice that, 
 in this case,
$$
Rh = h \ \  \Longleftrightarrow \ \ R^* h =h.
$$
(For this, one can show that $\|R^* h - h\|_{L^2(\la)} \leq 0 
\ \Longleftrightarrow \ R^*h =h$).
Therefore, we  use Theorem  \ref{thm adjoint of R} to deduce that
\be\label{eq W coboundary}
Rh = h \ \ \Longrightarrow \ \ h = W (h\circ \sigma),
\ee
 that is $W$ is a $\sigma$-coboundary. When we apply $R$ to the 
 right hand side of (\ref{eq W coboundary}), then we obtain $h = R(W) 
 h$ a.e., hence $R(W) =1$ a.e. 
\end{proof}

\begin{remark}
We observe that, due to the Schwarz inequality, the following 
relation is true.
$$
|R(f)| \leq \sqrt{R(|f|^2)}\sqrt{(R(\mathbf 1))}.
$$
Here we assume that $R$ is integrable and $R(f) \in L^2(\lambda)$. In 
particular, this holds for simple functions.

More generally, we have that, for any $k \in \N$,
$$
|R(f)| \leq \R(|f|^{2^k})^{2^{-k}}R(\mathbf 1)^{\sum_{i=1}^k 2^{-i}}.
$$
\end{remark}

\subsection{More relations between $R$ and $\sigma$}\label{subsect 
relations between R and sigma}
 Let $(X, \B,\lambda)$ be a measure space with a
 surjective  endomorphism $\sigma$.We recall  our assumption about
  the  endomorphism $\sigma$:  it is forward and backward 
  quasi-invariant, i.e., $\lambda(A) =0$ if and only if $
  \lambda(\sigma^{-1}(A)) =0$ if and only if $\lambda(\sigma(A)) 
  =0$. Let $(R, \sigma)$ be a transfer operator such that $\lambda 
  \circ R \ll \lambda$.
We will focus here on the study of relations between  the transfer 
operator $(R, \sigma)$ and the endomorphism $\sigma$.

 For $R$ and $\sigma$,  we recall the definitions of  the  Radon-
 Nikodym derivatives  (see Remark \ref{rem RN derivatives for sigma})
$$
W = \frac{d\lambda R}{d\lambda} \ \ \ \mbox{and} \ \ \ 
\omega_\lambda = \frac{d\lambda\circ \sigma}{d\lambda}.
$$

\begin{proposition}\label{prop two derivatives}
In the above notation, we have
$$
R(\omega_{\lambda R}) = W.
$$
\end{proposition}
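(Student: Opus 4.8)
The plan is to prove the identity weakly: fix an arbitrary bounded Borel function $g$ on $X$, evaluate $\int_X g\,R(\omega_{\lambda R})\,d\lambda$, and show it equals $\int_X gW\,d\lambda$; since $g$ is arbitrary this forces $R(\omega_{\lambda R}) = W$ $\lambda$-a.e. The whole computation should reduce to three facts already available in the excerpt: the pull-out property $R((g\cs)\varphi) = g\,R(\varphi)$; the relation $\int_X R(\varphi)\,d\lambda = \int_X \varphi W\,d\lambda$, i.e.\ $W\,d\lambda = d(\lambda R)$ (Lemma \ref{lem Ri as derivative}, eq.\ (\ref{eq R-N der W via integral})); and the defining property of $\omega_{\lambda R}$ as a Radon--Nikodym derivative, namely $\int_X (g\cs)\,\omega_{\lambda R}\,d(\lambda R) = \int_X g\,d(\lambda R)$, which is Remark \ref{rem RN derivatives for sigma} applied with the measure $\lambda R$ in place of $\lambda$.

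The steps, in order: first, use the pull-out property with $f=g$ and $\varphi=\omega_{\lambda R}$ to get $\int_X g\,R(\omega_{\lambda R})\,d\lambda = \int_X R\big((g\cs)\,\omega_{\lambda R}\big)\,d\lambda$. Second, apply the integration formula for $R$ against $\lambda$ to rewrite the right-hand side as $\int_X (g\cs)\,\omega_{\lambda R}\,W\,d\lambda = \int_X (g\cs)\,\omega_{\lambda R}\,d(\lambda R)$. Third, use the Radon--Nikodym property of $\omega_{\lambda R}$ (Remark \ref{rem RN derivatives for sigma} for $\lambda R$) to see this equals $\int_X g\,d(\lambda R) = \int_X gW\,d\lambda$. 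Chaining these gives $\int_X g\,R(\omega_{\lambda R})\,d\lambda = \int_X gW\,d\lambda$ for all bounded Borel $g$, hence the claim. To keep the manipulations with $R$ fully rigorous I would first carry them out for simple functions $g \in \mathcal S(X)$ --- where each application of $R$ and each integral is controlled by integrability of $R(\mathbf 1)$ via Lemma \ref{lem R(s) is integrable} --- and then pass to general bounded $g$ by monotone convergence and order continuity of $R$, exactly as in the proof of Theorem \ref{thm adjoint of R}.

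The main obstacle is not analytic but a matter of well-definedness: one must check that $\omega_{\lambda R}$ actually exists, i.e.\ that $\sigma$ is forward quasi-invariant with respect to $\lambda R$ and not merely with respect to $\lambda$, and that $\omega_{\lambda R}$ lies in a domain on which $R$ acts so that $R(\omega_{\lambda R}) \in L^1(\lambda)$ makes sense. Under the standing assumptions of this subsection ($\sigma$ backward and forward quasi-invariant with respect to $\lambda$, and $\lambda R \ll \lambda$) this follows from Remark \ref{rem properties 4}(4) together with $\lambda R \ll \lambda$; I would record this explicitly before running the computation. A shorter route is available through the adjoint: $\langle R(\omega_{\lambda R}), g\rangle_{L^2(\lambda)} = \langle \omega_{\lambda R},\, R^* g\rangle = \int_X \omega_{\lambda R}\,W\,(g\cs)\,d\lambda$ by Theorem \ref{thm adjoint of R}, after which the same three-line reduction finishes the proof; but this version needs the extra hypothesis $R(\mathbf 1) \in L^2(\lambda)$, so the direct $L^1$ argument above is the one I would present.
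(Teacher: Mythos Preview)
Your proposal is correct and follows essentially the same argument as the paper: both proofs test against an arbitrary function, and both reduce the identity to the chain $\int g\,R(\omega_{\lambda R})\,d\lambda = \int R((g\cs)\omega_{\lambda R})\,d\lambda = \int (g\cs)\omega_{\lambda R}\,d(\lambda R) = \int g\,d(\lambda R) = \int gW\,d\lambda$, using the pull-out property, the definition of $\lambda R$, and the defining relation of $\omega_{\lambda R}$. The paper traverses this chain in the reverse order (starting from the defining relation of $\omega_{\lambda R}$ and ending with the two integrals against $d\lambda$), but the content is identical.
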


\begin{proof} It follows from the definition of the Radon-Nikodym 
derivative $\omega_\lambda$ that, for any $f \in L^1(\lambda)$, one 
has
$$
\int_X f\; d\lambda = \int_X (f\circ\sigma) \omega_\lambda\; d
\lambda
$$
where $\omega_\lambda$ is a uniquely determined function which is 
measurable with respect to $\sigma^{-1}(\B)$. We apply this equality 
to the measure $\lambda R$ and obtain the following sequence of 
equalities:
$$
\int_X (f\circ\sigma) \omega_{\lambda R}\; d(\lambda R) = 
\int_X f \; d(\lambda R)
$$
$$
 \Updownarrow
$$
$$
\int_X R[(f\circ\sigma) \omega_{\lambda R}]\; d\lambda = \int_X 
R(f) \; d\lambda
$$
$$
  \Updownarrow
$$
$$
 \int_X f R(\omega_{\lambda R})\; d\lambda = \int_X f W\; d\lambda.
$$
Since $f $ is an arbitrary function from $L^1(\lambda R)$, we 
obtain the desired result.
\end{proof}

We recall that a measure $\lambda\in \mathcal L(R)$ is  called 
\emph{invariant} with respect to $R$ if $\lambda R = \lambda$, 
i.e., $W(x) = 1$ for $\lambda$-a.e. $x$.

Let $\varphi(x)$ be a Borel non-negative function. Given $\lambda \in 
\mathcal L(R)$, we define the  measure $\mu$:
\be\label{eq defining mu}
d\mu(x) = \varphi(x) d\lambda(x)
\ee
Then, for any measurable $f$,
$$
\int_X f\; d\mu = \int_X f\varphi \; d\lambda.
$$

\begin{theorem}\label{thm mu R-invariance}
Given a transfer operator $(R, \sigma)$, suppose that there exists a  
Borel measure  $\lambda$ on $(X, \B)$ such that $R(\mathbf 1) \in 
L^1(\la)$ and $\lambda  R = \lambda$. Then a Borel measure $\mu$ 
defined by its $\lambda$-density $\varphi(x)$ as in (\ref{eq defining 
mu}) is $R$-invariant if and only if $\varphi\circ \sigma = \varphi$.
\end{theorem}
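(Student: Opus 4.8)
The plan is to reduce the whole statement to the pull-out property \eqref{eq char property of r via sigma 1} together with the hypothesis $\lambda R=\lambda$. By the definition of the action of $R$ on measures, $\mu R=\mu$ means $\int_X R(f)\,d\mu=\int_X f\,d\mu$; since $d\mu=\varphi\,d\lambda$ this reads
\[
\int_X \varphi\, R(f)\,d\lambda=\int_X \varphi f\,d\lambda
\]
for $f$ in a suitable class. First I would observe that it suffices to test this identity on $f=\chi_A$, $A\in\B$, and extend by linearity and monotone convergence to all nonnegative Borel $f$; the integrability needed on the left is supplied by $R(\mathbf 1)\in L^1(\lambda)$ via Lemma \ref{lem simple f-n p-integrable} and Proposition \ref{prop measure lambda R}, and the action $\la\mapsto\la R$ is available because $\lambda R=\lambda\ll\lambda$ places $\lambda\in\mc L(R)$.

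The algebraic heart is a one-line computation. Applying the pull-out property with outer function $\varphi$ and inner argument $f$ gives $\varphi\,R(f)=R\big((\varphi\circ\sigma)f\big)$. Hence, using $\lambda R=\lambda$,
\[
\int_X \varphi\,R(f)\,d\lambda=\int_X R\big((\varphi\circ\sigma)f\big)\,d\lambda=\int_X (\varphi\circ\sigma)\,f\,d\lambda .
\]
Therefore $\mu R=\mu$ holds if and only if $\int_X(\varphi\circ\sigma)f\,d\lambda=\int_X \varphi f\,d\lambda$ for all such $f$, i.e. the Borel measures $(\varphi\circ\sigma)\,d\lambda$ and $\varphi\,d\lambda$ coincide, which is exactly $\varphi\circ\sigma=\varphi$ $\lambda$-a.e. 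Both implications fall out of this single chain: if $\varphi\circ\sigma=\varphi$ then the right-hand side equals $\int_X\varphi f\,d\lambda$ and $\mu$ is $R$-invariant; conversely $R$-invariance forces the two integrals to agree for all $f$, hence the two densities agree a.e.

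The only genuine work is measure-theoretic hygiene: checking that $\varphi R(f)$ and $(\varphi\circ\sigma)f$ are $\lambda$-integrable, or, in the $\sigma$-finite case, handling them by truncating $\varphi$ to $\varphi\wedge n$ and approximating $f$ by simple functions before passing to the limit; confirming that the pull-out property is legitimate in the $L^1(\lambda)$-realization of $R$ (guaranteed since $\lambda\in\mc L(R)$ and $\sigma$ is non-singular by the standing assumption); and confirming that bounded Borel test functions separate $\varphi\,d\lambda$ and $(\varphi\circ\sigma)\,d\lambda$, which is routine on a standard Borel space. I expect this bookkeeping, rather than any conceptual difficulty, to be the main point requiring care; the conceptual content is entirely contained in the identity $\varphi R(f)=R((\varphi\circ\sigma)f)$ combined with $\lambda R=\lambda$.
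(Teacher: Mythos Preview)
Your proposal is correct and follows essentially the same route as the paper: both hinge on the pull-out identity $\varphi\,R(f)=R((\varphi\circ\sigma)f)$ followed by the hypothesis $\lambda R=\lambda$ to reduce $\mu R=\mu$ to the equality of the densities $\varphi$ and $\varphi\circ\sigma$ against all test functions. The paper presents this as a single chain of integral equalities without the measure-theoretic bookkeeping you mention, but the conceptual core is identical.
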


\begin{proof}
The proof follows from the following chain of equalities. Let $g$ be a 
measurable function, then
\begin{eqnarray*}
  \int_X g\; d(\mu R)  &=&   \int_X R(g)\; d\mu\\
  \\
   &=&  \int_X R(g) \varphi\; d\lambda\\
   \\
   &=& \int_X R[ (\varphi\circ \sigma) g]\; d\lambda \\
   \\
   &=& \int_X (\varphi\circ \sigma) g\; d(\lambda  R)\\
   \\
   & = &  \int_X (\varphi\circ \sigma) g\; d\lambda\\
\end{eqnarray*}
Hence, if $\mu\circ R = \mu$, we get from the above relations that
$$
\int_X \varphi g\; d\lambda = \int_X (\varphi\circ \sigma) g\; d
\lambda,
$$
and $\varphi\circ \sigma = \varphi$. Conversely, if  $\varphi\circ 
\sigma = \varphi$ holds, then
$$
 \int_X g\; d(\mu R) =  \int_X g\; d\mu,
$$
that is $\mu R = \mu$.
\end{proof}

We can easily deduce from Theorem \ref{thm mu R-invariance} (see 
the next statement) that 
if $\sigma$ is ergodic on  $(X, \B, \la)$ , then any two  $R$-invariant
 measures are proportional. 

\begin{corollary}\label{cor ergodic sigma}
Let $(R, \sigma)$ be a transfer operator on $(X, \B, \lambda)$ such 
that $\lambda R = \lambda$. Suppose that $\sigma$ is an ergodic 
endomorphism with respect to the measure $\la$.
Then a Borel measure $\mu\ll \lambda$ is $R$-invariant if and only if 
$\mu = c\lambda$ for some $c \in \R_+$.
\end{corollary}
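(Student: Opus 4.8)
The plan is to reduce the statement directly to Theorem \ref{thm mu R-invariance} together with the ergodicity of $\sigma$. First, since $\mu \ll \lambda$, the Radon--Nikodym theorem furnishes a non-negative measurable function $\varphi$ with $d\mu(x) = \varphi(x)\, d\lambda(x)$; this is exactly the setup of (\ref{eq defining mu}). Because $\lambda R = \lambda$ by hypothesis, Theorem \ref{thm mu R-invariance} applies verbatim and tells us that $\mu$ is $R$-invariant if and only if $\varphi\cs = \varphi$ ($\lambda$-a.e.). So the entire problem reduces to showing that a non-negative $\sigma$-invariant function on $(X, \B, \lambda)$ is $\lambda$-a.e. constant, using only that $\sigma$ is ergodic in the sense of Definition \ref{def ergodic and exact}(ii).

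Next I would run the standard ``level set'' argument to pass from invariant functions to invariant sets. Fix $t \in \R$ and set $A_t = \{x \in X : \varphi(x) > t\}$. From $\varphi\cs = \varphi$ $\lambda$-a.e. one gets $\sigma^{-1}(A_t) = A_t \mod 0$, so after modifying $A_t$ on a null set it becomes an honestly $\sigma$-invariant set, and ergodicity forces either $\lambda(A_t) = 0$ or $\lambda(X\setminus A_t) = 0$. Since $\varphi$ is finite $\lambda$-a.e., $A_t$ cannot be co-null for all $t$, so $c := \inf\{t \in \R : \lambda(A_t) = 0\}$ is a finite real number; for $t < c$ the set $A_t$ is not null, hence $X\setminus A_t$ is null, i.e. $\varphi > t$ a.e., while for $t > c$ we have $\varphi \le t$ a.e. Letting $t \uparrow c$ and $t \downarrow c$ gives $\varphi = c$ $\lambda$-a.e., and $c \ge 0$ because $\varphi \ge 0$. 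Thus $\mu = c\lambda$ with $c \in \R_+$.

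The converse is immediate: for any $c \in \R_+$, linearity of the action $\lambda \mapsto \lambda R$ and the hypothesis $\lambda R = \lambda$ give $(c\lambda)R = c(\lambda R) = c\lambda$, so $c\lambda$ is $R$-invariant. I do not anticipate any serious obstacle; the only point requiring care is the $\mod 0$ bookkeeping in the level-set step --- upgrading ``$\sigma^{-1}(A_t) = A_t$ a.e.'' to a genuinely invariant set to which Definition \ref{def ergodic and exact}(ii) applies --- and, in the sigma-finite case, confirming that the constant $c$ produced by the monotone family $\{A_t\}$ is finite, which follows from $\varphi$ being $\lambda$-a.e. finite together with $\lambda$ not being the zero measure.
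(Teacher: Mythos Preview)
Your proposal is correct and matches the paper's intended argument: the paper states this corollary without proof, remarking only that it is ``easily deduced'' from Theorem \ref{thm mu R-invariance}, and your reduction via that theorem followed by the standard level-set argument for ergodicity is precisely the deduction being alluded to. The care you take with the $\mod 0$ bookkeeping and the finiteness of $c$ is appropriate and not in tension with anything in the paper.
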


The following result clarifies the relationship between harmonic 
functions for a transfer operator $R$ and $\sigma$-invariant measures 
$\mu$. 

\begin{proposition}\label{prop harm and invarianca implies sigma 
invarianca}
Let $(R, \sigma)$ be a transfer operator on $(X, \B, \lambda)$ such 
that $\lambda R = \lambda$. Let $h$ be a non-negative Borel 
function. Then $h$ is $R$-harmonic, $R h = h$, if and only if the 
measure $d\mu(x) := h(x)d\lambda(x)$ is $\sigma$-invariant, i.e., $
\mu \circ \sigma^{-1} = \mu$.
\end{proposition}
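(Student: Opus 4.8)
The plan is to show that both conditions in the proposition are equivalent to one and the same family of integral identities, the bridge being the pull-out property (\ref{eq char property of r via sigma 1}) together with the hypothesis $\la R = \la$.

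First I would record the basic identity that $\la R = \la$ produces. By the definition of the action of $R$ on measures (Proposition \ref{prop measure lambda R}) and approximation of a non-negative Borel function by an increasing sequence of simple functions, one has $\int_X R(\varphi)\, d\la = \int_X \varphi\, d\la$ for every non-negative Borel $\varphi$. Applying this with $\varphi = (f\circ\sigma)\,g$ and then using $R((f\circ\sigma)g) = fR(g)$ gives
\[
\int_X f\, R(g)\, d\la \;=\; \int_X (f\circ\sigma)\, g\, d\la ,
\]
valid for all non-negative Borel $f,g$ (and for $f,g$ of arbitrary sign whenever the integrals are finite, e.g. $f$ bounded and $g\in L^1(\la)$); call this identity $(\star)$.

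Next I would rewrite the $\sigma$-invariance of $\mu$, where $d\mu = h\, d\la$. For $A\in\B$, $(\mu\circ\sigma^{-1})(A) = \mu(\sigma^{-1}(A)) = \int_X (\chi_A\circ\sigma)\, h\, d\la$, whereas $\mu(A) = \int_X \chi_A\, h\, d\la$. Hence $\mu\circ\sigma^{-1} = \mu$ iff $\int_X (\chi_A\circ\sigma)\, h\, d\la = \int_X \chi_A\, h\, d\la$ for every $A\in\B$. On the other hand, $(\star)$ with $g=h$ reads $\int_X \chi_A\, R(h)\, d\la = \int_X (\chi_A\circ\sigma)\, h\, d\la$. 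Combining these, $\mu$ is $\sigma$-invariant iff $\int_A R(h)\, d\la = \int_A h\, d\la$ for every $A\in\B$, and since $R(h)$ and $h$ are non-negative Borel functions whose integrals agree on every Borel set, this is equivalent to $R(h)=h$ $\la$-a.e., i.e. to $R$-harmonicity of $h$. Reading the chain in the two directions yields both implications of the proposition.

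The computations are routine; the only point needing care is the \textbf{domain/integrability issue}, since $h$ need not be integrable and $R$ is in general only densely defined on $L^p(\la)$ (Corollary \ref{cor R as an operator in Lp}). I would handle this by phrasing everything at the level of the cone of non-negative Borel functions, on which $R$ and all the integrals above are unambiguously defined and where the monotone convergence theorem legitimizes passing from simple functions to $\varphi$, then to $(f\circ\sigma)g$, and finally to $g=h$ tested against $f=\chi_A$; the identity $\int_X R(h)\, d\la = \int_X h\, d\la$ coming from $\la R = \la$ shows in particular that $R(h)\in L^1(\la)$ as soon as $h\in L^1(\la)$, so no genuine obstruction arises.
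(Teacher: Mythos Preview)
Your proof is correct and follows essentially the same route as the paper: both arguments use the pull-out property together with $\la R=\la$ to obtain $\int_X f\,R(h)\,d\la=\int_X (f\circ\sigma)\,h\,d\la$, and then conclude by testing against arbitrary $f$ (characteristic functions in your case, general Borel $g$ in the paper). Your explicit isolation of the identity $(\star)$ and your remarks on working in the cone of non-negative functions to sidestep integrability issues are a slight elaboration on what the paper leaves implicit, but the underlying argument is the same.
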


\begin{proof}
The proof follows from the following argument. Let $g$ be an arbitrary 
Borel function. Then we deduce that the relation $Rh =h$ implies that 
$\mu \circ \sigma^{-1} = \mu$:
\begin{eqnarray*}
\int_X g\; d(\mu\circ \sigma^{-1})   &=& \int_X 
g\circ\sigma\; d\mu\\
\\
 \nonumber
   &=& \int_X (g\circ\sigma) h\; d\lambda\\
   \\
 \nonumber
   &=& \int_X R[(g\circ\sigma) h]\; d\lambda, \qquad (\mbox{recall\ 
   that\ } \lambda R = \lambda)\\
   \\
 \nonumber
   &=& \int_X gR(h) \; d\lambda \\
   \\
 \nonumber
   &=& \int_X g\; d\mu.
\end{eqnarray*}

Conversely, if we assume that $\mu$ is $\sigma$-invariant, then we 
can show, in a similar way, that
$$
\int_X gh\; d\lambda = \int_X gR(h)\; d\lambda
$$
for arbitrary function $g$. Hence, $h$ is harmonic for $R$.
\end{proof}

We recall that if $(R, \sigma)$ is a transfer operator and $k$ is a 
positive Borel function, then one can define a new transfer operator
$(R_k, \sigma)$ where 
$R_k(f) = R(fk)k^{-1}$ (in fact, $k$ can be non-negative but this 
generalization is inessential).  Furthermore,  this operator is 
normalized when $k$ is $R$-harmonic. More details are in Subsection
\ref{subsect harmonic and coboundaries}. 

\begin{lemma}\label{lem measure for R_h} Let $(R, \sigma)$ be  a 
transfer operator and let  $k$ be  a positive  function. Suppose that
$\la \in \mc L(R)$ and denote  by $W$ the corresponding Radon-
Nikodym derivative, $Wd\la 
= d \la R$. Then the measure $\la_k$ such that $d\la_k = k d\la$ is in 
$\mc L(R_k)$ and $W_k = W$ where $W_k d\la_k = d (\la_k R_k)$.
 In particular, if $\la R = \la$, then $\la_k =  \la_k R_k$.
\end{lemma}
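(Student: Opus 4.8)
The plan is to reduce the statement to a single direct computation built from the definition of the Doob-type transform $R_k$ and the defining property of $W$. First I would recall from Lemma \ref{lem reduce to R1=1}(2) that $(R_k,\sigma)$ is a transfer operator, with $R_k(f)=R(fk)k^{-1}$; since $k$ is assumed strictly positive this is well defined, and since multiplication by the fixed functions $k$ and $k^{-1}$ is order continuous, $R_k$ inherits order continuity from $R$, so the action $\la_k\mapsto \la_k R_k$ makes sense. I would also observe at the outset that $\la_k\sim\la$ (because $k>0$ $\la$-a.e.), so $\la_k$ inherits from $\la$ the property of being backward and forward quasi-invariant with respect to $\sigma$; this takes care of the quasi-invariance requirement in the definition of $\mc L(R_k)$.

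The core of the proof is the chain of equalities, valid for every non-negative measurable $f$ (it is enough to verify it on simple functions and then pass to the limit by monotone convergence, exactly as in the proof of Proposition \ref{prop measure lambda R}):
\begin{eqnarray*}
(\la_k R_k)(f) &=& \int_X R_k(f)\; d\la_k \;=\; \int_X \frac{R(fk)}{k}\,k\; d\la \\
&=& \int_X R(fk)\; d\la \;=\; (\la R)(fk) \\
&=& \int_X (fk)\,W\; d\la \;=\; \int_X f\,W\; d\la_k .
\end{eqnarray*}
Taking $f=\chi_A$ gives $(\la_k R_k)(A)=\int_A W\,d\la_k$ for every $A\in\B$, so $\la_k R_k\ll\la_k$ and $W_k:=d(\la_k R_k)/d\la_k = W$ ($\la_k$-a.e., hence $\la$-a.e.). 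This already yields $\la_k\in\mc L(R_k)$ in the order-continuous sense; in the integrable setting one additionally needs $R_k(\mathbf 1)\in L^1(\la_k)$, and the same computation with $f=\mathbf 1$ shows $(\la_k R_k)(X)=\int_X Wk\,d\la$, so this reduces to $Wk\in L^1(\la)$, which together with the quasi-invariance noted above completes the verification in that case as well.

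Finally, the ``in particular'' clause is immediate: if $\la R=\la$ then $W=1$ $\la$-a.e., hence $W_k=W=1$ $\la_k$-a.e. (using $\la_k\sim\la$), and therefore $d(\la_k R_k)=W_k\,d\la_k=d\la_k$, i.e. $\la_k R_k=\la_k$. I do not expect a genuine obstacle here; the only point needing a little care is the passage from simple functions to general $f$ in the displayed chain and the attendant domain issues for $R_k$, and these are handled precisely as in the proofs of Proposition \ref{prop measure lambda R} and Theorem \ref{thm  lambda R abs cntn wrt lambda}, using positivity and order continuity of $R$.
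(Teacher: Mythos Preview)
Your proposal is correct and follows essentially the same approach as the paper: both proofs hinge on the single chain of equalities $\int_X R_k(f)\,d\la_k = \int_X R(fk)k^{-1}k\,d\la = \int_X fkW\,d\la = \int_X fW\,d\la_k$, from which $W_k=W$ follows immediately. The paper's proof is terser, omitting the side remarks on order continuity, quasi-invariance, and the explicit treatment of the ``in particular'' clause that you include, but the core computation is identical.
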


\begin{proof} For any integrable function $f$, we get
\begin{eqnarray*}
  \int_X f \; d(\la_k  R_k) &=& \int_X R_k(f) \; d\la_k\\
  \\
   &=& \int_X R(fk) k^{-1} k \; d\la \\
   \\
   &=& \int_X fk W  \; d\la \\
   \\
   &=&  \int_X f W  \; d\la_k.
\end{eqnarray*}
This proves that the Radon-Nikodym derivative $W_k$ of measures $
\la_k R_k$ and $\la_k$  is $W$ for any positive function $k$ and any 
measure $\la\in \mc L(R)$.

\end{proof}

\begin{lemma}\label{lem harmonic omplies isometry}
Let $(R,\sigma)$ be a transfer operator considered on $L^2(\lambda)
$ where $\la \in \mathcal L(R)$. Let $W$ denote the Radon-Nikodym 
derivative $\dfrac{d(\la R)}{d\la}$. Suppose that  $R$ possesses  a  
harmonic function $h$, and we set $d\la_h = hd\la$.  Then the 
operator
$$
V : f \mapsto W^{1/2} (f\circ \sigma)
$$
is an isometry in $L^2(\la_h)$.
\end{lemma}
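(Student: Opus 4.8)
The plan is to verify directly that $V$ preserves the $L^2(\la_h)$-inner product on a dense set, then extend by density. First I would recall that $d\la_h = h\, d\la$, so for $f, g \in L^2(\la_h)$ we have $\langle V f, V g \rangle_{L^2(\la_h)} = \int_X W (f\circ\sigma)(g\circ\sigma) h \, d\la$, where I write $W^{1/2}\cdot W^{1/2} = W$ since $W \geq 0$. The idea is to massage the integrand $W (fg)(\sigma(\cdot))\, h$ into something that, after integrating against $\la$, collapses via the pull-out property and the harmonicity of $h$ back to $\int_X fg \, h\, d\la = \langle f, g\rangle_{L^2(\la_h)}$.

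The key computational steps I would carry out, in order, are: (i) rewrite $\int_X W (fg\circ\sigma) h \, d\la$ using the fact that $W d\la = d(\la R)$, giving $\int_X (fg)\circ\sigma \cdot h \, d(\la R)$; wait — this needs care, because $h$ is not necessarily $\sigma^{-1}(\B)$-measurable, so I cannot directly pull $h$ out. Instead the cleaner route is (i') $\int_X W \big((fg)\circ\sigma\big) h \, d\la = \int_X \big((fg)\circ\sigma\big)\, h W \, d\la$, and then recognize $hW$ is just a function, so this equals $\int_X \big((fg)\circ\sigma\big) h \, d(\la R)$. Now I want to express this as the integral of $R$ of something. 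Using the definition of $W$ as the Radon--Nikodym derivative (equivalently the relation $\int_X R(\phi)\, d\la = \int_X \phi W\, d\la$ from \eqref{eq R-N der W via integral}), I get $\int_X \big((fg)\circ\sigma\big) h \, d(\la R) = \int_X \big((fg)\circ\sigma\big) h W \, d\la$. Then since $hW = h W$ is a genuine function (not assumed $\sigma^{-1}(\B)$-measurable), I instead proceed as follows: apply the pull-out property to write $R\big(((fg)\circ\sigma)\, h\big) = fg \cdot R(h) = fg\cdot h$, using $Rh = h$. Hence $\int_X R\big(((fg)\circ\sigma) h\big) d\la = \int_X fg\, h\, d\la$. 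The missing link is the identity $\int_X R(\psi)\, d\la = \int_X \psi W\, d\la$ applied with $\psi = ((fg)\circ\sigma)\, h$, which gives $\int_X ((fg)\circ\sigma)\, h W\, d\la = \int_X fg\, h\, d\la$. Chaining these: $\langle Vf, Vg\rangle_{L^2(\la_h)} = \int_X W((fg)\circ\sigma) h\, d\la = \int_X ((fg)\circ\sigma) h W\, d\la = \int_X fg\, h\, d\la = \langle f, g\rangle_{L^2(\la_h)}$.

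I would do the above first for $f, g$ simple functions in $\mathcal S^2(\la)$ (or simple functions bounded so that all integrals are finite and Lemma \ref{lem R(s) is integrable} and Theorem \ref{thm adjoint of R} guarantee integrability), where every manipulation is legitimate, and then invoke density of simple functions in $L^2(\la_h)$ together with the isometry property just established on that dense set to conclude $V$ extends to an isometry of all of $L^2(\la_h)$. One should also note $V$ is well-defined: if $f = 0$ in $L^2(\la_h)$, i.e. $f = 0$ $\la_h$-a.e., then since $h > 0$ on its support the argument shows $\|Vf\|_{L^2(\la_h)} = \|f\|_{L^2(\la_h)} = 0$, so the extension is consistent.

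The main obstacle I anticipate is bookkeeping around finiteness and the fact that $h$ need not be $\sigma^{-1}(\B)$-measurable, which is why one must route the computation through the pull-out property $R(((fg)\circ\sigma)h) = fg\cdot R(h) = fg\cdot h$ rather than trying to factor $h$ out of an integral against $\la R$ directly. A secondary technical point is justifying the interchange $W^{1/2}\cdot W^{1/2} = W$ and the applicability of \eqref{eq R-N der W via integral} to the (possibly unbounded) function $((fg)\circ\sigma)h$; restricting to simple $f,g$ and using monotone approximation handles this, and the hypothesis $\la \in \mathcal L(R)$ (so $W \in L^1(\la)$ and $R(\mathbf 1) \in L^1(\la)$) together with $Rh = h$ keeps everything integrable.
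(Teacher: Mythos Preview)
Your proposal is correct and follows essentially the same route as the paper: the paper computes $\|Vf\|^2_{L^2(\la_h)} = \int_X W(|f|^2\circ\sigma)h\,d\la = \int_X (|f|^2\circ\sigma)h\,d(\la R) = \int_X R\big[(|f|^2\circ\sigma)h\big]\,d\la = \int_X |f|^2 R(h)\,d\la = \int_X |f|^2 h\,d\la$, which is exactly your chain of equalities (with $fg$ replaced by $|f|^2$). Your additional care about simple functions, density, and well-definedness is more than the paper provides, but the core argument is identical.
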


\begin{proof} We verify by direct calculations that  
$\|Vf\|_{L^2(\la_h)} = \|f\|_{L^2(\la_h)}$:
\begin{eqnarray*}
  \int_X |(Vf)|^2 \; d\la_h &=& \int_X W (|f|^2\circ \sigma) h \; 
  d\la \\
  \\
   &=& \int_X  (|f|^2\circ \sigma) h \; d(\la R) \\
   \\
   &=& \int_X  R[(|f|^2\circ \sigma) h] \; d\la \\
   \\
   &=& \int_X  |f|^2 R(h) \; d\la  \\
   \\
   &=&  \int_X  |f|^2 h \; d\la \\
   \\
 & = &\int_X  |f|^2  \; d\la_h.
\end{eqnarray*}

\end{proof}

Readers coming from other but related areas, may find the following papers/
books useful for background, \cite{BahsonSchmelingVaienti2015, 
FocchiMedrano2016, Matsumoto2017, 
Silvestrov2013, ShiozawaWang2017, Szewczak2017}.
 
 \newpage

\section{Actions of transfer operators on the set of Borel probability 
measures}
\label{sect actions on measures}

Let $(R, \sigma)$ be  a transfer operator defined on the space of Borel 
functions $\mc F(X, \B)$. The main theme of this section is the study 
of a dual action of $R$ on
the set of probability measures $M_1 = M_1(X, \B)$. As a matter of
fact, a big part of our results in this section remains true for any 
sigma-finite measure on $(X, \B)$, but we prefer to work with 
probability measures. The justification of this approach is 
contained in the results of Section \ref{sect L1 and L2} where
we showed that the replacement of a measure  by a probability 
measure does not affect the properties of $R$ described in terms
of measures.  Our main 
assumption for this section is that the  transfer operators 
$R$ are normalized, that is $R(\mathbf 1) = \mathbf 1$.
\medskip

\begin{remark}
We recall that there are classes of transfer operators $R$ for which
the natural action $\la \mapsto \la R$ on the set of measures is 
well defined. For instance, this is true for order continuous transfer 
operators, and for transfer operators defined on locally compact 
Hausdorff space, see Subsection \ref{subsect action 
on measures}. 
The advantage of dealing with probability measures and normalized 
operators $R$ is that, in this case, the 
measure $\la R \in M_1$ is defined  for any measure $\la\in M_1$ 
and for any normalized operator $R$, see Proposition \ref{prop 
measure lambda R}. 
\end{remark}
 
It follows from the above remark that, given a normalized transfer 
operator $(R, \sigma)$, we can associate two maps defined on  
$M_1$. They are
$$
t_R : \la \mapsto \la R, \ \qquad \ s_\sigma : \la \mapsto \la \csi1.
$$
We  call the maps $t_R$ and $s_\sigma$  \emph{actions} of $R$ and 
$\sigma$ on $M_1$, respectively.

For a normalized transfer operator $(R, \sigma)$, we can find out how 
these maps interact. 
We will show that the map $t_R$ is  one-to-one but 
not onto  (this fact is proved below in Theorem 
\ref{thm t_R is 1-1 and more}). Thus, we get the following decreasing 
sequence of subsets:
$$
M_1 \supset M_1R \supset M_1R^2 \supset \cdots .
$$
We will use the notation $K_i(R) = M_1(X)R^i$. Our interest is mostly 
focused on the set $K_1(R)$ (or simply $K_1$ when $R$ is fixed) 
because this set is crucial in our study of the action of $R$ 
on measures.

For a transfer operator $(R, \sigma)$, we also define the set of $R$-
invariant measures and the set of $\sigma$-invariant measures, by 
setting
$$
\mbox{Fix}(R) := \{\la \in M_1 : \la R = \la\},
$$
$$
\mbox{Fix}(\sigma) := \{\la \in M_1 : \la \csi1 = \la\}.
$$

We are interested in the following \emph{question.} Under what 
conditions on a transfer operator $(R, \sigma)$ is the set of 
invariant measures  $\mbox{Fix}(R)$ non-empty? A partial answer 
was given in Theorem \ref{thm mu R-invariance}. 

\begin{remark}\label{rem on partition of M_1}
We recall that, for a fixed transfer operator $R$, we dealt with the
 subset $\mc L(R)$ of the set of all measures $M_1$: by definition, 
 $\la \in \mc L(R)$ if and only if $\la R \ll \la$. In particular, $\la R$ 
 can be equivalent to $\la$. Theorem \ref{thm from measure 
 equivalence} asserts that when  the set of all measures $M_1$ is  
 partitioned into the classes of equivalent measures $[\la] := \{\mu : 
 \mu \sim \la\}$, then the map  $t_R$ preserves the partition into
  these classes  $[\la]$. The same holds for the sets 
$[\la]_{\ll} := \{\nu : \nu \ll \la\}$.  These facts are obviously true 
for the action of $s_\sigma$. Thus, if $M_1(\sim)$ is the set of 
classes of equivalent measures, then $t_R$ and $s_\sigma$ induce 
the maps $t_R(\sim)$ and $s_\sigma(\sim)$, defined on 
$M_1(\sim)$.

These facts will be used in  Section \ref{sect Universal 
HS} in the construction of the universal Hilbert space.
\end{remark}

The action $ s_{\sigma}$ of $\sigma$ on a the measure space $M_1$ 
is used to define the following two subsets naturally related to $\sigma$:
$$
\mc Q_+(\sigma) := \{\lambda \in M_1 :  \la\cs \ll \la\},
 $$
$$
  \mc Q_-(\sigma) := \{\lambda \in M_1 :  \la\csi1 \ll \la\}.
$$

We begin with a simple observation about measures for powers of
a transfer operator $R$.

\begin{lemma} Let $R$ be a transfer operator acting on a functional 
space over $(X, \B)$, and $\mc L(R)  = \{\la \in M_1 : \la R \ll \la\}$. 
Then
$$
\mc L(R) \subset \mc L(R^2) \subset \ \cdots \ \mc L(R^n) \subset 
\mc L(R^{n+1}) \subset \ \cdots,
$$
$$
 \mc Q_-(\sigma) \subset  \mc Q_-(\sigma^2) \subset \cdots
 \subset  \mc Q_-(\sigma^n) \subset  \mc Q_-(\sigma^{n+1}) 
 \cdots
$$
\end{lemma}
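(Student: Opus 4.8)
The plan is to reduce both chains to a single one‑step statement and treat them in parallel. First I would record that the action $\la\mapsto\la R$ on $M(X)$ (and likewise $\la\mapsto\la\csi1$) is a \emph{right} action: from $(\mu P)(f)=\int_X P(f)\,d\mu$ and $(PQ)(f)=P(Qf)$ one gets $\mu(PQ)=(\mu P)Q$, hence $\la R^{n+1}=(\la R^n)R$; and since $(\sigma^{n+1})^{-1}=\sigma^{-n}\circ\sigma^{-1}$ as a map of Borel sets, $\la\circ\sigma^{-(n+1)}=(\la\circ\sigma^{-n})\circ\sigma^{-1}$. So it suffices, in each case, to show that one application of the relevant operator carries the measures absolutely continuous with respect to $\la$ into themselves.

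For the first chain, fix $\la\in\mc L(R^n)$ and write $d(\la R^n)=W_n\,d\la$, which is exactly the content of $\la R^n\ll\la$. Let $A\in\B$ with $\la(A)=0$. By Lemma~\ref{lem R(s) is integrable}(1) the function $R(\chi_A)$ vanishes off $\sigma(A)$, and since $\sigma$ is forward quasi-invariant with respect to $\la$ (the standing hypothesis recalled in Subsection~\ref{subsect relations between R and sigma}; without it one restricts the chain to $\mc Q_+(\sigma)$), $\la(\sigma(A))=0$, so $R(\chi_A)=0$ $\la$-a.e. Hence
$$(\la R^{n+1})(A)=\bigl((\la R^n)R\bigr)(A)=\int_X R(\chi_A)\,d(\la R^n)=\int_X R(\chi_A)\,W_n\,d\la=0,$$
so $\la R^{n+1}\ll\la$, i.e. $\la\in\mc L(R^{n+1})$; letting $n$ vary gives $\mc L(R)\subset\mc L(R^2)\subset\cdots$. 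For the inclusions issuing from $\mc L(R)$ one can alternatively invoke Theorem~\ref{thm from measure equivalence}(1): from $\la R\ll\la$ it follows that $\la R^{k+1}=(\la R)R^k\ll\la R^k$ for all $k$, whence $\la R^n\ll\la$ for every $n$.

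The second chain runs along the same lines with $R$ replaced by the composition operator $S_\sigma\colon f\mapsto f\cs$, for which $\la S_\sigma=\la\csi1$. Fix $\la\in\mc Q_-(\sigma^n)$ and write $d(\la\circ\sigma^{-n})=\theta_n\,d\la$. For $A$ with $\la(A)=0$,
$$\la\bigl(\sigma^{-(n+1)}(A)\bigr)=\la\bigl(\sigma^{-n}(\sigma^{-1}(A))\bigr)=(\la\circ\sigma^{-n})(\sigma^{-1}(A))=\int_{\sigma^{-1}(A)}\theta_n\,d\la,$$
which vanishes provided $\la(\sigma^{-1}(A))=0$. This last point is the main obstacle: $\la(\sigma^{-1}(A))=0$ is precisely backward non-singularity of $\sigma$ with respect to $\la$, i.e. $\la\in\mc Q_-(\sigma)$, and it is not a consequence of $\la\in\mc Q_-(\sigma^n)$ alone. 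What is unconditional is $\mc Q_-(\sigma)\subset\mc Q_-(\sigma^n)$ for every $n$, obtained by iterating the implication ``$\la(A)=0\Rightarrow\la(\sigma^{-1}(A))=0$'' $n$ times; the full tower $\mc Q_-(\sigma)\subset\mc Q_-(\sigma^2)\subset\cdots$ then holds under the chapter's standing convention that $\sigma$ is non-singular. In writing this out I would make that dependence explicit, or else run the whole argument inside $\mc Q_-(\sigma)$ from the start.
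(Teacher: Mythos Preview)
Your proof is correct and considerably more careful than the paper's, which consists of a single sentence citing Theorem~\ref{thm from measure equivalence} and Remark~\ref{rem on partition of M_1}. The paper's intended argument is exactly your ``alternative'' one: from the monotonicity $\la_1\ll\la\Rightarrow\la_1 R\ll\la R$ (Theorem~\ref{thm from measure equivalence}(1)) one iterates $\la R\ll\la\Rightarrow\la R^{k+1}\ll\la R^k$ to get $\la R^n\ll\la$ for all $n$, and the same monotonicity for $s_\sigma$ handles the second chain. Your direct computation via the support lemma $\mathrm{supp}\,R(\chi_A)\subset\sigma(A)$ is a valid and slightly more hands-on route to the same conclusion.

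You have also put your finger on something the paper elides: the \emph{intermediate} inclusions $\mc L(R^n)\subset\mc L(R^{n+1})$ and $\mc Q_-(\sigma^n)\subset\mc Q_-(\sigma^{n+1})$ do not follow from monotonicity alone without the $n=1$ case as an additional input. Your explicit invocation of forward (resp.\ backward) quasi-invariance is exactly what is needed, and under the normalization $R(\mathbf 1)=\mathbf 1$ assumed throughout Section~\ref{sect actions on measures} this is the same as $\la\in\mc L(R)$ (by Theorem~\ref{thm L(R) is Q plus (sigma)}). So what both arguments actually establish unconditionally is $\mc L(R)\subset\mc L(R^n)$ and $\mc Q_-(\sigma)\subset\mc Q_-(\sigma^n)$ for every $n$; the full nested chain holds once one works inside $\mc L(R)$ (resp.\ $\mc Q_-(\sigma)$), which is the paper's standing convention. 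Your decision to make that dependence explicit is an improvement over the original.
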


\begin{proof}  This fact follows immediately from Theorem 
\ref{thm from measure equivalence} and the discussion in
 Remark \ref{rem on partition of M_1}.
\end{proof}

\begin{lemma}
Suppose that $\la \in \mc L(R)$ and $\mu \ll \la$. Then
$\mu \in \mc L(R)$.
\end{lemma}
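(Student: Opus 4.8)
The plan is to unwind the definitions and reduce everything to a single application of the formula for the Radon–Nikodym derivative of $\la R$ established in Theorem \ref{thm from measure equivalence}(1). Recall the claim: if $\la \in \mc L(R)$ (so $\la R \ll \la$) and $\mu \ll \la$, then $\mu \in \mc L(R)$, i.e. $\mu R \ll \mu$.

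First I would write $d\mu = \varphi\, d\la$ for a non-negative Borel function $\varphi$, which exists because $\mu \ll \la$. By Theorem \ref{thm from measure equivalence}(1), $\mu R \ll \la R$, and more precisely
$$
\frac{d(\mu R)}{d(\la R)} = \varphi \cs.
$$
So $d(\mu R) = (\varphi\cs)\, d(\la R)$. Since $\la \in \mc L(R)$, write $d(\la R) = W\, d\la$ with $W = \dfrac{d(\la R)}{d\la}$. Combining, $d(\mu R) = (\varphi\cs)\, W\, d\la$. Now I must compare this with $\mu$. The crucial point is that $\mu$ is carried by the set $\{\varphi > 0\}$, so it suffices to check that $\mu R$ is also carried by $\{\varphi > 0\}$ up to $\mu$-null sets — equivalently, that $(\varphi \cs)$ vanishes $\la$-a.e.\ wherever $\varphi$ vanishes, in the sense relevant to the measure $\mu R$. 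But on $\{\varphi = 0\}$ we have $\varphi = 0$, hence $d\mu = 0$ there; what we need is that if $\la(A) $ is $\mu$-null — i.e.\ $\int_A \varphi\, d\la = 0$, i.e.\ $\varphi = 0$ $\la$-a.e.\ on $A$ — then $(\mu R)(A) = \int_A (\varphi\cs) W\, d\la = 0$.

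The key step, and the only one requiring genuine thought, is the following: if $\varphi = 0$ $\la$-a.e.\ on a Borel set $A$, then $\int_A (\varphi\cs) W\, d\la = 0$. Here is where I would use the pull-out property together with $\la R = W d\la$: for any Borel set $A$,
$$
\int_A (\varphi\cs) W \, d\la = \int_X \chi_A\, (\varphi\cs)\, d(\la R) = \int_X R\big(\chi_A\, (\varphi\cs)\big)\, d\la.
$$
Actually the cleanest route avoids this and argues directly at the level of Radon–Nikodym derivatives: since $\mu \ll \la$ and $\la R \ll \la$, both $\mu R \ll \la R \ll \la$, so $\mu R \ll \la$, and the density is
$$
\frac{d(\mu R)}{d\la} = \frac{d(\mu R)}{d(\la R)} \cdot \frac{d(\la R)}{d\la} = (\varphi \cs)\, W.
$$
Then to conclude $\mu R \ll \mu$ I must show this density vanishes $\la$-a.e.\ on $\{\varphi = 0\}$, i.e.\ $(\varphi\cs) W = 0$ $\la$-a.e.\ on $\{\varphi = 0\}$. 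This is exactly the point where the structure of $\sigma$ enters: on $\{\varphi = 0\}$ one does \emph{not} in general have $\varphi \cs = 0$, since $\sigma$ moves points around. So the naive argument fails, and the honest statement is that one needs $\la$ to be quasi-invariant (which is a standing assumption of the paper) so that $\{\varphi \cs = 0\} = \sigma^{-1}(\{\varphi = 0\})$ has the same null sets behavior; more precisely, one uses that $\la(\{\varphi=0\}) $ being the complement of the support of $\mu$ forces $\int_{\{\varphi=0\}} (\varphi\cs)W\,d\la$ to be computed via $R$, and $R$ applied to a function supported on $\sigma^{-1}(\{\varphi=0\})$... Let me instead simply note: $\mu R \ll \la R$ by Theorem \ref{thm from measure equivalence}(1), and $\la R \ll \la$ by hypothesis, hence $\mu R \ll \la$; moreover by the same theorem the density of $\mu R$ w.r.t.\ $\la R$ is $\varphi \cs$, which is nonzero exactly on $\sigma^{-1}(\mathrm{supp}\,\mu)$, and since $\sigma$ is non-singular and $\mu \ll \la$ one deduces $\mu R \ll \mu$.

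The hard part will be precisely this last implication — showing $\mu R \ll \mu$ rather than merely $\mu R \ll \la$. I expect this to follow cleanly from the relation $\dfrac{d(\mu R)}{d(\la R)} = \varphi\cs$ of Theorem \ref{thm from measure equivalence}(1) combined with the non-singularity of $\sigma$ (a standing hypothesis): if $B$ is $\mu$-null then $\varphi = 0$ $\la$-a.e.\ on $B$, so $\varphi \cs = 0$ $\la$-a.e.\ on $\sigma^{-1}(B)$; but one wants $\varphi \cs = 0$ $(\la R)$-a.e.\ on $B$ itself. Using $\la R \ll \la$ and the fact (from the pull-out property, cf.\ Proposition \ref{prop lambda R equiv to lambda} and Lemma \ref{lem R(s) is integrable}) that $\mathrm{supp}\,R(\chi_B) \subset \sigma(B)$, one gets $(\mu R)(B) = \int_X R(\chi_B \cdot \varphi\cs)\,d\la = \int_X R(\chi_B)\varphi\,d\la$ after pulling out, wait — $\varphi$ is not $\sigma$-invariant, so one cannot pull it out. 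The correct computation is
$$
(\mu R)(B) = \int_X \chi_B\, d(\mu R) = \int_X \chi_B\,(\varphi\cs)\,d(\la R) = \int_X R\big((\chi_B\cs)(\varphi\cs)\big)\,d\la \cdot \text{(wrong: $\chi_B$ is not $\chi_B\cs$)}.
$$
So the real argument must go: $(\mu R)(B) = \int_X R(\chi_B)\, d\mu = \int_X R(\chi_B)\,\varphi\, d\la$ directly from the definition $\mu R(B) = \mu(R(\chi_B))$ together with $d\mu = \varphi d\la$; if $\mu(B) = 0$ then $\int_B \varphi\,d\la = 0$ so $\varphi = 0$ $\la$-a.e.\ on $B$, and since $\mathrm{supp}\,R(\chi_B) \subset \sigma(B)$... this still does not immediately give $0$. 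I would therefore present the proof via: $\mu(B)=0 \Rightarrow \la_B := \la|_{X\setminus B}$ satisfies $\mu \ll \la_B$, hence $\mu R \ll \la_B R \ll \la_B$ (the middle step by monotonicity and the outer by Theorem \ref{thm from measure equivalence}(1) applied to $\la_B \ll \la$), and conclude $(\mu R)(B) \le (\la_B R)(B) = \int_X R(\chi_B)\, d\la_B = \int_{X\setminus B} R(\chi_B)\,d\la = 0$ since $\mathrm{supp}\,R(\chi_B)\subset\sigma(B)$ and... I will flag that the cleanest final line uses Lemma \ref{lem R(s) is integrable}(1): $R(\chi_B)$ is supported on $\sigma(B)$, and restricting the ambient measure to $X\setminus B$ kills the contribution when combined with quasi-invariance. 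This is the one genuinely delicate point and deserves a careful half-page; everything else is immediate from Theorem \ref{thm from measure equivalence}.
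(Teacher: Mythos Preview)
Your instinct that ``the one genuinely delicate point'' cannot be closed is correct --- not because your technique is lacking, but because the lemma as stated is \emph{false}. Take $X=[0,1]$, $\sigma(x)=2x\bmod 1$, and the normalized transfer operator $R(f)(x)=\tfrac12\big(f(\tfrac{x}{2})+f(\tfrac{x+1}{2})\big)$. Lebesgue measure $\la$ satisfies $\la R=\la$, so certainly $\la\in\mc L(R)$. Now let $\mu=\la|_{[0,1/2]}$, so $d\mu=\chi_{[0,1/2]}\,d\la$ and $\mu\ll\la$. For $B=(1/2,1]$ we have $\mu(B)=0$, yet $R(\chi_B)(x)=\tfrac12$ for $x\in(0,1/2]$, hence $(\mu R)(B)=\int_0^{1/2}\tfrac12\,dx=\tfrac14>0$. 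Thus $\mu R\not\ll\mu$ and $\mu\notin\mc L(R)$. (Equivalently, via Theorem~\ref{thm L(R) is Q plus (sigma)}: $\mu\cs\not\ll\mu$ since $\mu((1/2,1])=0$ but $\mu(\sigma((1/2,1]))=\mu((0,1])=1/2$.)

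The paper's own proof contains exactly the gap you kept circling. Its chain of equalities passes from $\int_X R(f)\varphi\,d\la=\int_X R\big((\varphi\cs)f\big)\,d\la=\int_X(\varphi\cs)fW\,d\la$ to $\int_A(\varphi\cs)fW\,d\la$ with $A=\{\varphi\neq 0\}$, silently discarding the contribution over $A^c=\{\varphi=0\}$. That discard is legitimate only if $(\varphi\cs)W=0$ $\la$-a.e.\ on $\{\varphi=0\}$, which --- as you correctly sensed --- has no reason to hold: $\sigma$ can map points where $\varphi$ vanishes to points where it does not. The argument (both yours and the paper's) does go through under the stronger hypothesis $\mu\sim\la$, since then $\varphi>0$ a.e.\ and $A^c$ is $\la$-null; this is the content of Lemma~\ref{lem lambda_1 equiv to lambda}. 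But for mere absolute continuity $\mu\ll\la$ the conclusion fails, and no amount of rearranging the Radon--Nikodym bookkeeping will rescue it.
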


\begin{proof}
We need to show that $\mu R \ll \mu$. Since $\la R \ll \la$ and $\mu 
\ll \la$, there exist measurable functions $\varphi $ and $W$ from
 $L^1(\la)$ 
such that
$$
\varphi = \frac{d\mu}{d\la}, \ \ \ \ \ W = \frac{d(\la R)}{d\la}.
$$
Set
$$
Q(x) =
\begin{cases} ((\varphi \cs) W \varphi^{-1})(x), &\mbox{if } x \in A:= 
\{x : \varphi(x) \neq 0\}\\
\\
0, & \mbox{if } x \in A^c:= \{x : \varphi(x) = 0\}
\end{cases}.
$$

Take any measurable function $f$ and compute
\begin{eqnarray*}
\int_X f \; d(\mu R) &=& \int_X R(f)\; d\mu \\
\\
   &=& \int_A R(f) \varphi \; d\la  \\
   \\
   &=& \int_A R(f (\varphi\cs)) \; d\la \\
   \\
   &=&  \int_A f (\varphi\cs) \; d(\la R)\\
   \\
   &=&  \int_A f (\varphi\cs) W\; d\la\\
   \\
 & = & \int_A f (\varphi\cs) \varphi^{-1} W \varphi \; d\la\\
 \\
& = & \int_A f (\varphi\cs)W \varphi^{-1} \; d\mu\\
\\
& = & \int_X f Q \; d\mu.
\end{eqnarray*}
This proves that $\mu R \ll \mu$, and $Q =\dfrac{d(\mu R)}{d\mu}$.

\end{proof}

In the following lemmas we study the relations between the maps 
$t_R, s_\sigma$ and the sets $\mc L(R), \mc Q_-(\sigma), \mc Q_+
(\sigma)$.

\begin{lemma} If $\la \in \mc Q_-(\sigma)$, then $\la \ll \la\cs$.
\end{lemma}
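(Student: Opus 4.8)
The plan is to run a one-line monotonicity argument off the elementary set inclusion $A \subseteq \sigma^{-1}(\sigma(A))$, which holds for \emph{every} subset $A$ of $X$, combined with the defining property of $\mc Q_-(\sigma)$, namely $\la\csi1 \ll \la$.

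First I would fix a Borel set $A$ with $(\la\cs)(A) = 0$. Unwinding the definition of the measure $\la\cs$ (as in the Remark on Radon--Nikodym derivatives, where $\la$ and $\la\cs$ are considered on $\sigma^{-1}(\B)$ and $(\la\cs)(\sigma^{-1}(B)) = \la(B)$ by surjectivity), this reads $\la(\sigma(A)) = 0$; here $\sigma(A)$ is a legitimate measurable set under the standing hypotheses on $\sigma$ (non-singular, and at most countable-to-one or forward quasi-invariant, so that $\sigma$ carries Borel sets to Borel sets $\mod 0$). Then, applying monotonicity of $\la$ to the inclusion $A \subseteq \sigma^{-1}(\sigma(A))$ gives
$$
\la(A) \ \leq\ \la(\sigma^{-1}(\sigma(A))) \ =\ (\la\csi1)(\sigma(A)).
$$

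Finally, since $\la \in \mc Q_-(\sigma)$ means $\la\csi1 \ll \la$, the vanishing $\la(\sigma(A)) = 0$ forces $(\la\csi1)(\sigma(A)) = 0$, and therefore $\la(A) = 0$. As $A$ was an arbitrary Borel set with $(\la\cs)(A) = 0$, this is exactly the assertion $\la \ll \la\cs$.

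The only point that requires any care — and hence the nearest thing to an obstacle — is ensuring that $\sigma(A)$ is a genuine measurable set, so that both $\la\cs$ and the displayed chain of inequalities make sense; this is already guaranteed by the blanket assumptions in force in this section (under which $\la\cs$ is a well-defined measure and $\sigma$ is forward quasi-invariant). Everything else is bookkeeping with the pushforward and the inclusion $A \subseteq \sigma^{-1}(\sigma(A))$, so no further estimates are needed.
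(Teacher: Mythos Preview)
Your argument is correct and is essentially identical to the paper's own proof: both use the inclusion $A \subset \sigma^{-1}(\sigma(A))$ together with $\la\csi1 \ll \la$ to pass from $(\la\cs)(A)=\la(\sigma(A))=0$ to $\la(A)=0$. The only difference is that you spell out the measurability caveat for $\sigma(A)$, which the paper leaves implicit under its standing assumptions.
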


\begin{proof}
For any Borel set, one has $A \subset \sigma^{-1}(\sigma (A))$. 
Therefore, if $(\la\cs)(A) = 0$, then $(\la\csi1)(\sigma (A)) = 0$, 
and then $\la(A) = 0$.
\end{proof}

\begin{lemma}\label{lem support R(chi_A) is sigma(A)}
If $(R, \sigma)$ is a normalized transfer operator, then, for any 
measure $\la$ and Borel set $A$,
$$
\la(\{x \in X : R(\chi_A)(x) > 0\}) = (\la\cs)(A).
$$
\end{lemma}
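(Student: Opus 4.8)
The plan is to establish the two inequalities $\lambda(\{x:R(\chi_A)(x)>0\})\le(\lambda\circ\sigma)(A)$ and $\lambda(\{x:R(\chi_A)(x)>0\})\ge(\lambda\circ\sigma)(A)$ separately, reading $(\lambda\circ\sigma)(A)$ as $\lambda(\sigma(A))$; recall that under the book's standing nonsingularity conventions $\sigma$ is forward quasi-invariant, so $\sigma(A)$ is measurable and $\sigma$ carries $\lambda$-null sets to $\lambda$-null sets. First I would record two free facts: since $R(\mathbf 1)=\mathbf 1$, the operator $R$ is strict; and for a probability measure $\lambda$ we have $R(\mathbf 1)=\mathbf 1\in L^1(\lambda)$, so $\lambda\in I(R)$, whence $\lambda R\sim\lambda$ by Theorem (\ref{thm  lambda R abs cntn wrt lambda})(2) (its hypotheses being exactly strictness of $R$ together with backward and forward quasi-invariance of $\sigma$).

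The upper bound is immediate: Lemma (\ref{lem R(s) is integrable})(1) gives $\mathrm{supp}(R(\chi_A))\subseteq\sigma(A)$, hence $\{x:R(\chi_A)(x)>0\}\subseteq\sigma(A)$, and therefore $\lambda(\{x:R(\chi_A)(x)>0\})\le\lambda(\sigma(A))$.

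For the lower bound, which is the heart of the argument, put $B:=\sigma(A)\setminus\{x:R(\chi_A)(x)>0\}$, so that $R(\chi_A)=0$ on $B$; the goal is $\lambda(B)=0$. Applying the pull-out property with $f=\chi_B$ and $g=\chi_A$ and using $(\chi_B\circ\sigma)\chi_A=\chi_{\sigma^{-1}(B)\cap A}$ together with the definition of $\lambda R$,
$$
0=\int_B R(\chi_A)\,d\lambda=\int_X \chi_B\,R(\chi_A)\,d\lambda=\int_X R\big((\chi_B\circ\sigma)\,\chi_A\big)\,d\lambda=(\lambda R)\big(\sigma^{-1}(B)\cap A\big).
$$
Since $\lambda R\sim\lambda$, this forces $\lambda(\sigma^{-1}(B)\cap A)=0$. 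Finally, any $x\in B\subseteq\sigma(A)$ has a $\sigma$-preimage $y\in A$, and $\sigma(y)=x\in B$ shows $y\in\sigma^{-1}(B)$, so $x\in\sigma(\sigma^{-1}(B)\cap A)$; thus $B\subseteq\sigma(\sigma^{-1}(B)\cap A)$, which is the $\sigma$-image of a $\lambda$-null set and hence $\lambda$-null. Therefore $\lambda(B)=0$, giving $\lambda(\{x:R(\chi_A)(x)>0\})=\lambda(\sigma(A))=(\lambda\circ\sigma)(A)$.

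The step I expect to be the genuine obstacle is the implication $(\lambda R)(\sigma^{-1}(B)\cap A)=0\ \Rightarrow\ \lambda(\sigma^{-1}(B)\cap A)=0$: it is precisely here that strictness of $R$ (delivered by normalization) and quasi-invariance of $\sigma$ are used, so that the phrase ``for any measure $\lambda$'' must be understood within the book's standing nonsingularity conventions; the same conventions underlie the measurability of $\sigma(A)$ and the preservation of $\lambda$-null sets by $\sigma$, which the last step also needs. One could instead quote Lemma (\ref{lem R(s) is integrable})(2) directly, which already asserts $\mathrm{supp}(R(\chi_A))=\sigma(A)$ up to a $\lambda$-null set for strict transfer operators, reducing the present lemma to a single line; I prefer the self-contained computation above because it makes transparent which hypotheses are doing the work.
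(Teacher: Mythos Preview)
Your proof is correct. The paper's own proof is exactly the one-line shortcut you mention in your final paragraph: it simply says the lemma ``follows immediately from Lemma~\ref{lem R(s) is integrable},'' i.e., it invokes the statement that for a strict transfer operator $\mathrm{supp}(R(\chi_A))=\sigma(A)$ and takes $\lambda$-measure of both sides.

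Your self-contained route via $(\lambda R)(\sigma^{-1}(B)\cap A)=0$, the equivalence $\lambda R\sim\lambda$ from Theorem~\ref{thm  lambda R abs cntn wrt lambda}(2), and forward quasi-invariance is genuinely different and, in a sense, more transparent: it shows precisely where backward and forward quasi-invariance enter (the former to get $\lambda\ll\lambda R$, the latter to push the null set $\sigma^{-1}(B)\cap A$ forward to conclude $\lambda(B)=0$). The paper's shortcut hides these hypotheses inside Lemma~\ref{lem R(s) is integrable}(2), whose pointwise argument (the displayed line $\sigma(A)\cap\sigma(X\setminus A)=\emptyset$) is in fact not correct in general when $\sigma$ is many-to-one, so the measure-zero conclusion there really does require the quasi-invariance input you have made explicit. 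In that sense your longer proof is more robust than the paper's citation, even though both reach the same conclusion.
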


This fact follows immediately from Lemma \ref{lem R(s) is integrable}.

\begin{theorem}\label{thm L(R) is Q plus (sigma)}
If $(R, \sigma)$ is a normalized transfer operator, then
$$
\mc L(R) = \mc Q_+(\sigma).
$$
\end{theorem}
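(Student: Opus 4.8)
The plan is to deduce the statement directly from Lemma~\ref{lem support R(chi_A) is sigma(A)}. That lemma is available here because a normalized transfer operator is in particular strict ($R(\mathbf 1) = \mathbf 1 > 0$), and it asserts that for every $\la \in M_1$ and every $A \in \B$ one has $\la(\{x \in X : R(\chi_A)(x) > 0\}) = (\la\cs)(A)$; in effect it identifies, modulo $\la$-null sets, the support of $R(\chi_A)$ with $\sigma(A)$. The only other ingredient I need is the elementary remark that, since $(\la R)(A) = \int_X R(\chi_A)\,d\la$ with $R(\chi_A) \ge 0$, the value $(\la R)(A)$ vanishes if and only if $R(\chi_A) = 0$ $\la$-a.e., i.e. if and only if $\la(\{x : R(\chi_A)(x) > 0\}) = 0$.

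First I would combine these two facts to obtain, for a fixed $\la \in M_1$ and every $A \in \B$, the equivalence
\be\label{eq plan key equiv}
(\la R)(A) = 0 \quad \Longleftrightarrow \quad (\la\cs)(A) = 0.
\ee
Then the theorem follows by unwinding the definition of absolute continuity: $\la \in \mc L(R)$ means $\la(A) = 0 \Rightarrow (\la R)(A) = 0$ for all $A \in \B$, while $\la \in \mc Q_+(\sigma)$ means $\la(A) = 0 \Rightarrow (\la\cs)(A) = 0$ for all $A \in \B$, and by (\ref{eq plan key equiv}) these two implications coincide. Spelling out the two inclusions: if $\la \in \mc L(R)$ and $\la(A) = 0$, then $(\la R)(A) = 0$, hence $(\la\cs)(A) = 0$, so $\la \in \mc Q_+(\sigma)$; conversely, if $\la \in \mc Q_+(\sigma)$ and $\la(A) = 0$, then $(\la\cs)(A) = 0$, hence $(\la R)(A) = 0$, so $\la \in \mc L(R)$.

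I do not expect a genuine obstacle: the whole content lies in Lemma~\ref{lem support R(chi_A) is sigma(A)} (which itself rests on Lemma~\ref{lem R(s) is integrable}, giving that the support of $R(\chi_A)$ is $\sigma(A)$ for a strict transfer operator). The only points to watch are that ``normalized'' indeed suffices to invoke that lemma, and the harmless measure-theoretic step ``a non-negative function with zero integral is zero a.e.''. With these in hand both inclusions are one line, and the argument is completely symmetric in $\la R$ and $\la\cs$.
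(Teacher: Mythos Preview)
Your proof is correct and follows essentially the same approach as the paper: both arguments hinge on Lemma~\ref{lem support R(chi_A) is sigma(A)} to identify the $\la$-support of $R(\chi_A)$ with $\sigma(A)$, and then observe that $(\la R)(A)=\int_X R(\chi_A)\,d\la$ vanishes exactly when that support is $\la$-null. Your presentation is slightly more streamlined in that you isolate the equivalence $(\la R)(A)=0 \Leftrightarrow (\la\cs)(A)=0$ up front, whereas the paper routes the same computation through the formula $(\la R)(A)=\int_{\sigma(A)}R(\chi_A)\,d\la$ established earlier; the content is the same.
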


\begin{proof}
($\ \subset \ $) Suppose that $\la R \ll \la$. We proved in 
Theorem \ref{thm  
lambda R abs cntn wrt lambda} that, for any Borel set $A$,
\be\label{eq lambda R(A) as integral over sigma A}
(\la R)(A) = \int_{\sigma(A)} R(\chi_A)\; d\la.
\ee
If $\la(A) = 0$ implies that $(\la R)(A) = 0$, then, by (\ref{eq lambda 
R(A) as integral over sigma A}) and Lemma \ref{lem support R(chi_A) 
is sigma(A)}, we conclude that $\la(\sigma (A)) = 0$.

($\ \supset \ $)  Conversely, if $\la(A) = 0$ implies that 
$(\la\cs)(A) =0$,  then we 
again use (\ref{eq lambda R(A) as integral over sigma A}) and obtain 
that
$\int_{\sigma(A)} R(\chi_A)\; d\la = 0$, that is $(\la R)(A) = 0$.

\end{proof}

\begin{lemma}\label{lem lambda R equivalent to lambda}
Let $(R,\sigma)$ be  a transfer operator with $R (\mathbf 1) =
\mathbf 1$. Then
$$
\la \in \mc L(R) \bigcap \mc Q_-(\sigma) \ \Longleftrightarrow \ \ 
\la R \sim \la.
$$
\end{lemma}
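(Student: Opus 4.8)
The plan is to prove both implications by unwinding the definitions of the sets $\mc L(R)$, $\mc Q_-(\sigma)$, and the relation $\la R \sim \la$, using the structural results already established. Recall that $\la R \sim \la$ means $\la R \ll \la$ \emph{and} $\la \ll \la R$; the first condition is exactly $\la \in \mc L(R)$, so the content of the lemma is that, given $\la \in \mc L(R)$, the reverse absolute continuity $\la \ll \la R$ is equivalent to $\la \in \mc Q_-(\sigma)$, i.e. $\la\csi1 \ll \la$.

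First I would prove $(\Longleftarrow)$. Assume $\la \in \mc L(R) \cap \mc Q_-(\sigma)$. Since $\la \in \mc L(R)$ we automatically have $\la R \ll \la$, so it only remains to show $\la \ll \la R$, i.e. that $(\la R)(A) = 0$ implies $\la(A) = 0$. By the key formula \eqref{eq lambda R(A) as integral over sigma A} from Theorem~\ref{thm L(R) is Q plus (sigma)}, $(\la R)(A) = \int_{\sigma(A)} R(\chi_A)\, d\la$. If this vanishes, then $R(\chi_A) = 0$ $\la$-a.e. on $\sigma(A)$; combined with Lemma~\ref{lem support R(chi_A) is sigma(A)} (which says $\la(\{R(\chi_A) > 0\}) = (\la\cs)(A)$) and the fact that $\mathrm{supp}(R(\chi_A)) \subset \sigma(A)$ by Lemma~\ref{lem R(s) is integrable}(1), we conclude $(\la\cs)(A) = 0$. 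Now I would invoke the observation (the lemma just above) that $\la \in \mc Q_-(\sigma)$ implies $\la \ll \la\cs$; hence $\la(A) = 0$. This gives $\la \ll \la R$, and therefore $\la R \sim \la$.

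Next, $(\Longrightarrow)$: assume $\la R \sim \la$. Then in particular $\la R \ll \la$, so $\la \in \mc L(R)$; it remains to show $\la \in \mc Q_-(\sigma)$, i.e. $\la\csi1 \ll \la$. Suppose $\la(A) = 0$. Set $B = \sigma^{-1}(A)$, so that $\chi_B = \chi_A \cs$. By normalization, $R(\chi_B) = R(\chi_A \cs) = \chi_A R(\mathbf 1) = \chi_A$, hence $(\la R)(B) = \int_X R(\chi_B)\, d\la = \int_X \chi_A\, d\la = \la(A) = 0$. Since $\la \ll \la R$, this forces $\la(B) = \la(\sigma^{-1}(A)) = 0$, which is precisely $(\la\csi1)(A) = 0$. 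Thus $\la\csi1 \ll \la$, i.e. $\la \in \mc Q_-(\sigma)$, completing the proof.

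I do not anticipate a serious obstacle here: every step reduces to a formula already proved (the integral representation \eqref{eq lambda R(A) as integral over sigma A}, the support lemmas, and the normalization identity $R(\chi_A\cs) = \chi_A$ from Lemma~\ref{lem TO preserves simple fncts}). The only point requiring a little care is making sure the $\la$-null versus everywhere distinction in Lemma~\ref{lem R(s) is integrable}(2) versus (3) is handled correctly — since $R$ need not be strict here, one works with the ``$\la$-a.e.'' version and with $\mathrm{supp}(R(\chi_A)) \subset \sigma(A)$ rather than equality — but this causes no real difficulty because the normalization $R(\mathbf 1) = \mathbf 1$ already guarantees that $R(\chi_A)$ and $R(\chi_{A^c})$ have essentially disjoint supports covering $X$. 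So the bookkeeping is the only mild subtlety, and the argument is otherwise a direct chain of the cited results.
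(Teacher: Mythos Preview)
Your proof is correct and follows essentially the same route as the paper. For $(\Leftarrow)$ you argue by contrapositive of the paper's version (the paper shows $\la(A)>0 \Rightarrow \la(\sigma(A))>0 \Rightarrow (\la R)(A)>0$, you show $(\la R)(A)=0 \Rightarrow (\la\cs)(A)=0 \Rightarrow \la(A)=0$), and for $(\Rightarrow)$ you compute at the set level what the paper does at the measure level via the identity $(\la R)\csi1 = \la$ together with the fact that $s_\sigma$ preserves equivalence classes.

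One small remark: your closing caveat that ``$R$ need not be strict here'' is unnecessary --- the hypothesis $R(\mathbf 1)=\mathbf 1$ already makes $R$ strict, so Lemma~\ref{lem R(s) is integrable}(2) applies directly and the support bookkeeping you worry about is not an issue.
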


\begin{proof} We need to show only that $\la \ll \la R$. This is 
equivalent to the statement that $\la(A) > 0$ implies  $(\la R)(A) 
>0$. Since $\la$ is a quasi-invariant measure with respect to $\sigma$ 
and $A \subset \sigma^{-1}(\sigma (A))$, we see that $\la (\sigma 
(A)) > 0$, and therefore
$$
(\la R) (A) =\int_X R(\chi_A) \; d\la > 0.
$$

Conversely, suppose that $\la R \sim \la$. Then we have to show that 
$\la\csi1 \ll \la$. The fact that $R \mathbf 1 =\mathbf 1$ implies 
that $(\la R)\csi1 = \la$ for any measure $\la$. Since the map $s_
\sigma$ preserves the partition of $M_1$ into the classes of 
equivalent measures, we obtain that $\la\csi1 \ll \la$ (in fact we have 
that these measures are equivalent). This proves the statement.
\end{proof}

\begin{lemma}
Let $\nu$ be a measure from $\mc L(R)$. Then for $\la = \nu R$ we 
have the property
$$
\frac{d\la R}{d\la} \in \mc F(X, \sigma^{-1}(\B)).
$$
More generally, $\dfrac{d\la R}{d\la} $ is 
$\sigma^{-i}(\B)$-measurable if $\la = \nu R^i$ and $\nu \in \mc L(R)$
 and $i\in \N$.
\end{lemma}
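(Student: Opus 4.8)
The plan is to reduce the lemma to the Radon--Nikodym formula already established in Theorem \ref{thm from measure equivalence}(1): if $(R,\sigma)$ acts on measures and $\lambda_1 \ll \lambda$ with $d\lambda_1 = \varphi\, d\lambda$, then $\lambda_1 R \ll \lambda R$ and $\frac{d(\lambda_1 R)}{d(\lambda R)} = \varphi \circ \sigma$. The only other ingredient is the standard fact recorded in Section \ref{section Basics} that a Borel function on $(X,\B)$ is $\sigma^{-i}(\B)$-measurable if and only if it is of the form $G \circ \sigma^{i}$ for some Borel function $G$.

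First I would dispose of the case $i = 1$. Put $W := \frac{d(\nu R)}{d\nu}$, which makes sense because $\nu \in \mathcal L(R)$. Applying Theorem \ref{thm from measure equivalence}(1) with base measure $\nu$ and with $\lambda_1 := \nu R$, so that $\varphi = W$, gives $(\nu R)R \ll \nu R$ together with $\frac{d((\nu R)R)}{d(\nu R)} = W \circ \sigma$. Since $\lambda = \nu R$, this is precisely $\frac{d(\lambda R)}{d\lambda} = W \circ \sigma \in \mc F(X, \sigma^{-1}(\B))$.

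For general $i$ I would run an induction establishing the sharper statement
$$
\nu R^{i+1} \ll \nu R^{i}, \qquad \frac{d(\nu R^{i+1})}{d(\nu R^{i})} = W \circ \sigma^{i},
$$
for every $i \geq 0$. The base case $i = 0$ is just the definition of $W$. In the inductive step one has, by hypothesis, $\nu R^{i} \ll \nu R^{i-1}$ with density $W \circ \sigma^{i-1}$; feeding the pair $(\lambda, \lambda_1) = (\nu R^{i-1}, \nu R^{i})$ into Theorem \ref{thm from measure equivalence}(1) yields $\nu R^{i+1} \ll \nu R^{i}$ and $\frac{d(\nu R^{i+1})}{d(\nu R^{i})} = (W \circ \sigma^{i-1}) \circ \sigma = W \circ \sigma^{i}$, which closes the induction. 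Setting $\lambda = \nu R^{i}$ then gives $\frac{d(\lambda R)}{d\lambda} = W \circ \sigma^{i}$, and this function is $\sigma^{-i}(\B)$-measurable by the remark above.

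The whole argument is a bookkeeping exercise and there is no substantive obstacle; the only point deserving a moment's attention is that each application of Theorem \ref{thm from measure equivalence}(1) requires the absolute-continuity input $\nu R^{i} \ll \nu R^{i-1}$, but this is furnished automatically as part of the conclusion of the previous inductive step, so it need not be proved separately. If one prefers to avoid the induction entirely, one may simply observe that the chain of Radon--Nikodym derivatives multiplies, the $i$-fold iterate of the ``$\circ\,\sigma$'' rule from Theorem \ref{thm from measure equivalence}(1) turning $\frac{d(\nu R)}{d\nu}$ into $\frac{d(\nu R^{i+1})}{d(\nu R^{i})}$.
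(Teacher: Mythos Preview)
Your proof is correct and follows exactly the approach in the paper: both arguments reduce the claim to Theorem \ref{thm from measure equivalence}(1), obtaining $\dfrac{d(\nu R^2)}{d(\nu R)} = \dfrac{d(\nu R)}{d\nu}\circ\sigma$ and then iterating. The paper states only this single displayed identity and leaves the general $i$ case implicit, while you spell out the induction explicitly, but the substance is identical.
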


\begin{proof}
In order to proof the result, it suffices to note that due to Theorem 
\ref{thm from measure equivalence}
$$
\frac{d\la R}{d\la} = \frac{d\nu R^2}{d\nu R} = \frac{d\nu R}{d\nu} 
\cs.
$$
\end{proof}

In the following lemma, we collect a number of results that follow  from 
the  proved lemmas and definitions given in this section.

\begin{theorem}\label{thm t_R is 1-1 and more}
Let $(R, \sigma)$ be a normalized transfer operator acting in the 
space of Borel functions  $\FXB$ such that the dual action 
$\la \mapsto \la R: M_1 \to K_1(R) = M_1R$ is well defined. 
Then the following six statements hold:

(1) A measure $\mu \in K_1(R) $ if and only if $(\mu\csi1) R = \mu$.

(2) For any measure $\mu$, the equation $\mu = \la R$ has  a unique 
solution $\la = \mu\csi1$.

(3) The map $t_R$ is one-to-one on $M_1$.

(4a) Two measures $\la$ and $\la'$ are mutually singular  if and 
only if the measures $\la R$ and $\la' R$ are mutually singular.

(4b) If $\la \in K_1(R)$, then $\la$ and $\la\csi1 $ are mutually 
singular if and only if $\la $ and $\la R$ are mutually singular.

(5)
$$
\mbox{Fix}(R) \subset \bigcap_{i = 0}^\infty M_1R^i.
$$

(6)
$$
K_1(R) \bigcap \mbox{Fix}(\sigma) = \mbox{Fix}(R).
$$
\end{theorem}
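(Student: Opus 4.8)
The plan is to prove the six statements in the order listed, because the later ones lean on the earlier ones, and the whole theorem rests on the single identity $(\lambda R)\circ\sigma^{-1}=\lambda$, which holds for every measure $\lambda$ when $R$ is normalized (this was established in Lemma~\ref{lem Ri as derivative}). Concretely, for a normalized transfer operator and any $f\in\FXB$ one has $R(f\cs)=fR(\mathbf 1)=f$, hence $\int_X f\cs\,d(\mu R)=\int_X R(f\cs)\,d\mu=\int_X f\cs\,d\mu$ — wait, that is not quite what we need; rather, $\int_X f\,d\big((\mu R)\cs^{-1}\big)=\int_X f\cs\,d(\mu R)=\int_X R(f\cs)\,d\mu=\int_X f\,d\mu$, which gives $(\mu R)\csi1=\mu$. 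This is the workhorse. For (1): if $\mu\in K_1(R)$, write $\mu=\nu R$; then $\mu\csi1=(\nu R)\csi1=\nu$ by the identity, so $(\mu\csi1)R=\nu R=\mu$. Conversely $(\mu\csi1)R=\mu$ literally exhibits $\mu$ as being in $M_1R$. Statement (2) is immediate from the same identity: if $\mu=\lambda R$ then applying $\csi1$ forces $\lambda=\mu\csi1$, and conversely $(\mu\csi1)R=\mu$ by (1); uniqueness is then automatic. Statement (3) is just the injectivity asserted by (2): if $\lambda R=\lambda'R$ then $\lambda=(\lambda R)\csi1=(\lambda'R)\csi1=\lambda'$.

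For (4a), the key is that $t_R$ is a bijection of $M_1$ onto $K_1(R)$ that, together with the fact (Theorem~\ref{thm from measure equivalence}) that $t_R$ preserves both the relation $\ll$ and equivalence classes, should be promoted to preserving mutual singularity. One direction: if $\lambda\perp\lambda'$, pick a Borel set $E$ with $\lambda(E^c)=0=\lambda'(E)$; then I want to show $(\lambda R)$ and $(\lambda'R)$ are carried by disjoint sets. Here I would use the support computation from Lemma~\ref{lem support R(chi_A) is sigma(A)} / Lemma~\ref{lem R(s) is integrable}: $(\lambda R)(E^c)=\int_{\sigma(E^c)}R(\chi_{E^c})\,d\lambda$ and $R(\chi_{E})+R(\chi_{E^c})=\mathbf 1$ so the two functions $R(\chi_E),R(\chi_{E^c})$ have disjoint supports; measuring against $\lambda$ and $\lambda'$ and using $\lambda(E^c)=0$, $\lambda'(E)=0$ yields $(\lambda R)(\{R(\chi_{E^c})>0\})=0$ and $(\lambda'R)(\{R(\chi_{E})>0\})=0$, and since $\{R(\chi_E)>0\}\cup\{R(\chi_{E^c})>0\}=X$ these two sets witness $\lambda R\perp\lambda'R$. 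The converse direction is softer: if $\lambda R\perp\lambda'R$ but $\lambda\not\perp\lambda'$, then by Lebesgue decomposition there is $0\neq\rho\ll\lambda$ with $\rho\ll\lambda'$; by Theorem~\ref{thm from measure equivalence}, $\rho R\ll\lambda R$ and $\rho R\ll\lambda'R$, and $\rho R\neq 0$ (normalization gives $(\rho R)(X)=\rho(X)>0$), contradicting $\lambda R\perp\lambda'R$. Statement (4b) is the special case $\lambda'=\lambda\csi1$ once we note $(\lambda\csi1)R=\lambda$ when $\lambda\in K_1(R)$ — that is exactly statement (1) — so $\lambda'R=\lambda R$ is not what we want; rather $\lambda\csi1$ plays the role of the preimage $\nu$, and $\lambda R$ is the forward image, and (4a) applied to the pair $\lambda\csi1,\lambda$ translates singularity of $\lambda\csi1$ and $\lambda$ into singularity of their $t_R$-images $\lambda$ and $\lambda R$.

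For (5): if $\lambda\in\mathrm{Fix}(R)$ then $\lambda=\lambda R=\lambda R^2=\cdots$, so $\lambda\in M_1R^i$ for every $i$, giving the inclusion into $\bigcap_i M_1R^i$ directly. Statement (6) is the heart of the Perron–Frobenius flavor and the one I expect to cost the most care. The inclusion $\mathrm{Fix}(R)\subset K_1(R)\cap\mathrm{Fix}(\sigma)$ goes: $\lambda\in\mathrm{Fix}(R)$ gives $\lambda\in K_1(R)$ by (5) (or trivially $\lambda=\lambda R$), and then $\lambda\csi1=(\lambda R)\csi1=\lambda$ by the workhorse identity, so $\lambda\in\mathrm{Fix}(\sigma)$. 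For the reverse inclusion, take $\lambda\in K_1(R)\cap\mathrm{Fix}(\sigma)$: since $\lambda\in K_1(R)$, statement (1) gives $(\lambda\csi1)R=\lambda$; since $\lambda\in\mathrm{Fix}(\sigma)$, $\lambda\csi1=\lambda$; substituting, $\lambda R=\lambda$, i.e. $\lambda\in\mathrm{Fix}(R)$. So (6) actually falls out cleanly once (1) is in hand.

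\medskip
\noindent\textbf{Main obstacle.} The only genuinely non-formal point is the forward direction of (4a): turning disjointness of supports of $\lambda$ and $\lambda'$ into disjointness of supports of $\lambda R$ and $\lambda' R$. Everything hinges on the clean fact that $R(\chi_E)$ and $R(\chi_{E^c})$ have disjoint supports (from $R(\mathbf 1)=\mathbf 1$ and positivity), which lets the disjoint carriers propagate through $t_R$; I would make sure to invoke Lemma~\ref{lem R(s) is integrable}(2) correctly in the normalized setting so that the support identity $\mathrm{supp}(R(\chi_A))=\sigma(A)$ (mod the appropriate null set) is available, and handle the converse via Lebesgue decomposition plus the $\ll$-preservation in Theorem~\ref{thm from measure equivalence}. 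Once (1)–(4) are secured, (5) and (6) are short.
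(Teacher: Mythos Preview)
Your arguments for (1), (2), (3), (5), (6), and (4b) are correct and essentially match the paper's. Your converse for (4a) via Lebesgue decomposition is also correct (and in fact cleaner than the paper's somewhat terse ``apply $s_\sigma$'').

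The genuine gap is the forward direction of (4a). The claim that $R(\chi_E)$ and $R(\chi_{E^c})$ have disjoint supports is false: from $R(\chi_E)+R(\chi_{E^c})=\mathbf 1$ you can only conclude that at each point at least one of them is positive, not that at most one is. For a concrete example, take $\sigma(x)=2x\bmod 1$ on $[0,1]$ with $R(f)(x)=\tfrac12\big(f(\tfrac{x}{2})+f(\tfrac{x+1}{2})\big)$ and $E=[0,\tfrac12)$; then $R(\chi_E)\equiv\tfrac12\equiv R(\chi_{E^c})$, so both supports are all of $X$. More to the point, your intermediate claim $(\lambda R)(\{R(\chi_{E^c})>0\})=0$ does not follow from $\lambda(E^c)=0$: unwinding the definitions you would need $E\cap\sigma(\{R(\chi_{E^c})>0\})$ to be $\lambda$-null, and there is no mechanism for that. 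Lemma~\ref{lem R(s) is integrable}(2) does not rescue this, since $\sigma(E)\cap\sigma(E^c)$ is typically nonempty for a non-injective $\sigma$.

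The paper's fix is a one-line computation that avoids supports altogether. If $\lambda(A)=1$ and $\lambda'(A)=0$, use $\sigma^{-1}(A)$ as the separating set for $\lambda R$ and $\lambda' R$: since $\chi_{\sigma^{-1}(A)}=\chi_A\cs$ and $R(\mathbf 1)=\mathbf 1$,
\[
(\lambda R)(\sigma^{-1}(A))=\int_X R(\chi_A\cs)\,d\lambda=\int_X \chi_A\,d\lambda=\lambda(A)=1,
\]
and the same computation gives $(\lambda' R)(\sigma^{-1}(A))=\lambda'(A)=0$. So $\sigma^{-1}(A)$ witnesses $\lambda R\perp\lambda' R$. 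This is the identity $(\nu R)(\sigma^{-1}(A))=\nu(A)$, which is just the set-level version of your workhorse $(\nu R)\csi1=\nu$; you already had it in hand and should use it directly rather than routing through support considerations.
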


\begin{proof} (1) We first recall that the condition $R(\mathbf 1) =
\mathbf 1$ can be written in an equivalent form, namely,
$$
(\la R)\csi1 = \la, \ \ \forall \la \in M_1.
$$
Hence, if $\mu \in K_1(R)$, then $\mu = \la R$ for some $\la \in M_1$, and
$$
[\mu\csi1] R = [(\la R)\csi1] R = \la R = \mu.
$$
The converse is obvious.

(2) This fact follows immediately from statement (1).

(3) Suppose that $\la R = \la' R$. Then condition $R(\mathbf 1) =
\mathbf 1$ implies that
$$
\la = (\la R) \csi1 = (\la' R) \csi1 = \la',
$$
and statement (3) is proved.

(4) Suppose $\la $ and $\la'$ are mutually singular measures. Then 
there exists a set $A$ such that $\la(A) = 1$ and $\la'(A) =0$. Then
$$
(\la R)(\sigma^{-1}(A)) = \int_X \chi_{\sigma^{-1}(A)} d(\la R) = 
\int_X \chi_{A} R(\mathbf 1) d\la = \la(A) =1
$$
and, similarly, we get that $(\la' R)(\sigma^{-1}(A)) = \la'(A) = 0$. 
To see that the converse is true, we observe that if  $\la R$ and
$\la' R$ are singular, then, applying $s_\sigma$ to these measures,
we obtain that  $\la$ and $\la'$ are singular. This proves (4a)

To show that  (4b) holds, we use (4a) and note that if $\la$ and $\la
\csi1 $ are mutually singular, then, applying $t_R$ to these measures, 
we get that $\la$ and $\la R$ are  mutually singular. To see that the 
converse holds we begin with mutually singular measures $\la$ and $
\la R$ and apply $s_\sigma$ to them. Since $R$ is normalized, the 
result follows.

(5) This statement is obvious.

(6) If $\la \in K_1(R) \cap \mbox{Fix}(\sigma)$, then $\la = (\la 
\csi1) R = \la R$.
Conversely, if $\la = \la R$, then  $\la \in K_1(R)$ by (5),  hence $  
(\la \csi1) R  = \la R$. Since $t_R$ is a one-to-one map, we conclude 
that $\la \csi1 = \la$.

\end{proof}

In the next lemma, we continue discussing relations between the maps 
$t_R$ and $s_\sigma$ for a transfer operator $(R, \sigma)$.

\begin{lemma}\label{lem interaction R and sigma on measures}
Let $(R, \sigma)$ be  a transfer operator such that $R(\mathbf 1) =
\mathbf 1$. The following statements hold.

(1) $ s_\sigma t_R = \mbox{id}_{M_1}$ and $t_R s_\sigma = 
\mbox{id}_{K_1}$ where $K_1 = M_1 R$.

(2) If $\la \in K_1$, then
\be\label{eq 1}
\la \csi1 \ll \la \ \ \Longleftrightarrow \ \ \la \ll \la R;
\ee
\be\label{eq 2}
\la \csi1 = \la \ \ \Longleftrightarrow \ \ \la = \la R;
\ee
\be\label{eq 3}
\la \csi1 \gg \la \ \ \Longleftrightarrow \ \ \la \gg \la R.
\ee

(3) Let $T(\la):= t_R s_\sigma(\la)$. Then $T : M_1 \to K_1$ such 
that $T^2 = T$.
Moreover, if $\la_1 = T(\la)$, then
$$
\la_1\csi1 = \la \csi1.
$$
\end{lemma}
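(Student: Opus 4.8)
The plan is to reduce all of (1)--(3) to a single identity: since $R$ is normalized, $(\la R)\csi1 = \la$ for every $\la \in M_1$. This is immediate from the pull-out property, $(\la R)(\sigma^{-1}(A)) = \int_X R(\chi_A\cs)\,d\la = \int_X \chi_A\, R(\mathbf 1)\,d\la = \la(A)$, and was already noted in the proof of Theorem \ref{thm t_R is 1-1 and more}. The second ingredient is that both $t_R$ and $s_\sigma$ are monotone for absolute continuity: $\la \ll \la'$ implies $\la R \ll \la' R$ (Theorem \ref{thm from measure equivalence}(1)), and $\la \ll \la'$ implies $\la\csi1 \ll \la'\csi1$ (elementary, since $\la'(\sigma^{-1}(A))=0$ forces $\la(\sigma^{-1}(A))=0$). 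Part (1) is then just a restatement: the identity above says $s_\sigma t_R(\la) = \la$ for all $\la \in M_1$, i.e. $s_\sigma t_R = \mathrm{id}_{M_1}$; and for $t_R s_\sigma = \mathrm{id}_{K_1}$, write a generic $\la \in K_1 = M_1 R$ as $\la = \nu R = t_R(\nu)$, so that $s_\sigma(\la) = s_\sigma t_R(\nu) = \nu$ and hence $t_R s_\sigma(\la) = t_R(\nu) = \la$.

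For (2), fix $\la \in K_1$. By (1), $\la = t_R(s_\sigma(\la)) = (\la\csi1)R$, so $\la$ is the $t_R$-image of $\mu := \la\csi1$. Applying $t_R$ to $\mu \ll \la$ gives $\mu R \ll \la R$, i.e. $\la \ll \la R$; conversely, applying $s_\sigma$ to $\la \ll \la R$ and using $(\la R)\csi1 = \la$ gives $\mu = \la\csi1 \ll \la$. This is (\ref{eq 1}). Statement (\ref{eq 3}) is the same argument with the two sides of ``$\ll$'' interchanged (recall $\gg$ is just $\ll$ read backwards, and $(\la R)\csi1 = \la$ is invoked again). For (\ref{eq 2}): if $\la\csi1 = \la$ then directly $\la R = (\la\csi1)R = \la$ by the relation $\la = (\la\csi1)R$, and conversely $\la = \la R$ forces $\la\csi1 = (\la R)\csi1 = \la$.

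Finally (3): $T(\la) = (\la\csi1)R \in M_1 R = K_1$, so $T : M_1 \to K_1$; and $T^2(\la) = t_R s_\sigma\big( t_R s_\sigma(\la)\big) = t_R\big( s_\sigma t_R\big)\big( s_\sigma(\la)\big) = t_R s_\sigma(\la) = T(\la)$, using $s_\sigma t_R = \mathrm{id}_{M_1}$; hence $T^2 = T$. If $\la_1 = T(\la)$, then $\la_1\csi1 = s_\sigma t_R s_\sigma(\la) = s_\sigma(\la) = \la\csi1$, once more by (1). The whole argument is formal once (1) is in place; the only point needing a moment's care is verifying that $s_\sigma$ preserves $\ll$ (and therefore also $\gg$ and the equality case), which is elementary. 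I do not expect a genuine obstacle here, since the substantive content has already been isolated in Theorems \ref{thm t_R is 1-1 and more} and \ref{thm from measure equivalence}.
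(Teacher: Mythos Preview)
Your proof is correct and follows essentially the same approach as the paper's: both reduce everything to the identity $(\la R)\csi1 = \la$ together with the monotonicity of $t_R$ and $s_\sigma$ with respect to absolute continuity, and the arguments for each of (1)--(3) match the paper's almost line for line. The only cosmetic difference is that the paper cites Theorem~\ref{thm t_R is 1-1 and more}(6) for (\ref{eq 2}), whereas you spell out that short argument directly.
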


\begin{proof} (1) The statement $s_\sigma t_R = \mbox{id}_{M_1}$ 
is a reformulation of the fact that $R(\mathbf 1) = \mathbf 1$ (see, 
for example, Theorem \ref{thm t_R is 1-1 and more} (1)). Let $\la \in 
K_1$, then it follows that $t_R s_\sigma = \mbox{id}
_{K_1(R)}$.

(2) If $\la \csi1 \ll \la$, then, because $R$ possesses the 
``monotonicity'' property ($\mu\ll\nu \Longrightarrow \mu R \ll \nu R
$), we obtain that $\la = (\la\csi1) R \ll \la R$. Conversely, suppose 
that $\la \ll \la R$. Then, applying $\sigma^{-1}$ to this relation,  we 
have $\la\csi1 \ll (\la R) \csi1 = \la$. This proves (\ref{eq 1}).

Relation (\ref{eq 2}) was proved in Theorem \ref{thm t_R is 1-1 and 
more} (6).

To show that (\ref{eq 3}) holds, we again apply $R$ to the both sides 
of $\la\csi1 \gg \la$ and get that $\la R \ll \la$. The converse 
implication, i.e., $\la R \ll \la$ implies $\la\csi1 \gg \la$, follows from 
the fact $R\mathbf 1 = 1$ and application of $\sigma^{-1}$ to $\la R 
\ll \la$. Observe that this implication is true for any measure $\la$.

(3) The fact that $T^2 = T$ follows from the property  $R\mathbf 1 = 
\mathbf 1$ and the corresponding relation $s_\sigma t_R = 
\mathrm{id}_{M_1}$.

Because $\la_1 = T(\la) = (\la\csi1)R$, then, taking into account that 
$R$ is a normalized operator, we obtain
$$
\la_1\csi1 = [(\la\csi1)R]\csi1 = \la\csi1.
$$
\end{proof}

\begin{remark}
We note that for any measure $\la$ in $M(X)$, the following relation 
holds
$$
\la R \ll \la\circ \sigma.
$$
Indeed, this claim easily follows from Lemma \ref{lem R(s) is 
integrable} because the function $R(\chi_A)$ takes zero value on the 
compliment of $\sigma(A)$.
\end{remark}

\newpage

\section{Wold's theorem and automorphic factors of endomorphisms}
\label{sect Wold}

In this section, we discuss Wold's theorem stating the existence of  a 
decomposition of any isometry operator of a Hilbert space in a unitary 
part and a unilateral shift.
The variant of Wold's theorem, we outline below, is a bit more 
geometric than the original result of Wold, which was, in fact,  a 
decomposition theorem stated for stationary stochastic processes. The 
geometric variant is a result that applies to the wider context of any 
isometry in a Hilbert space.  Some of the relevant references include 
\cite{Wold1948, Wold1951, Wold1954, HalmosWallen1969, 
BratteliJorgensen2002}.

\subsection{Hilbert space decomposition defined by an isometry} Let 
$\mc H$ be a real Hilbert space, and let $S$ be an isometry in 
$\mc H$. 
This means that $\|Sx\| = \|x\|$ for every $x\in \mc H$, or 
equivalently, $S^*S = I$ where $I$ denotes the identity operator in $
\mc H$. In general, $S$ is not surjective.

It follows immediately  that the operator $E_1 = SS^*$ is a projection. 
More generally, one can show that $E_n := S^n (S^*)^n$ is a 
projection. Indeed, we use $n$ times the relation $S^*S = I$ and 
obtain
\begin{equation*}
\setlength{\jot}{10pt}
\begin{aligned}
 E_n^2&= S^n (S^*)^{n-1}(S^*S)S^{n-1} (S^*)^n \\
   &= S^n (S^*)^{n-1}S^{n-1} (S^*)^n = \cdots \\
   &=  S^n (S^*)^n\\
   & =  E_n.
   \end{aligned}
\end{equation*}

\begin{lemma}\label{lem sequence of projections} The sequence of 
projections $\{E_n\}$ is decreasing
$$
I \geq E_1 \geq E_2 \geq \cdots ,
$$
 and each $E_n : \mc H \to S^n(\mc H)$ is onto.
\end{lemma}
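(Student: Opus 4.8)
The plan is to verify the two claims directly from the identity $S^*S=I$, using only elementary manipulations with the operators $S^n$ and $(S^*)^n$. First I would establish the monotonicity $E_n\ge E_{n+1}$. The key observation is that $E_n-E_{n+1}$ is itself a projection, so in particular it is a positive operator; this gives the inequality $E_n\ge E_{n+1}$ for free. To see that $E_n-E_{n+1}$ is a projection, one notes that $E_{n+1}=E_n E_{n+1}=E_{n+1}E_n$: indeed $E_nE_{n+1}=S^n(S^*)^nS^{n+1}(S^*)^{n+1}=S^n S (S^*)^{n+1}=S^{n+1}(S^*)^{n+1}=E_{n+1}$, using $(S^*)^nS^{n+1}=S$ (which follows from $S^*S=I$ applied $n$ times), and the other order is the adjoint of this. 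Since $E_n$ and $E_{n+1}$ are commuting projections with $E_nE_{n+1}=E_{n+1}$, the difference $E_n-E_{n+1}$ is again a (self-adjoint, idempotent) projection, and positivity gives $E_n\ge E_{n+1}$. Taking $E_0=I$ covers the first inequality $I\ge E_1$.

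Second I would check that $E_n:\mc H\to S^n(\mc H)$ is onto, i.e.\ that $E_n$ is precisely the orthogonal projection onto the closed subspace $S^n(\mc H)$. Here I would argue in two steps. The range of $E_n=S^n(S^*)^n$ is clearly contained in $S^n(\mc H)$. Conversely, for any $y=S^n x\in S^n(\mc H)$ we compute $E_n y=S^n(S^*)^nS^n x=S^n x=y$, again using $(S^*)^nS^n=I$ (the $n$-fold iterate of $S^*S=I$). Hence $E_n$ fixes every vector of $S^n(\mc H)$, so its range is exactly $S^n(\mc H)$; in particular $S^n(\mc H)$ is closed, being the range of a projection, and $E_n$ maps $\mc H$ onto it. This also re-proves $E_n^2=E_n$ as a by-product, consistent with the computation preceding the lemma.

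I do not anticipate a genuine obstacle here: the whole argument is a bookkeeping exercise in the single relation $S^*S=I$ (equivalently $\|Sx\|=\|x\|$), together with the standard fact that a difference of commuting projections with $PQ=Q$ is a projection, and the standard fact that the range of a self-adjoint idempotent is closed and equals the fixed-point set. The one point requiring a little care is the repeated use of the cancellation $(S^*)^kS^k=I$ versus the non-cancellation of $S^k(S^*)^k$ (which is only $E_k$, not the identity, since $S$ need not be surjective); keeping the order of factors straight is the only place an error could creep in. Everything is finite-dimensional in spirit even though $\mc H$ is infinite-dimensional, so no limiting or density arguments are needed.
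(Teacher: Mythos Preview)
Your proposal is correct and, in fact, more detailed than the paper's own treatment: the paper simply asserts that the lemma follows from the obvious nesting $\mc H \supset S(\mc H) \supset S^2(\mc H) \supset \cdots$, leaving the verification that $E_n$ is the orthogonal projection onto $S^n(\mc H)$ implicit. Your algebraic verification of $E_nE_{n+1}=E_{n+1}$ and of $E_n|_{S^n(\mc H)}=\mathrm{id}$ is exactly what underlies that one-line remark, so the approaches are the same in substance.
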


This result follows from the obvious relation:
$$
\mc H \supset S(\mc H) \supset S^2(\mc H) \supset \cdots .
$$

Let $R_{S} := \{Sx : x \in \mc H\}$ be the range of $S$. Consider the 
kernel of the adjoint operator
$$
N_{S^*} := \{x \in \mc H : S^* x = 0\}.
$$
Then one can see that
\be\label{eq R orthog to N and vice versa}
(N_{S^*})^\bot  = R_S, \quad N_{S^*}  = (R_S)^\bot.
\ee
More generally, if $V$ is a bounded linear operator in $\mc H$, then
$$
ker (V^*) = \mc H \ominus V(\mathcal H).
$$

\begin{lemma}
The sequence $\{S^n N_{S^*}\}$ consists of mutually orthogonal 
subspaces of $\mc H$.
\end{lemma}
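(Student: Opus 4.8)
The plan is to show directly that for $m \neq n$ the subspaces $S^m N_{S^*}$ and $S^n N_{S^*}$ are orthogonal by computing the inner product of arbitrary vectors drawn from each. Without loss of generality assume $m < n$, and fix $x, y \in N_{S^*}$, so that $S^* x = 0$ and $S^* y = 0$. The goal is to verify $\langle S^m x,\, S^n y\rangle = 0$.

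First I would use the defining isometry relation $S^* S = I$, iterated $m$ times, to move $m$ copies of $S$ from the first slot to adjoints in the second slot:
$$
\langle S^m x,\, S^n y\rangle = \langle x,\, (S^*)^m S^n y\rangle = \langle x,\, S^{n-m} y\rangle,
$$
where the last equality uses $(S^*)^m S^m = I$ together with $n > m$. Next, since $n - m \geq 1$, I would peel off one factor of $S$ on the right and push it back as $S^*$ on the left:
$$
\langle x,\, S^{n-m} y\rangle = \langle x,\, S\bigl(S^{n-m-1} y\bigr)\rangle = \langle S^* x,\, S^{n-m-1} y\rangle = \langle 0,\, S^{n-m-1} y\rangle = 0.
$$
This shows $S^m N_{S^*} \perp S^n N_{S^*}$ whenever $m \neq n$, which is exactly the claim. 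Since $S$ is an isometry, each $S^n$ restricted to $N_{S^*}$ is injective and norm-preserving, so each $S^n N_{S^*}$ is a genuine (closed, if $N_{S^*}$ is closed, which it is as the kernel of a bounded operator) subspace, and the family is mutually orthogonal.

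I do not anticipate a serious obstacle here; the only point requiring a little care is bookkeeping the number of times $S^*S = I$ is applied and ensuring the exponent $n - m - 1$ stays nonnegative, which is guaranteed by the ordering $m < n$. One could also phrase the argument more structurally via $E_n = S^n (S^*)^n$ and the decreasing chain $I \geq E_1 \geq E_2 \geq \cdots$ from Lemma \ref{lem sequence of projections}, observing that $S^n N_{S^*} \subset S^n(\mathcal H) \ominus S^{n+1}(\mathcal H)$ because $N_{S^*} = \mathcal H \ominus S(\mathcal H)$ and $S^n$ is isometric, and these orthogonal complements of the nested ranges are pairwise orthogonal; but the direct inner-product computation above is the cleanest route and I would present that.
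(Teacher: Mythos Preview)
Your proof is correct and follows essentially the same route as the paper's. The paper's argument is a one-line sketch: for $k_1,k_2\in N_{S^*}$ and $m\in\mathbb N$, $\langle S^m k_1, k_2\rangle = \langle k_1, (S^*)^m k_2\rangle = 0$, implicitly reducing the general pair $(m,n)$ to the case where one exponent is zero via the isometry relation $S^*S=I$; you make that reduction explicit and then use $S^*x=0$ rather than $(S^*)^m k_2=0$, which is a cosmetic difference.
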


The proof of this lemma is clear: for any $k_1, k_2 \in N_{S^*}$ and 
$m \in \N$, we observe that
$$
<S^m k_1, k_2> \  = \ <k_1, (S^*)^m k_2> \ = 0.
$$

\begin{theorem}[Wold's theorem]\label{thm Wold} \index{Wold's theorem} 
Let $S$ be an isometry operator in a Hilbert space $\mc H$. Then the 
following statements hold.

(1) The space $\mc H$ can be decomposed into the orthogonal direct 
sum
$$
\mc H = \mc H_\infty \oplus \mc H_{shift}
$$
where $\mc H_{shift} = N_{S^*} \oplus SN_{S^*} \oplus \cdots 
\oplus S^kN_{S^*} \oplus \cdots$.

(2) The operator $S$ restricted on $\mc H_\infty$ is a unitary 
operator, and $S$ is a unilateral shift \index{unilateral shift} 
in the space $\mc H_{shift}$.

(3) $\mc H_{shift}^{\bot} =  \mc H_\infty$ and  $\mc H_{shift}  =  
\mc H_\infty^{\bot}$

\end{theorem}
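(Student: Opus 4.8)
The plan is to prove Wold's theorem by the standard geometric argument, building the shift part of $\mc H$ as the closed span of the mutually orthogonal subspaces $S^k N_{S^*}$ and identifying its orthogonal complement as the ``unitary part'' $\mc H_\infty$. First I would set $N := N_{S^*} = \ker(S^*)$ and, using the identity $\ker(V^*) = \mc H \ominus V(\mc H)$ recorded just before the theorem, observe that $N = \mc H \ominus S(\mc H)$, so that $S(\mc H) = N^\bot$. Since $S$ is an isometry, $S$ maps orthogonal vectors to orthogonal vectors, hence the subspaces $N, SN, S^2N, \dots$ are pairwise orthogonal (this was the preceding lemma). I would then \emph{define}
$$
\mc H_{shift} := \overline{\bigoplus_{k=0}^\infty S^k N}, \qquad \mc H_\infty := \mc H_{shift}^{\bot},
$$
so that statement (3) holds by construction and $\mc H = \mc H_\infty \oplus \mc H_{shift}$ is immediate. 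The content is then in showing (2), and in verifying that with these definitions the subspace $\mc H_\infty$ admits the iterated description $\mc H_\infty = \bigcap_n S^n(\mc H)$, which connects it back to the decreasing sequence of projections $E_n = S^n(S^*)^n$ from Lemma~\ref{lem sequence of projections}.

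The key steps, in order: (a) Show $\mc H_{shift}$ is $S$-invariant: $S$ maps $S^k N$ onto $S^{k+1}N$, hence maps the algebraic direct sum into itself, and by continuity maps the closure into itself. Moreover $S$ restricted to $\mc H_{shift}$ is, by definition of the orthogonal direct sum and the isometry property, a unilateral shift with ``multiplicity space'' $N$ — this is essentially a tautology once one checks that $S$ carries an orthonormal basis of $S^kN$ to one of $S^{k+1}N$. (b) Show $\mc H_\infty$ is $S$-invariant and that $S|_{\mc H_\infty}$ is unitary. For invariance: if $x \perp S^kN$ for all $k$, then for $y \in S^kN$ we have $\langle Sx, S y'\rangle$-type computations; more cleanly, $x \in \mc H_\infty$ iff $x \perp N$ and $x \perp S^kN$ for $k\geq 1$; the condition $x\perp N = \ker(S^*)$ combined with $S(\mc H) = N^\bot$ gives $x \in S(\mc H)$, say $x = Sw$, and then one checks $w \in \mc H_\infty$ as well, so $S$ maps $\mc H_\infty$ \emph{onto} $\mc H_\infty$; since $S$ is already isometric, $S|_{\mc H_\infty}$ is unitary. (c) Conclude: combine (a) and (b) with the orthogonal decomposition to obtain statements (1), (2), and note (3) was built in.

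The main obstacle I anticipate is step (b), specifically the surjectivity of $S|_{\mc H_\infty}$ onto $\mc H_\infty$. The clean way is to show $\mc H_\infty = \bigcap_{n\geq 0} S^n(\mc H)$: an induction using $S^n(\mc H) = (N \oplus SN \oplus \cdots \oplus S^{n-1}N)^\bot$ (which itself follows by iterating $S(\mc H) = N^\bot$ and applying $S$, an isometry, to both sides) gives $\bigcap_n S^n(\mc H) = \big(\overline{\bigoplus_k S^kN}\big)^\bot = \mc H_\infty$; then $S(\mc H_\infty) = S\big(\bigcap_n S^n(\mc H)\big) = \bigcap_n S^{n+1}(\mc H) = \bigcap_{n\geq 1}S^n(\mc H) = \mc H_\infty$, where the middle equality uses that $S$ is injective with closed range so it commutes with this countable intersection of nested closed subspaces. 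One must be slightly careful that $S$ genuinely commutes with the intersection (injectivity plus each $S^n(\mc H)$ closed, the latter because $S^n$ is an isometry, suffices), but this is routine; everything else is bookkeeping with orthogonal complements.
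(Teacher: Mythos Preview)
Your argument is correct and is the standard geometric proof of the Wold decomposition; it is essentially the same approach as the paper's, though the bookkeeping is organized a bit differently. The paper works from the projection side: it uses the decreasing sequence $E_n = S^n(S^*)^n$, proves the lemma $y \perp S^n N_{S^*} \Leftrightarrow E_{n+1}y = y$ (in context, cumulatively over $n$), takes $\mc H_\infty = \bigcap_n S^n(\mc H)$ as the range of $E_\infty = \lim_n E_n$, and establishes unitarity of $S|_{\mc H_\infty}$ via the characterization $x \in \mc H_\infty \Leftrightarrow \|(S^*)^n x\| = \|x\|$ for all $n$, which yields $SS^*|_{\mc H_\infty} = I$. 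You instead start from $\mc H_{shift}$, build the finite decompositions $\mc H = \big(\bigoplus_{k<n} S^kN\big) \oplus S^n(\mc H)$ directly, read off $\mc H_\infty = \bigcap_n S^n(\mc H)$ as the orthogonal complement, and get unitarity by showing $S(\mc H_\infty) = \mc H_\infty$ from the intersection formula. Both routes prove the same identifications; your version avoids the projection-norm calculus and is a touch more streamlined for the surjectivity step, while the paper's $E_n$ framework makes the connection to the filtration $\sigma^{-n}(\B)$ (used later in the section) more explicit.
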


\begin{proof} We sketch a proof of this theorem for the reader's 
convenience.

Let a vector $y\in \mc H$ be orthogonal to every subspace 
$S^kN_{S^*}$, $ k =0,1,...$. In particular, it follows from (\ref{eq R 
orthog to N and vice versa}) that
 $$
y \in (N_{S^*})^\bot  \ \Longleftrightarrow \ y \in R_S \ 
\Longleftrightarrow \ E_1y = y.
$$
It turns out that a more general result can be proved.

\begin{lemma} For any $n \in \N$, one has
$$
y \in (S^nN_{S^*})^\bot  \ \ \Longleftrightarrow \ \ E_{n+1}y = y.
$$
\end{lemma}

\begin{proof}
To see that the statement of this lemma is true, we apply the following 
sequence  of equivalences:
\begin{equation*}
\setlength{\jot}{10pt}
\begin{aligned}
  y \in \left( S^n N_{S^*} \right)^\bot & \ \Longleftrightarrow \    
  <y, \ S^n x> = 0 \qquad\qquad \forall x \in N_{S^*}  \\
   & \ \Longleftrightarrow \   <(S^*)^n y, \ x> = 0 \   \qquad \ \  
   \forall x \in N_{S^*}  \\
   & \ \Longleftrightarrow \  (S^*)^n y \in R_S  \qquad \qquad\quad
   \  (\mbox{see\ (\ref{eq R orthog to N and vice versa})}) \\
   & \ \Longleftrightarrow \   \exists x \in \mc H \ \mbox{such\ that}
   \ (S^*)^n y = Sx \\
& \ \Longleftrightarrow \  E_{n+1} y = y.
\end{aligned}
\end{equation*}
The last equivalence follows from the relation
$$
E_{n+1}y = S^{n+1}(S^*)^{n+1} y = S^{n+1}(S^*S)x = S^{n+1}x = y
$$
that proves the lemma.
\end{proof}

We continue the proof of the theorem. It follows from Lemma \ref{lem 
sequence of projections} that the strong limit
$$
\lim_{n\to \infty} E_n  = E_\infty
$$
exists, and is the projection onto the subspace
$$
\mc H_\infty = \bigcap_{n} S^n \mc H = \bigcap_n \left(S^n N_{S^*}
\right)^\bot.
$$

Next, we prove that $S$ and $S^*$ restricted to $\mc H_\infty$ are 
unitary operators. As a corollary, we obtain  a few formulas involving
these operators. For this, we show that
 $$
x \in \mc H_\infty \ \Longleftrightarrow \  \|(S^*)^n x\| = ||x||, \ \ \ 
\forall n \in \N.
$$
Observe first that $||x||^2 = ||E_n x||^2 + ||E_n^\bot x||^2$ where 
$E_n^\bot = I_{\mc H} - E_n$ and $x \in \mc H, n \in \N$. Since $||
E_n  x - E_\infty x|| \to 0$ for all $x \in \mc H$, we obtain that
$$
x \in \mc H_\infty \ \Longleftrightarrow \  E_n^\bot x \to 0 \ (n\to 
\infty)  \Longleftrightarrow \  E_n^\bot x \to 0.
$$
Because $||(S^*)^n x||^2 = ||S^n(S^*)^n x||^2 = ||E_n x||^2$, we 
conclude that
$$
||(S^*)^n x||^2 = ||x|| \ \Longleftrightarrow \ E_n^\bot x =0.
$$
Furthermore,
$$
x \in \mc H_\infty \ \Longleftrightarrow \ E_n^\bot x =0 \ \ \forall n 
 \in \N \ \Longleftrightarrow \ ||(S^*)^n x|| = ||x||.
$$
In particular, this means that $SS^*|_{\mc H_\infty} = I_{\mc H_
\infty}$.

We notice that the subspace $\mc H_\infty$ is invariant with respect 
to $S$ and $S^*$:
$$
S^*(\mc H_\infty) \subset \mc H_\infty \ \Longleftrightarrow \  
S(\mc H_\infty^\bot) \subset \mc H_\infty^\bot.
$$
Indeed, any vector $x$ from $\mc H_\infty^\bot$ has the form
$$
x = k_0 + Sk_1 + \cdots + S^i k_i + \cdots
$$
where all $k_i $ are from $N_{S^*}$, and
$$
S(k_0 + Sk_1 + S^2 k_2 + \cdots ) = 0 + Sk_0 + S^2 k_1 + \cdots .
$$
Thus, the operator $S$ on $\mc H_\infty^\bot = \mc H_{shift}$ is a 
unilateral shift, 
$$
(k_0, k_1, k_2, ... ) \mapsto (0, k_0, k_1, k_2, .... ).
$$
\end{proof}

\subsection{Automorphic factors and exact endomorphisms}

The goal of this subsection is to apply the Wold
theorem to the study of isometries generated by endomorphisms of a
 measure space.
  
We recall first the definition of a factor map. Let $(X, \B)$ and $(Y, 
\mc A)$ be standard Borel spaces, and let $\sigma : X \to X$ and $
\tau : Y \to Y$ be surjective maps.
It is said  that $F : (X, \B, \sigma) \to (Y, \mc A, \tau)$ is a 
\emph{factor map} \index{factor map} if $F$ is measurable with respect 
to the Borel 
sigma-algebras, and $F\circ \sigma = \tau \circ F$. Then  $\tau$ is 
called a \emph{factor} \index{factor} of $\sigma$. If $\tau$ is a Borel 
\textit{automorphism}, 
then the dynamical system $(Y, \mc A, \tau)$ is called an 
\emph{automorphic factor}. \index{automorphic factor} 
These definition can be obviously reformulated in 
the context of measurable dynamical systems when $\sigma$ and $
\tau$ are non-singular (or measure preserving) maps.

Suppose $\zeta $ is a measurable partition of $(X, \B, \mu)$.  Then 
we can define  the quotient space
$$
(Y, \B_\zeta, \mu_\zeta) = (X/\zeta, \B/\zeta, \mu/\zeta)
$$
(see Subection \ref{subsect meas partitions}). 
Let $\phi : X \to Y$ be the natural projection. If, additionally, the 
partition $\zeta$
is invariant with respect to $\sigma$, i.e., $\sigma^{-1} \zeta \preceq 
\zeta$, then $\sigma$ defines an onto endomorphism $\wt \sigma$ 
of $Y$ such that $\phi$ is a factor map: $\phi\sigma = 
\wt\sigma\phi$.

To define an isometry generated by a surjective endomorphism 
$\sigma$, we assume that  $\sigma$ is a finite measure-preserving 
endomorphism of a standard measure space $\sms$, and $\mu \circ 
\sigma^{-1} = \mu$. The assumption about the invariance of $\mu$ is 
not crucial and is made for convenience.  The definition can be
easily  modified to  the case of non-singular endomorphisms.

\begin{theorem}\label{thm S isometry} Let $(X, \B, \mu, \sigma)$ be a measure preserving 
non-invertible dynamical system.  Let  $\mc H = L^2(\mu)$ and define
$$
S : f \mapsto f\circ\sigma : \mc H \to \mc H.
$$
Then $S$ is an isometry.  The adjoint of $S$ is
$$
S^* g = \frac{(g d\mu)\circ\sigma^{-1}}{d\mu}, \ \ g \in \mc H.
$$
\end{theorem}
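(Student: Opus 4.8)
The plan is to verify the two assertions separately: first that $S$ is an isometry of $\mc H = L^2(\mu)$, then that the adjoint $S^*$ has the stated form. For the isometry claim, I would compute directly, using that $\mu$ is $\sigma$-invariant: for any $f \in L^2(\mu)$,
$$
\|Sf\|^2_{L^2(\mu)} = \int_X |f\circ\sigma|^2 \; d\mu = \int_X (|f|^2)\circ\sigma \; d\mu = \int_X |f|^2 \; d(\mu\circ\sigma^{-1}) = \int_X |f|^2 \; d\mu = \|f\|^2_{L^2(\mu)},
$$
where the third equality is the change-of-variables formula for the pushforward measure $\mu\circ\sigma^{-1}$, and the fourth uses $\mu\circ\sigma^{-1} = \mu$. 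This also shows $S$ is well defined as a map into $\mc H$ (it sends $L^2(\mu)$ to $L^2(\mu)$), so $S^*S = I$.

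For the adjoint, I would start from the defining relation $\langle Sf, g\rangle = \langle f, S^*g\rangle$ for all $f, g \in \mc H$ and unwind the left side. We have
$$
\langle Sf, g\rangle_{L^2(\mu)} = \int_X (f\circ\sigma)\, g \; d\mu.
$$
Now the key point is that the measure $g\,d\mu$ pushed forward by $\sigma$, namely $(g\,d\mu)\circ\sigma^{-1}$, is absolutely continuous with respect to $\mu$: indeed $\mu\circ\sigma^{-1} = \mu$, so any $\mu$-null set is $(g\,d\mu)\circ\sigma^{-1}$-null. Hence the Radon--Nikodym derivative $\frac{(g\,d\mu)\circ\sigma^{-1}}{d\mu}$ exists, and by the change-of-variables formula (applied to $f$ against the measure $g\,d\mu$),
$$
\int_X (f\circ\sigma)\, g\; d\mu = \int_X f \; d\big((g\,d\mu)\circ\sigma^{-1}\big) = \int_X f \cdot \frac{(g\,d\mu)\circ\sigma^{-1}}{d\mu} \; d\mu.
$$
Comparing with $\langle f, S^*g\rangle = \int_X f\cdot (S^*g)\; d\mu$ and using that $f \in L^2(\mu)$ is arbitrary yields $S^*g = \frac{(g\,d\mu)\circ\sigma^{-1}}{d\mu}$, as claimed. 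One should also note that the right-hand side indeed lies in $L^2(\mu)$, which follows from $\|S^*g\| \le \|S^*\|\,\|g\| = \|g\| < \infty$ since $\|S^*\| = \|S\| = 1$.

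The main obstacle — really the only subtle point — is justifying that the Radon--Nikodym derivative $\frac{(g\,d\mu)\circ\sigma^{-1}}{d\mu}$ is well defined, i.e.\ that $(g\,d\mu)\circ\sigma^{-1} \ll \mu$, and that the change-of-variables identity $\int (f\circ\sigma)\,g\,d\mu = \int f \, d\big((g\,d\mu)\circ\sigma^{-1}\big)$ holds for $f \in L^2(\mu)$ rather than just for bounded or simple $f$. For the absolute continuity, invariance $\mu\circ\sigma^{-1}=\mu$ suffices as noted (and for a signed density $g$ one splits into positive and negative parts). For the change-of-variables identity one proves it first for $f = \chi_A$ — where it is the definition of the pushforward measure — then extends to simple functions by linearity and to general $f \in L^2(\mu)$ by a monotone/dominated convergence argument, using that $g\,d\mu$ is a finite (signed) measure since $\mu$ is finite and $g \in L^2(\mu) \subset L^1(\mu)$. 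All of this is routine once the invariance of $\mu$ is in hand.
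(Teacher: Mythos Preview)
Your proof is correct and follows essentially the same approach as the paper: both verify the isometry directly from $\sigma$-invariance of $\mu$, and both compute $S^*$ via the chain $\langle Sf,g\rangle = \int (f\circ\sigma)g\,d\mu = \int f\,d((g\,d\mu)\circ\sigma^{-1}) = \int f\cdot\frac{(g\,d\mu)\circ\sigma^{-1}}{d\mu}\,d\mu$. You supply more justification for the absolute continuity and change-of-variables steps than the paper does, but the argument is the same.
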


\begin{proof}
The fact that $S$ is isometry follows from $\sigma$-invariance of $
\mu$. The formula for $S^*$ is deduced as follows:
\begin{equation*}
\setlength{\jot}{10pt}
\begin{aligned}
   \langle S f, \ g\rangle  & =  \int_X (f\circ \sigma) g\; d\mu   \\
   &= \int_X f (g d\mu)\circ\sigma^{-1}  \\
   &= \int_X f \frac{(g d\mu)\circ\sigma^{-1}}{d\mu}\ d \mu \\
   & =   \langle f,\ S^*g \rangle. 
\end{aligned}
\end{equation*}
As was mentioned in Section \ref{sect L1 and 
L2},  the co-isometry $S^*$ is, in fact, a transfer operator $R$ 
corresponding to the endomorphism $\sigma$.
\end{proof}

It follows from this lemma that we can apply the Wold theorem for $S$ 
and construct an orthogonal decomposition of $\mc H = L^2(\mu)$.
It says that $\mc H$ can be decomposed into the orthogonal sum 
$\mc H_\infty \oplus \mc H_\infty^\bot$ where $S$ restricted on 
$\mc H_\infty$ is a unitary operator.
It turns out that  the subspace $\mc H_\infty$ can be  explicitly 
described in terms of the endomorphism $\sigma$.

We recall that, to every endomorphism $\sigma$ of a Borel space 
$(X, \B)$, one can 
associate two partitions of $X$.  Let $\xi$ be the partition of $X$ into 
the $\sigma$-\emph{orbits}, \index{orbit} i.e.,
$\xi = \{ Orb_\sigma(x) : x \in X\}$, where $y \in    Orb_\sigma(x) $ 
if and only if there exist $m, n \in \N$ such that $\sigma^m(x) = 
\sigma^n (y)$. Define also a partition $\eta$  of $X$  into equivalence  
classes such that $x \sim y$ if and only if $\sigma^n(x) = \sigma^n(y)
$ for some $n \in \N$. Then
$$
\eta(x) = \bigcup_n \sigma^{-n}(\sigma^n(x)).
$$
If $\sigma$ is an at most countable-to-one endomorphism, then the 
partitions $\xi$ and $\eta$ define hyperfinite countable Borel  
equivalence relations (see \cite{DoughertyJacksonKechris1994} for 
detail).
Clearly, $\eta$-equivalence classes refine  $\xi$-equivalence classes. 

Suppose that $\mu\in M_1(X)$ is a $\sigma$-invariant measure, so 
that $\sigma$ is considered as a measure preserving 
endomorphism of $\sms$. We denote by $\xi'$ and $\eta'$ the 
measurable hulls of the partitions $\xi$ and $\eta$, respectively.

It is worth noting that there exists a one-to-one correspondence 
between  measurable partitions and complete sigma-subalgebras $\mc 
A$ of $\B$. Let $\mathcal A(\xi')$ and $\mc A(\eta')$ be the 
subalgebras corresponding to $\xi'$ and $\eta'$.

For $(X, \B, \mu, \sigma)$ as above, define
$$
\B_\infty = \bigcap_{n =0}^\infty \sigma^{-n}(\B),
$$
and let $\mc A_\sigma = \{A \in \B : \sigma^{-1}(A) = A\}$ be the 
subalgebra of $\sigma$-invariant subsets of $X$. We recall that $
\sigma$ is called \emph{exact} if $\B_\infty$ is a trivial subalgebra, 
and $\sigma$ is called \emph{ergodic} if $\mc A_\sigma$ is trivial. 
Since $\mc A_\sigma \subset \B_\infty$, exactness implies 
ergodicity.

If $\epsilon$ denotes the partition of $X$ into points, then we have
the sequence of decreasing measurable partitions $\{\sigma^{-i}
\epsilon\}_{i=0}^\infty$:
$$
\epsilon \succeq \sigma^{-1}\epsilon \succeq \sigma^{-2}\epsilon 
\cdots.
$$

The objects, we have defined above, satisfy the following properties.

\begin{lemma} [\cite{Rohlin1961}]\label{lem from Rohlin on exact} In 
the above notation, we have:

(1)
 $$
\xi' \preceq \eta', \qquad \ \ \ \eta' = \bigwedge_n \sigma^{-n}
\epsilon;
$$

(2)
$$
\mc A(\xi') = \mc A_\sigma, \qquad \ \ \ \mc A(\eta') = \B_\infty.
$$
\end{lemma}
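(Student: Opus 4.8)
The plan is to deduce the whole lemma from the explicit description, due to Rohlin \cite{Rohlin1949, Rohlin1961}, of the $\sigma$-algebra attached to a measurable hull: for a partition $\zeta$ of $(X, \B, \mu)$, the complete $\sigma$-algebra $\mathcal A(\zeta')$ of the measurable hull $\zeta'$ consists precisely of the ($\bmod 0$ class of) $\B$-measurable $\zeta$-sets. This reduction is legitimate here because the two candidate algebras, $\mathcal A_\sigma$ and $\B_\infty$, are themselves complete $\sigma$-subalgebras of $\B$; once the set-level saturation conditions are identified, parts (1) and (2) follow formally from the correspondence between measurable partitions and complete $\sigma$-subalgebras (the Lemma above) together with its identities for $\bigvee$ and $\bigwedge$.

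First I would settle the saturation condition for the orbit partition $\xi$. If $A \in \B$ is a $\xi$-set, then $x \in A$ and $\sigma^m(y) = \sigma^n(x)$ force $y \in A$; taking $n = 1$, $m = 0$ gives $\sigma(A) \subseteq A$, hence $A \subseteq \sigma^{-1}(A)$, while taking $n = 0$, $m = 1$ and using surjectivity of $\sigma$ gives $\sigma^{-1}(A) \subseteq A$, so $\sigma^{-1}(A) = A$. Conversely, if $\sigma^{-1}(A) = A$ then $\sigma^{-k}(A) = A = \sigma^{k}(A)$ for all $k$ (surjectivity again), and then $\sigma^m(y) = \sigma^n(x) \in \sigma^n(A) = A$ together with $A = \sigma^{-m}(A)$ forces $y \in A$. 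Hence the measurable $\xi$-sets are exactly the elements of $\mathcal A_\sigma$, i.e. $\mathcal A(\xi') = \mathcal A_\sigma$. Since $\sigma^{-1}(A) = A$ implies $A = \sigma^{-n}(A) \in \sigma^{-n}(\B)$ for every $n$, we also obtain $\mathcal A_\sigma \subseteq \B_\infty$.

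Next I would settle the condition for $\eta$. Writing $\eta(x) = \bigcup_n \sigma^{-n}(\sigma^n(x))$ as an increasing union of elements of the partition $\sigma^{-n}\epsilon = \zeta(\sigma^n)$, one checks that a set $A \in \B$ is $\eta$-saturated if and only if it is $(\sigma^{-n}\epsilon)$-saturated for every $n$, i.e. $A \in \mathcal A(\sigma^{-n}\epsilon) = \sigma^{-n}(\B)$ for every $n$ (using (\ref{eq quotient for endo}) applied to $\sigma^n$); equivalently, $A \in \B_\infty$. The direction from $A \in \B_\infty$ back to $\eta$-saturation uses that $A = \sigma^{-n}(B)$ implies $\sigma^{-n}(\sigma^n(A)) = A$ by surjectivity of $\sigma^n$; here the standing hypotheses on $\sigma$ (surjective, at most countable-to-one, non-singular, with $\mu$ $\sigma$-invariant) are what make $\xi,\eta$ available as hulls of measurable equivalence relations and keep the $\bmod 0$ manipulations legitimate. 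This gives $\mathcal A(\eta') = \B_\infty$, the second identity of part (2).

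It remains to assemble part (1). By the Lemma above, $\mathcal A\bigl(\bigwedge_n \sigma^{-n}\epsilon\bigr) = \bigcap_n \mathcal A(\sigma^{-n}\epsilon) = \bigcap_n \sigma^{-n}(\B) = \B_\infty = \mathcal A(\eta')$, so the one-to-one correspondence between measurable partitions and complete $\sigma$-subalgebras yields $\eta' = \bigwedge_n \sigma^{-n}\epsilon$; and $\mathcal A(\xi') = \mathcal A_\sigma \subseteq \B_\infty = \mathcal A(\eta')$ yields $\xi' \preceq \eta'$ (alternatively, every orbit $Orb_\sigma(x)$ is visibly a union of $\eta$-classes, so $\xi \prec \eta$ and one invokes monotonicity of the measurable hull). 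The main obstacle is not any one computation but the $\bmod 0$ bookkeeping inside the two saturation statements — in particular justifying that the $\sigma$-algebra of a measurable hull really is the family of measurable saturated sets, and that $\sigma^n(A)$ is measurable with $\sigma^{-n}(\sigma^n(A)) = A$ whenever $A \in \sigma^{-n}(\B)$ — so I would isolate these two auxiliary claims and prove them first.
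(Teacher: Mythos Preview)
The paper does not supply its own proof of this lemma: it is stated with the attribution \cite{Rohlin1961} and used as a black box. So there is nothing to compare against, and your write-up is in fact more detailed than what the paper offers.

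Your argument is correct in outline. The identification of $\xi$-saturated measurable sets with $\mathcal A_\sigma$ and of $\eta$-saturated measurable sets with $\B_\infty$ is exactly the right mechanism, and deducing $\eta' = \bigwedge_n \sigma^{-n}\epsilon$ from the correspondence between measurable partitions and complete $\sigma$-subalgebras is clean. One small point: you invoke ``at most countable-to-one'' among the standing hypotheses, but the lemma as stated in the paper does not assume this; in the section where the lemma appears, $\sigma$ is only assumed to be a surjective measure-preserving endomorphism. The place this matters is your use of measurability of $\sigma^n(A)$ and the identity $\sigma^{-n}(\sigma^n(A)) = A$ for $A \in \sigma^{-n}(\B)$. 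The latter is a purely set-theoretic consequence of surjectivity (if $A = \sigma^{-n}(B)$ then $\sigma^n(A) = B$ and $\sigma^{-n}(\sigma^n(A)) = A$), and measurability of $\sigma^n(A)$ is not actually needed once you phrase the argument this way, so you can drop the countable-to-one assumption. Other than that, your isolation of the two auxiliary $\bmod 0$ claims is the right bookkeeping, and the proof goes through.
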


In particular, it follows from Lemma \ref{lem from Rohlin on exact} that 
an endomorphism $\sigma$ is ergodic if the partition $\xi'$ is trivial, 
and $\sigma$ is exact if the partition $\eta'$ is trivial (understood in 
terms of  $\mbox{mod}\ 0$ convention).

Since $\eta' $ is a measurable partition, we can define the quotient 
measure space
$(X/\eta', \B/\eta', \mu_{\eta'})$. By Lemma \ref{lem from Rohlin on 
exact}, we see that  $\B/\eta' = \B_\infty$ and
$$
Y := X_\eta' = X_{\bigwedge_n \sigma^{-n}\epsilon}.
$$

\begin{corollary}
(1) Let $\phi : X \to Y$ be the natural projection. Then there exists a 
measure preserving  automorphism $\wt \sigma : (Y, \mu_{\eta'}) \to 
(Y, \mu_{\eta'})$ such that $\wt \sigma$  is an automorphic  factor 
of $\sigma$, i.e.,
$$
\wt \sigma \circ \phi = \phi \circ \sigma.
$$

(2) Let $S : f \to f\circ \sigma$ be the isometry on $\mc H =
 L^2(\mu)$. Then, in the Wold 
decomposition $\mc H = \mc H_\infty \oplus \mc H_\infty^\bot$
for $S$, we have
$$
\mc H_\infty = L^2(Y, \mu_{\eta'}),
$$
and the restriction of $S$ to $\mc H_\infty$ corresponds to the 
unitary operator $U$ defined by $\wt\sigma$, $U(f) = f\circ \wt
\sigma$.
\end{corollary}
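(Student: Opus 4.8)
The plan is to combine the Wold decomposition from Theorem \ref{thm Wold} with the structural description of $\B_\infty$ from Lemma \ref{lem from Rohlin on exact}, identifying the unitary part of the isometry $S : f \mapsto f\circ\sigma$ with the factor generated by the measurable hull $\eta'$ of the partition $\eta$. First I would establish (1): since $\eta'$ is $\sigma$-invariant (indeed $\eta' = \bigwedge_n \sigma^{-n}\epsilon$ is the intersection of the decreasing $\sigma$-invariant family, so $\sigma^{-1}\eta' \succeq \eta'$, and in fact equality holds because applying $\sigma^{-1}$ shifts the family), the endomorphism $\sigma$ descends to the quotient $(Y, \mu_{\eta'}) = (X/\eta', \B_\infty, \mu_{\eta'})$. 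The descended map $\wt\sigma$ is measure preserving because $\sigma$ is; it is surjective because $\sigma$ is; and crucially it is \emph{injective} on $Y$ because two points of $X$ with the same $\sigma$-image lie in the same $\eta$-class (hence the same $\eta'$-class), so they are already identified in $Y$. Thus $\wt\sigma$ is a measure preserving automorphism of $(Y, \mu_{\eta'})$ and $\wt\sigma\circ\phi = \phi\circ\sigma$ by construction of the quotient.

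Next I would prove (2), the identification $\mc H_\infty = L^2(Y, \mu_{\eta'})$. The key observation is that under the natural isometric inclusion $L^2(Y, \B_\infty, \mu_{\eta'}) \hookrightarrow L^2(X, \B, \mu) = \mc H$ (via $f \mapsto f\circ\phi$), the image is exactly the closed subspace $L^2(X, \B_\infty, \mu)$ of $\B_\infty$-measurable functions. So I must show
$$
\mc H_\infty = \bigcap_{n=0}^\infty S^n(\mc H) = L^2(X, \B_\infty, \mu).
$$
For this I would use the explicit form of $S^n$: since $S^n f = f\circ\sigma^n$, the range $S^n(\mc H)$ consists precisely of the $\sigma^{-n}(\B)$-measurable $L^2$ functions (a Borel function is $\sigma^{-n}(\B)$-measurable iff it factors as $g\circ\sigma^n$, as recalled in the excerpt; the $\sigma$-invariance of $\mu$ guarantees $g\circ\sigma^n \in L^2(\mu) \Leftrightarrow g \in L^2(\sigma$-pushforward$) = L^2(\mu)$, so this is a genuine identification of closed subspaces). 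Intersecting over $n$ gives $\bigcap_n L^2(X, \sigma^{-n}(\B), \mu) = L^2(X, \bigcap_n \sigma^{-n}(\B), \mu) = L^2(X, \B_\infty, \mu)$; the middle equality holds because the conditional expectations onto the decreasing sigma-algebras converge (reverse martingale convergence) to the conditional expectation onto $\B_\infty$, so a function fixed by all of them is $\B_\infty$-measurable. Finally, on this subspace $S$ acts by $f \mapsto f\circ\sigma$, which under the identification with $L^2(Y,\mu_{\eta'})$ is precisely $f\mapsto f\circ\wt\sigma = U f$, and $U$ is unitary by part (1); this matches the conclusion of Wold's theorem that $S|_{\mc H_\infty}$ is unitary.

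The main obstacle I anticipate is the careful verification that $\bigcap_n S^n(\mc H)$ equals $L^2(X, \B_\infty, \mu)$ rather than something smaller — the inclusion $\supseteq$ is routine, but $\subseteq$ requires knowing that a function lying in $L^2(X,\sigma^{-n}(\B),\mu)$ for every $n$ is $\B_\infty$-measurable, which is where the reverse martingale convergence argument (or equivalently Lemma \ref{lem from Rohlin on exact} together with the disintegration machinery of Theorem \ref{thm Rokhlin disintegration}) does the real work. A secondary technical point is checking that the quotient measure space $(X/\eta', \B_\infty, \mu_{\eta'})$ is standard and that $\phi$ genuinely intertwines $\sigma$ and a well-defined automorphism; this follows from measurability of $\eta'$ together with its $\sigma$-invariance, but the injectivity of $\wt\sigma$ deserves an explicit line since it is exactly the feature that upgrades ``endomorphic factor'' to ``automorphic factor.'' Everything else — the isometry property of $S$, the formula for $S^*$, the Wold decomposition itself — I may quote directly from Theorem \ref{thm S isometry} and Theorem \ref{thm Wold}.
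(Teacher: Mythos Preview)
The paper states this corollary without proof, treating it as an immediate consequence of the Wold theorem (Theorem \ref{thm Wold}), Theorem \ref{thm S isometry}, and Lemma \ref{lem from Rohlin on exact}. Your proposal supplies exactly the details one would expect to fill in, and the overall strategy is correct: identify $S^n(\mc H)$ with $L^2(X,\sigma^{-n}(\B),\mu)$, intersect to get $L^2(X,\B_\infty,\mu)$, and use Lemma \ref{lem from Rohlin on exact} to identify the latter with $L^2(Y,\mu_{\eta'})$. The reverse-martingale argument you invoke for the inclusion $\bigcap_n L^2(X,\sigma^{-n}(\B),\mu)\subset L^2(X,\B_\infty,\mu)$ is the clean way to handle the a.e.\ issues, and your identification of $S|_{\mc H_\infty}$ with $U$ is correct.

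One imprecision in part (1): your stated reason for injectivity of $\wt\sigma$ (``two points of $X$ with the same $\sigma$-image lie in the same $\eta$-class'') only treats the case $\sigma(x)=\sigma(y)$, whereas injectivity of $\wt\sigma$ requires the implication $\sigma(x)\sim_{\eta'}\sigma(y)\Rightarrow x\sim_{\eta'}y$. The correct justification is the equality $\sigma^{-1}\eta'=\eta'$ that you already established earlier in the same paragraph (shifting the index in $\bigwedge_n\sigma^{-n}\epsilon$); this equality is precisely the statement that $x\sim_{\sigma^{-1}\eta'}y\Leftrightarrow x\sim_{\eta'}y$, which gives both well-definedness and injectivity of $\wt\sigma$ at once. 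So you have the right fact in hand, just attached the wrong explanation to the injectivity step.
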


\begin{remark} \label{rem representation f is F sigma} Let $\sigma$ 
be an endomorphism of a standard measure space $\sms$ as above. 
Then it follows from the construction of $\B_\infty$ and from the 
definition 
of the partition $\eta'$ that for every $\B_\infty$-measurable 
function $f$ there exists a sequence of functions $(F_n)$ such that 
every $F_n$ is $\B$-measurable and, for every $n \in \N$, 
\be\label{eq f F_n sigma Wold}
f = F_n \circ\sigma^n.
\ee
With some abuse of notation, this relation can be also written as
$$
\mc M_n(X) = \mc M(X) \circ \sigma^n,
$$
where $\mc M_n(X)$ denotes the space of 
$\sigma^{-n}(\B)$-measurable functions.

Moreover, one can show that a $\B_\infty$-measurable function $f$ 
admits a representation $f = F_n\circ \sigma^n$ for every $n \in \N$ 
if and only if $f$ is a constant function on every class of the 
measurable equivalence relation $\eta'$.
\end{remark}

We consider now an application of  relation (\ref{eq f F_n sigma 
Wold}) to transfer operators $R$ defined on $(X, \B, \mu)$ by an 
onto endomorphism $\sigma$.
Suppose that $\sigma$ is not exact, i.e., the subalgebra $\B_\infty$ is 
not trivial. By Remark \ref{rem representation f is F sigma} and relation 
(\ref{eq f F_n sigma Wold}), we can see that, for any $f \in \mc 
M(\B_\infty)$,
$$
R^n(f) = F_n \omega_n, \ \ \ \omega_n := R^n(\mathbf 1).
$$
This fact can be easily proved by induction. Furthermore, since $\mu$ 
is $\sigma$-invariant, one can show that
$$
F_n = (S^*)^n f,
$$
where $S : f \mapsto f\circ \sigma$ is the isometry considered above. 
We leave the details to the reader.

\begin{remark}\label{rem nonsingular sigma in Wald sect}
In this section we have considered the case of measure preserving 
endomorphism $\sigma$. But the proved results are still true 
(mutatis mutandis) in the 
case when $\mu$ is non-singular with respect to $\sigma$. The 
standard method of dealing with non-singular transformations  is 
as follows.

Let $\theta_\mu$ be the Radon-Nikodym derivative, i.e. $\theta_\mu 
= \dfrac{d\mu\circ \sigma^{-1}}{d\mu}$. Then
$$
S : f \mapsto \sqrt{\theta_\mu} (f\circ\sigma) ,\ \ \ f \in L^2(\mu)
$$
is an isometry in $\mc H = L^2(\mu)$. Hence one can use the 
arguments developed
above in this section for the study of the operator $S$. In particular, 
the adjoint of $S$ can be determined by formula
$$
S^* g = \frac{(\sqrt{\theta_\mu} g d\mu)\circ \sigma^{-1}}{d\mu}.
$$

\end{remark}

\newpage

\section{Operators on the universal Hilbert space generated by 
transfer operators}\label{sect Universal HS}

Starting with a fixed transfer operator $(R, \sigma)$ on $(X, \B)$, we 
show below that there is then a naturally induced universal Hilbert 
space $\mc H(X)$  with the property that $(R, \sigma)$  yields 
naturally a corresponding isometry in $\mc H(X)$, i.e., an isometry 
with respect to the inner product from $\mc H(X)$. With this, we then 
obtain a rich spectral theory for the transfer operator at hand, for 
example a setting which may be considered to be an 
infinite-dimensional Perron-Frobenius theory. Our main results are 
Theorems \ref{lem H(la) is S-inv},  \ref{thm K_1(R) tfae}, and 
\ref{thm R is adjoint of S}.

\subsection{Definition of the universal Hilbert space $\mc H(X)$} For 
the reader's convenience, we recall the definition of the \textit{universal 
Hilbert space} \index{universal 
Hilbert space} $\mc H(X)$ where $(X, \B)$ is a standard Borel space as 
usual. We will use \cite{Nelson1969} as a main source. Our analysis
of transfer operators in $\mc H(X)$ is motivated by 
\cite{AlpayJorgensenLewkowicz2016,
 Jorgensen2001, Jorgensen2004}.

Let  $M(X)$ be the set of all Borel measures on $X$.
We write $(f, \mu)$ for a pair consisting of a real-valued function $f 
\in L^2(\mu)$ and a measure $\mu \in M(X)$.

\begin{definition}\label{def pair equivalence}
It is said that two pairs $(f, \mu)$ and $(g, \nu)$ are \index{equivalent 
pairs} \emph{equivalent} if there exists a measure $\la \in M(X)$ such that 
$\mu \ll \la$ and $\nu \ll \la$, and
\be\label{eq pair equivalence}
f\sqrt{\frac{d\mu}{d\la}} = g\sqrt{\frac{d\nu}{d\la}}, \ \ \ \la
\mbox{-a.e.}
\ee

The set of equivalence classes of pairs $(f, \mu)$ is denoted by $\mc 
H(X)$.
\end{definition}

It is not hard to show that, if relation (\ref{eq pair equivalence}) holds 
for some $\la$, then
$$
f\sqrt{\frac{d\mu}{d\la'}} = g\sqrt{\frac{d\nu}{d\la'}}, \ \ \  
\la'\mbox{-a.e.},
$$
for any measure $\la'$ such that $\mu \ll \la'$ and $\nu \ll \la'$  
\cite{Nelson1969}. From this observation, one can conclude  that  
(\ref{eq pair equivalence}) defines an \emph{equivalence relation} on 
the set of pairs $(f, \mu)$. We will denote the equivalence class of a 
pair $(f, \mu)$ by $f \sqrt{d\mu}$.

\begin{remark} \label{rem measure change 1}
(1) We mention an important case of equivalence of two pairs, $(f, 
\mu)$ and $(f', \mu')$. Suppose that $\mu' \ll \mu$ and $d\mu' = 
\varphi d\mu$. Then we can take $\la = \mu$ in Definition \ref{def 
pair equivalence}, so that 
$$
(f, \mu) \sim (f', \mu') \ \Longleftrightarrow \ f = f' \sqrt{\varphi}, \ 
\ \ \mu\mbox{-a.e.},
$$
and these pairs belong to the class $f\sqrt{d\mu}$.

(2) It follows from (1) that any pair $(f,\mu)$ is equivalent to a pair $
(f',\mu')$ with $\mu'(X) = 1$. Hence, one can assume that any 
equivalence class is defined by a probability measure.
\end{remark}

It turns out that $\mc H(X)$ can be endowed with a \textit{Hilbert 
space} structure. To see that $\mc H(X)$ is a vector space, we define 
the sum of elements from $\mc H(X)$ as follows:
$$
f \sqrt{d\mu} + g\sqrt{d\nu} = \left(f\sqrt{\frac{d\mu}{d\la}} +  g
\sqrt{\frac{d\nu}{d\la}}\right)\sqrt{d\la},
$$
where $\mu \ll \la$ and $\nu \ll \la$ for some measure $\la$. The 
definition of the multiplication by a scalar is obvious. Next, an inner 
product on $\mc H(X)$ is defined by
\be\label{eq inner product}
\langle f\sqrt{d\mu},\ g\sqrt{d\nu}\rangle_{\mc H(X)} \  = \int_X fg 
\sqrt{\frac{d\mu}{d\la}} \sqrt{\frac{d\nu}{d\la}} \; d\la 
\ee
where again $\mu \ll \la$ and $\nu \ll \la$ for a measure $\la$. It is a 
simple exercise to show that these operations are well-defined and do 
not depend on the choice of $\la$.

\begin{proposition} [\cite{Nelson1969}] With respect to the 
operations  defined above, $\mc H(X)$ is a Hilbert space.
\end{proposition}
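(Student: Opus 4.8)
The plan is to verify, in order, that $\mc H(X)$ is a well-defined vector space, that $\langle\cdot,\cdot\rangle_{\mc H(X)}$ is a well-defined positive-definite inner product, and finally that $\mc H(X)$ is complete; the first two parts are bookkeeping with Radon--Nikodym derivatives, while completeness is the substantive point.

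First I would record the chain rule that makes everything independent of the choice of dominating measure: if $\mu\ll\la$ and $\la\ll\la'$, then $\frac{d\mu}{d\la'}=\frac{d\mu}{d\la}\,\frac{d\la}{d\la'}$ $\la'$-a.e. Given finitely many pairs, a normalized sum of the underlying measures dominates them all, and any other common dominating measure $\la'$ may be compared with it by passing to a further common dominating measure (again a sum). Using this, one checks that the formula for $f\sqrt{d\mu}+g\sqrt{d\nu}$ and the formula (\ref{eq inner product}) for the inner product produce the same element, respectively the same number, regardless of which $\la$ is chosen, and that both respect the equivalence relation (\ref{eq pair equivalence}); the vector space axioms then reduce to the corresponding identities for representatives inside a fixed $L^2(\la)$. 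For positivity, taking $\la=\mu$ in (\ref{eq inner product}) gives $\langle f\sqrt{d\mu},f\sqrt{d\mu}\rangle_{\mc H(X)}=\int_X f^2\,d\mu=\|f\|_{L^2(\mu)}^2\ge 0$, with equality iff $f=0$ $\mu$-a.e., which by Definition \ref{def pair equivalence} says precisely that $f\sqrt{d\mu}$ is the zero class; symmetry and bilinearity are immediate from (\ref{eq inner product}). Hence Cauchy--Schwarz and the triangle inequality hold and $\|f\sqrt{d\mu}\|_{\mc H(X)}:=\|f\|_{L^2(\mu)}$ is a genuine norm.

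For completeness, I would take a Cauchy sequence $(f_n\sqrt{d\mu_n})_{n\ge 1}$ in $\mc H(X)$ and, invoking Remark \ref{rem measure change 1}(2), assume each $\mu_n$ is a probability measure. The key device is to pass to a single dominating measure: set $\mu:=\sum_{n=1}^\infty 2^{-n}\mu_n$, a Borel probability measure with $\mu_n\le 2^n\mu$, hence $\mu_n\ll\mu$ for all $n$. Then each class $f_n\sqrt{d\mu_n}$ is represented by $g_n\sqrt{d\mu}$ with $g_n:=f_n\sqrt{d\mu_n/d\mu}\in L^2(\mu)$, and by (\ref{eq inner product}) with $\la=\mu$ we get $\|f_n\sqrt{d\mu_n}-f_m\sqrt{d\mu_m}\|_{\mc H(X)}=\|g_n-g_m\|_{L^2(\mu)}$. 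Thus $(g_n)$ is Cauchy in the complete space $L^2(\mu)$, so $g_n\to g$ for some $g\in L^2(\mu)$, whence $f_n\sqrt{d\mu_n}=g_n\sqrt{d\mu}\to g\sqrt{d\mu}$ in $\mc H(X)$; so every Cauchy sequence converges and $\mc H(X)$ is a Hilbert space.

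The main obstacle is organizational rather than analytic: one must carry the ``independence of $\la$'' arguments through coherently so that addition and the inner product are genuinely defined on equivalence classes, and one must justify the two reductions in the completeness proof — replacing an arbitrary Cauchy sequence by one with probability measures, then dominating the entire sequence at once by $\sum_n 2^{-n}\mu_n$ — including the measurability, countable additivity, and finiteness of that infinite convex combination (and, if one insists on the completeness convention for measures in $M(X)$, passing to the completed measure). Once these reductions are in place, every computation in $\mc H(X)$ becomes a computation in an ordinary $L^2$-space and the classical theory finishes the argument.
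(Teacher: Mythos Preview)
Your proof is correct and follows the standard argument (this is essentially the proof in Nelson's \emph{Topics in Dynamics I}). Note, however, that the paper itself does not give a proof of this proposition at all: it simply writes ``A proof of this assertion can be found in \cite{Nelson1969} or \cite{Jorgensen2004}.'' So there is nothing to compare against beyond saying that you have supplied exactly the argument the paper defers to, with the key step being the reduction of a Cauchy sequence to a single $L^2(\mu)$ via the dominating measure $\mu=\sum_n 2^{-n}\mu_n$.
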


A proof of this assertion can be found in \cite{Nelson1969} or 
\cite{Jorgensen2004}.
\medskip

We will call $\mc H(X)$ the \emph{universal Hilbert space}. \index{universal 
Hilbert space} 

It follows from the definition of the inner product in $\mc H(X)$ that 
for any element $f\sqrt{d\mu}$ of $\mc H(X)$
$$
\|f\sqrt{d\mu}\|^2_{\mc H(X)} = \int_X f^2 \; d\mu = \|f\|
^2_{L^2(\mu)}.
$$
Thus, if $\mu$ is a fixed measure on $X$, then the map
\be\label{eq iota embedding}
{\iota} : f \mapsto f \sqrt{d\mu} : L^2(\mu) \to \mc H(X)
\ee
gives an \emph{isometric embedding} of $L^2(\mu)$ into $\mathcal 
H(X)$.

We denote $\mc H(\mu) := \iota (L^2(\mu))$. The following 
proposition explains why  $\mc H(X)$ is called a universal Hilbert 
space.

\begin{proposition}\label{prop properties of universal H space}
For any two measures $\mu$ and $\nu$ on $(X, \B)$,

(1)  $\mu \ll \nu$ if and only if  $\mc H(\mu) $ is isometrically 
embedded into $\mc H(\nu) $.

(2) $\mu \sim \nu$ if and only if  $\mc H(\mu) = \mc H(\nu)$.

(3)  $\mu$ and $\nu$ are mutually singular if and only if  the 
subspaces $\mc H(\mu)$ and $\mc H(\nu)$ are orthogonal in $\mc 
H(X)$.
\end{proposition}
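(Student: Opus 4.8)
The plan is to prove each of the three equivalences by relating the Hilbert space inner product in $\mc H(X)$ directly to the measure-theoretic relations $\ll$, $\sim$, and mutual singularity. The common technical device is to pick a single dominating measure $\la$ (e.g. $\la = \mu + \nu$, which dominates both $\mu$ and $\nu$ and is $\sigma$-finite when $\mu,\nu$ are) and express everything through Radon-Nikodym derivatives with respect to $\la$. Throughout I will use the description of $\mc H(\mu)$ from (\ref{eq iota embedding}): it is the set of classes $f\sqrt{d\mu}$ with $f \in L^2(\mu)$, and the embedding $\iota$ is isometric.

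First I would prove (1). For the forward direction, assume $\mu \ll \nu$. Then for any $f \in L^2(\mu)$, using Remark \ref{rem measure change 1}(1) with $d\mu = \varphi\, d\nu$ where $\varphi = \frac{d\mu}{d\nu}$, we have $f\sqrt{d\mu} = (f\sqrt{\varphi})\sqrt{d\nu}$, and $f\sqrt{\varphi} \in L^2(\nu)$ since $\int_X |f|^2 \varphi\, d\nu = \int_X |f|^2\, d\mu < \infty$. Thus $\mc H(\mu) \subset \mc H(\nu)$ as subsets of $\mc H(X)$, and the inclusion is automatically isometric because both inherit the ambient inner product (\ref{eq inner product}). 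For the converse, suppose $\mc H(\mu) \subset \mc H(\nu)$ isometrically but $\mu \not\ll \nu$; then there is a set $A$ with $\nu(A) = 0$ and $\mu(A) > 0$. The element $\chi_A \sqrt{d\mu}$ is nonzero in $\mc H(X)$ (its norm is $\mu(A)^{1/2} > 0$), so it must equal some $g\sqrt{d\nu}$ with $g \in L^2(\nu)$. Taking $\la = \mu + \nu$ and writing the equivalence condition (\ref{eq pair equivalence}), $\chi_A \sqrt{d\mu/d\la} = g\sqrt{d\nu/d\la}$ $\la$-a.e.; but $d\nu/d\la = 0$ $\la$-a.e.\ on $A$ while $d\mu/d\la > 0$ on a subset of $A$ of positive $\la$-measure (since $\mu(A)>0$), a contradiction.

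Next, (2) follows immediately from (1): $\mu \sim \nu$ means $\mu \ll \nu$ and $\nu \ll \mu$, which by (1) gives mutual isometric embeddings $\mc H(\mu) \subset \mc H(\nu) \subset \mc H(\mu)$, hence equality; conversely $\mc H(\mu) = \mc H(\nu)$ forces both embeddings, hence both absolute continuity relations. For (3), assume $\mu \perp \nu$, so $X = A \sqcup A^c$ with $\mu(A^c) = 0$, $\nu(A) = 0$. Take $\la = \mu + \nu$. For arbitrary $f\sqrt{d\mu} \in \mc H(\mu)$ and $g\sqrt{d\nu} \in \mc H(\nu)$, compute
\begin{equation*}
\langle f\sqrt{d\mu},\ g\sqrt{d\nu}\rangle_{\mc H(X)} = \int_X fg \sqrt{\frac{d\mu}{d\la}}\sqrt{\frac{d\nu}{d\la}}\; d\la,
\end{equation*}
and observe the integrand vanishes $\la$-a.e.: on $A$ we have $d\nu/d\la = 0$ $\la$-a.e., and on $A^c$ we have $d\mu/d\la = 0$ $\la$-a.e. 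Hence $\mc H(\mu) \perp \mc H(\nu)$. For the converse, suppose $\mc H(\mu) \perp \mc H(\nu)$ but $\mu$ and $\nu$ are not mutually singular. By the Lebesgue decomposition of $\mu$ with respect to $\nu$, there is a set $B$ with $\nu(B) > 0$ on which $\mu$ has a nontrivial absolutely continuous part, so one can find a set $B$ with $\mu(B) > 0$ and $\nu(B) > 0$ and $\mu|_B \ll \nu|_B$ (shrinking if necessary so that $d\mu/d\la$ and $d\nu/d\la$ are both positive on $B$, where $\la = \mu+\nu$). Then $\langle \chi_B\sqrt{d\mu},\ \chi_B\sqrt{d\nu}\rangle_{\mc H(X)} = \int_B \sqrt{(d\mu/d\la)(d\nu/d\la)}\; d\la > 0$, contradicting orthogonality.

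The main obstacle is the converse of (3): one must produce, from the failure of mutual singularity, a concrete set $B$ on which the two Radon-Nikodym densities with respect to $\la = \mu+\nu$ are simultaneously bounded away from zero, so that the inner product integral is strictly positive. This is exactly the content of the Lebesgue decomposition theorem applied carefully, and I would phrase it as: if $\mu \not\perp \nu$, then writing $\mu = \mu_{ac} + \mu_s$ with $\mu_{ac} \ll \nu$, $\mu_{ac} \neq 0$, the set $B = \{x : \tfrac{d\mu}{d\la}(x) > 0 \text{ and } \tfrac{d\nu}{d\la}(x) > 0\}$ has positive $\la$-measure, and $\chi_B\sqrt{d\mu}$, $\chi_B\sqrt{d\nu}$ are the desired non-orthogonal vectors. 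The forward directions and parts (1)–(2) are essentially bookkeeping with the definition of the inner product and Radon-Nikodym derivatives, and should be presented concisely.
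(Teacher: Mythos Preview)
Your proof is correct and follows the same approach as the paper: working with a dominating measure (the paper uses $\nu$ itself when $\mu\ll\nu$; you use $\la=\mu+\nu$ throughout) and translating the measure-theoretic relations into statements about Radon--Nikodym densities. The paper in fact only sketches the forward direction of (1) and defers the remaining parts to \cite{Nelson1969, Jorgensen2004}, so your treatment is more complete but not methodologically different.
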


\begin{proof}
These properties are rather obvious, and can be proved directly. We 
sketch here a proof of (1) to illustrate the used technique.  Let $\psi d
\nu = d\mu$. Set
$$
T(g \sqrt{d\mu}) = g\sqrt{\psi}\sqrt{d\nu},
$$
and show that $T$ implements the isometric embedding. We have
$$
||T(g \sqrt{d\mu})||^2_{\mc H(\nu)} = \int_X g^2 \psi \; d\nu = 
\int_X g^2 \; d\mu = ||g\sqrt{d\mu}||^2_{\mc H(\mu)}.
$$
We  leave the  proof of the other statements to the reader, see details 
in \cite{Nelson1969, Jorgensen2004}.
\end{proof}

\subsection{Transfer operators on $\mc H(X)$} Suppose that we have 
a surjective endomorphism $\sigma$ of a standard Borel space $(X, 
\B)$, and let a transfer operator $(R, \sigma)$ be defined on Borel 
functions on $(X, \B)$. In our further considerations, we will work with  
the transfer operator $R$ acting in the space  $L^2(\lambda)$ where  
$\la$ is a measure from $M(X)$. For given $R$, we divide measures 
into two subsets, $ \mc L(R)$ and $M(X) \setminus \mc L(R)$.

We recall that a measure $\la$ is called \emph{atomic} if there exists 
\index{measure ! atomic} \index{measure ! continuous}
a point in $X$ of positive measure. Non-atomic measures are called 
\emph{continuous}. Let $M_c(X)$ and $M_a(X)$ denote the subsets 
of $M(X)$ formed by continuous and purely atomic measures, 
respectively.  Dealing with vectors $f\sqrt{d\la}$ in the space $\mc 
H(X)$, we distinguish two principal cases: (i) the measure $\la$ is 
continuous or (ii) the measure $\la$ is purely atomic.

 We discuss in the  following statements some  properties of the 
 universal Hilbert space. Note that every measure $\mu $ can be 
 viewed as a vector $\sqrt{d\mu}$ in the space $\mc H(X)$. 

\begin{lemma}\label{lem prop of orthog decompos for universal}
 Let $\mc G$ be a subset of $M(X)$. Denote by $\mc H_{\mc G}$ 
the closure of the subspace spanned by $\mc H(\la), \la \in \mc G$.
Hence,  the universal Hilbert space admits the following orthogonal 
decomposition:
$$
\mc H(X) = \mc H_{\mc G} \oplus (\mc H_{\mc G})^\bot.
$$

Furthermore,  $\mc H_{\mc G}^\bot$ is spanned  by all $\mc H(\nu)$
such that $\nu$ is singular to all measures $\mu$ from $\mc G$. 
\end{lemma}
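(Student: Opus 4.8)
The plan is to establish the orthogonal decomposition directly from the Hilbert space structure of $\mc H(X)$, and then to identify the orthogonal complement $(\mc H_{\mc G})^\bot$ explicitly using Proposition \ref{prop properties of universal H space}(3). The first assertion is essentially free: $\mc H_{\mc G}$ is defined to be a \emph{closed} subspace of the Hilbert space $\mc H(X)$ (the closure of the span of the $\mc H(\la)$, $\la \in \mc G$), and any closed subspace $\mc K$ of a Hilbert space yields the orthogonal decomposition $\mc H(X) = \mc K \oplus \mc K^\bot$. So the only real content is the description of $\mc H_{\mc G}^\bot$.

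For that description, let $\mc N$ denote the set of measures $\nu \in M(X)$ that are mutually singular with every $\mu \in \mc G$, and let $\mc H_{\mc N}$ be the closure of the span of the subspaces $\mc H(\nu)$, $\nu \in \mc N$. I would prove $\mc H_{\mc G}^\bot = \mc H_{\mc N}$ by showing both inclusions. The inclusion $\mc H_{\mc N} \subseteq \mc H_{\mc G}^\bot$ follows from Proposition \ref{prop properties of universal H space}(3): for $\nu \in \mc N$ and any $\mu \in \mc G$, $\nu \perp \mu$ implies $\mc H(\nu) \perp \mc H(\mu)$; hence each $\mc H(\nu)$ with $\nu \in \mc N$ is orthogonal to the span of the $\mc H(\mu)$, $\mu \in \mc G$, and therefore to its closure $\mc H_{\mc G}$; taking the closed span over $\nu \in \mc N$ preserves this orthogonality.

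For the reverse inclusion $\mc H_{\mc G}^\bot \subseteq \mc H_{\mc N}$, I would take an arbitrary vector $\zeta \in \mc H_{\mc G}^\bot$ and, using Remark \ref{rem measure change 1}(2), write it as $\zeta = f\sqrt{d\nu}$ for some probability measure $\nu$ and $f \in L^2(\nu)$. The key step is a Lebesgue-type decomposition argument: given $\nu$, one wants to split it against the family $\mc G$ into an ``absolutely continuous part relative to $\mc G$'' and a ``singular part''. More precisely, I expect to produce a measure $\mu_0$ which is a countable convex combination $\mu_0 = \sum_k c_k \mu_k$ of members $\mu_k \in \mc G$ such that the $\mc G$-absolutely-continuous part $\nu_{ac}$ of $\nu$ (relative to the $\sigma$-ideal generated by $\mc G$) satisfies $\nu_{ac} \ll \mu_0$, while the remaining part $\nu_s$ is singular to every $\mu \in \mc G$, i.e.\ $\nu_s \in \mc N$. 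Correspondingly $f\sqrt{d\nu}$ decomposes as an element of $\mc H(\mu_0) \subseteq \mc H_{\mc G}$ plus an element of $\mc H(\nu_s) \subseteq \mc H_{\mc N}$; since $\zeta \perp \mc H_{\mc G}$, the $\mc H(\mu_0)$-component must vanish, so $\zeta \in \mc H_{\mc N}$.

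\textbf{The main obstacle} I anticipate is the countability issue in constructing $\mu_0$: the family $\mc G$ may be uncountable, so one cannot naively form a ``sum of all members of $\mc G$''. The standard fix is an exhaustion argument — among countable convex combinations $\sum c_k\mu_k$ with $\mu_k \in \mc G$, pick one maximizing (or approaching the supremum of) the $\nu$-measure of the set on which $\nu$ is absolutely continuous with respect to it; a routine argument then shows the leftover singular part $\nu_s$ is genuinely singular to \emph{every} $\mu \in \mc G$, not merely to the chosen countable subfamily, because otherwise one could enlarge the combination and increase the functional. I would also need to check that the decomposition $f\sqrt{d\nu} = f\mathbf 1_{E}\sqrt{d\nu} + f\mathbf 1_{X\setminus E}\sqrt{d\nu}$ (with $E$ the support set of $\nu_{ac}$) is an orthogonal decomposition inside $\mc H(X)$ and that the two pieces lie in $\mc H(\mu_0)$ and $\mc H(\nu_s)$ respectively — this is a direct computation with Radon–Nikodym derivatives using the definition of the inner product \eqref{eq inner product} and Proposition \ref{prop properties of universal H space}(1). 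Once these pieces are in place, the two inclusions combine to give $\mc H_{\mc G}^\bot = \mc H_{\mc N}$, which is exactly the ``furthermore'' clause.
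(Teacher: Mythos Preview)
Your proposal is correct and follows the same approach the paper indicates: the paper's entire proof is the single sentence ``Lemma \ref{lem prop of orthog decompos for universal} follows immediately from Proposition \ref{prop properties of universal H space}.'' Your argument unpacks this citation, and in fact supplies a genuine detail the paper glosses over---the exhaustion argument producing a countable combination $\mu_0 = \sum_k c_k\mu_k$ of members of $\mc G$ against which to take the Lebesgue decomposition of $\nu$. That step is not literally immediate from Proposition \ref{prop properties of universal H space}, so your version is more complete; one minor point worth making explicit is why $\mc H(\mu_0) \subseteq \mc H_{\mc G}$ (decompose $X$ into disjoint pieces on which $\mu_0$ is equivalent to some $\mu_k$, and use Proposition \ref{prop properties of universal H space}(2)).
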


Lemma \ref{lem prop of orthog decompos for universal} follows 
immediately from Proposition \ref{prop properties of universal H 
space}.

\begin{remark}\label{rem choice of measure in a class}
(1) As mentioned in Remark \ref{rem measure change 1}, we can 
always assume that $\mu$ is a probability measure. Together with the 
assumption that $R(\mathbf 1) =\mathbf 1$, this means that, for any 
probability measure $\mu$, the  measure $\mu R$ is well defined. 
This fact will be repeatedly used below.

(2) For  a specific choice of the set $\mc G$ in Lemma \ref{lem prop 
of orthog decompos  for universal},  we can get  the following  useful
 decompositions of $\mc H(X)$:
\be\label{eq repr in cont and atomic}
\mc H(X) = \mc H_{M_c(X)} \oplus \mc H_{M_a(X)}, \ \ \ 
\mc H(X) = \mc H_{K_1} \oplus (\mc H_{K_1})^\bot, \ \ \ 
\ee
\be\label{eq repr H(X) via L(R)}
\mc H(X) = \mc H_{\mc L(R)} \oplus (\mc H_{\mc L(R)})^\bot
\ee
where $K_1 = M_1(X) R$.

(3) Given  a \emph{nonzero} vector $f\sqrt{d\mu}$ in $\mc H(X)$ 
with a continuous (atomic) measure $\mu$, we remark that the 
 class of equivalent pairs generated by $(f, \mu)$  contains only pairs 
 $(g, \la)$ where $\la$ is a continuous (atomic) measure. This 
 follows  from the following obvious fact: if $\la \ll \nu$ and $\la$ is 
 atomic at  a  point $x_0$, then $\nu$ is atomic at the same point.
This means that  $f\sqrt{d\mu} \in \mc H_{M_c(X)}$ if and only if $
\mu$ is continuous, and $f\sqrt{d\mu} \in \mc H_{M_a(X)}$ if and 
only if $\mu$ is purely atomic. This means   that 
the decomposition (\ref{eq repr in cont and atomic}) is invariant with 
respect to the equivalence of pairs $(f, \mu)$. But the decomposition 
in  (\ref{eq repr H(X) via L(R)}) is not invariant with respect to this 
equivalence relation. 
\end{remark}
\medskip

In what follows we will translate the notion of a transfer operator 
$R$ and its adjoint operator $S$, which are studied in $L^2(\la)$ in 
Section 
\ref{sect L1 and L2},  to the subspace $\mc H(\la)$ of the universal 
Hilbert space $\mc H(X)$. But, in contrast to the pair $(R, S)$, we 
begin with an operator $\wh S$ and show that its adjoint $\wh S^* =
\wh R$ is an analogue of a transfer operator. 
 Our approach is mainly  based on  the application of 
Proposition \ref{prop properties of universal H space} which allows us 
to work with classes of equivalent measures.

In this section, we will deal with a pair of operators $(\wh R, \wh S)$ 
acting in $\mc H(X)$ that are considered as analogous one to the 
symmetric pair of operators $(R, S)$ studied in Section \ref{sect L1 
and L2} where $R$ is a transfer operator obtained as adjoint to the 
composition operator $S$. We first outline our approach to their 
definition. We recall that our main assumption in this context is 
that all considered transfer operators are normalized, 
$R(\mathbf 1) =\mathbf 1$. 

We  define an operator $\wh S$ that acts in the 
set $\mc P$ of all pairs $(f, \mu)$ where $f \in L^2(\mu)$ and $\mu 
\in M_1(X)$. It will be checked that $\wh S$ preserves the partition of 
$\mc P$ into equivalence classes. Therefore this fact allows us to 
consider $\wh S$ as an 
operator acting in $\mc H(X)$. In the next step, we will check that $
\wh S$ is an isometry that leaves every subspace $\mc H(\la)$   
invariant. Hence, the adjoint operator $\wh R = \wh S^*$ exists and is 
a co-isometry. We 
note that it is unclear whether $\wh R$ can be defined directly in 
terms of a transformation on the set $\mc P$ that  preserves the 
equivalence relation on the set of pairs $(f,\la)$. Meantime, there
 exists a particular case when it can be done explicitly and this case 
 will be  studied carefully.

Given a vector $f\sqrt{d\la}$ with continuous measure $\la \in M_c(X)
$, we are going to work with measures $\la R$ and $\la \circ 
\sigma^{-1}$.   We can do it by  virtue of Remark \ref{rem choice of 
measure in a class} (1). In other words, when we deal  with pairs $(f, 
\la)$, we can think that the actions of $R$ and $\sigma$ on the set of 
measures $M_1(X)$ are well defined everywhere. As was explained in 
Remark \ref{rem choice of measure in a class} (3), we can consider the
 two cases of continuous and purely atomic measures independently 
 due to the invariance of the decomposition (\ref{eq repr in cont and 
 atomic}).

As discussed in Section \ref{sect TO on 
measurable spaces}, the assumption that $R(\mathbf 1)
 =\mathbf 1$ is 
not restrictive if a non-trivial harmonic function exists.  On the other 
hand, this property is automatically  true for a wide class of transfer 
operators acting in $L^2(\la)$ for any measure $\la$. 

We begin with the following lemma which is used repeatedly 
below. 

\begin{lemma}\label{lem R 1= 1 - actions on measures}
Let $(R, \sigma)$ be  a transfer operator on Borel functions over $(X,
\B)$. Then
$$
R (\mathbf 1)=  \mathbf 1 \ \ \ \Longleftrightarrow \ \ \ (\mu\circ R)
\sigma^{-1} = \mu \ \ \forall \mu \in M_1(X).
$$
\end{lemma}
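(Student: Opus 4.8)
The plan is to reduce both implications to a single computation based on the pull‑out property (\ref{eq char property of r via sigma 1}) applied to $\chi_A\circ\sigma=\chi_{\sigma^{-1}(A)}$. First I would record the identity valid for \emph{any} transfer operator $(R,\sigma)$, normalized or not: for every $\mu\in M_1(X)$ and every $A\in\B$,
\[
\big((\mu R)\circ\sigma^{-1}\big)(A)=(\mu R)\big(\sigma^{-1}(A)\big)=\int_X R\big(\chi_A\circ\sigma\big)\,d\mu=\int_X \chi_A\,R(\mathbf 1)\,d\mu ,
\]
where the third equality is the definition of the action of $R$ on measures (see (\ref{eq mu P measure of A})) and the fourth is (\ref{eq char property of r via sigma 1}) with $f=\chi_A$ and $g=\mathbf 1$. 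This identity is the only structural input required.

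For the forward implication, assuming $R(\mathbf 1)=\mathbf 1$ the right‑hand side above equals $\int_X\chi_A\,d\mu=\mu(A)$, hence $(\mu R)\circ\sigma^{-1}=\mu$ for all $\mu\in M_1(X)$; here the hypothesis $R(\mathbf 1)=\mathbf 1$ also guarantees (cf. Proposition \ref{prop measure lambda R}) that $\mu R$ is again a probability measure, so the expression $(\mu R)\circ\sigma^{-1}$ is meaningful for every $\mu\in M_1(X)$.

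For the converse, assume $(\mu R)\circ\sigma^{-1}=\mu$ for all $\mu\in M_1(X)$. By the displayed identity this reads $\int_A\big(R(\mathbf 1)-\mathbf 1\big)\,d\mu=0$ for every $A\in\B$ and every $\mu\in M_1(X)$. Specializing to $A=X$ and to a Dirac measure $\mu=\delta_x$ at an arbitrary point $x\in X$ — for which $(\delta_x R)(B)=R(\chi_B)(x)$ is well defined without any continuity or integrability assumption on $R$ — gives $R(\mathbf 1)(x)=\delta_x(X)=1$; since $x$ is arbitrary, $R(\mathbf 1)=\mathbf 1$.

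The argument is essentially routine, and the only point deserving a remark — the closest thing to an obstacle — is the admissibility of $\mu R$ for the measures one tests against. In the forward direction this is handled by normalization as noted above; in the converse direction testing against Dirac masses sidesteps the issue entirely, but if one prefers to stay within the class of continuous measures, the same conclusion follows from the observation that a Borel function whose integral over every Borel set vanishes against every probability measure must be identically zero, applied to $R(\mathbf 1)-\mathbf 1$.
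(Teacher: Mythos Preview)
Your proof is correct and rests on the same identity the paper invokes, namely $(\mu R)\circ\sigma^{-1}=R(\mathbf 1)\cdot\mu$ (which the paper records as the Radon--Nikodym formula $R(\mathbf 1)=\dfrac{d(\mu R)\circ\sigma^{-1}}{d\mu}$ from Lemma~\ref{lem Ri as derivative}). The paper simply cites that formula; you rederive it for characteristic functions via the pull-out property and then make the converse explicit by testing against Dirac measures, which is a clean way to upgrade the conclusion from an a.e.\ statement to a pointwise one without invoking absolute continuity.
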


This result immediately follows from the the relation
$$
R(\mathbf 1) = \frac{d(\mu\circ R)\sigma^{-1}}{d\mu}
$$
that was proved in Section \ref{sect L1 and L2}, see (\ref{eq R1 R-N 
derivative}).

\begin{definition} Let $\la$ be a continuous probability  measure on $
(X, \B)$,  $R(\mathbf 1) =\mathbf 1$, and $f\in L^2(\la)$. Then we 
define, for any  pair $(f, \la)$,
\be\label{eq def of wh S}
\wh S (f, \la) = (f \circ \sigma, \la R).
\ee
\end{definition}

We first  show that the operator $\wh S$ induces an operator on  the
 space $\mc H(X)$. This fact follows from the following lemma.

\begin{lemma}\label{lem R is defined on classes} Let $f \in L^2(\la)$ 
and $f_1 \in L^2(\la_1)$ where $\la$ and  $\la_1$ are continuous 
probability measures. Then
$$
(f_1, \la_1) \sim (f, \la) \ \Longleftrightarrow \ (f_1\circ \sigma,  
\la_1 R) \sim (f\circ \sigma, \la R)
$$
\end{lemma}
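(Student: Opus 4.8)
The plan is to prove both directions of the equivalence simultaneously, by rewriting each side as an almost‑everywhere identity between two fixed measurable functions — once with respect to a common dominating measure $\mu$, once with respect to $\mu R$ — and then observing that these two a.e.\ identities are equivalent because $(\mu R)\csi1 = \mu$, which is exactly the content of the normalization $R(\mathbf 1)=\mathbf 1$ (Lemma~\ref{lem R 1= 1 - actions on measures}).

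First I would fix a convenient common dominator. Since the equivalence relation on pairs does not depend on the auxiliary measure used to witness it (the remark following Definition~\ref{def pair equivalence}), I may take $\mu := \la + \la_1$, so that $\la \ll \mu$, $\la_1 \ll \mu$; write $d\la = \va\, d\mu$ and $d\la_1 = \va_1\, d\mu$ with $\va,\va_1\ge 0$. By definition of equivalence of pairs,
$$
(f_1, \la_1) \sim (f, \la) \ \Longleftrightarrow \ f_1 \sqrt{\va_1} = f\sqrt{\va} \qquad \mu\mbox{-a.e.}
$$
On the measure side, $\mu R$ is a well‑defined finite measure ($(\mu R)(X)=\int_X R(\mathbf 1)\,d\mu = \mu(X)$), and Theorem~\ref{thm from measure equivalence}(1) applied to $\la\ll\mu$ and $\la_1\ll\mu$ gives $\la R \ll \mu R$, $\la_1 R \ll \mu R$ together with
$$
\frac{d(\la R)}{d(\mu R)} = \va \cs, \qquad \frac{d(\la_1 R)}{d(\mu R)} = \va_1 \cs .
$$
Hence $\mu R$ is a common dominator for $\la R$ and $\la_1 R$, and the definition of equivalence gives
$$
(f_1\cs, \la_1 R) \sim (f\cs, \la R) \ \Longleftrightarrow \ (f_1\cs)(\sqrt{\va_1}\cs) = (f\cs)(\sqrt{\va}\cs) \qquad (\mu R)\mbox{-a.e.},
$$
that is, $\bigl(f_1\sqrt{\va_1}\bigr)\cs = \bigl(f\sqrt{\va}\bigr)\cs$ holding $(\mu R)$-a.e. (Along the way one checks that $(f\cs,\la R)$ is a legitimate element of $\mc H(X)$, i.e. $f\cs\in L^2(\la R)$; this is the short computation $\int_X (f\cs)^2\, d(\la R) = \int_X R\bigl((f^2\cs)\mathbf 1\bigr)\, d\la = \int_X f^2 R(\mathbf 1)\, d\la = \|f\|^2_{L^2(\la)}$, and likewise for $f_1$.)

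It remains to compare the two displayed a.e.\ conditions. Writing $g := f_1\sqrt{\va_1}$ and $g' := f\sqrt{\va}$, the lemma reduces to the equivalence $g = g'\ \mu$-a.e. $\iff$ $g\cs = g'\cs\ (\mu R)$-a.e. Set $N := \{x\in X : g(x)\ne g'(x)\}$, so that $\{x : g(\sigma x)\ne g'(\sigma x)\} = \sigma^{-1}(N)$. Because $R(\mathbf 1)=\mathbf 1$, for every Borel set $A$,
$$
(\mu R)\bigl(\sigma^{-1}(A)\bigr) = \int_X R(\chi_A\cs)\, d\mu = \int_X \chi_A\, R(\mathbf 1)\, d\mu = \mu(A),
$$
i.e. $(\mu R)\csi1 = \mu$ (Lemma~\ref{lem R 1= 1 - actions on measures}). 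Taking $A=N$ yields $(\mu R)\bigl(\{g\cs\ne g'\cs\}\bigr) = (\mu R)(\sigma^{-1}(N)) = \mu(N) = \mu(\{g\ne g'\})$, so one of these quantities vanishes iff the other does. This establishes the reduced equivalence and hence both directions of the lemma. The argument is essentially routine; the only genuine point is the bookkeeping of Radon–Nikodym derivatives under $R$ via Theorem~\ref{thm from measure equivalence}, and the recognition that the normalization of $R$ is precisely what makes $\sigma$ a null‑set‑preserving correspondence between $(X,\mu)$ and $(X,\mu R)$.
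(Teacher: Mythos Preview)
Your proof is correct and follows essentially the same strategy as the paper's: both fix the common dominator $\mu=\la+\la_1$, invoke Theorem~\ref{thm from measure equivalence}(1) to identify $\dfrac{d(\la R)}{d(\mu R)}=\va\cs$ and $\dfrac{d(\la_1 R)}{d(\mu R)}=\va_1\cs$, and then compare the two a.e.\ identities. Your handling of the converse, via the single observation $(\mu R)\bigl(\sigma^{-1}(N)\bigr)=\mu(N)$ coming from $(\mu R)\csi1=\mu$, is in fact a bit more transparent than the paper's ``apply $R$ to both sides and use $\mu R\ll\mu$'', which is really encoding the same computation.
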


\begin{proof}
By definition of the equivalence relation $\sim$ on the set 
$\mc P$, two  pairs $(f, \la)$ and $(f_1, \la_1)$ are in the same class
if and only if there exists a measure $\mu$ such that $\la \ll \mu$, $
\la_1 \ll \mu$, and
\be\label{eq equiv pairs}
f\sqrt{\frac{d\la}{d\mu}} = f_1\sqrt{\frac{d\la_1}{d\mu}}, \quad 
\mu\mbox{-a.e.}
\ee
In particular, $\mu$ can be chosen as the sum $\la + \la_1$. Then  
$\la R \ll \mu R$ and $ \la_1 R \ll \mu R$ (see Section \ref{sect 
L1 and L2}). It follows from Theorem \ref{thm from measure 
equivalence} (1) that
\be\label{eq add R to la and la_1}
\frac{d\la}{d\mu} \circ \sigma = \frac{d(\la R)}{d(\mu R)}, \quad 
\quad
\frac{d\la_1}{d\mu} \circ \sigma = \frac{d(\la_1 R)}{d(\mu R)}.
\ee
Hence,  we can apply (\ref{eq equiv pairs}), (\ref{eq add R to la and 
la_1})  and conclude that
\begin{eqnarray*}
  (f\circ \sigma) \sqrt{\frac{d(\la R)}{d(\mu R)}}  &=& (f\circ \sigma) 
  \sqrt{\frac{d\la}{d\mu} \circ \sigma}   \\
  \\
   &=& (f_1\circ \sigma)\sqrt{\frac{d\la_1}{d\mu}\circ \sigma}\\
   \\
   &=& (f_1\circ \sigma)\sqrt{\frac{d(\la_1 R)}{d(\mu R)}} \qquad 
   (\mu R)\mbox{-a.e.}
\end{eqnarray*}
This proves that $(f_1\circ \sigma, \la_1 R) \sim (f\circ \sigma, \la R)
$.

Conversely, if we have the fact that the pairs $(f_1\circ \sigma, \la_1
 R)$ and   $(f\circ \sigma, \la R)$ are equivalent, then
$$
 (f\circ \sigma) \sqrt{\frac{d(\la R)}{d(\mu R)}} = (f_1\circ \sigma)
 \sqrt{\frac{d(\la_1 R)}{d(\mu R)}}, \qquad (\mu R)\mbox{-a.e.}
$$
Hence, we can apply the transfer operator $R$ to the both sides of 
this relation, and because $\mu R \ll \mu$, we obtain (\ref{eq equiv 
pairs}).
\end{proof}

\begin{remark}
 If a measure $\la$ is in the set $\mc L(R)$, then, for 
 some measurable function $W$, we have  $d(\la R) = Wd\la$, 
Then the operator $\wh S$ acts in the subspace $\mc H(\la)$ as
follows: 
\be\label{eq modified R 6}
\wh S (f\sqrt{d\la}) = (f\circ\sigma)\sqrt{W} \sqrt{d\la}.
\ee
In order to justify (\ref{eq modified R 6}), we observe that if $(f,\la) 
\sim (f,\la_1)$ with $\la, \la_1 \in \mc L(R)$, then
$$
(\sqrt{W} (f\circ\sigma), \la ) \sim (\sqrt{W_1} (f_1\circ\sigma), 
\la_1)
$$
where $W_1d\la_1 = d(\la_1R)$. This equivalence can be directly 
deduced from the relation $W_1 = (\varphi\circ\sigma) W 
\varphi^{-1}$ where $\varphi d\la = d\la_1$ that was discussed in 
Section \ref{sect L1 and L2}.

\end{remark}

\begin{lemma}\label{thm isometry}
The operator
\be\label{eq wh S on H(X)}
\wh S (f\sqrt{d\la}) = (f\circ\sigma)\sqrt{d(\la R)},
\ee
is well defined in the universal Hilbert space $\mc H(X)$. Furthermore, 
$\wh S$ is bounded if and only if $R(\mathbf 1) \in L^{\infty}(\la)$. 
\end{lemma}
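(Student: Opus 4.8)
The plan is to treat the two assertions separately. For the first---that $\wh S$ descends to a well-defined map on $\mc H(X)$---two things must be checked: that $(f\circ\sigma)\sqrt{d(\la R)}$ is a genuine element of $\mc H(X)$, and that it depends only on the class $f\sqrt{d\la}$ and not on the chosen representative $(f,\la)$. The second point is exactly Lemma \ref{lem R is defined on classes}: if $(f_1,\la_1)\sim(f,\la)$ (continuous probability measures, as in the standing setup of this subsection; the purely atomic case is parallel, using the invariance of the splitting $\mc H(X)=\mc H_{M_c(X)}\oplus\mc H_{M_a(X)}$ noted in Remark \ref{rem choice of measure in a class}), then $(f_1\circ\sigma,\la_1 R)\sim(f\circ\sigma,\la R)$. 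The first point is settled by the norm computation below.

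The core computation I would carry out uses, in order, the identity $\|g\sqrt{d\nu}\|^2_{\mc H(X)}=\int_X g^2\,d\nu$; the definition of the action $\la\mapsto\la R$, namely $\int_X h\,d(\la R)=\int_X R(h)\,d\la$ for $h\geq 0$ (Definition \ref{def action of P on mu}); and the pull-out property $R((f^2\circ\sigma)\mathbf 1)=f^2\,R(\mathbf 1)$:
\begin{align*}
\| \wh S(f\sqrt{d\la}) \|^2_{\mc H(X)}
&= \int_X (f\circ\sigma)^2\, d(\la R) \\
&= \int_X R(f^2\circ\sigma)\, d\la
= \int_X f^2\, R(\mathbf 1)\, d\la ,
\end{align*}
an identity in $[0,\infty]$. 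Hence $(f\circ\sigma)\sqrt{d(\la R)}\in\mc H(X)$ exactly when $f^2R(\mathbf 1)\in L^1(\la)$, which holds for every $f\in L^2(\la)$ as soon as $R(\mathbf 1)\in L^\infty(\la)$, and in particular whenever $R$ is normalized. Thus, modulo the isometric embedding $\iota:L^2(\la)\to\mc H(\la)$, the operator $\wh S$ on $\mc H(\la)$ is composition with $\sigma$ followed by multiplication by $\sqrt{R(\mathbf 1)}$, whose squared operator norm is $\|R(\mathbf 1)\|_{L^\infty(\la)}$.

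From here the boundedness criterion is routine. If $R(\mathbf 1)\in L^\infty(\la)$ with $M:=\|R(\mathbf 1)\|_{L^\infty(\la)}$, then $\int_X f^2R(\mathbf 1)\,d\la\leq M\|f\|^2_{L^2(\la)}$, so $\|\wh S\|\leq\sqrt M$. Conversely, if $R(\mathbf 1)\notin L^\infty(\la)$, the sets $A_n:=\{x:R(\mathbf 1)(x)>n\}$ have $\la(A_n)>0$ for all $n$, and the unit vectors $f_n:=\la(A_n)^{-1/2}\chi_{A_n}$ satisfy $\|\wh S(f_n\sqrt{d\la})\|^2=\la(A_n)^{-1}\int_{A_n}R(\mathbf 1)\,d\la\geq n$, so $\wh S$ is unbounded. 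The only genuinely delicate point I anticipate is the bookkeeping around domains---recognizing $\la R$ as a (sigma-finite) Borel measure via the order-continuity standing assumption and Lemma \ref{lem prop of mu P}, identifying membership $f\circ\sigma\in L^2(\la R)$ with $f^2R(\mathbf 1)\in L^1(\la)$, and confirming that Lemma \ref{lem R is defined on classes}, phrased for continuous probability measures, indeed covers every class of $\mc H(X)$ once the continuous/atomic splitting is invoked; the estimates themselves are the standard multiplication-operator bounds.
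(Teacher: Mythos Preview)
Your proof is correct and follows essentially the same approach as the paper: well-definedness via Lemma~\ref{lem R is defined on classes}, and the norm identity $\|\wh S(f\sqrt{d\la})\|^2=\int_X f^2 R(\mathbf 1)\,d\la$ obtained from the pull-out property. In fact you are more thorough than the paper here: the paper's proof of this lemma only treats well-definedness and linearity, deferring the norm computation to Theorem~\ref{lem H(la) is S-inv} and the boundedness criterion to a later remark, whereas you supply both directions of the $L^\infty$ criterion explicitly via the standard multiplication-operator argument. The one item the paper mentions that you omit is the verification of linearity of $\wh S$ (that $\wh S$ respects sums of classes with possibly different base measures), but this is routine once well-definedness on classes is established.
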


We  use the same notation $\wh S$ for the operators acting on the 
set of pairs $(f, \la)$ and in the Hilbert space $\mc H(X)$. It will be 
clear from the context where $\wh S$ acts.

\begin{proof}
We first need to justify the correctness of  the definition $\wh S$. 
Indeed, this result follows from Lemma  \ref{lem R is defined on 
classes} because if we take any two pairs  $(f, \la)$,  $(f_1, \la_1)$
that belong to the same class, then $\wh S$ maps these pairs
into equivalent pairs $(f \cs, \la R)$ and $(f_1\cs, \la_1 R)$. Hence 
relation ( \ref{eq wh S on H(X)}) defines a transformation in 
$\mc H(X)$. 

To see that $\wh S$ is a linear operator we have to check that $\wh 
S(c_1f\sqrt{d\la} + c_2f_1\sqrt{d\la_1}) = c_1\wh S(f\sqrt{d\la}) + 
c_2\wh S(f_1\sqrt{d\la_1})$. This can be proved again by the choice 
of representatives in the classes $f\sqrt{d\la}$ and $f_1\sqrt{d\la_1}
$ as we did above. The details are left to the reader.
\end{proof}

We recall that $\mc H(\la)$ denotes the subspace of $\mc H(X)$ 
obtained by the isometric  embedding of $L^2(\la)$ into $\mc H(X)$.

\begin{theorem}\label{lem H(la) is S-inv} Let $(R,\sigma)$ be a 
transfer operator on $(X, \B)$.  Then the operator  $\wh 
S$ of $\mc H(X)$ is an isometry if and only if  $R(\mathbf 1) =
\mathbf 1$. Moreover, 
if a measure $\la \in \mc L(R)$, then the subspace $\mc H(\la)$  
is invariant with respect to $\wh S$.
\end{theorem}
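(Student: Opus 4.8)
The plan is to verify two claims: first, that $\wh S$ is an isometry on $\mc H(X)$ precisely when $R(\mathbf 1) = \mathbf 1$; second, that each subspace $\mc H(\la)$ with $\la \in \mc L(R)$ is $\wh S$-invariant. For the isometry statement, I would compute $\|\wh S(f\sqrt{d\la})\|^2_{\mc H(X)}$ directly from the definition of the norm on $\mc H(X)$. Since $\wh S(f\sqrt{d\la}) = (f\circ\sigma)\sqrt{d(\la R)}$, and since for any element $g\sqrt{d\nu}$ of $\mc H(X)$ one has $\|g\sqrt{d\nu}\|^2_{\mc H(X)} = \int_X g^2\, d\nu$, we get
$$
\|\wh S(f\sqrt{d\la})\|^2_{\mc H(X)} = \int_X (f\circ\sigma)^2 \; d(\la R) = \int_X R\big((f\circ\sigma)^2\big)\; d\la,
$$
where the last step uses the definition of the action $\la R$ on functions. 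By the pull-out property applied to $g = f^2$, namely $R((f\circ\sigma)^2 \cdot \mathbf 1) = f^2 R(\mathbf 1)$, this equals $\int_X f^2 R(\mathbf 1)\; d\la$. Hence $\|\wh S(f\sqrt{d\la})\|^2 = \|f\sqrt{d\la}\|^2 = \int_X f^2\, d\la$ for every $f$ and every continuous probability measure $\la$ if and only if $R(\mathbf 1) = \mathbf 1$ $\la$-a.e. for all such $\la$, which (since every probability measure is dominated by continuous ones, or directly) forces $R(\mathbf 1) = \mathbf 1$. For the converse direction, if $R(\mathbf 1) = \mathbf 1$ the same computation shows $\wh S$ preserves norms, and since $\wh S$ is linear (Lemma~\ref{thm isometry}) and well defined on $\mc H(X)$ (Lemma~\ref{lem R is defined on classes}), it is an isometry.

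For the invariance of $\mc H(\la)$ when $\la \in \mc L(R)$: recall $\mc H(\la) = \iota(L^2(\la))$ consists of all classes $f\sqrt{d\la}$ with $f \in L^2(\la)$. Given such a class, $\wh S(f\sqrt{d\la}) = (f\circ\sigma)\sqrt{d(\la R)}$. Because $\la \in \mc L(R)$, we have $\la R \ll \la$, so writing $W = \tfrac{d(\la R)}{d\la}$ and using the change-of-measure rule in $\mc H(X)$ from Remark~\ref{rem measure change 1}(1), the pair $((f\circ\sigma), \la R)$ is equivalent to the pair $((f\circ\sigma)\sqrt{W}, \la)$; that is, $\wh S(f\sqrt{d\la}) = (f\circ\sigma)\sqrt{W}\sqrt{d\la}$, exactly as in equation~(\ref{eq modified R 6}). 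It remains to check $(f\circ\sigma)\sqrt{W} \in L^2(\la)$, which follows since $\int_X (f\circ\sigma)^2 W\, d\la = \int_X (f\circ\sigma)^2\, d(\la R) = \int_X R((f\circ\sigma)^2)\, d\la = \int_X f^2 R(\mathbf 1)\, d\la = \|f\|^2_{L^2(\la)} < \infty$ using $R(\mathbf 1) = \mathbf 1$. Therefore $\wh S(f\sqrt{d\la}) \in \mc H(\la)$.

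The main obstacle I anticipate is purely bookkeeping around well-definedness: one must be careful that the computation $\int_X (f\circ\sigma)^2\, d(\la R) = \int_X R((f\circ\sigma)^2)\, d\la$ is legitimate, i.e. that $(f\circ\sigma)^2$ lies in the domain on which the action $\la \mapsto \la R$ integrates against, which is guaranteed here by order continuity of $R$ (our standing assumption) together with approximation by simple functions as in Proposition~\ref{prop measure lambda R}. A secondary subtlety is that $\wh S$ as defined in~(\ref{eq def of wh S}) was a priori only specified on pairs $(f,\la)$ with $\la$ \emph{continuous}; the purely atomic case is handled separately by the same formula, and the decomposition~(\ref{eq repr in cont and atomic}) being $\wh S$-invariant lets us treat the two pieces independently, so the isometry property on all of $\mc H(X)$ follows from its validity on each summand. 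Everything else is a direct application of the pull-out property and the identity $\|g\sqrt{d\nu}\|^2_{\mc H(X)} = \|g\|^2_{L^2(\nu)}$.
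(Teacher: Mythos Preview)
Your proof is correct and follows essentially the same approach as the paper. The isometry computation is identical, and your invariance argument is in fact slightly more streamlined: you work directly with the canonical representative $(f,\la)$ of an element of $\mc H(\la)$ and verify $(f\circ\sigma)\sqrt{W}\in L^2(\la)$ via the norm computation, whereas the paper takes a general representative $(f,\mu)$ with $\mu\sim\la$ and tracks how the Radon--Nikodym derivatives transform.
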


\begin{proof}
To see that $\wh S$ is an isometry, we use (\ref{eq inner product}) 
and  calculate
\begin{eqnarray*}
||\wh S (f\sqrt{d\la})||^2_{\mc H(X)} &= & \int_X (f\circ \sigma)^2 
\; d(\la R)\\
\\
& = & \int_X R[(f\circ \sigma)^2] \; d\la \\
\\
& = & \int_X f^2 R(\mathbf 1) \; d\la.
\end{eqnarray*}
Hence, we see that  
$$
||\wh S (f\sqrt{d\la})||^2_{\mc H(X)} =  ||f||^2_{\mc H(X)}
$$
if and only if $R(\mathbf 1) = \mathbf 1$.

To prove the second part of the theorem, we suppose  that $\la\in
 \mc L(R)$, then $d(\la R) = W d\la$. Take 
any element  $f\sqrt{d\mu} \in \mc H(\la)$. By Proposition \ref{prop  
properties of universal H space}, this means that $\mu \sim \la$, 
$d\mu = \va d\la$, and $(f, \mu)$ is equivalent to $(g, \la)$. 
Then $d(\mu R) = (\va\cs) W \va^{-1}d\mu$ 
(see Section \ref{sect L1 and L2}). It follows from this fact that
$$
\wh S (f\sqrt {d\mu}) = (f\cs) \sqrt {d(\mu R)} = [\sqrt{W} 
(f\va)\cs]\sqrt{d\la}. 
$$ 
It follows from the definition of equivalence of pairs $(f,\mu)$ and
$(g, \la)$ that 
$$
\sqrt{W} (f\va)\cs] \in L^2(\la).
$$
 Hence 
$\wh S : \mc H(\la) \to \mc H(\la)$, and the theorem is proved.
\end{proof}

We can  immediately deduce from Theorem \ref{lem H(la) is S-inv} 
several important properties of $\wh S$ and its adjoint.
We recall the notation $K_1 := M_1 R$ that was used in Section 
\ref{sect actions on measures}. Then the subspace $\mc H_{K_1}$ is 
spanned   by $\{\mc H(\la) :  \la \in K_1\}$.  

\begin{corollary}\label{cor kernel adjoint to S}
 (1)  The decomposition $\mc H(X) = 
\mc H_{K_1} \oplus (\mc H_{K_1})^\bot$ implies that 
$\wh S(\mc H(X)) = \mc H_{K_1}$.

(2) The adjoint operator $\wh S^* : \mc H(X) \to \mc H(X)$ 
is well defined and $Ker(\wh S^*) =  (\mc H_{K_1})^\bot$.   
\end{corollary}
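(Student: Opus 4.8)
The plan is to read both statements off the single fact that, under the standing hypothesis $R(\mathbf 1)=\mathbf 1$ of this section, $\wh S$ is a bounded isometry of $\mc H(X)$ --- this is Theorem \ref{lem H(la) is S-inv} together with Lemma \ref{thm isometry}. For a bounded isometry $V$ of a Hilbert space one has that $V(\mc H)$ is closed and $\mc H\ominus V(\mc H)=\ker(V^{*})$; hence (1) and (2) reduce to the single task of identifying the closed subspace $\wh S(\mc H(X))$ with $\mc H_{K_1}$, after which (2) is merely its orthogonal-complement reformulation via the decomposition $\mc H(X)=\mc H_{K_1}\oplus(\mc H_{K_1})^{\bot}$ furnished by Lemma \ref{lem prop of orthog decompos for universal} with $\mc G=K_1$. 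Throughout one works inside the $\wh S$-invariant splitting of $\mc H(X)$ into its continuous and its purely atomic part, so that the action of $\wh S$ on pairs is unambiguous.

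The easy inclusion is $\wh S(\mc H(X))\subseteq\mc H_{K_1}$: by formula (\ref{eq wh S on H(X)}), $\wh S(f\sqrt{d\la})=(f\circ\sigma)\sqrt{d(\la R)}$ belongs to $\mc H(\la R)$, and $\la R\in K_1=M_1(X)R$ for every $\la$, so $\wh S$ maps each $\mc H(\la)$ into $\mc H_{K_1}$; as $\wh S$ is bounded and $\mc H_{K_1}$ is closed, the whole range lies in $\mc H_{K_1}$. On the adjoint side this already gives half of (2): a vector $v$ orthogonal to every $\mc H(\mu)$, $\mu\in K_1$, is orthogonal to $\wh S(\mc H(X))$, so $\wh S^{*}v=0$, i.e.\ $(\mc H_{K_1})^{\bot}\subseteq\ker(\wh S^{*})$.

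The substantive point --- and the step I expect to be the main obstacle --- is the reverse inclusion $\mc H_{K_1}\subseteq\wh S(\mc H(X))$, equivalently $\ker(\wh S^{*})\subseteq(\mc H_{K_1})^{\bot}$. Since $\mc H_{K_1}$ is the closed span of the $\mc H(\mu)$ with $\mu\in K_1$ and $\wh S(\mc H(X))$ is closed, it suffices to place each $\mc H(\mu)$ inside the range. Fix $\mu\in K_1$; by Theorem \ref{thm t_R is 1-1 and more}(2) the equation $\rho R=\mu$ has the unique solution $\la_0:=\mu\csi1$, and $(\la_0 R)\csi1=\la_0$ by Lemma \ref{lem R 1= 1 - actions on measures}, which already exhibits $f\mapsto f\circ\sigma$ as an isometric embedding $L^{2}(\la_0)\hookrightarrow L^{2}(\mu)$ whose image represents $\wh S(\mc H(\la_0))$. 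The delicate task is to see that, after passing to the closed span of the ranges $\wh S(\mc H(\la))$ over all $\la$, one recovers all of $\mc H_{K_1}$: I would take an arbitrary $f\sqrt{d\nu}\in\mc H(X)$, split $\nu$ by its Lebesgue decomposition relative to $\la_0$, and combine the singularity-preservation of $\la\mapsto\la R$ (Theorem \ref{thm t_R is 1-1 and more}(4a)) with the cocycle identity $\tfrac{d\nu}{d\la_0}\circ\sigma=\tfrac{d(\nu R)}{d(\la_0 R)}$ on the absolutely continuous part (Theorem \ref{thm from measure equivalence}(1)) to show that the orthogonal projection onto $\mc H(\mu)$ intertwines with $\wh S$, so that the closed span of the $\wh S(\mc H(\la))$ fills $\mc H_{K_1}$. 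Granting this, $\wh S(\mc H(X))=\mc H_{K_1}$, and then $\ker(\wh S^{*})=\mc H(X)\ominus\wh S(\mc H(X))=(\mc H_{K_1})^{\bot}$; the existence of $\wh S^{*}$ as an everywhere-defined bounded operator is automatic from the boundedness of $\wh S$.
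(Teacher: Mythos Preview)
Your reduction of the problem to identifying the closed range of the isometry $\wh S$, and your argument for the inclusion $\wh S(\mc H(X))\subseteq\mc H_{K_1}$, are correct and match the paper's intended ``immediate'' deduction. The gap is precisely where you flag it: the reverse inclusion $\mc H_{K_1}\subseteq\wh S(\mc H(X))$. Your sketch there (splitting $\nu$ by Lebesgue decomposition and invoking an intertwining of projections) is too vague to constitute a proof, and in fact no argument can complete it, because the inclusion fails in general.

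Here is the obstruction. Fix $\mu\in K_1$ with $\mu R=\mu$ and $\mu\circ\sigma^{-1}=\mu$ (any element of $\mathrm{Fix}(R)$ works). For an arbitrary $\la\in M_1(X)$, decompose $\la=\la_{\mathrm{ac}}+\la_s$ relative to $\mu$. By Theorem~\ref{thm from measure equivalence}(1), $\la_{\mathrm{ac}}R\ll\mu$ with $d(\la_{\mathrm{ac}}R)/d\mu=(d\la_{\mathrm{ac}}/d\mu)\circ\sigma=:\psi\circ\sigma$, and by Theorem~\ref{thm t_R is 1-1 and more}(4a), $\la_s R\perp\mu$. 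Hence the $\mc H(\mu)$-component of $\wh S(f\sqrt{d\la})=(f\circ\sigma)\sqrt{d(\la R)}$ equals $(f\circ\sigma)\sqrt{\psi\circ\sigma}\sqrt{d\mu}=((f\sqrt{\psi})\circ\sigma)\sqrt{d\mu}$, whose first factor is $\sigma^{-1}(\B)$-measurable. Thus $\wh S(\mc H(X))\cap\mc H(\mu)\subseteq\{(g\circ\sigma)\sqrt{d\mu}:g\in L^2(\mu)\}$, a \emph{proper} closed subspace of $\mc H(\mu)$ whenever $\sigma$ is not invertible. Concretely, for $\sigma(x)=2x\bmod 1$ on $[0,1]$, $R(f)(x)=\tfrac12\bigl(f(x/2)+f((x+1)/2)\bigr)$, and $\mu=$ Lebesgue, the vector $\chi_{[0,1/2]}\sqrt{d\mu}$ lies in $\mc H(\mu)\subseteq\mc H_{K_1}$ but not in $\wh S(\mc H(X))$; equivalently, $(\chi_{[0,1/2]}-\tfrac12)\sqrt{d\mu}\in\ker(\wh S^{*})\setminus(\mc H_{K_1})^{\bot}$. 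So only $\wh S(\mc H(X))\subseteq\mc H_{K_1}$ and $(\mc H_{K_1})^{\bot}\subseteq\ker(\wh S^{*})$ survive; the equalities asserted in the corollary are too strong as stated, and your intertwining idea cannot repair them.
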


We remark that  the adjoint operator $\wh S^*$ is defined 
in terms of the Hilbert  space $\mc H(X)$,  in contrast to the case of 
$\wh S$ where we first defined 
$\wh S$ on the set of pairs $(f, \la)$ and then 
extended to the classes of equivalence that form the
Hilbert space $\mc H(X)$. 

The next result gives an explicit formula for the action of 
$\wh S^*$  when $\la R \ll \la$. We recall that
with this assumption  $\wh S^*$ leaves the subspace $\mc H(\la)$ 
invariant. 

\begin{proposition}\label{cor adjoint of wh S} Let $(R, \sigma)$
 be a normalized transfer operator and $\la \in \mc L(R)$.  Then the
 adjoint operator $\wh S^*$ acts on $\mc H(\la)$ by the formula:
\be\label{eq adjoint of S}
\wh S^* (f\sqrt{d\la}) = R\left(\frac{f}{\sqrt W}\right) \sqrt{d\la}
\ee
where $Wd\la = d(\la R)$.
\end{proposition}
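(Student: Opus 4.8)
The plan is to compute $\langle \wh S^*(f\sqrt{d\la}),\, g\sqrt{d\la}\rangle_{\mc H(X)}$ for an arbitrary test vector $g\sqrt{d\la}\in\mc H(\la)$ and to recognize the result as $\langle R(f/\sqrt W)\sqrt{d\la},\, g\sqrt{d\la}\rangle_{\mc H(X)}$. Since Theorem \ref{lem H(la) is S-inv} already guarantees that $\mc H(\la)$ is $\wh S$-invariant when $\la\in\mc L(R)$ (and $R$ is normalized, so $\wh S$ is a genuine isometry), the restriction $\wh S\colon\mc H(\la)\to\mc H(\la)$ has an adjoint computable entirely inside the single copy $\mc H(\la)\cong L^2(\la)$; hence it suffices to test against vectors of the form $g\sqrt{d\la}$ with $g\in L^2(\la)$, and the formula obtained this way determines $\wh S^*$ on all of $\mc H(\la)$.

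First I would use the explicit formula (\ref{eq modified R 6}), namely $\wh S(g\sqrt{d\la}) = (g\circ\sigma)\sqrt W\,\sqrt{d\la}$, which is valid precisely because $\la\in\mc L(R)$ gives $d(\la R)=Wd\la$. Then, by the definition (\ref{eq inner product}) of the inner product (with $\mu=\nu=\la$, so the density factors are trivial),
\begin{equation*}
\langle f\sqrt{d\la},\ \wh S(g\sqrt{d\la})\rangle_{\mc H(X)} = \int_X f\,(g\circ\sigma)\sqrt W\ d\la.
\end{equation*}
Now I would rewrite $f = \sqrt W\cdot\dfrac{f}{\sqrt W}$ on the set $\{W>0\}$ (and note that $\la R\ll\la$ together with normalization forces the relevant sets where $W=0$ to be handled by the convention in (\ref{eq def of R_k})-type reasoning, i.e. $R(f/\sqrt W)$ is set to zero there), so that the integrand becomes $(g\circ\sigma)\cdot W\cdot\dfrac{f}{\sqrt W}$. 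Recognizing $W\,d\la = d(\la R)$ and applying the pull-out identity in the integrated form $\int_X (g\circ\sigma)\,h\ d(\la R) = \int_X R((g\circ\sigma)h)\,d\la = \int_X g\,R(h)\,d\la$ (this is exactly the computation underlying Theorem \ref{thm adjoint of R} and equation (\ref{eq R-N der W via integral})), with $h = f/\sqrt W$, yields
\begin{equation*}
\int_X f\,(g\circ\sigma)\sqrt W\ d\la = \int_X (g\circ\sigma)\,\frac{f}{\sqrt W}\ d(\la R) = \int_X g\, R\!\left(\frac{f}{\sqrt W}\right)\,d\la = \Big\langle R\!\left(\tfrac{f}{\sqrt W}\right)\sqrt{d\la},\ g\sqrt{d\la}\Big\rangle_{\mc H(X)}.
\end{equation*}
Since $g$ is arbitrary in (a dense subset of) $L^2(\la)$, this identifies $\wh S^*(f\sqrt{d\la}) = R(f/\sqrt W)\sqrt{d\la}$, which is (\ref{eq adjoint of S}).

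The main obstacle I anticipate is not the algebra but the domain/integrability bookkeeping: one must check that $f/\sqrt W\in L^1(\la R)$ (equivalently $R(f/\sqrt W)$ is a well-defined element of $L^2(\la)$) so that all three integrals above are finite and the pull-out manipulation is legitimate — this is where one restricts first to simple functions $f\in\mc S^2(\la)$ as in the proof of Theorem \ref{thm adjoint of R} and then passes to the general case by density, and where the convention $R(f/\sqrt W)(x)=0$ on $\{W=0\}$ must be justified (as in Lemma \ref{lem reduce to R1=1} and the subsequent Schwarz-inequality remark). A secondary point to be careful about is that the computation genuinely takes place after fixing the representative $(f,\la)$ inside its equivalence class; one should remark (citing the invariance established in the proof of Theorem \ref{lem H(la) is S-inv}, via $W_1=(\varphi\circ\sigma)W\varphi^{-1}$) that the resulting formula is independent of this choice, so that $\wh S^*$ is well defined on $\mc H(\la)$.
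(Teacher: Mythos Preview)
Your proof is correct and follows essentially the same route as the paper: both compute the inner product $\langle \wh S(\,\cdot\,),\,\cdot\,\rangle$ inside $\mc H(\la)$ using the formula $\wh S(g\sqrt{d\la})=(g\circ\sigma)\sqrt W\,\sqrt{d\la}$, then insert the factor $\sqrt W/\sqrt W$, pass to $d(\la R)=W\,d\la$, and apply the pull-out property. The only cosmetic difference is that you place $\wh S$ on the second slot (so the target function is $f$) while the paper places it on the first (so the target function is $g$); your extra remarks on integrability and the $\{W=0\}$ convention are more careful than the paper's own treatment, which simply runs the calculation.
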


\begin{proof} The result is proved by the following calculation: 
\begin{eqnarray*}
\langle \wh S(f\sqrt{d\la}), g\sqrt{d\la}\rangle_{\mc H(\la)} & =&  
\int_X \sqrt W (f\circ \sigma)g \; d\la \qquad \mbox{(see\ Remark\ 
\ref{rem measure change 1})}\\
\\
&= & \int_X (f\circ \sigma) g \frac{1}{\sqrt W}\; d(\la R)\\
\\
& = & \int_X R\left( (f\circ \sigma)g \frac{1}{\sqrt W}\right)\; d\la \\
\\
& = & \int_X  f R\left( \frac{g}{\sqrt W}\right)\; d\la\\
\\
& = & \langle f\sqrt{d\la}, R( g W^{-1/2})\rangle_{\mc H(\la)}
\end{eqnarray*}
and the proof is complete.
\end{proof}

\begin{corollary} Let $\la \in \mc L(R)$. In the notation of Proposition 
\ref{cor adjoint of
 wh S}, $\wh S\wh S^*$ is the projection in the space $\mc H(\la)$
 which acts by the formula:
$$
\wh S \wh S^* (f\sqrt{d\la}) = 
[R(\frac{f}{\sqrt W})\circ \sigma]\sqrt W \sqrt{d\la}.
$$
\end{corollary}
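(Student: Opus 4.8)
The plan is to obtain this corollary by directly composing the two explicit formulas already derived for $\wh S$ and for its adjoint on the invariant subspace $\mc H(\la)$. Fix $\la \in \mc L(R)$ and write $d(\la R) = W\,d\la$. By Theorem \ref{lem H(la) is S-inv}, the standing assumption $R(\mathbf 1) = \mathbf 1$ makes $\wh S$ an isometry of $\mc H(X)$, so $\wh S^*\wh S = I$; hence $P := \wh S\wh S^*$ is self-adjoint and satisfies $P^2 = \wh S(\wh S^*\wh S)\wh S^* = \wh S\wh S^* = P$, i.e. $P$ is an orthogonal projection (globally, $P$ is the projection onto $\mc H_{K_1}$ by Corollary \ref{cor kernel adjoint to S}). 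This settles the ``projection'' part of the claim with no further computation.

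For the formula, I would recall that, on $\mc H(\la)$ with $\la \in \mc L(R)$, equation (\ref{eq modified R 6}) gives $\wh S(g\sqrt{d\la}) = (g\circ\sigma)\sqrt W\,\sqrt{d\la}$, while Proposition \ref{cor adjoint of wh S} gives $\wh S^*(f\sqrt{d\la}) = R(f/\sqrt W)\,\sqrt{d\la}$. The point that makes the composition legitimate is the invariance of $\mc H(\la)$ under $\wh S^*$ noted just before Proposition \ref{cor adjoint of wh S}: for $f \in L^2(\la)$ the function $f/\sqrt W$ lies in $L^2(\la R)$, so $R(f/\sqrt W)$ is well defined and, by that invariance, again belongs to $L^2(\la)$; therefore formula (\ref{eq modified R 6}) for $\wh S$ applies to it. Applying $\wh S^*$ and then $\wh S$ yields
$$\wh S\wh S^*(f\sqrt{d\la}) = \wh S\!\left(R\!\left(\frac{f}{\sqrt W}\right)\sqrt{d\la}\right) = \left[R\!\left(\frac{f}{\sqrt W}\right)\circ\sigma\right]\sqrt W\,\sqrt{d\la},$$
which is exactly the asserted identity.

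I do not expect a substantive obstacle: the statement is a formal consequence of Theorem \ref{lem H(la) is S-inv} and Proposition \ref{cor adjoint of wh S}. The only care needed is bookkeeping --- keeping each intermediate vector inside $L^2(\la)$ so that the equivalence-class arithmetic of $\mc H(X)$ is valid, and interpreting the expression $f/\sqrt W$ on the set $\{W \neq 0\}$ (with the value $0$ elsewhere), exactly as in the convention used for $R_k$ in Lemma \ref{lem reduce to R1=1}; this does not affect the identity, which holds $\la$-a.e.
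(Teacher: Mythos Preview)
Your proof is correct and follows essentially the same approach as the paper: the paper's own proof consists of a single sentence stating that the formula follows by direct application of (\ref{eq wh S on H(X)}) and (\ref{eq adjoint of S}), which is exactly the composition you carry out. Your additional remarks on why $\wh S\wh S^*$ is a projection and on the domain bookkeeping for $f/\sqrt W$ are not in the paper but are sound elaborations of the same argument.
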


\begin{proof}
This relation is proved by direct application of (\ref{eq wh S on H(X)}) 
and (\ref{eq adjoint of S}).
\end{proof}

We return to the question about an explicit definition of the adjoint
 operator $\wh S^*$. The key point is that the range of the isometry
 $\wh S$ is the subspace $\mc H_{K_1}$, so that the kernel
 of $\wh S^*$  must be $(\mc H_{K_1})^\bot$. In other words,
  $\wh S^* (f \sqrt{d\la})  =0$ if $\sqrt{\la} \in 
  (\mc H_{K_1})^\bot  $ according to  Corollary  \ref{cor kernel 
  adjoint to S}. Here $\sqrt{\la}$ is considered as a vector in $\mc 
  H(X)$.

To describe the action of $\wh S^*$, we define an operator
 $\wh R$ in the Hilbert space 
$\mc H(X)$ that is generated by the transfer operator  $R$.

\begin{definition}\label{def of wh R} Let $(R, \sigma)$ be a 
normalized transfer operator. We set, for any $f\sqrt{d\la}$, 
$$
\wh R(f\sqrt{d\la}) = \begin{cases}
R(f) \sqrt{d(\la\circ\sigma^{-1})}, \  & \sqrt{\la} \in \mc H_{K_1}\\
\\
0,  & \sqrt{\la}  \in (\mc H_{K_1})^\bot.\\
\end{cases}
$$
\end{definition}

Because $\mc H(X) = \mc H_{K_1} \oplus \mc (H_{K_1})^\bot$, 
the operator  $\wh R$ is  well-defined  in $\mc H(X)$.
With some abuse of notation, we will equally use the relation 
 $\la \in K_1$ in the same meaning as $\sqrt{\la} \in \mc H_{K_1}$.
 
We remark that if $\la \circ\sigma^{-1} = \la$, then the operator 
$\wh R$ sends $f\sqrt{d\la}$ to $R(f) \sqrt{d\la}$, and it can be
 identified  with the transfer operator in $R$ acting in $L^2(\la)$. 
 
 The following theorem is complimentary to the results obtained in 
 Section \ref{sect actions on measures}. This theorem clarifies the 
 role of the subset $K_1 \subset M_(X)$.
 
\begin{theorem}\label{thm K_1(R) tfae} For a normalized transfer 
operator $(R, \sigma)$, the following statements are equivalent:

(1) $\la \in K_1$;

(2) $(\la\csi1)R = \la$;

(3) the map $f \mapsto R(f) \cs |_{L^2(\la)} = \mathbb E_\la(f \ | 
\sigma^{-1}(\B))$ where $\mathbb E_\la ( \cdot \ | \sigma^{-1}(\B))
$ is the conditional expectation \index{conditional expectation} 
on the subalgebra of 
$\sigma^{-1}(\B)$-measurable functions in $L^2(\la)$;

(4)  the operator  $\wh E_1 = \wh S \wh R$ maps $\mc H(\la)$ into 
itself, and
$$
\wh E_1(f\sqrt{d\la}) = \mathbb E_\la (f\; | \; \sigma^{-1}(\B))
\sqrt{d\la}.
$$
\end{theorem}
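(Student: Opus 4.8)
\textbf{Proof plan for Theorem \ref{thm K_1(R) tfae}.}
The strategy is to establish the equivalences in the cycle $(1)\Leftrightarrow(2)$, then $(1)\Leftrightarrow(3)$, and finally $(3)\Leftrightarrow(4)$, using the machinery already in place. For $(1)\Leftrightarrow(2)$ I would simply invoke Theorem \ref{thm t_R is 1-1 and more}(1): a measure $\mu$ lies in $K_1 = M_1(X)R$ if and only if $(\mu\circ\sigma^{-1})R=\mu$. This is immediate, since the hypothesis $R(\mathbf 1)=\mathbf 1$ guarantees that for any $\la\in M_1(X)$ one has $(\la R)\circ\sigma^{-1}=\la$ (Lemma \ref{lem R 1= 1 - actions on measures}), so $\la=\nu R$ for some $\nu$ forces $\nu=(\la\circ\sigma^{-1})$, and conversely.

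For $(1)\Leftrightarrow(3)$, recall from Corollary \ref{cor properties of E} that the operator $E(f)=R(f)\circ\sigma$ is a positive idempotent onto $\mc F(X,\sigma^{-1}(\B))$ at the level of Borel functions. The claim in $(3)$ is that, when $\la\in K_1$, this $E$ realizes the genuine $L^2(\la)$-conditional expectation $\mathbb E_\la(\cdot\,|\,\sigma^{-1}(\B))$. The plan is: (i) $E$ maps $L^2(\la)$ into $L^2(X,\sigma^{-1}(\B),\la)$ and fixes $\sigma^{-1}(\B)$-measurable functions pointwise, so it is a projection onto the right subspace; (ii) the defining property of conditional expectation is $\int_X E(f)\,g\circ\sigma\,d\la=\int_X f\,(g\circ\sigma)\,d\la$ for all $g$, i.e. $\int_A E(f)\,d\la=\int_A f\,d\la$ for $A\in\sigma^{-1}(\B)$. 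Writing $A=\sigma^{-1}(B)$ and using the pull-out property together with $\int_X R(h)\,d\la=\int_X h\,d(\la R)$, one computes $\int_X (g\circ\sigma)E(f)\,d\la = \int_X R((g\circ\sigma)E(f)\circ\sigma^{-1}\cdots)$ — more cleanly, $\int_X (g\circ\sigma)\,R(f)\circ\sigma\,d\la$ versus $\int_X (g\circ\sigma)f\,d\la$; these agree for all $g\in L^2(\la)$ precisely when $\la\circ\sigma^{-1}=\la R\circ\sigma^{-1}$ evaluated appropriately, which unwinds to the condition $(\la\circ\sigma^{-1})R=\la$, i.e. $\la\in K_1$. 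So $(3)$ holds iff $(2)$ holds. The reason this needs $\la\in K_1$ rather than arbitrary $\la$ is exactly that the inner-product computation requires $\la$ to be invariant under the composite $s_\sigma$ followed by $t_R$; without it, $E$ is still a Borel-level projection but not self-adjoint in $L^2(\la)$.

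For $(3)\Leftrightarrow(4)$, I would observe that $\wh E_1:=\wh S\wh R$ acts on $\mc H(\la)$ by $\wh E_1(f\sqrt{d\la}) = \wh S(R(f)\sqrt{d(\la\circ\sigma^{-1})})$, and when $\la\in K_1$ we have $\la\circ\sigma^{-1}\in M_1(X)$ with $(\la\circ\sigma^{-1})R=\la$, so applying the formula $\wh S(h\sqrt{d\mu})=(h\circ\sigma)\sqrt{d(\mu R)}$ from Lemma \ref{thm isometry} with $\mu=\la\circ\sigma^{-1}$ gives $\wh E_1(f\sqrt{d\la})=(R(f)\circ\sigma)\sqrt{d\la}=E(f)\sqrt{d\la}$. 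Combined with $(3)$ this is $\mathbb E_\la(f\,|\,\sigma^{-1}(\B))\sqrt{d\la}$, and the map lands in $\mc H(\la)$ since $E(f)\in L^2(\la)$. Conversely, if $\wh E_1$ maps $\mc H(\la)$ into itself with the stated formula, then restricting to the copy of $L^2(\la)$ recovers $(3)$; also $\wh E_1$ being well-defined on $\mc H(\la)$ forces $\la\circ\sigma^{-1}$ to be such that $(\la\circ\sigma^{-1})R$ is equivalent to $\la$, which gives back $(1)$. The main obstacle, I expect, is the careful bookkeeping in step $(3)$: one must verify that the Borel-level identity $E\circ R=R$, $R\circ E=R$ and idempotency of $E$ upgrade to an $L^2(\la)$-orthogonal-projection statement, and this upgrade is exactly where the hypothesis $\la\in K_1$ (equivalently $(\la\circ\sigma^{-1})R=\la$) enters, via the change-of-measure identity $\int_X R(h)\,d\la=\int_X h\,d(\la R)$ and the requirement that $\la R\circ\sigma^{-1}=\la$. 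Everything else is formal manipulation with the pull-out property and the definitions of $\wh S$, $\wh R$ already recorded above.
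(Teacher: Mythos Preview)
Your proposal is essentially correct and follows the same overall architecture as the paper: the equivalence $(1)\Leftrightarrow(2)$ is taken verbatim from Theorem \ref{thm t_R is 1-1 and more}, and the passage $(3)\Leftrightarrow(4)$ is carried out exactly as you describe, by unraveling $\wh S\wh R(f\sqrt{d\la})=(R(f)\circ\sigma)\sqrt{d((\la\circ\sigma^{-1})R)}$ and using $(2)$ to land back in $\mc H(\la)$; the implication $(4)\Rightarrow(1)$ comes from the fact that $\wh R$ vanishes on $\mc H(\la)$ when $\sqrt{\la}\in(\mc H_{K_1})^\perp$.

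The one place where your route diverges slightly from the paper is in establishing $(2)\Rightarrow(3)$. You verify the defining integral identity of conditional expectation, namely $\int_X (g\circ\sigma)\,R(f)\circ\sigma\,d\la=\int_X(g\circ\sigma)f\,d\la$, and trace it back to $(\la\circ\sigma^{-1})R=\la$ (your intermediate expression ``$\la\circ\sigma^{-1}=\la R\circ\sigma^{-1}$'' is a slip, but you correct it in the next clause). The paper instead writes $\la=\nu R$ and checks that $P_\la(f)=R(f)\circ\sigma$ is \emph{self-adjoint} on $L^2(\la)$ via the symmetric formula $\langle P_\la f_1,f_2\rangle_{L^2(\la)}=\int_X R(f_1)R(f_2)\,d\nu$; together with the already-noted idempotency and range, this identifies $P_\la$ as the orthogonal projection onto $L^2(X,\sigma^{-1}(\B),\la)$. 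The two computations are equivalent, but the paper's version makes the role of the auxiliary measure $\nu$ (the unique preimage of $\la$ under $t_R$) more transparent, whereas yours stays closer to the standard characterization of $\mathbb E_\la(\cdot\,|\,\sigma^{-1}(\B))$. Either way the crucial input is exactly $(\la\circ\sigma^{-1})R=\la$, as you identify.
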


\begin{proof}
The equivalence of statements (1) and (2) was proved in Theorem 
\ref{thm t_R is 1-1 and more}. Moreover, these two assertions are
equivalent to the fact that the equation $\nu R = \la$ has 
a unique solution for every fixed $\la \in K_1$.

Suppose now that (1) and/or (2) hold. To show that (3) is true, 
we observe that the 
operator $P_\la = f \mapsto R(f) \cs |_{L^2(\la)}$ is obviously a 
projection in $L^2(\la)$ since $P_\la^2 = P_\la$ and $P_\la (g \cs) = 
g\cs$. It remains to show that $P_\la = P_\la^*$ or
$$
\langle P_\la f_1, f_2 \rangle_{L^2(\la)} = \langle   f_1, P_\la f_2 
\rangle_{L^2(\la)}.
$$
To see this, we compute, using that $\la = \nu R$,
\begin{eqnarray}\label{eq Hermitian}
\nonumber
 \langle P_\la f_1, f_2 \rangle_{L^2(\la)}   &=& \int_X (R(f_1)\cs ) 
 f_2\; d\la  \\
 \nonumber
 \\
\nonumber
   &=& \int_X (R(f_1)\cs ) f_2\; d(\nu R) \\
   \\
   \nonumber
   &=& \int_X R[(R(f_1)\cs ) f_2]\; d\nu \\
   \nonumber
   \\
\nonumber
&=& \int_X R(f_1) R(f_2)\; d\nu.
\end{eqnarray}
By symmetry, we see that relation (\ref{eq Hermitian}) gives also
$$
\langle   f_1, P_\la f_2 \rangle_{L^2(\la)} = \int_X R(f_1) R(f_2)\; d
\nu.
$$
Thus, $P_\la$ is self-adjoint. We conclude that $P_\la$ is the
conditional expectation $\mathbb E_\la( \cdot \ | \sigma^{-1}(\B))$.

$(3) \ \Longrightarrow \ (4)$ We apply $\wh E_1$ to a vector
$(f\sqrt{d\la}) \in \mc H(\la)$ and find
\begin{equation*}
\setlength{\jot}{10pt}
\begin{aligned}
(\wh S \wh R)(f\sqrt{d\la}) & = \wh S(R(f) \sqrt{d(\la \csi1)})\\
& = (R(f)\cs) \sqrt{d(\la \csi1)R}\\
& = (R(f)\cs) \sqrt{d\la}.
\end{aligned}
\end{equation*}
The result then follows from (3).

$(4)\ \Longrightarrow \ (1)$ The operator $\wh R$ is nonzero on the 
elements of $f \sqrt{d\la} \in \mc H(X)$ if and only if $\la$ is in 
$K_1$.

\end{proof}

\begin{theorem}\label{thm R is adjoint of S} Let $(R, \sigma)$ be 
a transfer operator such that $R(\mathbf 1) =\mathbf 1$. 
The operators $\wh R$ and $\wh S$ form a symmetric pair 
\index{symmetric pair} in $\mc 
H(X)$, that is  $\wh R = \wh S^*$. 
\end{theorem}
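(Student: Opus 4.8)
The plan is to identify $\wh R$ with $\wh S^{*}$ by using that $\wh S$ is an isometry whose range and co-kernel are already known. Since $R(\mathbf 1)=\mathbf 1$, Theorem \ref{lem H(la) is S-inv} shows $\wh S$ is an isometry of $\mc H(X)$, hence has closed range, and by Corollary \ref{cor kernel adjoint to S} that range is exactly $\mc H_{K_1}$ while $Ker(\wh S^{*})=(\mc H_{K_1})^{\bot}$. So $\wh S:\mc H(X)\to\mc H_{K_1}$ is a surjective isometry: $\wh S^{*}$ annihilates $(\mc H_{K_1})^{\bot}$ and, on $\mc H_{K_1}$, is the two-sided inverse of $\wh S$ viewed as a unitary onto its range. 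It therefore suffices to verify that $\wh R$ shares the same two features, namely that $\wh R$ vanishes on $(\mc H_{K_1})^{\bot}$ and that $\wh R\,\wh S=I_{\mc H(X)}$; for then every $w\in\mc H_{K_1}$ can be written $w=\wh S u$, whence $\wh R w=\wh R\wh S u=u=\wh S^{*}w$, and combined with the decomposition $\mc H(X)=\mc H_{K_1}\oplus(\mc H_{K_1})^{\bot}$ and linearity of $\wh R$ this yields $\wh R=\wh S^{*}$.

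The vanishing $\wh R|_{(\mc H_{K_1})^{\bot}}=0$ is built into Definition \ref{def of wh R}, extended to $(\mc H_{K_1})^{\bot}$ through the subspaces $\mc H(\nu)$ with $\nu$ singular to every element of $K_1$ (Lemma \ref{lem prop of orthog decompos for universal}). For $\wh R\,\wh S=I$, I would take a representative pair $(f,\la)$ with $\la$ a probability measure and compute
$$
\wh S\big(f\sqrt{d\la}\big)=(f\circ\sigma)\sqrt{d(\la R)}.
$$
Because $\la R\in K_1=M_1(X)R$, this vector lies in $\mc H(\la R)\subset\mc H_{K_1}$, so the first branch of Definition \ref{def of wh R} applies and
$$
\wh R\,\wh S\big(f\sqrt{d\la}\big)=R(f\circ\sigma)\,\sqrt{d\big((\la R)\circ\sigma^{-1}\big)}.
$$
The pull-out property (\ref{eq char property of r via sigma 1}) together with $R(\mathbf 1)=\mathbf 1$ gives $R(f\circ\sigma)=f\,R(\mathbf 1)=f$, and $(\la R)\circ\sigma^{-1}=\la$ by Lemma \ref{lem R 1= 1 - actions on measures} (equivalently Lemma \ref{lem Ri as derivative}). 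Hence $\wh R\,\wh S(f\sqrt{d\la})=f\sqrt{d\la}$, and since such vectors exhaust $\mc H(X)$ and $\wh S$ is bounded, $\wh R\,\wh S=I_{\mc H(X)}$.

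I expect the only real work to be bookkeeping with the equivalence classes of pairs: when comparing the representatives $f\sqrt{d\la}$, $(f\circ\sigma)\sqrt{d(\la R)}$ and $R(f)\sqrt{d(\la\circ\sigma^{-1})}$ one must pass to common dominating measures and invoke Proposition \ref{prop properties of universal H space}, and one must confirm that the two-branch Definition \ref{def of wh R} extends consistently to the linear operator $\wh R$ being used — which is precisely where the identity $(\la R)\circ\sigma^{-1}=\la$ does its job, keeping $\wh S$-images inside $\mc H_{K_1}$ and $\wh R$-images back inside $\mc H(\la)$. As an independent cross-check I would also verify the symmetric-pair identity directly: for $u=f\sqrt{d\mu}$ and $v=g\sqrt{d\nu}$ one gets $\langle\wh S u,v\rangle_{\mc H(X)}=\langle u,\wh R v\rangle_{\mc H(X)}=0$ whenever $\sqrt{\nu}\in(\mc H_{K_1})^{\bot}$ (the left side because $\wh S u\in\mc H_{K_1}$), while for $\nu\in K_1$ one writes $\nu=(\nu\circ\sigma^{-1})R$ via Theorem \ref{thm K_1(R) tfae} and reduces, through the pull-out property and the change-of-measure formulas of Section \ref{sect L1 and L2}, to the $L^{2}(\la)$ adjoint computation already recorded in Theorem \ref{thm adjoint of R} and Proposition \ref{cor adjoint of wh S}.
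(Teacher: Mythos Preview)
Your primary route is genuinely different from the paper's. The paper proves the theorem by a direct inner-product computation: for $u=f\sqrt{d\nu}$ and $v=g\sqrt{d\mu}$ with $\mu\in K_1$ it picks a measure $\la$ with $\nu R\ll\la R$ and $\mu=(\mu\circ\sigma^{-1})R\ll\la R$, then uses the change-of-measure identities $\frac{d(\nu R)}{d(\la R)}=\frac{d\nu}{d\la}\circ\sigma$ and $\frac{d\mu}{d(\la R)}=\frac{d(\mu\circ\sigma^{-1})}{d\la}\circ\sigma$ together with the pull-out property to rewrite $\langle\wh S u,v\rangle$ as $\langle u,\wh R v\rangle$. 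Your argument instead exploits the abstract characterisation of the adjoint of an isometry: once you know $\wh S$ is an isometry onto $\mc H_{K_1}$, the adjoint is determined by $\wh S^{*}\wh S=I$ and $\wh S^{*}|_{(\mc H_{K_1})^{\bot}}=0$, and you verify that $\wh R$ satisfies both. The identity $\wh R\wh S=I$ you use is exactly the one the paper records afterwards in Remark~8.14(1), so in a sense you invert the paper's order of presentation.

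What each approach buys: yours is shorter and more conceptual, replacing the dominating-measure bookkeeping by a one-line computation with $R(f\circ\sigma)=f$ and $(\la R)\circ\sigma^{-1}=\la$. The paper's computation, however, has a hidden payoff: because the inner product on $\mc H(X)$ is intrinsically defined on equivalence classes, the adjoint identity automatically shows that the \emph{formula} $g\sqrt{d\mu}\mapsto R(g)\sqrt{d(\mu\circ\sigma^{-1})}$ respects the equivalence of pairs and is linear --- i.e.\ it simultaneously proves that $\wh R$ is a well-defined operator. Your route assumes this well-definedness (which the paper asserts after Definition~\ref{def of wh R} but never checks directly); your $\wh R\wh S=I$ computation only tests $\wh R$ on representatives of the special form $(f\circ\sigma,\la R)$, not on an arbitrary $(g,\mu)$ with $\mu\in K_1$. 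You flag this as ``bookkeeping'', and your proposed cross-check via the symmetric-pair identity is precisely the paper's argument and would close it. So the proof is sound, but to make it self-contained without the cross-check you should add a lemma, parallel to Lemma~\ref{lem R is defined on classes}, showing that $(g_1,\mu_1)\sim(g_2,\mu_2)$ with $\mu_i\in K_1$ implies $(R(g_1),\mu_1\circ\sigma^{-1})\sim(R(g_2),\mu_2\circ\sigma^{-1})$.
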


\begin{proof}
We need to show that 
\begin{eqnarray}\label{eq S and R symmetric pair in UHS}
\langle \wh S(f\sqrt{d\nu}), g \sqrt{d\mu}\rangle_{\mc H(X)} =
\langle f\sqrt{d\nu}, R(g) \sqrt{d(\mu \csi1)}\rangle_{\mc H(X)}
\end{eqnarray}
It suffices to assume that $\sqrt{d\mu} \in \mc H_{K_1}$ because 
for $\sqrt{d\mu} \in (\mc H_{K_1})^\bot$ the both parts of 
(\ref{eq S and R symmetric pair in UHS}) are zeros. 

Then, by Theorem \ref{thm K_1(R) tfae}, there exists a measure 
$\la$ such that $d(\nu R) \ll d(\la R)$ and $d\mu = 
d(\mu\csi1 ) R \ll d(\la R)$. We will use in the following 
computation  the formulas that were 
proved in Section \ref{sect L1 and L2}
$$
\frac{d(\nu R)}{d(\la R)} = \frac{d\nu }{d\la } \cs, 
$$
$$
 \frac{d(\mu )}{d(\la R)} =   \frac{d((\mu\csi1) R)}{d(\la R)}
 =  \frac{d\mu \csi1 }{d\la } \cs.
$$
Thus, we have
\begin{eqnarray*}
\langle \wh S(f\sqrt{d\nu}), g \sqrt{d\mu}\rangle_{\mc H(X)}  
& = & \int_X (f\cs) g \sqrt{\frac{d(\nu R)}{d(\la R)}} 
\sqrt{\frac{d(\mu)}{d(\la R)}} \; d(\la R)\\
\\
& = &  \int_X (f\cs) g \sqrt{\frac{d\nu }{d\la } \cs} 
\sqrt{\frac{d(\mu\csi1)}{d\la R} \cs} \; d(\la R)\\
\\
& = &  \int_X f R(g) \sqrt{\frac{d\nu }{d\la } } 
\sqrt{\frac{d(\mu\csi1)}{d\la R} } \; d\la \\
\\
& = &  \langle f\sqrt{d\nu},  R(g) \sqrt{d(\mu \csi1)}
\rangle_{\mc H(X)}.
\end{eqnarray*}
The proof is complete.
\end{proof}

\begin{remark} In this remark, we collect a few facts about the 
operators $\wh S$ and $\wh R$.

(1) If the transfer operator $(R, \sigma)$ is normalized, then It can
 be deduced directly from the definitions of the operators 
$\wh S$ and $\wh R$ that $\wh R \wh S = I_{\mc H(X)}$. 
Indeed, we have
\begin{equation*}
\setlength{\jot}{10pt}
\begin{aligned}
(\wh R\wh S)(f\sqrt{d\la}) &= \wh R( (f\cs )\sqrt{d(\la R)}\\
   &= R(f\cs)\sqrt{d(\la R \csi1)} \\
   &=  f\sqrt{d\la} 
\end{aligned}
\end{equation*}
where statement (2) of Theorem \ref{thm K_1(R) tfae} was used.

(2) If $R$ is not normalized, then  $\wh S$ is bounded in $\mc H(X)$ 
if and only if $R(\mathbf 1) \in L^{\infty}(\la)$ for all $\la$.

To see this, we compute
$$
||\wh S (f\sqrt{d\la})||^2_{\mc H(X)} = \int_X (f\circ\sigma)^2 \; 
d(\lacr) = \int_X f^2 R(\mathbf 1) \; d\la.
$$

(3) The following result which is similar to Theorem 
\ref{lem H(la) is S-inv} can be proved: 

Suppose $h$ is a harmonic function for the transfer operator $R$ 
acting in $L^2(\la)$. Then $\wh S$ is an isometry in $L^2(hd\la)$.

\end{remark}

The next lemma deals with non-normalized transfer operators $(R,
\sigma)$. 

\begin{lemma} Let $(R, \sigma)$ be  a transfer operator. Suppose 
the operators  $\wh R$ and $\wh S$ are defined as above. 
Then the  operator $\wh R\wh S$ is a  multiplication operator in $\mc 
H(X)$.
\end{lemma}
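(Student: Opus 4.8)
The plan is to evaluate $\wh R\wh S$ on a generic vector $f\sqrt{d\la}\in\mc H(X)$ directly from the definitions of $\wh S$ (formula (\ref{eq wh S on H(X)})) and $\wh R$ (Definition \ref{def of wh R}), and to recognize the result as multiplication by a fixed non-negative Borel function on $X$. By Remark \ref{rem choice of measure in a class}(1) I may assume throughout that $\la$ is a probability measure; then $\la R$ is a well-defined finite measure with $\la R\in K_1=M_1(X)R$, so $\sqrt{d(\la R)}\in\mc H_{K_1}$ and applying $\wh R$ to $\wh S(f\sqrt{d\la})$ uses the non-trivial branch of Definition \ref{def of wh R}.

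First I would apply $\wh S$, obtaining $\wh S(f\sqrt{d\la})=(f\circ\sigma)\sqrt{d(\la R)}$, and then $\wh R$ with the measure $\la R$, obtaining $\wh R\wh S(f\sqrt{d\la})=R(f\circ\sigma)\sqrt{d((\la R)\circ\sigma^{-1})}$. Two earlier facts then finish the computation. The pull-out property (\ref{eq char property of r via sigma 1}) with $g=\mathbf 1$ gives $R(f\circ\sigma)=f\,R(\mathbf 1)$. For the measure under the radical, testing against an arbitrary bounded Borel $f$ and combining $\int_X(f\circ\sigma)\,d(\la R)=\int_X R(f\circ\sigma)\,d\la=\int_X f\,R(\mathbf 1)\,d\la$ with $\int_X(f\circ\sigma)\,d(\la R)=\int_X f\,d((\la R)\circ\sigma^{-1})$ yields the equality of measures $d((\la R)\circ\sigma^{-1})=R(\mathbf 1)\,d\la$; this is exactly the identity behind Lemma \ref{lem Ri as derivative}, and it requires no absolute-continuity hypothesis, only that $\la R$ is defined (which holds under the standing order-continuity assumption). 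Hence $\wh R\wh S(f\sqrt{d\la})=f\,R(\mathbf 1)\,\sqrt{R(\mathbf 1)\,d\la}$.

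To return this to the subspace $\mc H(\la)$, I would invoke Remark \ref{rem measure change 1}(1): since $R(\mathbf 1)\la\ll\la$ with density $R(\mathbf 1)$, the pair $(f\,R(\mathbf 1),\,R(\mathbf 1)\la)$ is equivalent to $(f\,R(\mathbf 1)^{3/2},\,\la)$, i.e.\ $f\,R(\mathbf 1)\,\sqrt{R(\mathbf 1)\,d\la}=(f\,R(\mathbf 1)^{3/2})\sqrt{d\la}$. Therefore $\wh R\wh S(f\sqrt{d\la})=(f\,R(\mathbf 1)^{3/2})\sqrt{d\la}$, which is precisely multiplication by the fixed Borel function $R(\mathbf 1)^{3/2}$ (an operation that manifestly respects the equivalence of pairs and so is well defined on $\mc H(X)$). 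In the normalized case $R(\mathbf 1)=\mathbf 1$ this specializes to $\wh R\wh S=I_{\mc H(X)}$, consistent with the remark following Theorem \ref{thm R is adjoint of S}.

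The only delicate points are the universal–Hilbert–space bookkeeping — that $\wh S(f\sqrt{d\la})$ genuinely lies in $\mc H_{K_1}$ so that the correct branch of $\wh R$ is taken, that each manipulation descends from pairs to equivalence classes (Lemma \ref{lem R is defined on classes}, Proposition \ref{prop properties of universal H space}, Remark \ref{rem measure change 1}), and that $R(\mathbf 1)^{3/2}$ is a legitimate, possibly unbounded, multiplier, which holds because $R$ is positive so $R(\mathbf 1)\ge 0$. I do not anticipate a serious obstacle: the two algebraic identities are immediate and the remaining verifications are routine.
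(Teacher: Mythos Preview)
Your proposal is correct and follows essentially the same route as the paper's proof: apply $\wh S$ then $\wh R$, use the pull-out identity $R(f\circ\sigma)=f\,R(\mathbf 1)$, invoke the measure identity $d((\la R)\circ\sigma^{-1})=R(\mathbf 1)\,d\la$ (relation (\ref{eq R1 R-N derivative})), and collapse the resulting pair to $(R(\mathbf 1)^{3/2}f)\sqrt{d\la}$. Your version is in fact more careful than the paper's terse one-line computation, since you explicitly justify that $\la R\in K_1$ so the non-trivial branch of $\wh R$ applies, and you derive the measure identity directly without appealing to the hypothesis $\la\in\mc L(R)$ that Lemma \ref{lem Ri as derivative} nominally carries.
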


\begin{proof} We obtain that
$$
\wh R \wh S(f\sqrt{\la}) = \wh R((f\circ \sigma) \sqrt{\la R}) = 
(R(\mathbf 1)f \sqrt{(\la R) \csi1 \sigma^{-1}}) = 
(R(\mathbf 1)^{3/2} f  \sqrt{\la}).
$$
We used here relation (\ref{eq R1 R-N derivative}).

\end{proof}

\newpage

\section{Transfer operators with a Riesz property}

A well known theorem (Riesz) in analysis states  that every positive  linear 
functional $L$ on continuous functions is 
represented by a Borel measure. More precisely, let $X$ be  a locally 
compact Hausdorff space and $C_c(X)$ the space of continuous 
functions with compact support. Then the  Riesz theorem says 
that, for every positive linear functional $L$, there exists  a unique regular 
Borel measure $\mu$ on $X$ such that
$$
L(f)  = \int_X f\; d\mu.
$$

We are interesting in a special case of functionals $L_x$ defined on a 
functional space by the formula $L_x(f) = f(x)$.
For Borel functions $\mc F(X, \B)$ over a standard Borel space 
$(X, \B)$,  the Riesz  theorem  is not directly applicable. We introduce
in this section a class of transfer operators $R$ that have the following 
property.

\begin{definition}\label{def Riesz property}
Let $R$ be  a positive operator acting on Borel functions over a 
standard Borel space $(X, \B)$. We say that $R$ has the \emph{Riesz 
property} \index{Riesz property} if, for every $x \in X$, there exists a 
Borel measure $\mu_x$ such that
\be\label{eq def of Riesz prop}
R(f)(x) = \int_X f(y) \; d\mu_x(y), \ \ \qquad\ \ f \in \mc F(X, \B).
\ee
We call $(\mu_x)$ a \emph{Riesz family} \index{Riesz family} of measures corresponding 
to the operator $R$ with Riesz property. 
\end{definition}

In the following remark we present several  facts that 
immediately follow from this definition.

\begin{remark}\label{rem properties of R with Riesz}
(1) If $R(\mathbf 1)(x) = \mathbf 1$ for all $x\in X$, then every 
measure $\mu_x$ is probability, i.e., $\mu_x(X) =1$. In general, 
$\mu_x(X) = R(\mathbf 1)(x)$.

(2) The field of measures $x \mapsto \mu_x$ is Borel in the sense 
that, for any Borel function $f \in \mc F(X, \B)$, the function $ x
\mapsto \mu_x(f)$ is Borel. Indeed, this observation follows from 
(\ref{eq def of Riesz prop}) because $\mu_x(f) = R(f)(x)$.

(3) Given a positive operator $R$, the corresponding Riesz family  $
(\mu_x)$ is uniquely determined.
\end{remark}

Suppose $R$ is a positive operator with the Riesz property. Then any
power $R^k$ also has the Riesz property. So that we can write down 
for $f \in \mc F(X)$
$$
R^k(f) = \int_X f \; d\nu^k_x, \qquad k \in \N.
$$
On the other hand, if we  iterate relation  (\ref{eq def of Riesz prop}), 
then we obtain the following formula
$$
R^k(f)(x) = \int_X\!\cdots\!\int_X  f(y_k)\; d\mu_{y_{k-1}}(y_k)\cdots d
\mu_x(y_1).
$$
By uniqueness of the Riesz family, we conclude that
$$
d\nu_x^k = \int_X\!\cdots\!\int_X \; d\mu_{y_{k-1}}(y_k)\cdots d
\mu_x(y_1).
$$
We will also write $d\mu_x(y) = d\mu(y | x)$ and treat this measure 
as conditional one. This point of view will be used for the case when all 
measures $(\mu_x)$ are pairwise singular.

So far, we have used only the property of positivity of the operator
 $R$. From now on, we will assume that $R$ has the pull-out property,
 i.e., $R$ is a transfer operator on $\mc F(X, \B)$ corresponding 
to an onto endomorphism $\sigma$.

\begin{lemma}\label{lem mu_x for TO} Suppose that $(R, \sigma)$ is 
a transfer operator defined on $\mc F(X, \B)$ such that $R(\mathbf 
1) =  \mathbf 1$. Assume that $R$ has the Riesz property. Then, for 
the Riesz family of measures $(\mu_x)$, we have
$$
\mu_x \circ \sigma^{-1} = \delta_x, \qquad x \in X,
$$
where $\delta_x$ is the Dirac measure.

\end{lemma}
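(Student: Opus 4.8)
The goal is to show that for the Riesz family $(\mu_x)$ attached to a normalized transfer operator $(R,\sigma)$, we have $\mu_x\circ\sigma^{-1}=\delta_x$ for every $x\in X$. The natural strategy is to evaluate both sides on an arbitrary Borel test function $g\in\mc F(X,\B)$ and use the defining Riesz identity $R(g)(x)=\int_X g\, d\mu_x$ together with the pull-out property. First I would compute, for a fixed $x$ and a Borel function $g$,
\[
\int_X g\; d(\mu_x\circ\sigma^{-1}) = \int_X (g\circ\sigma)\; d\mu_x = R(g\circ\sigma)(x).
\]
The first equality is just the change-of-variables formula for the pushforward measure $\mu_x\circ\sigma^{-1}$ (valid for bounded Borel $g$, and then extended); the second is the Riesz property applied to the Borel function $g\circ\sigma$.

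Next I would invoke the special case of the pull-out property recorded earlier in the paper, namely relation \eqref{eq simplest pull out prop}: $R(g\circ\sigma)=g\,R(\mathbf 1)$. Since $R$ is normalized, $R(\mathbf 1)=\mathbf 1$, so $R(g\circ\sigma)(x)=g(x)$. Combining,
\[
\int_X g\; d(\mu_x\circ\sigma^{-1}) = g(x) = \int_X g\; d\delta_x
\]
for every Borel function $g$. Because this holds for all $g\in\mc F(X,\B)$ (in particular for all indicator functions $\chi_A$, $A\in\B$), the two finite Borel measures $\mu_x\circ\sigma^{-1}$ and $\delta_x$ agree on every Borel set, hence are equal. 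This gives the claim for each $x\in X$.

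The argument is essentially a one-line verification once the right earlier facts are assembled, so there is no serious obstacle; the only points requiring a little care are (i) checking that $\mu_x\circ\sigma^{-1}$ is genuinely a well-defined Borel measure — this is immediate since $\sigma$ is a Borel map and $\mu_x(X)=R(\mathbf 1)(x)=1$ by Remark \ref{rem properties of R with Riesz}(1), so $\mu_x\circ\sigma^{-1}$ is a Borel probability measure — and (ii) making sure the identity $\int_X g\,d(\mu_x\circ\sigma^{-1})=\int_X(g\circ\sigma)\,d\mu_x$ and the Riesz identity are applied in a class of functions (bounded Borel, or simple functions) where all integrals are finite and then passing to general Borel $g$ by the usual monotone class / linearity-and-positivity routine. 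I would phrase the final step as: two Borel measures that assign the same value to $R(\chi_A)(x)=\mu_x(\sigma^{-1}(A))$ versus $\chi_A(x)=\delta_x(A)$ for all $A\in\B$ must coincide, which is exactly $\mu_x\circ\sigma^{-1}=\delta_x$.
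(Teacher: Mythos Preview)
Your proof is correct and follows essentially the same route as the paper: both compute $\int_X g\,d(\mu_x\circ\sigma^{-1})=\int_X(g\circ\sigma)\,d\mu_x=R(g\circ\sigma)(x)$, then apply the pull-out property together with $R(\mathbf 1)=\mathbf 1$ to get $g(x)$. The extra care you take with well-definedness and the monotone-class remark is fine but not needed for the paper's version.
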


\begin{proof} Since $\delta_x(f) = f(x)$, we note that the relation $
\mu_x \circ \sigma^{-1} = \delta_x$ is equivalent to
$$
\int_X f\; d(\mu_x \circ \sigma^{-1}) = f(x), \qquad \forall f \in \mc 
F(X, \B),
$$
or, in other words, is equivalent to
$$
\int_X (f\circ \sigma)\; d\mu_x  = f(x), \qquad \forall f \in \mc 
F(X, \B).
$$
But by (\ref{eq def of Riesz prop}), we obtain
$$
\int_X (f\circ \sigma)\; d\mu_x = R(f\circ \sigma)(x) = f(x) 
R(\mathbf 1)(x) = f(x),
$$
and we are done.
\end{proof}

The following observation follows directly from this result.

\begin{corollary}\label{cor support of mu_x}
Let $(R, \sigma)$ be a transfer operator acting on  $\mc F(X, \B)$. 
Suppose that $R$ has the Riesz property and $R(\mathbf 1) = 
\mathbf 1 $. Then, for any $x\in X$,
$$
\mathrm{supp} (\{\mu_x\}) = \sigma^{-1}(x),
$$
where $(\mu_x)$ is the Riesz family of measures corresponding to 
$R$.
\end{corollary}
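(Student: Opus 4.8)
The plan is to deduce Corollary~\ref{cor support of mu_x} directly from Lemma~\ref{lem mu_x for TO}, which already tells us that $\mu_x\circ\sigma^{-1}=\delta_x$ for every $x\in X$. The goal is to translate this identity about the pushed-forward measure into a statement about where $\mu_x$ itself lives, namely that $\mathrm{supp}(\mu_x)=\sigma^{-1}(x)$. First I would establish the inclusion $\mathrm{supp}(\mu_x)\subset\sigma^{-1}(x)$. For this, observe that $\sigma^{-1}(X\setminus\{x\})=X\setminus\sigma^{-1}(x)$, and apply the identity $\mu_x\circ\sigma^{-1}=\delta_x$ to the Borel set $X\setminus\{x\}$: this gives
$$
\mu_x\bigl(X\setminus\sigma^{-1}(x)\bigr)=\mu_x\bigl(\sigma^{-1}(X\setminus\{x\})\bigr)=(\mu_x\circ\sigma^{-1})(X\setminus\{x\})=\delta_x(X\setminus\{x\})=0,
$$
so $\mu_x$ is concentrated on $\sigma^{-1}(x)$, and in particular its support is contained in $\sigma^{-1}(x)$. (In a standard Borel space singletons are Borel, so all the sets involved are legitimately measurable.)

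Next I would argue the reverse inclusion, $\sigma^{-1}(x)\subset\mathrm{supp}(\mu_x)$. Since we have just shown $\mu_x$ is a probability measure (it has total mass $R(\mathbf 1)(x)=1$ by Remark~\ref{rem properties of R with Riesz}) supported on the fiber $\sigma^{-1}(x)$, it remains to see that no proper Borel subset of the fiber carries all the mass in a way that shrinks the support strictly below $\sigma^{-1}(x)$. Here one must be slightly careful: ``support'' in a general standard Borel space without a topology is a notion that needs to be pinned down. The cleanest route is to recall that $(X,\B)$ may be realized (Borel isomorphically) as a Polish space, and that the operations under consideration do not depend on this realization (Subsection~\ref{subsect standard spaces}); with a Polish topology fixed, $\mathrm{supp}(\mu_x)$ is the smallest closed set of full $\mu_x$-measure. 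The statement $\mathrm{supp}(\mu_x)=\sigma^{-1}(x)$ should then be read modulo this identification, or equivalently as the assertion that $\mu_x$ is a probability measure whose set of full measure is precisely the fiber $\sigma^{-1}(x)$ and no smaller Borel set. That is exactly what the first paragraph delivers, together with the observation that $\mu_x\bigl(\sigma^{-1}(x)\bigr)=\mu_x(X)=1>0$, so the fiber is not $\mu_x$-null.

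I expect the main obstacle to be purely expository rather than mathematical: making precise what $\mathrm{supp}(\{\mu_x\})$ means in the Borel category, since the paper works with standard Borel spaces and only occasionally invokes a compatible Polish topology. The substantive content — that $\mu_x$ lives on the fiber — is an immediate consequence of $\mu_x\circ\sigma^{-1}=\delta_x$ applied to the complement of the point $x$, as above; the rest is bookkeeping about singletons being Borel and about $\mu_x$ having total mass one. So the proof is essentially one line of set algebra ($\sigma^{-1}(X\setminus\{x\})=X\setminus\sigma^{-1}(x)$) plugged into the conclusion of Lemma~\ref{lem mu_x for TO}, followed by the remark that the fiber has positive $\mu_x$-measure and hence cannot be discarded from the support.
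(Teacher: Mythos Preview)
Your approach is correct and is exactly the route the paper takes: the corollary is stated immediately after Lemma~\ref{lem mu_x for TO} with the single sentence ``The following observation follows directly from this result,'' and your computation $\mu_x(X\setminus\sigma^{-1}(x))=(\mu_x\circ\sigma^{-1})(X\setminus\{x\})=\delta_x(X\setminus\{x\})=0$ is precisely the one-line unpacking the paper leaves implicit. Your caveat about the reverse inclusion is well taken --- the paper's equality is to be read as ``$\mu_x$ is concentrated on the fiber $\sigma^{-1}(x)$,'' since nothing in the hypotheses forces $\mu_x$ to charge every point of the fiber.
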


\begin{lemma}\label{lem 3 equivalent properties}
Let $R$ be  a transfer operator with Riesz property such that 
$R(\mathbf 1) =\mathbf 1$. Suppose that 
$$
\int_X f\; d\mu_x = R(f)(x)
$$ 
for all $x\in X$. Take a Borel measure $\la $ on $(X,\B)$. If $\mc 
H(\mu_x)$ is a subspace of the universal Hilbert space $\mc H(X)$, 
then the following statements are equivalent:

(1)
$$
\la \ll \mu_x, \qquad x \in X;
$$

(2)
$$
\mc H(\la) \hookrightarrow \mc H(\mu_x),  \qquad x \in X;
$$

(3)
$$
\int_X f \; d\la = R\left(f \frac{d\la}{d\mu_x}\right)(x).
$$
%The last statement says that the function $R(f \dfrac{d\la}
%{d\mu_x})(x)$ is a constant.
\end{lemma}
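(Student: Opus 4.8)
The plan is to split the claim into the three implications $(1)\Leftrightarrow(2)$, $(1)\Rightarrow(3)$, and $(3)\Rightarrow(1)$, using Proposition \ref{prop properties of universal H space} for the Hilbert-space part and the defining Riesz identity (\ref{eq def of Riesz prop}) for the Radon--Nikodym part. For $(1)\Leftrightarrow(2)$ I would fix $x\in X$; since $R(\mathbf 1)=\mathbf 1$, Remark \ref{rem properties of R with Riesz}(1) shows each $\mu_x$ is a probability measure, so $\mc H(\mu_x)$ is a genuine subspace of $\mc H(X)$, and Proposition \ref{prop properties of universal H space}(1) gives $\la\ll\mu_x$ if and only if $\mc H(\la)$ is isometrically embedded in $\mc H(\mu_x)$. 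Taking the conjunction of these statements over all $x\in X$ is exactly the equivalence of (1) and (2).

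For $(1)\Rightarrow(3)$ I would set $g_x:=d\la/d\mu_x\in L^1(\mu_x)$, which exists by hypothesis, and compute for a non-negative Borel function $f$, using the Riesz property (\ref{eq def of Riesz prop}),
$$
\int_X f\;d\la \;=\; \int_X f\,g_x\;d\mu_x \;=\; R\!\left(f\,g_x\right)(x),
$$
an identity in $[0,\infty]$; restricting to those $f$ for which $f g_x$ lies in the domain of $R$ (in particular $f\in L^1(\la)$, or bounded $f$) turns this into the stated equality of finite numbers, i.e. statement (3). Conversely, for $(3)\Rightarrow(1)$ one first observes that the right-hand side of (3) already presupposes that $d\la/d\mu_x$ is defined; reading (3) as the assertion that for each $x$ there is a Borel function $g_x$ with $\int_X f\,d\la=R(f g_x)(x)=\int_X f g_x\,d\mu_x$ for all Borel $f$, we take $f=\chi_A$ to get $\la(A)=\int_A g_x\,d\mu_x$ for every $A\in\mathcal B$; hence $\mu_x(A)=0$ forces $\la(A)=0$, so $\la\ll\mu_x$, and since $x$ was arbitrary, (1) follows, closing the cycle.

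The genuinely delicate point is not any of these short arguments but the precise reading of (3): the factor $d\la/d\mu_x$ is an $x$-dependent Borel function, so strictly one should regard $(x,y)\mapsto \frac{d\la}{d\mu_x}(y)$ as a measurable field of densities (compare the disintegration statements, Theorems \ref{thm Rokhlin disintegration} and \ref{thm simmons}), and one must fix the class of test functions $f$ for which the displayed identity is an equality of reals rather than of extended reals. Once these conventions are settled, each implication is a one-line consequence of (\ref{eq def of Riesz prop}) together with Proposition \ref{prop properties of universal H space}, and I do not anticipate any further obstruction.
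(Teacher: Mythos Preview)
Your proof is correct and follows exactly the same approach as the paper: the paper's own proof consists of two sentences, citing Proposition \ref{prop properties of universal H space} for $(1)\Leftrightarrow(2)$ and the definition of the Riesz property for the equivalence with $(3)$. Your write-up simply unpacks these references in detail, and your remarks about the $x$-dependence of $d\la/d\mu_x$ and the class of admissible test functions are reasonable clarifications of conventions that the paper leaves implicit.
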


\begin{proof} The equivalence of (1) and (2) is mentioned in 
Proposition \ref{prop properties of universal H space}. The equivalence 
of these statements 
to (3) follows from the definition of the Riesz property.
\end{proof}

\begin{lemma}\label{lem multiplication property}
Let $R$ be a transfer operator with Riesz property such that $R
(\mathbf 1) = \mathbf 1$. If $(\mu_x)$ is the corresponding 
family of measures for $R$, then, for any sets $A, B \in \B$,
$$
\mu_x (\sigma^{-1}(A)\cap B ) = \delta_x(A) \mu_x(B),\qquad 
x \in X.
$$
\end{lemma}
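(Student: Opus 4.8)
The plan is to reduce the identity to a single application of the pull-out property after translating the set-theoretic data into integrals against the Riesz family. First I would rewrite the left-hand side using the definition of the Riesz property. Since $\chi_{\sigma^{-1}(A)\cap B} = \chi_{\sigma^{-1}(A)}\,\chi_B$ and $\chi_{\sigma^{-1}(A)} = \chi_A\circ\sigma$, the function $(\chi_A\circ\sigma)\chi_B$ is a genuine Borel function in $\mc F(X,\B)$ (a product of characteristic functions of $\sigma^{-1}(A)\in\B$ and $B\in\B$), so formula (\ref{eq def of Riesz prop}) applies and gives
$$
\mu_x\big(\sigma^{-1}(A)\cap B\big) = \int_X (\chi_A\circ\sigma)\,\chi_B \; d\mu_x = R\big((\chi_A\circ\sigma)\,\chi_B\big)(x), \qquad x\in X.
$$

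Next I would invoke the pull-out property (\ref{eq char property of r via sigma 1}) with $f = \chi_A$ and $g = \chi_B$, obtaining $R\big((\chi_A\circ\sigma)\chi_B\big)(x) = \chi_A(x)\, R(\chi_B)(x)$. Finally $R(\chi_B)(x) = \mu_x(B)$, again by (\ref{eq def of Riesz prop}), and $\chi_A(x) = \delta_x(A)$ since $\delta_x$ is the unit point mass at $x$. Chaining these equalities yields $\mu_x(\sigma^{-1}(A)\cap B) = \delta_x(A)\,\mu_x(B)$ for every $x\in X$, because each step is a pointwise identity in $x$.

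There is essentially no obstacle here; the argument is a three-line computation. The only point worth a remark is that the hypothesis $R(\mathbf 1)=\mathbf 1$ is not actually needed for the displayed identity — it is used only to guarantee that each $\mu_x$ is a probability measure (cf. Remark \ref{rem properties of R with Riesz}). Thus the same proof shows, for an arbitrary Riesz transfer operator, that $\mu_x(\sigma^{-1}(A)\cap B) = \chi_A(x)\,\mu_x(B)$; the normalization is retained only for consistency with the neighbouring statements such as Lemma \ref{lem mu_x for TO} and Corollary \ref{cor support of mu_x}, where it is genuinely used. One could also note that taking $B = X$ recovers a special case of Lemma \ref{lem mu_x for TO}, namely $(\mu_x\circ\sigma^{-1})(A) = \mu_x(\sigma^{-1}(A)) = \delta_x(A)$.
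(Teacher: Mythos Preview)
Your proof is correct and follows essentially the same route as the paper: rewrite $\chi_{\sigma^{-1}(A)\cap B}$ as $(\chi_A\circ\sigma)\chi_B$, apply the Riesz representation to express $\mu_x(\sigma^{-1}(A)\cap B)$ as $R((\chi_A\circ\sigma)\chi_B)(x)$, use the pull-out property, and translate back via $R(\chi_B)(x)=\mu_x(B)$ and $\chi_A(x)=\delta_x(A)$. Your additional remark that the hypothesis $R(\mathbf 1)=\mathbf 1$ is not actually used in the computation is a valid observation that the paper does not make explicit.
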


\begin{proof}
To show this, we use Definition \ref{def Riesz property} and Lemma
\ref{lem mu_x for TO}. We compute
\begin{eqnarray*}
\mu_x(\sigma^{-1}(A)\cap B ) & = & \int_X \chi_{\sigma^{-1}(A)}
\chi_B\; d\mu_x\\
\\
 & = & R(\chi_{A}\cs \chi_B )(x)\\
 \\
 & = & \chi_A(x) R(\chi_B)(x)\\
 \\
 & = & \delta_x(A) \mu_x(B)
\end{eqnarray*}
\end{proof}

In a similar way, we can formulate a simple general criterion  for
a positive operator $R$, defined by (\ref{eq def of Riesz prop}), to 
have the pull-out property. 

\begin{lemma}
A positive operator $R$ with Riesz property is a transfer operator with
pull-out property if and only if, for any measurable functions $f, g$ 
from the domain of $R$, 
$$
\int_X (f\cs) g\; d\mu_x = f(x) \int_X g\; d\mu_x.
$$
\end{lemma}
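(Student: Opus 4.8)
The plan is to simply translate the pull-out property into an integral identity by invoking the Riesz representation twice. First I would recall that, by Definition \ref{def Riesz property}, for the (uniquely determined) Riesz family $(\mu_x)$ one has $R(h)(x) = \int_X h(y)\, d\mu_x(y)$ for every Borel function $h$ in the domain of $R$ and every $x \in X$. Applying this with $h = (f\circ\sigma)g$ yields $R((f\circ\sigma)g)(x) = \int_X (f\circ\sigma)(y)\, g(y)\, d\mu_x(y)$, and applying it with $h = g$ yields $R(g)(x) = \int_X g(y)\, d\mu_x(y)$, hence $f(x)\, R(g)(x) = f(x)\int_X g\, d\mu_x$.

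Next I would use that $R$, being already a positive operator equipped with the Riesz property, is a transfer operator attached to $\sigma$ precisely when it satisfies the pull-out property of Definition \ref{def transfer operator}, i.e. when $R((f\circ\sigma)g) = f\, R(g)$, equivalently $R((f\circ\sigma)g)(x) = f(x)\, R(g)(x)$ for all admissible $f,g$ and all $x \in X$. Comparing this with the two displayed identities above shows at once that the pull-out property holds if and only if
$$
\int_X (f\circ\sigma)\, g\; d\mu_x = f(x) \int_X g\; d\mu_x, \qquad x\in X,
$$
for all such $f, g$. Since every step is an equality, both implications are obtained simultaneously; no further argument is needed.

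The only point requiring a little care — and this is the closest thing to an obstacle — is bookkeeping about which functions are allowed: one should restrict $f$ and $g$ so that $(f\circ\sigma)g$ actually lies in the domain of $R$ (for instance bounded Borel functions, or simple functions, exactly as in the analogous Lemma \ref{lem mu_x for TO}), and then note, as usual, that it is enough to verify the identity on such a generating family, the general case following by linearity and monotone limits. Apart from this, the proof is a one-line unwinding of the Riesz representation.
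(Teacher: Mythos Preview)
Your proposal is correct and matches the paper's intent: the paper does not actually write out a proof for this lemma, introducing it only with the phrase ``in a similar way'' (referring back to the computation in Lemma~\ref{lem multiplication property}), so your unwinding of the Riesz representation on both sides of the pull-out identity is exactly the argument being left to the reader.
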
 

Let $(R, \sigma)$ be a transfer operator on Borel function $\mc F(X,
\B)$. We recall the construction of the induced transfer operator 
$R_h$ where $h$ is a positive  harmonic function for $R$.
 Then
\be\label{eq R_h in sect R P}
R_h(f) := \frac{R(fh)}{h}, \ \ \qquad \ f \in \mc F(X, \B),
\ee
is a transfer operator such that $R_h(\mathbf 1) = \mathbf 1$.

\begin{proposition}\label{prop mu' and mu}
Let transfer operators $(R, \sigma)$ and $(R_h,\sigma)$  be defined 
 as above and  $Rh =h$. Suppose that $R$ has the Riesz 
property, and let $(\mu_x)$ be the corresponding Riesz family of 
measures. Then $R_h$ also has the Riesz property with respect the 
family $(\mu'_x)$ where the measures $(\mu_x)$ and $(\mu'_x)$
 are related as follows:
$$
d\mu_x(y) = \frac{h(\sigma y)}{h(y)} d\mu'_x(y), \qquad \ \ y \in 
\sigma^{-1}(x),
$$
\end{proposition}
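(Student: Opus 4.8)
The plan is to unwind both Riesz families against an arbitrary Borel test function $f$ and compare, using the defining relation $R_h(f) = R(fh)/h$ together with the Riesz representation $R(g)(x) = \int_X g\, d\mu_x$. First I would fix $x \in X$ and a Borel function $f \in \mc F(X,\B)$, and write
$$
R_h(f)(x) = \frac{R(fh)(x)}{h(x)} = \frac{1}{h(x)} \int_X f(y) h(y) \; d\mu_x(y).
$$
Since $R_h$ is a transfer operator with $R_h(\mathbf 1) = \mathbf 1$ (this is exactly the content of Lemma \ref{lem reduce to R1=1}(2), using $Rh = h$), and since one checks that the positive operator $R_h$ inherits the Riesz property from $R$ — indeed $f \mapsto R_h(f)(x)$ is a positive linear functional and its representing measure is forced to be $\frac{h(y)}{h(x)} d\mu_x(y)$ — there exists a Riesz family $(\mu'_x)$ with $R_h(f)(x) = \int_X f \, d\mu'_x$. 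Comparing the two expressions for $R_h(f)(x)$ and invoking uniqueness of the Riesz family (Remark \ref{rem properties of R with Riesz}(3)) gives
$$
d\mu'_x(y) = \frac{h(y)}{h(x)} \, d\mu_x(y).
$$

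Next I would rewrite this in the form the proposition demands. By Corollary \ref{cor support of mu_x}, the measure $\mu_x$ is supported on $\sigma^{-1}(x)$, so for $\mu_x$-a.e.\ $y$ we have $\sigma(y) = x$, hence $h(\sigma y) = h(x)$. Substituting $h(x) = h(\sigma y)$ into the displayed formula yields
$$
d\mu'_x(y) = \frac{h(y)}{h(\sigma y)}\, d\mu_x(y), \qquad y \in \sigma^{-1}(x),
$$
which is equivalent to the claimed identity $d\mu_x(y) = \frac{h(\sigma y)}{h(y)}\, d\mu'_x(y)$ on $\sigma^{-1}(x)$ (here one uses that $h$ is strictly positive, or at least positive $\mu_x$-a.e., so dividing is legitimate; if $h$ is merely nonnegative, one restricts to $\{h > 0\}$ as in Lemma \ref{lem reduce to R1=1} and Remark following it).

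The main obstacle, and the one step I would treat carefully rather than wave through, is justifying that $R_h$ actually \emph{has} the Riesz property and that its representing family is the obvious candidate $\frac{h(y)}{h(x)} d\mu_x(y)$ — in other words, that the pointwise formula $R_h(f)(x) = \int_X f(y)\, \frac{h(y)}{h(x)}\, d\mu_x(y)$ defines, for each fixed $x$, a genuine Borel measure in $y$ and that integration against $f$ reproduces $R_h(f)(x)$ for \emph{all} Borel $f$ (not just bounded or simple ones). This is where one has to be slightly careful about integrability: one should note $\mu_x(X) = R(\mathbf 1)(x)$ is finite (Remark \ref{rem properties of R with Riesz}(1)) and that $R_h(\mathbf 1) = \mathbf 1$ forces $\frac{1}{h(x)}\int_X h \, d\mu_x = 1$, so the candidate measure is a probability measure, making the representation valid first for bounded Borel $f$ and then extending by monotone convergence to all nonnegative Borel $f$, and by linearity to $\mc F(X,\B)$. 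Once this is in place, uniqueness of the Riesz family does all the remaining work, and the support identity of Corollary \ref{cor support of mu_x} converts the answer into the exact form stated.
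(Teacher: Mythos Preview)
Your argument is correct and follows essentially the same route as the paper's proof: both compute $R_h(f)(x)$ via the Riesz representation of $R$ and read off the density $d\mu'_x/d\mu_x = h(y)/h(\sigma y)$. The only cosmetic difference is that the paper moves $1/h(x)$ inside $R$ using the pull-out property, writing $\frac{R(fh)}{h}(x) = R\!\left(\frac{fh}{h\circ\sigma}\right)(x)$, whereas you first integrate and then invoke the support identity $h(x)=h(\sigma y)$ on $\sigma^{-1}(x)$; these are two ways of saying the same thing.

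One small citation issue: Corollary~\ref{cor support of mu_x} as stated assumes $R(\mathbf 1)=\mathbf 1$, which is not hypothesized here (only $Rh=h$ is). What you actually need is the inclusion $\mathrm{supp}(\mu_x)\subset\sigma^{-1}(x)$, and that follows for \emph{any} transfer operator from Lemma~\ref{lem R(s) is integrable}(1), since $\mu_x(A)=R(\chi_A)(x)=0$ whenever $x\notin\sigma(A)$. So your step is valid, but you should cite Lemma~\ref{lem R(s) is integrable} rather than the corollary.
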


In other words, the statement of the Proposition \ref{prop mu' and
 mu} says that the function $\dfrac{d\mu_x}{d\mu'_x}$ is a 
 $\sigma$-coboundary. 

\begin{proof} 
We need to find the family of measures $(\mu'_x)$ such that
$$
R_h(f)(x) = \int_X f(y) \; d\mu'_x(y).
$$
Since $R_h(f)$ can be found from (\ref{eq R_h in sect R P}),  we can 
write
\begin{eqnarray*}
R_h(f) & =& \frac{R(fh)}{h}(x)\\
\\
& = & R\left(\frac{fh}{h\circ\sigma}\right)(x)\\
\\
&=&\int_X \frac{fh}{h\circ\sigma}(y) \; d\mu_x(y)\\
\\
& =&  \int_X f \; d\mu_x'.
\end{eqnarray*}
Hence, we can take
$$
d\mu_x' (y)= \frac{h}{h\circ\sigma}(y) \; d\mu_x(y).
$$
 We note that $h(x) d\mu_x' (y)= h(y) d\mu_x(y)$ for any $x$ and
 $y \in \sigma^{-1}(x)$.
\end{proof}

Let now $\nu$ be a probability measure on $(X,\B)$. Define a new 
measure $\la$ on $(X,\B)$ by the formula
$$
\la = \int_X \mu_x \; d\nu(x).
$$
This is equivalent to the equality
\be\label{eq lambda via mu_x}
 \int_X f \; d\la = \int_X \left(\int_X f(y) \; d\mu_x(y)\right) 
 \; d\nu(x)
\ee
which is used in the following statement. 

We note that, in the case when a transfer operator $R$ satisfies
the Riesz property,  the family of Riesz measures $(\mu_x)$ can be 
viewed as a system of conditional measures defined $(X, \B, \la)$
 by the measurable partition $\xi = \{\sigma^{-1}(x) | x\in X\}$.

\begin{proposition}\label{prop la R = la} Let a transfer operators 
$(R, \sigma)$ have the Riesz property with the family of measures 
$(\mu_x)$. 

(1) Let $\la$ be  a measure defined by $\nu$ and $(\mu_x)$ as in 
(\ref{eq  lambda via mu_x}). Then $\la = \nu R$. 

(2) Suppose that  $R (\mathbf 1) =\mathbf 1$. Then, for 
  $\nu$ on $(X,\B)$ and  $\la$ as above, we have  $\la R = \la$.
\end{proposition}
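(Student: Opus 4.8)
The plan splits along the two parts, with (1) a direct unpacking of definitions and (2) leaning on the structure results of Section~\ref{sect actions on measures}. For (1), since $R$ has the Riesz property we have $R(f)(x)=\int_X f\,d\mu_x$ for every Borel $f$, so the action of $R$ on measures (Definition~\ref{def action of P on mu}) gives
\[
(\nu R)(f)=\int_X R(f)\,d\nu=\int_X\Bigl(\int_X f(y)\,d\mu_x(y)\Bigr)d\nu(x),
\]
which is precisely the right-hand side of (\ref{eq lambda via mu_x}); hence $\lambda=\nu R$. I would also record at this point that this alone puts $\lambda$ in $K_1(R)=M_1(X)R$.

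For (2), the plan is to reduce $\lambda R=\lambda$ to a statement about $\sigma$ and then dispatch it with the results on the map $t_R$. Starting from $\lambda=\nu R\in K_1(R)$, first compute, for $A\in\B$,
\[
(\lambda\circ\sigma^{-1})(A)=\int_X\mu_x\bigl(\sigma^{-1}(A)\bigr)\,d\nu(x)=\int_X\delta_x(A)\,d\nu(x)=\nu(A),
\]
where the middle equality is Lemma~\ref{lem mu_x for TO} (valid because $R(\mathbf 1)=\mathbf 1$); equivalently this is Lemma~\ref{lem R 1= 1 - actions on measures} applied to $\lambda$. Thus $\lambda\circ\sigma^{-1}=\nu$, and Theorem~\ref{thm t_R is 1-1 and more}(1) re-delivers $(\lambda\circ\sigma^{-1})R=\nu R=\lambda$. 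Since $\lambda\in K_1(R)$, Theorem~\ref{thm t_R is 1-1 and more}(6) shows that the desired conclusion $\lambda R=\lambda$, i.e.\ $\lambda\in\mathrm{Fix}(R)$, is equivalent to $\lambda\in\mathrm{Fix}(\sigma)$, i.e.\ to $\lambda\circ\sigma^{-1}=\lambda$. So the whole proof reduces to identifying $\lambda$ with its own $\sigma^{-1}$-image, equivalently to checking that the measure produced by the averaging (\ref{eq lambda via mu_x}) is $\sigma$-invariant.

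This last reduction is the main obstacle, and it is where the Riesz/conditional-measure picture is used essentially. The remark preceding the proposition identifies $(\mu_x)$ as the system of conditional measures of $(X,\B,\lambda)$ for the partition $\xi=\{\sigma^{-1}(x):x\in X\}$, whose quotient measure is $\lambda\circ\sigma^{-1}=\nu$; so the Rokhlin disintegration (Theorem~\ref{thm Rokhlin disintegration}) supplies $\lambda=\int_X\mu_x\,d(\lambda\circ\sigma^{-1})(x)$ alongside the defining identity $\lambda=\int_X\mu_x\,d\nu(x)$. I would then compute $\lambda R$ directly from $(\lambda R)(A)=\int_X R(\chi_A)\,d\lambda$, expanding $R(\chi_A)(x)=\mu_x(A)$ and the outer integral via the disintegration, and collapse the resulting double integral back to $\lambda(A)$ using $\mu_x\circ\sigma^{-1}=\delta_x$, the multiplicativity $\mu_x(\sigma^{-1}(A)\cap B)=\delta_x(A)\mu_x(B)$ of Lemma~\ref{lem multiplication property}, and the idempotence $t_R s_\sigma=\mathrm{id}$ on $K_1$ from Lemma~\ref{lem interaction R and sigma on measures}. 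I expect essentially all of the genuine work to sit in this bookkeeping step, with the remainder being formal.
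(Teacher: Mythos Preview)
Your argument for (1) is correct and essentially identical to the paper's.

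For (2), your reduction is valid but circular. You correctly compute $\lambda\circ\sigma^{-1}=\nu$ and then, via Theorem~\ref{thm t_R is 1-1 and more}(6), reduce $\lambda R=\lambda$ to $\lambda\circ\sigma^{-1}=\lambda$, i.e.\ to $\nu=\lambda=\nu R$. But that is precisely the statement that $\nu$ is already $R$-invariant, which is nowhere assumed. Your proposed ``bookkeeping'' cannot close this gap: expanding $(\lambda R)(A)=\int_X\mu_y(A)\,d\lambda(y)$ through the disintegration $\lambda=\int_X\mu_x\,d\nu(x)$ gives $\int_X\bigl(\int_X\mu_y(A)\,d\mu_x(y)\bigr)\,d\nu(x)=\int_X R^2(\chi_A)\,d\nu=(\nu R^2)(A)$, and equality with $(\nu R)(A)=\lambda(A)$ is again, by injectivity of $t_R$, exactly $\nu R=\nu$. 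None of the tools you list (Lemma~\ref{lem multiplication property}, $t_Rs_\sigma=\mathrm{id}_{K_1}$, the conditional-measure identification) collapse $\nu R^2$ to $\nu R$ without that hypothesis. In fact the assertion is false in general: for $X=\{0,1\}^{\mathbb N}$, $\sigma$ the left shift, $R(f)(x)=\tfrac12 f(0x)+\tfrac12 f(1x)$ (so $\mu_x=\tfrac12\delta_{0x}+\tfrac12\delta_{1x}$), and $\nu=\delta_{\bar 0}$, one gets $\lambda=\nu R=\tfrac12\delta_{\bar 0}+\tfrac12\delta_{1\bar 0}$ but $\lambda R=\tfrac14(\delta_{\bar 0}+\delta_{1\bar 0}+\delta_{01\bar 0}+\delta_{11\bar 0})\neq\lambda$.

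The paper's own proof of (2) attempts a direct computation rather than your structural reduction, but it founders at the same point. After writing $\int_X R(f)\,d\lambda=\int_X\bigl(\int_X R(f)(y)\,d\mu_x(y)\bigr)\,d\nu(x)$, the paper invokes the Riesz identity to replace $R(f)(y)$ by $\int_X f\,d\mu_x$---with subscript $x$---whereas the Riesz property actually gives $\int_X f\,d\mu_y$. With the correct subscript the inner double integral is $R^2(f)(x)$, the chain yields $(\nu R^2)(f)$ rather than $(\nu R)(f)$, and the argument does not close. So your instinct that ``essentially all of the genuine work'' remains is right; the trouble is that the work cannot be done under the stated hypotheses.
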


\begin{proof} 
(1) It follows from the definition of $\la$ that, for any function $f$,
\begin{eqnarray*}
\int_X f (x)\; d\la & = & \int_X R(f)(x) \; d\nu\\
\\
& = & \int_X f (x) \; d(\nu R)
\end{eqnarray*}
and we are done.

(2) We need to show that for any function $f$ the following 
equality holds
$$
\int_X f\; d \la = \int_X f\; d(\la R) = \int_X R(f) \; d\la.
$$
In the following computation we use relation  (\ref{eq lambda via 
mu_x}) and the fact that $\mu_x$ is a probability measure for all 
$x \in X$. By Definition \ref{def Riesz property}, we have 
\begin{eqnarray*}
\int_X R(f)(x) \; d\la(x) &= & \int_X\left(\int_X R(f)(y) \; d\mu_x(y)
\right) \; d\la(x) \qquad\qquad\mbox{(by \ (\ref{eq lambda via 
mu_x}))}\\
\\
& = &  \int_{(x)}\left[\int_{(z)}  \left( \int_{(y)} f(y) \; d\mu_x(y)
\right) \; d\mu_x(z)\right] \; d\nu(x) \qquad\mbox{(by \ (\ref{eq def 
of Riesz prop}))}\\
\\
& =& \int_{(x)}\left[\int_{(y)} f(y) \left( \int_{(z)}  \; d\mu_x(z)
\right) \; d\mu_x(y)\right] \; d\nu(x)\\
\\
& =& \int_{(x)}\left[\int_{(y)} f(y)  \; d\mu_x(y)\right] \; d\nu(x)\\
\\
& = & \int_X f(x) d\la(x).
\end{eqnarray*}

\end{proof}

\begin{corollary} Let $R$,  $(\mu_x)$, $\nu$ and $\la$ be as in 
Proposition \ref{prop la R = la}.
Suppose $R(\mathbf 1)(x) = W(x)$. Then
$$
W(x) = \frac{d(\la R)}{d\la}(x), \qquad x \in X.
$$
\end{corollary}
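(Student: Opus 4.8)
The plan is to obtain the corollary as a direct consequence of Proposition~\ref{prop la R = la} together with the defining property of the Radon--Nikodym derivative. By part~(1) of that proposition we already know $\la = \nu R$, and what must be checked is that $W = R(\mathbf 1)$ is the density $d(\la R)/d\la$, i.e. that
\[
\int_X R(f)\,d\la = \int_X f\,W\,d\la
\]
holds for every bounded Borel function $f$ (equivalently, taking $f=\chi_A$, that $(\la R)(A)=\int_A W\,d\la$). When $R$ is normalized, $W=\mathbf 1$ and this is exactly the assertion $\la R=\la$ of Proposition~\ref{prop la R = la}(2); so I would record that case first as an immediate citation.

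For the general computation I would repeat the argument used in the proof of Proposition~\ref{prop la R = la}(2), now carrying the weight $W=R(\mathbf 1)$ instead of $\mathbf 1$. Starting from $\int_X R(f)\,d\la$, I expand $\la=\int_X\mu_x\,d\nu(x)$ via \eqref{eq lambda via mu_x} and use the Riesz representation \eqref{eq def of Riesz prop} of $R$ (Definition~\ref{def Riesz property}) to rewrite the integrand as an iterated integral against the Riesz family $(\mu_x)$. The identities $\mathrm{supp}\,\mu_x=\sigma^{-1}(x)$ and $\mu_x\circ\sigma^{-1}=R(\mathbf 1)(x)\,\delta_x$ (the versions of Lemma~\ref{lem mu_x for TO} and Corollary~\ref{cor support of mu_x} valid for non-normalized $R$) allow the inner integration to collapse, leaving $\int_X f(x)\,R(\mathbf 1)(x)\,d\nu(x)$, which by $\la=\nu R$ and a further use of \eqref{eq lambda via mu_x} equals $\int_X f\,W\,d\la$. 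This gives the required identity and hence $W=d(\la R)/d\la$. As an alternative, one can invoke Lemma~\ref{lem Ri as derivative}: since $\la=\nu R$ puts $\la$ in $\mathcal L(R)$, formula \eqref{eq R1 R-N derivative} already exhibits $R(\mathbf 1)$ as a Radon--Nikodym derivative built from $\la R$, and one reconciles it with $d(\la R)/d\la$ using the relation $\la R=\la$ (and its normalized hypothesis) coming from Proposition~\ref{prop la R = la}.

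The step that needs care is the bookkeeping of the two measures in play: the disintegration \eqref{eq lambda via mu_x} integrates against $\nu$ on the outside while $\la$ is itself the $\nu$-mixture of the $\mu_x$, so one must not conflate $d\nu$ and $d\la$ when iterating $R$. The collapse of the inner integral is precisely the place where $\mathrm{supp}\,\mu_x=\sigma^{-1}(x)$ is used (so that $f\circ\sigma$ is $\mu_x$-a.e. equal to $f(x)$), and this is the only nontrivial point; no tools beyond Proposition~\ref{prop la R = la}, Lemma~\ref{lem mu_x for TO}, Corollary~\ref{cor support of mu_x}, and Lemma~\ref{lem Ri as derivative} are required.
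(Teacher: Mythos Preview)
Your overall strategy matches the paper's brief proof: both say to rerun the computation in Proposition~\ref{prop la R = la}(2), replacing the use of $\mu_x(X)=1$ by $\mu_x(X)=W(x)=\int_X d\mu_x$. However, your detailed execution contains a genuine gap at exactly the ``bookkeeping'' point you yourself flagged. After expanding $\int_X R(f)\,d\la$ via \eqref{eq lambda via mu_x} you have $\int_X\!\int_X R(f)(y)\,d\mu_x(y)\,d\nu(x)$, with $R(f)(y)=\int f\,d\mu_y$; the identities $\mathrm{supp}\,\mu_x=\sigma^{-1}(x)$ and $\mu_x\circ\sigma^{-1}=W(x)\delta_x$ tell you about $\mu_x$, not about $\mu_y$ for $y\in\sigma^{-1}(x)$, so they do not by themselves collapse the inner integral to $\int_X f(x)W(x)\,d\nu(x)$. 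More seriously, even granting that collapse, your claim that $\int_X fW\,d\nu=\int_X fW\,d\la$ conflates $\nu$ and $\la$: since $\la=\nu R$ one has $\int g\,d\la=\int R(g)\,d\nu$, so $\int fW\,d\la=\int R(fW)\,d\nu$, which is generally different from $\int fW\,d\nu$.

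Your alternative route via Lemma~\ref{lem Ri as derivative} has the same circularity: that lemma gives $R(\mathbf 1)=d((\la R)\circ\sigma^{-1})/d\la$, not $d(\la R)/d\la$, and to reconcile them you invoke $\la R=\la$ from Proposition~\ref{prop la R = la}(2) --- but that conclusion is established there only under the normalized hypothesis $R(\mathbf 1)=\mathbf 1$, which is exactly the assumption you are trying to drop.
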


\begin{proof} This result follows from the proof of Proposition
 \ref{prop la R = la} in which we will need to use the relation
$$
W(x) = \int_X d\mu_x.
$$
\end{proof}

\section{Transfer operators on the space of densities}
\label{sect TO on densities}

This section is focused on the study of an important class of transfer 
operators. As usual, we fix a non-invertible non-singular dynamical 
system $(X, \B, \mu,\sigma)$. Without loss of generality, we can 
assume that $\mu$ is a probability measure. 

If $\la$ is a a Borel measure such that $\la \ll \mu $, then there exists 
 the Radon-Nikodym derivative $f(x) = \dfrac{d\la}{d\mu}(x)$. 
 Conversely, any nonnegative function $f \in L^1(\mu)$ serves as  a 
 density  function for a measure $d\la = fd\mu$.

\begin{definition}
Define a transfer operator $R_\mu  = (R, \sigma)$ acting on 
$L^1(\mu)$ by the formula
\be\label{eq TO on densities def}
R_\mu(f) (x) = \frac{(f d\mu) \circ \sigma^{-1}}{d\mu}(x), \ \ \ f \in 
L^1(\mu).
\ee
We call $R_\mu $ a transfer operator on the space of densities. 
\index{density}
\end{definition}

In this section, we will work only with transfer operators $R_\mu$ 
defined by (\ref{eq TO on densities def}). 

The following lemma contains main properties of $R =R_\mu$. 
Most of the statements are well known, so that we omit their proofs.

\begin{lemma}\label{lem properties of R_lambda} Let $R$ be 
defined by (\ref{eq TO on densities def}). The following 
statements hold.

(1) $R$ is a positive bounded linear operator with $L^1$-norm equal 
to one. 

(2) The operator $R$ satisfies the pull-out property: 
$R[(f\circ \sigma) g] = f R(g)$. Moreover $R$ is a normalized 
transfer operator if and only if $\mu$ is a probability measure.

(3) The operator $R$ can be defined by the following statement:
 $R(f)$ is a unique element of $L^1(\mu)$ such that, for any function
  $g \in L^{\infty}(\mu)$,
$$
\int_X g (Rf)\; d\mu = \int_X (g\circ \sigma) f\; d\mu.
$$

(4) If $\mu$ is $\sigma$-invariant, then the operator $S : f \to f\circ 
\sigma$ is an isometry in $L^p(\mu)$. In this case, the operator $R$
also preserve the measure $\mu$, $\mu R = \mu$.

(5) If $\mu$ is a probability $\sigma$-invariant measure, 
then $f \mapsto R(f)\cs :
L^1\sms \to L^1(X, \sigma^{-1}(\B), \mu)$ is the conditional 
expectation $\mathbb E_\mu(f | \sB)$. \index{conditional expectation}
\end{lemma}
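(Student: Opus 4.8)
\textbf{Proof proposal for Lemma \ref{lem properties of R_lambda}.}

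The plan is to verify the five assertions in order, treating each as a short computation built on the defining formula (\ref{eq TO on densities def}) and the general facts about transfer operators and the Radon-Nikodym calculus already established. For (1), I would first observe that positivity is immediate: if $f\geq 0$ then $(fd\mu)\circ\sigma^{-1}$ is a nonnegative measure, hence its density $R(f)$ is nonnegative a.e. For the $L^1$-norm, note $\int_X R(f)\,d\mu = \int_X d\big((fd\mu)\circ\sigma^{-1}\big) = \int_X f\,d\mu$ for $f\geq 0$, so $\|R(f)\|_1 = \|f\|_1$ on the positive cone and $\|R(f)\|_1\leq\|f\|_1$ in general by splitting into positive and negative parts; taking $f\geq 0$ shows the norm is exactly one. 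Linearity and boundedness then follow.

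For (2), the pull-out property $R[(f\circ\sigma)g] = fR(g)$ is exactly the content of Example \ref{ex example 2} (the operator here is the $R_\lambda$ of that example with $\lambda=\mu$), so I would either cite it or reproduce the one-line check using the integral characterization: $\int_X h\,R[(f\circ\sigma)g]\,d\mu = \int_X (h\circ\sigma)(f\circ\sigma)g\,d\mu = \int_X ((hf)\circ\sigma)g\,d\mu = \int_X hf\,R(g)\,d\mu$ for all $h\in L^\infty(\mu)$. Normalization: $R(\mathbf 1)$ is the density of $\mu\circ\sigma^{-1}$ with respect to $\mu$, which is $\mathbf 1$ precisely when $\mu$ is $\sigma$-invariant — but the statement as written says ``if and only if $\mu$ is a probability measure,'' which I read together with the standing assumption (this section fixes $\mu$ to be a $\sigma$-invariant, indeed probability, measure), so I would phrase this carefully to match the ambient hypotheses rather than claim something false; the honest statement is that $R$ is normalized iff $\mu\circ\sigma^{-1}=\mu$, and under the standing probability-measure normalization this is the relevant case. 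Statement (3) is just the integral characterization of the Radon-Nikodym derivative: $R(f)$ is the unique $L^1(\mu)$ function with $\int_X g\,R(f)\,d\mu = \int_X (g\circ\sigma)f\,d\mu$ for all $g\in L^\infty(\mu)$, which I would derive directly from (\ref{eq TO on densities def}) by testing against $g=\chi_A$ and invoking uniqueness of densities.

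For (4), if $\mu$ is $\sigma$-invariant then $\int_X |f\circ\sigma|^p\,d\mu = \int_X |f|^p\,d(\mu\circ\sigma^{-1}) = \int_X|f|^p\,d\mu$, so $S$ is an isometry of $L^p(\mu)$; and $\mu R=\mu$ follows from (3) with $f=\mathbf 1$ of part~(1), or directly since $(\mu R)(A)=\int_X R(\chi_A)\,d\mu = \int_X \chi_A\,d\mu = \mu(A)$. For (5), under the probability $\sigma$-invariant hypothesis, set $E(f):=R(f)\circ\sigma$. Using (3), for $g\in L^\infty(\mu)$ write $g=G\circ\sigma$ with $G$ the appropriate $\sigma^{-1}(\B)$-representative: then $\int_X g\,E(f)\,d\mu = \int_X (G\circ\sigma)(R(f)\circ\sigma)\,d\mu = \int_X GR(f)\,d\mu = \int_X (G\circ\sigma)f\,d\mu = \int_X gf\,d\mu$, and since $E(f)$ is $\sigma^{-1}(\B)$-measurable this identifies $E(f)$ as $\mathbb E_\mu(f\mid\sigma^{-1}(\B))$. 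The main obstacle I anticipate is purely expository rather than mathematical: stating part~(2) in a way that is literally correct (normalization is governed by $\sigma$-invariance of $\mu$, not merely finiteness) while remaining consistent with the paper's standing conventions for this section; every other step is a routine few-line verification from the definitions and the Radon-Nikodym calculus already in hand.
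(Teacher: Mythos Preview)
Your proposal is correct and matches the paper's approach: the paper only writes out the verification of (5), declaring (1)--(4) ``easily verified,'' and your computation for (5) is essentially identical to theirs (write a $\sigma^{-1}(\B)$-measurable test function as $g=G\circ\sigma$, use $\sigma$-invariance of $\mu$ to pass from $\int (G\circ\sigma)(R(f)\circ\sigma)\,d\mu$ to $\int G\,R(f)\,d\mu$, then apply the duality relation (3)). Your concern about the wording of (2) is legitimate---normalization is equivalent to $\sigma$-invariance of $\mu$, not to $\mu$ being probability---but this is an artifact of the paper's phrasing under its standing conventions, not a flaw in your argument.
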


\begin{proof}
We show only that (5) is true (the other statements are easily 
verified). For this, we observe that (i) $R(R(f)\cs)\cs = R(f) \cs$
and (ii) for any $\sB$-measurable function $g$,
$$
\int_X g R(f)\cs \; d\mu = \int_X g f \; d\mu. 
$$ 
We calculate the left-hand side integral using $\sigma$-invariance of 
$\mu$ and the fact that
$g = h\cs$ for a $\B$-measurable function $h$:
\begin{eqnarray*}
\int_X g R(f)\cs \; d\mu  & = & \int_X (h\cs) R(f)\cs \; d\mu \\
\\
& = & \int_X hR(f)\; d\mu\csi1 \\
\\
& = & \int_X R((h\cs) f)\; d\mu\\
\\
& = & \int_X g f\; d\mu\\
\end{eqnarray*}

\end{proof}

Suppose now $d\la = \varphi d\mu$ where $\varphi$ is a positive 
function from $L^\infty(\mu)$. We will find out how the operators 
$R_\la$ and $R_\mu$ relate.

In the above setting, we define the multiplication operator
$$
M_\varphi (f) = \varphi f : L^1(\mu) \to L^1(\la).
$$

\begin{lemma} For $R_\la, R_\mu, \varphi$, and $M_\varphi$ defined 
 as above, we have
$$
R_\la M_\varphi = M_{\varphi} R_\mu.
$$
\end{lemma}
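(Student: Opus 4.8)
The plan is to prove the operator identity $R_\la M_\varphi = M_\varphi R_\mu$ by establishing, for every $f\in L^1(\mu)$, the pointwise equality $R_\la(\varphi f)=\varphi\,R_\mu(f)$ of elements of $L^1(\la)$. Both composites $R_\la M_\varphi$ and $M_\varphi R_\mu$ send $L^1(\mu)$ into $L^1(\la)$, and by Lemma~\ref{lem properties of R_lambda}(1) each of $R_\mu,R_\la$ is an $L^1$-contraction while $M_\varphi$ is bounded (since $\varphi\in L^\infty(\mu)$); hence it suffices to verify the identity on the dense subset of simple functions and then pass to the limit by continuity and positivity. I would therefore fix a simple $f$ throughout the main computation.

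The key tool is the duality (integral) characterization of the density transfer operators recorded in Example~\ref{ex example 2} and Lemma~\ref{lem properties of R_lambda}(3): for all bounded $g$ one has $\int_X g\,R_\mu(f)\,d\mu=\int_X (g\cs)\,f\,d\mu$, and likewise $\int_X g\,R_\la(h)\,d\la=\int_X (g\cs)\,h\,d\la$. Since $R_\la(\varphi f)$ and $\varphi R_\mu(f)$ both lie in $L^1(\la)$, each is uniquely determined by its pairings $h\mapsto\int_X g\,h\,d\la$ against $g\in L^\infty(\la)$, so the whole statement reduces to checking that these pairings agree.

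First I would expand $\int_X g\,R_\la(\varphi f)\,d\la$ by the $R_\la$-duality relation, turning it into an integral involving $g\cs$ paired against $\la$. Then, using the Radon--Nikodym relation $d\la=\varphi\,d\mu$ from the hypothesis, I would rewrite this $\la$-integral as a $\mu$-integral and invoke the $R_\mu$-duality relation together with the pull-out property~\eqref{eq char property of r via sigma 1} to move the $\sigma$-composed factors outside $R_\mu$. Comparing the outcome with the analogous expansion $\int_X g\,\varphi R_\mu(f)\,d\la=\int_X g\,\varphi\,R_\mu(f)\,\varphi\,d\mu$ should produce the desired coincidence. The transformation rule in Theorem~\ref{thm from measure equivalence}(1), which governs how Radon--Nikodym derivatives behave under the action $\la\mapsto\la R$ for equivalent measures, is the clean way to package this change-of-density step.

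The main obstacle is the precise bookkeeping of the density $\varphi$: one must carefully distinguish the factor $\varphi$ that multiplies $f$ inside $R_\la$ from the factor $\varphi\cs$ that the pull-out property produces once the change of measure $d\la=\varphi\,d\mu$ is inserted, and confirm that these factors recombine to leave exactly $\varphi\,R_\mu(f)$ with no residual composition with $\sigma$ surviving. This is the only delicate point; once the densities are matched correctly, the identity follows at once from the two duality relations, and the passage from simple functions to all of $L^1(\mu)$ is routine by density and $L^1$-continuity.
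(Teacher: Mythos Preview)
Your approach via the weak (duality) characterization in Lemma~\ref{lem properties of R_lambda}(3) is sound in principle, but it is considerably more elaborate than what the paper actually does. The paper's proof is a four-line direct computation at the level of Radon--Nikodym derivatives: expand $R_\la M_\varphi(f)=R_\la(\varphi f)=\dfrac{(\varphi f\,d\la)\circ\sigma^{-1}}{d\la}$, use the density relation to rewrite the measure $\varphi f\,d\la$ as $f\,d\mu$, and then factor the resulting quotient as $\dfrac{(f\,d\mu)\circ\sigma^{-1}}{d\mu}\cdot\dfrac{d\mu}{d\la}=R_\mu(f)\cdot\varphi=M_\varphi R_\mu(f)$. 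No pairing against test functions, no reduction to simple functions, no appeal to $L^1$-continuity, no pull-out property, and no Theorem~\ref{thm from measure equivalence} are needed: the statement is literally a chain-rule identity for densities. Your route would ultimately recover the same equality after unwinding the integrals, but the machinery you propose (density of simple functions, the pull-out relation~\eqref{eq char property of r via sigma 1}, the transformation rule for $\la\mapsto\la R$) is extraneous here. The ``precise bookkeeping of $\varphi$'' that you flag as the main obstacle is in fact the entire content of the lemma, and it is handled most transparently by manipulating the measures $\varphi f\,d\la$ and $f\,d\mu$ directly rather than after integrating against a test function $g$.
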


\begin{proof}
Indeed, we compute, for a function $ f \in L^1(\mu)$,
\begin{eqnarray*}
R_{\la} M_{\va}(f) &=& \frac{(\va f d\la) \csi1}{d\la}\\
\\
& =& \frac{(fd\mu)\csi1}{d\la}\\
\\
& = &  \frac{(fd\mu)\csi1}{d\mu} \frac{d\mu}{d\la}\\
\\
& = & M_{\va} R_{\mu}(f).
\end{eqnarray*}

 \end{proof}

Let $\lambda $ be a Borel measure on $(X, \B)$ which is equivalent to 
$\mu$. Then, as we know from Section \ref{sect L1 and L2},  
$\lambda $ is in $\mathcal L(R)$. We can  find  the 
Radon-Nikodym derivative $W_\la = \dfrac{d(\la R_\mu)}{d\la}$ 
of $R_\mu$ with respect to $\la$.

\begin{lemma} Let $\la \sim \mu$ and $\va = \dfrac{d\la}{d\mu}$.
Then $W_\la$ is a $\sigma$-coboundary, $W_\la = 
(\va\cs) \va^{-1}$.
\end{lemma}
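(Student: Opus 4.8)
The statement to prove is that if $\la\sim\mu$ with $\va=\tfrac{d\la}{d\mu}$, then the Radon–Nikodym derivative $W_\la=\tfrac{d(\la R_\mu)}{d\la}$ equals the $\sigma$-coboundary $(\va\cs)\va^{-1}$. The plan is to compute $\la R_\mu$ directly by unwinding the definition of the action of $R_\mu$ on measures, using the pull-out property and the explicit density formula \eqref{eq TO on densities def} for $R_\mu$. This is essentially a special case of Lemma \ref{lem lambda_1 equiv to lambda}: there it is shown that for a transfer operator $(R,\sigma)$ and a measure $\la$ equivalent to a reference measure via density $\va$, one has $W_\la(x)=\va(\sigma x)W(x)\va(x)^{-1}$, where $W$ is the Radon–Nikodym derivative relative to the original measure. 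So the first observation I would make is that since $\mu$ is $R_\mu$-invariant — this is Lemma \ref{lem properties of R_lambda}(4), $\mu R_\mu=\mu$, valid when $\mu$ is a probability measure — the derivative $W$ of $R_\mu$ with respect to $\mu$ itself is $W_\mu\equiv\mathbf 1$. Plugging $W=\mathbf 1$ into the cohomology formula of Lemma \ref{lem lambda_1 equiv to lambda} immediately gives $W_\la=(\va\cs)\cdot\mathbf 1\cdot\va^{-1}=(\va\cs)\va^{-1}$, which is the claim.

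For a self-contained argument I would instead carry out the short direct computation. Take an arbitrary bounded measurable $f$ and write, using Definition \ref{def action of P on mu} of the action on measures, that $(\la R_\mu)(f)=\int_X R_\mu(f)\,d\la=\int_X R_\mu(f)\,\va\,d\mu$. Now rewrite $\va$ as $(\va\cs)$ composed back; more precisely, since $\mu$ is $\sigma$-invariant the cleanest route is to apply the pull-out property inside $R_\mu$: $\int_X R_\mu(f)\,\va\,d\mu$ is not directly in pull-out form, so instead I use statement (3) of Lemma \ref{lem properties of R_lambda}, which characterizes $R_\mu$ by $\int_X g\,(R_\mu f)\,d\mu=\int_X (g\cs)\,f\,d\mu$ for $g\in L^\infty(\mu)$. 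Taking $g=\va$ gives $\int_X R_\mu(f)\,\va\,d\mu=\int_X (\va\cs)\,f\,d\mu=\int_X (\va\cs)\,\va^{-1}\,f\,\va\,d\mu=\int_X (\va\cs)\,\va^{-1}\,f\,d\la$. Since $f$ was arbitrary, this shows $d(\la R_\mu)=(\va\cs)\va^{-1}\,d\la$, i.e. $W_\la=(\va\cs)\va^{-1}$, which is by definition a $\sigma$-coboundary in the sense of Definition \ref{def R and sigma coboundaries} (taking $q=\va$ in the relation $W=(q\cs)q^{-1}$). One should note $\va>0$ $\mu$-a.e.\ because $\la\sim\mu$, so $\va^{-1}$ is well defined a.e.

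The only mild subtlety — and the place where care is needed — is the interchange of $\va$ and $\va^{-1}$ and the passage from $d\mu$ to $d\la$ inside the integral: this requires $\va$ to be finite and strictly positive $\mu$-a.e., which follows from $\la\sim\mu$, and also that $f\va\in L^1(\mu)$ so that $f\in L^1(\la)$ and all integrals are finite; restricting to bounded $f$ (or to simple functions, as in Remark after Lemma \ref{lem R(s) is integrable}) and then extending by density handles this. I would also remark that the result is consistent with the general identity $R(\mathbf 1)(x)=\tfrac{[(W_\la d\la)\cs^{-1}]}{d\la}(x)$ from Lemma \ref{lem Ri as derivative}: here $R_\mu(\mathbf 1)=\mathbf 1$ and $W_\la d\la=(\va\cs)\va^{-1}\va\,d\mu=(\va\cs)\,d\mu$, whose push-forward by $\sigma^{-1}$ has density $\mathbf 1$ with respect to $\mu$, hence with respect to $\la$ after the obvious bookkeeping — a useful sanity check rather than a necessary step. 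No genuine obstacle is expected; this is a direct consequence of the invariance $\mu R_\mu=\mu$ together with Lemma \ref{lem lambda_1 equiv to lambda}.
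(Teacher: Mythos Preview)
Your proof is correct and follows essentially the same route as the paper: both compute $\int_X R_\mu(f)\,d\la=\int_X R_\mu(f)\,\va\,d\mu$, pass to $\int_X (\va\cs)f\,d\mu$, and then rewrite this as $\int_X (\va\cs)\va^{-1}f\,d\la$; the only cosmetic difference is that the paper uses the pull-out identity $R_\mu(f)\va=R_\mu((\va\cs)f)$ together with $\mu R_\mu=\mu$ explicitly, while you invoke the equivalent duality relation of Lemma~\ref{lem properties of R_lambda}(3) in one step. Your preliminary remark that the result is the specialization $W_\mu=\mathbf 1$ of Lemma~\ref{lem lambda_1 equiv to lambda} is a nice extra observation not stated in the paper (note, though, that $\mu R_\mu=\mu$ holds unconditionally for the density transfer operator by Lemma~\ref{lem properties of R_lambda}(3) with $g=\mathbf 1$, not merely when $\mu$ is a probability measure).
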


\begin{proof} We use the definition of the Radon-Nikodym derivative 
for the transfer operator and compute
\begin{eqnarray*}
\int_X R_{\mu}(f) \; d\la &=& \int_X R_{\mu}(f)\va \; d\mu\\
\\
&=& \int_X R_{\mu}(f (\va \cs)) \; d\mu\\
\\
&=& \int_X f (\va \cs) \; d\mu\\
\\
&=& \int_X f (\va \cs) \va^{-1}\; d\mu 
\end{eqnarray*}
(we used here that $\mu R_\mu = \mu$). The latter means that
$W_\la = (\va\cs) \va^{-1}$.
\end{proof}

Let $\la$ be a quasi-invariant measure with respect to a surjective 
endomorphism $\sigma$ of $(X, \B)$. Let 
$\dfrac{d(\la \csi1)}{d\la}$. Then $\sigma$ generates an 
operator $S$ on $L^2(\la)$ defined by
$$
S : f \mapsto f\circ \sigma.
$$
It can be seen from Lemma \ref{lem properties of R_lambda} (3) that 
the operators $R_\la$ and $S$, viewed as operators in $L^2(\la)$, 
form a symmetric pair of operators because
$$
\int_X g (R_\la f)\; d\la = \int_X (Sg) f\; d\la, \ \ \ \ f,g \in L^2(\la).
$$
So, we can use the notation $S^*$ for $R_\la$ for consistency.

\begin{lemma}\label{lem RS is multipl operator}
In the above notation, the operator $S^*S$ is an operator of
 multiplication $M_{\theta_\la}$ by the function $\theta_\la$.
\end{lemma}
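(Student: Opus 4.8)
The plan is to compute $S^{*}S$ directly on functions, using that $S^{*}=R_{\la}$ is the transfer operator on densities and that $R_{\la}$ enjoys the pull-out property established in Lemma \ref{lem properties of R_lambda}(2). The key observation is that $S^{*}Sf = R_{\la}(f\circ\sigma)$, and the pull-out property with the second argument taken to be the constant function $\mathbf 1$ gives
$$
R_{\la}(f\circ\sigma) = R_{\la}\bigl((f\circ\sigma)\,\mathbf 1\bigr) = f\,R_{\la}(\mathbf 1).
$$
So the whole matter reduces to identifying $R_{\la}(\mathbf 1)$.

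Next I would invoke the defining formula (\ref{eq TO on densities def}) for $R_{\la}$, namely $R_{\la}(g)(x) = \dfrac{(g\,d\la)\circ\sigma^{-1}}{d\la}(x)$, applied to $g=\mathbf 1$. This yields
$$
R_{\la}(\mathbf 1)(x) = \frac{d(\la\circ\sigma^{-1})}{d\la}(x) = \theta_{\la}(x),
$$
which is exactly the Radon--Nikodym derivative recalled in Remark \ref{rem RN derivatives for sigma}. Combining the two displays gives $S^{*}Sf = \theta_{\la}\cdot f = M_{\theta_{\la}}f$ for $f$ in the domain, which is the assertion.

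As an alternative (or a cross-check) I would verify the identity in bilinear form: for $f,g\in L^{2}(\la)$ in the relevant dense domain,
$$
\langle S^{*}Sf,\ g\rangle_{L^{2}(\la)} = \langle Sf,\ Sg\rangle_{L^{2}(\la)} = \int_{X} (f\circ\sigma)(g\circ\sigma)\, d\la = \int_{X} (fg)\circ\sigma\, d\la = \int_{X} fg\, d(\la\circ\sigma^{-1}) = \int_{X} fg\,\theta_{\la}\, d\la,
$$
using the change-of-variables formula for $\theta_{\la}$ from Remark \ref{rem RN derivatives for sigma}; since this equals $\langle \theta_{\la}f,\ g\rangle_{L^{2}(\la)}$ for all such $g$, we conclude $S^{*}Sf=\theta_{\la}f$.

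There is essentially no obstacle here: the statement is a formal consequence of the pull-out property and the definition of $R_{\la}$. The only point requiring a word of care is domain bookkeeping — $R_{\la}$ (hence $S^{*}S$) is in general unbounded on $L^{2}(\la)$ unless $\theta_{\la}\in L^{\infty}(\la)$ — so I would state the identity on the natural domain (e.g.\ on $\mathcal S^{2}(\la)$, or on $\{f: \theta_{\la}f\in L^{2}(\la)\}$), consistently with the symmetric-pair framework of Subsection \ref{subsect Adjoint operator in L2}, and note that $S^{*}S=M_{\theta_{\la}}$ is then the self-adjoint operator attached to the closable pair $(S;S^{*})$.
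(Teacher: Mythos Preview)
Your proposal is correct and follows essentially the same route as the paper: both compute $S^*Sf=R_\la(f\circ\sigma)$, apply the pull-out property to get $f\,R_\la(\mathbf 1)$, and identify $R_\la(\mathbf 1)=\theta_\la$ from the definition of $R_\la$. The paper merely phrases this inside an inner product $\langle S^*Sf,g\rangle_{L^2(\la)}$, but the substance is identical; your bilinear-form cross-check and domain remarks are extra care beyond what the paper records.
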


\begin{proof}
We note that $R_\la$ is not normalized because
$$
R_\la(\mathbf 1) = \frac{d(\la \csi1)}{d\la} = \theta_\la.
$$
Then, using inner product in $L^2(\la)$, we have
\begin{eqnarray*}
\langle S^*S(f), g\rangle_{L^2(\la)} &= & 
\int_X R_\la(f\cs) g\; d\la\\
\\
 &=&\int_X f R_\la(\mathbf 1) g\; d\la\\
 \\
  &=& \langle M_{\theta_\la}(f),  g\rangle_{L^2(\la)}.
\end{eqnarray*}
The result follows.
\end{proof}

\begin{theorem}\label{thm harmonic vs coboundary}
Let  $(X, \B, \la, \sigma)$ be a non-singular dynamical system 
generated by a surjective endomorphism. Let $R_\la$ be the transfer 
operator defined by (\ref{eq TO on densities def}). The following 
statements are equivalent:

(i) there exists a harmonic function $h$ for $R_\la$ such that 
$h$ is $\sB$-measurable;

(ii) the Radon-Nikodym derivative $\theta_\la$ is a 
$\sigma$-coboundary.

\end{theorem}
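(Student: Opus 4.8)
The plan is to reduce the whole equivalence to the single identity $R_\la(\mathbf 1) = \theta_\la$, which is immediate from the defining formula (\ref{eq TO on densities def}), combined with the pull-out property. The key elementary observation is that $R_\la$ acts on the range of the composition operator by ``pushing down and then multiplying by $\theta_\la$'': if $h = g\cs$ with $g$ Borel, then $R_\la(h) = R_\la((g\cs)\,\mathbf 1) = g\, R_\la(\mathbf 1) = g\,\theta_\la$. At the outset I would also record that $\theta_\la > 0$ $\la$-a.e. (since $\sigma$ is non-singular, $\la\csi1 \sim \la$), and I would take the harmonic function $h$ in (i) to be strictly positive, which is the natural normalization; then $g$ above is strictly positive since $\sigma$ is onto.

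For the implication (ii) $\Longrightarrow$ (i): assume $\theta_\la$ is a $\sigma$-coboundary, i.e. there is a strictly positive $q$ with $(q\cs)\theta_\la = q$, equivalently $\theta_\la = q\,(q\cs)^{-1}$. Put $h := (q\cs)^{-1} = (q^{-1})\cs$, which is $\sB$-measurable and positive. Then the displayed ``push down and multiply'' identity gives $R_\la(h) = R_\la((q^{-1})\cs) = q^{-1}R_\la(\mathbf 1) = q^{-1}\theta_\la = q^{-1}q\,(q\cs)^{-1} = (q\cs)^{-1} = h$, so $h$ is a positive $\sB$-measurable harmonic function.

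For the implication (i) $\Longrightarrow$ (ii): let $h$ be a positive harmonic function that is $\sB$-measurable, and write $h = g\cs$ with $g$ Borel and strictly positive. Then $g\cs = h = R_\la(h) = g\,R_\la(\mathbf 1) = g\,\theta_\la$, hence $\theta_\la = (g\cs)\,g^{-1}$. Setting $q := g^{-1}$ (positive), one checks $(q\cs)\theta_\la = (g^{-1}\cs)(g\cs)g^{-1} = g^{-1} = q$, so $\theta_\la$ is a $\sigma$-coboundary in the sense of Definition \ref{def R and sigma coboundaries}.

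All the computations are routine; the only point that needs care is bookkeeping with reciprocals, since the paper's convention for ``$f$ is a $\sigma$-coboundary'' is $(g\cs)f = g$, so one must pass from $\theta_\la = (g\cs)/g$ to $\theta_\la = q/(q\cs)$ via $q = g^{-1}$ — which is why strict positivity of $h$ (equivalently, non-vanishing of $g$) is invoked. I do not expect a serious obstacle beyond this. If one insists on allowing harmonic functions with zeros, a short extra argument is needed: from $g\cs = g\theta_\la$ with $\theta_\la > 0$ a.e.\ and $\sigma$ onto, the zero set $\{g = 0\}$ is essentially $\sigma^{-1}$-invariant, so it is $\la$-null when $\sigma$ is ergodic, and otherwise the conclusion holds on each invariant piece; this recovers the same statement modulo $\mbox{mod}\ 0$ conventions.
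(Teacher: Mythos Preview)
Your proof is correct and follows essentially the same route as the paper: the paper packages your key identity $R_\la(g\cs)=g\,\theta_\la$ into Lemma \ref{lem RS is multipl operator} (namely $R_\la S=M_{\theta_\la}$) and then runs exactly the same two one-line computations with $h=q\cs$. Your extra care with the reciprocal bookkeeping against Definition \ref{def R and sigma coboundaries} and the remark on positivity of $\theta_\la$ are points the paper leaves implicit.
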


\begin{proof}
The proof of the theorem is based on Lemma \ref{lem RS is multipl 
operator}. 
We first observe that, since $\la \csi1 \sim \la$,  the Radon-Nikodym 
derivative $\theta_\la$ is positive a.e. Then the fact that 
$\theta_\la$ is a coboundary, $q \theta_\la = q\cs$, implies  that
$q \neq 0$. 

Therefore, to see that (i) implies (ii), we take a harmonic function
for $R_\la$ in the form $h = q\cs$ and obtain by 
Lemma \ref{lem RS is multipl operator} 
\be\label{eq harmonic to coboundary}
 q \cs  = R_\la (q \cs) =  (R_\la S)( q) = \theta_\la q.
\ee

Conversely, if $\theta_\la = (q\cs)q^{-1}$, then
$$
R_\la(q\cs) = \theta_\la q = q\cs 
$$
and the theorem is proved 
\end{proof}

\begin{corollary}\label{cor harmonic vs invariant measure}
For the transfer operator $(R_\mu, \sigma)$ defined on $\sms$, the  
measure $d\la = h d\mu$ is $\sigma$-invariant if and only if $h$ is 
harmonic for $R_\mu $. 
 \end{corollary}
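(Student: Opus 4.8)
The plan is to reduce the corollary to Proposition \ref{prop harm and invarianca implies sigma invarianca}, which already proves exactly the desired equivalence for an arbitrary transfer operator $(R,\sigma)$ on a measure space $(X,\B,\lambda)$, \emph{under the single standing hypothesis} $\lambda R = \lambda$. Thus the only thing that really needs checking is that the transfer operator $R_\mu$ on densities, defined by \eqref{eq TO on densities def}, automatically satisfies $\mu R_\mu = \mu$ — with no invariance assumption on $\mu$ itself.

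First I would record this identity. By Lemma \ref{lem properties of R_lambda}(3), $R_\mu(f)$ is characterized by $\int_X g\,(R_\mu f)\,d\mu = \int_X (g\circ\sigma)\,f\,d\mu$ for all $f\in L^1(\mu)$, $g\in L^\infty(\mu)$; taking $g=\mathbf 1$ gives $\int_X R_\mu(f)\,d\mu = \int_X f\,d\mu$ for every $f$, hence $(\mu R_\mu)(A)=\int_X R_\mu(\chi_A)\,d\mu=\mu(A)$ for all $A\in\B$, i.e. $\mu R_\mu=\mu$. (Equivalently, one reads this directly off \eqref{eq TO on densities def}, since $\int_X \frac{(\chi_A d\mu)\circ\sigma^{-1}}{d\mu}\,d\mu = (\chi_A d\mu)\circ\sigma^{-1}(X)=(\chi_A d\mu)(X)=\mu(A)$; this is the remark $\lambda R_\lambda=\lambda$ already made in Example \ref{ex example 2}.)

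Then I would simply invoke Proposition \ref{prop harm and invarianca implies sigma invarianca} with $\lambda:=\mu$, $R:=R_\mu$, and $h$ the given non-negative Borel function: since $\mu R_\mu=\mu$, the proposition yields that $R_\mu h=h$ if and only if the measure $d\lambda=h\,d\mu$ is $\sigma$-invariant, $\lambda\circ\sigma^{-1}=\lambda$. That is precisely the statement of the corollary.

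I do not expect any genuine obstacle here; the corollary is a direct specialization. The two points worth stating explicitly so that the reduction is airtight are (i) that $h$ must be non-negative for $h\,d\mu$ to define a measure — this is implicit in the formulation — and (ii) that the hypothesis $\mu R_\mu=\mu$ needed to apply Proposition \ref{prop harm and invarianca implies sigma invarianca} is not an extra assumption but is built into the definition of the operator on densities, as verified above. If a self-contained argument were preferred instead of the citation, one would just repeat the short computation in the proof of that proposition: for arbitrary Borel $g$, $\int_X g\,d(\lambda\circ\sigma^{-1}) = \int_X (g\circ\sigma)h\,d\mu = \int_X R_\mu[(g\circ\sigma)h]\,d\mu = \int_X g\,R_\mu(h)\,d\mu$ (using $\mu R_\mu=\mu$ and the pull-out property), which equals $\int_X g\,d\lambda = \int_X gh\,d\mu$ for all $g$ precisely when $R_\mu(h)=h$.
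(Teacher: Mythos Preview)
Your proposal is correct. The paper's own proof is precisely the short self-contained computation you sketch at the end: using Lemma~\ref{lem properties of R_lambda}(3) it writes $\int g\,d\la=\int gh\,d\mu=\int gR_\mu(h)\,d\mu=\int (g\circ\sigma)h\,d\mu=\int (g\circ\sigma)\,d\la$, and reads off the equivalence. Your primary route---verify $\mu R_\mu=\mu$ and then invoke Proposition~\ref{prop harm and invarianca implies sigma invarianca}---is a cleaner framing that the paper does not make, but it is not a genuinely different argument: the proof of that proposition is the very same chain of equalities, so under the hood the two coincide. The merit of your reduction is that it identifies the corollary as a specialization of an earlier general result rather than presenting it as a fresh computation.
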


\begin{proof}
This result follows from the equality where we use  Lemma \ref{lem 
properties of R_lambda}. For any measurable function 
$g\in L^\infty(\mu)$, we have
\begin{eqnarray*}
\int_X g \; d\la &=& \int_X gh\; d\mu \\
\\
&=& \int_X gR_\mu(h)\; d\mu \\
\\
&=& \int_X (g\cs) h\; d\mu \\
\\
&=& \int_X g\cs \; d\la .\\
\end{eqnarray*}
Hence, $\la\csi1 =\la$
\end{proof}

Readers coming from other but related areas, may find the following papers/
books useful for background  \cite{Baggett_et_al2010, ArklintRuiz2015}.

\newpage

\section{Piecewise monotone maps and the Gauss endomorphism}
\label{sect Gauss}

The purpose of the next two sections is to outline applications of our 
results to a family of examples of dynamics of endomorphisms, and 
their associated transfer operators. Earlier papers discussing some of 
these examples are as follows \cite{Keane1972, Llibre2015,
Radin1999, Rugh2016} for the case of piecewise 
monotone maps, \cite{AlpayJorgensenLewkowicz2016, 
BektasCanfesDursun2016, ChaoLv2016} for the 
case of Gauss endomorphism (map), and \cite{Hutchinson1981,
JarosMaslankeStrobin2016, YaoLi2016} for the case of iterated 
function systems. Our emphasis is infinite branching systems.

\subsection{Transfer operators for piecewise monotone maps}

In this section, we will discuss invariant measures for  piecewise 
monotone maps $\alpha : I \to I$ of an open interval  $I$ onto itself. 
We also consider the corresponding transfer operators $(R, \alpha)$ 
and show how one can describe $R$-invariant measures on $I$. While 
studying these problems, we assume, for definiteness, that $I = (0,1)$.

 We recall that, by definition, an onto endomorphism $\alpha$ of 
 $(0, 1)$ is called \emph{piecewise monotone} \index{endomorphism !
 piecewise monotone} if $(0,1)$ can be 
 partitioned into a finite or infinite family $(J_k)$ of subintervals $J_k 
 = (t_{k-1}, t_k)$ such that the restriction of $\alpha$ on each $J_k$ 
 is a continuous monotone one-to-one map onto $(0, 1)$ (in many 
 examples, the map $\alpha$ is assumed to be  differentiable on each 
 $J_k$).  Since our main interest is focused on invariant non-atomic 
 measures for piecewise monotone maps, we do not need to define $
 \alpha$ at the points of possible discontinuities $\{t_k : k \in \N\}$.
In the second part of this section, we apply the proved results to the 
Gauss map, which is a famous  example of a piecewise monotone map. 
Moreover, since the Gauss map admits a symbolic representation on a 
product space, we will be able to prove more results about invariant 
measures for the Gauss map.

We notice that the property of piecewise monotonicity of $\alpha$  
means that $\alpha: J_k \to (0,1)$ is a one-to-one map on every 
interval $J_k$. Then, for every $k$, there exists an inverse branch $
\beta_k$ of $\alpha$ such that $\beta_k$ maps $(0, 1)$ onto $J_k$ 
and satisfies the condition
$$
\alpha\circ \beta_k (x) = x, \qquad x \in (0,1).
$$
 We will assume implicitly that the collection of disjoint subintervals $
 (J_k)$  of $(0,1)$ is countable.
\medskip

Let $\alpha$, $(\beta_k : k \in \N)$, and $J_k$ be as above.
Suppose that $\pi = (p_k : k \in \N)$ is a probability infinite-
dimensional positive vector (probability distribution), i.e., $p_k >0$ 
and $\sum_k p_k =1$. 

\begin{definition}
Let  a measure $\mu$ on $X = (0,1)$ satisfy the property
\be\label{eq def mu as ISF}
\mu = \sum_{k =1}^{\infty} p_k \mu\circ \beta_k^{-1}.
\ee
Then $\mu$ is called an \emph{iterated function systems measure (IFS 
measure)}  \index{measure ! iterated function systems (IFS)} 
for the iterated function system \index{iterated function system (IFS)} 
$(\beta_k : k \in \N)$.
\end{definition}

 It is known that a measure $\mu$ satisfying (\ref{eq def mu as ISF}) 
 is uniquely determined and ergodic.
 
The following properties immediately follow from the definitions.

\begin{lemma}\label{lem first props of IFS measures}
(1) Let $\mu$ be an IFS measure for the system 
$(\beta_k : k \in \N)$, defined as in (\ref{eq def mu as ISF}), where 
$\beta_k : (0,1) \to J_k$. Then
$$
\mu(J_k) = p_k, \ \ \ \ k\in \N.
$$

(2) For the IFS measure $\mu$ and $\beta_k, J_k$ as above,
$$
\mu(A \cap J_k) = \mu(J_k) \mu(\beta_k^{-1}(A)).
$$

(3) For $\mu_k := \mu|_{J_k}$, we have $\mu_k \ll \mu$ and $
\mu_k \ll \mu\circ\beta_k^{-1}$. Moreover, the Radon-Nikodym 
derivatives are:
$$
\frac{d\mu_k}{d\mu} = \chi_{J_k}, \ \ \ \ \  \frac{d\mu_k}{d(\mu
\circ \beta_k^{-1})} = p_k \chi_{J_k}.
$$
\end{lemma}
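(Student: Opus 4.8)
The plan is to verify the three claims directly from the defining relation \eqref{eq def mu as ISF}, the fact that the inverse branches $\beta_k$ map $(0,1)$ bijectively onto the pairwise disjoint subintervals $J_k$ with $\bigcup_k J_k = (0,1)$ (mod $0$), and the uniqueness of Radon--Nikodym derivatives. The key preliminary observation I would record is that for a Borel set $A \subset (0,1)$ and indices $j \ne k$, the measure $\mu\circ\beta_j^{-1}$ is supported on $J_j$: indeed $(\mu\circ\beta_j^{-1})(B) = \mu(\beta_j^{-1}(B)) = \mu(\beta_j^{-1}(B \cap J_j))$ since $\mathrm{range}(\beta_j) = J_j$, so $(\mu\circ\beta_j^{-1})(J_k) = 0$ for $k \ne j$ and $(\mu\circ\beta_j^{-1})((0,1)) = \mu(\beta_j^{-1}((0,1))) = \mu(J_j)$... but wait, that last step presumes $\mu$ is a probability measure, which is fine here. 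Actually the cleanest route: since $\beta_j$ is a bijection of $(0,1)$ onto $J_j$, for any $A$ we have $(\mu\circ\beta_j^{-1})(A) = \mu(\beta_j^{-1}(A \cap J_j))$.

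For part (1): apply \eqref{eq def mu as ISF} to the set $J_k$. By the supporting observation, $(\mu\circ\beta_j^{-1})(J_k) = 0$ when $j \ne k$, and $(\mu\circ\beta_k^{-1})(J_k) = \mu(\beta_k^{-1}(J_k)) = \mu((0,1)) = 1$. Hence
$$
\mu(J_k) = \sum_{j=1}^\infty p_j (\mu\circ\beta_j^{-1})(J_k) = p_k \cdot 1 = p_k.
$$

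For part (2): apply \eqref{eq def mu as ISF} to the set $A \cap J_k$. Again $(\mu\circ\beta_j^{-1})(A \cap J_k) = 0$ for $j \ne k$ because that measure is supported on $J_j$ disjoint from $J_k$. For $j = k$ we get $(\mu\circ\beta_k^{-1})(A \cap J_k) = \mu(\beta_k^{-1}(A \cap J_k)) = \mu(\beta_k^{-1}(A))$, using $\beta_k^{-1}(J_k) = (0,1)$ and that $\beta_k^{-1}$ of a set already inside the range is unchanged by further intersecting the image with $J_k$. Therefore $\mu(A \cap J_k) = p_k \,\mu(\beta_k^{-1}(A)) = \mu(J_k)\,\mu(\beta_k^{-1}(A))$ by part (1).

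For part (3): $\mu_k = \mu|_{J_k}$ means $\mu_k(A) = \mu(A \cap J_k)$. Since $\mu_k(A) = 0$ whenever $\mu(A) = 0$, we have $\mu_k \ll \mu$, and for any $A$, $\mu_k(A) = \mu(A \cap J_k) = \int_A \chi_{J_k}\, d\mu$, so $d\mu_k/d\mu = \chi_{J_k}$. For the second derivative, part (2) gives $\mu_k(A) = p_k\, \mu(\beta_k^{-1}(A)) = p_k\, (\mu\circ\beta_k^{-1})(A)$; restricting attention to the relevant behavior, $\mu\circ\beta_k^{-1}$ lives on $J_k$, so $\mu_k(A) = p_k (\mu\circ\beta_k^{-1})(A \cap J_k) = \int_A p_k \chi_{J_k}\, d(\mu\circ\beta_k^{-1})$, whence $d\mu_k/d(\mu\circ\beta_k^{-1}) = p_k \chi_{J_k}$ (off $J_k$ the dominating measure vanishes, so the value of the density there is immaterial; on $J_k$ it equals $p_k$). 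This establishes $\mu_k \ll \mu\circ\beta_k^{-1}$ as well. I do not anticipate a genuine obstacle here; the only point requiring minor care is the bookkeeping about supports of the pushforward measures $\mu\circ\beta_j^{-1}$ and making sure the Radon--Nikodym derivative formula in (3) is stated correctly off the set $J_k$, which is why I would phrase it mod $0$ with respect to the dominating measure.
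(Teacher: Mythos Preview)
Your proof is correct and follows essentially the same approach as the paper: both apply the defining relation \eqref{eq def mu as ISF} to $J_k$ and $A\cap J_k$, using that $\mu\circ\beta_j^{-1}$ is supported on $J_j$ so only the $j=k$ term survives, and then read off the Radon--Nikodym derivatives in (3) directly from (2). Your write-up is a bit more explicit about the support bookkeeping, but the argument is the same.
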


\begin{proof} (1) Since $\beta_l^{-1}$ is defined on $J_l$ only, we 
see that $\mu\circ \beta^{-1}_l (J_k) = 0 $ if $l \neq k$. On the 
other hand, $\mu\circ \beta^{-1}_k (J_k) = 1$ because $\beta^{-1}
_k (J_k) = (0,1)$. Therefore, it follows from (\ref{eq def mu as ISF}) 
that $\mu(J_k) = p_k$.

(2) For an IFS measure $\mu$ such that $\mu = \sum_{k =1}^{\infty} 
p_k \mu\circ \beta_k^{-1}$, where $\beta_k : (0,1) \to J_k$ is a 
one-to one map and all $(J_k)$ are pairwise disjoint, we find that
\begin{eqnarray}\label{eq RN for IFS measures}
 \nonumber
\mu(A \cap J_k) & = & \sum_{i =1}^{\infty} p_i \mu\circ 
\beta_i^{-1}(A \cap J_k) \\
\\
   &=&  p_k \mu(\beta_k^{-1}(A \cap J_k)) \\
   \\
 \nonumber
  &=&  p_k \mu(\beta_k^{-1}(A))\\
  \\
 \nonumber
 & = & \mu(J_k) \mu(\beta_k^{-1}(A))
   \end{eqnarray}

(3) By definition, $\mu_k(A) := \mu(A\cap J_k)$. Then $d\mu_k(x) = 
\chi_{J_k}(x)d\mu(x)$. The other formula in this statement follows 
from (2) and (\ref{eq RN for IFS measures}).

\end{proof}

\begin{lemma}\label{lem ISF measures for alpha}
The IFS measure $\mu$ satisfying (\ref{eq def mu as ISF}) is
$\alpha$-invariant.
\end{lemma}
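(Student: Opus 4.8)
The plan is to verify directly that $\mu \circ \alpha^{-1} = \mu$ by testing against an arbitrary Borel set $A \subset (0,1)$. Since the intervals $J_k$ partition $(0,1)$ up to the finite or countable set of endpoints $\{t_k\}$ (which has $\mu$-measure zero because $\mu$ is non-atomic), I would write $\alpha^{-1}(A) = \bigsqcup_k (\alpha^{-1}(A) \cap J_k)$ and use the fact that on $J_k$ the map $\alpha$ is the one-to-one map with inverse branch $\beta_k$, so that $\alpha^{-1}(A) \cap J_k = \beta_k(A)$. This identity is the crux of the computation: $x \in J_k$ and $\alpha(x) \in A$ exactly when $x = \beta_k(y)$ for some $y \in A$.

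\textbf{Key steps.} First I would establish the set-theoretic decomposition $\alpha^{-1}(A) \cap J_k = \beta_k(A)$ from the defining relation $\alpha \circ \beta_k = \mathrm{id}$ together with $\beta_k : (0,1) \to J_k$ being a bijection. Second, I would compute
$$
\mu(\alpha^{-1}(A)) = \sum_{k=1}^\infty \mu\big(\alpha^{-1}(A) \cap J_k\big) = \sum_{k=1}^\infty \mu(\beta_k(A)).
$$
Third, I would use Lemma \ref{lem first props of IFS measures}(2), applied with $\beta_k(A) \subset J_k$ in place of $A \cap J_k$: since $\beta_k^{-1}(\beta_k(A)) = A$, that lemma gives $\mu(\beta_k(A)) = \mu(A \cap J_k') $ type identity; more directly, the defining equation (\ref{eq def mu as ISF}) evaluated at $A$ yields $\mu(A) = \sum_k p_k \mu(\beta_k^{-1}(A))$, and I would instead run the argument the other way: from $\mu(\beta_k(A)) = p_k \mu(A)$, which follows because $\mu \circ \beta_k^{-1}(\beta_k(A)) = \mu(A)$ and $\mu(\beta_k(A)) = \mu(\beta_k(A) \cap J_k) = \mu(J_k)\,\mu(\beta_k^{-1}(\beta_k(A))) = p_k \mu(A)$ by Lemma \ref{lem first props of IFS measures}(1),(2). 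Summing, $\mu(\alpha^{-1}(A)) = \sum_k p_k \mu(A) = \mu(A)$ since $\sum_k p_k = 1$.

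\textbf{Main obstacle.} The only genuine subtlety is bookkeeping about the endpoints and the disjointness: one must check that $\mu\big(\bigcup_k \partial J_k\big) = 0$ so that the restriction $\mu|_{J_k}$ behaves as expected and the countable decomposition is valid, and that the branches $\beta_k$ are measurable with pairwise disjoint ranges $J_k$ — both of which are built into the setup ($\mu$ non-atomic, the $J_k$ a partition, the $\beta_k$ continuous monotone). Interchanging the (possibly infinite) sum with the measure is justified by countable additivity of $\mu$ applied to the disjoint union. With those routine points dispatched, the computation above closes the proof.
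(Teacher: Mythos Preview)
Your proof is correct. The core idea is the same as the paper's --- use the IFS relation together with $\alpha\circ\beta_k=\mathrm{id}$ --- but the execution differs. The paper argues directly at the level of test functions: for any integrable $f$,
\[
\int_X f\circ\alpha\;d\mu \;=\; \sum_k p_k \int_X (f\circ\alpha)\circ\beta_k\;d\mu \;=\; \sum_k p_k \int_X f\;d\mu \;=\; \int_X f\;d\mu,
\]
which is a four-line calculation using only (\ref{eq def mu as ISF}) and $\alpha\circ\beta_k=\mathrm{id}$. Your route is set-theoretic: you decompose $\alpha^{-1}(A)=\bigsqcup_k\beta_k(A)$ and then invoke Lemma~\ref{lem first props of IFS measures}(2) to obtain $\mu(\beta_k(A))=p_k\mu(A)$. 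This is slightly more roundabout, since Lemma~\ref{lem first props of IFS measures}(2) is itself a consequence of (\ref{eq def mu as ISF}); the paper's version bypasses that intermediate step. On the other hand, your argument makes the geometric picture (pre-image $=$ union of branch images) explicit. One small remark: you justify $\mu(\{t_k\})=0$ via non-atomicity of $\mu$, but in fact this already follows from Lemma~\ref{lem first props of IFS measures}(1), since $\sum_k\mu(J_k)=\sum_k p_k=1$ forces the complement of $\bigcup_k J_k$ to be $\mu$-null regardless of whether $\mu$ has atoms.
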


\begin{proof}
We verify that, for any integrable function $f$ on $X = (0,1)$,
\begin{eqnarray*}
  \int_X f \; d(\mu\circ\alpha^{-1})  &=& \int_X f(\alpha x)\; d\mu \\
  \\
  &=& \sum_{k =1}^{\infty} p_k \int_X (f\circ \alpha)(x) \; d(\mu
  \circ \beta_k^{-1}) \\
  \\
   &=& \sum_{k =1}^{\infty} p_k \int_X (f\circ \alpha) (\beta_k x)\; d
   \mu \\
   \\
   &=& \int_X f \; d\mu.
\end{eqnarray*}
Hence, $\mu\circ \alpha^{-1} = \mu$.
\end{proof}

In the next result we answer the following question. Suppose that a 
piecewise  monotone map $\alpha$ is as above, and let $(\beta_k)$ 
be  the family of inverse branches for $\alpha$.  Let $\mu$ be an 
$\alpha$-invariant IFS measure of the form (\ref{eq def mu as ISF}). 
We address now the following question: how can one determine 
explicitly the entries of the corresponding 
probability distribution $\pi$ in terms of $\alpha$ and the measure 
$\mu$?

\begin{theorem}\label{thm existence of p_k}
Let $\alpha$ be  a piecewise monotone endomorphism of $(0,1)$, and 
let $(J_k : k\in \N)$ be the corresponding collection of the disjoint 
intervals. Suppose that a measure $\mu$ is non-atomic and  satisfies 
relation (\ref{eq def mu as ISF}). Then, the entries $(p_k)$ of the 
probability distribution $\pi = (p_k : k\in \N)$ are determined by 
formula
\be\label{eq formula for p_k}
p_k = \frac{\int_{J_k} \alpha(x) \; d\mu(x) }{\int_0^1 x \; d\mu(x)}
 = \mu(J_k).
\ee
\end{theorem}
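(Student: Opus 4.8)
The plan is to derive the first equality in (\ref{eq formula for p_k}); the second one, $p_k = \mu(J_k)$, is already supplied by Lemma \ref{lem first props of IFS measures}(1), so the two together give the claim.

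The starting observation is a ``change of measure on a branch'': as measures on $X=(0,1)$ one has $\mu|_{J_k} = p_k\,(\mu\circ\beta_k^{-1})$. Indeed, $\mu\circ\beta_k^{-1}$ is concentrated on $J_k$ (since $\beta_k$ maps $(0,1)$ onto $J_k$), and by Lemma \ref{lem first props of IFS measures}(3) the Radon--Nikodym derivative $d\mu_k/d(\mu\circ\beta_k^{-1})$ equals $p_k\chi_{J_k}$, where $\mu_k:=\mu|_{J_k}$. Alternatively, this is read off directly from (\ref{eq def mu as ISF}) once one notes that the summands $\mu\circ\beta_j^{-1}$ have pairwise disjoint supports $J_j$.

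Next I would compute the numerator by the substitution $x=\beta_k(y)$. Using $\mu_k = p_k(\mu\circ\beta_k^{-1})$ together with the defining identity $\alpha\circ\beta_k(y)=y$ for $y\in(0,1)$,
\[
\int_{J_k}\alpha(x)\,d\mu(x) = \int_X \alpha\,d\mu_k = p_k\int_X (\alpha\circ\beta_k)(y)\,d\mu(y) = p_k\int_X y\,d\mu(y) = p_k\int_0^1 x\,d\mu(x).
\]
It remains only to observe that the denominator is a finite, strictly positive real number: $\int_0^1 x\,d\mu(x)$ is finite because the integrand is bounded on $(0,1)$, and it is strictly positive because $x>0$ everywhere on $(0,1)$ while $\mu$ is a (probability) measure, so the integral cannot vanish. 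Dividing by $\int_0^1 x\,d\mu(x)$ then gives $p_k = \big(\int_{J_k}\alpha\,d\mu\big)\big/\big(\int_0^1 x\,d\mu\big)$, and combining with Lemma \ref{lem first props of IFS measures}(1) yields exactly (\ref{eq formula for p_k}).

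There is no genuine obstacle here; the statement is a short consequence of the structure of IFS measures recorded in Lemma \ref{lem first props of IFS measures}. The only points deserving a word of care are the branchwise change-of-variables identity $\mu|_{J_k}=p_k(\mu\circ\beta_k^{-1})$ (which hinges on the disjointness of the intervals $J_k$ and on $\beta_k$ being the inverse of $\alpha|_{J_k}$) and the non-degeneracy of the normalizing integral $\int_0^1 x\,d\mu(x)$, which uses that $\mu$ is a nonzero measure carried by the open interval $(0,1)$.
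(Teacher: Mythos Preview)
Your proof is correct and follows essentially the same approach as the paper. Both arguments reduce to the identity $\int_{J_k}\alpha\,d\mu = p_k\int_0^1 x\,d\mu(x)$ via the IFS relation and $\alpha\circ\beta_k=\mathrm{id}$; the paper packages this by introducing the test functions $f_k(x)=\alpha(x)\chi_{J_k}(x)$ and proving $f_k(\beta_l(x))=x\,\delta_{k,l}$ before applying the IFS identity in integral form, whereas you invoke the branchwise restriction $\mu|_{J_k}=p_k(\mu\circ\beta_k^{-1})$ from Lemma~\ref{lem first props of IFS measures} to do the same change of variables in one step.
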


\begin{proof}
Without loss of generality, we can assume that $\mu(J_k) >0$ and 
$J_k \cap J_l = \emptyset$ for all $k \neq l$. Let $\beta_k$ be the 
inverse branch of $\alpha$ on the interval $J_k$. Define the collection 
of functions $(f_k : k \in \N)$ on $(0,1)$:
\be\label{eq def of f_k}
f_k(x) := \alpha(x) \chi_{J_k}(x), \ \ \ k\in \N.
\ee
We claim that
\be\label{eq f_k disjointness}
f_k(\beta_l(x)) = x \delta_{k,l}= \begin{cases} x &\mbox{if } k=l \\
0 & \mbox{if } k\neq l\end{cases}.
\ee
Indeed, for any $x \in (0,1)$,
$$
f_k(\beta_k(x)) = \alpha(\beta_k(x)) \chi_{J_k}(\beta_k(x)) = x
$$
because $\beta_k : (0,1) \to J_k$. On the other hand, if $l \neq k$, 
then
$$
f_k(\beta_l(x)) =0,
 $$
since $\beta_l(x) \in J_l$ and $J_l \cap J_k =\emptyset$.

Next, we notice that if $(p_k)$ is defined according to (\ref{eq 
formula for p_k}), then   $\pi = (p_k) $ is a probability distribution:
$$
\sum_{k=1}^\infty \int_{J_k} \alpha(x)\; d\mu(x) = \int_0^1 
\alpha(x) \; d\mu(x) = \int_0^1 x \; d\mu(x)
$$
because $\mu$ is $\alpha$-invariant.

To obtain relation (\ref{eq formula for p_k}), we first check that $p_l 
= \mu(J_l)$.
Indeed, we can use the fact that $\mu\circ \tau_l^{-k}, k \in \N,$ is 
supported by the set $J_k$ and then calculate the measure of $J_l$ 
by formula (\ref{eq def mu as ISF}). We get that the right hand side is 
nonzero only for $k =l$ and $\mu(J_l) = p_l$.

For the other part of (\ref{eq formula for p_k}), we calculate
\begin{eqnarray*}
 \int_{J_k} \alpha(x) \; d\mu(x)  &=&  \int_0^1 f_k(x) \; d\mu(x) \ \ 
 \qquad\qquad (\mbox{by}\ (\ref{eq def of f_k}))\\
 \\
   &=& \sum_l p_l \int_0^1 f_k(\beta_l(x))\; d\mu \qquad 
   (\mbox{by}\ (\ref{eq def mu as ISF})) \\
   \\
   &=& p_k\int_0^1 f_k \circ\beta_k \; d\mu \ \ \qquad \qquad 
   (\mbox{by}\ (\ref{eq f_k disjointness}))\\
   \\
   &=& p_k  \int_0^1x\; d\mu(x),
\end{eqnarray*}
and the result follows.

\end{proof}

The next theorem contains  a converse ( in some sense) statement 
for Theorem \ref{thm existence of p_k}.

\begin{theorem} \label{thm p_k in terms alpha} Let $\alpha$ be  a 
piecewise monotone map of $(0,1)$ onto itself. Let $(J_k  : k\in \N)$ 
be the collection of open subintervals such that the map $\alpha$ is 
monotone on each $J_k$, and let $(\beta_k : (0,1) \to J_k)$ be the 
inverse branches for $\alpha$.
Take an $\alpha$-invariant measure $\mu$  on $(0,1)$, $\mu\circ
\alpha^{-1} =\mu$. Suppose that  $R = R_\pi$ is the transfer operator 
acting on measurable functions such that
$$
R(f)(x) = \sum_{k=1}^\infty p_k f(\beta_k(x)),
$$
where the probability distribution $\pi = (p_k)$ is defined by
$$
p_k := \frac{\int_{J_k} \alpha(x) \; d\mu(x) }{\int_0^1 x \; d\mu(x) } 
= \mu(\beta_k(0,1)). 
$$
Then $\mu$ is $R$-invariant if and only if, for  any $k, m \in \N$,
\be\label{eq condition on mu for R-invariance}
\left(\int_0^1 x\; d\mu(x)\right) \int_{J_k}\alpha(x)^m \; d\mu(x) =
\left(\int_0^1 x^m\; d\mu(x)\right) \int_{J_k}\alpha(x)\; d\mu(x)
\ee
\end{theorem}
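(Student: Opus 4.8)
The plan is to compute the measure $\mu R = \mu R_\pi$ against an arbitrary test function and compare it with $\mu$, using the $R$-invariance hypothesis $\mu\circ\alpha^{-1}=\mu$ only to rewrite integrals over $(0,1)$ as sums of integrals over the pieces $J_k$. First I would note that $\mu$ is $R$-invariant if and only if $\int_0^1 g\,d\mu = \int_0^1 R(g)\,d\mu$ for all bounded measurable $g$, and by a density/monotone-class argument (polynomials, or simple functions of the form $x^m\chi_{J_k}$) it is enough to test against functions of the form $g(x) = x^m$, $m\in\N$, and then localize to the intervals $J_k$. The natural test functions to use are the $f_k(x) = \alpha(x)\chi_{J_k}(x)$ from the proof of Theorem~\ref{thm existence of p_k}, together with their powers $f_k^m(x) = \alpha(x)^m\chi_{J_k}(x)$, since the key identity $f_k(\beta_l(x)) = x\,\delta_{k,l}$ generalizes immediately to $f_k^m(\beta_l(x)) = x^m\,\delta_{k,l}$.

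The core computation would run as follows. On one side,
\begin{equation*}
\int_0^1 R(x^m)\,d\mu = \int_0^1 \sum_{l} p_l\,\beta_l(x)^m\,d\mu(x),
\end{equation*}
and on the other side, using $\mu\circ\alpha^{-1}=\mu$ to pull $x^m$ back through $\alpha$ and split along the partition $(J_k)$,
\begin{equation*}
\int_0^1 x^m\,d\mu(x) = \int_0^1 \alpha(x)^m\,d\mu(x) = \sum_k \int_{J_k}\alpha(x)^m\,d\mu(x) = \sum_k \int_0^1 f_k^m\,d\mu.
\end{equation*}
Next I would apply the IFS identity $\mu = \sum_l p_l\,\mu\circ\beta_l^{-1}$ to $\int_0^1 f_k^m\,d\mu$: since $f_k^m\circ\beta_l = x^m\delta_{k,l}$, this collapses to $p_k\int_0^1 x^m\,d\mu(x)$, so that $\int_{J_k}\alpha(x)^m\,d\mu = p_k\int_0^1 x^m\,d\mu$. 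Substituting the defining formula $p_k = \big(\int_{J_k}\alpha\,d\mu\big)\big/\big(\int_0^1 x\,d\mu\big)$ (equivalently $p_k = \mu(J_k)$), this reads
\begin{equation*}
\Big(\int_0^1 x\,d\mu\Big)\int_{J_k}\alpha(x)^m\,d\mu = \Big(\int_0^1 x^m\,d\mu\Big)\int_{J_k}\alpha(x)\,d\mu,
\end{equation*}
which is precisely (\ref{eq condition on mu for R-invariance}). Tracking which implication needs which hypothesis: the identity $\int_{J_k}\alpha(x)^m d\mu = p_k\int_0^1 x^m d\mu$ holds always (it only uses that $\mu$ is the IFS measure and the definition of $p_k$), and $\alpha$-invariance gives $\sum_k\int_{J_k}\alpha(x)^m d\mu = \int_0^1 x^m d\mu$; so $R$-invariance $\int_0^1 R(x^m)d\mu = \int_0^1 x^m d\mu$ becomes $\sum_k p_k\int\beta_k(x)^m d\mu = \sum_k p_k\int_0^1 x^m d\mu$, and one reconciles this term-by-term with (\ref{eq condition on mu for R-invariance}) via a change of variables $x\mapsto\alpha(y)$ on each $J_k$.

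The main obstacle I anticipate is bookkeeping rather than conceptual: making rigorous the reduction to power functions $x^m$ and, more delicately, the passage from the \emph{summed} equality $\int_0^1 R(g)\,d\mu = \int_0^1 g\,d\mu$ to the \emph{per-interval} equalities (\ref{eq condition on mu for R-invariance}) indexed by $k$. One direction (condition $\Rightarrow$ $R$-invariance) is immediate by summing over $k$ and invoking $\alpha$-invariance; for the converse one should test against $g$ supported so as to isolate a single $J_k$ — for instance write $R(x^m)(x) = \sum_l p_l\beta_l(x)^m$, integrate, change variables piecewise to get $\sum_k p_k\int_{J_k}\alpha(y)^m d\mu(y)/\big(\int_0^1 y\,d\mu\big)\cdot(\cdots)$, and then exploit that the intervals $J_k$ partition $(0,1)$ and that the measures $\mu\circ\beta_k^{-1}$ are mutually singular (Lemma~\ref{lem first props of IFS measures}) to decouple the $k$-terms. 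I would also want to note at the outset that all the integrals $\int_{J_k}\alpha(x)^m\,d\mu(x)$ are finite (bounded by $\mu(J_k)=p_k$ since $0<\alpha<1$ a.e.\ on $J_k$), so no integrability subtlety arises.
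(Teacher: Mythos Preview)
Your proposal has the right ingredients but misapplies them, leading to a circular step and an incorrect claim about one direction.

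The central confusion: you treat ``$\mu$ is an IFS measure'' (i.e., $\mu = \sum_l p_l\,\mu\circ\beta_l^{-1}$) as a standing hypothesis, but that identity is exactly the $R$-invariance $\mu R = \mu$ being characterized. So when you write ``the identity $\int_{J_k}\alpha(x)^m\,d\mu = p_k\int_0^1 x^m\,d\mu$ holds always (it only uses that $\mu$ is the IFS measure\ldots)'' you are assuming the conclusion. Your proposed decoupling via Lemma~\ref{lem first props of IFS measures} is circular for the same reason: that lemma presupposes $\mu$ is an IFS measure. Separately, the claim that ``condition $\Rightarrow$ $R$-invariance is immediate by summing over $k$'' is false: summing (\ref{eq condition on mu for R-invariance}) over $k$ yields $\int_0^1\alpha(x)^m\,d\mu = \int_0^1 x^m\,d\mu$, which is just $\alpha$-invariance --- a tautology here, not $R$-invariance.

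The paper's argument is a one-line fix using your own functions $f_{k,m}(x) = \alpha(x)^m\chi_{J_k}(x)$, but as the \emph{test functions for $R$-invariance} rather than as auxiliary objects. Since $R(f_{k,m})(x) = \sum_l p_l f_{k,m}(\beta_l x) = p_k\, x^m$, the equality $\int_0^1 R(f_{k,m})\,d\mu = \int_0^1 f_{k,m}\,d\mu$ reads $p_k\int_0^1 x^m\,d\mu = \int_{J_k}\alpha(x)^m\,d\mu$, which after substituting the definition of $p_k$ is exactly (\ref{eq condition on mu for R-invariance}). So $R$-invariance on the span of the $f_{k,m}$ is \emph{already} the per-$k$ condition; both directions follow once one notes this span is dense (on each $J_k$ the map $\alpha$ is a bijection onto $(0,1)$, so polynomials in $\alpha$ separate points). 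Testing against the unlocalized functions $x^m$, as you do, discards the $J_k$-localization and manufactures the decoupling problem you yourself flagged as the main obstacle.
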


\begin{proof} It was shown in the proof of Theorem \ref{thm existence 
of p_k} that the numbers $(p_k)$ defined above satisfy the condition  
$\sum_k p_k =1$. For any integers $m,k$, we set
\be\label{eq def f_k,m}
f_{k,m}(x) := \alpha(x)^m \chi_{J_k}(x).
\ee
Then, for any $k,l,m \in \N$, and $x \in (0,1)$,
$$
f_{k,m}(\beta_l x) = x^m \delta_{k,l}.
$$
We apply the following sequence of equivalences to prove the result:
$$
\int_0^1 Rf\; d\mu = \int_0^1 f \; d\mu, \qquad \qquad \ \ \ \ 
\forall f \in \mc F(X)
$$
$$
\Updownarrow
$$
$$
\int_0^1 R(f_{k,m})\; d\mu = \int_0^1 f_{k,m} \; d\mu, \qquad \ \ \ 
\forall k,m \in \N
$$
$$
\Updownarrow
$$
$$
p_k\int_0^1 x^m \; d\mu(x) = \int_{J_k} \alpha(x)^m \; d\mu(x), 
\qquad \forall k,m \in \N.
$$
This proves the theorem.
\end{proof}

It follows from Theorem \ref{thm p_k in terms alpha} that the left 
hand side of the equality
$$
\frac{\int_{J_k} \alpha(x)^m \; d\mu(x) }{\int_0^1 x^m \; d\mu(x) } 
= \frac{\int_{J_k} \alpha(x) \; d\mu(x) }{\int_0^1 x \; d\mu(x) } = 
p_k
$$
 does not depend on $m$.

\begin{proposition} \label{prop equivalence}
Let $\alpha$ be  a piecewise monotone map of $(0,1)$ onto itself  
such that $\beta_k : (0,1) \to J_k$ is an inverse branch for $\alpha$, 
$k\in \N$. Suppose $\mu$ is an $\alpha$-invariant measure.  The 
following statements are equivalent:

(1)
$$
\mu = \sum_{k=1}^\infty p_k\mu\circ \beta_k^{-1},
$$

(2)
$$
(\chi_{J_k} d\mu)\circ\alpha^{-1} = p_k d\mu,
$$

(3) $\forall f \in \mc F((0,1), \mc B)$,
$$
\int_{J_k} f(\alpha x) \; d\mu(x) = p_k \int_0^1 f(x)\; d\mu(x).
$$

\end{proposition}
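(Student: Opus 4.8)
The plan is to prove the equivalence of the three conditions by establishing the cyclic chain $(1)\Longleftrightarrow(2)$ and $(2)\Longleftrightarrow(3)$, exploiting that the $J_k$ are pairwise disjoint with $\bigcup_k J_k = (0,1)$ and that each $\beta_k : (0,1)\to J_k$ is the inverse branch of $\alpha$, so that $\alpha\circ\beta_k = \mathrm{id}$ and $\beta_k\circ\alpha|_{J_k} = \mathrm{id}|_{J_k}$. Throughout I would use the change-of-variables identity for pushforward measures: for any nonnegative Borel $f$,
$$
\int_X f\; d(\nu\circ\varphi^{-1}) = \int_X (f\circ\varphi)\; d\nu,
$$
applied with $\varphi = \beta_k$ and $\varphi = \alpha$.

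First, for $(2)\Longleftrightarrow(3)$: the measure $(\chi_{J_k}d\mu)\circ\alpha^{-1}$ evaluated against a test function $f$ gives $\int_X (f\circ\alpha)\chi_{J_k}\; d\mu = \int_{J_k} f(\alpha x)\; d\mu(x)$, while $p_k d\mu$ evaluated against $f$ gives $p_k\int_0^1 f\; d\mu$. Equality of these two for all $f\in\mc F((0,1),\mc B)$ is exactly the assertion that the two measures coincide, so $(2)$ and $(3)$ are literally reformulations of one another; this step is immediate.

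Next, $(2)\Longrightarrow(1)$: assume $(\chi_{J_k}d\mu)\circ\alpha^{-1} = p_k\, d\mu$ for every $k$. Summing over $k$ and using $\sum_k \chi_{J_k} = \mathbf 1$ (mod $0$, since the $J_k$ partition $(0,1)$ up to the null set of endpoints) together with $\alpha$-invariance of $\mu$, the left side sums to $\mu\circ\alpha^{-1} = \mu$ and the right side to $(\sum_k p_k)\mu$, forcing $\sum_k p_k = 1$ (consistent with the hypothesis). To recover $(1)$, I would test against $f$: using $\beta_k\circ\alpha = \mathrm{id}$ on $J_k$, write $\int_{J_k} f(\beta_k(\alpha x))\,d\mu(x) = \int_{J_k} f(x)\,d\mu(x)$, hence by $(2)$,
$$
p_k\int_0^1 f\;d\mu = \int_{J_k} (f\circ\beta_k)(\alpha x)\,d\mu(x) = \int_0^1 (f\circ\beta_k)(y)\, d\big((\chi_{J_k}d\mu)\circ\alpha^{-1}\big)(y),
$$
but the cleanest route is rather: $\int_X f\, d(\mu\circ\beta_k^{-1}) = \int_X (f\circ\beta_k)\,d\mu$, and since $f\circ\beta_k$ is supported in the behaviour of $\beta_k$, one checks $(f\circ\beta_k)(x) = (f\circ\beta_k)(x)$; I would instead verify $(1)$ and $(2)$ are equivalent by computing, for each $k$, the measure $p_k(\mu\circ\beta_k^{-1})$ applied to a set $A$: it equals $p_k\mu(\beta_k^{-1}A)$, and comparing with $\mu(A\cap J_k)$ via Lemma \ref{lem first props of IFS measures}(2) gives $\mu(A\cap J_k) = \mu(J_k)\mu(\beta_k^{-1}A) = p_k\mu(\beta_k^{-1}A)$ once one knows $p_k = \mu(J_k)$. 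So the argument splits: show $(1)\Leftrightarrow p_k = \mu(J_k)$ for all $k$ (via the disjointness computation as in Lemma \ref{lem first props of IFS measures}(1)), and independently show $(2)\Leftrightarrow p_k = \mu(J_k)$ by testing $(2)$ on $A = (0,1)$, which yields $\mu(\alpha^{-1}(0,1)\cap J_k) = \mu(J_k) = p_k\mu(0,1) = p_k$.

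For the converse direction $(1)\Longrightarrow(2)$: given $\mu = \sum_k p_k(\mu\circ\beta_k^{-1})$, apply $(\cdot)\circ\alpha^{-1}$ is not the move; instead, restrict to $J_k$. Using Lemma \ref{lem first props of IFS measures}(1), $(1)$ forces $p_k = \mu(J_k)$. Then for any $A$, $(\chi_{J_k}d\mu)\circ\alpha^{-1}(A) = \mu(J_k\cap\alpha^{-1}A)$; since $\alpha|_{J_k}$ is a bijection onto $(0,1)$ with inverse $\beta_k$, $J_k\cap\alpha^{-1}A = \beta_k(A)$, so this equals $\mu(\beta_k(A))$. On the other hand $p_k\mu(A) = \mu(J_k)\mu(A)$, and $(1)$ gives $\mu(\beta_k(A)) = \mu(\beta_k(A)\cap J_k) = \mu(J_k)\mu(\beta_k^{-1}\beta_k(A)) = \mu(J_k)\mu(A)$ (using injectivity of $\beta_k$ so $\beta_k^{-1}\beta_k(A) = A$), which is exactly $p_k\mu(A)$; hence $(2)$. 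The main obstacle, and the point requiring care rather than difficulty, is handling the null set of partition endpoints $\{t_k\}$ and making the identifications $J_k\cap\alpha^{-1}(A) = \beta_k(A)$ and $\beta_k^{-1}\beta_k(A) = A$ rigorous $\mathrm{mod}\ 0$; since $\mu$ is non-atomic these endpoint sets are $\mu$-null, so all identities hold modulo null sets, which is exactly the convention in force in the paper.
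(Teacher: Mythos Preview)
Your argument for $(2)\Leftrightarrow(3)$ is fine, and your proof of $(1)\Rightarrow(2)$ via Lemma~\ref{lem first props of IFS measures} is correct. The gap is in your treatment of $(2)\Rightarrow(1)$. You propose to reduce both $(1)$ and $(2)$ to the intermediate condition ``$p_k=\mu(J_k)$ for all $k$'' and argue each is equivalent to it. That is false: $p_k=\mu(J_k)$ is only a \emph{necessary} consequence of $(1)$ (Lemma~\ref{lem first props of IFS measures}(1)) and of $(2)$ (test on $A=(0,1)$), not a sufficient one. Indeed, the paper's own Corollary~\ref{cor measure mu_0} exhibits the Gauss measure $\mu_0$, which is $\alpha$-invariant; one may set $p_k:=\mu_0(J_k)$, yet $\mu_0$ fails to be an IFS measure, so $(1)$ (and hence $(2)$) fails. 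Thus the proposed bridge collapses.

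Ironically, the approach you began and then abandoned is the correct one. From $(3)$ applied to the test function $g=f\circ\beta_k$ you get, on the left, $\int_{J_k}(f\circ\beta_k)(\alpha x)\,d\mu(x)=\int_{J_k}f\,d\mu$ (since $\beta_k\circ\alpha|_{J_k}=\mathrm{id}$), and on the right $p_k\int_0^1(f\circ\beta_k)\,d\mu=p_k\int f\,d(\mu\circ\beta_k^{-1})$. Summing over $k$ yields $(1)$. The paper's proof packages this same idea slightly differently: it introduces $\varphi(x)=\chi_{J_k}(x)\,h(\alpha x)$, checks the key identity $\varphi(\beta_l(x))=\delta_{k,l}\,h(x)$, and substitutes $\varphi$ directly into the IFS relation $(1)$ to obtain $(3)$; the computation is then visibly reversible. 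Either route works, but you must not pass through ``$p_k=\mu(J_k)$'' as if it were equivalent to the full statements.
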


\begin{proof} Let $h(x) $ be a measurable function on $X = (0,1)$. Then we note that
the function $f_k(x) = \chi_{J_k}(x) h(\alpha (x))$ satisfies the relation
\be\label{eq f_k vs beta_l}
f_k(\beta_l (x)) = \delta_{k,l} h(x), \qquad \forall k,l \in \N,
\ee
where $\delta_{k,l} $ is the Kronecker delta symbol. Indeed,
\begin{eqnarray*}
f_k(\beta_l (x)) &=& \chi_{J_k}(\beta_l x) h(\alpha (\beta_l(x))) \\
\\
   &=& \begin{cases}  0, &\mbox{if } l \neq k\\
\\   
     \chi_{J_k}(\beta_k x),  & \mbox{if } k = l \end{cases}   \\
\\   
   &=&   \delta_{k,l} h(x).
\end{eqnarray*}
We used the facts that $\beta_k (0,1) = J_k$, and the sets $(J_k : k
\in \N)$ are disjoint.

Suppose now that (1) holds. Then, for any measurable function $
\varphi$, we have
$$
\int_0^1 \varphi \; d\mu = \sum_{k=1}^\infty p_k \int_0^1 \varphi
\circ \beta_k \; d\mu.
$$
Take $\varphi =  \chi_{J_k}(x) h(\alpha (x))$. Hence,
\begin{eqnarray*}
  \int_0^1 \chi_{J_k}(x) h(\alpha (x))  &=&\sum_{k=1}^\infty p_k 
  \int_0^1 \chi_{J_k}(\beta_k x) h(\alpha (\beta_k x))\; d\mu(x)  \\
   &=& p_k \int_0^1 h(x) \; d\mu(x)
 \end{eqnarray*}
This relation can be written in the form
\be\label{eq equivalence reln}
\int_0^1 h \; (\chi_{J_k}d\mu)\circ\alpha^{-1} = p_k \int_0^1 h\; d
\mu
\ee
which is equivalent to statement (2):
$$
p_k = \frac{(\chi_{J_k}d\mu)\circ\alpha^{-1}}{d\mu}.
$$
Simultaneously, we have shown, in the above proof, that (3) holds due 
to relation (\ref{eq equivalence reln}).

To finish the proof, we observe that all implications are reversible so 
that the three statements formulated in the proposition are equivalent.

\end{proof}

\begin{remark} Given $\alpha$ and $\mu$ as above, we can define the 
transfer operator
$$
R\varphi := \frac{(\varphi d\mu)\circ\alpha^{-1}}{d\mu}.
$$
Then $R( \mathbf 1) = \mathbf 1$ since $\mu$ is $\alpha$-invariant.
It follows from Proposition \ref{prop equivalence} that
$$
R(\chi_{J_k}) = \frac{(\chi_{J_k}d\mu)\circ\alpha^{-1}}{d\mu} = 
p_k, \qquad k \in \N.
$$
\end{remark}

\subsection{The Gauss map}

The  famous \emph{Gauss endomorphism (map)} $\sigma$ is an 
example of  a piecewise monotone map with countably many inverse 
branches. This map has been studied in many papers, we refer, for 
example,  to \cite{CornfeldFominSinai1982} for basic definitions and 
facts about $\sigma$. Our study below is motivated by 
\cite{AlpayJorgensenLewkowicz2016}.

Let $\lfloor x \rfloor$ denote  the integer part, and let  $\{x\}$ 
denote the fractional part of $x \in \R$. Then the Gauss map
\index{Gauss map} \index{endomorphism ! Gauss} is defined by the formula
$$
\sigma(x) = \frac{1}{x} - \left\lfloor \frac{1}{x} \right\rfloor = 
\left\{\frac{1}{x}\right\}, \ \ 0< x < 1.
$$
We apply the results provedin the first part of this section  to this map 
and find  explicit formulas for invariant measures.

Since $\sigma$ is a countable-to-one map, which is monotone 
decreasing on the intervals $J_k = (\frac{1}{k+1}, \frac{1}{k})$, we 
can easily  point out the family $\{\tau_k : k \in \N\}$ of inverse 
branches for the Gauss map $\sigma$:
$$
\tau_k (x) = \frac{1}{k +x},\ \ \ \ \ x \in (0,1).
$$
Clearly, $\tau_k$ is a monotone decreasing  map from $(0,1)$ onto 
the subinterval $(\frac{1}{k+1}, \frac{1}{k})$. 
The relation $(\sigma
\circ \tau_k)(x) = x$ holds for any $x \in (0,1)$, so that $\tau_k$ is 
the inverse branch of $\sigma$ for every $k\in \N$.

\begin{remark}
We observe that the composition $\tau_{k_1}\circ \cdots \circ 
\tau_{k_m}$ is also a well defined map from $(0,1)$ onto the open 
subinterval $(\tau_{k_1}\circ \cdots \circ \tau_{k_m}(0), \ 
\tau_{k_1}\circ \cdots \circ \tau_{k_m}(1))$ of $(0,1)$ according to 
the formula:
\be\label{eq composition of tau maps}
\tau_{k_1}\circ \tau_{k_2}\cdots \circ \tau_{k_m}(x) =
\cfrac{1}{k_1+\cfrac{1}{k_2+ \cdots \cfrac{1}{k_m +x}}}
\ee

It follows from the definition of the Gauss map $\sigma$ that the 
endomorphism $\sigma^m$ is a one-to-one map from each subinterval 
$(\tau_{k_1}\circ \cdots \circ \tau_{k_m}(0), \ \tau_{k_1}\circ 
\cdots \circ \tau_{k_m}(1))$ onto $(0,1)$.
\end{remark}

In this section, we are interested in $\sigma$-invariant ergodic 
non-atomic measures. The set of such measures  is uncountable.  As 
follows from Lemma \ref{lem ISF measures for alpha}, every IFS 
measure for $\sigma$ is $\sigma$-invariant (and ergodic as follows 
from Theorem \ref{thm measure mu_pi}).

The following example of an ergodic $\sigma$-invariant measure is 
well known and  goes back to Gauss.

\begin{lemma}[Gauss, \cite{Renyi1957}] The class of measures 
equivalent to the Lebesgue measures $dx$ contains the measure $
\mu_0$ with density
$$
d\mu_0(x) = \frac{1}{\ln 2}\cdot \frac{dx}{(x+1)},\ \ \ x\in (0,1),
$$
such that $\mu_0$ is  $\sigma$-invariant and ergodic.
\end{lemma}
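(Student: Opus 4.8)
\medskip
\noindent\textbf{Proof proposal.}
The plan is to get invariance from an explicit change of variables over the monotonicity intervals, and ergodicity from the classical bounded–distortion (Rényi) argument for the Gauss map. Observe first that the density $\frac1{\ln 2}\cdot\frac1{x+1}$ lies between the positive constants $\frac1{2\ln 2}$ and $\frac1{\ln 2}$ on $(0,1)$ and integrates to $\frac1{\ln 2}\bigl[\ln(x+1)\bigr]_0^1=1$; hence $\mu_0$ is a probability measure equivalent to Lebesgue measure $dx$. By the discussion in Section \ref{sect L1 and L2} on the dependence of dynamical properties on the measure class, ergodicity of $\sigma$ with respect to $\mu_0$ is the same as ergodicity with respect to $dx$, so it suffices to treat Lebesgue measure in the ergodicity step.

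First I would prove $\mu_0\circ\sigma^{-1}=\mu_0$. Recall $\tau_k(x)=\frac1{k+x}$, $|\tau_k'(x)|=\frac1{(k+x)^2}$, $\tau_k\colon(0,1)\to J_k=(\frac1{k+1},\frac1k)$, and $\sigma\circ\tau_k=\mathrm{id}$. For a non-negative Borel function $f$ on $(0,1)$, decomposing over the $J_k$ and substituting $x=\tau_k(y)$ (Tonelli applies, all terms being non-negative),
\[
\int_0^1 f(\sigma x)\,d\mu_0(x)=\sum_{k=1}^\infty\int_0^1 f(y)\,\frac1{\ln 2}\cdot\frac1{\tau_k(y)+1}\cdot\frac1{(k+y)^2}\,dy .
\]
Since $\tau_k(y)+1=\frac{k+y+1}{k+y}$, the $k$-th density factor equals $\frac1{\ln 2}\cdot\frac1{(k+y)(k+y+1)}=\frac1{\ln 2}\bigl(\frac1{k+y}-\frac1{k+y+1}\bigr)$, and the series telescopes to $\frac1{\ln 2}\cdot\frac1{1+y}$. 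Therefore $\int_0^1 f(\sigma x)\,d\mu_0(x)=\int_0^1 f\,d\mu_0$, i.e. $\mu_0\circ\sigma^{-1}=\mu_0$. (Alternatively one could check that $\mu_0$ is an IFS measure with weights $p_k=\mu_0(J_k)$ and quote Lemma \ref{lem ISF measures for alpha}, but the telescoping is shorter.)

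For ergodicity I would use cylinder sets and bounded distortion. For $m\in\N$ and $\underline k=(k_1,\dots,k_m)\in\N^m$ put $\psi_{\underline k}=\tau_{k_1}\circ\cdots\circ\tau_{k_m}$ and $\Delta(\underline k)=\psi_{\underline k}\bigl((0,1)\bigr)$; by (\ref{eq composition of tau maps}), $\sigma^m$ maps $\Delta(\underline k)$ bijectively onto $(0,1)$ and $\psi_{\underline k}$ is its inverse branch, a composition of decreasing Möbius maps. The key estimate is uniform bounded distortion: there is $C\ge 1$, independent of $m$ and $\underline k$, with $\sup_{(0,1)}|\psi_{\underline k}'|\le C\inf_{(0,1)}|\psi_{\underline k}'|$. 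This is the classical Rényi estimate for the Gauss map, obtainable directly from the continued–fraction form (\ref{eq composition of tau maps}) together with $|\sigma'|\ge 1$. Granting it, for any $A$ with $\sigma^{-1}(A)=A$ (hence $\sigma^{-m}(A)=A$) and any cylinder $\Delta(\underline k)$ of order $m$, the substitution $x=\psi_{\underline k}(t)$ gives
\[
\mathrm{Leb}(A\cap\Delta(\underline k))=\int_0^1\chi_A(t)\,|\psi_{\underline k}'(t)|\,dt\ \ge\ C^{-1}\mathrm{Leb}(\Delta(\underline k))\int_0^1\chi_A(t)\,dt\ =\ C^{-1}\,\mathrm{Leb}(A)\,\mathrm{Leb}(\Delta(\underline k)),
\]
using $\chi_A\circ\psi_{\underline k}=\chi_A$ on $(0,1)$ (valid since $\sigma^{-m}(A)=A$ and $\sigma^m$ is a bijection of $\Delta(\underline k)$ onto $(0,1)$) and $|\psi_{\underline k}'(t)|\ge C^{-1}\int_0^1|\psi_{\underline k}'|=C^{-1}\mathrm{Leb}(\Delta(\underline k))$. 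Since the cylinders $\{\Delta(\underline k):m\in\N,\ \underline k\in\N^m\}$ generate $\B$ and have diameters tending to $0$, Knopp's lemma (equivalently, a Lebesgue density–point argument at a density point of $A$) forces $\mathrm{Leb}(A)\in\{0,1\}$. Thus $\sigma$ is ergodic for $dx$, hence for $\mu_0$.

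The main obstacle is the uniform bounded–distortion estimate; the rest is either the routine computation above or standard measure theory. One could instead cite the folklore theorem on piecewise-$C^2$ expanding interval maps satisfying the Rényi condition, or invoke exactness of the Gauss map (which gives ergodicity via Definition \ref{def ergodic and exact}), but both rest on the same distortion control, so I would prove that estimate directly from (\ref{eq composition of tau maps}).
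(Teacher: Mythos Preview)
The paper does not actually prove this lemma; it is stated as a classical fact with attribution to Gauss and a citation to R\'enyi, and the text moves on immediately. So there is no proof in the paper to compare your proposal against.

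Your outline is the standard route and is essentially correct. The invariance argument is complete: the substitution $x=\tau_k(y)$ and the telescoping identity $\frac{1}{(k+y)(k+y+1)}=\frac{1}{k+y}-\frac{1}{k+y+1}$ are exactly what one does. For ergodicity, the bounded-distortion-plus-density-point argument is precisely R\'enyi's method. One refinement: the distortion bound does not follow from $|\sigma'|\ge 1$ alone (mere expansion is not enough); what makes it work here is the explicit M\"obius form coming from (\ref{eq composition of tau maps}). Writing $\psi_{\underline k}(t)=\dfrac{p_m+p_{m-1}t}{q_m+q_{m-1}t}$ in terms of the continued-fraction convergents, one gets $|\psi_{\underline k}'(t)|=(q_m+q_{m-1}t)^{-2}$, and since $q_{m-1}\le q_m$ the sup/inf ratio over $t\in(0,1)$ is at most $(1+q_{m-1}/q_m)^2\le 4$. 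That is the computation you should insert where you wrote ``together with $|\sigma'|\ge 1$''; with it in place the Knopp/density argument goes through exactly as you describe.
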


We apply the methods used in the first part of this section and 
consider transfer operators $R$ associated to $\sigma$, probability 
distributions $\pi = (p_k : k \in \N)$ and the corresponding inverse 
branches $(\tau_k)$.

\begin{lemma}\label{lem TO for pi}
Let $\pi = (p_k : k \in \N)$ be  a probability infinite-dimensional 
positive vector.
Then
\be\label{eq TO for pi}
R_\pi f(x) := \sum_{k=1}^{\infty} p_k f(\tau_ k x) = \sum_{k=1}
^{\infty} p_k f\left(\frac{1}{k+x}\right), \ \ x \in (0,1).
\ee
is a normalized transfer operator associated with the Gauss map $
\sigma$. Moreover, $R_\pi$ is normalized, i.e., $R_\pi (\mathbf 1) = 
\mathbf 1$.
\end{lemma}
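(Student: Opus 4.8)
The plan is to verify the two defining properties of a transfer operator from Definition \ref{def transfer operator} directly, and then to check the normalization separately. The statement really has three assertions: that $R_\pi$ is a positive linear operator, that it satisfies the pull-out property with respect to the Gauss map $\sigma$, and that $R_\pi(\mathbf 1) = \mathbf 1$. Linearity is immediate from the formula $R_\pi f(x) = \sum_k p_k f(\tau_k x)$, since each term $f \mapsto f(\tau_k x)$ is linear in $f$ and the $p_k$ are fixed scalars. Positivity is equally routine: if $f \geq 0$, then $f(\tau_k x) \geq 0$ for every $k$, and since $p_k > 0$ the series $\sum_k p_k f(\tau_k x)$ is a sum of non-negative terms, hence non-negative.

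Next I would handle the normalization, which is the cleanest part. Applying the formula to the constant function $\mathbf 1$ gives
$$
R_\pi(\mathbf 1)(x) = \sum_{k=1}^\infty p_k \cdot \mathbf 1(\tau_k x) = \sum_{k=1}^\infty p_k = 1,
$$
because $\pi = (p_k)$ is a probability vector. So $R_\pi(\mathbf 1) = \mathbf 1$, and in particular $R_\pi$ is strict and normalized.

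The substantive step is the pull-out property: for Borel functions $f, g$ on $(0,1)$,
$$
R_\pi((f\circ\sigma) g)(x) = \sum_{k=1}^\infty p_k \, f(\sigma(\tau_k x)) \, g(\tau_k x).
$$
Here I would invoke the defining property of the inverse branches, namely $\sigma(\tau_k x) = x$ for all $x \in (0,1)$ and all $k \in \N$, which was recorded right before the lemma. Substituting, $f(\sigma(\tau_k x)) = f(x)$, which is independent of $k$, so it factors out of the sum:
$$
R_\pi((f\circ\sigma) g)(x) = f(x) \sum_{k=1}^\infty p_k \, g(\tau_k x) = f(x)\, R_\pi(g)(x).
$$
This is exactly \eqref{eq char property of r via sigma 1}. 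Together with linearity and positivity, this shows $R_\pi$ is a transfer operator associated with $\sigma$.

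The only genuine obstacle is making sure the formula actually defines an operator on whatever functional space is intended — i.e., that the series converges. For $f$ bounded (say $f \in L^\infty$ or $f$ continuous on $(0,1)$), convergence is clear from $|R_\pi f(x)| \leq \|f\|_\infty \sum_k p_k = \|f\|_\infty$, so $R_\pi$ is a well-defined contraction there; the general case for $L^1$ or Borel functions follows the same pattern as in Example \ref{ex R forIFS} and the earlier treatment of transfer operators defined by countable inverse branches, so I would simply remark that the domain is handled exactly as in those earlier discussions rather than belabor it. Everything else is a one-line substitution using $\sigma\circ\tau_k = \mathrm{id}$ and $\sum_k p_k = 1$.
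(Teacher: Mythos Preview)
Your proof is correct and follows exactly the approach the paper indicates: the paper simply asserts that positivity and normalization are obvious and that the pull-out property is verified by direct computation, omitting the details. You have supplied precisely those details, using $\sigma\circ\tau_k = \mathrm{id}$ for the pull-out property and $\sum_k p_k = 1$ for normalization, which is the only natural argument here.
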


It is obvious that $R_\pi$ is positive and normalized. The pull-out 
property for $R_\pi$ is verified  by direct computations. We omit the 
details.

\begin{lemma}\label{lem ISF measures for sigma} Suppose that $\la$ 
is a measure on $(0,1)$ such that $\la R_\pi = \la$. Then $\la$ is $
\sigma$-invariant, i.e., $\la\csi1 = \la$.
\end{lemma}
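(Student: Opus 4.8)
The plan is to reduce the statement to the general interaction, established earlier, between the normalization condition $R(\mathbf 1)=\mathbf 1$ and the way the dual action $\lambda\mapsto\lambda R$ composes with the action $\lambda\mapsto\lambda\circ\sigma^{-1}$. By Lemma~\ref{lem TO for pi}, $R_\pi$ is a \emph{normalized} transfer operator associated with the Gauss map $\sigma$, i.e. $R_\pi(\mathbf 1)=\mathbf 1$, and (being normalized) its action $\lambda\mapsto\lambda R_\pi$ is well defined on all of $M_1((0,1))$, cf. Proposition~\ref{prop measure lambda R} and the Remark opening Section~\ref{sect actions on measures}. So the hypothesis $\lambda R_\pi=\lambda$ makes sense verbatim, and no extra integrability or quasi-invariance assumption on $\lambda$ is needed.

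The heart of the argument is a short chain of equalities, valid for every bounded Borel function $f$ on $(0,1)$:
$$
\int_X f\, d(\lambda\circ\sigma^{-1}) = \int_X (f\circ\sigma)\, d\lambda = \int_X (f\circ\sigma)\, d(\lambda R_\pi) = \int_X R_\pi(f\circ\sigma)\, d\lambda = \int_X f\,R_\pi(\mathbf 1)\, d\lambda = \int_X f\, d\lambda.
$$
Here the second equality uses the hypothesis $\lambda=\lambda R_\pi$, the third is the definition of the measure $\lambda R_\pi$ (Definition~\ref{def action of P on mu}), the fourth is the pull-out property~(\ref{eq char property of r via sigma 1}) applied to $R_\pi((f\circ\sigma)\cdot\mathbf 1)=f\,R_\pi(\mathbf 1)$, and the last uses $R_\pi(\mathbf 1)=\mathbf 1$. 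Specializing $f=\chi_B$ for $B\in\mathcal B$ gives $\lambda(\sigma^{-1}(B))=\lambda(B)$, which is exactly $\lambda\circ\sigma^{-1}=\lambda$. Alternatively, one can invoke the already-proved general results directly: Lemma~\ref{lem R 1= 1 - actions on measures} gives $(\lambda R_\pi)\circ\sigma^{-1}=\lambda$ for every probability measure $\lambda$, whence $\lambda\circ\sigma^{-1}=(\lambda R_\pi)\circ\sigma^{-1}=\lambda$; or one can note that $\lambda\in\mathrm{Fix}(R_\pi)\subset\bigcap_i M_1R_\pi^i\subset K_1(R_\pi)$ by Theorem~\ref{thm t_R is 1-1 and more}(5) and then apply relation~(\ref{eq 2}) of Lemma~\ref{lem interaction R and sigma on measures}, which for $\lambda\in K_1(R_\pi)$ states $\lambda=\lambda R_\pi\iff\lambda\circ\sigma^{-1}=\lambda$.

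I do not expect a genuine obstacle here: the statement is essentially a corollary of the pull-out property together with normalization, and the only point requiring a word of care is making sure the computation above is run for a class of test functions broad enough to conclude equality of the two measures (bounded measurable, or just indicators), and that the intermediate integrals are finite — which is automatic because $\lambda$ and $\lambda R_\pi$ are probability measures and $R_\pi$ is normalized. The remaining work is purely routine bookkeeping of which earlier lemma is being cited.
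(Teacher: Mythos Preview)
Your proof is correct and takes essentially the same approach as the paper: the paper's own proof simply observes that this is a particular case of the general statement for normalized transfer operators and refers back to Section~\ref{sect actions on measures}, which is exactly what your alternative invocations of Lemma~\ref{lem R 1= 1 - actions on measures} or Theorem~\ref{thm t_R is 1-1 and more}/Lemma~\ref{lem interaction R and sigma on measures} do. Your explicit chain of equalities is a welcome unpacking of that reference.
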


In fact, this is a particular case of the general statement that holds for 
any normalized transfer operator.  A formal proof  was given in Section 
\ref{sect actions on measures}.
\medskip

In what follows we recall a convenient realization of the Gauss map on 
the space of one-sided infinite sequences. Let
$$
\Omega = \prod_{i=1}^\infty \N
$$
be the product space with Borel structure  generated by cylinder sets
$$
C(k_1, ..., k_m):= \{\omega \in \Omega : \omega_1 = k_1, ... , 
\omega_m = k_m\}
$$
where  $\omega = (a_1, a_2, ... )$ denotes an arbitrary point in $
\Omega$. Let $S$ be the one-sided shift in $\Omega$:
$$
S(a_1, a_2, a_3, ... ) = (a_2, a_3, ... ).
$$
Clearly, $S$ is a countable-to-one Borel endomorphism of $\Omega$.
For every $k \in \N$, we define the inverse branch of $S$ by setting
$$
T_k (a_1, a_2, a_3,  ... )  =(k, a_1, a_2,  ... ).
$$
Then $T_k (\Omega) = C(k), k \in \N$, and  $ST_k = \mathrm{id}_
\Omega$.
The collection of sets $(C(k) : k \in \N)$ forms a partition of 
$\Omega$.

Take a positive probability distribution $\pi = (p_1, ... , p_k, ....)$ on 
the set $\N$ and define the probability product measure
$$
\mathbb P = \pi\times\pi\times\cdots
$$
on $\Omega$ so that $\mathbb P(C(k_1, ... , k_m)) = p_{k_1} \cdots 
p_{k_m}$. Clearly, the measure $\mathbb P$ is $S$-invariant, i.e., $
\mathbb P\circ S^{-1} = \mathbb P$.

Define the Borel map $F: \Omega \to (0,1)$ by setting
$$
F(\omega) =
\cfrac{1}{a_1+\cfrac{1}{a_2+\cfrac{1}{a_3+\cdots}}}
$$
where $\omega =  (a_1, a_2, a_3, \cdots) $ is any point from $
\Omega$.

\begin{remark}
It is clear that  $F$ establishes a one-to-one correspondence between 
sequences  from $\Omega$ and all irrational points in the interval $(0,
1)$. We denote by $X$ the set $F(\Omega)$. This means that $X = 
(0,1) \setminus \mathbb Q$. In the sequel, we will use the same 
notation $J_k $ for the interval $(\frac{1}{k+1}, \frac{1}{k})$ with 
removed rational points. As was mentioned above, this alternation does 
not affect continuous $\sigma$-invariant measures which are our main 
object of study.
\end{remark}

It is a simple observation that  $F(C(k)) = J_k$ for any $k\in \N$. 
Moreover, it follows from relation (\ref{eq composition of tau maps}) 
that
\be\label{eq F maps cylinder sets}
F(C(k_1, \cdots, k_m)) = (\tau_{k_1}\circ \cdots \circ \tau_{k_m}
(0), \ \tau_{k_1}\circ \cdots \circ \tau_{k_m}(1)).
\ee

The following statement is well known (see e.g. 
\cite{CornfeldFominSinai1982}). We formulate it for further 
references.

\begin{lemma}\label{lem folklore}
 In the above notation, the map $F$ intertwines the pairs of maps $
 \sigma, S$ and $T_k, \tau_k$:
$$
F \circ S = \sigma\circ F, \ \ \ \ \ F \circ T_k = \tau_k\circ F, \ \ \ \ k 
\in \N.
$$
\end{lemma}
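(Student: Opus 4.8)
The plan is to verify the two intertwining relations $F \circ S = \sigma \circ F$ and $F \circ T_k = \tau_k \circ F$ by direct computation on an arbitrary sequence $\omega = (a_1, a_2, a_3, \dots) \in \Omega$, using the explicit continued-fraction formula for $F$ together with the explicit formulas for $\sigma$, $S$, $T_k$, and $\tau_k$. First I would recall that $F(\omega)$ is the value of the infinite continued fraction $[0; a_1, a_2, a_3, \dots]$, and that for $x = F(\omega) \in (0,1)$, by construction $\lfloor 1/x \rfloor = a_1$, so $\sigma(x) = 1/x - a_1$ is exactly the value of the tail continued fraction $[0; a_2, a_3, \dots] = F(S\omega)$. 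This is essentially the classical fact that the Gauss map acts as the shift on continued-fraction digits; I would present it as the one-line identity
$$
\sigma(F(\omega)) = \frac{1}{F(\omega)} - a_1 = F(a_2, a_3, \dots) = F(S\omega),
$$
being careful to justify the middle equality from the nested structure of $F$ (namely $F(\omega) = 1/(a_1 + F(S\omega))$, hence $1/F(\omega) = a_1 + F(S\omega)$ with $F(S\omega) \in (0,1)$, so the integer part of $1/F(\omega)$ is indeed $a_1$).

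Next I would handle the second relation. For $F \circ T_k = \tau_k \circ F$, I would compute $F(T_k\omega) = F(k, a_1, a_2, \dots) = 1/(k + F(\omega))$ directly from the continued-fraction expression (this is exactly the "peeling off one level" identity displayed in equation (\ref{eq composition of tau maps})), and observe that $1/(k + x) = \tau_k(x)$ with $x = F(\omega)$, so $F(T_k\omega) = \tau_k(F(\omega))$. Both computations are routine given the recursive self-similar form of $F$; the only genuine care needed is in the domain bookkeeping — checking that $F(S\omega) \in (0,1)$ so that the floor computation in the first identity is correct, and noting $F(\omega)$ is irrational so $1/F(\omega)$ is never an integer and $\sigma$ is well-defined at $F(\omega)$. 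These points are already implicit in the Remark preceding the lemma, which identifies $X = F(\Omega) = (0,1) \setminus \mathbb{Q}$.

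I do not expect a serious obstacle here; this is a folklore statement (as the lemma itself notes, citing \cite{CornfeldFominSinai1982}), and the proof is a short unwinding of definitions. If anything, the subtlety worth spelling out is convergence: one should remark that the infinite continued fraction $F(\omega)$ genuinely converges for every $\omega \in \Omega$ (standard, since the convergents $p_m/q_m$ satisfy $|p_m/q_m - p_{m+1}/q_{m+1}| = 1/(q_m q_{m+1}) \to 0$), so that $F$ is a well-defined Borel map and the recursion $F(\omega) = 1/(a_1 + F(S\omega))$ holds as an honest equality of real numbers rather than a formal manipulation. Once that is in place, the two displayed identities follow immediately, and applying $F$ or $\tau_k$/$\sigma$ on the appropriate side gives the claimed intertwining. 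The argument is symmetric in that the relation for $\sigma$ and $S$ is dual (via inverse branches) to the relation for $\tau_k$ and $T_k$, so in fact establishing either one together with the observation $\sigma \circ \tau_k = \mathrm{id}$, $S \circ T_k = \mathrm{id}$ essentially yields the other, but I would simply prove both directly since each is a two-line computation.
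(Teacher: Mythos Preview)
Your proposal is correct; the paper does not actually supply a proof of this lemma, stating only that it is well known and citing \cite{CornfeldFominSinai1982}. Your direct verification via the recursive identity $F(\omega) = 1/(a_1 + F(S\omega))$ is exactly the standard argument, and your attention to the domain issue (that $F(S\omega) \in (0,1)$ so $\lfloor 1/F(\omega)\rfloor = a_1$) is the only point requiring any care.
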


From this lemma and relations (\ref{eq composition of tau maps}) and 
(\ref{eq F maps cylinder sets}), we deduce the following result.

\begin{corollary}
The collection of intervals $\{(\tau_{k_1}\circ \cdots \circ 
\tau_{k_m}(0), \ \tau_{k_1}\circ \cdots \circ \tau_{k_m}(1)) : 
k_1, ... k_m \in \N, m \in \N\}$ generates the sigma-algebra of Borel 
sets on the interval $(0,1)$.
\end{corollary}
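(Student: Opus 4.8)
The plan is to let $\mathcal A$ denote the sigma-algebra of subsets of $(0,1)$ generated by the cylinder intervals $J(k_1,\dots,k_m):=(\tau_{k_1}\circ\cdots\circ\tau_{k_m}(0),\ \tau_{k_1}\circ\cdots\circ\tau_{k_m}(1))$ ($m\in\N$, $k_i\in\N$), and to prove that $\mathcal A=\B((0,1))$ in the $\bmod\,0$ sense made precise below. One inclusion is immediate: every $J(k_1,\dots,k_m)$ is an open interval, hence a Borel set, so $\mathcal A\subseteq\B((0,1))$.

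For the reverse inclusion the decisive input is that cylinder intervals shrink uniformly with their depth. Writing $p_m/q_m=[0;k_1,\dots,k_m]$ for the continued–fraction convergent, the two endpoints $\tau_{k_1}\circ\cdots\circ\tau_{k_m}(0)$ and $\tau_{k_1}\circ\cdots\circ\tau_{k_m}(1)$ of $J(k_1,\dots,k_m)$ are $p_m/q_m$ and $(p_m+p_{m-1})/(q_m+q_{m-1})$, so $|J(k_1,\dots,k_m)|=\dfrac{1}{q_m(q_m+q_{m-1})}\le q_m^{-2}$; since $q_m$ grows at least as fast as the Fibonacci numbers, $\sup\{|J(k_1,\dots,k_m)|:k_1,\dots,k_m\in\N\}\to 0$ as $m\to\infty$. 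By Lemma \ref{lem folklore} together with (\ref{eq F maps cylinder sets}), for every irrational $x\in(0,1)$ with continued–fraction expansion $[0;x_1,x_2,\dots]$ one has $x\in J(x_1,\dots,x_m)$ for all $m$, and $\bigcap_m J(x_1,\dots,x_m)=\{x\}$; hence each point of $X:=(0,1)\setminus\mathbb Q$ lies in cylinder intervals of arbitrarily small diameter. Consequently, for any open $U\subseteq(0,1)$ the set $U_0:=\bigcup\{J(k_1,\dots,k_m):J(k_1,\dots,k_m)\subseteq U\}$ is a \emph{countable} union of cylinder intervals, hence lies in $\mathcal A$, satisfies $U_0\subseteq U$, and $U_0\supseteq U\cap X$. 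Thus every open set — and therefore every Borel set — coincides, off a subset of $\mathbb Q\cap(0,1)$, with a member of $\mathcal A$; equivalently, the trace of $\mathcal A$ on $X$ is exactly $\B(X)$. Since $\mathbb Q\cap(0,1)$ is countable, and hence null for Lebesgue measure and for every continuous (non-atomic) measure on $(0,1)$, this is precisely the statement required for the study of continuous $\sigma$-invariant measures.

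The step I expect to be delicate is exactly the treatment of the rational ``division'' points: the points $1/k$, $k\ge 2$, belong to \emph{no} cylinder interval (each has the ``all-zero signature'' with respect to the generating family), so literally they all lie in a single atom $\{1/k:k\ge 2\}$ of $\mathcal A$ and $\mathcal A\subsetneq\B((0,1))$ strictly. In keeping with the $\bmod\,0$ convention used throughout the book, and with the Remark preceding the Corollary, I would phrase the conclusion as: the cylinder intervals generate $\B((0,1))$ up to a countable (Lebesgue-null) set. Alternatively — and in this form the statement is literally correct — one records that $F:\Omega\to X$ is a Borel isomorphism carrying the cylinder sets of $\Omega$ to the sets $J(k_1,\dots,k_m)\cap X$, so these generate $\B(X)$ verbatim, and then observes that $\B((0,1))$ differs from this sigma-algebra only on the countable set $\mathbb Q\cap(0,1)$.
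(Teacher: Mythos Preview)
Your proof is correct and follows essentially the same route as the paper's: the key ingredients are that the cylinder intervals have diameters tending to zero and separate points, which the paper states in a single sentence while you supply the explicit continued-fraction estimate $|J(k_1,\dots,k_m)|\le q_m^{-2}$ and the open-set approximation argument. Your careful discussion of the rational endpoints is well taken and indeed more honest than the paper's version---the literal statement fails on $\mathbb Q\cap(0,1)$, and the intended reading is exactly the $\bmod\,0$ one you give, consistent with the Remark immediately preceding the Corollary where the authors declare they work on $X=(0,1)\setminus\mathbb Q$.
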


\begin{proof} This result obviously follows from the facts that the 
length of subintervals $(\tau_{k_1}\circ \cdots \circ \tau_{k_m}(0), \ 
\tau_{k_1}\circ \cdots \circ \tau_{k_m}(1))$ tends to zero as $m 
\to \infty$, and they separate points in $(0,1)$ because every such 
subinterval is the image of a cylinder set in $\Omega$.
\end{proof}

\begin{theorem}\label{thm measure mu_pi}
For any probability distribution $\pi = (p_k : k \in \N)$, there is a 
unique $\sigma$-invariant measure $\mu_\pi$ on $(0,1)$ such that
$$
\int_0^1 f \; d\mu_\pi = \sum_{k=1}^\infty p_k \int_0^1 f \circ 
\tau_k \; d\mu_\pi,
$$
or equivalently, $\mu_\pi$ is the IFS measure defined by $\pi$ and 
$\{\tau_k\}$ such that
$$
\mu_\pi = \sum_{k=1}^\infty p_k \mu_\pi\circ \tau_k^{-1}.
$$
Conversely, if $\mu$ is an IFS measure on $(0,1)$ with respect to the 
maps $(\tau_k : k \in \N)$, then there exists a product measure $
\mathbb P = \mathbb P_\mu$ on $\Omega$ such that $\mu = 
\mathbb P\circ F^{-1}$.
\end{theorem}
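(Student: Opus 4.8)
The plan is to build the measure $\mu_\pi$ as the pushforward under $F$ of the product measure $\mathbb P = \pi \times \pi \times \cdots$ on $\Omega$, and then verify that this pushforward satisfies the IFS fixed-point equation; uniqueness will come from the known fact (quoted in the paragraph following Definition of IFS measure) that a measure satisfying an IFS identity with a contractive system is uniquely determined. Concretely, first I would set $\mu_\pi := \mathbb P \circ F^{-1}$, a Borel probability measure on $(0,1)$ (it is supported on $X = (0,1)\setminus\mathbb Q$, which is a full set for every continuous measure, so this is harmless). Then I would use Lemma \ref{lem folklore}, which gives $F\circ S = \sigma\circ F$ and $F\circ T_k = \tau_k\circ F$. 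From $F\circ T_k = \tau_k\circ F$ one gets, for any Borel set $A\subset(0,1)$,
$$
(\mu_\pi\circ\tau_k^{-1})(A) = \mathbb P(F^{-1}\tau_k^{-1}(A)) = \mathbb P(T_k^{-1}F^{-1}(A)),
$$
and since $T_k^{-1}(C) = S(C\cap C(k))$ for cylinder-type sets, combined with the defining property $\mathbb P(C(k)\cap T_k(B)) = p_k\,\mathbb P(B)$ of the product measure, this yields $p_k(\mu_\pi\circ\tau_k^{-1})(A) = \mathbb P(C(k)\cap F^{-1}(A))$. Summing over $k$ and using that $\{C(k):k\in\N\}$ partitions $\Omega$ gives $\sum_k p_k\,\mu_\pi\circ\tau_k^{-1} = \mathbb P\circ F^{-1} = \mu_\pi$, which is exactly the IFS identity; the equivalent integral form $\int f\,d\mu_\pi = \sum_k p_k\int f\circ\tau_k\,d\mu_\pi$ follows by the change-of-variables formula for pushforwards. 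Then $\sigma$-invariance of $\mu_\pi$ is immediate from Lemma \ref{lem ISF measures for alpha} (or directly from $F\circ S=\sigma\circ F$ together with $\mathbb P\circ S^{-1}=\mathbb P$).

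For uniqueness, I would invoke the statement recorded right after the definition of an IFS measure: a measure satisfying $\mu = \sum_k p_k\,\mu\circ\beta_k^{-1}$ for an iterated function system of contractions is uniquely determined (this is the Hutchinson fixed-point theorem in the probabilistic Markov-operator formulation; the maps $\tau_k(x) = 1/(k+x)$ are strict contractions on $(0,1)$, uniformly for $k\ge 1$, since $|\tau_k'(x)| = (k+x)^{-2}\le 1$ with strict inequality on the relevant scale, and even $k=1$ forces $|\tau_1'|\le 1/(1+x)^2 < 1$ away from $x=0$; more robustly, the $m$-fold compositions contract by \eqref{eq composition of tau maps}, whose images have length tending to $0$). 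Hence $\mu_\pi$ is the only $\sigma$-invariant measure satisfying the IFS identity for the data $(\pi,\{\tau_k\})$.

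For the converse, suppose $\mu$ is an IFS measure on $(0,1)$ for the maps $(\tau_k)$, i.e.\ $\mu = \sum_k p_k\,\mu\circ\tau_k^{-1}$ for some probability distribution $\pi=(p_k)$. By Lemma \ref{lem first props of IFS measures}(1) we already have $\mu(J_k) = p_k$, so $\pi$ is read off from $\mu$. Define $\mathbb P := \mu\circ F^{-1}$ on $\Omega$... wait — $F$ maps $\Omega$ onto $X$, so I want $\mathbb P$ such that $\mu = \mathbb P\circ F^{-1}$, i.e.\ $\mathbb P = \mu\circ F$ in the sense of the pullback along the bijection $F:\Omega\to X$ (which is a Borel isomorphism by the Remark following the definition of $F$, since $F$ is a bijection between $\Omega$ and the irrationals in $(0,1)$ and intertwines the Borel structures via \eqref{eq F maps cylinder sets}). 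I would then check $\mathbb P$ is a product measure by computing it on cylinders: iterating the IFS identity gives
$$
\mu = \sum_{k_1,\dots,k_m} p_{k_1}\cdots p_{k_m}\,\mu\circ(\tau_{k_1}\circ\cdots\circ\tau_{k_m})^{-1},
$$
so $\mu\big(F(C(k_1,\dots,k_m))\big) = p_{k_1}\cdots p_{k_m}$ by \eqref{eq F maps cylinder sets} (using that the images of distinct length-$m$ cylinders are essentially disjoint and each $\tau_{k_1}\circ\cdots\circ\tau_{k_m}$ maps $(0,1)$ onto its image injectively, so the only surviving term is $(k_1,\dots,k_m)$); hence $\mathbb P(C(k_1,\dots,k_m)) = p_{k_1}\cdots p_{k_m}$, which characterizes the product measure $\pi\times\pi\times\cdots$. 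Since the cylinder images generate the Borel sigma-algebra of $(0,1)$ (the Corollary just above), this determines $\mathbb P$ on all of $\Omega$ and gives $\mu = \mathbb P\circ F^{-1}$ with $\mathbb P = \mathbb P_\mu$ a product measure.

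The main obstacle I anticipate is bookkeeping around the map $F$: it is only a bijection between $\Omega$ and the \emph{irrational} points of $(0,1)$, and the inverse branches $\tau_k$ and shift $S$ must be matched carefully (the relation $T_k^{-1}(C) = S(C\cap C(k))$ and the near-disjointness of cylinder images need to be handled modulo the rationals / modulo $\mu$-null sets). All of this is legitimate because every measure under consideration is non-atomic, hence assigns zero mass to the countable set $\mathbb Q$, so throughout one works modulo null sets as permitted by the $\bmod\ 0$ convention; I would state this once and then suppress it. The genuinely substantive input is the Hutchinson-type uniqueness for IFS measures, which we are allowed to assume from the text.
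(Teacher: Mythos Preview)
Your proposal is correct and follows essentially the same route as the paper: construct $\mu_\pi$ as the pushforward $\mathbb P_\pi\circ F^{-1}$ of the product measure, verify the IFS identity via the intertwining relations in Lemma~\ref{lem folklore}, and handle the converse by checking agreement on cylinder images $F(C(k_1,\dots,k_m))$. The only cosmetic difference is that the paper carries out the IFS verification at the level of integrals (writing $\varphi=f\circ F^{-1}$ and using $\varphi(k\omega)=f(\tau_k x)$), while you do the equivalent set-theoretic computation via $p_k(\mu_\pi\circ\tau_k^{-1})(A)=\mathbb P(C(k)\cap F^{-1}(A))$; and for the converse the paper \emph{defines} $\mathbb P$ as the product measure and checks it matches $\mu$ on cylinders, whereas you pull $\mu$ back along $F$ and then identify the pullback as a product---same content, different order. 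Your extra care about $F$ being a bijection only onto the irrationals and about the non-uniform contraction of $\tau_1$ near $x=0$ is appropriate (the paper, like you, simply quotes uniqueness of the IFS measure as a known fact).
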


\begin{proof}
 Fix any probability distribution $\pi$ and consider the stationary 
product measure
$$
\mathbb P_\pi = \pi \times \pi \times \cdots
$$
on $\Omega$. The measure $\mathbb P$ is first determined on 
cylinder sets, and then it is extended by Kolmogorov consistency to all 
Borel sets in $\Omega$. In particular, we observe that $\mathbb P_\pi 
(\Omega_k) = p_k$.

Next, we set $\mu_\pi := \mathbb P_\pi \circ F^{-1}$, i.e., $\mu_
\pi(A) = \mathbb P_\pi(F^{-1}(A)), \forall A \in \B(0,1)$. It defines a 
Borel probability measure on $X$.

Let  $\varphi$ be any measurable function on $\Omega$. It follows 
from the above definitions that the following relation holds:
\begin{eqnarray*}
\int_\Omega \varphi(\omega) \; d\mathbb P_\pi(\omega)  &=& 
\sum_{k=1}^\infty \int_{\Omega_k} \varphi(\omega)\; d\mathbb P_
\pi(\omega) \\
   &=& \sum_{k=1}^\infty p_k \int_{\Omega} \varphi(k\omega')\; d
   \mathbb P_\pi(\omega').
\end{eqnarray*}
We used in this calculation the fact that $d\mathbb P_\pi(k\omega') = 
p_kd\mathbb P_\pi(\omega')$.

Since $F^{-1}$ is a one-to-one map from $X$ onto $\Omega$, we see 
that any measurable function $f$ on $X$ is represented as $f = 
\varphi \circ F^{-1}$.
By Lemma \ref{lem folklore}, we obtain that
$$
\varphi(k \omega) = \varphi (k F^{-1}x) = f(\tau_k(x)).
$$
Therefore, we have
\begin{eqnarray*}
\int_0^1 f \; d\mu   &=& \int_\Omega \varphi(\omega) \; d\mathbb 
P_\pi(\omega)\\
\\
   &=& \sum_{k=1}^\infty p_k \int_{\Omega} \varphi(k\omega')\; d
   \mathbb P_\pi(\omega') \\
   \\
   &=& \sum_{k=1}^\infty p_k \int_0^1 f \circ \tau_k\; d\mu.
\end{eqnarray*}
Hence, this  shows that $\mu_\pi$ is an IFS measure. By Lemma 
\ref{lem ISF measures for sigma}, $\mu_\pi$ is $\sigma$-invariant, 
and this fact completes the proof.

In order to prove the converse statement, we begin with an IFS 
measure $\mu = \sum_k p_k \mu\circ \tau_k^{-1}$ and define the 
product measure $\mathbb P$ by setting $\mathbb P = \pi \times \pi 
\times \cdots $ where $\pi = (p_1, p_2, ...)$ as in the definition of $
\mu$. Then, for any $m\in \N$ and $k_1, ... , k_m \in \Z_+$, we have
\be\label{eq measure cylinder sets}
\mathbb P(C(k_1, ... , k_m)) = \mu(\tau_{k_1}\circ\cdots \circ
\tau_{k_m}(0, 1)) = p_1 \cdots p_{k_m}.
\ee
It follows from (\ref{eq measure cylinder sets}) that there is a one-to-
one correspondence between IFS measures on $(0,1)$ and product 
measures on $\Omega$.

\end{proof}

We summarize the previous discussions in the following corollary.

\begin{corollary}\label{cor equivalence to IFS property} Let $\sigma$ 
be the Gauss map, $\mu$ a $\sigma$-invariant measure. The following 
are equivalent:

(i) $\mu$ is an IFS measure, $\mu = \sum_{k=1}^\infty p_k \mu\circ 
\tau_k^{-1}$;

(ii) $\mu = \mathbb P_\mu \circ F^{-1}$ for some product measure $
\mathbb P$ on $\Omega$;

(iii)
\begin{equation*}
\setlength{\jot}{10pt}
\begin{aligned}
  \mathbb P_\mu(C(k_1, ... , C_{k_m})) &= \mu(\tau_{k_1}\circ 
  \cdots \circ \tau_{k_m} (X)) \\
     &= \mu(\tau_{k_1}(0,1)) \ \cdots\  \mu(\tau_{k_m}(0,1))\\
   & = p_{k_1} \cdots p_{k_m},\ \qquad \ \ \ \forall k,m \in \N.
\end{aligned}   
    \end{equation*}
\end{corollary}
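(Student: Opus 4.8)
The plan is to prove Corollary \ref{cor equivalence to IFS property} by chaining together results already established, since each implication is essentially a reformulation of something proved above. First, the equivalence of (i) and (ii) is precisely the content of Theorem \ref{thm measure mu_pi}: the forward direction shows that for a probability distribution $\pi$, the measure $\mu_\pi := \mathbb P_\pi \circ F^{-1}$ is an IFS measure, while the converse direction of that theorem shows that any IFS measure $\mu$ arises as $\mathbb P_\mu \circ F^{-1}$ for the product measure $\mathbb P_\mu = \pi \times \pi \times \cdots$ with $\pi = (p_k)$ the weights appearing in the IFS relation. So I would simply cite Theorem \ref{thm measure mu_pi} and note that the product measure there is canonically determined by $\mu$ via the formula $p_k = \mu(\tau_k(0,1))$, which follows from Lemma \ref{lem first props of IFS measures}(1).

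Next I would show (ii) $\Longleftrightarrow$ (iii). Given (ii), the value of $\mathbb P_\mu$ on a cylinder set $C(k_1,\dots,k_m)$ is computed by pulling back through $F$: by relation (\ref{eq F maps cylinder sets}), $F(C(k_1,\dots,k_m)) = (\tau_{k_1}\circ\cdots\circ\tau_{k_m}(0),\ \tau_{k_1}\circ\cdots\circ\tau_{k_m}(1))$, which I abbreviate $\tau_{k_1}\circ\cdots\circ\tau_{k_m}(X)$ modulo the removed rationals. Hence $\mathbb P_\mu(C(k_1,\dots,k_m)) = \mu(\tau_{k_1}\circ\cdots\circ\tau_{k_m}(X))$. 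On the other hand, since $\mathbb P_\mu$ is the product measure $\pi\times\pi\times\cdots$, its value on that cylinder is $p_{k_1}\cdots p_{k_m}$, and each $p_{k_j} = \mu(\tau_{k_j}(0,1))$ by Lemma \ref{lem first props of IFS measures}(1). This gives all three expressions in (iii). Conversely, if (iii) holds, then the product measure $\pi \times \pi \times \cdots$ and the pushforward-compatible measure $\mathbb P_\mu$ agree on all cylinder sets, hence agree everywhere by the uniqueness part of Kolmogorov consistency; combined with the fact (Corollary after Lemma \ref{lem folklore}) that the intervals $\tau_{k_1}\circ\cdots\circ\tau_{k_m}(X)$ generate the Borel sigma-algebra of $(0,1)$, one recovers $\mu = \mathbb P_\mu \circ F^{-1}$, i.e., (ii).

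I do not expect a genuine obstacle here; the corollary is a packaging of Theorem \ref{thm measure mu_pi} together with the dictionary between cylinder sets and iterated-branch intervals. The only point requiring a small amount of care is the handling of the rational points: $F$ is a bijection only between $\Omega$ and the irrationals $X = (0,1)\setminus\mathbb Q$, so strictly speaking all the set identities hold after deleting a countable (hence $\mu$-null, since $\mu$ is non-atomic) set. I would insert one sentence observing that since every $\sigma$-invariant measure under consideration is continuous, this alteration does not affect any of the measure identities, exactly as remarked just before Lemma \ref{lem folklore}. With that caveat in place, the proof is a three-line argument, and I would present it as: (i)$\Leftrightarrow$(ii) by Theorem \ref{thm measure mu_pi}; (ii)$\Rightarrow$(iii) by (\ref{eq F maps cylinder sets}) and the product structure; (iii)$\Rightarrow$(ii) by Kolmogorov uniqueness on the generating algebra of cylinder images.
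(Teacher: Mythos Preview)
Your proposal is correct and matches the paper's approach exactly: the paper presents this corollary with no explicit proof, merely stating ``We summarize the previous discussions in the following corollary,'' so the content is precisely the packaging of Theorem \ref{thm measure mu_pi} together with relation (\ref{eq F maps cylinder sets}) and (\ref{eq measure cylinder sets}) that you describe. Your write-up is in fact more detailed than what appears in the paper, and your remark about the null set of rationals is a sensible clarification.
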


\begin{remark}
Suppose that $\pi, \pi'$ are two distinct probability distributions. Then 
the corresponding stationary measures $\mathbb P_\pi$ and $
\mathbb P_{\pi'}$ are mutually singular  by the Kakutani theorem 
\cite{Kakutani1948} for $\pi \neq \pi'$. But it would be interesting 
to find out whether the map 
$F : \Omega \to X$ preserve this property. 
Is it possible to have two $\sigma$-invariant measures $\mu_\pi = 
\mathbb P_\pi\circ F^{-1}$ and $\mu_{\pi'} = \mathbb P_{\pi'}\circ 
F^{-1}$ which are  both equivalent to the Lebesgue measure?
\end{remark}

It turns out that there are $\sigma$-invariant measures on $(0,1)$ 
which are not  generated by product measure on $\Omega$.

\begin{corollary}\label{cor measure mu_0} Let $\sigma$ be the Gauss 
map and let $\mu_0$ be the probability $\sigma$-invariant measure 
on $(0,1)$ given by the density
$$
d\mu_0(x) = (\ln 2)^{-1} \frac{dx}{x+1}
$$
where $dx$ is the Lebesgue measure on $(0,1)$. Then $\mu_0$ is 
not an IFS measure.
\end{corollary}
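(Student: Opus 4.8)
The statement is that the Gauss-invariant measure $\mu_0$ with density $(\ln 2)^{-1}(x+1)^{-1}$ is not an IFS measure, i.e.\ it cannot be written as $\mu_0 = \sum_{k=1}^\infty p_k\,\mu_0\circ\tau_k^{-1}$ for any probability vector $\pi=(p_k)$. The plan is to argue by contradiction: suppose $\mu_0$ \emph{were} an IFS measure for some $\pi$. By Theorem~\ref{thm existence of p_k} (applied with $\alpha=\sigma$, $J_k=(\frac1{k+1},\frac1k)$) the weights would be forced to be $p_k=\mu_0(J_k)$, so there is no freedom: the only candidate distribution is $p_k = \mu_0((\tfrac1{k+1},\tfrac1k)) = (\ln 2)^{-1}\big(\ln(1+\tfrac1k) - \ln(1+\tfrac1{k+1})\big) = (\ln 2)^{-1}\ln\!\big(\tfrac{(k+1)^2}{k(k+2)}\big)$. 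Then I would derive a contradiction by testing the putative identity $\mu_0 = \sum_k p_k\,\mu_0\circ\tau_k^{-1}$ against a well-chosen family of test functions or sets and showing the two sides disagree.

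\textbf{Key steps.} First, record the forced weights $p_k=\mu_0(J_k)$ from Theorem~\ref{thm existence of p_k}, and note by Corollary~\ref{cor equivalence to IFS property} that $\mu_0$ being an IFS measure is equivalent to $\mu_0 = \mathbb{P}_\pi\circ F^{-1}$ for the product measure $\mathbb{P}_\pi = \pi\times\pi\times\cdots$, equivalently to the ``multiplicativity on cylinders'' relation $\mu_0(\tau_{k_1}\circ\cdots\circ\tau_{k_m}(0,1)) = p_{k_1}\cdots p_{k_m}$ for all $m$ and all $k_1,\dots,k_m$. Second, I would exploit the fact that an IFS measure restricted and rescaled to a sub-cylinder reproduces itself: from Lemma~\ref{lem first props of IFS measures}(2), $\mu_0(A\cap J_k) = \mu_0(J_k)\,\mu_0(\tau_k^{-1}(A))$, i.e.\ $(\mu_0|_{J_k})\circ\tau_k = \mu_0(J_k)\,\mu_0$ after transporting by the diffeomorphism $\tau_k$. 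Now compute both sides explicitly using the density $(x+1)^{-1}\,dx$: the push-forward $\mu_0$ under $\tau_k(x)=\frac1{k+x}$ of the set $(0,1)$ has, on $J_k$, density proportional to $(x+1)^{-1}$ by the change-of-variables formula applied to $\tau_k^{-1}(y)=\frac1y-k$; but $\mu_0|_{J_k}$ also has density proportional to $(x+1)^{-1}$. These two statements force a pointwise identity of densities on $J_k$ up to the constant $p_k$; carrying out the Jacobian computation, $\tau_k^{-1}$ has derivative $-y^{-2}$, so $\mu_0\circ\tau_k^{-1}$ has density $(\ln2)^{-1}\,\frac{1}{(\tfrac1y - k)+1}\cdot\frac{1}{y^2} = (\ln2)^{-1}\,\frac{1}{y(1+(1-k)y)}\cdot\frac1y$ — and this is \emph{not} a constant multiple of $(y+1)^{-1}$ on $J_k$ for $k\ge 2$ (the denominators are different degree-2 polynomials with different roots). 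Hence the self-similarity identity fails, contradicting the assumption. Third, clean up the $k=1$ case or the normalization constant separately if needed, and conclude.

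\textbf{Expected main obstacle.} The main work is the explicit density computation showing $d(\mu_0\circ\tau_k^{-1})/dx$ is \emph{not} a scalar multiple of $d\mu_0/dx = (\ln2)^{-1}(x+1)^{-1}$ on $J_k$. One must be careful with the change-of-variables Jacobian (the inverse of $\tau_k$, its derivative, and the resulting rational function), and then observe that two Möbius-type densities agree up to a constant iff the underlying linear-fractional maps are equal — which they are not here, since $\tau_k$ is a genuine Möbius transformation of degree one not fixing the density $(x+1)^{-1}$. An alternative, possibly cleaner route to the same contradiction: use Theorem~\ref{thm measure mu_pi} to note that an IFS measure $\mu_\pi$ equals $\mathbb{P}_\pi\circ F^{-1}$ and hence makes the digits $a_1,a_2,\dots$ of the continued fraction expansion \emph{independent} random variables under $\mu_\pi$; but under the Gauss measure $\mu_0$ the continued-fraction digits are famously \emph{not} independent (e.g.\ $\mathrm{Prob}(a_1=j,a_2=k)\ne \mathrm{Prob}(a_1=j)\mathrm{Prob}(a_2=k)$, which one checks by computing $\mu_0(\tau_j\tau_k(0,1))$ versus $\mu_0(J_j)\mu_0(J_k)$ directly from the density and comparing). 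Either way the crux is one finite computation with the Gauss density, and I expect that to be routine but the place where sign/Jacobian errors are most likely.
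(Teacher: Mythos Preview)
Your proposal is correct and in fact contains two valid routes. Your \emph{alternative} route---showing that the continued-fraction digits are not independent under $\mu_0$ by checking $\mu_0(\tau_j\tau_k(0,1)) \neq \mu_0(J_j)\mu_0(J_k)$ for some specific $j,k$---is exactly what the paper does: it invokes Corollary~\ref{cor equivalence to IFS property}(iii), takes $j=k=1$, and computes $\mu_0(\tau_1\tau_1(0,1)) = (\ln 2)^{-1}\ln(10/9)$ versus $p_1^2 = (\ln 2)^{-2}(\ln(4/3))^2$, which are unequal.

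Your \emph{main} route is genuinely different: you test the IFS relation $\mu_0|_{J_k} = p_k\,(\mu_0\circ\tau_k^{-1})$ directly at the level of densities. This is more elementary in that it does not pass through the product-measure characterization of Corollary~\ref{cor equivalence to IFS property}; it simply observes that the pushforward $\mu_0\circ\tau_k^{-1}$ has density $(\ln 2)^{-1}\bigl(y(1+(1-k)y)\bigr)^{-1}$ on $J_k$, which is never a constant multiple of $(\ln 2)^{-1}(y+1)^{-1}$ (for $k=1$ the ratio is $y/(y+1)$, for $k\ge 2$ the two rational functions have different pole structure). Note that your displayed simplification carries an extra factor of $1/y$; the correct density is $(\ln 2)^{-1}\bigl(y(1+(1-k)y)\bigr)^{-1}$, not that times $1/y$. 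Once this slip is fixed, the argument goes through for every $k$, including $k=1$. The paper's route trades the change-of-variables computation for a single numerical comparison of two logarithms; yours avoids the second-level cylinder entirely but requires handling a Jacobian carefully.
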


\begin{proof} We first can directly calculate the measures of the 
intervals $J_k$ on which the Gauss map $\sigma$ is one-to-one:
\begin{eqnarray}\label{eq mu0 measure of J_k}
\nonumber
 \mu_0(J_k) &=& (\ln 2)^{-1} \int_{(k+1)^{-1}}^{k^{-1}} \frac{1}{x
 +1}\; dx \\
   &=& (\ln 2)^{-1}\ln \left( 1 + \frac{1}{k(k+2)}\right).
\end{eqnarray}

In order to prove the formulated statement, we use Corollary \ref{cor 
equivalence to IFS property}. Suppose for contrary that $\mu_0$ is an 
IFS measure. This means that by Theorem \ref{thm measure mu_pi} 
there exists a product measure $\mathbb P_\pi$ such that  $\mu_0  
= \mathbb P_\pi \circ F^{-1}$ for some probability distribution $\pi = 
(p_1, p_2, ...)$. By Corollary \ref{cor equivalence to IFS property} the 
measure $\mu_0$ will then satisfy the property
\be\label{eq property of mu0}
\mu_0(\tau_1\circ\tau_1 (0,1)) = p_1^2 = \mu_0(\tau_1(0,1))^2.
\ee
When we calculate the measures of $\tau_1\circ\tau_1 (0,1)$ and $
\tau_1(0,1)$, we see that
$$
\mu_0(\tau_1(0,1))^2 = \frac{1}{(\ln 2)^2}(\ln (4/3))^2
$$
which is not equal to
$$
\mu_0(\tau_1\circ\tau_1 (0,1)) = \frac{1}{\ln 2} \ln (10/9).
$$
This is a contradiction that shows that $\mu_0$ is not an IFS measure.
\end{proof}

Readers coming from other but related areas, may find the following papers/
books useful for background \cite{ArslanMilousheva2016, Machado2016, 
JarosMaslankeStrobin2016, JiLiuRi2016, GhaneSarizadeh2016, 
HorbaczSleczka2016, YeZouLu2013, SzarekUrbanskiZdunik2013,
JorgensenTian2015, JorgensenPedersenTian2015}.
 
\newpage

%Let $\mu_0$ be the $\sigma$-invariant measure defined in Corollary 
%\ref{cor measure mu_0}. Take the probability distribution $q = (q_1, 
%q_2, .... )$ where
%$$
%q_k = \mu_0(J_k) =  (\ln 2)^{-1}\ln \left( 1 + \frac{1}{k(k+2)}
%\right).
%$$
%Therefore we can consider the product measure $\mathbb P_q = q 
%\times q\times \cdots $ on the space $\Omega$. This allows us to 
%define  an IFS measure $\nu = \nu_q$ by setting $\nu = \mathbb 
%P_q \circ F^{-1}$.

%\begin{proposition}\label{prop IFS equivalent to mu_0}
%The IFS measure $\nu$ is equivalent to $\mu_0$.
%\end{proposition}

%\begin{proof} We will show that $\mu_0(A) = 0\ 
%\Longleftrightarrow \ 
%\nu(A) = 0$. For any Borel set $A$, we have
%$$
%\mu_0 (A) = \sum_{k=1}^\infty \mu_0(A \cap J_k).
%$$
%Then $\mu_0(A) =0 \ \Longleftrightarrow \ \mu_0(A \cap J_k) 
%=0$. On the other hand,
%\begin{eqnarray*}
 %\nu(A)  &=& \int_0^1 \chi_A \; d\nu \\
% &=&  \sum_{k=1}^\infty q_k \int_0^1 \chi_A\circ \tau_k\; d\nu\\
  %  &=& \sum_{k=1}^\infty q_k \int_0^1 \chi_{\tau_k^{-1}(A)}\; d
  %  \nu \\
   %& = & \sum_{k=1}^\infty q_k \nu(\tau_k^{-1}(A))\\
  % & = & \sum_{k=1}^\infty q_k \nu(\tau_k^{-1}(A \cap J_k))\\
%\end{eqnarray*}
%It remains to prove that the measure $\nu\circ\tau_k^{-1}$ is 
%equivalent to
%the Lebesgue measure $dx$. Indeed, we see that, for any integrable 
%function $f$,
%\begin{eqnarray*}
 % \int_0^1 f (\tau_k x)\; d\nu &=&  \\
  % &=&  \\
  % &=& 
%\end{eqnarray*}

%This proves that $\mu(A) =0 $ if and only if $\nu(A) =0$. \tcr{[Shall 
%we assume that $\nu$ is non-atomic here? Frankly speaking I don't 
%understand the proof.]}

%\end{proof}

\section{Iterated function systems and transfer operators}
\label{sect Example IFS}

\subsection{Iterated function systems and measures} In this section, 
we will discuss an application of general results about transfer 
operators, that were proved in previous sections, to a family of 
examples based on the notion of \emph{ iterated function system 
(IFS)} \cite{Hutchinson1995, HutchinsonRueschendorf1998, Mauldin1998}.

We recall that if an endomorphism $\sigma$ is a finite-to-one map of a
a standard Borel space $(X, \B)$ onto itself, then there exists a family 
of one-to-one maps $\{\tau_i\}_{i=1}^n$  such that $\tau_i : X \to X
$ and $\sigma \circ \tau_i = \mbox{id}_{X}, \forall i$. The maps $
\tau_i$ are called the \emph{inverse branches} \index{inverse branch}
 for $\sigma$. The 
collection of maps $(\tau_i : 1 \leq i \leq N)$ represents  an example 
of iterated function systems. This is a motivating example for the 
concept of iterated function systems which, in general, not need to 
have an endomorphism $\sigma$ but is based on the one-to-one  maps 
$\tau_i : X \to X$ only.

Thus, an IFS consists of a space $X$ and injective maps $\{\tau_i : i 
\in I\}$ of $X$ into itself. The orbit of any point $x\in X$ is formed by 
$(\tau_{i_1} \circ \cdots \circ \tau_{i_k}(x) : i_1, ..., i_k \in I, k \in 
\N)$. The study of properties of an IFS assumes that the underlying 
space $X$ is a complete metric space (or compact space) and the 
maps $\tau_i$ are continuous (or even contractions).

\begin{remark}
We discuss here the case of \emph{finite} iterated function systems 
only. The theory of  \emph{infinite} iterated function systems is more 
complicated and  require additional assumptions (see, for example, the 
expository article \cite{Mauldin1998}). It worth recalling that we have 
already dealt with infinite IFS in Section \ref{sect Gauss} when we 
discussed piecewise monotone maps and the Gauss map.
\end{remark}

Let $p = (p_i : i =1,...,N)$ be a strictly positive probability vector, i.e., 
$\sum_{i=1}^N p_i =1$ and $p_i >0$ for all $i$. A measure $\mu_p$ 
on a Borel space $(X, \B)$ is called an \emph{IFS measure}  for the 
iterated function system $(\tau_i : i =1,...,N)$ if
\be\label{eq IFS measure def}
\mu_p =  \sum_{i=1}^N p_i\; \mu_p\circ \tau_i^{-1},
\ee
or, equivalently,
$$
\int_X f(x)\; d\mu_p(x)  =  \sum_{i=1}^N p_i \int_X f(\tau_i(x))\; d
\mu_p(x), \  \qquad f\in L^1(\mu_p).
$$
In particular, $p$ can be the uniformly distributed probability vector, 
$p_i = 1/N$. Then the corresponding measure $\nu_p$ satisfies the 
property
$$
\nu_p =  N^{-1}\sum_{i=1}^N \nu_p\circ \tau_i^{-1}.
$$

\begin{definition} \label{def operator for IFS} Let $(X; \tau_1, ... , 
\tau_n)$ be a finite iterated function system, and let $p = (p_i)$ be a 
positive probability vector.
Define a positive linear operator acting in the space of Borel functions:
\be\label{eq operator for IFS via W}
R(f)(x) = \sum_{i=1}^N p_i W(\tau_i x)f(\tau_i x)
\ee
where $W$ is a nonnegative Borel function (sometimes it is called a 
\textit{weight}).
If, for all $x$, one has
$$
\sum_{i =1}^N p_i W(\tau_i x) = 1,
$$
then $R$ is a normalized transfer operator in the sense that 
$R(\mathbf 1) = \mathbf 1$.
\end{definition}

To clarify our terminology, we note that $R$ is not, in general,  a 
transfer operator because the maps $(\tau_1, ... , \tau_N)$ do not 
define an endomorphism $\sigma$. But if the IFS $(X; \tau_1, ... , 
\tau_N)$ consists of inverse branches for a finite-to-one onto 
endomorphism $\sigma$, then the operator $R$ is a transfer operator 
related to $\sigma$: for $y_i = \tau_i x$, we have
$$
R(f)(x) = \sum_{y_i : \sigma(y_i) =x} p_{y_i} W(y_i)f(y_i).
$$
In some cases, it is convenient to modify $W$ by considering $\wt 
W(y) = p_y W(y)$.

It is not difficult to see that if $(X; \tau_1, ... , \tau_N)$ is an IFS, 
then the maps $(\tau_i)$ are the inverse branches for an 
endomorphism $\sigma$ if and only if the sets $J_i = \tau_i(X)$ have 
the properties:
$$
X = \bigcup_{i=1}^N J_i, \qquad \ \  J_i \cap J_k =\emptyset, \ \ i 
\neq k.
$$
Indeed, one can then define $\sigma (x) = \tau_i^{-1}(x), x \in J_i, 
1\leq i \leq N$.

If an IFS is generated by inverse branches of an endomorphism $\sigma
$, then we can add more useful relations. For any Borel set $A \subset 
X$, we see that
$$
\sigma^{-1}(A) = \bigcup_i \tau_i (A)
$$
and, more generally,
$$
\sigma^{-k}(A) = \bigcup_{\omega|_k} \tau_{\omega|_k} (A).
$$

As for abstract transfer operators, we define the notion of integrability 
of $R$: we say that $R$ is integrable with respect to a measure $\nu$ 
if $R( \mathbf 1) \in L^1(\nu)$. In this case, $R$ acts on the measure 
$\nu$, $\nu \mapsto \nu R$.
Then we can define the set $\mc L(R)$ of all measures on $(X, \B)$ 
such that $\nu P\ll \nu$.

\begin{lemma} Let $(X; \tau_1, ...., \tau_N)$ be an IFS, and let $p = 
(p_i)$ be a probability distribution on $\{1, ... , N\}$. Suppose that  $R
$ is the operator defined for the IFS by (\ref{eq operator for IFS via 
W}), and the measure $\mu_p$ 
satisfies  (\ref{eq IFS measure def}). Then $R$ is $\mu_p$-integrable 
if and only if $W\in L^1(\mu_p)$. Furthermore, $\mu_p$   belongs to 
$\mc L(R)$ and
$$
\frac{d\mu_p R}{d\mu_p} = W.
$$
\end{lemma}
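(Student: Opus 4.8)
The plan is to compute the measure $\mu_p R$ directly from its definition $(\mu_p R)(f) = \int_X R(f)\, d\mu_p$ and to recognize the answer as $W\, d\mu_p$, using the defining relation \eqref{eq IFS measure def} for the IFS measure $\mu_p$. First I would verify the integrability claim: since $R$ is a positive operator, monotonicity gives $R(\mathbf 1) \in L^1(\mu_p)$ if and only if $\int_X R(\mathbf 1)\, d\mu_p < \infty$; and plugging $f = \mathbf 1$ into \eqref{eq operator for IFS via W} yields $R(\mathbf 1)(x) = \sum_{i=1}^N p_i W(\tau_i x)$, so
$$
\int_X R(\mathbf 1)\, d\mu_p = \sum_{i=1}^N p_i \int_X W(\tau_i x)\, d\mu_p(x) = \sum_{i=1}^N p_i \int_X W\, d(\mu_p \circ \tau_i^{-1}) = \int_X W\, d\mu_p,
$$
where the last equality is precisely \eqref{eq IFS measure def} applied to the function $W$. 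This shows $R(\mathbf 1) \in L^1(\mu_p) \iff W \in L^1(\mu_p)$, and in particular that $\mu_p R$ is a finite measure when $W \in L^1(\mu_p)$.

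Next I would establish the absolute continuity and identify the Radon–Nikodym derivative in one stroke, by showing $(\mu_p R)(f) = \int_X fW\, d\mu_p$ for every bounded (or nonnegative) measurable $f$, which is the content of $\mu_p \in \mc L(R)$ together with $d(\mu_p R)/d\mu_p = W$. The computation is the same telescoping as above: for a Borel function $f$,
$$
(\mu_p R)(f) = \int_X \left( \sum_{i=1}^N p_i W(\tau_i x) f(\tau_i x) \right) d\mu_p(x) = \sum_{i=1}^N p_i \int_X (Wf)(\tau_i x)\, d\mu_p(x) = \sum_{i=1}^N p_i \int_X Wf\, d(\mu_p \circ \tau_i^{-1}),
$$
and applying \eqref{eq IFS measure def} to the function $Wf$ gives $\sum_{i=1}^N p_i \int_X Wf\, d(\mu_p\circ\tau_i^{-1}) = \int_X Wf\, d\mu_p$. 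Hence $(\mu_p R)(f) = \int_X fW\, d\mu_p$ for all $f$; taking $f = \chi_A$ shows $(\mu_p R)(A) = \int_A W\, d\mu_p$, so $\mu_p R \ll \mu_p$ with density $W$, i.e. $\mu_p \in \mc L(R)$ and $\frac{d\mu_p R}{d\mu_p} = W$.

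The argument is essentially a two-line manipulation, so there is no serious obstacle; the only points requiring a little care are (i) justifying the interchange of summation and integration — handled by the monotone convergence theorem for nonnegative $f$ and $W$, then extended by linearity — and (ii) being careful that \eqref{eq IFS measure def} is stated for $L^1(\mu_p)$ functions, so one should first argue for nonnegative simple (or bounded) functions and pass to the general case by the usual approximation, exactly as in the proof of Proposition \ref{prop measure lambda R}. I would also note that when the IFS arises from the inverse branches of an endomorphism $\sigma$ (so $R$ is a genuine transfer operator in the sense of Definition \ref{def transfer operator}), this recovers the earlier general formula $W = \frac{d(\lambda R)}{d\lambda}$ from Section \ref{sect L1 and L2}, with $\lambda = \mu_p$.
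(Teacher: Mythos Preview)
Your proof is correct and follows essentially the same approach as the paper: both compute $\int_X R(f)\,d\mu_p$ directly, change variables via the maps $\tau_i$, and invoke the IFS relation \eqref{eq IFS measure def} to obtain $\int_X fW\,d\mu_p$. Your version is simply more detailed about the integrability equivalence and the justification of the interchange of sum and integral, which the paper leaves implicit.
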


\begin{proof} We use (\ref{eq operator for IFS via W}) and show that
\begin{eqnarray*}
   \int_X R(f)(x)\; d\mu_p(x) &=& \int_X \sum_{i=1}^N p_i W(\tau_i 
   x) f(\tau_i x)\; d\mu_p(x) \\
  &=& \int_X fW (\sum_{i=1}^N p_i \; d\mu_p\circ \tau_i^{-1}) \\
   &=& \int_X fW \; d\mu_p.
\end{eqnarray*}
This calculation shows that the following  facts hold. Firstly, 
$R(\mathbf 1) $ is $\mu_p$-integrable if and only if $W \in 
L^1(\mu_p)$; secondly, $\mu_p \in \mc L(R)$; thirdly, the Radon-
Nikodym derivative of $\mu_p R$ with respect to $\mu_p$ is $W$.
\end{proof}

The question about the existence of an IFS measure for a given finite 
or infinite iterated function system system $(\tau_i : i \in I)$ is of 
extreme importance. We discuss here a general scheme that leads to 
IFS measures. We do not formulate rigorous statements; instead we 
describe the construction method. This approach works perfectly for 
many specific applications under some additional conditions on $X$ and 
maps $\tau_i$. For definiteness, we assume that $I 
= \{1, ... ,N\}$.

Let $\Omega$ be the product space:
$$
\Omega = \prod_{i =1}^\infty \{1,..., N\}.
$$
 Our goal is to define a map $F$ (a coding map) from  $\Omega$ to 
 $X$. In general, $F(\Omega) $ will be a subsets of $X$ called the 
 attractor of the IFS.

For any  infinite sequence $\omega = (\omega_1, \omega_2, ...) \in 
\Omega$, let $\omega|_n $ denote the finite truncation, i.e., $
\omega|_n$ is the finite word $(\omega_1, ... , \omega_n)$. Then, we 
can use this word $\omega|_n$  to define  a map $\tau_{\omega|_n} 
$ acting on $X$ by the formula:
$$
\tau_{\omega|_n} (x) := \tau_{\omega_1} \cdots \tau_{\omega_n}
(x), \qquad x \in X, \ \  \omega \in \Omega,\ n \in \N.
$$

It is said that $\Omega$ is an \emph{encoding space} \index{encoding space} if, for every $
\omega\in \Omega$,
\be\label{eq singleton}
F(\omega) = \bigcap_{n \geq 1}\tau_{\omega|_n}(X)
\ee
is a singleton. In other words, we have a well defined Borel map $F$ 
from $\Omega$ to $F(\Omega)$ where $x =F(\omega)$ is defined by 
(\ref{eq singleton}). It is worth noting that there are IFS such that 
$F(\Omega) = X$. One of such IFS will be discussed in this section 
below.

There are various sufficient conditions under which there exists a 
\textit{coding map} \index{coding map} $F :\Omega\to X$ for a given IFS.  
For instance, this is the 
case when each $\tau_i$ is a contraction and $X$ is a complete metric 
space. 

Next, we define the following maps on $\Omega$: the left shift $\wt
\sigma$ by setting
$$
\wt\sigma(\omega_1, \omega_2, ... ) = (\omega_2, \omega_3, ... ),
$$
and the inverse branches $\wt\tau_i$ of $\wt\sigma$ by setting
$$
\wt\tau_i (\omega_1, \omega_2, ...) = (i, \omega_1, \omega_2, ...), 
\ \  i = 1,..., N.
$$
Clearly,
$$
\wt \tau_i (\Omega) = C(i) = \{\omega \in \Omega : \omega_1 = i\},
$$
 and the space $\Omega $ is partitioned by the sets $C(i), i  =1,...,N$.

The following statement follows directly from the definitions.

\begin{lemma} \label{lem F intertwines} The map $F : \Omega \to X$
is a factor map, i.e.,
$$
F\circ \wt\tau_i = \tau_i \circ F, \ \ \ \qquad \ \ i = 1,..., N.
$$
\end{lemma}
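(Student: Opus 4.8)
The plan is to unwind the definitions of $F$, $\wt\tau_i$, and $\tau_i$ and verify the intertwining identity pointwise on $\Omega$. Fix $i \in \{1,\dots,N\}$ and an arbitrary point $\omega = (\omega_1,\omega_2,\dots) \in \Omega$. Then $\wt\tau_i(\omega) = (i,\omega_1,\omega_2,\dots)$, so that $F(\wt\tau_i(\omega))$ is, by \eqref{eq singleton}, the unique point of the decreasing intersection $\bigcap_{n\geq 1} \tau_{(i,\omega_1,\dots,\omega_{n-1})}(X)$. On the other hand, $\tau_i(F(\omega))$ is the image under $\tau_i$ of the unique point of $\bigcap_{n\geq 1}\tau_{(\omega_1,\dots,\omega_n)}(X)$.

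The core of the argument is the elementary observation that for the finite word $(i,\omega_1,\dots,\omega_{n-1})$ one has, by the very definition of $\tau_{\omega|_n}$ as a composition,
$$
\tau_{(i,\omega_1,\dots,\omega_{n-1})}(X) = \tau_i\bigl(\tau_{(\omega_1,\dots,\omega_{n-1})}(X)\bigr).
$$
Hence
$$
\bigcap_{n\geq 1}\tau_{(i,\omega_1,\dots,\omega_{n-1})}(X) = \bigcap_{n\geq 1}\tau_i\bigl(\tau_{(\omega_1,\dots,\omega_{n-1})}(X)\bigr) \supseteq \tau_i\Bigl(\bigcap_{n\geq 1}\tau_{(\omega_1,\dots,\omega_{n-1})}(X)\Bigr),
$$
and the left-hand side is a singleton by hypothesis (it equals $\{F(\wt\tau_i(\omega))\}$), while the set on the right is nonempty because it contains $\tau_i(F(\omega))$. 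A singleton containing a nonempty set equals that set, so $F(\wt\tau_i(\omega)) = \tau_i(F(\omega))$. Since $\omega$ and $i$ were arbitrary, this gives $F\circ\wt\tau_i = \tau_i\circ F$ for all $i$, which is precisely the claim. I would also remark that this shows $F$ is a factor map in the sense of the definition given earlier in Section \ref{sect Wold}, since measurability of $F$ is part of the setup in which the coding map is constructed.

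The only point requiring a little care — and the step I expect to be the mildest obstacle — is the set-theoretic manipulation of the nested intersections: one must be sure that pushing $\tau_i$ inside the infinite intersection is legitimate in the direction used, and that the equality (rather than mere inclusion) follows from the singleton hypothesis on the encoding space $\Omega$. Everything else is formal substitution. If one prefers, an even shorter route is available: test both sides against the defining property of $F$ directly, noting that $F(\wt\tau_i(\omega)) \in \tau_{(i,\omega|_{n-1})}(X) = \tau_i(\tau_{\omega|_{n-1}}(X))$ for every $n$, hence $\tau_i^{-1}(F(\wt\tau_i(\omega))) \in \tau_{\omega|_{n-1}}(X)$ for every $n$ (using injectivity of $\tau_i$), so $\tau_i^{-1}(F(\wt\tau_i(\omega))) = F(\omega)$, i.e. $F(\wt\tau_i(\omega)) = \tau_i(F(\omega))$. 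I would present whichever of these two versions reads more cleanly.
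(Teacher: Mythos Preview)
Your proof is correct and is precisely the unwinding the paper has in mind: the paper gives no explicit proof, stating only that the lemma ``follows directly from the definitions.'' Your pointwise verification via the identity $\tau_{(i,\omega_1,\dots,\omega_{n-1})}(X)=\tau_i\bigl(\tau_{\omega|_{n-1}}(X)\bigr)$ together with the singleton hypothesis is exactly the content behind that remark, and either of your two routes (the nested-intersection inclusion plus singleton, or the pullback via injectivity of $\tau_i$) is fine.
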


Let $p = (p_i : i =1,..., N)$ be a positive probability   vector. It defines 
the product measure $\mathbb P$ on $\Omega$,
$$
\mathbb P = p \times p \times \cdots.
$$
Firstly, $\mathbb P$ is defined on the algebra of cylinder sets $(C(i_1, 
... , i_m) : 1 \leq i_1, ... , i_m \leq N, m \in \N)$ by the formula
$$
\mathbb P(C(i_1, ... , i_m)) = p_{i_1} \cdots p_{i_m},
$$
and then $\mathbb P$ is extended  to the sigma-algebra of Borel sets 
on $\Omega$ by the standard procedure.

We observe that the maps $(\wt\tau_1, ... , \wt\tau_N)$ constitute 
an IFS on $\Omega$ such that $\mathbb P$ is an IFS measure:
\be\label{eq P is IFS}
\mathbb P = \sum_{i=1}^N p_i \; \mathbb P\circ \wt\tau_i^{-1}.
\ee

\begin{proposition} Suppose that $(X; \tau_1, ... , \tau_n)$ is an IFS 
that admits a coding map $F : \Omega \to X$. Let $p = (p_i)$ be  a 
probability vector generating the product measure $\mathbb P = p
\times p \times \cdots$. Then the measure
$$
\mu := \mathbb P\circ F^{-1}
$$
is an IFS measure satisfying
$$
\mu = \sum_{i=1}^N p_i \mu\circ \tau_i^{-1}.
$$
Moreover, if $F$ is continuous, then  $\mu$ has full support.
\end{proposition}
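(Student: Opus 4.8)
The plan is to read the self-similarity identity off Lemma~\ref{lem F intertwines} together with the fact, recorded in~(\ref{eq P is IFS}), that the product measure $\mathbb P$ is itself the IFS measure for the shift-branches $\wt\tau_i$ on $\Omega$. First I would check that $\mu:=\mathbb P\circ F^{-1}$ is a well-defined Borel probability measure on $(X,\B)$: $F$ is Borel, so $F^{-1}(A)\in\B(\Omega)$ for every $A\in\B$, and $\mu(X)=\mathbb P(\Omega)=1$. Each $\tau_i$ is injective and Borel, hence $\tau_i^{-1}(\B)\subset\B$, so $\mu\circ\tau_i^{-1}$ is again a Borel measure on $X$; thus both sides of the asserted identity are genuine measures, and it suffices to test them against bounded Borel functions $f$ on $X$ (equivalently, against characteristic functions).

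The core step is a change of variables: for bounded Borel $f\colon X\to\R$,
\begin{align*}
\int_X f\;d\mu
 &= \int_\Omega (f\circ F)\;d\mathbb P
 = \sum_{i=1}^N p_i\int_\Omega (f\circ F)\circ\wt\tau_i\;d\mathbb P
 = \sum_{i=1}^N p_i\int_\Omega f\circ(F\circ\wt\tau_i)\;d\mathbb P\\
 &= \sum_{i=1}^N p_i\int_\Omega f\circ(\tau_i\circ F)\;d\mathbb P
 = \sum_{i=1}^N p_i\int_\Omega (f\circ\tau_i)\circ F\;d\mathbb P
 = \sum_{i=1}^N p_i\int_X f\circ\tau_i\;d\mu,
\end{align*}
where the second equality uses~(\ref{eq P is IFS}) and the fourth uses $F\circ\wt\tau_i=\tau_i\circ F$ from Lemma~\ref{lem F intertwines}. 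Since the right-hand side equals $\sum_i p_i\int_X f\,d(\mu\circ\tau_i^{-1})$ and $f$ is arbitrary, this is exactly $\mu=\sum_{i=1}^N p_i\,\mu\circ\tau_i^{-1}$, i.e.\ $\mu$ is an IFS measure for $(\tau_i)$. The only thing to justify here is the interchange of the finite sum with the integral, which is immediate for bounded $f$, plus the elementary measurability points above.

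For the support assertion, assume $F$ is continuous. Since $p$ is strictly positive, every basic cylinder $C(i_1,\dots,i_m)$ has $\mathbb P$-measure $p_{i_1}\cdots p_{i_m}>0$, so $\mathrm{supp}(\mathbb P)=\Omega$. Given a nonempty relatively open $U\subseteq F(\Omega)$ (the attractor), pick $x=F(\omega)\in U$; then $F^{-1}(U)$ is a nonempty open subset of $\Omega$ by continuity of $F$, hence contains some basic cylinder, and therefore $\mu(U)=\mathbb P(F^{-1}(U))>0$. Thus $\mathrm{supp}(\mu)=F(\Omega)$; when moreover $F(\Omega)=X$ (as in the examples treated below) this reads $\mathrm{supp}(\mu)=X$. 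I expect the subtlest point of the whole argument to be precisely the reading of ``full support'': $\mu$ is carried by the attractor $F(\Omega)$, which may be a proper closed subset of $X$, so the statement is relative to $F(\Omega)$, and the closed-support conclusion is exactly what forces the continuity hypothesis on $F$ that is absent from the first two clauses. Everything else is bookkeeping.
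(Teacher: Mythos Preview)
Your proof is correct and follows essentially the same route as the paper: both arguments push the IFS identity~(\ref{eq P is IFS}) for $\mathbb P$ forward through the intertwining relation $F\circ\wt\tau_i=\tau_i\circ F$ of Lemma~\ref{lem F intertwines}, and both handle the support claim by observing that $F^{-1}$ of a nonempty open set contains a cylinder of positive $\mathbb P$-measure. The only cosmetic difference is that the paper tests the identity on Borel sets while you test on bounded Borel functions; your added caveat that ``full support'' must be read relative to the attractor $F(\Omega)$ is a genuine clarification that the paper's version glosses over.
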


\begin{proof} By definition of measure $\mu_p$, we have
$$
\mu_p (A) = \mathbb P(F^{-1}(A)).
$$
Then we use Lemma \ref{lem F intertwines} and (\ref{eq P is IFS}) to 
show that $\mu_p$ is an IFS measure:
\begin{eqnarray*}
 \mu_p (A)   &=& \mathbb P(F^{-1}(A)) \\
   &=& \sum_{i=1}^N p_i\; \mathbb P(\wt \tau_i (F^{-1}(A)))\\
   &=& \sum_{i=1}^N p_i\; \mathbb P(F^{-1}(\tau_i (A))) \\
   &=&  \sum_{i=1}^N p_i\; \mu_p (\tau_i(A)).
\end{eqnarray*}

Let now $C$ be an open subset of $X$. Then there exists a cylinder 
set $C(\omega_1, ... , \omega_m)$ such that
$$
\wt\tau_{\omega_1} \circ \cdots \circ \wt\tau_{\omega_m}
(\Omega) \subset F^{-1}(C).
$$
It follows from this inclusion that
$$
\mu_p(C) = \mathbb P(F^{-1}(C)) \geq \mathbb P(C(\omega_1, ... , 
\omega_m)) = p_{\omega_1} \cdots p_{\omega_m}.
$$
The proof is complete.
\end{proof}

\subsection{Transfer operator for $x \mapsto 2x \; 
\mathrm{mod}\; 1$}

We consider here the one of the most popular endomorphisms, 
 $\sigma:  x \mapsto 2x \mod 1$ defined 
 on the unit interval $[0,1]$ and study its properties related 
to the corresponding transfer operator and iterated function system.

We fix the following notations for this subsection. Let $X = [0,1] = 
\mathbb R/\mathbb Z, \sigma(x) = 2x \mod 1$,
$$
\tau_0 (x) = \frac{x}{2}, \ \ \ \  \tau_1(x) = \frac{x+1}{2}, \qquad x 
\in X,
$$
and let $\la = dx$ denote the Lebesgue measure on $X$. Then $(X; 
\tau_0, \tau_1)$ is an IFS defined by the inverse branches for $
\sigma$.

In this example, we will illustrate the facts about transfer operators $
(R, \sigma)$ by considering specific weights $W$.

We use the formula given in (\ref{eq operator for IFS via W}) to define 
the transfer operator by a weight function $W$.

Take the function 
$W = \cos^2(\pi x)$. Define the transfer operator associated with the 
IFS $(X; \tau_0, \tau_1)$:
$$
R_W(f)(x) = \cos^2(\frac{\pi x}{2}) f(\frac{x}{2}) + 
\cos^2(\frac{\pi (x+1)}{2}) f(\frac{x+1}{2}). 
$$
Since $\cos^2(\frac{\pi (x+1)}{2}) = \sin^2(\frac{\pi x}{2})$, we get 
that
\be\label{eq TO for 2x}
R_W(f )(x) = \cos^2(\frac{\pi x}{2}) f(\frac{x}{2}) + 
 \sin^2(\frac{\pi x}{2})  f(\frac{x+1}{2}).
\ee
It follows from (\ref{eq TO for 2x}) that the transfer operator $R_W$ is 
normalized because
$$
R_W (\mathbf 1) = \cos^2(\frac{\pi x}{2}) + \sin^2(\frac{\pi 
x}{2}).
$$

\begin{lemma}\label{lem la R derivative}
 The Lebesgue measure $\la = dx$ belongs to $\mc L(R_p)$ for any 
 probability vector $p$. If  $p_0 = p_1 = 1/2$, then
$$
\frac{d\la R_{1/2}}{d\la}(x) =  \cos^2(\pi x).
$$
\end{lemma}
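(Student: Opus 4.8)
The plan is to compute the action $\la \mapsto \la R_W$ directly from the definition and recognize the result. Recall that for the transfer operator $R_W$ given in \eqref{eq TO for 2x}, the measure $\la R_W$ is defined (Definition~\ref{def action of P on mu}, since $R_W$ is normalized and $\la$ is finite) by $(\la R_W)(f) = \int_X R_W(f)\, d\la$ for measurable $f$. So the first step is to write out
$$
\int_0^1 R_W(f)(x)\, dx = \int_0^1 \cos^2\!\left(\tfrac{\pi x}{2}\right) f\!\left(\tfrac{x}{2}\right) dx + \int_0^1 \sin^2\!\left(\tfrac{\pi x}{2}\right) f\!\left(\tfrac{x+1}{2}\right) dx.
$$

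Next I would perform the substitution $u = x/2$ (so $du = dx/2$, $u$ ranging over $(0, 1/2)$) in the first integral and $u = (x+1)/2$ (so $du = dx/2$, $u$ ranging over $(1/2, 1)$) in the second. This turns the first integral into $\int_0^{1/2} \cos^2(\pi u) f(u)\, 2\, du$ and the second into $\int_{1/2}^1 \cos^2(\pi(2u-1)) f(u)\, 2\, du = \int_{1/2}^1 \cos^2(\pi u)\, f(u)\, 2\, du$, using $\cos^2(\pi(2u-1)) = \cos^2(2\pi u - \pi) = \cos^2(2\pi u)$ — wait, I should be careful here: with $u = (x+1)/2$ we have $x = 2u-1$, so $\sin^2(\pi x/2) = \sin^2(\pi(2u-1)/2) = \sin^2(\pi u - \pi/2) = \cos^2(\pi u)$. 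So indeed both pieces contribute $2\cos^2(\pi u) f(u)$ over complementary halves of $(0,1)$, giving $\int_0^1 R_W(f)\, d\la = \int_0^1 2\cos^2(\pi u) f(u)\, du$. This shows $\la R_W \ll \la$ with $d(\la R_W)/d\la = 2\cos^2(\pi x)$.

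The statement of the lemma, however, refers to a probability vector $p$ and the operator $R_p$, and asserts $d(\la R_{1/2})/d\la = \cos^2(\pi x)$ when $p_0 = p_1 = 1/2$. This matches the general IFS transfer operator \eqref{eq operator for IFS via W}, namely $R_p(f)(x) = p_0 W(\tau_0 x) f(\tau_0 x) + p_1 W(\tau_1 x) f(\tau_1 x)$ with $W = \cos^2(\pi x)$ rescaled; with $p_0 = p_1 = 1/2$ one gets $R_{1/2}(f)(x) = \tfrac12\big(\cos^2(\tfrac{\pi x}{2}) f(\tfrac x2) + \sin^2(\tfrac{\pi x}{2}) f(\tfrac{x+1}{2})\big)$, i.e. exactly half of $R_W$. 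Hence by the computation above, $d(\la R_{1/2})/d\la = \tfrac12 \cdot 2\cos^2(\pi x) = \cos^2(\pi x)$. I would also note that $\la \in \mc L(R_p)$ for an arbitrary probability vector $p = (p_0, p_1)$ follows from the preceding general lemma on IFS (the one stating $d(\mu_p R)/d\mu_p = W$), applied with $\mu_p = \la$, which is the IFS measure for the uniform vector but nonetheless satisfies the absolute-continuity conclusion for all $p$ by the same change-of-variables argument; the explicit Radon--Nikodym derivative is then $p_0 W(\tau_0 x)\cdot 2\chi \ldots$ — more precisely one repeats the substitution to get $\int_0^1 R_p(f)\, d\la = \int_0^1 (p_0 \cdot 2\cos^2(\pi u) \chi_{(0,\cdot)} + \ldots) \ldots$, yielding $d(\la R_p)/d\la(x) = 2p_0\cos^2(\pi x)$ on the relevant range, in particular finite, so $\la \in \mc L(R_p)$.

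\textbf{Main obstacle.} There is no serious obstacle; the only point requiring care is the trigonometric identity $\sin^2(\pi(2u-1)/2) = \cos^2(\pi u)$ and keeping the domains of integration straight after the two substitutions, so that the two half-intervals $(0,1/2)$ and $(1/2,1)$ reassemble into $(0,1)$ with the single density $2\cos^2(\pi u)$. I would double-check the normalization factor $\tfrac12$ distinguishing $R_W$ from $R_{1/2}$ to make sure the final answer is $\cos^2(\pi x)$ rather than $2\cos^2(\pi x)$.
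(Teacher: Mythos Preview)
Your proposal is correct and follows essentially the same approach as the paper: compute $\int_X R(f)\,d\la$ by splitting into the two inverse-branch terms, apply the substitutions $y=x/2$ and $y=(x+1)/2$ respectively, use the identity $\sin^2(\pi(2y-1)/2)=\cos^2(\pi y)$, and reassemble the two half-intervals into $\int_0^1 \cos^2(\pi y)\,f(y)\,dy$. The paper carries the factor $1/2$ through from the start rather than computing for $R_W$ and then halving, but this is a cosmetic difference; your more explicit handling of the trigonometric step and the normalization is fine.
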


\begin{proof} The fact that $\la R_p \ll \la$ will be clear from the 
following computation which are conducted for the case  $p_0 = p_1 = 
1/2$:
\begin{eqnarray*}
 \int_X R(f) \; dx  &=& 2^{-1}\int_X\cos^2(\frac{\pi x}{2}) f(\frac{x}
 {2})\; dx +
2^{-1}\int_X \sin^2(\frac{\pi x}{2})  f(\frac{x+1}{2}) \; dx \\
\\
 & = &  \int_0^{1/2}\cos^2(\pi y) f(y)\; dy + \int_{1/2}^1 
 \cos^2(\pi y)  f(y)\; dy\\
 \\
 &=& \int_X \cos^2(\pi y) f(y) \; dy
\end{eqnarray*}
This means that $2W = \cos^2(\pi x)$ is the Radon-Nikodym 
derivative.
\end{proof}

In what follows we will use the formula
\be\label{eq R via sigma}
R(f)(x) = \sum_{y : \sigma y =x} \cos^2(\pi y) f(y)
\ee
for the transfer operator. The advantage of this definition is that $R$ 
is now normalized, $R (\mathbf 1) = \mathbf 1$. It follows from 
Lemma \ref{lem la R derivative} that
\be\label{eq RN for R}
d (\la R) = 2 \cos^2(\pi x) d \la.
\ee

Let $\delta_0$ denote the atomic Dirac measure concentrated at 
$x=0$.

\begin{corollary}\label{cor delta} (1) The measures $\delta_0$ and $\la$ are 
$\sigma$-invariant.

(2) The measures $\delta_0$ and $\la$ are $R$-invariant,  $\la R$ 
is absolutely continuous with respect to $\la$ but $\delta_0 \notin 
\mc L(R)$.
\end{corollary}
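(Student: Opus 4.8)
The plan is to prove both parts by direct computation, using only the explicit forms of $\sigma,\tau_0,\tau_1$ on $[0,1]=\R/\Z$, the change-of-variables formula, and the definition of the action $\la\mapsto\la R$ from Subsection \ref{subsect action on measures}; no structural machinery is needed beyond Lemma \ref{lem la R derivative} and Table \ref{tab:table1}.

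For part (1), I would first note that $\sigma(0)=0$, so for every $A\in\B$ we have $(\delta_0\circ\sigma^{-1})(A)=\delta_0(\sigma^{-1}(A))$, which equals $1$ exactly when $0\in\sigma^{-1}(A)$, i.e.\ when $\sigma(0)=0\in A$; hence $\delta_0\circ\sigma^{-1}=\delta_0$. For the Lebesgue measure, using the disjoint decomposition $\sigma^{-1}([a,b])=[a/2,b/2]\cup[(a+1)/2,(b+1)/2]$ gives $\la(\sigma^{-1}([a,b]))=\tfrac{b-a}{2}+\tfrac{b-a}{2}=\la([a,b])$, and since the intervals generate $\B$ this yields $\la\circ\sigma^{-1}=\la$.

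For part (2) I would handle the $\delta_0$-clauses and the $\la$-clauses in turn, keeping careful track of which transfer operator is meant in each. Evaluating the normalized operator $R$ of (\ref{eq R via sigma}) at $x=0$ gives $R(f)(0)=\cos^2(0)\,f(0)+\sin^2(0)\,f(\tfrac12)=f(0)$, so $(\delta_0R)(f)=R(f)(0)=f(0)$, i.e.\ $\delta_0R=\delta_0$. For the balanced operator $R_\sigma$ of (\ref{eq example R_sigma IFS}), the same change of variables as in part (1) (or Table \ref{tab:table1}) gives $\la R_\sigma=\la$; for the $\cos^2$-weighted operator $R$, the computation in Lemma \ref{lem la R derivative} together with (\ref{eq RN for R}) gives $\la R\ll\la$ with $\tfrac{d(\la R)}{d\la}(x)=2\cos^2(\pi x)$. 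Finally, evaluating $R_\sigma$ at $x=0$ gives $R_\sigma(f)(0)=\tfrac12 f(0)+\tfrac12 f(\tfrac12)$, so $\delta_0R_\sigma=\tfrac12(\delta_0+\delta_{1/2})$; this measure puts mass $\tfrac12$ on $\{1/2\}$ while $\delta_0(\{1/2\})=0$, hence $\delta_0R_\sigma\centernot\ll\delta_0$ and $\delta_0\notin\mc L(R_\sigma)$.

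There is no deep obstacle here — each assertion is a one-line verification. The one thing to watch is the bookkeeping: the $R$-invariance of $\delta_0$, the absolute continuity $\la R\ll\la$, and the density $2\cos^2(\pi x)$ refer to the $\cos^2$-weighted normalized operator (\ref{eq R via sigma}), whereas the $R$-invariance of $\la$ and the failure $\delta_0\notin\mc L(R_\sigma)$ refer to the balanced operator (\ref{eq example R_sigma IFS}); and one must not forget that on $\R/\Z$ the inverse-branch value $\tau_1(0)=\tfrac12$ is exactly the point through which the mass of $\delta_0R_\sigma$ escapes, which is why $\delta_0$ can be $\sigma$-invariant and yet not lie in $\mc L(R_\sigma)$.
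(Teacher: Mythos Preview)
Your approach is the same as the paper's: direct evaluation of $R(f)(0)$, the change-of-variables computation for $\la$, and an appeal to Lemma \ref{lem la R derivative}. The paper's proof is in fact terser than yours and simply records $R(f)(0)=f(0)$ for the $R$-invariance of $\delta_0$, then writes $\delta_0 R=\tfrac12(\delta_0+\delta_{1/2})$ for the failure of absolute continuity, and refers to Lemma \ref{lem la R derivative} for the Lebesgue claims.

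Your careful bookkeeping in the final paragraph is well taken and actually goes beyond what the paper does. As written in Section~12.2, the paper's own proof uses the symbol $R$ for both computations with $\delta_0$, so that in two consecutive sentences it asserts $\delta_0 R=\delta_0$ and $\delta_0 R=\tfrac12(\delta_0+\delta_{1/2})$; these are plainly incompatible for a single operator. Your resolution --- attributing the invariance of $\delta_0$ and the absolute continuity $\la R\ll\la$ to the $\cos^2$-weighted normalized operator (\ref{eq R via sigma}), and the invariance of $\la$ together with $\delta_0\notin\mc L(R_\sigma)$ to the balanced operator (\ref{eq example R_sigma IFS}) --- is the only reading that makes all four clauses simultaneously true, and it matches Table~\ref{tab:table1} exactly. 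So your proof is correct, follows the paper's method, and is more precise than the paper on exactly the point where precision is needed.
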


\begin{proof}
The first statement is obvious and well known.

To show that (2) holds, we notice that $R(f)(0) = f(0)$, and this fact 
can be interpreted as
$$
\int_X f \; d(\delta_0 R) = \int_X f \; d\delta_0.
$$
Hence $\delta_0$ is $R$-invariant. Since $\delta_0 R = 1/2(\delta_0 
+ \delta_{1/2})$ it is clear that $\delta_0 R$ is not absolutely 
continuous with respect to $\delta_0$. The result about  the 
Lebesgue measure $\la$ has been proved in Lemma \ref{lem la R 
derivative}.

\end{proof}

We recall that, for given $R, \sigma$, one can define linear operators $
\wh S$ and $\wh R$ in the universal Hilbert space $\mc H(X)$. By 
definition (see Section \ref{sect Universal HS}),
$$
\wh S(f\sqrt{d\mu}) = (f\circ\sigma)\sqrt{d(\la R)}.
$$

\begin{proposition}  The operator $\wh S$ is an isometry in the Hilbert 
space $L^2(\la)$ where $\la$ is the Lebesgue measure on 
$X = [0,1]$.

\end{proposition}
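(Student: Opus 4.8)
The plan is to recognize this proposition as an immediate consequence of Theorem \ref{lem H(la) is S-inv} together with the two facts already recorded in this subsection: that the transfer operator $R$ in (\ref{eq R via sigma}) is normalized, $R(\mathbf 1) = \mathbf 1$, and that the Lebesgue measure $\la$ lies in $\mc L(R)$ with $d(\la R) = W\, d\la$, $W(x) = 2\cos^2(\pi x)$ (see Lemma \ref{lem la R derivative} and (\ref{eq RN for R})).

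First I would invoke the isometric embedding $\iota : L^2(\la) \to \mc H(X)$, $f \mapsto f\sqrt{d\la}$, whose image is $\mc H(\la)$, so that $\|f\|_{L^2(\la)} = \|f\sqrt{d\la}\|_{\mc H(X)}$. By the second part of Theorem \ref{lem H(la) is S-inv}, since $\la \in \mc L(R)$ the subspace $\mc H(\la)$ is invariant under $\wh S$, and on $\mc H(\la)$ one has $\wh S(f\sqrt{d\la}) = (f\circ\sigma)\sqrt{d(\la R)} = \sqrt{W}\,(f\circ\sigma)\sqrt{d\la}$. Transporting back through $\iota$, the operator induced by $\wh S$ on $L^2(\la)$ is the weighted composition operator $V : f \mapsto \sqrt{W}\,(f\circ\sigma) = \sqrt{2}\,|\cos(\pi x)|\,(f\circ\sigma)$, and the proposition is exactly the statement that $V$ is an isometry of $L^2(\la)$.

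The one-line verification I would then carry out uses the pull-out property and normalization of $R$: for $f \in L^2(\la)$,
\begin{equation*}
\|\wh S(f\sqrt{d\la})\|^2_{\mc H(X)} = \int_X (f\circ\sigma)^2\, d(\la R) = \int_X R\big[(f\circ\sigma)^2\big]\, d\la = \int_X f^2\, R(\mathbf 1)\, d\la = \int_X f^2\, d\la = \|f\|^2_{L^2(\la)},
\end{equation*}
where the second equality rewrites $d(\la R) = W d\la$ via $\int_X R(g)\, d\la = \int_X gW\, d\la$, and the third is (\ref{eq char property of r via sigma 1}) applied to pull $(f\circ\sigma)^2$ out of $R$. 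As an optional sanity check, and to make the example concrete, I would also compute $\|Vf\|^2_{L^2(\la)} = \int_0^1 2\cos^2(\pi x)\,|f(2x\ \mathrm{mod}\ 1)|^2\,dx$ directly, splitting $[0,1]$ into $[0,\tfrac12]$ and $[\tfrac12,1]$ and substituting $u = 2x$ resp.\ $u = 2x - 1$, which gives $\int_0^1\big(\cos^2(\tfrac{\pi u}{2}) + \sin^2(\tfrac{\pi u}{2})\big)|f(u)|^2\,du = \|f\|^2_{L^2(\la)}$.

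There is no serious obstacle here: the proposition is a corollary, and the only point requiring care is the bookkeeping that identifies the restriction of $\wh S$ to $\mc H(\la)$ with the concrete operator $f \mapsto \sqrt{W}(f\circ\sigma)$ on $L^2(\la)$ — in particular that $\sqrt{W} = \sqrt{2}\,|\cos(\pi x)|$, and that the zero of $W$ at $x = \tfrac12$ is harmless, being a $\la$-null set.
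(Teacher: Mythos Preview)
Your proposal is correct. Your primary route is more abstract than the paper's: you deduce the result from the general Theorem \ref{lem H(la) is S-inv} (normalization of $R$ plus $\la \in \mc L(R)$), whereas the paper proves it by a direct change-of-variables computation, first recording the identity $\int_0^1 f(\sigma x)g(x)\,dx = \int_0^1 f(x)\tfrac{1}{2}(g(\tau_0 x)+g(\tau_1 x))\,dx$ and then applying it with $g(x)=2\cos^2(\pi x)$ to get $\int_0^1 f(2x\bmod 1)^2\,2\cos^2(\pi x)\,dx = \int_0^1 f(x)^2(\cos^2(\tfrac{\pi x}{2})+\sin^2(\tfrac{\pi x}{2}))\,dx$. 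Your ``optional sanity check'' is exactly this computation, so the two proofs meet there; what your abstract route buys is that the argument visibly works for any normalized $(R,\sigma)$ with $\la\in\mc L(R)$, while the paper's hands-on version makes the specific weight $W=2\cos^2(\pi x)$ explicit.
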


\begin{proof} We first note that the following useful formula holds:
\be\label{eq useful formula for integrat}
\int_X f(\sigma x) g(x) \; dx = \int_X f(x)\frac{g(\tau_0 x) + 
g(\tau_1 x)}{2} \; dx.
\ee
Then we use (\ref{eq R via sigma}) and (\ref{eq RN for R}) to find the 
norm
\begin{eqnarray*}
||\wh S(f\sqrt{d\la})||^2_{L^2(\la)} & = & \int_X f(2x \!\!\!\mod 1)^2 
2\cos^2(\pi x)\; d\la(x) \\
\\
   &=&  \int_X f(x)^2(\cos^2(\pi x/2) + \sin^2 (\pi x/2)\; dx\\
   \\
  &=& ||fd\la||^2_{L^2(\la)}.
\end{eqnarray*}
Hence $\wh S$ is an isometry.
\end{proof}

We recall that the operator $\wh R (f\sqrt{d\mu}) = (Rf) \sqrt{d(\mu
\circ\sigma^{-1})}$ is the adjoint operator $\wh S^*$ for $\wh S$. 
For the Lebesgue measure $\la$, one has $\la\circ \sigma^{-1} =\la$, 
hence
$$
\wh R (f\sqrt{d\la}) = (Rf) \sqrt{d\la}.
$$

\begin{corollary} For the Lebesgue measure $\la$ on $X = [0,1]$, the 
projection $\wh E = \wh S \wh R$ from $\mc H(X)$ onto $\mc H(\la)
$ acts by the formula
$$
\wh E= \sqrt {2} (R(f) \circ \sigma) |\cos (\pi x)|\sqrt{d\la},
$$
where $R(f)\circ \sigma = \mathbb E_\la ( \cdot \; | \; \sigma^{-1}
(\B))$ is the conditional expectation.
\end{corollary}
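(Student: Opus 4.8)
The plan is to simply chase the definitions of $\wh S$ and $\wh R$ through the concrete data of this example, using the two computed facts $d(\la R) = 2\cos^2(\pi x)\,d\la$ (equation (\ref{eq RN for R})) and $\la\circ\sigma^{-1} = \la$ (Corollary (\ref{cor delta})(1)). First I would recall from Section \ref{sect Universal HS} that $\wh E = \wh S\wh R = \wh S\wh S^*$ is the orthogonal projection of $\mc H(X)$ onto $\mc H(K_1)$, and that since $\la \in \mc L(R)$ (Lemma (\ref{lem la R derivative})), $\wh E$ leaves $\mc H(\la)$ invariant; the corollary asks only for its restriction to $\mc H(\la)$. Since $\la$ is $\sigma$-invariant, $\wh R(f\sqrt{d\la}) = (Rf)\sqrt{d(\la\circ\sigma^{-1})} = (Rf)\sqrt{d\la}$, which is exactly the remark made just before the corollary.

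Next I would apply $\wh S$ to this: by the defining formula (\ref{eq wh S on H(X)}), $\wh S\big((Rf)\sqrt{d\la}\big) = \big((Rf)\circ\sigma\big)\sqrt{d(\la R)}$. Now substitute $d(\la R) = 2\cos^2(\pi x)\,d\la$, so that $\sqrt{d(\la R)} = \sqrt{2}\,|\cos(\pi x)|\sqrt{d\la}$ as elements of $\mc H(\la)$ (this is the ``choice of representative within the equivalence class'' move legitimized by Remark (\ref{rem measure change 1})). Combining,
$$
\wh E(f\sqrt{d\la}) = \wh S\wh R(f\sqrt{d\la}) = \sqrt{2}\,\big((Rf)\circ\sigma\big)\,|\cos(\pi x)|\,\sqrt{d\la},
$$
which is the claimed formula. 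The identification $R(f)\circ\sigma = \mathbb E_\la(\,\cdot\,|\,\sigma^{-1}(\B))$ is then just Theorem (\ref{thm K_1(R) tfae})(3) applied to $\la \in K_1$ — and $\la \in K_1$ holds because $\la$ is $R$-invariant (Corollary (\ref{cor delta})(2)), hence $\la = \la R \in M_1 R = K_1$, or alternatively because $\la$ is $\sigma$-invariant and lies in $\mc L(R)$, so $(\la\circ\sigma^{-1})R = \la R = \la$, which is condition (2) of that theorem.

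I do not anticipate a serious obstacle here: every ingredient is already in place. The one point requiring a little care is bookkeeping about which Hilbert space the equality lives in — the expression $\sqrt{2}(R(f)\circ\sigma)|\cos(\pi x)|\sqrt{d\la}$ must be checked to be a genuine element of $\mc H(\la)$, i.e. that $\sqrt{2}(R(f)\circ\sigma)|\cos(\pi x)| \in L^2(\la)$ whenever $f \in L^2(\la)$; this follows since $\wh E$ is a projection (hence bounded) on $\mc H(\la)$, but one could also verify it directly using $|R(f)| \le \sqrt{R(|f|^2)}$ together with $\la R \ll \la$. Beyond that, the corollary is a direct specialization of the general machinery, so the writeup is essentially the three-line display above plus the citation to Theorem (\ref{thm K_1(R) tfae}).
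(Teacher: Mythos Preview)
Your derivation of the displayed formula is correct and is exactly what the paper intends: apply $\wh R$ using $\la\circ\sigma^{-1}=\la$, then $\wh S$, then substitute $d(\la R)=2\cos^2(\pi x)\,d\la$. The paper gives no explicit proof, and your two-line chain is the intended argument.

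There is, however, an internal inconsistency in your justification of the conditional-expectation clause. You use $d(\la R)=2\cos^2(\pi x)\,d\la$ from (\ref{eq RN for R}) in the main computation and then, a few lines later, assert $\la R=\la$ (once by citing Corollary~\ref{cor delta}(2), once in your alternative route $(\la\circ\sigma^{-1})R=\la R=\la$). These are incompatible. In fact $\la\notin K_1$: by Theorem~\ref{thm t_R is 1-1 and more}(1) membership requires $(\la\circ\sigma^{-1})R=\la$, and the left side equals $\la R\neq\la$. What \emph{does} hold is $\la\sim\la R\in K_1$, so $\sqrt{d\la}\in\mc H_{K_1}$ and Definition~\ref{def of wh R} still applies --- that suffices for the formula. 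But Theorem~\ref{thm K_1(R) tfae}(3) genuinely requires $\la\in K_1$, not merely equivalence to a $K_1$-measure, so your citation does not establish $R(f)\circ\sigma=\mathbb E_\la(\,\cdot\mid\sigma^{-1}(\B))$. A direct check confirms the trouble: $(R(f)\circ\sigma)(x)=\cos^2(\pi x)\,f(x)+\sin^2(\pi x)\,f(x\pm\tfrac12)$, whereas the Lebesgue conditional expectation over the two-point fibers averages with equal weights $\tfrac12,\tfrac12$. The defect therefore lies in the paper's statement of the corollary (and in the wording of Corollary~\ref{cor delta}(2)) rather than in your method.
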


\begin{example}
In this example, we give several formulas  for the action of the transfer 
operator $R$ on any Dirac measure $\delta_a$. 

For $a =0$, we observe that 
$$
\wh S(f\sqrt{d\delta_0}) = (f\cs) \sqrt{d\delta_0},\ \qquad \wh R(f
\sqrt{d\delta_0}) = R(f)\sqrt{d\delta_0})
$$
because $\delta_0$ is simultaneously $R$-invariant and $\sigma$-
invariant.

By direct computation we find that, for $ a \in (0,1)$,
$$
\wh S(f\sqrt{d\delta_a}) = (f \cs ) \sqrt{\cos^2(\pi a/2)\delta_a + 
\sin^2(\pi a/2)\delta_{(a+1)/2} },
$$
$$
\wh R(f\sqrt{d\delta_a}) = R(f)\sqrt{d\delta_a}),
$$
and 
$$
\wh S \wh R (f\sqrt{d\delta_a})  = R(f) \cs  \sqrt{\cos^2(\pi a)
\delta_a + \sin^2(\pi a)\delta_{a+1/2} }.
$$
In particular, if $a =1/2$, we can show more.

\begin{lemma} For the Dirac measure $\delta_{1/2}$, the following 
relations hold:
$$
(\delta_{1/2} \csi1) R = \delta_0,
$$
$$
\wh E(f\sqrt{d\delta_{1/2}}) = f(0)\sqrt{d\delta_0}) 
$$
where $\wh E = \wh S\wh R$.
\end{lemma}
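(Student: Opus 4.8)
The lemma records two computations about the Dirac measure $\delta_{1/2}$ under the maps $s_\sigma$, $t_R$ and the operators $\wh S, \wh R$ on $\mc H(X)$, for the specific endomorphism $\sigma(x) = 2x \bmod 1$ with weight $W = \cos^2(\pi x)$, so the normalized transfer operator is $R(f)(x) = \sum_{y:\sigma y = x}\cos^2(\pi y) f(y)$. The proof is a direct verification using the explicit formulas for $R$, for the action of $R$ and $\sigma$ on measures, and for $\wh S, \wh R$ established earlier. First I would dispose of $(\delta_{1/2}\csi1) R = \delta_0$. Note $\sigma^{-1}(1/2) = \{1/4, 3/4\}$, so $\delta_{1/2}\csi1$ is the measure $\mu$ on $X$ with $\mu(A) = \delta_{1/2}(\sigma^{-1}A)$; evaluating on a function $f$, $\mu(f) = (f\cs)(1/4)\cdot(\text{something})$ — more carefully, $\delta_{1/2}\circ\sigma^{-1}(f) = \int f\,d(\delta_{1/2}\circ\sigma^{-1})$, and since $\sigma^{-1}(1/2)$ is two points, this is not itself a point mass; but what I actually need is $(\delta_{1/2}\csi1)R$. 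The cleanest route: by Lemma \ref{lem R 1= 1 - actions on measures} and the relation $(\mu R)\csi1 = \mu$ for all $\mu$ (valid since $R(\mathbf 1)=\mathbf 1$), together with Theorem \ref{thm t_R is 1-1 and more}(2), the unique solution $\la$ of $\la R = \mu$ is $\la = \mu\csi1$. Apply this with $\mu = \delta_0$: since $\delta_0 R = \delta_0$ (Corollary \ref{cor delta}(2)), the unique $\la$ with $\la R = \delta_0$ is $\la = \delta_0\csi1$. So I must instead check directly that $(\delta_{1/2}\csi1)R = \delta_0$, by computing $\big((\delta_{1/2}\csi1)R\big)(f) = \int R(f)\,d(\delta_{1/2}\csi1) = \int (R(f)\cs)\,d\delta_{1/2} = (R(f)\cs)(1/2) = R(f)(1) = R(f)(0)$, using $\sigma(1/2) = 1 \equiv 0$ in $\R/\Z$; and $R(f)(0) = \sum_{y:\sigma y = 0}\cos^2(\pi y) f(y) = \cos^2(0) f(0) + \cos^2(\pi) f(1/2)$ — wait, $\sigma^{-1}(0) = \{0, 1/2\}$, giving $R(f)(0) = \cos^2(0)f(0) + \cos^2(\pi/2)f(1/2) = f(0)$. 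Hence $\big((\delta_{1/2}\csi1)R\big)(f) = f(0)$, i.e. $(\delta_{1/2}\csi1)R = \delta_0$.

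\textbf{Second identity.} For $\wh E = \wh S\wh R$ applied to $f\sqrt{d\delta_{1/2}}$, I would chain the two operator formulas. By Definition \ref{def of wh R}, provided $\sqrt{\delta_{1/2}} \in \mc H_{K_1}$ (equivalently $\delta_{1/2} \in K_1 = M_1 R$, which holds by the first identity since $\delta_{1/2} = (\text{any }\la)R$ is not what we need — rather we need $\delta_{1/2} \notin$ the orthogonal complement; but $\delta_{1/2}$ is itself atomic and equals $\nu R$ for some $\nu$? Actually $\delta_{1/2} \in K_1$ iff $(\delta_{1/2}\csi1)R = \delta_{1/2}$ by Theorem \ref{thm K_1(R) tfae}(1)–(2), and we just computed this equals $\delta_0 \neq \delta_{1/2}$, so $\delta_{1/2}\notin K_1$ and $\wh R(f\sqrt{d\delta_{1/2}}) = 0$?). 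This requires care: I would reexamine whether $\sqrt{\delta_{1/2}}\in\mc H_{K_1}$ or its complement. Since the answer $\wh E(f\sqrt{d\delta_{1/2}}) = f(0)\sqrt{d\delta_0}$ is nonzero, $\wh R$ must be nonzero on this vector, so $\delta_{1/2}\in K_1$; reconciling this with Theorem \ref{thm K_1(R) tfae} is the subtle point — presumably $K_1$ here should be understood as classes, or $\delta_{1/2}$ does lie in $M_1R$ via $\delta_{1/2} = \nu R$ for a suitable $\nu$ supported on $\sigma^{-1}(1/2)$ with the right weights: solving $\nu R = \delta_{1/2}$ forces $\nu = \frac13\delta_{1/4} + \frac{2}{3}\delta_{3/4}$ or similar up to the $\cos^2$ weights. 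I would just compute: $\wh R(f\sqrt{d\delta_{1/2}}) = R(f)\sqrt{d(\delta_{1/2}\csi1)}$, then $\wh S$ of that is $(R(f)\cs)\sqrt{d(\delta_{1/2}\csi1)R} = (R(f)\cs)\sqrt{d\delta_0}$ by the first identity. Finally $(R(f)\cs)$ evaluated at the support point $0$ of $\delta_0$ is $R(f)(\sigma(0)) = R(f)(0) = f(0)$, so $\wh E(f\sqrt{d\delta_{1/2}}) = f(0)\sqrt{d\delta_0}$.

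\textbf{Main obstacle.} The one place demanding attention is the status of $\delta_{1/2}$ relative to $\mc H_{K_1}$ and the branch of Definition \ref{def of wh R} that applies; I expect the resolution is that $\delta_{1/2}$ is indeed in $M_1 R$ (it is $\nu R$ for an appropriate $\nu$), so $\wh R$ acts by the first line of its definition, and the chain $\wh E = \wh S\wh R$ goes through cleanly. If instead one takes the formula $\wh E(f\sqrt{d\la}) = \mathbb E_\la(f\mid\sigma^{-1}(\B))\sqrt{d\la}$ from Theorem \ref{thm K_1(R) tfae}(4), that would force $\la = \delta_{1/2}$ on the right, contradicting the stated answer; so the correct reading is the two-step composition with the measure changing from $\delta_{1/2}$ to $\delta_{1/2}\csi1$ to $\delta_0$. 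Once that is pinned down, everything reduces to the arithmetic $\sigma(1/2) = 0$, $\sigma^{-1}(0) = \{0,1/2\}$, $\cos^2(0) = 1$, $\cos^2(\pi/2) = 0$, which I would present compactly rather than belabor.
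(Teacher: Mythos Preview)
Your computations for both identities are correct and match the paper's approach, which is nothing more than specializing the displayed formula $\wh S\wh R(f\sqrt{d\delta_a}) = (R(f)\cs)\sqrt{\cos^2(\pi a)\delta_a + \sin^2(\pi a)\delta_{a+1/2}}$ at $a=1/2$ (using $\cos^2(\pi/2)=0$, $\sin^2(\pi/2)=1$, and $1\equiv 0$ in $\R/\Z$), together with $R(f)(0)=f(0)$.

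On your $K_1$ worry: your instinct is right but your conjectured resolution is not. In fact $\delta_{1/2}\notin K_1$, since $R(\chi_{\{1/2\}})(x)=\sum_{\sigma y=x}\cos^2(\pi y)\chi_{\{1/2\}}(y)$ vanishes identically (the only candidate is $x=0$, where the weight is $\cos^2(\pi/2)=0$); hence no $\nu R$ carries mass at $1/2$, and $\sqrt{\delta_{1/2}}\in(\mc H_{K_1})^\bot$. The paper simply applies the first line of Definition~\ref{def of wh R} formally without revisiting that case split, and your chain $\delta_{1/2}\circ\sigma^{-1}=\delta_0$, $\delta_0 R=\delta_0$, $R(f)(0)=f(0)$ reproduces exactly what the paper records. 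So your computation is faithful to the paper; the tension you flagged is an inconsistency in the paper's own framework at this atomic example, not a defect in your argument.
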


\begin{proof} These results follow from the above formulas by 
straightforward computations. 
\end{proof}

\end{example}
\newpage

\section{Examples}\label{sect examples}

In this section, we discuss in detail several examples of transfer 
operators that are mentioned in Introduction.

\subsection{Transfer operator and a system of 
conditional measures} \label{ex examples of TO}
For every measurable partition $\xi$ of a probability measure space
$\sms$, there exists a system of conditional measures 
 \cite{Rohlin1949}. We apply this remarkable result to the case
 of a surjective endomorphism. 

\begin{theorem}\label{thm csm}
Let $\sms$ be a standard measure space with finite measure, and let 
$\sigma$ be a surjective homeomorphism on $X$. 
Let $\xi$ be the measurable 
partition into pre-images of $\sigma$, $\xi = \{\sigma^{-1}(x) : x \in X
\}$. Then there exists a system of conditional 
 measures $\{\mu_C\}_{C \in \xi}$ defined uniquely by $\mu$ and 
 $\xi$, see Definition \ref{def system cond measures}. 
 For an onto endomorphism,  $X/\xi$ is identified with $X$, 
and the following disintegration formula holds:
$$
\int_X f(x)\; d\mu(x) = \int_X \left(\int_{C_x} f(y)\; d\mu_{C_x}(y)
\right) \; d\mu_\sigma(x)
$$ 
 where $\mu_\sigma$ is the restriction of $\mu$ to $\sB$, and   $x$
 is identified with $C_x$. 
\end{theorem}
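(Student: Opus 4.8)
The statement is essentially a corollary of Rokhlin's disintegration theorem (Theorem~\ref{thm Rokhlin disintegration}), combined with the structural facts about the partition $\zeta(\sigma) = \{\sigma^{-1}(x) : x \in X\}$ recorded in~\eqref{eq quotient for endo}. The plan is therefore to verify two things: first, that $\zeta(\sigma)$ is a genuine measurable partition of $\sms$ so that Rokhlin's theorem applies; and second, that under the identification $X/\zeta(\sigma) \cong X$ the quotient measure $\mu_{\zeta(\sigma)}$ coincides with $\mu|_{\sB}$, so that the general disintegration formula~\eqref{eq function integration csm} becomes the stated formula.

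First I would invoke the discussion in Subsection~\ref{subsect meas partitions}: since $\sigma : \sms \to \sms$ is a measurable (indeed surjective) map, the partition $\zeta(\sigma) := \{\sigma^{-1}(x) : x \in X\}$ into fibers is measurable --- this is the special case of ``$\varphi^{-1}(y)$-partitions'' noted there, with $\varphi = \sigma$ and $Y = X$. Hence Theorem~\ref{thm Rokhlin disintegration} provides a unique system of conditional measures $\{\mu_C\}_{C \in X/\zeta(\sigma)}$ satisfying (i)--(iii) of Definition~\ref{def system cond measures}, and in particular relation~\eqref{eq function integration csm}: for all $f \in L^1(\mu)$,
$$
\int_X f\, d\mu = \int_{X/\zeta(\sigma)} \left( \int_C f_C\, d\mu_C \right) d\mu_{\zeta(\sigma)}(C).
$$
This already gives existence and uniqueness of $\{\mu_C\}$; it remains to rewrite the right-hand side in terms of $X$ rather than the abstract quotient.

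Next I would carry out the identification of the quotient. By~\eqref{eq quotient for endo} we have $X/\zeta(\sigma) = X$ and $\B(\zeta(\sigma)) = \sigma^{-1}(\B)$, the natural projection $\pi : X \to X/\zeta(\sigma)$ being simply $\pi = \sigma$ (a point $x$ is sent to the fiber $\sigma^{-1}(\sigma(x)) = C_x$ containing it, which is labeled by $\sigma(x)$). By the definition $\mu_{\zeta(\sigma)} = \mu \circ \pi^{-1}$ given in that subsection, and since here $\pi = \sigma$, we get $\mu_{\zeta(\sigma)} = \mu \circ \sigma^{-1}$ on $\B$; but restricted to $\sigma^{-1}(\B)$ this is precisely $\mu|_{\sigma^{-1}(\B)} = \mu_\sigma$, as also recorded in~\eqref{eq quotient for endo}. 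Substituting $C = C_x$, $\mu_{\zeta(\sigma)} = \mu_\sigma$, and $f_C = f|_{C_x}$ into the displayed disintegration formula yields exactly
$$
\int_X f(x)\, d\mu(x) = \int_X \left( \int_{C_x} f(y)\, d\mu_{C_x}(y) \right) d\mu_\sigma(x),
$$
which is the claimed identity.

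\textbf{Main obstacle.} There is no deep obstacle here --- the content is bookkeeping around the identifications --- but the one point requiring care is the precise meaning of ``$X/\zeta(\sigma)$ is identified with $X$'' and the corresponding normalization of $\mu_\sigma$: one must be careful that the measure appearing in the outer integral is the push-forward $\mu \circ \sigma^{-1}$ viewed as the restriction of $\mu$ to $\sigma^{-1}(\B)$ (these agree as measures on the isomorphic spaces $X/\zeta(\sigma)$ and $(X, \sigma^{-1}(\B))$, but are not literally the same object on $(X, \B)$ unless $\sigma$ is measure-preserving). I would state this identification explicitly at the start so that the final substitution is unambiguous. The hypothesis that $\sigma$ is a homeomorphism (or merely a surjective measurable endomorphism) is used only to guarantee measurability of the fiber partition; finiteness of $\mu$ is what lets us apply Rokhlin's theorem in its probability-space form after normalizing (or directly, since the excerpt's Theorem~\ref{thm Rokhlin disintegration} is stated for probability spaces and extends to finite measures by scaling).
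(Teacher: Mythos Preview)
Your proposal is correct and follows exactly the approach the paper relies on: the paper does not give an explicit proof of this theorem, treating it as a direct consequence of Rokhlin's disintegration theorem (Theorem~\ref{thm Rokhlin disintegration}) together with the identifications~\eqref{eq quotient for endo} for the partition $\zeta(\sigma)$. Your write-up simply makes those steps explicit, including the careful remark about the identification of $\mu_{\zeta(\sigma)}$ with $\mu_\sigma = \mu|_{\sigma^{-1}(\B)}$, which the paper records in~\eqref{eq quotient for endo} and~\eqref{eq function disintegration for endo}.
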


In Example \ref{ex TO by cond meas}, we introduced a  transfer 
operator $(R, \sigma)$ on a standard probability measure 
space  $(X, \B, \mu)$. Here we consider a slightly more
 general construction  by setting
\be\label{eq TO via cond syst meas and W}
R_W(f)(x) := \int_{C_x} f(y)W(y)\; d\mu_{C_x}(y)
\ee
where $C_x$ is the element of $\xi$ containing $x$ and $W$ is a 
\textit{positive} $\mu$-integrable function. It follows then that $f$ is 
$\mu_{C_x}$-integrable functions for a.e. $x$. We note first  that 
the condition $\mu(X) =1$ implies that $\mu_C(X) =1$ for a.e. 
$C \in X/\xi$. Another important fact is that the quotient measure
space defined by $\xi$ is isomorphic to $(X, \sB, \mu_\sigma)$ (see 
Subsection \ref{subsect meas partitions}  for details).

 It is natural to consider the operator  $R_W$ acting either in  
 $L^1(X,  \B, \mu)$ or in $L^2\sms$ in this example.

\begin{proposition}\label{prop R(f) constant on C}
 (1) If $\sigma$ is an onto endomorphism of a probability 
 measure space $\sms$, then  $(R_W, \sigma)$ is a transfer operator 
 on  $\sms$. It is normalized if and only 
if $\int_{C_x} W \; d\mu_{C_x} =1 $ $\mu$-a.e. $x\in X$.

(2) For any measurable function $f$, the function $R_W(f)(x)$ is 
 constant a.e. on every $C_x$ for $\mu$-a.e. $x$. 
\end{proposition}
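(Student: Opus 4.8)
The plan is to obtain both assertions directly from Definition \ref{def transfer operator} and from the defining properties of a system of conditional measures (Definition \ref{def system cond measures}), essentially running the computation of Lemma \ref{lem R via cond syst meas} again with the weight $W$ inserted, and then observing that the value produced by $R_W$ at a point depends only on the fiber of $\sigma$ through that point. So I would treat (1) and (2) in turn, the first being a routine check of the axioms and the second a one-line consequence of how $R_W$ is built.

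For part (1), I would first pin down the domain and the well-definedness. Since $W$ is positive and $\mu$-integrable, Theorem \ref{thm csm} applied to $fW$ (for $f$ bounded, or more generally for $f$ with $fW\in L^1(\mu)$) shows that $y\mapsto f(y)W(y)$ is $\mu_{C_x}$-integrable for $\mu$-a.e.\ $x$, so $R_W(f)(x)=\int_{C_x}fW\,d\mu_{C_x}$ is a finite number for $\mu$-a.e.\ $x$; and property (ii) of Definition \ref{def system cond measures}, together with the usual approximation by simple functions and monotone convergence, shows that $x\mapsto R_W(f)(x)$ is $\mu_\sigma$-measurable on $X/\xi$, which under the identification $X/\xi=X$ (see (\ref{eq quotient for endo}) and Subsection \ref{subsect meas partitions}) makes it $\sigma^{-1}(\B)$-measurable. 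Positivity of $R_W$ is immediate because $W\ge 0$. For the pull-out property I would use that every $y\in C_x=\sigma^{-1}(x)$ satisfies $\sigma(y)=x$, so $(f\circ\sigma)(y)=f(x)$ is constant over $C_x$ and factors out of the integral:
$$ R_W((f\circ\sigma)g)(x)=\int_{C_x}(f\circ\sigma)(y)\,g(y)\,W(y)\,d\mu_{C_x}(y)=f(x)\int_{C_x}g(y)\,W(y)\,d\mu_{C_x}(y)=f(x)\,R_W(g)(x). $$
Finally, taking $f=\mathbf 1$ gives $R_W(\mathbf 1)(x)=\int_{C_x}W\,d\mu_{C_x}$, hence the normalization criterion $R_W(\mathbf 1)=\mathbf 1$ is exactly $\int_{C_x}W\,d\mu_{C_x}=1$ for $\mu$-a.e.\ $x$.

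For part (2), the point is that $R_W(f)(x)$ depends on $x$ only through the element $C_x$ of $\xi$ containing it: if $x'\in C_x$, then $\sigma(x')=\sigma(x)$, hence $C_{x'}=\sigma^{-1}(\sigma(x'))=\sigma^{-1}(\sigma(x))=C_x$ and $\mu_{C_{x'}}=\mu_{C_x}$, so $R_W(f)(x')=R_W(f)(x)$. Thus $R_W(f)$ is literally constant on each $C_x$ for which the conditional measure is defined, i.e.\ for $\mu$-a.e.\ $x$; equivalently, by surjectivity of $\sigma$ there is a function $\overline g$ on $X$ with $R_W(f)=\overline g\circ\sigma$, which is just the $\sigma^{-1}(\B)$-measurability obtained in part (1) restated via the characterization of $\sigma^{-1}(\B)$-measurable functions recorded in Subsection \ref{sect Basics on PO and TO}.

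I do not expect a real obstacle here: the only things needing care are the measurability and integrability bookkeeping for the disintegrated integral — handled by Definition \ref{def system cond measures}(ii) and Theorem \ref{thm csm} — and the ``$\mu$-a.e.'' qualifiers, which appear only because the system $\{\mu_C\}$ is determined uniquely up to a $\mu_\xi$-null set of fibers. The computations themselves are routine, and the substance of (2) is just the elementary observation that $x'\in C_x$ forces $C_{x'}=C_x$.
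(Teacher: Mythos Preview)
Your proposal is correct and follows essentially the same approach as the paper: the pull-out computation in (1) is identical (using $(f\circ\sigma)(y)=f(x)$ for $y\in C_x$), and for (2) both you and the paper simply observe that $R_W(f)(x)$ depends on $x$ only through the fiber $C_x$. You supply a bit more bookkeeping on measurability and integrability than the paper does, but the substance is the same.
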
 
 
 \begin{proof}
 (1)  Clearly, $R_W$ is positive. To see 
that the pull-out property holds, we calculate
\begin{eqnarray*}
R_W((f\circ\sigma) g)(x) & = & \int_{C_x} (f\circ\sigma)(y)g(y) 
W(y) \; d\mu_{C_x}\\
\\
&=& f(x) \int_{C_x} g(y)W(y)\; d\mu_{C_x}(y)\\
\\
& = & f(x) R_W(g)(x).
\end{eqnarray*}
Here we used the fact that $f(\sigma(y)) = f(x)$ for $y \in C_x = 
\sigma^{-1}(x)$.

We also obtain from (\ref{eq TO via cond syst meas and W})
 that $R_W$ is normalized if and only if 
$$
R_W(\mathbf 1)(x) = \int_{C_x} W(y)\; d\mu_{C_x}(y) =1
$$
for any $x\in X$.

(2)  Because the value of $R_W(f)$ evaluated at $x$ is the integral of 
$f$ over  the measure space $(C_x, \mu_x)$, we see that the value of
 $R_W(f)$ at $x' \in C_x $ is equal to  $R_W(f)(x)$.  
 
 \end{proof}

Proposition \ref{prop R(f) constant on C} allows us to deduce several 
simple consequences of the proved results.

\begin{corollary} (1) For the transfer operator $(R_W, \sigma)$, the 
following property holds a.e.
$$
R^2_W(f)(x)  = R_W(f)(x)R(\mathbf 1)(x).
$$

(2) If $\int_{C_x} W \; d\mu_{C_x}= 1$ for a.e. $x$, 
then, for any $f$, $R_W(f)$ is a harmonic  function with respect to 
 $R_W$. 
 
 (3) For any $\B$-measurable function $f$, the function $R(f)$ is
 $\sB$-measurable. 
\end{corollary}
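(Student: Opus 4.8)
The plan is to prove each of the three items of the Corollary by unwinding the definition of $R_W$ from \eqref{eq TO via cond syst meas and W} and invoking Proposition \ref{prop R(f) constant on C}. The common mechanism is the observation, already recorded in part (2) of that proposition, that $R_W(f)$ is constant $\mu_{C_x}$-a.e.\ on each fiber $C_x = \sigma^{-1}(x)$; equivalently, $R_W(f)$ agrees $\mu$-a.e.\ with a function of the form $g \circ \sigma$, hence is $\sigma^{-1}(\B)$-measurable. This already settles item (3) directly: given any $\B$-measurable $f$, the value $R_W(f)(x)$ depends only on the fiber through $x$, so there exists a $\B$-measurable $g$ with $R_W(f) = g\circ \sigma$ $\mu$-a.e., and by the characterization of $\sigma^{-1}(\B)$-measurable functions recalled in Subsection \ref{subsect standard spaces} and Subsection \ref{sect Basics on PO and TO}, $R_W(f)$ is $\sigma^{-1}(\B)$-measurable.

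For item (1), the key point is that since $R_W(f)$ is constant on fibers, we may write $R_W(f) = g \circ \sigma$ $\mu$-a.e.\ for a suitable $\B$-measurable $g$, and then apply the pull-out property \eqref{eq char property of r via sigma 1} (established for $R_W$ in Proposition \ref{prop R(f) constant on C}(1)) with $f$ replaced by $g$ and $g$ replaced by $\mathbf 1$:
\begin{equation*}
R^2_W(f) = R_W\big(R_W(f)\big) = R_W\big((g\circ\sigma)\,\mathbf 1\big) = g\, R_W(\mathbf 1) = R_W(f)\, R_W(\mathbf 1).
\end{equation*}
This chain of equalities holds $\mu$-a.e., which is exactly the asserted identity. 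The only subtlety is the passage from "$R_W(f)$ is constant $\mu_{C_x}$-a.e.\ on each $C_x$" to "$R_W(f) = g\circ\sigma$ $\mu$-a.e.\ for a genuinely $\B$-measurable $g$" — this requires the identification of the quotient space $X/\xi$ with $(X, \sB, \mu_\sigma)$ noted in the discussion after \eqref{eq TO via cond syst meas and W}, so that the fiberwise-constant function descends to a measurable function on the quotient and pulls back along $\sigma$.

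For item (2), assume $\int_{C_x} W \, d\mu_{C_x} = 1$ for $\mu$-a.e.\ $x$, i.e.\ $R_W(\mathbf 1) = \mathbf 1$ $\mu$-a.e. Then item (1) immediately gives $R^2_W(f) = R_W(f)\cdot \mathbf 1 = R_W(f)$ $\mu$-a.e., which says precisely that $h := R_W(f)$ satisfies $R_W(h) = h$, i.e.\ $h$ is a harmonic function for $R_W$ in the sense of Definition \ref{def transfer operator}(5). The main obstacle in the whole argument — really the only non-routine step — is the careful justification that a function which is $\mu_{C_x}$-a.e.\ constant on a.e.\ fiber can be realized, modulo a $\mu$-null set, as a bona fide $\sigma^{-1}(\B)$-measurable function; once one has this (it is a standard consequence of Rokhlin's disintegration theorem, Theorem \ref{thm Rokhlin disintegration}, together with the measurability clause \eqref{eq quotient for endo}), items (1)--(3) follow formally. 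I would present the fiberwise-constancy lemma as the single technical ingredient and then derive all three corollary statements in a few lines each.
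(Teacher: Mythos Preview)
Your proof is correct and follows essentially the same approach as the paper: both rely on the single observation from Proposition~\ref{prop R(f) constant on C}(2) that $R_W(f)$ is constant on the fibers $C_x$, from which all three items follow formally. The paper's own proof is a single sentence (``All these results follow directly from the fact that $R_W(f)$ is constant on elements $C$ of the partition $\xi$''), so your version simply spells out the details---writing $R_W(f) = g\circ\sigma$ and invoking the pull-out property---that the paper leaves implicit.
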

 
In particular, the case when $W =1$,  gives a simple straightforward  
example of harmonic functions for  the corresponding transfer
operator $R_1$. 
 
\begin{proof} All these results follow directly from the fact that 
$R_W(f)$ is constant on elements $C$ of the partition $\xi$.
\end{proof}

\begin{proposition}\label{prop W =1}
For the transfer operator $R_W$ defined by (\ref{eq TO via cond syst meas and W}), 
$$
\int_X R_W(f)(x)\; d\mu(x) = \int_X f(x) W(x)\; d\mu(x), \qquad 
f\in L^1(\mu),
$$
that is $W =\dfrac{d\mu R_W}{d\mu}$. 
If $W = 1$, then $R$ is an isometry in the space $L^1(X, \B, \mu)$.
\end{proposition}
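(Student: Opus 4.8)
The plan is to establish the integral identity by the standard disintegration argument and then read off the norm consequence. First I would apply the Rokhlin disintegration formula from Theorem~\ref{thm Rokhlin disintegration} (in the endomorphism form recalled in Theorem~\ref{thm csm}) to the function $x \mapsto R_W(f)(x)$. Since $R_W(f)$ is, by Proposition~\ref{prop R(f) constant on C}(2), constant on each element $C_x$ of the partition $\xi$, it is $\sigma^{-1}(\B)$-measurable, so its integral against $\mu$ equals its integral against $\mu_\sigma = \mu|_{\sB}$, and the inner integral $\int_{C_x} R_W(f)(y)\, d\mu_{C_x}(y)$ collapses to $R_W(f)(x)$ because $\mu_{C_x}$ is a probability measure. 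Thus
$$
\int_X R_W(f)(x)\; d\mu(x) = \int_X \left(\int_{C_x} R_W(f)(y)\; d\mu_{C_x}(y)\right) d\mu_\sigma(x) = \int_X R_W(f)(x)\; d\mu_\sigma(x).
$$
On the other hand, plugging the definition~\eqref{eq TO via cond syst meas and W} directly into the disintegration formula gives
$$
\int_X R_W(f)(x)\; d\mu_\sigma(x) = \int_X \left(\int_{C_x} f(y) W(y)\; d\mu_{C_x}(y)\right) d\mu_\sigma(x) = \int_X f(x) W(x)\; d\mu(x),
$$
where the last equality is again just the disintegration formula~\eqref{eq function disintegration for endo} applied to $fW$. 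Combining the two displays yields $\int_X R_W(f)\, d\mu = \int_X fW\, d\mu$, which by the defining relation~\eqref{eq action of P on M(X)} for $\mu R_W$ means precisely $W = d(\mu R_W)/d\mu$.

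For the second assertion, set $W = \mathbf 1$. Then the identity just proved says $\int_X R_1(f)\, d\mu = \int_X f\, d\mu$ for all $f \in L^1(\mu)$. To get the isometry statement for the $L^1$-norm, I would apply this with $f$ replaced by $|f|$, using positivity of $R_1$ and the fact that $R_1$ preserves constants on fibers together with the pointwise estimate $|R_1(f)(x)| = \left|\int_{C_x} f\, d\mu_{C_x}\right| \le \int_{C_x} |f|\, d\mu_{C_x} = R_1(|f|)(x)$. Hence
$$
\|R_1(f)\|_{L^1(\mu)} = \int_X |R_1(f)|\; d\mu \le \int_X R_1(|f|)\; d\mu = \int_X |f|\; d\mu = \|f\|_{L^1(\mu)},
$$
and the reverse inequality follows from the same chain read with $f \ge 0$, where $R_1(f) \ge 0$ and the first inequality is an equality. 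Therefore $\|R_1(f)\|_{L^1(\mu)} = \|f\|_{L^1(\mu)}$.

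The only genuinely delicate point, and the one I would be most careful about, is the legitimacy of substituting $R_W(f)$ into the disintegration formula and the interchange that collapses the inner integral: this requires knowing that $R_W(f) \in L^1(\mu)$ and that $f$ is $\mu_{C_x}$-integrable for $\mu_\sigma$-a.e.\ $x$. Both are guaranteed here because $W$ is a positive $\mu$-integrable function and $f \in L^1(\mu)$ (as already noted in the paragraph preceding Proposition~\ref{prop R(f) constant on C}), so Fubini/Tonelli for the disintegration applies without obstruction; for the isometry claim one additionally uses that $\mu_{C_x}(X) = 1$ a.e., which is the normalization built into the system of conditional measures. Everything else is a routine unwinding of definitions.
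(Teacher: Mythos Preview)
Your argument for the integral identity is correct and is essentially the paper's own proof: both use the disintegration of $\mu$ over the partition $\xi$ and the fact that each $\mu_{C_x}$ is a probability measure to collapse one layer of integration. You make the role of Proposition~\ref{prop R(f) constant on C}(2) more explicit than the paper does, but the mechanism is the same.

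The isometry paragraph, however, has a genuine gap. Your chain of inequalities gives $\|R_1(f)\|_{L^1(\mu)} \le \|f\|_{L^1(\mu)}$ for arbitrary $f$, and equality when $f \ge 0$; but the sentence ``the reverse inequality follows from the same chain read with $f \ge 0$'' does not upgrade this to equality for \emph{signed} $f$. Indeed it cannot: $R_1$ is the conditional expectation onto $\sigma^{-1}(\B)$ (cf.\ Lemma~\ref{lem properties of R_lambda}(5)), and conditional expectation is only a contraction on $L^1$, not an isometry. For a concrete instance, take $\sigma(x)=2x\bmod 1$ on $[0,1]$ with Lebesgue measure and $f=\chi_{[0,1/2)}-\chi_{[1/2,1)}$; then $R_1(f)\equiv 0$ while $\|f\|_{L^1}=1$. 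The paper's own proof does not spell out the isometry claim either, and in light of the standing convention in the paper that transfer operators are analyzed primarily on the cone of nonnegative functions, the statement should be read as $\|R_1(f)\|_{L^1(\mu)} = \|f\|_{L^1(\mu)}$ for $f \ge 0$ (equivalently, $\mu R_1 = \mu$). Your argument proves exactly that; just don't overclaim the conclusion for all of $L^1$.
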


\begin{proof} The proof follows from the following calculations based
 on (\ref{eq function integration csm}):
\begin{eqnarray*}
\int_X R_W(f)(x)\; d\mu(x)  &= &\int_X \left(\int_{C_x} f(y)W(y)\; 
d\mu_{C_x}(y)\right) \: d\mu(x)\\
\\
&=& \int_X \int_C\left(\int_{C_x} f(y)W(y)\; d\mu_{C_x}(y)\right) \: d\mu_C(z)\; d\mu_\sigma(C)\\
\\
&=& \int_X \int_{C_x}\left(\int_{C} f(y)W(y)\; d\mu_{C}(z)\right) \: 
d\mu_{C_x}(y)\; d\mu_\sigma(C)\\
\\
&=& \int_X \left( \int_{C_x}f(y)W(y)\; d\mu_{C_x}(y)\right) \: 
d\mu_\sigma(C)\\
\\
&=& \int_X f(x) W(x) \; d\mu(x)
 \end{eqnarray*} 
 We used here that $\mu_C$ is a probability measure for a.e. $C$. 
 This equality shows that $d(\mu R_W) = d\mu$. 
\end{proof}

Let $\sigma$ be an endomorphism of a standard Borel space 
$(X, \B)$. Suppose now that $\xi$ is a $\sigma$-invariant partition of
 $\sms$. This means that $\sigma^{-1}(C)$ is a $\xi$-set for any 
 element $C$ of the partition $\xi$.  
Take a measure $\nu$  on $(X/\xi, \B/\xi)$. Denote by 
$(\nu_C)_{C \in X/\xi}$ a random measure  on $(X, \B)$, i.e., it 
 satisfies the conditions: (i)  $C \to \nu_C(B)$ is measurable for any 
 $B \in \B$, (ii)  $\nu_C(C) \in L^1(X/\xi, \nu)$. We\emph{ define} a  
 measure $\mu$ on $(X, \B)$ by setting
\be\label{eq mu defined by nu_C}
\mu(B) = \int_{X/\xi}  \nu_C(B) \; d\nu(C).
\ee

\begin{corollary}\label{cor W is not 1}
Suppose that the measure $\mu$ on $(X, \B)$ is as in  (\ref{eq mu 
defined by nu_C}). Let the transfer operator $R$ be defined by the 
relation
$$
R(f)(x) := \int_{C_x} f(y)\; d\nu_{C_x}(y).
$$
Then the Radon-Nikodym derivative $W = \dfrac{d\mu  R}{d\mu} = 
\nu_C(C)$.
\end{corollary}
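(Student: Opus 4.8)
The statement to prove is Corollary \ref{cor W is not 1}: given a $\sigma$-invariant measurable partition $\xi$, a random measure $(\nu_C)_{C \in X/\xi}$, and the measure $\mu$ on $(X,\B)$ defined by $\mu(B) = \int_{X/\xi} \nu_C(B)\, d\nu(C)$, the transfer operator $R(f)(x) = \int_{C_x} f(y)\, d\nu_{C_x}(y)$ satisfies $\dfrac{d\mu R}{d\mu}(x) = \nu_{C_x}(C_x)$. The approach is to compute the measure $\mu R$ directly from its definition $(\mu R)(B) = \int_X R(\chi_B)\, d\mu$ and then peel off the density against $\mu$, using the defining formula (\ref{eq mu defined by nu_C}) for $\mu$ in terms of $(\nu_C)$ and $\nu$. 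The key structural fact to exploit, exactly as in Proposition \ref{prop W =1}, is that $R(f)(x)$ is \emph{constant on each element $C_x$ of $\xi$}, because $R(f)(x)$ only depends on $x$ through the fiber $C_x = \sigma^{-1}(x)$ over which the integral is taken; this is the analogue of Proposition \ref{prop R(f) constant on C}(2) in the present non-normalized setting.

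\textbf{Main steps.} First I would record the two representations that will be combined: $(\mu R)(B) = \int_X R(\chi_B)\, d\mu$, and $\mu(D) = \int_{X/\xi} \nu_C(D)\, d\nu(C)$ for any $D \in \B$. Next, for a fixed Borel set $B$, compute
\be
(\mu R)(B) = \int_X \left( \int_{C_x} \chi_B(y)\, d\nu_{C_x}(y) \right) d\mu(x) = \int_X \nu_{C_x}(B \cap C_x)\, d\mu(x).
\ee
Since the integrand $x \mapsto \nu_{C_x}(B \cap C_x)$ is constant on each fiber $C$, I would then apply the disintegration formula (\ref{eq mu(B) integral}) (or its version (\ref{eq function integration csm})) for $\mu$ with respect to $\xi$ to rewrite the outer integral as an integral over $X/\xi$ against $\nu$, weighted by $\nu_C(C)$ — more precisely, using (\ref{eq mu defined by nu_C}) one obtains $\int_X g(x)\, d\mu(x) = \int_{X/\xi} \left(\int_C g\, d\nu_C\right) d\nu(C) = \int_{X/\xi} g|_C \cdot \nu_C(C)\, d\nu(C)$ whenever $g$ is $\xi$-measurable. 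Applying this with $g(x) = \nu_{C_x}(B \cap C_x)$ gives
\be
(\mu R)(B) = \int_{X/\xi} \nu_C(B \cap C)\, \nu_C(C)\, d\nu(C).
\ee
Finally, I would compare this with $\int_B \nu_{C_x}(C_x)\, d\mu(x)$: expanding the latter by the same disintegration identity yields $\int_{X/\xi} \left(\int_{B\cap C} \nu_C(C)\, d\nu_C(y)\right) d\nu(C) = \int_{X/\xi} \nu_C(B\cap C)\, \nu_C(C)\, d\nu(C)$, which matches. Hence $(\mu R)(B) = \int_B \nu_{C_x}(C_x)\, d\mu(x)$ for all $B \in \B$, i.e. $W(x) = \dfrac{d\mu R}{d\mu}(x) = \nu_{C_x}(C_x)$, as claimed.

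\textbf{Anticipated obstacle.} The technical care needed is in justifying the disintegration step: one must check that $(\nu_C)$ plays the role of a system of conditional measures for $(X,\B,\mu)$ relative to $\xi$ up to the normalizing factor $\nu_C(C)$, i.e. that the normalized measures $\nu_C/\nu_C(C)$ (defined where $\nu_C(C) \neq 0$) form the genuine Rokhlin system of conditional measures of $\mu$ with quotient measure $d\mu_\xi(C) = \nu_C(C)\, d\nu(C)$. This requires the measurability hypotheses on the random measure (condition (i), $C \mapsto \nu_C(B)$ measurable) and the integrability $\nu_C(C) \in L^1(X/\xi,\nu)$, together with the uniqueness in Theorem \ref{thm Rokhlin disintegration}; the $\sigma$-invariance of $\xi$ is what makes $X/\xi$ and the map $\sigma$ interact correctly so that $R$ is well-defined and maps $\xi$-measurable functions to $\xi$-measurable functions. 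I expect this bookkeeping — matching the unnormalized random measure against the normalized conditional system — to be the only real subtlety; once it is in place, the computation above is routine and purely formal, exactly parallel to the proof of Proposition \ref{prop W =1} with the weight $W$ there replaced by the fiber mass $\nu_C(C)$ here.
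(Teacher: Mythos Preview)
Your proposal is correct and follows precisely the approach the paper intends: the paper's own proof of Corollary \ref{cor W is not 1} is the single sentence ``The proof is the same as in Proposition \ref{prop W =1},'' and your argument is exactly that---you disintegrate $\mu$ via the random measure $(\nu_C)$, use that $R(f)$ is constant on each fiber $C$, and read off the density $\nu_C(C)$ where Proposition \ref{prop W =1} read off $W$. Your discussion of the normalization bookkeeping (identifying $\nu_C/\nu_C(C)$ with the Rokhlin conditional measures and $\nu_C(C)\,d\nu$ with the quotient measure) is more careful than anything the paper spells out, but it is the right point to flag and does not deviate from the intended route.
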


The \emph{proof} is the same as in Proposition \ref{prop W =1}.

For the class of transfer operators which are considered in this example, 
we can easily point out harmonic functions.

\begin{theorem}\label{prop harmonic f-ns}
Let $\sigma$ be an onto endomorphism of a probability standard
measure spaca $\sms$, and $\xi$ is a $\sigma$-invariant measurable 
partition of $X$. Define  a transfer operator $R$ by setting
$$
R(f)(x) = \int_{C_x} f(y)\; d\mu_{C_x}(y)
$$
where $(\mu_{C_x})$ is the system of conditional measures 
\index{system of conditional measures} 
associated to $\xi$.   Then a measurable function $h$ defined on
 $X$ is harmonic with 
respect to $R$ if and only if $h$ is $\xi$-measurable, i.e., $h(x)$ is 
constant on every element $C$ of $\xi$.
\end{theorem}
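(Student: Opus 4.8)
The plan is to prove both implications directly from the disintegration formula \eqref{eq function integration csm} together with the defining property of conditional measures, namely that $\mu_{C}$ is a probability measure supported on $C$ for $\mu_\xi$-a.e.\ $C$. The key point is the same observation already recorded in Proposition \ref{prop R(f) constant on C}(2): for any $f$, the value $R(f)(x) = \int_{C_x} f\,d\mu_{C_x}$ depends on $x$ only through the element $C_x \in \xi$ containing $x$, so $R(f)$ is automatically $\xi$-measurable. This immediately gives one direction: if $h$ is $R$-harmonic, then $h = R(h)$ is $\xi$-measurable, hence constant on every $C \in \xi$.

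For the converse, suppose $h$ is $\xi$-measurable, so $h|_C \equiv c_C$ for each $C \in \xi$ (with $C \mapsto c_C$ a $\mu_\xi$-measurable function). First I would check that $h \in L^1(\mu)$ is the relevant integrability hypothesis; then compute, for $x \in C$,
$$
R(h)(x) = \int_{C} h(y)\, d\mu_{C}(y) = \int_{C} c_{C}\, d\mu_{C}(y) = c_{C}\,\mu_{C}(C) = c_{C} = h(x),
$$
using that $\mu_C$ is a probability measure on $C$ for $\mu_\xi$-a.e.\ $C$. Since the exceptional set of elements $C$ has $\mu_\xi$-measure zero, the union of those $C$'s is a $\xi$-set of $\mu$-measure zero by \eqref{eq mu(B) integral}, so $R(h) = h$ holds $\mu$-a.e. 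This is exactly $R$-harmonicity in the $\mod 0$ convention adopted throughout.

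There is essentially no serious obstacle here; the statement is a clean consequence of the structure of conditional measures. The only point requiring a little care is the measure-theoretic bookkeeping around the $\mu_\xi$-null set of ``bad'' elements $C$ where $\mu_C$ might fail to be a probability measure or where $h$ might not restrict to a constant: I would invoke Theorem \ref{thm Rokhlin disintegration} to know $(\mu_C)$ exists and is unique with the stated properties, and then use \eqref{eq mu(B) integral} to transfer $\mu_\xi$-negligibility of a set in $X/\xi$ to $\mu$-negligibility of the corresponding $\xi$-set in $X$. One might also remark that the same computation shows $R(\mathbf 1) = \mathbf 1$, i.e.\ this $R$ is automatically normalized, which is consistent with Proposition \ref{prop R(f) constant on C}(1) (the case $W = \mathbf 1$). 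Finally, I would note that both implications are genuinely equivalences only modulo $0$, matching the conventions of Subsection \ref{subsect standard spaces}.
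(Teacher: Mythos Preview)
Your proposal is correct and follows essentially the same approach as the paper: both directions rest on the observation that $R(f)(x)=\int_{C_x} f\,d\mu_{C_x}$ depends only on $C_x$, together with the fact that each $\mu_{C_x}$ is a probability measure. The paper's proof is terser and omits the $\mathrm{mod}\ 0$ bookkeeping you spell out, but the argument is the same.
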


\begin{proof} We note that $\mu_C$ is a probability measure for
$C \in X/\xi$. Therefore,  if $h(x) = h(C_x)$ for all $x \in X$, then
$$
R(h)(x) = \int_{C_x}h(y) \; d\mu_{C_x}(y) = 
h(C_x)\mu_{C_x}(C_x) = h(x).
$$

Conversely, if for all $x \in X$, we have $R(h)(x) = h(x)$, then $h(x)$ 
satisfies the relation
$$
h(x_1) = \int_{C}h(y) \; d\mu_{C}(y) = h(x_2)
$$
where $x_1$ and $x_2$ are taken from $C$.
\end{proof}

\begin{example}\label{ex cond measures with kernel}
Consider a standard probability measure space $\sms$, and  let $\nu:
x \mapsto \nu_x, x\in X$, be a random measure taken values in 
$M(Y)$ where $(Y, \mc A)$ is a measurable space. 
Consider the product measure space $(X \times Y, m)$ where 
$$
m = \int_X \nu_x \; d\mu 
$$
is a measure on $\B \times \mc A$.

Define an operator $R : \mc F(X\times Y) \to \mc F(X)$:
\be
R(f)(x) = \int_X f(x, y) K(x, y)\; d\nu_x(y)
\ee
where $K(x, y)$ is a non-negative measurable  bounded function.  

Let $\mc F_X$ be the set of functions depending on $x \in X$ only. 
In other words, $\mc F_X = \mc F(X\times Y ) \circ \pi$ where  
$\pi : X \times Y \to X$ is the projection. 
\\

\textit{Claim.} The operator $R$ satisfies the property:
$$
R(fg) = f R(g)
$$
if $f \in \mc F_X$ and $g \in \mc F(X \times Y)$. 
\\

Indeed, we see that 
\begin{eqnarray*}
R((f\circ \pi)g)(x)& = &\int_{X} K(x, y)f(\pi (x,y))g(x, y)\; 
d\nu_x(y)\\
\\
&= & f(x) \int_{X} K(x, y)g(x, y)\; d\nu_x(y) \\
\\
& = & f(x)R(g)(x).
\end{eqnarray*}
\end{example}

\begin{example}\label{ex TO defined by inverse branch of sigma}
In this example, we will work with a countable-to-one (or bounded-to-one) 
endomorphism $\sigma$ of a probability measure space $\sms$.
As was mentioned in Theorem \ref{thm Rohlin partition}, there is a partition 
$(A_i | i \in I)$ of $X$
into measurable sets of positive measure such that $\sigma_i\circ \tau_i = 
\mbox{id}$ on $A_i$.

We define a transfer operator $R_{\tau_i} : \mathcal M(\sigma(A_i)) \to 
\mathcal M(\sigma(A_i))$ by setting
\be\label{TO by tau_i}
(R_if) (x) = f(\tau_i(x)),\ \ \ \ x\in \sigma(A_i).
\ee
Then $R_{\tau_i} $ is positive and $R_{\tau_i} ((f\circ\sigma) g) = f
\circ(\sigma\tau_i) g\circ\tau_i = fR_{\tau_i}(g)$.

This example can be discussed in detail  in the context of stationary 
\textit{Bratteli diagrams}. \index{Bratteli diagram}

\end{example}

\begin{example}[Parry's Jacobian and transfer operator 
\cite{ParryWalters1972}] \label{ex Parry Jacobian}
Let $\sigma$ be a bounded-to-one nonsingular endomorphism of $\sms$, 
and let $(A_i | i \in I)$be the corresponding Rohlin partition.

We define for $x \in A_i$
$$
J_i (x) = \frac{d\mu\tau_i}{d\mu}(x),\ \ \ \ x \in A_i
$$
and let
\be\label{eq Jacobian}
J(x) = \sum_{i \in I} J_i(x) \chi_{A_i}(x),\ \ \ x \in X.
\ee

The function $J(x)= J_\sigma(x)$ is called \emph{Jacobian}
 \index{Jacobian} and was defined in \cite{Parry1969}, 
 \cite{ParryWalters1972}.
One can prove that the function $J(x)$ is independent of the choice of the 
Rohlin partition.

By non-singularity of $\sigma$ we have the following relations
$$
\theta_{\sigma}(x) :=  \frac{d\mu\sigma^{-1}}{d\mu}(x) = \sum_{y \in \sigma^{-1}(x)} \frac{1}{J(x)},
$$
$$
\omega_{\sigma}(x) :=  \frac{d\mu}{d\mu\sigma^{-1}}(x) = \frac{1}{\theta_\sigma(\sigma x)}.
$$
The function $\omega_\sigma$ is called the \emph{Radon-Nikodym derivative of $\sigma$} and satisfies the property
$$
\int_X f\circ\sigma \ \omega_\sigma \; d\mu =\int_X f \; d\mu
$$
for all $f \in L^1(X, \mu)$.

We use the Jacobian to define the transfer operator $R_\sigma$ acting on $\mathcal M(X)$:
$$
(R_\sigma h)(x) = \sum_{ y\in \sigma^{-1}(x)} \frac{h(y)}{ J_\sigma (y)}.
$$
It can be easily verified that $R_\sigma$ satisfies the characteristic property 
for transfer operators.
\end{example}

\newpage

\bibliographystyle{alpha}
\bibliography{bibliography-TO}

\printindex{}

\end{document}